\newcommand{\filledbox}{%
    \begin{tikzpicture}[baseline={(0,0)}]
        \draw[line width=1pt] (0,0) rectangle (.5em,.5em); 
    \end{tikzpicture}%
}
\def\nonumberfootnote{\xdef\@thefnmark{}\@footnotetext}			
\definecolor{colorred}{HTML}{B00000}
\definecolor{colorgreen}{HTML}{258300}
\definecolor{colorblue}{HTML}{2e32fa}
\definecolor{coloryellow}{HTML}{cbbb1a}
\numberwithin{equation}{section}
\newcommand{\nlb}{{\ensuremath{\textnormal{b}}}}
\newcommand{\nlC}{{\ensuremath{\textnormal{C}}}}
\newcommand{\rmc}{{\ensuremath{\mathrm{c}}}}
\newcommand{\rmd}{{\ensuremath{\mathrm{d}}}}
\newcommand{\rme}{{\ensuremath{\mathrm{e}}}}
\newcommand{\bdH}{{\ensuremath{\boldsymbol{H}}}}
\newcommand{\sfd}{{\ensuremath{\mathsf{d}}}}
\newcommand{\sfe}{{\ensuremath{\mathsf{e}}}}
\newcommand{\sfg}{{\ensuremath{\mathsf{g}}}}
\newcommand{\sfh}{{\ensuremath{\mathsf{h}}}}
\newcommand{\sfp}{{\ensuremath{\mathsf{p}}}}
\newcommand{\sfG}{{\ensuremath{\mathsf{G}}}}
\newcommand{\sfH}{{\ensuremath{\mathsf{H}}}}
\newcommand{\sfJ}{{\ensuremath{\mathsf{J}}}}
\newcommand{\sfP}{{\ensuremath{\mathsf{P}}}}
\newcommand{\sfQ}{{\ensuremath{\mathsf{Q}}}}
\newcommand{\sfR}{{\ensuremath{\mathsf{R}}}}
\newcommand{\scrB}{{\ensuremath{\mathscr{B}}}}
\newcommand{\scrC}{{\ensuremath{\mathscr{C}}}}
\newcommand{\scrD}{{\ensuremath{\mathscr{D}}}}
\newcommand{\scrE}{{\ensuremath{\mathscr{E}}}}
\newcommand{\scrH}{{\ensuremath{\mathscr{H}}}}
\newcommand{\scrM}{{\ensuremath{\mathscr{M}}}}
\newcommand{\scrP}{{\ensuremath{\mathscr{P}}}}
\newcommand{\scrT}{{\ensuremath{\mathscr{T}}}}
\newcommand{\scrU}{{\ensuremath{\mathscr{U}}}}
\newcommand{\scrV}{{\ensuremath{\mathscr{V}}}}
\newcommand{\bdpi}{{\ensuremath{\boldsymbol{\pi}}}}
\newcommand{\N}{\boldsymbol{\mathrm{N}}}						
\newcommand{\R}{\boldsymbol{\mathrm{R}}}						
\renewcommand{\d}{\,\mathrm{d}}				
\let\limsup\undefined
\let\liminf\undefined
\let\div\undefined
\DeclareMathOperator*{\limsup}{limsup}		
\DeclareMathOperator*{\liminf}{liminf}		
\DeclareMathOperator*{\esssup}{esssup}		
\DeclareMathOperator*{\essinf}{essinf}		
\DeclareMathOperator{\supp}{spt}			
\DeclareMathOperator{\div}{div}				
\theoremstyle{definition}
\newtheorem{bump}{Bump}[section]
\theoremstyle{plain}
\newtheorem{theorem}[bump]{Theorem}
\newtheorem{proposition}[bump]{Proposition}
\newtheorem{definition}[bump]{Definition}
\newtheorem{lemma}[bump]{Lemma}
\newtheorem{corollary}[bump]{Corollary}
\theoremstyle{remark}
\newtheorem{remark}[bump]{Remark}
\newtheorem{example}[bump]{Example}
\crefname{theorem}{Theorem}{Theorems}
\crefname{proposition}{Proposition}{Propositions}
\crefname{definition}{Definition}{Definitions}
\crefname{lemma}{Lemma}{Lemmas}
\crefname{corollary}{Corollary}{Corollaries}
\crefname{hypothesis}{Hypothesis}{Hypotheses}
\crefname{remark}{Remark}{Remarks}
\crefname{example}{Example}{Examples}
\crefname{notation}{Notation}{Notations}
\newcommand{\mms}{\mathsf{M}}				
\newcommand{\met}{\sfd}						
\newcommand{\Meas}{\mathfrak{M}}
\newcommand{\Rmet}{g}						
\newcommand{\meas}{\mathfrak{m}}				
\newcommand{\Leb}{\mathscr{L}}				
\newcommand{\vol}{\mathrm{vol}}				
\newcommand{\Prob}{\mathscr{P}}		        
\newcommand{\OptTGeo}{\mathrm{OptTGeo}}
\newcommand{\ac}{{\mathrm{ac}}}
\newcommand{\TCD}{\mathsf{TCD}}
\newcommand{\TMCP}{\mathsf{TMCP}}
\newcommand{\MCP}{\mathsf{MCP}}
\newcommand{\CD}{\mathsf{CD}}
\newcommand{\bounded}{\nlb}					
\newcommand{\comp}{\rmc}						
\newcommand{\bc}{\mathrm{bc}}
\newcommand{\loc}{\mathrm{loc}}				
\newcommand{\pr}{\mathrm{pr}}				
\newcommand{\HS}{{\mathrm{HS}}}				
\newcommand{\Ric}{\mathrm{Ric}}				
\newcommand{\Cont}{\nlC}					
\newcommand{\LCC}{\mathrm{LCC}}
\newcommand{\Ell}{\mathit{L}}				
\newcommand{\End}{\mathrm{bp}}
\newcommand{\Dom}{\scrD}					
\DeclareMathOperator{\BOX}{\filledbox}
\DeclareMathOperator{\Ent}{Ent}				
\DeclareMathOperator{\Hess}{Hess}			
\DeclareMathOperator{\diam}{diam}			
\newcommand{\eval}{\sfe}					
\newcommand{\push}{\sharp}					
\newcommand{\cl}{\mathrm{cl}}				
\newcommand{\Len}{\mathrm{Len}}
\newcommand{\TGeo}{\mathrm{TGeo}}
\newcommand{\One}{1}
\newcommand{\ray}{\sfh}
\newcommand{\Quot}{\mathsf{Q}}
\let\oldtocsection=\tocsection
\let\oldtocsubsection=\tocsubsection
\let\oldtocsubsubsection=\tocsubsubsection
\renewcommand{\tocsection}[2]{\hspace{0em}\oldtocsection{#1}{#2}}
\renewcommand{\tocsubsection}[2]{\hspace{1em}\oldtocsubsection{#1}{#2}}
\renewcommand{\tocsubsubsection}[2]{\hspace{2em}\oldtocsubsubsection{#1}{#2}}
\newcommand{\nocontentsline}[3]{}
\newcommand{\tocless}[2]{\bgroup\let\addcontentsline=\nocontentsline#1{#2}\egroup}
\newcommand{\mres}{\mathbin{\vrule height 1.6ex depth 0pt width 0.13ex\vrule height 0.13ex depth 0pt width 1.3ex}}
\newcommand{\cost}{\mathfrak{c}}
\newcommand{\SIN}{\sin}
\newcommand{\COS}{\cos}
\newcommand{\Tr}{\scrT}
\renewcommand{\u}{\mathtt{u}}
\renewcommand{\v}{\mathtt{v}}
\newcommand{\w}{\mathsf{w}}
\renewcommand{\q}{\mathfrak{q}}
\DeclareMathOperator{\COMP}{comp}
\newcommand{\Diff}{\boldsymbol{\mathrm{D}}}
\DeclareMathOperator{\LCR}{\mathrm{LCR}}
\DeclareMathOperator{\RCR}{\mathrm{RCR}}
\DeclareMathOperator{\sgn}{sgn}
\newcommand{\Pert}{\mathrm{Pert}}
\newcommand{\FPert}{\mathrm{FPert}}
\newcommand{\TCut}{\mathrm{TC}}
\newcommand{\ETCut}{\mathrm{ETC}}
\newcommand{\hh}{\mathfrak{h}}
\newcommand{\PP}{{p}}
\newcommand{\QQ}{{q}}
\begin{document}

\title[Exact d'Alembertian for Lorentz distance functions]{Exact d'Alembertian for Lorentz distance functions}
\author{Mathias Braun}
\address{Institute of Mathematics, EPFL, 1015 Lausanne, Switzerland}
\email{\href{mailto:mathias.braun@epfl.ch}{mathias.braun@epfl.ch}}
\subjclass[2020]{Primary 
28A50, 
51K10;  
Secondary
35J92, 
49Q22, 
51F99, 
53C21, 
53C50, 
83C75. 
}
\keywords{Comparison theorems; D'Alembertian; General relativity; Mean curvature; Metric measure spacetime; $p$-Laplacian; Ricci curvature; Singularity theorem}
\thanks{I am grateful to Nicola Gigli, Robert McCann, and Clemens Sämann for their valuable comments.}
\thanks{Financial support by the EPFL through a Bernoulli Instructorship is gratefully acknowledged. A large part of this work was carried out during my Postdoctoral Fellowship at the University of Toronto.}

\begin{abstract} We refine a recent distributional notion of d'Alembertian of a signed Lorentz distance function  to an achronal set in a metric measure spacetime obeying the timelike measure contraction property. We show precise representation formulas and comparison estimates (both upper and lower bounds). Under a condition we call ``infinitesimally strict concavity'' (known for infinitesimally Minkowskian structures and established here  for Finsler spacetimes), we prove the associated  distribution is a signed measure certifying the integration by parts formula. This   treatment of the d'Alembertian using techniques from metric geometry expands upon its recent nonlinear yet elliptic interpretation; even in the smooth case, our formulas seem to pioneer its exact shape across the timelike cut locus. Two central ingredients our  contribution unifies are the localization paradigm of Cavalletti--Mondino and the  Sobolev calculus of Beran--Braun--Calisti--Gigli--McCann--Ohanyan--Rott--S\"amann.

In the second part of our work, we present several applications of these insights. First, we show the  equivalence of the timelike curvature-dimension condition with a Bochner-type inequality. Second, we set up synthetic mean curvature (as well as barriers for CMC sets) exactly. Third, we prove synthetic volume and area estimates of Heintze--Karcher-type, which enable  us to show several synthetic volume singularity theorems.
\end{abstract}

\maketitle

\thispagestyle{empty}

\tableofcontents

\addtocontents{toc}{\protect\setcounter{tocdepth}{2}}

\section{Introduction}\label{Ch:Intro}

\subsection{Objective} Recent advances to questions from   mathematical general relativity from a new family of Lagrangians suggest the nonlinear \emph{$\PP$-d'Alembertian}
\begin{align}\label{Eq:Lapl formally}
\Box_\PP\u := \div\!\big[\vert\rmd\u\vert^{\PP-2}\,\nabla\u\big] 
\end{align}
exhibits elliptic characteristics, unlike the classical linear yet hyperbolic d'Alembertian --- here $\PP\in (-\infty,1)\setminus \{0\}$ and the function $\u$ has timelike gradient. This ellipticity was first realized by Beran--Braun--Calisti--Gigli--McCann--Ohanyan--Sämann--Rott \cite{beran-braun-calisti-gigli-mccann-ohanyan-rott-samann+-} (abbreviated by \emph{Beran et al.} from now on). Motivated by this and a family of comparison theorems for $\smash{\Box_\PP}$ they established, they introduced a distributional notion of $\PP$-d'Alembertian based on the integration by parts formula. Starting from this and reflecting preceding approaches in metric measure geometry by Gigli \cite{gigli2015}, Cavalletti--Mondino \cite{cavalletti-mondino2020-new}, and Ketterer \cite{ketterer2017}, the ambition of this exposition is two-fold.


First, using tools from optimal transport and convex geometry --- notably  Cavalletti--Mondino's localization paradigm  \cite{cavalletti-mondino2020} ---, we prove exact   representation formulas (which include the timelike cut locus) and comparison theorems for $\PP$-d'Alembertians of signed Lorentz distance functions (and more general $1$-steep functions) and their powers. This \emph{constructive} approach to  existence of these operators  complements the \emph{abstract} results derived by Beran et al.~\cite{beran-braun-calisti-gigli-mccann-ohanyan-rott-samann+-}. The distributions are  upgraded to generalized signed Radon measures under an anti-Lipschitz condition of Kunzinger--Steinbauer \cite{kunzinger-steinbauer2022} (inspired by Sormani--Vega \cite{sormani-vega2016}) on the function in question. Moreover, inspired by work of Gigli  \cite{gigli2015} in positive signature, we will identify a condition called \emph{infinitesimal strict concavity} --- which we show for general Finsler spacetimes --- relative to certain causal functions $\u$  implying the considered $\PP$-d'Alembertian of $\u$ is unique. 

Second, our formulas yield a synthetic Bochner-type inequality, set up a nonsmooth  notion of mean curvature, and help derive volume singularity theorems (a concept recently  introduced by García-Heveling \cite{garcia-heveling2023-volume} inspired by Treude--Grant \cite{treude-grant2013}) from separately shown synthetic volume and area inequalities of Heintze--Karcher-type \cite{heintze-karcher1978}. 

Our results  hold in the general abstract framework of metric measure spacetimes with constant or variable timelike Ricci curvature bounds after Cavalletti--Mondino \cite{cavalletti-mondino2020} or Braun--McCann \cite{braun-mccann2023}, respectively.

\subsection{Motivation} Before describing our results and related literature in  \cref{Sub:Contri}, we list some of the   motivations that underpin the recent attention the $\PP$-d'Alembertian has received. In particular, we outline manifested and expected features of the operator \eqref{Eq:Lapl formally}.

\subsubsection{Laplacian in Riemannian signature}\label{Sub:Lapl Riem} The Laplacian  is \emph{the} ubiquitous object in the study of Riemannian manifolds and more general metric measure spaces. The interplay of analysis, probability, and geometry it supports flourishes in the presence of classical or synthetic  lower bounds on the Ricci curvature (in the sense of Sturm \cite{sturm2006-i,sturm2006-ii} and Lott--Villani \cite{lott-villani2009}), often based on the Laplace comparison theorem and Bochner's  inequality. 
Classically, the Laplace comparison theorem (cf.~the book of Cheeger--Ebin \cite{cheeger-ebin1975} for Riemannian comparison theory) gives an upper bound on the Laplacian of a \emph{distance function} in terms of geometric data. This is relevant e.g.~for Cheeger--Gromoll's splitting theorem \cite{cheeger-gromoll1972} and the interrelation of mean curvature \cite{heintze-karcher1978,choe-fraser2018}, De Giorgi's minimal surfaces \cite{giusti1984}, and the isoperimetric problem \cite{cavalletti-mondino2017-isoperimetric,antonelli-pasqualetto-pozzetta-semola2022,mondino-semola2023+} (see also Caffarelli--Sire's  survey \cite{caffarelli-sire2020}).

As well-known, at cut points distance functions cease to be smooth.  Although the cut locus typically has measure zero, a nonsmooth interpretation of their Laplacians and said bounds on them is needed across this degeneracy for global considerations. Traditionally, this is tackled in the barrier sense of Calabi \cite{calabi1958}, distributionally by Cheeger--Gromoll \cite{cheeger-gromoll1972}, and in the viscosity sense of Crandall--Lions \cite{crandall-lions1983} (see also Wu \cite{wu1979}). The intermediate approach relies on the integration by parts formula, while the two extremal viewpoints come from the maximum principle. Its availability follows from   \emph{ellipticity}, a key feature of the Laplacian in Riemannian signature.

Via the more flexible toolkit from metric measure geometry, several related singular Laplacians for distance functions and Laplace comparison theorems have been proposed over time. Two  distributional approaches based upon the integration by parts formula will be most influential to our work.
\begin{itemize}
\item Gigli \cite{gigli2015} employed a metric form of the Legendre--Fenchel duality of cotangent and tangent spaces (Lorentzified by Beran et al. \cite{beran-braun-calisti-gigli-mccann-ohanyan-rott-samann+-}). His Laplacian, meaningful on general metric measure spaces, is a priori multivalued, as the ``integration by parts formula'' may generally fail to be an equality. Yet, he pointed out the vast class of \emph{infinitesimally strictly convex} metric measure spaces containing e.g.~Finsler manifolds which do not exhibit this ambiguity. Gigli's comparison theorem for CD spaces \cite{gigli2015} was  a key  ingredient for his splitting theorem for RCD spaces \cite{gigli2013}. 
\item The approach of \cite{gigli2015} was adapted by  Cavalletti--Mondino \cite{cavalletti-mondino2020-new}. They combined \cite{gigli2015} with  their localization paradigm for CD spaces \cite{cavalletti-mondino2017-isoperimetric} (Lorentzified by them \cite{cavalletti-mondino2020}). By reducing the task of ``removing the derivative from the test function'' to a one-dimensional problem, they derived exact representation formulas for Laplacians of distance functions. Using these, \cite{cavalletti-mondino2020-new} generalized upper and established \emph{lower} comparison inequalities. The precise behavior of the considered Laplacians on the cut locus can be read off from their formulas. This adds a notable value to the mere information a priori provided by the Laplace comparison theorem that the distributional Laplacian in question has a sign on its singular set.
\end{itemize}

\subsubsection{D'Alembertian in Lorentzian signature}\label{Sub:DAL LOR} Lorentzian manifolds constitute the default setting of mathematical general relativity. Unlike \cref{Sub:Lapl Riem}, their induced ``Laplacian'', the  \emph{d'Alembertian}, has been considered far less with elliptic tools. Apart from Eschenburg's earlier work \cite{eschenburg1988}, comparison theory has been systematically studied by Treude \cite{treude2011}, Treude--Grant \cite{treude-grant2013}, Graf \cite{graf2016}, and Lu--Minguzzi--Ohta \cite{lu-minguzzi-ohta2022-range} in relative recency compared to the age of relativity theory. A seemingly exhaustive account on Bochner's technique in  the Lorentz\-ian case constitute the works of Lichnerowicz \cite{lichnerowicz1955-theorie-relativiste} and Stepanov \cite{stepanov1993} on relativistic  electromagnetism and ideal fluids, Romero--Sánchez \cite{romero-sanchez1995,romero-sanchez1998} on con\-formal symmetries, and Galloway \cite{galloway1989-splitting} within his splitting theorem. 

Arguably, the main issue is the indefinite signature of the metric tensor. Consequently, the d'Alembertian is a \emph{hyperbolic} operator. 
Therefore, until the contribution of Beran et al. \cite{beran-braun-calisti-gigli-mccann-ohanyan-rott-samann+-}, no control over the timelike cut locus was available in the literature. A further drawback is maximum principle techniques only apply after restricting the d'Alembertian in question to spacelike hypersurfaces (where it becomes the intrinsic Riemannian Laplacian), as e.g.~discussed by Galloway \cite{galloway1989-splitting}. This process requires  nontrivial \emph{a priori} regularity of the function in question and its level sets; conceptually, a certain degree of smoothness is usually a \emph{consequence} rather than a hypothesis of  elliptic regularity theory. 

On the other hand, \cref{Sub:Lapl Riem} serves as a blueprint for applications in Lorentzian geometry. Ricci curvature naturally enters through the Einstein equations \cite{rendall2002}; lower bounds on it correspond to energy conditions, which become especially relevant when the precise matter distribution in spacetime is unknown. Mean curvature relates to the d'Alembertian of  Lorentz distance functions \cite{treude2011,treude-grant2013}, which are well-known to lose smoothness at their timelike cut loci.  Moreover, it appears as an initial condition in the classical singularity theorems of general relativity \cite{penrose1965,hawking1967,hawking-penrose1970}.  \emph{Constant} mean curvature surfaces are simple solutions to the Einstein constraint equations, cf.~Andersson-Moncrief  \cite{andersson-moncrief2004}.  Their existence is a characteristic of Bartnik's splitting conjecture \cite{bartnik1988} (see also Galloway's survey \cite{galloway2019}). The foliation of such surfaces defines the geometric center of mass of Huisken--Yau \cite{huisken-yau1996}.

\subsubsection{A nonlinear yet elliptic variant of the d'Alembertian} In our recent work \cite{beran-braun-calisti-gigli-mccann-ohanyan-rott-samann+-} with Beran et al., inter alia a weak formulation of the estimate ``$\Box_\PP\phi \leq \mu$'' was proven. Here, $\PP$ is a nonzero number less than one, $\Box_\PP$ is formally from \eqref{Eq:Lapl formally}, $\phi$ denotes a Kantorovich potential for a Lorentzian optimal transport problem \cite{eckstein-miller2017,suhr2018-theory,mccann2020,mondino-suhr2022,cavalletti-mondino2020} (cf.~\cref{Sub:Lor opt tr} below), and $\mu$ forms a signed measure. From this, we derived the existence of a  distributional $p$-d'Alembertian of $\phi$ (and, by using the chain rule, of the Lorentz distance function $\u$ from a point) by \emph{abstract} representation theorems for nonnegative distributions. This holds in a  synthetic setting as pioneered by Kunzinger--Sämann  \cite{kunzinger-samann2018}  and Cavalletti--Mondino \cite{cavalletti-mondino2020-new} detailed in \Cref{Sub:Non dal,Sub:TMCP,Sub:TCD}, which largely exceeds classical Lorentz spacetimes. 

The latter machinery --- using tools from optimal transport --- is known to characterize the strong energy condition of Hawking--Penrose \cite{penrose1965,hawking1967,hawking-penrose1970} and solutions to the Einstein equations by contributions of McCann \cite{mccann2020} and Mondino--Suhr \cite{mondino-suhr2022}, respectively. The work \cite{mondino-suhr2022} already noted the linearization of $\Box_\PP$ to appear  in the second derivative of the Boltzmann entropy along displacement interpolations.

Several adjacent observations motivate a more systematic study of the $\PP$-d'\!Alembertian \eqref{Eq:Lapl formally} as suggested by Mondino--Suhr \cite{mondino-suhr2022} and Beran et al.~\cite{beran-braun-calisti-gigli-mccann-ohanyan-rott-samann+-}.
\begin{itemize}
\item \textbf{Geometry.} The functions $\u$ we mainly (but not exclusively) focus on have unit magnitude $\vert\rmd\u\vert=1$ on the relevant set, cf.~\cref{Cor:Constant slope}. Consequently, if $\u$ is a Lorentz distance function, irrespective of $\PP$ its $\PP$-d'Alembertian becomes the usual d'Alembertian and inherits its geometric relevance from \cref{Sub:DAL LOR}. 
\item \textbf{Legendre--Fenchel duality.} The study of $\Box_\PP$  naturally continues the seminal works of McCann  \cite{mccann2020} and Mondino--Suhr \cite{mondino-suhr2022}. By replacing the Lagrangian $\vert \dot\gamma\vert$ in the customary length functional by a strictly concave dependence $\smash{\QQ^{-1}\,\vert\dot\gamma\vert^\QQ}$  (where $\PP$ and $\QQ$ are mutually conjugate), they gained enhanced properties of optimal mass transports. An expert reader will realize that the quantity $\smash{\vert\rmd\u\vert^{\PP-2}\,\nabla\u}$ from \eqref{Eq:Lapl formally} is the Legendre transform of the differential $\rmd\u$ of $\u$ induced by the Lagrangian $\smash{\QQ^{-1}\,\vert\dot\gamma\vert^\QQ}$. This aligns with the  Laplacian from Hamiltonian geometry\footnote{However, we point out the Lagrangians of \cite{mccann2020,mondino-suhr2022} are no special cases of this Hamiltonian formalism, since they degenerate as $\smash{\dot{\gamma}}$ approaches the light cone.}  \cite{agrachev2008}, where Laplace comparison theorems and  Bochner's identity are due to Ohta \cite{ohta2014}. 
\item \textbf{Analysis.} As mentioned, Mondino--Suhr \cite{mondino-suhr2022} noted the linearization of $\Box_\PP$ shows up in the differentiation of the Boltzmann entropy.  They also proved an analog of Ohta's Bochner identity. This relates $\Box_\PP$ to the influential Otto calculus \cite{otto-villani2000}.
\item \textbf{Flexibility.} The tools from Lorentzian optimal transport we also use are robust enough to avoid regularity issues. For instance, the weak comparison estimates of Beran et al.~\cite{beran-braun-calisti-gigli-mccann-ohanyan-rott-samann+-} hold across timelike cut loci. On a broader scale, this toolkit  has recently helped Cavalletti--Mondino \cite{cavalletti-mondino2020,cavalletti-mondino2024} and Braun--McCann \cite{braun-mccann2023} to establish sharp geometric inequalities, synthetic Hawking-type singularity theorems, and isoperimetric inequalities. We also point out the  synthetic descriptions of the null energy condition by Ketterer \cite{ketterer2024}, McCann \cite{mccann2023-null}, and Cavalletti--Manini--Mondino \cite{cavalletti-manini-mondino2024+}.
\item \textbf{Regularity.} In the specified range $\PP\in (-\infty,1)\setminus\{0\}$, $\Box_\PP$ exhibits elliptic characteristics in two ways. Up to a prefactor, it is the  variational derivative of a convex functional on the cone of causal functions, cf.~Beran et al.~\cite{beran-braun-calisti-gigli-mccann-ohanyan-rott-samann+-}, \eqref{Eq:q-Cheeger}, and \cref{Re:Brezis}. Also, its linearization around a suitable function \emph{is} in fact   elliptic. In Braun--Gigli--McCann--Ohanyan--Sämann \cite{braun-gigli-mccann-ohanyan-samann+} (abbreviated by \emph{Braun et al.} from now on), this allows us to give a simplified proof of the classical Lorentzian splitting theorem \cite{eschenburg1988,galloway1989-splitting,newman1990} and a new extension to low regularity using techniques from elliptic PDE. On a broader scale, for RCD spaces heat flow regularization  \cite{ambrosio-gigli-savare2014-calculus,ambrosio-gigli-savare2014-riemannian,ambrosio-gigli-savare2015-bakry,savare2014} based on ellipticity is crucial  e.g.~in developing a second order calculus as accomplished by Gigli  \cite{gigli2018}.
\end{itemize}

\subsection{Contributions}\label{Sub:Contri} 

\subsubsection{Exact representation formulas for d'Alembertians}\label{Sub:Ex1} To fix the main ideas, we first stick to the case of a globally hyperbolic Finsler spacetime $\mms$ endowed with a smooth measure $\meas$; their basic notions are summarized in \cref{Sub:LorentzFinsler}. Then $\meas$ induces the $N$-Ricci curvature $\smash{\Ric^N_\meas}$, where $N\in (1,\infty)$ is an upper bound for $\dim\mms$. For simplicity, assume $\smash{\Ric^N_\meas}$ is nonnegative in all timelike directions (relating to the \emph{strong energy condition} of Hawking--Penrose  \cite{penrose1965,hawking1967,hawking-penrose1970}). If $\mms$ is Lorentzian with metric tensor $\Rmet$, $\meas := \vol_\Rmet$, and $N:=\dim\mms$, then $\smash{\Ric^N_\meas}$ is the usual Ricci tensor $\Ric_g$, cf.~\cref{Ex:TCD}. Our representation formulas will already be new in this case.

Let $\Sigma$ denote a smooth, compact, achronal spacelike hyper\-surface with induced Lorentz distance function $l_\Sigma$, see \cref{Sec:Signed}. The negative gradient flow  of $l_\Sigma$ foliates $\smash{I^+(\Sigma)}$ minus the future timelike cut locus $\smash{\TCut^+(\Sigma)}$  into normal geodesic rays $\mms_\alpha$  which pass    through $\alpha\in Q$. Here $Q$ is an index set (which, in the current case, can be taken to be $\Sigma$, cf. \cref{Re:Compat smooth}). We  write  $a_\alpha$ and $b_\alpha$ for the unique intersection points of $\cl\,\mms_\alpha$ with $\Sigma$ and  $\TCut^+(\Sigma)$ (if existent), respectively.

As pioneered in the Lorentzian context by Cavalletti--Mondino \cite{cavalletti-mondino2020}, later refined by them  \cite{cavalletti-mondino2024} and Braun--McCann \cite{braun-mccann2023} (summarized in \cref{Sub:Disintegr}), this foliation induces a disintegration of the restriction of $\meas$ to $\smash{I^+(\Sigma)}$, viz.
\begin{align*}
\meas \mres I^+(\Sigma) = \int_Q\meas_\alpha\d\q(\alpha).
\end{align*}
Here $\q$ is a Borel probability measure on $Q$ (which, in the current case, can be taken as a density times the  area measure $\smash{\scrH_\Sigma^{\dim\mms-1}}$, cf.~\cref{Re:Compat smooth}). It is merely used to label the rays; geometric information enters as follows. For $\q$-a.e.~$\alpha\in Q$, $\meas_\alpha$ is a Radon measure on $\mms_\alpha$ with the following properties recalled in \cref{Th:Disintegration}.
\begin{itemize}
\item It is absolutely continuous with respect to the Lebesgue measure $\Leb_\alpha^1$ on $\mms_\alpha$ with Radon--Nikodým density $h_\alpha$ (interpreted as an ``area element'').
\item It inherits the ambient nonnegative curvature, viz.~$\smash{h_\alpha^{1/(N-1)}}$ is concave.
\end{itemize}
Moreover, $h_\alpha$ is locally Lipschitz continuous and positive. Consequently, its logarithmic derivative $\smash{(\log h_\alpha)'}$ along the negative gradient flow  exists $\meas_\alpha$-a.e. 

Based on these preparations, let us now introduce a special case of our main result: an exact representation formula for the d'Alembertian of $l_\Sigma$ together with an integration by parts formula. After some preparations, its proof is given in \cref{Sub:Conclusion}.

\begin{theorem}[Finslerian d'Alembertian for Lorentz distance functions]\label{Th:Main I Finsler} Let $\mms$ be a globally hyperbolic Finsler spacetime with a smooth measure $\meas$ on it. Suppose its $N$-Ricci curvature is nonnegative in all timelike directions, where $N\in (1,\infty)$. Let $\Sigma$ be a smooth, compact, achronal space\-like hypersurface. Moreover, let $\q$ designate a disintegration of $\smash{\meas\mres I^+(\Sigma)}$ as above. Then the assignment 
\begin{align}\label{Eq:Assignmentttt}
\BOX l_\Sigma &:= \int_Q (\log h_\alpha)'\,\meas_\alpha\d\q(\alpha) - \int_Q h_\alpha(b_\alpha)\,\delta_{b_\alpha}\d\q(\alpha)
\end{align}
constitutes an element of the topological dual space $\smash{\Cont_\comp(I^+(\Sigma))'}$. 

More strongly, it forms a \textnormal{(}nonrelabeled\textnormal{)} generalized signed Radon measure on $I^+(\Sigma)$ which satisfies the following integration by parts formula for every $\smash{f\in \Cont_\comp^\infty(I^+(\Sigma))}$:
\begin{align*}
\int_{\mms} \rmd f(\nabla l_\Sigma)\d\meas = -\int_\mms f\d\BOX l_\Sigma.
\end{align*}
\end{theorem}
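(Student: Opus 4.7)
The plan is to reduce the multidimensional integration by parts to a one-dimensional problem on each ray $\mms_\alpha$ using the disintegration, and then to upgrade the resulting distribution to a signed Radon measure by exploiting the concavity of $h_\alpha^{1/(N-1)}$.

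First, parameterize each ray by arc length as $t \mapsto \gamma_\alpha(t)$ starting from $a_\alpha \in \Sigma$ and note that the restriction of $\nabla l_\Sigma$ to $\mms_\alpha$ coincides with $\dot\gamma_\alpha$. For $f \in \Cont_\comp^\infty(I^+(\Sigma))$, Fubini combined with the disintegration gives
\begin{align*}
\int_\mms \rmd f(\nabla l_\Sigma)\d\meas = \int_Q \int_0^{\ell_\alpha} (f \circ \gamma_\alpha)'(t)\, h_\alpha(t)\d t\d\q(\alpha),
\end{align*}
where $\ell_\alpha := l_\Sigma(b_\alpha)$ if $b_\alpha$ exists and $\ell_\alpha := +\infty$ otherwise. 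The local Lipschitz continuity and positivity of $h_\alpha$ justify one-dimensional integration by parts on $[0,\ell_\alpha]$. The boundary term at $t=0$ vanishes since $a_\alpha\in\Sigma$ lies outside the open set $I^+(\Sigma)$, so $f(a_\alpha)=0$; the boundary term at $t=\ell_\alpha$ equals $f(b_\alpha)\,h_\alpha(b_\alpha)$ when $b_\alpha$ exists, and vanishes by compact support of $f$ otherwise. Substituting $\meas_\alpha = h_\alpha\,\Leb_\alpha^1$ in the interior term and integrating over $\q$ yields exactly $-\int f \d\BOX l_\Sigma$ as read off from \eqref{Eq:Assignmentttt}, which simultaneously verifies the integration by parts formula and shows that $\BOX l_\Sigma$ defines a continuous linear functional on $\Cont_\comp(I^+(\Sigma))$ via its action on smooth test functions (and then by density).

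Second, to promote $\BOX l_\Sigma$ to a generalized signed Radon measure, decompose it into its absolutely continuous and singular components. The absolutely continuous part has density $(\log h_\alpha)'$ along each ray; by the nonnegative $N$-Ricci bound, $h_\alpha^{1/(N-1)}$ is concave, so $(\log h_\alpha)'$ is nonincreasing in $t$, bounded above on every compact subray, and its negative part is integrable against $h_\alpha\d t$. Joint Borel measurability in $(\alpha,t)$ is inherited from the disintegration structure recalled in \cref{Th:Disintegration}. The singular part is the pushforward under the Borel map $\alpha \mapsto b_\alpha$ of the finite measure $h_\alpha(b_\alpha)\d\q(\alpha)$ on $Q$, concentrated on $\TCut^+(\Sigma)$.

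The main obstacle will be establishing local finiteness of the singular component on compact subsets of $\TCut^+(\Sigma)$, which requires a uniform bound on $\alpha \mapsto h_\alpha(b_\alpha)$ together with Borel measurability of this map and of $\alpha \mapsto b_\alpha$. The uniform bound I would obtain by comparing $h_\alpha$ against its concave $(N-1)$-envelope up to the cut time via a Heintze--Karcher-type estimate, exploiting smoothness and compactness of $\Sigma$ to control the initial area element. The measurability and selection issues, as well as the $\q$-essential existence of $(\log h_\alpha)'$, are precisely the analytic core for which the Cavalletti--Mondino localization framework, adapted to the Lorentzian setting by Cavalletti--Mondino and Braun--McCann, provides the required infrastructure.
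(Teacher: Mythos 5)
Your computation is, at its core, the same one the paper performs: disintegrate $\meas\mres I^+(\Sigma)$ into rays, integrate by parts in one dimension against the conditional density $h_\alpha$, and control the two resulting terms by the MCP/CD density estimates. What you do differently is to run the argument entirely classically in the smooth Finsler setting, identifying $\rmd f(\nabla l_\Sigma)$ with $(f\circ\gamma_\alpha)'$ away from the closed, $\meas$-null cut locus. The paper instead obtains \cref{Th:Main I Finsler} as a specialization of \cref{Th:11} and \cref{Cor:22}: there only the one-sided inequality $\rmd^+f(\nabla\u)\le f'$ is available (\cref{Th:df vs f'}), the test objects are finite perturbations rather than smooth functions, the measure representation comes from dominating the functional by $\meas+\mu$ and invoking Riesz--Markov--Kakutani (which needs the topological local anti-Lipschitz property of \cref{Le:TF lv} to connect finite perturbations to $\Cont_\comp$), and the \emph{equality} in the integration by parts formula is finally extracted from smoothness of the Legendre transform, i.e.\ $\rmd^+f(\nabla l_\Sigma)=\rmd^-f(\nabla l_\Sigma)$ $\meas$-a.e. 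Your route is more elementary and self-contained in the smooth case; the paper's route buys the general metric measure spacetime theorem and, importantly, the identification of \eqref{Eq:Assignmentttt} with the abstract $\PP$-d'Alembertian of \cref{Def:DAlem} for every $\PP$, which your classical computation does not address.

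Two points to tighten. First, the parenthetical ``(and then by density)'' is not right as stated: the integration by parts identity bounds $\int_\mms f\d\BOX l_\Sigma$ by $\int_\mms\vert\rmd f(\nabla l_\Sigma)\vert\d\meas$, which is not a $\Vert f\Vert_\infty$-bound, so continuity on $\Cont_\comp(I^+(\Sigma))$ cannot be deduced from it by density; it must come from the separate estimates you outline afterwards. Those do work, but the key point you should make explicit is that on a compact $W\subset I^+(\Sigma)$ one has $l_\Sigma\ge\varepsilon>0$, and anchoring the density comparison at the footpoint on $\Sigma$ (so the comparison interval has length at least $\varepsilon$) yields $\int_{W\cap\mms_\alpha}\vert h_\alpha'\vert\d\Leb_\alpha^1\le C\varepsilon^{-1}\,\meas_\alpha[\,\cdot\,]$ and $h_\alpha(b_\alpha)\le C\varepsilon^{-1}\,\meas_\alpha[\,\cdot\,]$ \emph{uniformly} in $\alpha$ — exactly the content of \cref{Le:Bounds} as deployed in \cref{Le:Radon funct II}; without this uniformity the integral over $Q$ is not controlled. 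Second, the one-dimensional integration by parts needs $h_\alpha$ absolutely continuous up to the endpoints of the ray, not merely locally Lipschitz in the interior; this again requires the first-order integral bound of \cref{Le:Bounds}, as in the proof of \cref{Le:IBP II}. Neither issue is fatal, but both are where the actual analytic work sits.
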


Here and in the sequel, we set $h_\alpha(b_\alpha) := 0$ if $b_\alpha$ does not exist for $\alpha\in Q$. Moreover, by a ``generalized signed Radon measure'' on $\smash{I^+(\Sigma)}$ we will mean an element of $\smash{\Cont_\comp(I^+(\Sigma))'}$ which can be represented as the difference of two Radon measures, cf.~ \cref{Sub:Fixed}. This property is slightly weaker than being a signed Radon measure, since the respective ``positive and negative parts'' may both be infinite. 

The curvature hypothesis enters the conclusion of \Cref{Th:Main I Finsler} only via the regularity properties of the conditional densities. Unlike the Riemannian situation, as quantified by McCann \cite{mccann2023-null}, in Lorentzian geometry a locally uniform lower bound on the Ricci curvature in all timelike directions is not automatically satisfied,  cf.~\cref{Re:Smooothspa}.

In positive signature, the analog of \cref{Th:Main I Finsler}  (and its described extensions) --- i.e. exact representation formulas for the Laplacian of distance functions and more general $1$-Lipschitz functions for CD \cite{sturm2006-i,sturm2006-ii,lott-villani2009} or MCP \cite{sturm2006-ii,ohta2007-mcp} metric measure spaces --- was established by Cavalletti--Mondino \cite{cavalletti-mondino2020-new}   assuming Rajala--Sturm's essential non\-branching \cite{rajala-sturm2014}. They combined an approach of Gigli \cite{gigli2015} to  distributional Laplacians with their localization paradigm \cite{cavalletti-mondino2017-isoperimetric}. 

\cref{Th:Main I Finsler} describes the usual  d'Alembertian distributionally; no nonlinear version of it seems involved. Yet, qua our approach (cf.~\cref{Sub:Non dal} below) and \cref{Cor:Constant slope}, its claim should be read as ``for all  $\PP\in (-\infty,1)\setminus\{0\}$, \eqref{Eq:Assignmentttt} defines the $\PP$-d'Alembertian $\smash{\BOX_\PP l_\Sigma}$ with the stated properties''. In particular, the inherent integration by parts formula is
\begin{align*}
\int_\mms \rmd f(\nabla l_\Sigma)\,\vert\rmd l_\Sigma\vert^{\PP-2}\d\meas = -\int_\mms f\d\BOX_\PP l_\Sigma. 
\end{align*}

There are several remarkable observations around \cref{Th:Main I Finsler}. 
\begin{itemize}
\item \textbf{Novelty.} \cref{Th:Main I Finsler} seems to be the first instance in Lo\-rentzian geometry which provides a precise representation formula for the d'Alembertian (as a generalized signed Radon measure).
\item \textbf{Lebesgue decomposition.} By the disintegration formula, \eqref{Eq:Assignmentttt} decomposes into an $\meas$-absolutely continuous and an $\meas$-singular part. Both are  explicitly given. The $\meas$-singular part is concentrated on the set of future timelike cut points $b_\alpha$ of $\Sigma$, where $\alpha\in Q$. Notably, it is \emph{nonpositive}, which confirms the qualitative indication of this sign by the weak d'Alembert comparison theorem of Beran et al.~\cite{beran-braun-calisti-gigli-mccann-ohanyan-rott-samann+-}. In \cite{braun-gigli-mccann-ohanyan-samann+}, we provide an alternative qualitative explanation.
\item \textbf{Variational identity.} The integration by parts formula holds with an \emph{equality}. On Finsler spacetimes, the distributional d'Alembertian of $l_\Sigma$ is therefore  uniquely determined by this property. 
\end{itemize}

\subsubsection{Sharp comparison estimates}\label{Sub:Ex2} In the setting of \cref{Th:Main I Finsler}, we write the density of the $\meas$-absolutely continuous part of $\BOX l_\Sigma$ as $\Box l_\Sigma$. That is, for $\q$-a.e.~$\alpha\in Q$, 
\begin{align}\label{Eq:Delta log ha}
\Box l_\Sigma = (\log h_\alpha)'\quad\meas_\alpha\textnormal{-a.e.}
\end{align}

\begin{theorem}[Finslerian d'Alembert comparison I, see also \cref{Th:mmmmm}]\label{Th:Main II Finsler} Let $\mms$  be a globally hyperbolic Finsler spacetime with a smooth measure $\meas$ on it. Assume its induced $N$-Ricci curvature is nonnegative in all timelike directions, where $N\in (1,\infty)$.  Let $\Sigma$ be a smooth, compact, achronal space\-like hypersurface. Moreover, let $\q$ be a disintegration of $\meas \mres I^+(\Sigma)$ as above. Then for $\q$-a.e.~$\alpha\in Q$,
\begin{align*}
-(N-1)\,\frac{1}{l(\cdot,a_\alpha)} \leq \Box l_\Sigma \leq (N-1)\,\frac{1}{l_\Sigma}\quad \meas_\alpha\textnormal{-a.e.}
\end{align*}
\end{theorem}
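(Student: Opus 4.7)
My plan is to reduce the asserted comparison estimate, which must hold $\meas_\alpha$-a.e.\ along each ray $\mms_\alpha$, to an elementary one-dimensional fact about concave functions. The main inputs are the pointwise identification \eqref{Eq:Delta log ha} and the concavity of $h_\alpha^{1/(N-1)}$ that comes with the disintegration of $\meas\mres I^+(\Sigma)$ under the nonnegative $N$-Ricci curvature hypothesis, as already recorded in the bulleted description following the disintegration formula. Parametrize each ray $\mms_\alpha$ by the value $t := l_\Sigma \in [0,L_\alpha]$, so that $a_\alpha$ corresponds to $t=0$ and $b_\alpha$ (whenever it exists) to $t=L_\alpha$, and set $H_\alpha := h_\alpha^{1/(N-1)}$.

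On this interval $H_\alpha$ is concave, locally Lipschitz, and strictly positive on the interior. Consequently $H_\alpha$ is differentiable at $\Leb^1$-a.e.\ point of $(0,L_\alpha)$, and at every such $t$ the supporting-line characterization of concavity
\begin{align*}
H_\alpha(s) \leq H_\alpha(t) + H_\alpha'(t)\,(s-t)
\end{align*}
holds for all $s \in [0,L_\alpha]$. Evaluating at $s=0$ and $s=L_\alpha$, using continuity together with interior positivity to ensure $H_\alpha(0), H_\alpha(L_\alpha) \geq 0$, and dividing by $H_\alpha(t) > 0$, one obtains the two-sided estimate
\begin{align*}
-\frac{1}{L_\alpha - t} \;\leq\; \frac{H_\alpha'(t)}{H_\alpha(t)} \;\leq\; \frac{1}{t},
\end{align*}
with the convention $1/\infty := 0$ covering rays on which $b_\alpha$ does not exist.

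Multiplying through by $N-1$ and using the chain rule $(\log h_\alpha)' = (N-1)\,H_\alpha'/H_\alpha$ together with \eqref{Eq:Delta log ha} translates the display above directly into the claimed sandwich for $\Box l_\Sigma$, once one identifies $t$ with $l_\Sigma$ at the given point of the ray and $L_\alpha - t$ with the Lorentz distance along the ray from the point to the endpoint indexed by $\alpha$. The only subtlety I anticipate is organizing the measurable-selection bookkeeping so that differentiability of $H_\alpha$, its strict interior positivity, its concavity, and nonnegativity of its endpoint values hold simultaneously for a common full-$\q$-measure choice of $\alpha$; this is routine in view of the joint measurability packaged into \Cref{Th:Disintegration}, and I do not expect any genuine obstacle beyond it. The argument is intentionally local along each ray: no smoothness of $\Sigma$ or of the ambient geometry is invoked beyond what is already embedded in the ray decomposition, which is why the same strategy will continue to work in the general metric measure setting treated by the companion statement \Cref{Th:mmmmm}.
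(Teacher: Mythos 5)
Your proof is correct and follows essentially the same route as the paper: the claim is by definition a statement about the density $(\log h_\alpha)'$ via \eqref{Eq:Delta log ha}, and the paper obtains the two-sided bound from the one-dimensional density comparison (\cref{Le:logarithmic derivative} together with the $\kappa=0$ case of \cref{Re:Logarithmic derivative}), which for nonnegative curvature is exactly your supporting-line estimate for the concave function $h_\alpha^{1/(N-1)}$ evaluated at the two endpoints of the ray. One small but worthwhile observation: your argument produces the lower bound in terms of the distance $l(\cdot,b_\alpha)$ to the \emph{final} (cut) point of the ray, which is what the body of the paper states (cf.\ \cref{Cor:1515,Cor:M}) and what the surrounding text intends ("from below by the distance to its cut point"); the occurrence of $a_\alpha$ in the displayed statement of \cref{Th:Main II Finsler} is a labeling slip, and your version is the correct one.
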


By the nonpositivity of the contribution of $\smash{\TCut^+(\Sigma)}$ to $\BOX l_\Sigma$, this implies
\begin{align*}
\BOX l_\Sigma \leq (N-1)\,\frac{1}{l_\Sigma}\,\meas\mres  I^+(\Sigma).
\end{align*} 

This follows from \eqref{Eq:Assignmentttt} by noting the concavity properties of the conditional densities transfer to estimates on their logarithmic derivatives, cf.~\cref{Le:logarithmic derivative} and \cref{Re:Logarithmic derivative}. Such density bounds go back to Cavalletti \cite{cavalletti2014-monge}. Inter alia, they were employed by Cavalletti--Mondino \cite{cavalletti-mondino2020-new} to establish the predecessor of \cref{Th:Main II Finsler} (and  \cref{Th:mmmmm} below) for essentially nonbranching MCP metric measure spaces. 

In the Lorentzian case, these density bounds originate in Cavalletti--Mondino \cite{cavalletti-mondino2020,cavalletti-mondino2024} and have been extended for variable curvature bounds by Braun--McCann \cite{braun-mccann2023}.

Even in the smooth case, \cref{Th:Main II Finsler} extends the hypersurface d'Alembert comparison results of Eschenburg \cite{eschenburg1988}, Treude \cite{treude2011}, Treude--Grant \cite{treude-grant2013}, Graf \cite{graf2016}, and Lu--Minguzzi--Ohta \cite{lu-minguzzi-ohta2022-range} which give  upper bounds on the d'Alembertian of $l_\Sigma$ only outside  $\smash{\TCut^+(\Sigma)}$; \cite{graf2016} studied mean curvature comparison geometry for $\smash{\Cont^{1,1}}$-Lorentz spacetimes by approximation. With the main result of Braun--Calisti \cite{braun-calisti2023} (cf.~\cref{Sub:Non dal} below), \cref{Th:Main II Finsler} extends Graf's results to $\smash{\Cont^1}$-regularity plus Cavalletti--Mondino's timelike nonbranching \cite{cavalletti-mondino2020}. Conjecturally, \cite{braun-calisti2023} holds in the Lipschitz case, which is optimal from the viewpoint of a generally well-behaved causality, as pointed out by Chru\'sciel--Grant   \cite{chrusciel-grant2012}. The sharpness of these (hence our) results is known, cf.~Treude--Grant \cite{treude-grant2013} for the identification of the model spaces. When $\Sigma$ consists of a point, the upper bound of \cref{Th:Main II Finsler} has been  established with different yet related  techniques by Beran et al.~\cite{beran-braun-calisti-gigli-mccann-ohanyan-rott-samann+-}. 

We point out that \cref{Th:Main II Finsler} also establishes a \emph{lower} bound. The latter estimates  the d'Alembertian $\Box l_\Sigma$ from below by the distance  to its cut point with respect to $l$ (if existent, cf.~\cref{Re:Unbounded I,Re:Unbounded II} for the correct interpretation else). An intuition thank\-fully pointed out to me by Nicola Gigli why a lower curvature bound entails lower (instead of the customary upper) comparison estimates  based on the maximum principle is found in Cavalletti--Mondino \cite{cavalletti-mondino2020-new}*{Rem.~4.11}.

The last inequality of \cref{Th:Main II Finsler} reflects the well-known fact that the d'Alembertian of $l_\Sigma$ admits per se no uniform control near $\Sigma$. Certain powers of Lorentz distance functions behave better  ``near the origin'', as for similar features  of the squared distance function in Riemannian geometry, cf.~the book of Cheeger--Ebin   \cite{cheeger-ebin1975}. This induces  another class of comparison results we show; it relates somewhat more to the shape of the operator \eqref{Eq:Lapl formally}.

\begin{theorem}[Finslerian d'Alembert comparison II, see also \cref{Th:Main II Finsler}]\label{Th:mmmmm} Let $\mms$ be a globally hyperbolic Finsler spacetime endowed with a smooth measure $\meas$. Suppose its $N$-Ricci curvature is nonnegative in all timelike directions, where $N\in(1,\infty)$. Let $\Sigma$ be a smooth, compact, achronal space\-like hypersurface. Let $\q$ designate a disintegration of $\smash{\meas\mres I^+(\Sigma)}$ as above. Lastly, let $\PP$ and $\QQ$ be mutually conjugate nonzero exponents less than $1$. Then $\q$-a.e.~$\alpha\in Q$ satisfies
\begin{align*}
1- (N-1)\,l_\Sigma\,\frac{1}{l(\cdot,a_\alpha)} \leq \Box_\PP [\QQ^{-1}\,l_\Sigma^\QQ] \leq N\quad\meas_\alpha\textnormal{-a.e.}
\end{align*}
\end{theorem}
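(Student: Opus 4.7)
The plan is to use the same ray-wise disintegration of $\meas\mres I^+(\Sigma)$ that underlies \cref{Th:Main I Finsler,Th:Main II Finsler} to reduce the bounds on $\Box_\PP[\QQ^{-1}\,l_\Sigma^\QQ]$ to a one-dimensional computation on each ray $\mms_\alpha$, and then to invoke \cref{Th:Main II Finsler} to translate the known bounds on $\Box l_\Sigma$ into the desired ones. The cleanness of the outcome rests on the algebraic identity $(\PP-1)(\QQ-1)=1$ satisfied by mutually conjugate exponents, which is precisely what makes the $\PP$-d'Alembertian of $\QQ^{-1}\,l_\Sigma^\QQ$ collapse to a linear expression in $\Box l_\Sigma$.

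For $\q$-a.e.~$\alpha\in Q$, I would first establish the ray-wise pointwise identity
\begin{align*}
\Box_\PP[\QQ^{-1}\,l_\Sigma^\QQ] = 1 + l_\Sigma\,\Box l_\Sigma\quad \meas_\alpha\textnormal{-a.e.}
\end{align*}
Parametrizing $\mms_\alpha$ by $t=l_\Sigma$ starting from $a_\alpha$, the absolutely continuous part of the one-dimensional $\PP$-d'Alembertian of $f(t):=\QQ^{-1}\,t^\QQ$ with respect to the conditional measure $h_\alpha(t)\,\rmd t$ equals $h_\alpha^{-1}\,(h_\alpha\,\vert f'\vert^{\PP-2}\,f')'$. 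Since $f'(t)=t^{\QQ-1}$, conjugacy collapses the exponent: $\vert f'(t)\vert^{\PP-2}\,f'(t) = t^{(\QQ-1)(\PP-1)} = t$. Hence $h_\alpha^{-1}\,(h_\alpha\,t)' = 1 + t\,(\log h_\alpha)'$, and by \eqref{Eq:Delta log ha} this equals $1 + l_\Sigma\,\Box l_\Sigma$. Multiplying the bounds from \cref{Th:Main II Finsler} by $l_\Sigma>0$ then yields the two stated inequalities: $\Box l_\Sigma \leq (N-1)/l_\Sigma$ gives $\Box_\PP[\QQ^{-1}\,l_\Sigma^\QQ] \leq N$, and $\Box l_\Sigma \geq -(N-1)/l(\cdot,a_\alpha)$ gives $\Box_\PP[\QQ^{-1}\,l_\Sigma^\QQ] \geq 1 - (N-1)\,l_\Sigma/l(\cdot,a_\alpha)$.

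The main technical point is to legitimize the one-dimensional reduction for the composite function $\QQ^{-1}\,l_\Sigma^\QQ$ rather than just $l_\Sigma$. This should follow by adapting the construction behind \cref{Th:Main I Finsler}: the same disintegration decouples the ambient $\PP$-d'Alembertian along the foliation, and the chain rule for $\Box_\PP$ applied to a strictly increasing power of $l_\Sigma$ is rigorous along each ray because $l_\Sigma$ parametrizes $\mms_\alpha$ by Lorentz arc length with $\vert \rmd l_\Sigma\vert = 1$, a feature which is inert under the choice of $\PP$ and which is exactly what forces $\Box_\PP l_\Sigma = \Box l_\Sigma$ in the first place. Any singular contribution picked up at $b_\alpha$ by the composition is nonpositive in sign and does not affect the $\meas_\alpha$-a.e.~pointwise bounds claimed.
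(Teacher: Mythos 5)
Your proposal is correct and follows essentially the same route as the paper: the paper's proof (via \cref{Th:11} and \cref{Cor:11}, specialized in \cref{Sub:Conclusion}) likewise uses the chain-rule collapse $\vert l_\Sigma\vert^{(\QQ-1)(\PP-1)}=\vert l_\Sigma\vert$ to identify the absolutely continuous part of $\BOX_\PP\u_\QQ$ with $1+l_\Sigma\,(\log h_\alpha)'=1+l_\Sigma\,\Box l_\Sigma$ on each ray, and then feeds in the MCP-density bounds on $(\log h_\alpha)'$ (equivalently the bounds of \cref{Th:Main II Finsler}), noting $l(a_\alpha,\cdot)=l_\Sigma$ to get the upper bound $N$. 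Your observation that the singular contribution at $b_\alpha$ is nonpositive and irrelevant for the $\meas_\alpha$-a.e.~bounds also matches the paper.
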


These enhanced inequalities  help upgrade the distributional d'Alembertian from \cref{Th:Main I Finsler} to a generalized  signed Radon measure, cf.~\cref{Th:11}. This passage through comparison results to get ``measure-valued'' Laplacians  is customary in metric measure geometry \cite{kuwae-shioya2007,gigli2015,cavalletti-mondino2020-new} and was already used in the Lorentzian setting by Beran et al.  \cite{beran-braun-calisti-gigli-mccann-ohanyan-rott-samann+-}. 

As further described in \cref{Sub:Non dal}, we also  establish  representation formulas for such power functions. In turn, these entail
\begin{align*}
\BOX_\PP [\QQ^{-1}\,l_\Sigma^\QQ] \leq N\,\meas\mres I^+(\Sigma).
\end{align*}
This extends our results with Beran et al.~\cite{beran-braun-calisti-gigli-mccann-ohanyan-rott-samann+-} for $\Sigma$ being a singleton.

\subsubsection{Extensions and conceptions}\label{Sub:Non dal}  \cref{Th:Main I Finsler,Th:Main II Finsler,Th:mmmmm} have wide    generalizations to the recent advances to mathematical general relativity through optimal transport and metric measure geometry. Such an abstract approach seems highly desirable from various perspectives, as outlined in the introductions of Kunzinger--Sämann \cite{kunzinger-samann2018} and Cavalletti--Mondino \cite{cavalletti-mondino2020} and detailed in the overviews of the latter \cite{cavalletti-mondino2022-review} and Steinbauer \cite{steinbauer2023} in honor of Sir Roger Penrose's 2020 Nobel Prize in Physics.

Our setting is that of a \emph{metric measure spacetime} $\scrM$ \cite{kunzinger-samann2018,cavalletti-mondino2020,minguzzi-suhr2022,mccann2023-null,braun-mccann2023,beran-braun-calisti-gigli-mccann-ohanyan-rott-samann+-}\footnote{The original ``metric'' generalization of  Lorentz spacetimes are the so-called \emph{Lorentzian length spaces} of Kunzinger--Sämann \cite{kunzinger-samann2018}. The terminology ``metric measure spacetimes'' was proposed by McCann \cite{mccann2023-null}. The current number of variations of \cite{kunzinger-samann2018} comes from the diversity of causality phenomena in Lorentzian geometry,  e.g.~Minguzzi \cite{minguzzi2019-causality}, which need to be selected on a case-by-case basis. Here, we follow the setup of Braun--McCann \cite{braun-mccann2023} which builds on Minguzzi--Suhr's bounded Lorentzian metric spaces \cite{minguzzi-suhr2022}, but in either framework our methods can be developed with some modifications.}. This is a triple $(\mms,l,\meas)$ consisting of 
\begin{itemize}
\item a possibly nonsmooth space $\mms$ endowed with
\item a signed time separation function $l$ and
\item a reference Radon measure $\meas$
\end{itemize}
satisfying certain compatibility conditions specialized in \cref{Sub:MMS}. We assume the so-called \emph{entropic timelike measure contraction property} $\smash{\TMCP^e(k,N)}$, where $k$ is a lower semicontinuous function  on $\mms$ and $N\in (1,\infty)$. Occasionally (cf.~\cref{Sub:BOCH}), we also consider the stronger \emph{entropic timelike curvature-dimension condition} $\smash{\TCD_\beta^e(k,N)}$, where $\beta$ forms a nonzero transport exponent less than one. Both  synthesize 
\begin{itemize}
\item the ``timelike Ricci curvature'' of $\scrM$ being bounded from below by $k$, and 
\item the ``dimension'' of $\scrM$ being bounded from above by $N$.
\end{itemize}
Inspired by the seminal contributions of McCann \cite{mccann2020} and Mondino--Suhr \cite{mondino-suhr2022}, they were introduced by Cavalletti--Mondino \cite{cavalletti-mondino2020} for constant $k$; see Braun \cite{braun2023-renyi} for an alternative approach and Beran et al.~\cite{beran-braun-calisti-gigli-mccann-ohanyan-rott-samann+-} for negative $\beta$. For variable $k$, they were generalized by Braun--McCann \cite{braun-mccann2023}. See \cref{Sub:TMCP,Sub:TCD} for   details. Both conditions include  $\Cont^2$  \cite{mccann2020,mondino-suhr2022} and $\Cont^1$  \cite{braun-calisti2023} regular Lorentz spacetimes and Finsler spacetimes \cite{braun-ohta2024}. The \emph{null energy condition} of Penrose \cite{penrose1965} is equivalent to a variable lower bound on the Ricci curvature in all timelike directions by  McCann \cite{mccann2023-null}, hence covered as well. 

We also assume $\scrM$ is timelike $\beta$-essentially nonbranching after Braun \cite{braun2023-renyi}, a measure-theoretic extension of nonbranching of timelike geodesics by Cavalletti--Mondino \cite{cavalletti-mondino2020}. Except for \cref{Th:From TCD to Bochner,Th:From Bochner to TCD}, $\beta$ does never  enter  our conclusions. It is conjectured by Cavalletti--Mondino \cite{cavalletti-mondino2022-review}, under appropriate nonbranching assumptions $\smash{\TCD_\beta^e(k,N)}$ is  independent of $\beta$,  which is work in progress by Akdemir \cite{akdemir+}.

In the range $\PP\in (-\infty,1)\setminus\{0\}$, our formulas are based on a new distributional notion of \emph{$\PP$-d'Alembertian} of a given $l$-causal function $\u$ on $\mms$ (i.e.~$\u$ is nondecreasing along the causality  relation $\leq$ induced by $l$) from Beran et al.~\cite{beran-braun-calisti-gigli-mccann-ohanyan-rott-samann+-}, cf.~\cref{Def:DAlem}. The property of $\u$ having a d'Alembertian is then set up by $\u$ belonging to the domain of the $\PP$-d'Alembertian for every such $\PP$. We first establish several calculus rules, e.g.~a chain rule in \cref{Pr:Chain rule}. The approach of \cite{beran-braun-calisti-gigli-mccann-ohanyan-rott-samann+-}  also yields a synthetic meaning of $\u$ being $\PP$-harmonic, $\PP$-superharmonic, or $\PP$-subharmonic, cf.~\cref{Def:Harmon,Re:Superharmonicity} below. As for much of our work, our strategy mirrors  Gigli \cite{gigli2015} and  Cavalletti--Mondino \cite{cavalletti-mondino2020-new} in positive signature. We also refer to Björn--Björn \cite{bjorn-bjorn2011} and Gigli--Mondino \cite{gigli-mondino2013} for studies of nonlinearly harmonic functions on metric measure spaces.

Remarkably, our representation formulas hold and make sense in high generality for ``abstract'' test functions from  Beran et al.~\cite{beran-braun-calisti-gigli-mccann-ohanyan-rott-samann+-}  (which we later call \emph{finite perturbations}). This follows from the good almost continuity properties of such functions from \cite{beran-braun-calisti-gigli-mccann-ohanyan-rott-samann+-} (see also similar results by Minguzzi \cite{minguzzi2019-causality}), cf.~\cref{Le:Almost rightleft,Re:Prop fin p}. They are valid under natural hypotheses on $\meas$, cf.~\cref{Sub:MMS}. 

Our choice for $\u$ are mostly signed Lorentz distance functions $l_\Sigma$ (negative on $I^-(\Sigma)$ and positive on $I^+(\Sigma)$). Here $\Sigma$ forms a fixed Borel subset in $\mms$ which is timelike complete \cite{galloway1986}, cf.~\cref{Sec:Signed}. The extensions of \cref{Th:Main I Finsler,Th:Main II Finsler,Th:mmmmm} are given in \cref{Cor:22}, \cref{Th:11}, and  \cref{Cor:1515}, respectively. We give  analogs for more general $1$-steep functions $\u$ in  \cref{Th:Meas val Alem I,Cor:Sharp comparison}. Although $1$-steepness is a condition on the \emph{first} ``derivative'' of $\u$, \cref{Th:Meas val Alem I} asserts its \emph{twice} differentiability on a suitable subset $\Tr$ of $\mms$, cf.~\cref{Sub:Transport rels}. Lastly, by ``flipping'' the causal orientation on $I^-(\Sigma)$, we derive representation formulas and comparison results for the ``d'Alembertian'' of the absolute value $\vert l_\Sigma\vert$ and its powers, neither of which is $l$-causal on $\smash{I^+(\Sigma)\cup \Sigma\cup I^-(\Sigma)}$, in \cref{Sub:Unsign time}.

\subsection{Applications} We finally establish several  consequences of our results.

\subsubsection{Bochner-type inequality}\label{Sub:BOCH} The first is a Bochner-type inequality for e.g.~signed Lorentz  distance functions. The simplifying feature of such functions is the vanishing of their ``Hessian'' precisely in the direction where the ``metric tensor'' fails to be positive definite. 

In \cref{Th:From TCD to Bochner,Th:From Bochner to TCD}, we characterize the time\-like curvature-dimension condition $\smash{\TCD_\beta^e(k,N)}$ by a family of such Bochner-type inequalities. On smooth spacetimes, this result is straightforward, albeit no source seems to have stated it explicitly: following von Renesse--Sturm \cite{von-renesse-sturm2005}, simply construct signed Lorentz distance functions which --- if plugged into e.g.~Mondino--Suhr's Bochner identity \cite{mondino-suhr2022} --- recover the lower bounded\-ness of the Ricci curvature by $k$ in timelike directions  (and then apply McCann \cite{mccann2020}). Yet, this argument passes through a tensor, which we avoid. Instead, we use our identification \eqref{Eq:Delta log ha} of the $\meas$-absolutely continuous part of the relevant d'Alembertian, a differential inequality for the appearing logarithmic derivative found in Cavalletti--Milman \cite{cavalletti-milman2021} (also stated in the localization of the TCD condition for constant $k$ in Cavalletti--Mondino \cite{cavalletti-mondino2024}), cf. \cref{Le:Diffchar}, and a characterization of $\smash{\TCD_\beta^e(k,N)}$ in progress by Akdemir \cite{akdemir+}.

This Lorentzifies the corresponding prior contribution of Cavalletti--Mondino \cite{cavalletti-mondino2020-new}. It supports the  equivalence of the Lagrangian $\smash{\TCD_\beta^e(k,N)}$ condition and an Eulerian description conjectured  by Cavalletti--Mondino \cite{cavalletti-mondino2022-review}, which is yet to be developed. In positive signature, this relation pioneered by Ambrosio--Gigli--Savaré \cite{ambrosio-gigli-savare2014-riemannian,ambrosio-gigli-savare2015-bakry} based on   Gigli's  infinitesimal Hilbert\-iani\-ty \cite{gigli2015} was the initial ignition for the theory of RCD spaces. It was e.g.~succeeded by  its  adaptations to finite dimensions \cite{erbar-kuwada-sturm2015,  ambrosio-mondino-savare2019} and variable curvature bounds \cite{braun-habermann-sturm2021,sturm2020}. In the Lorentzian case, such an equivalence conceivably relies on infinitesimal Minkowskianity, which was  recently proposed by Beran et al.~\cite{beran-braun-calisti-gigli-mccann-ohanyan-rott-samann+-}. Notably, this is \emph{not} assumed in our equivalence. Thus, it extends to Finsler spacetimes, where it  reflects the Finslerian characterization of Ohta--Sturm \cite{ohta-sturm2014} in positive signature.

\subsubsection{Mean curvature}\label{SUB:MCUR} In the framework of their Hawking-type singularity theorem, a synthetic notion of mean curvature \emph{bounds} for a sufficiently regular subset $\Sigma$ of $\mms$ was given by Cavalletti--Mondino \cite{cavalletti-mondino2020}. In \cref{Th:Mean curv bd equiv}, we characterize (suitable variable extensions of) these by upper and lower bounds for the distributional d'Alembertian of $l_\Sigma$. Consequently, the synthetic Hawking-type singularity theorems of  Cavalletti--Mondino \cite{cavalletti-mondino2020} and Braun--McCann \cite{braun-mccann2023} hold under a trappedness condition on $\Sigma$ in terms of our d'Alembert mean curvature bounds set up in \cref{Def:DAL MEAN C} below. 

For essentially nonbranching MCP spaces, Laplacian bounds for distance functions by  Cavalletti--Mondino \cite{cavalletti-mondino2020-new} were connected to synthetic mean curvature bounds after Ketterer \cite{ketterer2020-heintze-karcher} by Burtscher--Ketterer--McCann--Woolgar \cite{burtscher-ketterer-mccann-woolgar2020} and Ketterer \cite{ketterer2023-rigidity}.

Our result motivates the notion of a \emph{mean curvature barrier} for $\Sigma$. Roughly speaking, this is a function on $\Sigma$ which bounds $\BOX l_\Sigma$ ``approximately'' from above and below near $\Sigma$ in a quantitative way;  cf.~\cref{Def:Barriers}. This induces a natural analog of constant mean curvature surfaces in nonsmooth mathematical general relativity. Our approach  mirrors Antonelli--Pasqualetto--Pozzetta--Semola \cite{antonelli-pasqualetto-pozzetta-semola2022}  (see also Ketterer \cite{ketterer2023-rigidity} and Mondino--Semola \cite{mondino-semola2023+}). They also pointed out some metric  examples of sets which admit an entire range of mean curvature barriers, which justifies the different name compared to e.g.~a  ``constant mean curvature set''. This ambiguity is a genuinely nonsmooth phenomenon; in the smooth case, these two descriptions are equivalent.   See \cref{Re:CMC,Re:Compat smooth}.

In fact, inspired by Ketterer \cite{ketterer2020-heintze-karcher} we define the \emph{exact} synthetic mean curvature of $\Sigma$ in \cref{Def:MCurv}, using a suitable ``restriction'' of the d'Alembertian of its signed Lorentz  distance function.

\subsubsection{Volume and area bounds meet Cauchy developments} Besides the Laplace comparison theorem, the Heintze--Karcher inequality \cite{heintze-karcher1978} is another key  volume comparison result from Riemannian geometry. It estimates the volume of a  tubular neighborhood of a smooth hypersurface by a surface integral involving its mean curvature and other geometric data. Subsequent generalizations were given to the weighted case by Bayle \cite{bayle2004}  and Morgan \cite{morgan2005} and essentially nonbranching MCP spaces by Ketterer \cite{ketterer2020-heintze-karcher}.  As argued by Treude--Grant \cite{treude-grant2013}, this reflects the setting of the Hawking singularity theorem \cite{hawking1967}, which makes such estimates very interesting for Lorentzian geometry.


In \cref{Th:HeintzeKarcher}, based on our  mean curvature outlined in \cref{SUB:MCUR} we adapt \cite{heintze-karcher1978,bayle2004,ketterer2020-heintze-karcher} to our synthetic Lorentzian case, mirroring the approach of  Ketterer \cite{ketterer2020-heintze-karcher}. Let us formulate a special case in the Finslerian framework of \cref{Sub:Ex1,Sub:Ex2}.

\begin{theorem}[Heintze--Karcher-type inequality]\label{Th:HKARIntro} Let $\mms$ be  a globally hyperbolic Finsler spacetime with a smooth measure $\meas$. Assume its induced $N$-Ricci curvature is nonnegative in all timelike directions, where $N\in (1,\infty)$. Let $\Sigma$ be a smooth, compact, achronal spacelike hypersurface. Assume its forward mean curvature is bounded from above by a number $H_0$. Lastly, let $\hh_0$ denote its surface measure and $\Sigma_{[0,t]}$ its  future development with temporal height $t>0$, respectively. Then
\begin{align}\label{Eq:Posp}
\meas[\Sigma_{[0,t]}] \leq \hh_0[\Sigma]\int_0^t \Big[1 + \frac{H_0}{N-1}\,\theta\Big]_+^{N-1}\d\theta.
\end{align}
\end{theorem}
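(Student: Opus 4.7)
The plan is to reduce the volume estimate to a one-dimensional comparison along the rays of the normal geodesic foliation, using the disintegration recalled in \cref{Sub:Ex1} as the bridge between the mean curvature hypothesis on $\Sigma$ and the nonnegative $N$-Ricci hypothesis on $\mms$. The argument is the classical Heintze--Karcher scheme, recast so that the hyperbolic nature of the d'Alembertian never enters explicitly.

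First I would invoke \cref{Th:Disintegration} to write $\meas\mres I^+(\Sigma) = \int_Q \meas_\alpha\d\q(\alpha)$ with $\meas_\alpha = h_\alpha\,\Leb^1_\alpha$, where each $h_\alpha$ is positive and locally Lipschitz on the ray $\mms_\alpha$. Since nonnegative $N$-Ricci curvature implies $\TMCP^e(0,N)$, the map $\theta\mapsto h_\alpha(\theta)^{1/(N-1)}$ is concave on the parametrization interval $[0,\,l(a_\alpha,b_\alpha)]$ for $\q$-a.e.~$\alpha\in Q$. Thanks to \cref{Re:Compat smooth}, in the present smooth setting the disintegration can be calibrated so that $h_\alpha(0)\d\q(\alpha) = \d\hh_0(\alpha)$ on $\Sigma$; i.e., $\q$ becomes the surface measure once the initial density is factored out.

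Second I would convert the mean curvature hypothesis into a one-sided bound on the initial logarithmic derivative of $h_\alpha$. By \eqref{Eq:Delta log ha}, on $\mms_\alpha$ the $\meas$-absolutely continuous part of $\BOX l_\Sigma$ equals $(\log h_\alpha)'$; hence the synthetic mean curvature formalism of \cref{SUB:MCUR} (characterized in \cref{Th:Mean curv bd equiv} by d'Alembertian bounds at $\Sigma$) yields $(\log h_\alpha)'(0^+)\le H_0$ for $\q$-a.e.~$\alpha$. Setting $g_\alpha := h_\alpha^{1/(N-1)}$, concavity together with $g_\alpha'(0^+)/g_\alpha(0)\le H_0/(N-1)$ gives the tangent-line bound
\begin{align*}
g_\alpha(\theta) \;\le\; g_\alpha(0)\,\Bigl[1 + \frac{H_0}{N-1}\,\theta\Bigr]_+
\end{align*}
on $[0,\,l(a_\alpha,b_\alpha)]$, the positive part reflecting that a nonnegative concave function must vanish once its tangent at the origin drops below zero---and past such a point we are in any case beyond the cut value $l(a_\alpha,b_\alpha)$, so no integration occurs.

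Third I would integrate. Since $h_\alpha = g_\alpha^{N-1}$, Fubini and the pointwise bound yield
\begin{align*}
\meas[\Sigma_{[0,t]}] \;&=\; \int_Q \int_0^{\min\{t,\,l(a_\alpha,b_\alpha)\}} h_\alpha(\theta)\d\theta\d\q(\alpha)\\
&\le\; \int_Q h_\alpha(0)\int_0^t \Bigl[1+\frac{H_0}{N-1}\,\theta\Bigr]_+^{N-1}\d\theta\d\q(\alpha),
\end{align*}
and the calibration $h_\alpha(0)\d\q = \d\hh_0$ collapses the outer integral to $\hh_0[\Sigma]$ times the inner one, producing exactly \eqref{Eq:Posp}.

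The step I expect to be the main obstacle is the bookkeeping in the second paragraph: extracting the sharp one-sided bound $(\log h_\alpha)'(0^+)\le H_0$ from the synthetic mean curvature hypothesis in the correct direction, and in parallel pinning down the normalization $h_\alpha(0)\d\q=\d\hh_0$ so that the constant in \eqref{Eq:Posp} is exactly $\hh_0[\Sigma]$ rather than a fiber-weighted proxy. Once these identifications are in place, the remaining ODE comparison is a short one-dimensional calculation that bypasses the cut locus harmlessly through the positive part in the integrand.
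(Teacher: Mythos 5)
Your proposal is correct and follows essentially the same route as the paper: disintegrate $\meas\mres I^+(\Sigma)$ along the normal rays, convert the forward mean curvature bound into $(\log h_\alpha)'^+(0)\le H_0$ (this is exactly \cref{Le:Diff in}/\cref{Th:Mean curv bd equiv}), apply the tangent-line comparison for the concave $h_\alpha^{1/(N-1)}$ (which is \cref{Le:CompII} specialized to $k=0$, yielding the Jacobian $\sfJ_{0,N,H_0}$), and integrate against $\q$. The only remark worth making is that the "calibration" $h_\alpha(0)\,\d\q=\d\hh_0$ you flag as a potential obstacle is not something to be derived: it is the paper's definition of the surface measure $\hh_0=(\sfp_0)_\push[\hat{h}_\cdot(0)\,\q]$, so that step is immediate.
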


Volume bounds by mean curvature for CMC surfaces in Riemannian  warped products (notably positive signature analogs of the de Sitter--Schwarzschild and Reissner--Nordstrøm solutions) are proven by Brendle \cite{brendle2013}. On Lorentz spacetimes, \cref{Th:HKARIntro} was established  by  Treude--Grant \cite{treude-grant2013} (smooth spacetimes, constant curvature bounds), Graf \cite{graf2016} ($\smash{\Cont^{1,1}}$-spacetimes, constant curvature bounds), as well as Graf--Sormani \cite{graf-sormani2022} (smooth spacetimes, nonnegative timelike Ricci curvature,  integral mean curvature bounds). We point out our results improve the volume bounds from \cite{graf-sormani2022}, where the positive part is only taken on the mean curvature.   Although $H_0$ is constant in  \cref{Th:HKARIntro} --- chosen in favor of a simpler presentation ---, we stress that \cref{Th:HeintzeKarcher} covers variable bounds (both on the Ricci and the mean curvature). Our results already extend these to low regularity Lorentz, \smash{Bakry--Émery} spacetimes (cf.~Galloway--Woolgar \cite{galloway-woolgar2014} and Woolgar--Wylie \cite{woolgar-wylie2018}), and Finsler spacetimes. Moreover, Treude--Grant \cite{treude-grant2013} assumed an upper bound on the length of the rays in question \emph{a priori}, which we avoid.

Our methods also allow for a corresponding synthetic extension of \emph{area estimates} à la \cite{treude2011,treude-grant2013,graf2016,graf-sormani2022}, which are given in \cref{Th:AREA}.


\begin{remark}[Volume estimates vs.~stability] Recall a \emph{Cauchy development} from an initial data set $(\Sigma,r,\kappa)$ (cf.~the survey of Rendall \cite{rendall2002}) is a globally hyperbolic Lorentz spacetime with metric tensor $\Rmet$ which solves the vacuum Einstein equations with Cauchy hypersurface $\Sigma$ such that 
\begin{itemize}
\item $\Rmet$ restricts to $r$ on $\Sigma$, where $r$ is a Riemannian metric, and
\item $\kappa$ is the second fundamental form of $\Sigma$.
\end{itemize}
Their existence is a deep classical result of Choquet-Bruhat \cite{choquet-bruhat1952}. A main relevance of volume estimates from higher codimension constraints is the question of their stability under a reasonable convergence of the initial data sets. We refer to the introduction of Graf--Sormani \cite{graf-sormani2022} (see also the survey of Sormani  \cite{sormani2023}) for an overview. 

A candidate which may apply to sequences of Cauchy developments is the \emph{spacetime intrinsic flat convergence}, inspired by Sormani--Wenger's intrinsic flat convergence for integral current spaces \cite{sormani-wenger2011}. The latter admits a compactness theorem that only requires uniform volume and diameter controls, as proven by Wenger \cite{wenger2011}. The idea is that by estimates as e.g.~provided by  \cref{Th:HKARIntro}, convergence of initial data sets entails  the necessary volume bounds in a possible version of Wenger's result in Lorentzian signature. Uniform diameter bounds could be drawn from the Hawking-type singularity theorems of Cavalletti--Mondino \cite{cavalletti-mondino2020} and Braun--McCann \cite{braun-mccann2023}. In this sense,   \cref{Th:HeintzeKarcher}  completes the hypotheses of this (synthetic) ``pattern compactness theorem''. \hfill{\footnotesize{$\blacksquare$}}
\end{remark}

\subsubsection{Volume singularities} As indicated above, the work of Treude--Grant \cite{treude-grant2013} has in fact been motivated by the contribution of Heintze--Karcher \cite{heintze-karcher1978}, with the goal of proving ``volume'' versions of Hawking's singularity theorem. Based on their partial results and attempting to better understand Penrose's cosmic censorship conjectures, recently García-Heveling \cite{garcia-heveling2023-volume} proposed the concept of \emph{volume singularities} for Lorentz spacetimes.  It has a direct generalization to our setting: $\scrM$ is future volume incomplete if it contains a point whose chronological future has finite $\meas$-measure, cf.~\cref{Def:Vol sing,Re:Finite vol}. 

\cref{Th:HKARIntro} and its add-on of \cref{Th:HeintzeKarcher} imply four volume singularity theorems. The simple yet powerful idea is that the positive part  in e.g.~\eqref{Eq:Posp} --- provided its argument eventually becomes nonpositive, as  is implied by suitable  trappedness conditions --- forces the volume of $\smash{\Sigma_{[0,t]}}$ to ``stop growing'' as $t\to\infty$.
\begin{itemize}
\item \cref{Th:Const vol} generalizes the Hawking-type volume singularity theorem from \cite{garcia-heveling2023-volume} beyond Lorentz spacetimes. It is a volume version of Cavalletti--Mondino's synthetic Hawking singularity theorem  \cite{cavalletti-mondino2020}. It thus yields a synthetic resolution of the ``volume version of Penrose's singularity theorem'' of  \cite{garcia-heveling2023-volume}*{Conj. 4.3}. In fact, it predicts volume singularities in cases which are known to be geodesically \emph{complete}, cf.~\cref{Re:Comments}. On the other hand, to a large extent the proof of \cite{garcia-heveling2023-volume} relies on the Hawking singularity theorem. Ours does not, which underlines the logical independence of geodesic and volume incompleteness \cite{garcia-heveling2023-volume}.
\item Volume singularities under variable curvature bounds are pioneered in \cref{Th:Vol2,Th:Vol1,Th:Vol3}. The first two are (independent) volume analogs of Braun--McCann \cite{braun-mccann2023}. This confirms a conjecture about their existence from \cite{garcia-heveling2023-volume}.  
\end{itemize}

\subsection{Organization} In \cref{Ch:Prerequisites}, we collect some basic background material. In particular, our standing framework is described in \cref{Sub:MMS}. In \cref{Ch:Dalem}, after summarizing fundamental notions of our work \cite{beran-braun-calisti-gigli-mccann-ohanyan-rott-samann+-} with Beran et al., \cref{Sub:Def basis} recalls the distributional d'Alembertian; \cref{Sub:Calc rulez} proves several of its properties,  especially pertaining to the chain rule and harmonicity. Then we pass over to spaces with synthetic timelike curvature and dimension bounds. \cref{Sec:Localization} collects the necessary prerequisites from prior works. In particular, since it plays a major role in our work, the localization paradigm is thoroughly reviewed in \cref{Sub:Disintegr}. In \cref{Ch:Repr form}, we develop some analytic tools in order to conduct our integration by parts approach. This culminates in the connection of Cavalletti--Mondino's localization paradigm with Beran et al.'s  Lorentzian Sobolev calculus in \cref{Sub:Loc hor vert}. In \cref{Sub:Formula1,Sec:Formula2}, we address the representation formulas and adjacent comparison estimates for the d'Alembertian of general $1$-steep functions and, more specifically, signed Lorentz distance functions, respectively. Enhanced properties hold under the stronger hypothesis of topological anti-Lipschitzness (which is set up in \cref{Sub:Top Anti}); it is explicitly justified for Finsler spacetimes in \cref{App:A}. Finally, \cref{Ch:Appli} is devoted to applications of \cref{Ch:Repr form}. Inter alia, we discuss our Bochner inequality in \cref{Sub:Nonsmooth Bochner}, our nonsmooth notion of mean curvature in \cref{Sub:Dalem Mean Curv}, and our Heintze--Karcher-type inequality in \cref{Sub:HKIn}.

\section{Prerequisites}\label{Ch:Prerequisites}

Throughout the paper, if not explicitly stated otherwise, by a function we mean a map with codomain $\R$, thus excluding it from taking the values $\pm\infty$.  Let $\Vert f\Vert_\infty$ denote the pointwise supremum norm of a function $f$. For points $a$ and $b$ in a specified space and a function $f$ on it, we frequently use the abbreviations
\begin{align*}
\big[f\big]_a^b &:= f(b) - f(a),\\
\big[f\big]^b &:= f(b),\\
\big[f\big]_a &:= -f(a).
\end{align*}

By an interval we will always mean a convex subset of $\R$, possibly of infinite or zero diameter and possibly empty.

Sometimes we will need to work with an \emph{everywhere} defined derivative of a continuous function $f$ on a closed interval $I$ (even though generically, $f$ will at least be differentiable $\Leb^1\mres I$-a.e.). To this aim, we define its \emph{right} and \emph{left derivative} $f'^\pm$ on $I$ by
\begin{align*}
f'^\pm(t) := \limsup_{\tau\to 0\pm} \frac{f(t+\tau)-f(t)}{\tau}
\end{align*}
subject to the conventions
\begin{align}\label{Eq:CONVEN}
\begin{split}
f'^+(\sup I) &:= -\infty,\\
f'^-(\inf I) &:= \infty.
\end{split}
\end{align}

\subsection{Measure theoretic notation}\label{Sub:Fixed} 

Let $\mms$ be a locally compact and $\sigma$-compact topological space. A \emph{Radon measure} is a Borel measure $\mu$ on $\mms$ which is inner regular and finite on every compact set (and thus outer regular, cf.~Folland  \cite{folland1999}*{Cor.~7.6}). A \emph{signed Radon measure} is an $\R\cup\{\pm\infty\}$-valued map $\mu$ on the Borel $\sigma$-algebra of $\mms$ which is the difference $\smash{\mu - \nu}$ of two Radon measures which satisfy $\smash{\mu[\mms]<\infty}$ or $\smash{\nu[\mms]<\infty}$. In this case, the Radon measure $\smash{\vert \mu\vert := \mu + \nu}$ is called the \emph{total variation} of $\mu$; if $\vert \mu\vert[\mms] < \infty$, $\mu$ is termed to be \emph{finite}. If  $\mu$ and $\nu$ are infinite, the difference $\mu-\nu$ is made sense of in a distributional manner, see below.

Let $\mu$ be a Radon measure. For a $\mu$-measurable set $A$ in  $\mms$, the \emph{restriction} of $\mu$ to $A$ is the Radon measure $\mu\mres A$ defined by $\mu\mres A[B] := \mu[A\cap B]$.

Let $\smash{\Cont_\comp(\mms)}$ be the class of continuous functions with compact support in $\mms$. A \emph{Radon functional} is a linear functional $T$ on $\smash{\Cont_\comp(\mms)}$ such that for every compact subset $W$ of $\mms$, there is a constant $C$ such that for every $f\in \Cont_\comp(\mms)$ with support in $W$,
\begin{align*}
\vert T(f)\vert \leq C\,\Vert f\Vert_\infty.
\end{align*}
If $T$ is nonnegative (i.e.~the value $T(f)$ is nonnegative if $\smash{f\in \Cont_\comp(\mms)}$ is), it is represented by (integration against) a unique Radon measure $\mu$ on $U$ by the Riesz--Markov--Kakutani representation theorem, cf.~Folland  \cite{folland1999}*{Thm.~7.2}. 

In general,  not every Radon functional is represented by a signed Radon measure by the possible failure of $\sigma$-additivity thereof, as is the case e.g.~for the Hilbert transform on $\R$, cf.~Cavalletti--Mondino \cite{cavalletti-mondino2020-new}*{p.~2103}. On the other hand, the difference $T := \mu-\nu$ of two infinite Radon measures $\mu$ and $\nu$ does make sense as a Radon functional; if $T$ can be written this way, we call it a \emph{generalized signed Radon measure}. Since such quantities will be relevant for our purposes (cf.~\cref{Sub:Loc bdd var!,Sub:Differentiation}), we introduce some adjacent notation. Let $\meas$ be a given Radon measure. We will  call density of the $\meas$-absolutely continuous part of $T$ the unique locally $\meas$-integrable function $f$ with the following property. For every compact subset $C$ of $\mms$, the Lebesgue decomposition of  $\mu\mres C-\nu\mres C$ with respect to $\meas$ reads
\begin{align*}
\mu\mres C - \nu\mres C = f\,\meas\mres C + (\mu\mres C - \nu\mres C)^\perp.
\end{align*} 
Moreover, we say $T$ has nonnegative $\meas$-singular part and write
\begin{align*}
T^\perp \geq 0
\end{align*}
if the signed Radon measure $(\mu\mres C - \nu\mres C)^\perp$ from above is nonnegative (i.e.~a Radon measure) for every $C$ as above. Analogously, $T$ has nonpositive $\meas$-singular part if $-T$ has nonnegative $\meas$-singular part.

Evident versions of the above notions make sense and will be used relative to an open subset $U$ of $\mms$ instead of $\mms$.

\subsection{Monotone and locally BV functions on the real line}\label{Sub:Monotone real} Next, let us collect several well-known  properties of monotone functions and, more generally, functions of locally bounded variation on a given interval $I$. We refer to the book of Folland \cite{folland1999} for details.

\subsubsection{Monotonicity}\label{Sub:MONOT} To discuss monotone functions $f$ on $I$, we focus on $f$ being nondecreasing, the non\-in\-creasing situation is analogous. Then $f$ has  right and left limits $f(x+)$ and $f(x-)$ at every $x\in I$. At cocountably many $x\in I$, the values $f(x)$, $f(x+)$, and $f(x-)$ coincide. We write $\RCR f$ and $\LCR f$ for the functions $f(\cdot\,+)$ and $f(\cdot\,-)$ and call them the \emph{right-} and \emph{left-continuous representative} of $f$.  The functions  $f$, $\RCR f$, and $\LCR f$ are differentiable $\smash{\Leb^1 \mres I}$-a.e.~and their derivatives coincide $\Leb^1\mres I$-a.e. \cite{folland1999}*{Thm.~3.23}.

The differentiation of monotone and right-continuous  functions can also be quantified via the Radon--Nikodým  theorem. Since $\RCR f$ is nondecreasing and right-continuous, there is  a unique complete Radon measure $\Diff \RCR f$ on $I$, the \emph{Lebesgue--Stieltjes measure} of $\RCR f$, with  $\Diff \RCR f(a,b] = \RCR f(b) - \RCR f(a)$ for  every $a,b\in I$ with $a<b$  \cite{folland1999}*{Thm.~1.16}. Moreover, $(\RCR f)'$ (and thus $f'$) forms an $\smash{\Leb^1}$-version of the density of the $\smash{\Leb^1}$-absolutely continuous part  of $\Diff \RCR f$  \cite{folland1999}*{Thm.~3.23}, i.e.
\begin{align*}
\Diff \RCR f = (\RCR f)'\,\Leb^1 + (\Diff \RCR f)^\perp = f'\,\Leb^1 + (\Diff \RCR f)^\perp.
\end{align*}

\subsubsection{Locally bounded variation}\label{Sub:Loc bdd var!} Next, we say a function $f$ is of \emph{bounded variation} on a compact interval \cite{folland1999}*{p.~102} if its usual total variation thereon is finite. We say $f$ has \emph{locally bounded variation} on $I$ if has bounded variation on every compact subinterval of $I$. This property is equivalent to $f$ being the difference of two nondecreasing functions $f_1$ and $f_2$ on $I$ \cite{folland1999}*{Thm.~3.27}. The discussion from \cref{Sub:MONOT} then extends to $f$ as follows. 

First, $f$ has right and left limits everywhere on $I$ as well as right-and left-continuous representatives $\RCR f$ and $\LCR f$  it coincides with $\smash{\Leb^1\mres I}$-a.e., respectively.  

Second, the distributional derivative $\Diff\RCR f$ of $\RCR f$   satisfies
\begin{align*}
\Diff\RCR f = \Diff \RCR f_1 - \Diff \RCR f_2.
\end{align*}
This difference makes sense as a signed Radon measure if $f$ has compact support and as a generalized signed Radon measure after \cref{Sub:Fixed} otherwise. 

Third, the derivatives of $f$, $\RCR f$, and $\LCR f$ exist and coincide $\Leb^1\mres I$-a.e. They are $\Leb^1$-versions of the density of the $\Leb^1$-absolutely continuous part of $\Diff \RCR f$.

The following integration by parts formula will be crucial in what follows.

\begin{theorem}[Integration by parts I {\cite{folland1999}*{Lem.~3.34, Thm.~3.36}}, see also \cref{Le:IBP II}]\label{Th:Partial integration f} If on the compact and nonempty interval $[a,b]$, the function $f$ has bounded variation and  $h$ is  absolutely continuous,
\begin{align*}
\int_{(a,b]} h\d\Diff \RCR f = -\int_{(a,b]} f\,h'\d\Leb^1 + \big[\!\RCR f\,h\big]_a^b.
\end{align*}
\end{theorem}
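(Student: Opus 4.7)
The plan is to reduce to the monotone case and then apply Fubini's theorem. Since $f$ has bounded variation on $[a,b]$, write $f = f_1 - f_2$ with $f_1, f_2$ nondecreasing as in \cref{Sub:Loc bdd var!}; then $\Diff\RCR f = \Diff\RCR f_1 - \Diff\RCR f_2$ as (finite) signed Radon measures on $[a,b]$, and by linearity in $f$ it suffices to treat the case in which $f$ is itself nondecreasing, so that $\Diff\RCR f$ is a finite (nonnegative) Radon measure and $\RCR f(x) = \Diff\RCR f(a,x] + \RCR f(a)$ for every $x \in (a,b]$.

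Next, I would exploit the absolute continuity of $h$ to write $h(x) = h(a) + \int_{(a,x]} h'\d\Leb^1$ for every $x \in [a,b]$. Inserting this into the left-hand side yields
\begin{align*}
\int_{(a,b]} h \d\Diff\RCR f = h(a)\,\big[\RCR f(b) - \RCR f(a)\big] + \int_{(a,b]}\!\int_{(a,x]} h'(t)\d\Leb^1(t)\d\Diff\RCR f(x).
\end{align*}
The double integral is taken over the triangular region $\{(t,x) : a < t \le x \le b\}$; since $h' \in L^1((a,b],\Leb^1)$ and $\Diff\RCR f$ is a finite nonnegative Radon measure, Fubini's theorem applies and gives
\begin{align*}
\int_{(a,b]}\!\int_{(a,x]} h'(t)\d\Leb^1(t)\d\Diff\RCR f(x) = \int_{(a,b]} h'(t)\,\Diff\RCR f[t,b]\d\Leb^1(t).
\end{align*}

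The crux is then to identify $\Diff\RCR f[t,b]$ correctly. Using the defining property $\Diff\RCR f(c,d] = \RCR f(d) - \RCR f(c)$ of the Lebesgue--Stieltjes measure, a short limiting argument gives $\Diff\RCR f[t,b] = \RCR f(b) - \LCR f(t)$; and since $\RCR f$, $\LCR f$, and $f$ coincide outside a countable (hence $\Leb^1$-null) set, I may replace $\LCR f(t)$ by $f(t)$ inside the $\Leb^1$-integral. Thus
\begin{align*}
\int_{(a,b]} h'(t)\,[\RCR f(b) - f(t)]\d\Leb^1(t) = \RCR f(b)\,[h(b) - h(a)] - \int_{(a,b]} f\,h'\d\Leb^1,
\end{align*}
where I used absolute continuity of $h$ once more. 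Plugging this back in and collecting the boundary terms yields $h(a)[\RCR f(b) - \RCR f(a)] + \RCR f(b)[h(b) - h(a)] = [\RCR f\,h]_a^b$, which produces exactly the claimed identity.

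The main obstacle is essentially the bookkeeping at the endpoints: choosing the half-open interval $(a,x]$ rather than $[a,x]$ in the Fubini step, and recognizing that the atom of $\Diff\RCR f$ at $t$ (if any) contributes to $\Diff\RCR f[t,b]$ via $\RCR f(b) - \LCR f(t)$ rather than $\RCR f(b) - \RCR f(t)$. Since the at-most-countable set of discontinuities of $f$ is $\Leb^1$-negligible, this distinction disappears inside the final absolutely continuous integral, but it must be handled carefully before passing to that step. The only hypothesis used on $h$ beyond $h \in L^1$ is the fundamental theorem of calculus for absolutely continuous functions; in particular, no further regularity of $f$ beyond local boundedness and monotonicity of the summands $f_1,f_2$ is needed.
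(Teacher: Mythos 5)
Your proof is correct. The paper offers no argument of its own for this statement—it is quoted directly from Folland—and the proof there is exactly your reduction to the monotone case followed by Fubini on the triangle $\{a<t\le x\le b\}$, with the same endpoint bookkeeping ($\Diff\RCR f[t,b]=\RCR f(b)-\LCR f(t)$, harmless $\Leb^1$-a.e.~replacement of $\LCR f$ by $f$), so you have simply reproduced the standard cited argument.
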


\subsection{Variable Ricci curvature bounds on the real line} Next, we recall the distortion coefficients induced by variable potentials, following  Ketterer \cite{ketterer2017}. Then we sketch how variable Ricci curvature bounds on real intervals are characterized by convexity properties of their densities, along with their  properties. This constitutes a straightforward generalization of the constant situation, e.g.~Cavalletti--Milman \cite{cavalletti-milman2021}, Cavalletti--Mondino \cite{cavalletti-mondino2020-new}, Ketterer \cite{ketterer2020-heintze-karcher}, and Burtscher--Ketterer--McCann--Woolgar \cite{burtscher-ketterer-mccann-woolgar2020}, to which we refer for details.

\subsubsection{Variable distortion coefficients}\label{Sub:Var dist coeff} For a lower semicontinuous function $\kappa$ on $[0,\infty)$, let $\smash{\SIN_\kappa}$ be the unique upper semicontinuous distributional solution to the ODE $u'' +\kappa\, u = 0$ with initial conditions $u(0) = 0$ and $u'(0) = 1$,  called \emph{generalized sine function}. Whenever needed, we extend $\kappa$ symmetrically and $\SIN_\kappa$ antisymmetrically to all of $\R$.

Throughout the sequel, $\pi_\kappa$ is the positive maximal length $L$ of the interval $[0,L)$ on which $\SIN_\kappa$ is positive. If $\kappa$ is nonpositive, it is infinite; if $\kappa$ is positive, then $\pi_\kappa$ is the first positive root of $\SIN_\kappa$ (e.g.~$\pi_\kappa = \pi/\sqrt{\kappa}$ if $\kappa$ is also constant).

Since $\kappa$ is uniformly bounded from below on every compact interval $[a,b]$ of $(0,\infty)$, say by a real number $K$, $\SIN_\kappa$ is a distributional supersolution to the ODI $u'' + K\,u \leq 0$ on $(a,b)$; since it is locally bounded from above on $(a,b)$, it is continuous and locally semiconcave on $[a,b]$ and locally Lipschitz continuous on $(a,b)$. The right and left derivatives $\smash{\SIN_\kappa'^\pm}$ exist everywhere as true limits on $(a,b)$, respectively. Moreover, $\smash{\SIN_\kappa'^+}$ also exists in $a$ and $\smash{\SIN_\kappa'^-}$ also exists in $b$ (both as true limits); the first is right-continuous on $[a,b)$ and the second is left-continuous on $(a,b]$, respectively. Lastly, on $(a,b)$ we have $\smash{\SIN_\kappa'^+\leq \SIN_\kappa'^-}$ with equality precisely at those cocountably many points where $\smash{\SIN_\kappa}$ is differentiable.

We define the \emph{generalized cosine function} $\COS_\kappa$ on $[0,\infty)$ by
\begin{align}\label{Eq:GenCOS}
\COS_\kappa(\theta) := 1-\int_0^\theta \kappa(r)\SIN_\kappa(r)\d r,
\end{align}
symmetrically extended to all of $\R$. By monotone approximation of the given potential and the above  regularity properties of $\SIN_\kappa$, the function $\smash{\COS_\kappa}$ is equal to $\smash{\SIN_\kappa'^\pm}$ at cocountably many points and in fact everywhere if $\kappa$ is continuous.

\begin{remark}[The constant case I, see also \cref{Re:Const dist coeff,Re:Logarithmic derivative}]\label{Re:Const sink cosk} If $\kappa$ is constant,  the two functions $\smash{\SIN_\kappa}$ and $\smash{\COS_\kappa}$ introduced above  read
\begin{align*}
\SIN_\kappa(\theta) &= \begin{cases} \sqrt{\kappa}^{\,-1} \sin(\sqrt{\kappa}\,\theta) & \textnormal{if }\kappa > 0,\\
\theta & \textnormal{if }\kappa=0,\\
\sqrt{-\kappa}^{\,-1}\sinh(\sqrt{-\kappa}\,\theta) & \textnormal{otherwise},
\end{cases}\\
\COS_\kappa(\theta)  &= \begin{cases} \cos(\sqrt{\kappa}\,\theta) & \textnormal{if }\kappa  >0,\\
1 & \textnormal{if }\kappa=0,\\
\cosh(\sqrt{-\kappa}\,\theta) & \textnormal{otherwise}.
\end{cases}\tag*{{\footnotesize{$\blacksquare$}}}
\end{align*}
\end{remark}

\begin{definition}[Distortion coefficients]\label{Def:Dist coeff} Given any $t\in[0,1]$, the \emph{distortion coefficients} $\smash{\sigma_\kappa^{(t)}}\colon [0,\infty)\to [0,\infty]$ associated with $\kappa$ are defined by
\begin{align*}
\sigma_{\kappa}^{(t)}(\theta) := \begin{cases} \displaystyle\frac{\SIN_\kappa(t\,\theta)}{\SIN_\kappa(\theta)} & \textnormal{\textit{if}\,}\theta < \pi_\kappa,\\
\infty & \textnormal{\textit{otherwise}}.
\end{cases}
\end{align*}
\end{definition}

\begin{remark}[The constant case II, see also \cref{Re:Const sink cosk,Re:Logarithmic derivative}]\label{Re:Const dist coeff} If  $\kappa$ is constant,  the distortion coefficients from \cref{Def:Dist coeff} take the following explicit form:
\begin{align*}
\sigma_{\kappa}^{(t)}(\theta) &= \begin{cases} \displaystyle\frac{\sin(\sqrt{\kappa}\,t\,\theta)}{\sin(\sqrt{\kappa}\,\theta)} & \textnormal{if } 0 < \kappa\,\theta^2 <\pi^2,\\
\infty & \textnormal{if } \kappa\,\theta^2 \geq \pi^2,\\
t &  \textnormal{if }\kappa\,\theta^2 = 0,\\
\displaystyle\frac{\sinh(\sqrt{-\kappa}\,t\,\theta)}{\sinh(\sqrt{-\kappa}\,\theta)} & \textnormal{otherwise}.
\end{cases}\tag*{{\footnotesize{$\blacksquare$}}}
\end{align*}
\end{remark}

For any $\theta\geq 0$,  Sturm--Picone's oscillation theorem as quoted e.g.~in \cite[Thm.~3.2]{ketterer2017}  implies either $\smash{\sigma_\kappa^{(t)}(\theta)}$ is infinite for every $t\in[0, 1]$, or $\smash{\sigma_\kappa^{(t)}(\theta)}$ is finite for every $t\in[0, 1]$. In the latter case, the assignment $\smash{u(t) := \sigma_\kappa^{(t)}(\theta)}$ obeys  $u(0) = 0$, $u(1) = 1$ and it induces a distributional solution to the ODE $\smash{u'' + \kappa(\cdot\,\theta)\,\theta^2\,u=0}$ on $(0,1)$. This yields
\begin{align*}
\sigma_\kappa^{(t)}(\theta) = \sigma_{\kappa\theta^2}^{(t)}(1),
\end{align*}
where $\kappa\,\theta^2$ is understood as the function $\smash{\tilde{\kappa}}$ on $[0,1]$ defined by
\begin{align*}
\tilde{\kappa}(t) := \kappa(t\,\theta)\,\theta^2.
\end{align*}

\begin{remark}[Properties of distortion coefficients]\label{Re:Props dist coeff} The following hold.
\begin{itemize}
\item If $\kappa$ is nonpositive,  $\smash{\sigma_{\kappa}^{(t)}(\theta)}$ is nonincreasing in $\theta\geq 0$ for every $t\in [0,1]$.
\item If a lower semicontinuous function $\kappa'$ is no larger than $\kappa$, then $\smash{\sigma_{\kappa'}^{(t)}(\theta) \leq \sigma_\kappa^{(t)}(\theta)}$ for every $t\in[0,1]$ and every $\theta\geq 0$, cf.~Ketterer \cite{ketterer2017}*{Prop.~3.4}.
\item The distortion coefficients are lower semicontinuous in the following way \cite{ketterer2017}*{Prop.~3.15}. Let $(\kappa_\imath)_{\imath\in\N}$ be a given sequence of lower semicontinuous functions on $\R$ such that for every $\theta\geq 0$,
\begin{align*}
\kappa(\theta) \leq\liminf_{\imath\to \infty}\kappa_\imath(\theta).
\end{align*}
Then for every $t\in[0,1]$ and every $\theta\geq 0$,
\begin{align*}
\sigma_\kappa^{(t)}(\theta) \leq \liminf_{\imath\to\infty}\sigma_{\kappa_\imath}^{(t)}(\theta).\tag*{{\footnotesize{$\blacksquare$}}}
\end{align*}
\end{itemize}
\end{remark}

\subsubsection{MCP densities}\label{Sub:MCP densities}  Let $I$ be a fixed interval with interior $\smash{I^\circ}$. Given  $x_0,x_1\in I$,  define the forward and back\-ward potentials $\smash{\kappa_{x_0,x_1}^\pm}$ on the interval $[0,\vert x_1-x_0\vert]$  by
\begin{align*}
\kappa_{x_0,x_1}^+(t\,\vert x_1-x_0\vert) &:= \kappa((1-t)\min\{x_0,x_1\} + t\max\{x_0,x_1\}),\\
\kappa_{x_0,x_1}^-(t\,\vert x_1-x_0\vert) &:= \kappa(t\min\{x_0,x_1\} + (1-t)\max\{x_0,x_1\}).
\end{align*}
Observe every $t\in[0,1]$ satisfies
\begin{align}\label{Eq:t vs 1-t}
\kappa_{x_0,x_1}^-(t\,\vert x_1-x_0\vert) = \kappa_{x_1,x_0}^+((1-t)\,\vert x_1-x_0\vert).
\end{align}

\begin{definition}[MCP density]\label{Def:MCP density} A Borel function $h$ on $I$ is called an \emph{$\MCP(\kappa,N)$ den\-sity} if it is nonnegative and every $x_0,x_1\in I$ and every $t\in[0,1]$ satisfy
\begin{align*}
h((1-t)\,x_0 + t\,x_1)^{1/(N-1)} \geq \sigma_{\kappa_{x_0,x_1}^-/(N-1)}^{(1-t)}(\vert x_1-x_0\vert)\,h(x_0)^{1/(N-1)}.
\end{align*}
\end{definition}

Here,  recall our simplifying assumption $1<N<\infty$ from \cref{Sub:Fixed}. The class of $\MCP(\kappa,1)$ densities $h$, e.g.~defined by saying $h$ is an $\MCP(\kappa,1+\delta)$ density for every $\delta>0$,  only consists of constant functions, e.g.~Cavalletti--Mondino \cite{cavalletti-mondino2017-isoperimetric}*{Thm.~4.2}.

\begin{remark}[Properties of MCP densities]\label{Re:Properties MCP} Let $h$ be an $\MCP(\kappa,N)$ density on $I$.
\begin{itemize}
\item The metric measure space $\smash{(\cl\, I,\vert\cdot-\cdot\vert,h\,\Leb^1\mres I)}$ obeys $\MCP(\kappa,N)$ in the variable adaptation of Cavalletti--Milman \cite{cavalletti-milman2021}*{Def.~6.8}, cf.~Ketterer \cite{ketterer2017}*{Def.~4.4}, if and only if $h$ has an $\smash{\Leb^1\mres I}$-version which is an $\MCP(\kappa,N)$ density. The proof is the same as in the constant case by Cavalletti--Mondino  \cite{cavalletti-mondino2020-new}*{Lem.~2.13}, see also  Cavalletti--Milman  \cite{cavalletti-milman2021}*{Prop.~9.1}.
\item The function $h$ can be continuously extended to the closure of $I$ \cite{cavalletti-mondino2020-new}*{Rem. 2.14}. 
 It is locally Lipschitz continuous on $I^\circ$; in  particular, it is differentiable $\Leb^1\mres I$-a.e. However, unlike the stronger case of $\CD(\kappa,N)$ densities (cf.~\cref{Re:Prop CD}), the one-sided derivatives $\smash{h^\pm}$  need not exist as true limits, and there is no general relation between both quantities. Furthermore, either $h$  vanishes identically on $I$ or it is positive on $I^\circ$. These are qualitative statements which are either clear or can be deduced from  the constant case by using local uniform lower boundedness of $\kappa$.
\item If $\kappa$ is nonnegative and $I$ is all of $\R$, then $h$ is constant \cite{cavalletti-mondino2020-new}*{Lem.~2.17}.\hfill{\footnotesize{$\blacksquare$}}
\end{itemize}
\end{remark}

Through the requirement of finiteness of $h$, it is built into the definition that the involved distortion coefficients are  finite as soon as $h$ does not vanish identically\footnote{Since $h$ will typically come from a rescaled  probability density in the sequel, its vanishing will never  occur in every relevant case for trivial reasons.}; in this case, the diameter of $I$ necessarily does not exceed $\pi_\kappa$.

Lastly, let us recall the following quantitative fact. Its standard proof basically uses \cref{Re:Properties MCP} and follows as in the constant case, e.g.~Cavalletti--Milman \cite{cavalletti-milman2021}*{Lem.~A.8} and Cavalletti--Mondino \cite{cavalletti-mondino2020-new}*{Lem.~2.17}. For variable $\kappa$, the first claim has already been stated in  Braun--McCann \cite{braun-mccann2023}*{Cor.~6.40} without proof. 

\begin{lemma}[Comparison I, see also \cref{Le:CompII}]\label{Le:logarithmic derivative} Let $h$ be a positive $\MCP(\kappa,N)$ density on a bounded open interval $(a,b)$. Then for every $x,y\in (a,b)$ with $x<y$,
\begin{align*}
\frac{\SIN_{\kappa_{x,b}^-/(N-1)}(b-y)^{N-1}}{\SIN_{\kappa_{x,b}^-/(N-1)}(b-x)^{N-1}} \leq \frac{h(y)}{h(x)} \leq \frac{\SIN_{\kappa_{a,y}^+/(N-1)}(y-a)^{N-1}}{\SIN_{\kappa_{a,y}^+/(N-1)}(x-a)^{N-1}}.
\end{align*}

In particular, at $\Leb^1$-a.e.~$x\in[a, b]$ we have
\begin{align*}
-(N-1)\,\frac{\COS_{\kappa_{x,b}^-/(N-1)}(b-x)}{\SIN_{\kappa_{x,b}^-/(N-1)}(b-x)} \leq (\log h)'(x) \leq  (N-1)\,\frac{\COS_{\kappa_{a,x}^+/(N-1)}(x-a)}{\SIN_{\kappa_{a,x}^+/(N-1)}(x-a)}.
\end{align*}
\end{lemma}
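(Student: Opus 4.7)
The plan is to first establish the pointwise ratio bounds by applying the $\MCP(\kappa,N)$ density inequality from \cref{Def:MCP density} with appropriate auxiliary endpoints, passing to the limit as these endpoints approach the boundary of $(a,b)$, and then deduce the pointwise logarithmic derivative bounds by differentiating the ratio inequality as the two arguments coalesce.

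For the upper bound on $h(y)/h(x)$, fix $x<y$ in $(a,b)$ and an auxiliary point $a'\in (a,x)$. Writing $x$ as the convex combination $x=(1-t)\,y + t\,a'$ with $t=(y-x)/(y-a')$ and hence $1-t=(x-a')/(y-a')$, an application of \cref{Def:MCP density} with $(x_0,x_1)=(y,a')$ yields
\begin{align*}
h(x)^{1/(N-1)} \geq \sigma_{\kappa_{y,a'}^-/(N-1)}^{(1-t)}(y-a')\,h(y)^{1/(N-1)}.
\end{align*}
Unwinding \cref{Def:Dist coeff} rearranges this into a bound on $h(y)/h(x)$ by a ratio of $\SIN_{\kappa/(N-1)}$ values at $y-a'$ and $x-a'$, raised to the power $N-1$. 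The limit $a'\downarrow a$ is justified by two ingredients: the continuity of $h$ up to the boundary of $(a,b)$ from \cref{Re:Properties MCP}, together with the ODE stability of the generalized sine function under uniform convergence of potentials on compact subintervals (drawing on the regularity of $\SIN_\kappa$ summarized in \cref{Sub:Var dist coeff} and the lower semicontinuity of distortion coefficients from \cref{Re:Props dist coeff}). The lower bound on $h(y)/h(x)$ is obtained by a symmetric application of \cref{Def:MCP density} with $(x_0,x_1)=(x,b')$ for $b'\in(y,b)$ and intermediate point $y$, followed by the limit $b'\uparrow b$.

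For the pointwise logarithmic derivative bounds, I would take logarithms in the ratio inequalities at a point $x$ of differentiability of $\log h$ (guaranteed at $\Leb^1$-a.e.\ such $x$ by the local Lipschitzness of $h$ from \cref{Re:Properties MCP}), divide by $y-x>0$, and send $y\downarrow x$ for the upper bound and $y\uparrow x$ for the lower bound. The positivity of both $h$ and the relevant $\SIN_{\kappa/(N-1)}$ factors on the pertinent subintervals---afforded by \cref{Re:Properties MCP} and the $\pi_\kappa$-finiteness implicit in \cref{Def:MCP density}---makes the logarithm well-defined. The left-hand side converges to $(\log h)'(x)$. On the right, each $\SIN_{\kappa/(N-1)}$ factor is the generalized sine of a potential determined by $\kappa$ and a \emph{fixed} boundary endpoint ($a$ for the upper bound or $b$ for the lower), so the difference quotient reduces to a one-variable derivative of $\log \SIN_{\kappa/(N-1)}$ at $x-a$, respectively $b-x$; via \eqref{Eq:GenCOS} this equals $\COS_{\kappa/(N-1)}/\SIN_{\kappa/(N-1)}$ at $\Leb^1$-a.e.\ such point, matching the claimed bounds. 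The main technical obstacle is the joint limiting procedure in the $\SIN_\kappa$ factor, where both the argument and the effective domain of the potential vary with the parameter being sent to zero; this is controlled via the ODE stability for generalized sine functions under perturbation on a vanishingly small tail, using the local uniform lower bounds on $\kappa$ from \cref{Sub:Var dist coeff}.
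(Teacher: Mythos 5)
Your argument is correct and follows essentially the same route as the paper's proof: apply the defining $\MCP(\kappa,N)$ inequality with one endpoint at (or approaching) the boundary of $(a,b)$ and the evaluation point realized as the interior convex combination, unwind the distortion coefficient into a ratio of generalized sines, and then differentiate the resulting ratio bound at differentiability points of $h$ (which exist $\Leb^1$-a.e.\ by local Lipschitzness). The only differences are cosmetic: the paper plugs the boundary points $a$ and $b$ directly into \cref{Def:MCP density} (using the relabeling \eqref{Eq:t vs 1-t}) instead of passing through auxiliary interior points, and the ODE-stability machinery you invoke for the limits is unnecessary, since the potentials $\kappa_{a,y}^+$ and $\kappa_{x,b}^-$ do not actually vary with the auxiliary parameter---only their nominal domains do.
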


\begin{proof} To prove the first inequality, apply the certifying estimate from \cref{Def:MCP density} to $x_0 := x$ and $x_1 := b$; for every $t\in[0, 1]$, we get
\begin{align*}
h((1-t)\,x + t\,b)\geq \sigma_{\kappa_{x,b}^-/(N-1)}^{(1-t)}(b-x)^{N-1}\,h(x).
\end{align*}
We choose $t$ with $(1-t)\,x + t\,b = y$, or in other words $1-t = (b-y)/(b-x)$. Employing the definition of the involved distortion coefficient, this gives the desired inequality
\begin{align*}
h(y) \geq \frac{\SIN_{\kappa_{x,b}^-/(N-1)}(b-y)^{N-1}}{\SIN_{\kappa_{x,b}^-/(N-1)}(b-x)^{N-1}}\,h(x).
\end{align*}

The corresponding upper bound is argued analogously, this time by considering $x_0 := y$ and $x_1 := a$ while taking into account \eqref{Eq:t vs 1-t}.

The claimed bounds for $(\log h)'$ are now straightforward consequences of the already established inequalities and \cref{Re:Properties MCP}. The first holds at every differentiability point $x\in (a, b)$ of $h$ and therefore of $\log h$ since $h$ is  positive. The second is true at every differentiability point $y \in (a, b)$ of $h$.
\end{proof}

\begin{remark}[The constant case III, see also \cref{Re:Const sink cosk,Re:Const dist coeff}]\label{Re:Logarithmic derivative}  If  $\kappa$ is constant, by \cref{Re:Const sink cosk} the estimates from \cref{Le:logarithmic derivative}  simplify to the following. 
\begin{itemize}
\item In the case $\kappa > 0$,
\begin{align*}
&\frac{\sin(\sqrt{\kappa/(N-1)}\,(b-y))^{N-1}}{\sin(\sqrt{\kappa/(N-1)}\,(b-x))^{N-1}}\\ 
&\qquad\qquad \leq \frac{h(y)}{h(x)} \leq \frac{\sin(\sqrt{\kappa/(N-1)}\,(y-a))^{N-1}}{\sin(\sqrt{\kappa/(N-1)}\,(x-a))^{N-1}}
\end{align*}
and 
\begin{align*}
&-\sqrt{\kappa\,(N-1)} \,\frac{\cos(\sqrt{\kappa/(N-1)}\,(b-x))}{\sin(\sqrt{\kappa/(N-1)}\,(b-x))}\\
&\qquad\qquad  \leq (\log h)'(x)  \leq \sqrt{\kappa\,(N-1)}\,\frac{\cos(\sqrt{\kappa/(N-1)}\,(x-a))}{\sin(\sqrt{\kappa/(N-1)}\,(x-a))}.
\end{align*}
\item In the case $\kappa=0$,
\begin{align*}
\frac{(b-y)^{N-1}}{(b-x)^{N-1}} \leq \frac{h(y)}{h(x)} \leq \frac{(y-a)^{N-1}}{(x-a)^{N-1}}
\end{align*}
and
\begin{align*}
-(N-1)\,\frac{1}{b-x}\leq (\log h)'(x) &\leq (N-1)\,\frac{1}{x-a}.
\end{align*}
\item In the case $\kappa < 0$,
\begin{align*}
&\frac{\sinh(\sqrt{-\kappa/(N-1)}\,(b-y))^{N-1}}{\sinh(\sqrt{-\kappa/(N-1)}\,(b-x))^{N-1}}\\
&\qquad\qquad\leq \frac{h(y)}{h(x)} \leq \frac{\sinh(\sqrt{-\kappa/(N-1)}\,(y-a))^{N-1}}{\sinh(\sqrt{-\kappa/(N-1)}\,(x-a))^{N-1}}
\end{align*}
and
\begin{align*}
&-\sqrt{-\kappa\,(N-1)} \,\frac{\cosh(\sqrt{-\kappa/(N-1)}\,(b-x))}{\sinh(\sqrt{-\kappa/(N-1)}\,(b-x))}\\
&\qquad\qquad \leq (\log h)'(x)  \leq \sqrt{-\kappa\,(N-1)}\,\frac{\cosh(\sqrt{-\kappa/(N-1)}\,(x-a))}{\sinh(\sqrt{-\kappa/(N-1)}\,(x-a))}.\tag*{{\footnotesize{$\blacksquare$}}}
\end{align*}
\end{itemize}
\end{remark}

The following qualitative result we will repeatedly use later  is taken from Cavalletti--Milman \cite{cavalletti-milman2021}*{Lem.~A.8} and Cavalletti--Mondino \cite{cavalletti-mondino2020-new}*{Lems. 2.15, 2.16}.

\begin{lemma}[A priori estimates]\label{Le:Bounds} Let $h$ be a given $\MCP(\kappa,N)$ density on a nonempty, bounded, and open interval $(a,b)$, where $\kappa$ is a negative  \emph{constant}. Then the subsequent a priori estimates hold.
\begin{enumerate}[label=\textnormal{(\roman*)}]
\item \textnormal{\textbf{Zeroth order uniform bound.}} We have
\begin{align*}
\sup_{x\in (a,b)} h(x) \leq \frac{1}{b-a}\,\Big[\!\int_0^1\sigma_{\kappa/(N-1)}^{(r)}(b-a)^{N-1}\d r\Big]^{-1} \int_{[a,b]} h\d\Leb^1.
\end{align*}
\item \textnormal{\textbf{First order integral bound.}} There is a function $\smash{C_\cdot^{\kappa,N}}$ on $(0,\infty)$ which certifies the subsequent properties. It is nondecreasing, uniformly bounded on $(0,R)$ for every $R>0$, we have $\smash{C^{\kappa,N}_r\to \infty}$ as $r\to \infty$, and
\begin{align*}
\int_{[a,b]} \vert h'\vert\d \Leb^1 \leq \frac{1}{b-a}\,C_{b-a}^{\kappa,N} \int_{[a,b]} h\d\Leb^1.
\end{align*}
\end{enumerate}
\end{lemma}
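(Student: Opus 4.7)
My plan is to treat the two parts of the lemma in turn, both of which rest on the MCP inequality together with the monotonicity of the coefficients $\sigma^{(r)}_{\kappa/(N-1)}(\theta)$ in $\theta$ that comes from $\kappa < 0$.

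For (i), fix an arbitrary interior point $y \in (a,b)$ and apply \cref{Def:MCP density} with $x_0 = y$ and $x_1 \in (a,b)$; since $\kappa$ is constant, $\smash{\kappa^-_{y,x_1}}\equiv \kappa$. Raising the defining inequality to the power $N-1$, integrating in $t\in [0,1]$ along $z=(1-t)y+tx_1$, letting $x_1 \to b^-$ (licit by the continuous extension of $h$ to the closure from \cref{Re:Properties MCP}), and substituting $r=1-t$, I would arrive at
\begin{align*}
\int_y^b h(z)\d z \geq h(y)\,(b-y) \int_0^1 \sigma^{(r)}_{\kappa/(N-1)}(b-y)^{N-1}\d r =: h(y)\,F(b-y).
\end{align*}
The mirror estimate with $x_1 \to a^+$ gives $\int_a^y h \geq h(y)\,F(y-a)$, and adding produces $\int_a^b h \geq h(y)\,[F(y-a)+F(b-y)]$. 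Since $\kappa<0$, \cref{Re:Props dist coeff} ensures $\sigma^{(r)}_{\kappa/(N-1)}(\theta)$ is nonincreasing in $\theta$, so $\theta\mapsto F(\theta)/\theta$ is nonincreasing; the standard two-line argument
\begin{align*}
F(\theta_1) + F(\theta_2) = \theta_1\,\frac{F(\theta_1)}{\theta_1} + \theta_2\,\frac{F(\theta_2)}{\theta_2} \geq (\theta_1+\theta_2)\,\frac{F(\theta_1+\theta_2)}{\theta_1+\theta_2} = F(\theta_1+\theta_2)
\end{align*}
then yields superadditivity; applied with $\theta_1=y-a,\theta_2=b-y$, it gives a $y$-uniform upper bound for $h(y)$ of exactly the stated form. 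Taking the supremum closes (i).

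For (ii), write $c := \sqrt{-\kappa/(N-1)}$, $\phi(x) := \sinh(c(x-a))^{N-1}$, $\psi(x) := \sinh(c(b-x))^{N-1}$. The two-sided density ratio bound in \cref{Le:logarithmic derivative} is equivalent to the statement that $x\mapsto h(x)/\psi(x)$ is nondecreasing and $x\mapsto h(x)/\phi(x)$ is nonincreasing on $(a,b)$. Splitting at the midpoint $m:=(a+b)/2$, on $[a,m]$ I would decompose $h = (h/\psi)\cdot\psi$: the factor $\psi$ is positive, decreasing, and bounded, while $h/\psi$ is nondecreasing and bounded above by $(h/\psi)(m) \leq h(m)/\psi(m)$. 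The standard BV product rule then gives
\begin{align*}
\TV(h;[a,m]) \leq \sup_{[a,m]}\psi \cdot \TV(h/\psi;[a,m]) + \sup_{[a,m]}(h/\psi) \cdot \TV(\psi;[a,m]),
\end{align*}
where all four quantities are explicit in $\sinh, \cosh$ evaluated at $c(b-a)$ and $c(b-a)/2$, and $h(m)$ is controlled through (i). The mirror decomposition $h = (h/\phi)\cdot \phi$ on $[m,b]$ delivers the analogous bound, and summing yields $\int_a^b |h'|\d\Leb^1 = \TV(h;[a,b])$ dominated by $\smash{C^{\kappa,N}_{b-a}/(b-a)}$ times $\int_a^b h\d\Leb^1$, with $\smash{C^{\kappa,N}_{b-a}}$ the explicit combination of $\cosh(c(b-a)/2)^{N-1}$ and the reciprocal of the $F$-integral appearing in (i). A direct inspection shows $\smash{C^{\kappa,N}_\cdot}$ is nondecreasing (both factors are), bounded on bounded intervals (both factors limit to finite values as $b-a\to 0$), and diverges as $b-a\to\infty$ (since $\cosh^{N-1}$ grows exponentially while the $F$-integral decays at most polynomially).

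The main obstacle is less conceptual than bookkeeping: the point of the product decomposition in (ii) is to neutralize the blow-up of $(\log h)'$ near $a$ (respectively near $b$) by pairing it against the factor $\phi$ (respectively $\psi$) which vanishes there; checking that the resulting total variation is finite up to the closed endpoints, and then extracting the explicit form of $C_{b-a}^{\kappa,N}$ with all three monotonicity/growth properties, requires care but no new ideas.
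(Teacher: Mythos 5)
Your argument is correct, and it is essentially the standard one: the paper itself gives no proof of this lemma but refers to Cavalletti--Milman and Cavalletti--Mondino, and your part (i) (integrate the MCP inequality toward each endpoint, then use that $\theta\mapsto F(\theta)/\theta$ is nonincreasing for $\kappa<0$ to get superadditivity of $F$) is exactly their argument, while your part (ii) repackages their first-order estimate via the BV product rule applied to the monotone factorizations $h=(h/\psi)\psi$ on $[a,m]$ and $h=(h/\phi)\phi$ on $[m,b]$, which rests on the same two-sided density ratio bounds of \cref{Le:logarithmic derivative} and yields a constant with the three required qualitative properties. One small remark: your closing sentence has the pairing backwards — on $[a,m]$ you (correctly) use the factor $\psi$, which vanishes at $b$ and is therefore bounded \emph{away} from zero on $[a,m]$; the point is not to pair the blow-up of $(\log h)'$ near $a$ with the factor vanishing there, but to avoid the vanishing factor on each half-interval so that all four quantities in the product rule are finite. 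This does not affect the computation, which is sound.
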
 

\subsubsection{CD densities}\label{Sub:CD density} 

\begin{definition}[CD density]\label{Def:CD density} A Borel function $h$ on $I$  is  termed a \emph{$\CD(\kappa,N)$ density} if it is nonnegative and every $x_0,x_1\in I$ and every $t\in[0,1]$ satisfy
\begin{align*}
h((1-t)\,x_0 + t\,x_1)^{1/(N-1)} &\geq \sigma_{\kappa_{x_0,x_1}^-/(N-1)}^{(1-t)}(\vert x_1-x_0\vert)\,h(x_0)^{1/(N-1)}\\
&\qquad\qquad + \sigma_{\kappa_{x_0,x_1}^+/(N-1)}^{(t)}(\vert x_1-x_0\vert)\,h(x_1)^{1/(N-1)}.
\end{align*}
\end{definition}

\begin{remark}[Properties of CD densities]\label{Re:Prop CD} Let $h$ be a $\CD(\kappa,N)$ density on $I$.
\begin{itemize}
\item Evidently, $h$ forms an $\MCP(\kappa,N)$ density. In particular, it satisfies all qualitative properties from \cref{Re:Properties MCP}.
\item More strongly, $h$ is locally semiconcave on $I^\circ$. In turn, the one-sided derivatives $\smash{h'^\pm}$ exist as true limits everywhere. Furthermore, $h'^+$ exists at $\inf I$ and is right-continuous on $I$; $h'^-$ exists at $\sup I$ and is left-continuous on $I$, cf.~Burtscher--Ketterer--McCann--Woolgar \cite{burtscher-ketterer-mccann-woolgar2020}*{Rem.~2.12}. Lastly, $\smash{h'^+\leq h'^-}$ everywhere on $I$, taking into account our convention \eqref{Eq:CONVEN}.
\item The metric measure space $\smash{(\bar{I},\vert\cdot-\cdot\vert, h\,\Leb^1\mres \bar{I})}$ obeys the finite-dimensional variable $\CD(\kappa,N)$  introduced by Ketterer \cite{ketterer2017}*{Def.~4.4} if and only if $h$ has an $\smash{\Leb^1\mres I}$-version which is a $\CD(\kappa,N)$ density.\hfill{\footnotesize{$\blacksquare$}}
\end{itemize}
\end{remark}

A standard characterization is the following, cf.~Cavalletti--Milman \cite{cavalletti-milman2021}*{Lem.~A.3} for constant $\kappa$. Its integrated form later induces our  Bochner inequality, cf.~\cref{Th:From TCD to Bochner}.

\begin{lemma}[Differential characterization]\label{Le:Diffchar} A positive locally semiconcave function $h$ constitutes a $\CD(\kappa,N)$ density on an open interval $I$ if and only if at every twice  differen\-tiability point of $h$ in $I$,
\begin{align*}
(\log h)'' + \frac{1}{N-1}\,\big\vert (\log h)'\big\vert^2 = (N-1)\,\frac{(h^{1/(N-1)})''}{h^{1/(N-1)}} \leq -\kappa.
\end{align*}
\end{lemma}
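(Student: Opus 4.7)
Setting $g := h^{1/(N-1)}$ so that $h = g^{N-1}$ and $\log h = (N-1)\,\log g$, a direct calculus gives $(\log h)' = (N-1)\,g'/g$ and $(\log h)'' = (N-1)(g''/g - (g'/g)^2)$, whence
\begin{align*}
(\log h)'' + \frac{1}{N-1}\,\big\vert(\log h)'\big\vert^2 = (N-1)\,\frac{g''}{g} = (N-1)\,\frac{(h^{1/(N-1)})''}{h^{1/(N-1)}}.
\end{align*}
This establishes the stated equality. Since $h$ is positive and locally semiconcave, $g$ inherits twice differentiability at $\Leb^1$-a.e.~point of $I$ by Alexandrov's theorem in one variable. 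The claim therefore reduces to proving that $h$ is a $\CD(\kappa,N)$ density if and only if the Jacobi-type pointwise inequality
\begin{align*}
g''(x) + \frac{\kappa(x)}{N-1}\,g(x) \leq 0 \qquad (\star)
\end{align*}
holds at every twice differentiability point $x \in I$.

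For the forward direction I plan to fix such an $x$ and apply the $\CD(\kappa,N)$ concavity from \cref{Def:CD density} with $x_0 := x - \epsilon$, $x_1 := x + \epsilon$, $t := 1/2$. Taylor expansions
\begin{align*}
g(x\pm \epsilon) &= g(x) \pm g'(x)\,\epsilon + \tfrac{1}{2}\,g''(x)\,\epsilon^2 + o(\epsilon^2),\\
\sigma_k^{(1/2)}(2\epsilon) &= \tfrac{1}{2} + \tfrac{1}{4}\,k\,\epsilon^2 + o(\epsilon^2)
\end{align*}
applied with $k = \kappa^\pm_{x_0,x_1}/(N-1)$ --- whose averaged limits as $\epsilon \downarrow 0$ are governed by the lower semicontinuity of $\kappa$ at $x$ --- yield, after balancing the second-order terms on the two sides, exactly $(\star)$.

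For the converse, fix $x_0 < x_1$ in $I$, let $L := x_1 - x_0$, $\gamma(t) := (1-t)\,x_0 + t\,x_1$, and introduce the Jacobi-type reference
\begin{align*}
v(t) := \sigma_{\kappa^-_{x_0,x_1}/(N-1)}^{(1-t)}(L)\,g(x_0) + \sigma_{\kappa^+_{x_0,x_1}/(N-1)}^{(t)}(L)\,g(x_1).
\end{align*}
By construction $v$ is the unique distributional solution on $(0,1)$ of the linear Jacobi equation $v'' + L^2\,\kappa(\gamma(\cdot))/(N-1)\,v = 0$ with boundary values $g(x_0)$ and $g(x_1)$. Meanwhile $u(t) := g(\gamma(t))$ matches these boundary values and, from $(\star)$, satisfies $u'' + L^2\,\kappa(\gamma(\cdot))/(N-1)\,u \leq 0$ at $\Leb^1$-a.e.~$t \in (0,1)$; semiconcavity upgrades this to a distributional inequality. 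Consequently $y := u - v$ vanishes at the endpoints and is a distributional supersolution of the same linear equation. Restricting to the regime in which the reference distortion coefficients are finite (outside of which the concavity inequality is vacuous), the Sturm--Picone oscillation theorem cited in the excerpt forbids $y$ from going negative inside $(0,1)$, forcing $y \geq 0$; evaluating at $t$ produces the $\CD(\kappa,N)$ concavity.

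The main obstacle is making this last comparison rigorous when $\kappa$ is only lower semicontinuous and $(\star)$ holds just a.e. My plan is a two-step approximation: first replace $\kappa$ by a continuous potential $\kappa_j$ with $\kappa_j \nearrow \kappa$, establish $\CD(\kappa_j,N)$ concavity for each $j$, and pass to the limit using the monotone lower semicontinuous dependence $\sigma_{\kappa_j}^{(t)}(\theta) \nearrow \sigma_\kappa^{(t)}(\theta)$ recalled in \cref{Re:Props dist coeff}; second, mollify $g$ to a $C^2$ approximant so that Sturm--Picone applies classically, and exploit semiconcavity to send the mollification parameter to zero on both sides of the resulting inequality.
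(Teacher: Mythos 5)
The paper offers no proof of this lemma at all --- it is stated as a ``standard characterization'' with a pointer to Cavalletti--Milman's Lem.~A.3 for constant $\kappa$ (and, implicitly, Ketterer's variable-curvature version) --- so there is nothing internal to compare against; what you have written is precisely the standard argument those references carry out. Your reduction via $g:=h^{1/(N-1)}$ and the identity $(\log h)''+\tfrac{1}{N-1}\vert(\log h)'\vert^2=(N-1)\,g''/g$ is correct, the midpoint Taylor expansion gives the forward direction (to handle mere lower semicontinuity of $\kappa$, use the monotonicity $\smash{\sigma_{\kappa'}^{(t)}\leq\sigma_{\kappa}^{(t)}}$ for $\kappa'\leq\kappa$ from \cref{Re:Props dist coeff} to compare against the constant potential $\kappa(x)-\delta$ on a small neighborhood and then let $\delta\to0$, rather than appealing vaguely to ``averaged limits''), and the converse via the Jacobi reference function $v$ and the disconjugacy maximum principle (substitute $y=wz$ with $w>0$ a solution) is the right mechanism, including the two-step approximation of $\kappa$ and of $g$. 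One point needs repair: your parenthetical that the $\CD(\kappa,N)$ inequality is ``vacuous'' when the distortion coefficients are infinite is wrong --- since $h$ is positive and finite, an infinite coefficient would make the inequality \emph{false}, not vacuous. In the converse direction you must therefore additionally observe that the a.e.\ inequality $g''+\tfrac{\kappa}{N-1}\,g\leq0$ together with the positivity of $g$ makes $g$ a positive (distributional) supersolution of the Jacobi equation on every subinterval, which by Sturm comparison forces disconjugacy and hence finiteness of the relevant $\sigma$'s; this uses only the tools you already invoke, so the gap is minor and easily closed.
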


\begin{remark}[Local-to-global property \cite{cavalletti-milman2021,cavalletti-sturm2012}]\label{Re:Localtoglobal} An important direct  consequence from \cref{Le:Diffchar} is the following. Suppose an open interval $I$ can be covered by open intervals on each of  which $h$ is a $\CD(\kappa,N)$ density. Then $h$ is a $\CD(\kappa,N)$ density on $I$.\hfill{\footnotesize{$\blacksquare$}}
\end{remark}

\subsubsection{Jacobian function}\label{Sub:Ball Jac} Finally, we provide variable adaptations of constructions appearing in  Ketterer \cite{ketterer2020-heintze-karcher} and Burtscher--Ketterer--McCann--Woolgar \cite{burtscher-ketterer-mccann-woolgar2020} for constant $\kappa$ (see the references therein for prior work, especially Milman \cite{milman2015}). 

For a real number $\lambda$, we define the \emph{potential  function} $\smash{\sfP_{\kappa,\lambda}}$ on $\R$ through
\begin{align*}
\sfP_{\kappa,\lambda}(\theta) := \COS_\kappa(\theta) + \lambda\SIN_\kappa(\theta).
\end{align*}
For instance, if $\kappa$ is constant, by \cref{Re:Const sink cosk} we have
\begin{align*}
\sfP_{\kappa,\lambda}(\theta) = \begin{cases} \cos(\sqrt{\kappa}\,\theta) + \lambda\sqrt{\kappa}^{-1}\sin(\sqrt{\kappa}\,\theta) &\textnormal{if }\kappa>0,\\
1 + \lambda\,\theta & \textnormal{if }\kappa=0,\\
\cosh(\sqrt{-\kappa}\,\theta) + \lambda\sqrt{-\kappa}^{-1}\sinh(\sqrt{-\kappa}\,\theta) & \textnormal{otherwise}.
\end{cases}
\end{align*}

\begin{remark}[Transformation under reflection]\label{Re:Trafo refl} In the previous setting, the symmetry and antisymmetry properties of $\cos_\kappa$ and $\sin_\kappa$ on $\R$, respectively, easily entail the formulas $\sfP_{\kappa,\lambda}(-\theta) = \sfP_{\kappa,-\lambda}(\theta)$ and $\sfP_{\kappa,\lambda}'(-\theta) = -\sfP_{\kappa,-\lambda}(\theta)$.\hfill{\footnotesize{$\blacksquare$}}
\end{remark}

We say the pair $(\kappa,\lambda)$ obeys the \emph{ball condition} if $\smash{\sfP_{\kappa,\lambda}}$ has a positive root we denote by $\theta_{\kappa,\lambda}$. We conventionally set $\theta_{\kappa,\lambda} := \infty$ if the ball condition does not hold for $(\kappa,\lambda)$.

\begin{definition}[Jacobian function]\label{Def:Jacobian} For a real number $H$, define the \emph{Jacobian function} $\smash{\sfJ_{\kappa,N,H}}$  by the assignment
\begin{align*}
\sfJ_{\kappa,N,H} := \big[\sfP_{\kappa/(N-1), H/(N-1)} \big]_+^{N-1}.
\end{align*}
\end{definition}

In the setting of \cref{Def:Jacobian} and before, the number $\theta_{\kappa/(N-1),H/(N-1)}$ is precisely the maximal length $L$ of the interval $(0,L)$ on which $\smash{\sfJ_{\kappa,N,H}}$ is positive. It is finite if and only if the pair $\smash{(\kappa/(N-1),H/(N-1))}$ obeys the ball condition. In particular, since the Jacobian function $\smash{\sfJ_{\kappa,N,H}}$ is symmetric, by definition its open positivity interval around the origin is enclosed by $\smash{\pm \theta_{\kappa/(N-1),H/(N-1)}}$.

We state the following results without proof. By employing  \cref{Le:Diffchar}, \cref{Re:Properties MCP},  and the formula \eqref{Eq:GenCOS}, with evident modifications the arguments from the constant setting carry over to the variable case; cf.~Burtscher--Ketterer--McCann--Woolgar \cite{burtscher-ketterer-mccann-woolgar2020}*{Lems.~3.1, 4.1, Rem.~3.2} and Ketterer \cite{ketterer2020-heintze-karcher}*{Cor.~4.3}.

\begin{lemma}[Comparison II, see also \cref{Le:logarithmic derivative}]\label{Le:CompII} Let $h$ constitute a $\CD(\kappa,N)$ density on a compact interval $[a,b]$ such that $h$ is positive on $[a,b)$, where $a\leq 0 < b$. Assume $\smash{H^+ := (\log h)'^+(0)}$ is a real number\footnote{This holds e.g.~if $a$ is negative.}. Then on $(0,b)$ we have 
\begin{align*}
h \leq h(0)\,\sfJ_{\kappa,N, H^+}
\end{align*}
and in particular $b\leq \theta_{\kappa/(N-1),H^+/(N-1)}$.
\end{lemma}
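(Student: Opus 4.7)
The plan is a Sturm-type comparison between $g := h^{1/(N-1)}$ and an explicit solution of the linearized ODE, matched at $0$ and carried out distributionally in view of the low regularity. Set $\phi := g(0)\,\sfP_{\kappa/(N-1),H^+/(N-1)}$. The claim of the lemma reduces to $g \leq \phi$ on the intersection of $(0,b)$ with $\{\phi > 0\}$: raising to the $(N-1)$-th power yields $h \leq h(0)\,\sfJ_{\kappa,N,H^+}$, and if $b$ exceeded $\theta := \theta_{\kappa/(N-1),H^+/(N-1)}$, then $g(\theta) \leq \phi(\theta) = 0$ would contradict the positivity of $h$ on $[a,b)$.

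First, by construction of the generalized sine and cosine functions, $\phi$ solves $\phi'' + (\kappa/(N-1))\,\phi = 0$ distributionally on $\R$, with $\phi(0)=g(0)$ and $\phi'^+(0)=g(0)\,H^+/(N-1)$. Since $h$ is positive at $0$, the chain rule gives $H^+ = (\log h)'^+(0) = (N-1)\,g'^+(0)/g(0)$, so $\phi'^+(0)=g'^+(0)$; the right derivatives exist by \cref{Re:Prop CD}. Moreover, $g$ is locally semiconcave and positive on $[a,b)$ because $h$ is and because $t \mapsto t^{1/(N-1)}$ is smooth and concave on $(0,\infty)$. By \cref{Le:Diffchar}, at every twice differentiability point of $g$ in $(a,b)$,
\[
g'' + \frac{\kappa}{N-1}\,g \leq 0,
\]
and combined with semiconcavity (which forces the singular part of the distributional second derivative to be nonpositive), this holds in the sense of distributions on $(a,b)$.

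Next, form the Wronskian $W := g'^+\,\phi - g\,\phi'^+$. A direct computation gives, distributionally,
\[
W' = g''\,\phi - g\,\phi'' \leq -\frac{\kappa}{N-1}\,g\,\phi + \frac{\kappa}{N-1}\,g\,\phi = 0,
\]
so $W$ is nonincreasing. Since $W(0)=0$ by the matching above, $W\leq 0$ on the overlap of $(0,b)$ with $\{\phi > 0\}$, which rewrites as $(g/\phi)'^+ \leq 0$. Hence $g/\phi$ is nonincreasing, and from $g(0)/\phi(0)=1$ we conclude $g \leq \phi$ as required.

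\emph{Main obstacle.} The principal subtlety is the rigorous distributional justification of the Wronskian monotonicity when neither $g$ nor $\phi$ is $\Cont^2$ (both are only locally semiconcave, and $\kappa$ is merely lower semicontinuous). One implementation is to approximate $\kappa$ from below by continuous potentials, carry out the classical argument for each approximation, and pass to the limit using the lower semicontinuity of the distortion coefficients recorded in \cref{Re:Props dist coeff}. An alternative is to decompose $g''$ into its absolutely continuous and singular parts --- the singular part being a nonpositive Radon measure by semiconcavity of $g$ --- and check directly that both contribute nonpositively to $W'$ as a distribution.
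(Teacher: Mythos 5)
Your overall strategy --- a Sturm/Wronskian comparison of $g:=h^{1/(N-1)}$ against the model function, matched in value and right derivative at $0$, carried out distributionally via \cref{Le:Diffchar} and semiconcavity --- is the standard route and is exactly what the paper intends (it states the lemma without proof, deferring to the constant-curvature arguments of Burtscher--Ketterer--McCann--Woolgar and Ketterer; your Wronskian monotonicity is the integrated form of the Riccati comparison in \cref{Le:Riccati}). The matching at $0$, the reduction of the diameter bound to positivity of $h$, and the treatment of the singular part of $g''$ are all fine.

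The gap is the assertion that $\phi=g(0)\,\sfP_{\kappa/(N-1),H^+/(N-1)}$ solves $\phi''+(\kappa/(N-1))\phi=0$ distributionally. By \eqref{Eq:GenCOS} one has $\COS_\kappa=\SIN_\kappa'$, and the derivative of a solution of $u''+\kappa u=0$ is not itself a solution unless $\kappa$ is constant: formally $\COS_\kappa''+\kappa\,\COS_\kappa=-(\kappa\SIN_\kappa)'+\kappa\SIN_\kappa'=-\kappa'\,\SIN_\kappa$. Since your estimate $W'\le 0$ in fact requires $W'\le -g\,\bigl(\phi''+(\kappa/(N-1))\phi\bigr)\le 0$, i.e.\ that $\phi$ be a \emph{sub}solution, the argument only closes where $\kappa$ is nonincreasing; for increasing $\kappa$ the Wronskian drifts the wrong way. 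The failure is not merely technical. Take $\kappa/(N-1)=0$ on $[0,1]$ and $=1$ on $(1,\infty)$, and let $c$ be the even solution of $c''+(\kappa/(N-1))c=0$ with $c(0)=1$, $c'(0)=0$, so $c\equiv 1$ on $[0,1]$ and $c(r)=\cos(r-1)$ thereafter; then $h:=c^{N-1}$ is a $\CD(\kappa,N)$ density (it satisfies \cref{Le:Diffchar} with equality) with $H^+=0$, positive up to $b=1+\pi/2$, whereas $\sfP_{\kappa/(N-1),0}(r)=\cos(r-1)-\sin(r-1)<c(r)$ for $r>1$ and vanishes already at $1+\pi/4$. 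So neither your approximation by continuous potentials nor the decomposition of $g''$ into absolutely continuous and singular parts can repair the step as written: the comparison function must be the genuine even solution of the initial value problem (with which your Wronskian argument is the correct and classical one), and this is precisely the point at which the ``evident modification'' from constant to variable $\kappa$ is not evident.
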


\begin{remark}[Reverse parametrization] If we have instead $a<0\leq b$ in \cref{Le:CompII}, the last conclusion therein translates into $a\geq -\theta_{\kappa/(N-1),H^-/(N-1)}$ with $\smash{H^- := (\log h)'^-(0)}$, provided the latter is a real number.\hfill{\footnotesize{$\blacksquare$}}
\end{remark}

\begin{lemma}[Variable Riccati comparison]\label{Le:Riccati} For a real number $d$, let $v$ be a nonnegative and continuous distributional solution to the ODI $v'' + \kappa\,v\leq 0$ on a given interval  $[0,b)$  satisfying $v(0) = 1$ and $v'(0) \leq d$. Then $b\leq \theta_{\kappa,-d}$ and on $[0,b)$ we have
\begin{align*}
(\log v)'^+ \leq (\log \sfP_{\kappa,-d})'^+.
\end{align*}
\end{lemma}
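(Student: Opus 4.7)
The plan is to carry out a classical Sturm--Picone/Wronskian comparison against the model profile $w := \sfP_{\kappa,-d}$, which by construction satisfies $w'' + \kappa w = 0$ with $w(0) = 1$ and $w'^+(0) = -d$, is positive on $[0, \theta_{\kappa,-d})$, and (by the regularity discussion preceding \cref{Def:Dist coeff}) is locally Lipschitz with one-sided derivatives $w'^\pm$ existing everywhere on this interval. Throughout I abbreviate $b' := \min\{b, \theta_{\kappa,-d}\}$.

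First I would verify that $v > 0$ on $[0, b')$. By continuity this holds near $0$. If $t_0 \in (0, b')$ were a first zero of $v$, then $t_0$ is necessarily a minimum of $v$, and the supersolution inequality $v'' + \kappa v \leq 0$ together with $v(t_0) = 0$ is incompatible with a strict minimum: a short argument through the Lebesgue--Stieltjes measure of $v'^+$ (recalling \cref{Sub:Loc bdd var!}), or equivalently via a local comparison with the classical solution of $u'' + \kappa u = 0$ with $u(t_0) = 0$, forces $v \equiv 0$ in a left neighborhood of $t_0$, contradicting $v(0) = 1$ and continuity.

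Next I would introduce the Wronskian $W := vw' - v'w$ and use the distributional identity
\begin{align*}
W' \;=\; vw'' - v''w \;=\; -\kappa\,v\,w - v''\,w \;=\; -w\,(v'' + \kappa v),
\end{align*}
which is a nonnegative Radon measure on $[0,b')$ because $w > 0$ there and $v'' + \kappa v \leq 0$. Since $v,w$ are continuous and $v',w'^\pm$ are of locally bounded variation on $(0,b')$, $W$ admits a right-continuous nondecreasing representative. Evaluating at $0$ gives $W(0) = w'^+(0) - v'^+(0) = -d - v'(0)$, whose sign (under the intended convention) combines with monotonicity to yield $W \geq 0$ on $[0,b')$. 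Dividing by the positive product $vw$ converts $W \geq 0$ into the pointwise bound $(\log v)'^+ \leq (\log w)'^+ = (\log \sfP_{\kappa,-d})'^+$, establishing the stated inequality; integrating it yields $v \leq w$ on $[0,b')$. If $b$ exceeded $\theta_{\kappa,-d}$, then by continuity and $w \to 0$ at $\theta_{\kappa,-d}^-$ we would have $v(\theta_{\kappa,-d}) = 0$, contradicting the first step. Hence $b \leq \theta_{\kappa,-d}$.

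The main obstacle I anticipate is making the Wronskian identity rigorous at the distributional level, since $v''$ is only the nonpositive Radon measure $-\kappa v\,\Leb^1 - \mu$ for some nonnegative measure $\mu$, so the product $w\,v''$ must be read as pairing the continuous function $w$ against this measure, and the monotonicity of $W$ should be obtained through \cref{Th:Partial integration f} rather than by pointwise manipulation. The cleanest alternative, if direct handling proves delicate, is to mollify $v$ by $v_\epsilon := v \ast \rho_\epsilon$ on slightly contracted subintervals, run the Wronskian argument classically against $w$ (using $\kappa\ast\rho_\epsilon$ in place of $\kappa$ if needed), and recover the one-sided derivative bound by letting $\epsilon \downarrow 0$, using uniform convergence together with the lower semicontinuity property recorded in \cref{Re:Props dist coeff}.
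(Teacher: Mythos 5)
Your route — positivity of $v$ up to the first zero of the model, then the Wronskian $W=vw'-v'w$ read distributionally, then division by $vw$ — is exactly the standard Sturm/Riccati comparison. The paper in fact states \cref{Le:Riccati} without proof and defers to the constant-$\kappa$ versions in Burtscher--Ketterer--McCann--Woolgar and Ketterer, which argue precisely this way; so in spirit you are on the intended path, and your handling of the measure-theoretic issues (pairing the continuous $w$ against the measure $v''$, monotonicity of the right-continuous representative of $W$ via \cref{Th:Partial integration f}, mollification as a fallback) is appropriate.

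There is, however, one point you cannot leave to "the intended convention": it is doing all the work. With $w:=\sfP_{\kappa,-d}$ you correctly get $w'^+(0)=-d$ (since $\COS_\kappa'^+(0)=0$ and $\SIN_\kappa'^+(0)=1$), hence $W(0)=-d-v'(0)$, and the stated hypothesis $v'(0)\leq d$ gives only $W(0)\geq -2d$, which has no sign for $d>0$. Indeed the lemma as literally printed is false for $d>0$: take $\kappa\equiv 0$, $v(t)=1+t$, $d=1$; then $\sfP_{0,-1}(t)=1-t$, $\theta_{0,-1}=1$, and neither $b\leq 1$ nor $(1+t)^{-1}\leq -(1-t)^{-1}$ holds. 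The statement carries a sign typo: the comparison profile must be $\sfP_{\kappa,d}$ (whose initial logarithmic derivative is $d$, matching the hypothesis $v'(0)\leq d$), and the conclusion should read $b\leq\theta_{\kappa,d}$ and $(\log v)'^+\leq(\log\sfP_{\kappa,d})'^+$ — this is exactly how the lemma is invoked in \cref{Le:CompII} and in the proof of \cref{Th:Mean curv bd equiv}, where the bound $(\log h)'^+(0)\leq H$ is converted into comparison with $\sfP_{\cdot,H/(N-1)}$, not $\sfP_{\cdot,-H/(N-1)}$. You should say this explicitly and run your argument with $w=\sfP_{\kappa,d}$, for which $W(0)=d-v'(0)\geq 0$ and everything closes. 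A smaller remark on your first step: the phrase "incompatible with a strict minimum" is not quite the right mechanism; what actually happens at a first zero $t_0$ is that $v\geq 0$ forces $v'^-(t_0)\leq 0\leq v'^+(t_0)$ while the nonpositivity of the singular part of $v''$ forces $v'^+(t_0)\leq v'^-(t_0)$, so both vanish, and a Gronwall estimate using a local lower bound on $\kappa$ then gives $v\equiv 0$ on a left neighborhood, contradicting $v>0$ on $[0,t_0)$. Note also that this argument gives $v>0$ on all of $[0,b)$ (not just $[0,b')$), which is what your final contradiction at $\theta_{\kappa,d}$ actually uses.
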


Recall from \cref{Sub:Var dist coeff} that all right and left  logarithmic  derivatives appearing in the above statements  are always well-defined.

\subsection{Metric measure spacetimes}\label{Sub:MMS} We will work on  (globally hyperbolic, regular length) \emph{metric measure spacetimes} as set up by Braun--McCann \cite{braun-mccann2023}, which are reviewed in this section. Some of the properties we require a priori in \cref{Def:MMS} actually follow from weaker hypotheses. The setup of \cite{braun-mccann2023} was based on Minguzzi--Suhr's  theory of bounded Lorentzian metric spaces \cite{minguzzi-suhr2022}, which itself adapts Kunzinger--Sämann's Lorentzian length spaces  \cite{kunzinger-samann2018}. We refer to McCann \cite{mccann2023-null} and Beran et al.~\cite{beran-braun-calisti-gigli-mccann-ohanyan-rott-samann+-} for related discussions. We stress \cref{Def:MMS} includes globally hyperbolic, regular Lorentz\-ian length spaces in the terminology of Kunzinger--Sämann \cite{kunzinger-samann2018}. We assume the reader to have  a basic knowledge of causality theory, cf.~e.g.~Minguzzi \cite{minguzzi2019-causality}, and will thus be brief with precise definitions and instead fix nomenclature and notation.

A function $\smash{l\colon\mms^2\to [0,\infty)\cup\{-\infty\}}$ on a topological space $\mms$ is called a \emph{signed time separation function}  if 
\begin{itemize}
\item it is upper semicontinuous,
\item its positive part $l_+$ is continuous,
\end{itemize}
and every triple $x,y,z\in\mms$ satisfies
\begin{itemize}
\item $l(x,y) > -\infty$ and $l(y,x)>-\infty$ simultaneously if and only if $x=y$, and
\item the \emph{reverse triangle inequality} 
\begin{align*}
l(x,z) \geq l(x,y) + l(y,z).
\end{align*}
\end{itemize}
We point out the subtle additional assumption $l$ does not attain the value $\infty$.

We write $\leq$ and $\ll$ for the relations of  \emph{causality} and \emph{chronology}  on $\mms$ induced by $l$, respectively. We write $J := \{l\geq 0\}$ and $I := \{l>0\}$. $I$ will have empty intersection with the diagonal of $\smash{\mms^2}$ \cite{braun-mccann2023}*{Cor.~2.14}. For $x,y\in\mms$ and subsets $X$ and $Y$ of $\mms$, we  employ  the usual notations $\smash{J^+(x)}$,  $\smash{J^+(X)}$, $\smash{J^-(y)}$, $\smash{J^-(Y)}$, $J(x,y)$, and $J(X,Y)$ for   causal futures, pasts, diamonds, and --- if $X$ and $Y$ are compact --- emeralds, respectively; analogously with all  occurrences  of $J$ and ``causal'' replaced by $I$ and  ``chronological'', respectively. 

A \emph{causal}, \emph{strictly causal}, or \emph{timelike} curve is a continuous curve\footnote{We use the terms ``curve'' and ``path'' synonymously and do \emph{not} a priori  include continuity therein.} $\gamma\colon[0,1]\to\mms$  along which $\leq$, $\leq$ plus nowhere constancy, or $\ll$ propagate, respectively. If $\gamma$ is not required to be continuous, we add the adjective \emph{rough} to these tags. 

We write $\Len_l$ for the length functional on rough causal curves induced by $l$.

\begin{definition}[Metric measure spacetime]\label{Def:MMS} A triple $\scrM := (\mms,l,\meas)$ is termed a \emph{metric measure spacetime} if it certifies the following properties.
\begin{enumerate}[label=\textnormal{\alph*.}]
\item \textnormal{\textbf{Space.}} $\mms$ is a Polish space.
\item \textnormal{\textbf{Nontrivial chronology} \cite{braun-mccann2023}*{Def.~2.7}\textbf{.}} Every $x\in\mms$ is the midpoint of a timelike curve.
\item \textnormal{\textbf{Length property} \cite{braun-mccann2023}*{Def.~2.7}\textbf{.}} If $x,y\in\mms$ satisfy $x\leq y$, then
\begin{align*}
l(x,y) = \sup\Len_l\gamma,
\end{align*}
where the supremum runs over all strictly causal curves $\gamma$ from $x$ to $y$. 
\item \textnormal{\textbf{Global hyperbolicity} \cite{minguzzi2023}*{Def.~1.1}\textbf{.}} The causality relation $\leq$ is a closed order and $J(C,C)$ is compact for every compact subset $C$ of $\mms$.
\item \textnormal{\textbf{Regularity} \cite{braun-mccann2023}*{Def.~2.40}\textbf{.}} Every strictly causal curve which  maximizes $\Len_l$ and connects chronologically related points is already timelike.
\item \textnormal{\textbf{Measure.}} The measure $\meas$ is a  Radon measure on $\mms$.
\end{enumerate}
\end{definition}

Moreover, to simplify the presentation we assume 
\begin{itemize}
\item $\mms$ is proper\footnote{\cref{Def:MMS} already makes $\mms$ locally compact yet noncompact \cite{braun-mccann2023}*{Cor.~2.17}.}, 
\item the measure $\meas$ has full topological support, symbolically $\supp\meas =\mms$, and 
\item in view of \cref{Sub:Iso pert} and \cref{Ex:timedistfunct} it will be convenient to assume $\meas$ assigns zero outer measure $\smash{\meas^*}$ to every achronal subset of $\mms$.
\end{itemize}
Recall a set is \emph{achronal} if it contains no two timelike related points.

We refer to Braun--McCann \cite{braun-mccann2023}*{Exs.~2.43, 2.44}, \cref{Ex:TCD}, and \cref{Sub:LorentzFinsler}  for examples covering \cref{Def:MMS}.

\begin{remark}[Time-reversal]\label{Re:Causal reversal} A  useful structure in the following is the \emph{time-reversal} of $l$, i.e.~the signed time separation function $\smash{l^\leftarrow\colon\mms^2\to [0,\infty)\cup\{-\infty\}}$ given by
\begin{align*}
l^\leftarrow(x,y) :=l(y,x).
\end{align*}

The metric measure spacetime induced by $\mms$, $\smash{l^\leftarrow}$, and $\meas$ is denoted $\smash{\scrM^\leftarrow}$ and termed the causally reversed structure of $\scrM$.\hfill{\footnotesize{$\blacksquare$}}
\end{remark}

\begin{definition}[Geodesics on $\mms$]\label{Def:GeosM} A curve $\gamma$ will be called an \emph{$l$-geodesic} if for every $s,t\in[0,1]$ with $s<t$, we have
\begin{align*}
l(\gamma_s,\gamma_t) = (t-s)\,l(\gamma_0,\gamma_1)>0.
\end{align*}
\end{definition}

The set of all $l$-geodesics will be denoted by $\TGeo(\mms)$.

A subset $E$ of $\mms$ is \emph{$l$-geodesically convex}  if every $\gamma\in\TGeo(\mms)$ whose endpoints lie in $E$  stays entirely within $E$.

An important point of regularity is that every $x,y\in\mms$ with $x\ll y$ are joined by an $l$-geodesic and in fact every maximizer of $\Len_l$ connecting $x$ and $y$ can be reparametrized to an $l$-geodesic, cf.~Braun--McCann  \cite{braun-mccann2023}*{Thm.~2.36, Prop.~2.37}. Regularity also ensures every element of $\TGeo(\mms)$ is continuous,  as first noted by McCann  \cite{mccann2023-null}*{Lem.~5}. Lastly, for every $r>0$ the set of all $\gamma\in\TGeo(\mms)$ with $l(\gamma_0,\gamma_1)\geq r$ is compact with respect to uniform convergence \cite{braun-mccann2023}*{Cor.~2.47}; in particular, $\TGeo(\mms)$ is $\sigma$-compact, hence Borel.

\begin{remark}[Relaxation] The machinery of Braun--McCann  \cite{braun-mccann2023} (and thus our paper) can also be developed if $\mms$ constitutes merely a causally convex closed subset of a metric measure spacetime. In particular, we may allow $\mms$ to be compact or to violate nontrivial chronology. To retain a clear presentation, we refrain from making this extraneous hypothesis.\hfill{\footnotesize{$\blacksquare$}}
\end{remark}

\subsection{Signed Lorentz distance functions}\label{Sec:Signed} An important class of functions we will consider below are signed Lorentz distance functions, which we shortly review here inspired by Treude--Grant \cite{treude-grant2013} and Cavalletti--Mondino \cite{cavalletti-mondino2020}. Their induced localizations have been used by the latter \cite{cavalletti-mondino2020} and later in Braun--McCann \cite{braun-mccann2023} to establish synthetic Hawking-type singularity theorems. In our context, such functions will become especially relevant in \cref{Sec:Formula2}. The results reviewed now are folklore, hence their elementary proofs are omitted.

Let $\Sigma$ be an achronal subset of $\mms$. Achronality yields  the well-definedness of the \emph{signed Lorentz distance function} $l_\Sigma\colon \mms \to \R \cup\{-\infty,\infty\}$, where
\begin{align*}
l_\Sigma(x) := \sup_{x-\in \Sigma} l_+(x-,x) - \sup_{x+\in \Sigma} l_+(x,x+).
\end{align*}
The function $\smash{\pm l_\Sigma}$ is clearly lower semicontinuous on $\smash{I^\pm(\Sigma)\cup \Sigma}$.

In the following, we consider the set
\begin{align}\label{Eq:ESigmaUSigma}
E_\Sigma := I^+(\Sigma) \cup \Sigma \cup I^-(\Sigma).
\end{align}

When $\Sigma$ is a singleton $\{o\}$, we write $l_\Sigma$ and $E_\Sigma$ as  $l_o$ and $E_o$, respectively.

\begin{lemma}[Geodesic convexity]\label{Le:Geod conv E} The set $\smash{E_\Sigma}$ is $l$-geodesically convex.
\end{lemma}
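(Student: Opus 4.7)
The plan is to argue by case analysis on where the endpoints $\gamma_0,\gamma_1$ of an arbitrary $\gamma\in\TGeo(\mms)$ sit in the decomposition $E_\Sigma = I^-(\Sigma)\cup\Sigma\cup I^+(\Sigma)$, crucially using that any $l$-geodesic satisfies $\gamma_s\ll\gamma_t$ whenever $s<t$. A preliminary observation is that achronality of $\Sigma$ forces this union to be disjoint: a point $z\in I^+(\Sigma)\cap I^-(\Sigma)$ would fit in a chain $x_-\ll z\ll x_+$ with $x_\pm\in\Sigma$, and transitivity of $\ll$ would then yield $x_-\ll x_+$ in contradiction with achronality; the same trick disposes of $\Sigma\cap I^\pm(\Sigma)$ and rules out the configuration $(\gamma_0,\gamma_1)\in I^+(\Sigma)\times I^-(\Sigma)$, together with every pairing of a $\Sigma$-endpoint with an endpoint on the ``wrong'' chronological side of $\Sigma$.

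The easy remaining configurations can then be dispatched uniformly. If $\gamma_0\in I^+(\Sigma)\cup\Sigma$, select $x_-\in\Sigma$ with $x_-\leq\gamma_0$; for every $t\in(0,1]$ one has $x_-\leq\gamma_0\ll\gamma_t$, placing $\gamma_t\in I^+(\Sigma)\subset E_\Sigma$, while $\gamma_0\in E_\Sigma$ is given. The mirror argument (or, formally, an appeal to the time-reversed structure $\scrM^\leftarrow$ of \cref{Re:Causal reversal}) handles $\gamma_1\in I^-(\Sigma)\cup\Sigma$: choose $x_+\in\Sigma$ with $\gamma_1\leq x_+$ and use $\gamma_t\ll\gamma_1\leq x_+$ to put $\gamma_t\in I^-(\Sigma)$ for $t<1$.

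The sole remaining configuration, $\gamma_0\in I^-(\Sigma)$ and $\gamma_1\in I^+(\Sigma)$, is the main obstacle. Here the sets $T^\pm := \{t\in[0,1]:\gamma_t\in I^\pm(\Sigma)\}$ are open in $[0,1]$ (openness of $I^\pm(\Sigma)$ follows from continuity of $l_+$), and the transitivity arguments above show that $T^-$ is downward-closed while $T^+$ is upward-closed; hence $T^-=[0,a)$ and $T^+=(b,1]$ with $a>0$ and $b<1$. Geodesic convexity then reduces to showing that the residual interval $[a,b]$, when nonempty, is mapped by $\gamma$ into $\Sigma$. I anticipate this step needs to supplement pure achronality with a completeness property of $\Sigma$ --- for instance, the timelike completeness of Galloway \cite{galloway1986} invoked elsewhere in the paper for the signed distance setup --- since mere achronality of an arbitrary $\Sigma$ is insufficient (a vertical timelike geodesic in Minkowski $\R^{1,1}$ with $\Sigma$ a single spacelike-separated point between its endpoints would violate the conclusion). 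Under such completeness, any timelike curve from $I^-(\Sigma)$ to $I^+(\Sigma)$ must cross $\Sigma$, pinning down some $t^*\in[a,b]$ with $\gamma_{t^*}\in\Sigma$ and closing the argument.
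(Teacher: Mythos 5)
Your treatment of the easy configurations is fine, and you are right that the entire content of the statement sits in the case $\gamma_0\in I^-(\Sigma)$, $\gamma_1\in I^+(\Sigma)$. But the resolution you propose there does not work: the claim you ultimately rely on --- that under timelike completeness every timelike curve from $I^-(\Sigma)$ to $I^+(\Sigma)$ must meet $\Sigma$ --- is false for a general achronal set, even a compact (hence TC) one. Take Minkowski space $\R^{1,1}$, $\Sigma=\{(1,\tfrac12)\}$, and the $l$-geodesic $\gamma_s=(2s,0)$. Then $\gamma_0=(0,0)\ll(1,\tfrac12)$ and $(1,\tfrac12)\ll(2,0)=\gamma_1$, so both endpoints lie in $E_\Sigma$, yet $\gamma_{1/2}=(1,0)$ is spacelike-separated from $(1,\tfrac12)$ and therefore lies in none of $I^+(\Sigma)$, $\Sigma$, $I^-(\Sigma)$. (Your own candidate counterexample, with $\Sigma$ a single point spacelike to the whole geodesic, is malformed for the opposite reason: there the endpoints would not lie in $E_\Sigma$ in the first place.) A timelike curve is forced to cross $\Sigma$ only when $\Sigma$ topologically separates the spacetime, e.g.\ for an edgeless achronal hypersurface; achronality plus TC is far from sufficient.

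So the gap is not merely in your write-up: as the example shows, the statement cannot be proved for an arbitrary achronal $\Sigma$, and your instinct that an extra hypothesis is needed was correct even though TC is not the right one. What does survive --- and is all that is used downstream (cf.~\cref{Ex:timedistfunct} and \cref{Sub:FRAME}) --- is the weaker $\sim_{l_\Sigma}$-convexity, where one only considers endpoints with $l_\Sigma(y)-l_\Sigma(x)=l(x,y)>0$; this hypothesis excludes exactly the ``bypassing'' geodesics above (in the example $l_\Sigma(\gamma_1)-l_\Sigma(\gamma_0)=\sqrt{3}<2=l(\gamma_0,\gamma_1)$). To salvage a genuine convexity statement you must either impose a separation property on $\Sigma$ (achronal boundary, edgelessness, Cauchy hypersurface) or weaken the conclusion to $\sim_{l_\Sigma}$-convexity and run your case analysis only for such maximizing configurations, where the reverse triangle inequality combined with \cref{Cor:Steepness} does force the intermediate points into $E_\Sigma$.
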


The following property will be tagged $1$-steepness in \cref{Sub:Iso pert} below.

\begin{corollary}[Steepness]\label{Cor:Steepness} For every $\smash{x,y\in E_\Sigma}$,
\begin{align*}
l_\Sigma(y) \geq l_\Sigma(x) + l(x,y).
\end{align*}
\end{corollary}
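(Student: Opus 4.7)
The plan is to case split on whether $x$ and $y$ belong to $I^+(\Sigma)\cup\Sigma$, to $I^-(\Sigma)\cup\Sigma$, or to opposite sides of $\Sigma$. The inequality is vacuous when $x\not\leq y$, since then $l(x,y)=-\infty$; so assume $x\leq y$ throughout. First I would record that achronality of $\Sigma$ makes $I^+(\Sigma)$, $\Sigma$, $I^-(\Sigma)$ pairwise disjoint, since any point in two of these sets immediately exhibits two distinct chronologically related elements of $\Sigma$. The same chronology argument also rules out the configuration $x\in I^+(\Sigma)\cup\Sigma$, $y\in I^-(\Sigma)$ with $x\leq y$: chaining $p\ll x\leq y\ll q$ (or $x\leq y\ll q$ when $x\in\Sigma$) with the reverse triangle inequality produces two chronologically related points of $\Sigma$.

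In the two ``same-side'' configurations, the reverse triangle inequality alone does the work. When $x,y\in I^+(\Sigma)\cup\Sigma$, disjointness forces $\sup_{x+\in\Sigma}l_+(x,x+)=\sup_{y+\in\Sigma}l_+(y,y+)=0$, so $l_\Sigma(z)=\sup_{z-\in\Sigma}l_+(z-,z)$ for $z\in\{x,y\}$. For any $x-\in\Sigma$ with $x-\leq x$, the chain $x-\leq x\leq y$ and the reverse triangle inequality give $l(x-,y)\geq l(x-,x)+l(x,y)\geq 0$, hence $l_+(y-,y)\geq l_+(x-,x)+l(x,y)$ upon choosing $y-:=x-$; taking the supremum over $x-\in\Sigma$ closes the case. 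The case $x,y\in I^-(\Sigma)\cup\Sigma$ is symmetric, using the candidate $x+:=y+$ in the chain $x\leq y\leq y+$.

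The only substantive case is $x\in I^-(\Sigma)$, $y\in I^+(\Sigma)\cup\Sigma$. If $l(x,y)=0$, then $l_\Sigma(y)\geq 0>l_\Sigma(x)$ already closes the argument, so I would assume $x\ll y$ and pick an $l$-geodesic $\gamma\colon[0,1]\to\mms$ from $x$ to $y$, whose existence is guaranteed by regularity of $\scrM$. By \cref{Le:Geod conv E}, $\gamma$ stays inside $E_\Sigma$. Positive length forces $\gamma_s\ll\gamma_t$ whenever $s<t$, so if $\gamma_t\in I^-(\Sigma)$ is witnessed by some $q\in\Sigma$, the chain $\gamma_s\ll\gamma_t\ll q$ gives $\gamma_s\in I^-(\Sigma)$; combined with the analogous upward inheritability of $I^+(\Sigma)$ and the openness of both sets, the transition time $t_\ast:=\sup\gamma^{-1}(I^-(\Sigma))$ lies in $(0,1)$, and $\gamma_{t_\ast}\in E_\Sigma\setminus(I^-(\Sigma)\cup I^+(\Sigma))=\Sigma$.

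Feeding $\gamma_{t_\ast}\in\Sigma$ as a candidate into the two suprema defining $l_\Sigma(x)$ and $l_\Sigma(y)$, and using the geodesic identities $l(x,\gamma_{t_\ast})=t_\ast\,l(x,y)$ and $l(\gamma_{t_\ast},y)=(1-t_\ast)\,l(x,y)$, I would obtain $\sup_{x+\in\Sigma}l_+(x,x+)\geq t_\ast\,l(x,y)$ and $\sup_{y-\in\Sigma}l_+(y-,y)\geq(1-t_\ast)\,l(x,y)$, summing to exactly $l(x,y)$; since in this configuration $l_\Sigma(y)-l_\Sigma(x)$ coincides with this sum, the assertion follows. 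The main obstacle is precisely locating the crossing point $\gamma_{t_\ast}\in\Sigma$: without the geodesic convexity in \cref{Le:Geod conv E}, $\gamma$ could drift into the spacelike complement of $\Sigma$ and bypass $\Sigma$ altogether, in which case the inequality can genuinely fail, so this step is where \cref{Le:Geod conv E} is indispensable.
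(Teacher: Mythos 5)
Your proof is correct, and it is precisely the folklore argument the paper has in mind when it omits the proof: the reverse triangle inequality handles the two same-side configurations, and for $x\in I^-(\Sigma)$, $y\in I^+(\Sigma)\cup\Sigma$ one connects by an $l$-geodesic, uses \cref{Le:Geod conv E} to keep it inside $E_\Sigma$, and locates the crossing point in $\Sigma$ by the up/down-set argument. The only (harmless) imprecision is that when $y\in\Sigma$ the transition time is $t_\ast=1$ rather than lying in $(0,1)$, but then $\gamma_{t_\ast}=y$ and your final estimate goes through unchanged.
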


A further assumption is required in order for the suprema in the definition of $\smash{l_\Sigma}$ to be attained. To this aim, we recall the following notions introduced by Galloway \cite{galloway1986}.

\begin{definition}[Timelike completeness]\label{Def:timelike complete} We term $\Sigma$
\begin{enumerate}[label=\textnormal{\alph*.}]
\item \emph{future timelike complete}, briefly FTC, if for every $\smash{y\in I^+(\Sigma)}$ the set $\smash{J^-(y) \cap \Sigma}$ has compact closure in $\Sigma$,
\item \emph{past timelike complete}, briefly PTC, if for every $\smash{x\in I^-(\Sigma)}$ the set $\smash{J^+(x)\cap \Sigma}$ has compact closure in $\Sigma$, and
\item \emph{timelike complete}, briefly TC, if it is simultaneously FTC and PTC.
\end{enumerate}
\end{definition}

By straightforward arguments, cf.~e.g.~Cavalletti--Mondino  \cite{cavalletti-mondino2020-new}*{Lem.~1.8}, $\Sigma$ being FTC ensures the supremum in the definition of $\smash{l_\Sigma(y)}$ is attained for every $y\in I^+(\Sigma)$; in other words, there exists $x\in \Sigma$ with  $\smash{l_\Sigma(y) = l(x,y)}$, in which case we call $x$ a \emph{footpoint} of $y$. In particular, $\smash{l_\Sigma}$ is finite on $\smash{I^+(\Sigma)}$. It is also not difficult to show $\smash{l_\Sigma}$ is continuous on $\smash{I^+(\Sigma)}$ under the FTC condition, as established by Treude--Grant \cite{treude2011}*{Cor.~3.2.20}.  Analogous statements hold relative to $\smash{I^-(\Sigma)}$ if $\Sigma$ is PTC. 

\subsection{Topological local anti-Lipschitz condition}\label{Sub:Top Anti} Assuming the following stronger property on a signed Lorentz distance function\footnote{\cref{Def:KS} evidently makes sense by replacing $l_\Sigma$ with a general $l$-causal  function $\u$. Since we only need it in the former case, we stick to the present formulation.} $l_\Sigma$ as above is helpful in turning distributions into generalized signed Radon measures, especially via  \cref{Le:VsTop}. A specific setting where it  applies is described in  \cref{Sub:LorentzFinsler}.

\begin{definition}[Topological local anti-Lipschitz condition \cite{kunzinger-steinbauer2022}*{p.~4328}]\label{Def:KS} Given an open subset $U$ of $E_\Sigma$, we call $\smash{l_\Sigma}$ \emph{topologically anti-Lipschitz} on $U$ if there exists a metric $\met_U$ on $U$ which generates the relative topology $\smash{\mathcal{T}_U}$ of $U$ and every $x,y\in U$ with $x\leq y$ satisfy 
\begin{align*}
l_\Sigma(y) - l_\Sigma(x) \geq \met_U(x,y).
\end{align*}

Accordingly, we call $\smash{l_\Sigma}$ \emph{topologically locally anti-Lipschitz} on $U$ if every point in $U$ has an open neighborhood on which $l_\Sigma$ is topologically anti-Lipschitz.
\end{definition}

In particular, if $l_\Sigma$ is topologically locally anti-Lipschitz on $U$, it is strictly increasing along the causality relation $\leq$.

\begin{remark}[Topological part of \cref{Def:KS}] The previous notion by Kunzinger--Steinbauer \cite{kunzinger-steinbauer2022} was motivated by the anti-Lipschitz condition of Sormani--Vega \cite{sormani-vega2016}*{Def. 4.4} which does not require $\smash{\mathcal{T}_U}$ and the topology generated by $\met_U$ to be compatible. The property we will need for \cref{Le:VsTop} below is continuity of $\met_U$ with respect to $\smash{\mathcal{T}_U}$, which is a priori weaker than $\smash{\met_U}$ generating $\smash{\mathcal{T}_U}$. However, straightforward topological considerations (cf.~e.g.~the proof of Minguzzi--Suhr \cite{minguzzi-suhr2022}*{Prop.~1.5}) show in our locally compact setting the induced weakening of topological local anti-Lipschitzness is in fact equivalent to the second part of \cref{Def:KS}.\hfill{\footnotesize{$\blacksquare$}}
\end{remark}

The feature of the previous anti-Lipschitz condition from Sormani--Vega \cite{sormani-vega2016} is that it characterizes definiteness of their so-called \emph{null distance} \cite{sormani-vega2016}*{Def.~3.2} from the related function, cf.~\cite{sormani-vega2016}*{Prop.~4.5} and \cite{kunzinger-steinbauer2022}*{Prop.~3.12}. We prefer to work with the ``local'' \cref{Def:KS} over the null distance since the finiteness of the latter would require some connectivity hypotheses, cf.~\cite{sormani-vega2016}*{Lem.~3.5}  and \cite{kunzinger-steinbauer2022}*{Def.~3.4}.

\section{Distributional d'Alembertian}\label{Ch:Dalem}

\subsection{Sobolev calculus}\label{Sub:Sobo} 

\subsubsection{Probabilistic notation}\label{Sub:Prob Not} Before reviewing the Lorentzian Sobolev calculus developed in Beran et al.~\cite{beran-braun-calisti-gigli-mccann-ohanyan-rott-samann+-} (inspired by the contribution of Ambrosio--Gigli--Savaré \cite{ambrosio-gigli-savare2014-calculus} and Gigli \cite{gigli2015} in metric measure geometry), we fix some notation.

Let $\scrP(\mms)$ denote the space of Borel probability measures. It is  endowed with the usual narrow topology. Its subsets of compactly supported or $\meas$-absolutely continuous elements are denoted  $\Prob_\comp(\mms)$ or  $\smash{\Prob^\ac(\mms,\meas)}$, respectively; their intersection is denoted  ${\Prob_\comp^\ac(\mms,\meas)}$. 

Given  $t\in[0,1]$, the \emph{evaluation map} $\eval_t\colon \Cont([0,1];\mms)\to\mms$ is defined by $\eval_t(\gamma) := \gamma_t$. By a \emph{plan}, we mean a Borel probability measure $\bdpi$ on $\Cont([0,1];\mms)$. A curve $(\mu_t)_{t\in[0,1]}$ with values in $\Prob(\mms)$ is \emph{represented} by a plan $\bdpi$ if $\mu_t = (\eval_t)_\push\bdpi$ for every $t\in[0,1]$. Lastly, the \emph{reverse plan} of $\bdpi$ is the Borel probability measure $\smash{\bdpi^\leftarrow := \sfR_\push\bdpi}$ on $\Cont([0,1];\mms)$, where $\sfR\colon \Cont([0,1];\mms)\to \Cont([0,1];\mms)$ is defined by $\sfR(\gamma)_t := \gamma_{1-t}$. We also use the same notation for Borel probability measures on path spaces which contain $\Cont([0,1];\mms)$, cf.~\cref{Sub:CSpeed}.

We say $\mu\in\Prob(\mms)$ has \emph{bounded compression} provided $\mu\leq C\,\meas$ for some $C>0$; the least such constant $C$  is written $\COMP\mu$. Accordingly, a plan $\bdpi$ has \emph{bounded compression} provided $\smash{\COMP\bdpi := \sup_{t\in[0,1]} \COMP(\eval_t)_\push\bdpi < \infty}$.

\subsubsection{Causal and steep functions}\label{Sub:Iso pert} We call a function $f$ on $\mms$ 
\begin{itemize}
\item \emph{$l$-causal} if $f(y)\geq f(x)$ whenever $x,y\in\mms$ obey $x\leq y$, and
\item \emph{$L$-steep}, where $L$ is a given nonnegative constant, if every $x,y\in \mms$ satisfy
\begin{align*}
f(y) \geq f(x) + L\,l(x,y).
\end{align*}
\end{itemize}
These properties can also be set up for functions merely defined on an $\meas$-measurable set or which attain the values $\pm\infty$ (subject to appropriate conventions \cite{beran-braun-calisti-gigli-mccann-ohanyan-rott-samann+-}*{§2.1}). To streamline the delivery and since we do not deal with infinite quantities, we concentrate on globally defined and real-valued functions; \cref{Re:Extension,Re:ExtensionII} use locality to extend the theory to functions which are only defined on an appropriate subset of $\mms$. 

Where defined, the \emph{time-reversal} of a function $f$ is 
\begin{align*}
f^{\leftarrow} := -f.
\end{align*}
This notation and terminology are well-motivated from the time-reversal $\smash{l^\leftarrow}$ defined in  \cref{Re:Causal reversal}, for $f$ is $l$-causal if and only if $\smash{f^\leftarrow}$ is $\smash{l^\leftarrow}$-causal. Analogously, the function $f$ is $1$-steep with respect to $l$ if and only if $\smash{f^\leftarrow}$ is $1$-steep with respect to $\smash{l^\leftarrow}$.

For the rest of \cref{Sub:Sobo}, let $f$ be an $l$-causal  function on $\mms$.

Using nontrivial chronology and the fact that $f$ is everywhere real-valued, the following lemma is verified in the same way as continuous functions are established to attain their maxima and minima on compact sets.

\begin{lemma}[Boundedness]\label{Le:Bounded} The function $f$  is locally bounded.
\end{lemma}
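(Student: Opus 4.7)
The plan is to fix an arbitrary compact subset $K$ of $\mms$ and to sandwich $f$ on $K$ between the values attained at finitely many auxiliary points obtained from nontrivial chronology, exactly mirroring the standard proof that a continuous function attains its extrema on a compact set.

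First I would establish the upper bound on $K$. For each $x\in K$, the nontrivial chronology assumption (\cref{Def:MMS}) guarantees a point $\smash{x^{+}}\in\mms$ with $x\ll \smash{x^{+}}$. The chronological past $\smash{I^{-}(x^{+})}=\{y\in\mms : l(y,x^{+})>0\}$ is open, since $l_+$ is continuous, and it contains $x$. Thus $\{I^{-}(x^{+})\}_{x\in K}$ is an open cover of $K$, from which compactness extracts a finite subcover indexed by points $\smash{x_1^{+},\dots,x_n^{+}}$. For any $y\in K$ we then have $y\ll \smash{x_i^{+}}$ for some $i$, so $l$-causality yields $f(y)\leq f(\smash{x_i^{+}})\leq \max_{i} f(\smash{x_i^{+}})$. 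Because $f$ is everywhere real-valued, this maximum is a finite real number, supplying the upper bound.

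The lower bound is completely symmetric: for each $x\in K$ pick $\smash{x^{-}}\in\mms$ with $\smash{x^{-}}\ll x$ via nontrivial chronology, cover $K$ by the open sets $\smash{I^{+}(x^{-})}$, extract a finite subcover indexed by $\smash{x_1^{-},\dots,x_m^{-}}$, and conclude from $l$-causality that $f(y)\geq \min_{j}f(\smash{x_j^{-}})$ for every $y\in K$. The right-hand side is again a finite real number.

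There is essentially no obstacle here; the only subtlety worth flagging is that the argument relies on the openness of $\smash{I^{\pm}}$, which is immediate from upper semicontinuity of $l$ together with continuity of $l_{+}$, and on the global real-valuedness of $f$ (i.e., the convention from \cref{Sub:Fixed} that functions exclude $\pm\infty$), which prevents the finite suprema/infima from blowing up.
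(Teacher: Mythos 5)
Your proof is correct and is precisely the argument the paper alludes to: it invokes nontrivial chronology to produce the open cover by chronological pasts/futures, extracts a finite subcover of a compact set, and uses $l$-causality together with real-valuedness of $f$ to sandwich $f$ between finitely many finite values — exactly the "same way as continuous functions are established to attain their maxima and minima on compact sets." No gaps; the openness of $I^{\pm}$ via continuity of $l_+$ is the right justification.
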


Now we shortly discuss right- and left-continuity properties of $f$ in analogy to the real case of \cref{Sub:Monotone real}. The \emph{$l$-right-} and \emph{$l$-left-continuous representative} of $f$, respectively, are the $l$-causal functions $\RCR f$ and $\LCR f$ on $\mms$ defined by
\begin{align*}
\RCR f(x) &:= \inf_{x+\in I^+(x)} f(x+),\\
\LCR f(x) &:= \sup_{x-\in I^-(x)} f(x-).
\end{align*}

\begin{lemma}[Almost right- and left-continuity \cite{beran-braun-calisti-gigli-mccann-ohanyan-rott-samann+-}*{Lem.~3.2}]\label{Le:Almost rightleft} The function $f$ lies between $\LCR f$ and $\RCR f$ everywhere on $\mms$. 

In fact, outside a countable union of achronal sets the functions  $\RCR f$ and $\LCR f$ \textnormal{(}and thus $f$\textnormal{)} coincide and at every such point the function $f$ is in fact continuous.
\end{lemma}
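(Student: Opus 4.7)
The plan is to reduce everything to one key monotonicity-style inequality linking $\RCR f$ and $\LCR f$ along the chronology relation, and then to realise the ``bad'' set $\{\LCR f < \RCR f\}$ as a countable union of level-type sets, each of which is achronal by that inequality.

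First I would dispose of the sandwich $\LCR f \leq f \leq \RCR f$. Nontrivial chronology ensures $I^-(x)$ and $I^+(x)$ are nonempty for every $x\in\mms$, so both the supremum and infimum in the definitions of $\LCR f(x)$ and $\RCR f(x)$ are taken over nonempty sets. For any $x_+\in I^+(x)$ we have $x\ll x_+$, so $l$-causality of $f$ gives $f(x)\leq f(x_+)$; taking the infimum over $x_+$ yields $f(x)\leq\RCR f(x)$. The inequality $\LCR f(x)\leq f(x)$ is strictly analogous.

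Next I would prove the core estimate
\begin{align*}
\RCR f(x)\leq \LCR f(y)\qquad \text{whenever}\quad x\ll y.
\end{align*}
Because $x\ll y$ (so, in particular, $l(x,y)>0$), the length property together with global hyperbolicity and regularity from \cref{Def:MMS} provides an $l$-geodesic $\gamma\in\TGeo(\mms)$ from $x$ to $y$; the midpoint $z:=\gamma_{1/2}$ satisfies $l(x,z)=l(z,y)=l(x,y)/2>0$, so $z\in I^+(x)\cap I^-(y)$. Then $\RCR f(x)\leq f(z)$ by definition of $\RCR f$, and $f(z)\leq \LCR f(y)$ by definition of $\LCR f$, yielding the claim. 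This midpoint step is the only nontrivial ingredient and is the place I expect to dwell carefully on the hypotheses.

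With this in hand, set $B:=\{x\in\mms:\LCR f(x)<\RCR f(x)\}$ and, for each pair of rationals $q<q'$, define
\begin{align*}
B_{q,q'}:=\{x\in\mms:\LCR f(x)\leq q<q'\leq \RCR f(x)\}.
\end{align*}
Density of $\mathbb{Q}$ in $\mathbb{R}$ gives $B=\bigcup_{q<q'}B_{q,q'}$ (a countable union). Each $B_{q,q'}$ is achronal, for if $x,y\in B_{q,q'}$ satisfied $x\ll y$, the inequality from the previous paragraph would force $q'\leq \RCR f(x)\leq \LCR f(y)\leq q$, contradicting $q<q'$. This proves the coincidence $\LCR f=\RCR f=f$ outside a countable union of achronal sets.

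Finally, at any $x$ with $\LCR f(x)=\RCR f(x)$ (hence $=f(x)$), continuity of $f$ at $x$ follows by a standard open-neighbourhood argument: for any sequence $x_n\to x$ and any $x_+\in I^+(x)$, openness of $I^-(x_+)$ gives $x_n\ll x_+$ for large $n$, so $f(x_n)\leq f(x_+)$; infimising over $x_+\in I^+(x)$ produces $\limsup_n f(x_n)\leq \RCR f(x)=f(x)$. The reverse bound $\liminf_n f(x_n)\geq \LCR f(x)=f(x)$ is obtained symmetrically from $I^-(x)\neq\varnothing$, completing the proof.
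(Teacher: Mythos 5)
Your proof is correct and is the standard argument one expects here: the sandwich $\LCR f\leq f\leq\RCR f$ from nontrivial chronology, the key inequality $\RCR f(x)\leq\LCR f(y)$ for $x\ll y$ via a chronological midpoint, the rational-level decomposition of $\{\LCR f<\RCR f\}$ into achronal pieces, and continuity from openness of $I$. The paper itself only cites this lemma from Beran et al.\ without reproducing a proof, but your argument is complete and uses exactly the hypotheses available in \cref{Def:MMS}.
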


\begin{remark}[Properties of representatives]\label{Re:Prosp} The following hold.
\begin{itemize}
\item The function $\RCR f$ is $l$-right-continuous in the following sense. If $x\in\mms$ is the limit of a sequence in $\smash{I^+(x)}$, then
\begin{align*}
\lim_{n\to\infty}\RCR f(x_n) = \RCR f(x).
\end{align*}
Analogously, $\LCR f$ is $l$-left-continuous in the evident way.
\item As $\meas$ has full support and its outer measure $\meas^*$ charges no  achronal subsets of $\mms$ by \cref{Sub:MMS}, the functions $f$, $\RCR f$, and $\LCR f$ coincide $\meas$-a.e.~by \cref{Le:Almost rightleft}. All three functions are thus $\meas$-measurable \cite{beran-braun-calisti-gigli-mccann-ohanyan-rott-samann+-}*{Cor.~3.3}. 
\item These notions are consistent with \cref{Sub:Monotone real} as follows. If $\gamma$ is a continuous  timelike curve, then $\RCR(f\circ\gamma) = (\RCR f) \circ \gamma$ on $[0,1)$ [sic]. Indeed, the functions $\RCR(f\circ\gamma)$ and --- since  $\gamma$ crosses the set $\{\RCR f \neq f\}$ at most countably many times by \cref{Le:Almost rightleft} --- $(\RCR f)\circ \gamma$ both coincide with $f\circ \gamma$ at cocountably many points. Since $\RCR(f\circ\gamma)$ and $(\RCR f)\circ\gamma$ are right-continuous, the claim follows. The identity $\LCR(f\circ\gamma) = (\LCR f)\circ\gamma$ on $(0,1]$ [sic] is shown analogously.\hfill{\footnotesize{$\blacksquare$}}
\end{itemize}
\end{remark}

\subsubsection{Causal speed}\label{Sub:CSpeed} Let us briefly recall the Lorentzian analog of the metric notion of speed of absolutely continuous curves (discussed e.g.~in Ambrosio--Gigli--Savaré \cite{ambrosio-gigli-savare2008}*{§1.1})  introduced by Beran et al.~\cite{beran-braun-calisti-gigli-mccann-ohanyan-rott-samann+-}. The regularity of $l$-causal curves assumed in their work is left-continuity. The space $\LCC([0,1];\mms)$ of all such paths is in fact Polish \cite{beran-braun-calisti-gigli-mccann-ohanyan-rott-samann+-}*{Prop.~2.29}. As we deal with continuous paths in all relevant cases, we do not go into more details here and refer to \cite{beran-braun-calisti-gigli-mccann-ohanyan-rott-samann+-}*{§2.5} for more information.

Let $\gamma$ be a left-continuous $l$-causal curve. By \cite{beran-braun-calisti-gigli-mccann-ohanyan-rott-samann+-}*{Thm.~2.23} the limit
\begin{align*}
\vert \dot\gamma_t\vert := \lim_{n\to\infty} \frac{l(\gamma_{s_n}, \gamma_{t_n})}{t_n -s_n}
\end{align*}
exists for $\Leb^1$-a.e.~$t\in[0,1]$, where $(s_n)_{n\in\N}$ and $(t_n)_{n\in\N}$ are arbitrary sequences in $[0,1]$ which converge to $t$ such that $s_n<  t_n$ for every $n\in\N$; in particular, $\vert \dot\gamma\vert$ is $\Leb^1\mres[0,1]$-a.e. independent of the choice of these sequences. We call $\vert\dot\gamma\vert$ the \emph{$l$-causal speed} of $\gamma$. This quantity forms the $\smash{\Leb^1\mres[0,1]}$-a.e.~largest function whose average on $[s,t]$ is bounded from above by $l(\gamma_s,\gamma_t)$ for every $s,t\in [0,1]$ with $s<t$ (simply by maximality).

Evidently, if $\gamma\in \TGeo(\mms)$ then every $t\in[0,1]$ satisfies
\begin{align*}
\vert \dot\gamma_t\vert = l(\gamma_0,\gamma_1).
\end{align*}

\subsubsection{Weak subslope and $\PP$-Cheeger energy}

\begin{definition}[Test plan \cite{beran-braun-calisti-gigli-mccann-ohanyan-rott-samann+-}*{Def.~3.7}]\label{Def:Test plan} A Borel probability measure $\bdpi$ on $\LCC([0,1];\mms)$ is called a \emph{test plan} if it has bounded compression.
\end{definition}

For the existence of test plans assuming the curvature hypotheses  from \cref{Sub:TMCP,Sub:TCD} (but without any nonbranching condition), see the contributions of Braun \cite{braun2023-good}*{Thm.~1.2}, Braun--McCann \cite{braun-mccann2023}*{Thm.~3.26}, and Beran et.~al.~\cite{beran-braun-calisti-gigli-mccann-ohanyan-rott-samann+-}*{Thm.~5.7}. A specific test plan with stronger properties is built in \cref{Pr:Plan representing}.

\begin{definition}[Weak subslope \cite{beran-braun-calisti-gigli-mccann-ohanyan-rott-samann+-}*{Def.~3.11}]\label{Def:WSS} An $\meas$-measurable function $G$ is termed a \emph{weak subslope} of $f$ if it is $\meas$-measurable and for every test plan $\bdpi$,
\begin{align*}
\int \big[f(\gamma_1) - f(\gamma_0)\big]\d\bdpi(\gamma) \geq\iint_0^1 G(\gamma_t)\,\vert\dot\gamma_t\vert\d t\d\bdpi(\gamma).
\end{align*}
\end{definition}

\begin{theorem}[Maximal weak subslope \cite{beran-braun-calisti-gigli-mccann-ohanyan-rott-samann+-}*{Thm.~3.14}]\label{Th:MaxWSS} There  exists a unique maximal function $\vert \rmd f\vert$ on $\mms$, called \emph{maximal weak subslope} of $f$, which is a weak subslope of $f$. 
\end{theorem}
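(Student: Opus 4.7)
The plan is to construct $\vert\rmd f\vert$ as the $\meas$-essential supremum of the class $\scrS(f)$ of all weak subslopes of $f$, mirroring the standard construction of minimal weak upper gradients in metric measure geometry (Ambrosio--Gigli--Savar\'e, Di Marino) with all inequalities reversed to match the Lorentzian sign convention in \cref{Def:WSS}. Nonemptiness of $\scrS(f)$ is immediate: the constant $G\equiv 0$ is a weak subslope, since $l$-causality of $f$ combined with the concentration of any test plan $\bdpi$ on $l$-causal curves guarantees $\int[f(\gamma_1)-f(\gamma_0)]\d\bdpi\geq 0$.

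The heart of the argument is closure of $\scrS(f)$ under pointwise binary maxima. Given $G_1,G_2\in\scrS(f)$, set $A_1:=\{G_1\geq G_2\}$, $A_2:=\mms\setminus A_1$, and $G:=\max\{G_1,G_2\}$. For a fixed test plan $\bdpi$, one splits
\begin{align*}
\iint_0^1 G(\gamma_t)\,\vert\dot\gamma_t\vert\d t\d\bdpi = \sum_{i\in\{1,2\}}\iint_0^1 G_i(\gamma_t)\,\mathbf{1}_{A_i}(\gamma_t)\,\vert\dot\gamma_t\vert\d t\d\bdpi.
\end{align*}
To bound each summand by a corresponding portion of $\int[f(\gamma_1)-f(\gamma_0)]\d\bdpi$, one disintegrates $\bdpi$ along a Borel measurable selection of the fiberwise sets $\{t\in[0,1]:\gamma_t\in A_i\}$ into countably many disjoint intervals, restricts and reparametrizes $\gamma$ over each such interval to produce auxiliary test plans, and applies the weak subslope inequality for $G_i$ piecewise. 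Telescoping $\sum_k[f(\gamma_{t_{k+1}})-f(\gamma_{t_k})]=f(\gamma_1)-f(\gamma_0)$ across the reassembled partition and summing over $i$ yields $\iint_0^1 G\,\vert\dot\gamma\vert\d t\d\bdpi\leq \int[f(\gamma_1)-f(\gamma_0)]\d\bdpi$, so $G\in\scrS(f)$.

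Third, $\scrS(f)$ is closed under $\meas$-a.e.~monotone increasing limits: if $(G_n)_{n\in\N}\subset\scrS(f)$ with $G_n\nearrow G$ pointwise $\meas$-a.e., then $G\in\scrS(f)$. Indeed, the Fubini-type bound $\iint_0^1 h(\gamma_t)\d t\d\bdpi\leq \COMP\bdpi\int h\d\meas$ for nonnegative Borel $h$ (an immediate consequence of bounded compression, as in \cref{Def:Test plan}) transfers $\meas$-a.e.~convergence into $\Leb^1\otimes\bdpi$-a.e.~convergence. Applying monotone convergence to the nonnegative increasing sequence $(G_n-G_1)\,\vert\dot\gamma\vert$ and then adding back the $\bdpi$-integrable $G_1\,\vert\dot\gamma\vert$ passes the subslope inequality to the limit.

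Finally, since $(\mms,\meas)$ is $\sigma$-finite and $\scrS(f)$ is nonempty and upper-directed by Step~2, a standard measure-theoretic argument produces an increasing sequence $(G_n)_{n\in\N}\subset\scrS(f)$ whose pointwise $\meas$-a.e.~limit coincides with $\esssup\scrS(f)$; Step~3 places this limit back in $\scrS(f)$, identifying it as the maximal element $\vert\rmd f\vert$ and automatically giving $\meas$-a.e.~uniqueness. The principal technical difficulty is the curve-splitting in Step~2: one must carry out a Borel measurable selection of the fiberwise partition and verify that the restricted-reparametrized sub-curves assemble into test plans whose left-continuity, $l$-causality, and bounded compression are preserved --- the analogous step in the metric setting of Ambrosio--Gigli--Savar\'e is already the delicate  point, and the Lorentzian refinements of \cref{Sub:CSpeed} for $l$-causal speed under restriction of curves are needed to handle it here.
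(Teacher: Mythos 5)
Your overall architecture — show the class $\scrS(f)$ of weak subslopes is nonempty, upward directed (closed under binary maxima), and closed under monotone increasing limits, then take the $\meas$-essential supremum — is exactly the standard Ambrosio--Gigli--Savar\'e scheme that the cited reference adapts, and Steps~1, 3 and 4 of your proposal are fine (note that starting the increasing sequence from $G\equiv 0$ makes all integrands nonnegative, so monotone convergence applies without the ``add back $G_1$'' caveat).

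The gap is in Step~2, which is the crux. For Borel sets $A_1,A_2$ the fiberwise set $\{t\in[0,1]:\gamma_t\in A_i\}$ is an arbitrary Lebesgue-measurable subset of $[0,1]$; it does \emph{not} decompose into countably many disjoint intervals, so the proposed restriction--reparametrization of $\gamma$ over such a decomposition cannot be carried out, and no telescoping identity of the form $\sum_k[f(\gamma_{t_{k+1}})-f(\gamma_{t_k})]$ is available for a partition of $[0,1]$ into non-interval pieces. The curve-splitting machinery is both unworkable as stated and unnecessary. The correct implementation of the lattice step goes through the pathwise self-improvement of \cref{Def:WSS} (this is \cref{Le:Pathwise}, i.e.~Lemma~3.13 of the cited work, whose proof is the genuinely delicate point): for $\bdpi$-a.e.~$\gamma$ and all $s<t$ one has $f(\gamma_t)-f(\gamma_s)\geq\int_s^t G_i(\gamma_r)\,\vert\dot\gamma_r\vert\d r$ for $i=1,2$. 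Since $u:=f\circ\gamma$ is nondecreasing it is differentiable $\Leb^1$-a.e., and dividing by $t-s$ at a common differentiability/Lebesgue point yields $u'(t)\geq G_i(\gamma_t)\,\vert\dot\gamma_t\vert$ for a.e.~$t$ and both $i$, hence $u'(t)\geq\max\{G_1,G_2\}(\gamma_t)\,\vert\dot\gamma_t\vert$ a.e. Re-integrating via $u(1)-u(0)\geq\int_0^1 u'\d t$ (valid for monotone $u$) and integrating in $\bdpi$ recovers the defining inequality for $\max\{G_1,G_2\}$ with no splitting of curves at all. With Step~2 repaired this way, the rest of your argument goes through.
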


Here ``maximal'' means if $G$ is a weak subslope of $f$,  then $\vert \rmd f\vert\geq G$ $\meas$-a.e. Uniqueness  easily follows from this, as do the subsequent results about concavity plus non\-negative $1$-homogeneity \cite{beran-braun-calisti-gigli-mccann-ohanyan-rott-samann+-}*{Prop.~3.21}. If $f$ and $g$ are $l$-causal and $\lambda$ is positive, 
\begin{alignat*}{3}
\vert\rmd(\lambda f + g)\vert &\geq \lambda\,\vert\rmd f\vert + \vert\rmd g\vert &&&\quad &\meas\textnormal{-a.e.,}\\
\vert\rmd(\lambda f)\vert &= \lambda\,\vert\rmd f\vert &&&& \meas\textnormal{-a.e.}
\end{alignat*}

Lastly, by the average property of the $l$-causal speed from  \cref{Sub:CSpeed}, if the function $f$ is $L$-steep for a nonnegative real number $L$, then
\begin{align*}
\vert \rmd f\vert\geq L\quad\meas\textnormal{-a.e.}
\end{align*}

\begin{remark}[Behavior under time-reversal]\label{Re:Behcaus} Since the property of being a test plan is not invariant under time-reversal qua inherent regularity, it is unclear to us if the maximal weak subslope from  \cref{Th:MaxWSS} is the same for both structures. (On the other hand, the notion of \cref{Sub:CSpeed} is $\Leb^1\mres[0,1]$-a.e.~independent on whether right- or left-continuous versions are chosen.) If \cref{Def:WSS} was ``symmetric'' under time-reversal, the $\meas$-a.e.~equality of the two induced weak subslopes would easily follow from maximality.

This does not cause issues for our developments. The relevant test plans constructed in \cref{Pr:Plan representing} are  concentrated on \emph{continuous} paths. Irrespective of the causal orientation, the upper bound from \cref{Th:df vs f'} and  our representation formulas from  \cref{Sub:Formula1,Sec:Formula2} will be the same in both scenarios up to a sign.\hfill{\footnotesize{$\blacksquare$}}
\end{remark}

\begin{lemma}[Pathwise version of \Cref{Def:WSS} \cite{beran-braun-calisti-gigli-mccann-ohanyan-rott-samann+-}*{Lem.~3.13}]\label{Le:Pathwise} Let $\bdpi$ form a test plan. Then $\bdpi$-a.e.~$\gamma$ satisfies the following property for every $s,t\in[0,1]$ with $s<t$:
\begin{align*}
f(\gamma_t) - f(\gamma_s) \geq \int_s^t \vert\rmd f\vert(\gamma_r)\,\vert\dot\gamma_r\vert\d r.
\end{align*}
\end{lemma}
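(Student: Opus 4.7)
The plan is to upgrade the integrated inequality from \cref{Def:WSS} to a pointwise statement along $\bdpi$-a.e.~curve, first for a countable dense family of intervals and then for all $s<t$ by monotonicity.

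For rational $s<t$ in $[0,1]$, I would introduce the reparametrized plan $\bdpi^{s,t}$ obtained as the pushforward of $\bdpi$ under the affine reparametrization map $\gamma\mapsto (r\mapsto \gamma_{(1-r)s+rt})$, which sends $\LCC([0,1];\mms)$ into itself. Since $(\eval_r)_\push \bdpi^{s,t}=(\eval_{(1-r)s+rt})_\push\bdpi$, the compression bound is preserved, so $\bdpi^{s,t}$ remains a test plan. Directly from the definition of $l$-causal speed in \cref{Sub:CSpeed}, affine reparametrization multiplies the speed by the constant factor $t-s$. The substitution $u=(1-r)s+rt$ then transforms \cref{Def:WSS} applied to $\bdpi^{s,t}$ into
\begin{align*}
\int \bigl[f(\gamma_t)-f(\gamma_s)\bigr]\d\bdpi(\gamma)\geq\iint_s^t|\rmd f|(\gamma_r)\,|\dot\gamma_r|\d r\d\bdpi(\gamma).
\end{align*}
For any Borel set $A$ with $\bdpi[A]>0$, the conditional plan $\bdpi[A]^{-1}\,\bdpi\mres A$ is again a test plan (its compression is at most $\bdpi[A]^{-1}\,\COMP\bdpi$), so the same argument applied to it gives the same inequality with $\bdpi$ replaced by $\bdpi\mres A$ throughout. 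Since $A$ is arbitrary and both integrands are Borel in $\gamma$, the pointwise inequality
\begin{align*}
f(\gamma_t)-f(\gamma_s)\geq \int_s^t |\rmd f|(\gamma_r)\,|\dot\gamma_r|\d r
\end{align*}
holds for $\bdpi$-a.e.~$\gamma$ for this fixed rational pair $(s,t)$. Intersecting the corresponding $\bdpi$-null exceptional sets over the countable collection of rational pairs $(s,t)$ with $s<t$ yields a single $\bdpi$-conull set of curves on which the inequality holds for all such rational pairs simultaneously.

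To extend to arbitrary $0\leq s<t\leq 1$, I would fix such a good $\gamma$ and choose rationals $s<s_n<t_n<t$ with $s_n\downarrow s$ and $t_n\uparrow t$ (with one-sided approximation if $s=0$ or $t=1$). Since $\gamma$ is $l$-causal and $f$ is $l$-causal, the real-valued function $f\circ\gamma$ is nondecreasing, so $f(\gamma_t)-f(\gamma_s)\geq f(\gamma_{t_n})-f(\gamma_{s_n})$, and dominated convergence of the right-hand integral delivers the desired inequality. The main technical point to double-check is that the reparametrized and conditional plans genuinely inherit bounded compression and that the $l$-causal speed transforms as claimed under affine reparametrization; both reduce to unwinding the relevant definitions, after which the localization-and-density scheme is standard.
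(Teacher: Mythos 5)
The paper states this lemma as a quotation from Beran et al.\ (their Lem.~3.13) and gives no proof of its own; your argument — reparametrize the test plan to rational subintervals, localize over Borel sets of curves via normalized restrictions, intersect countably many null sets, and pass to arbitrary $s<t$ using that $f\circ\gamma$ is nondecreasing together with monotone convergence of the nonnegative integrand — is exactly the standard restriction-plus-localization proof used there and in its metric-measure predecessors, and it is correct. The only point to make explicit is that $f$ and $\vert\rmd f\vert$ should be replaced by Borel representatives, which is harmless because bounded compression makes the endpoint evaluations and (via Fubini) the integral term insensitive to modifications on $\meas$-null sets.
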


\begin{proposition}[Calculus rules I \cite{beran-braun-calisti-gigli-mccann-ohanyan-rott-samann+-}*{Props.~3.22, 3.23, 3.24}, see also \cref{Pr:Calc rulez II}]\label{Pr:Calc rulez} Let $f$ and $g$ be $l$-causal functions. Then the following statements hold.
\begin{enumerate}[label=\textnormal{\textcolor{black}{(}\roman*\textcolor{black}{)}}]
\item \textnormal{\textbf{Locality I.}} We have
\begin{align*}
\vert\rmd f\vert = \vert \rmd g\vert\quad \meas\mres\{f=g\}\textnormal{-a.e.}
\end{align*}
\item \textnormal{\textbf{Locality II.}} Let $N$ be a Borel subset of $\R$ with $\smash{\Leb^1[N]=0}$. Then 
\begin{align*}
\vert\rmd f\vert = 0\quad\meas\mres f^{-1}(N)\textnormal{-a.e.}
\end{align*}
\item \textnormal{\textbf{Chain rule.}}\label{La:CHAIN} Assume $\varphi$ is a nondecreasing and Lipschitz continuous function on the image of $f$. Then
\begin{align*}
\vert\rmd(\varphi\circ f)\vert = \varphi'\circ f\,\vert\rmd f\vert\quad\meas\textnormal{-a.e.},
\end{align*}
where $\varphi'$ denotes the density of the $\Leb^1$-absolutely continuous part of the Lebesgue--Stieltjes measure $\Diff\RCR\varphi$ and the quantity $\varphi'\circ f$ is defined arbitrarily at all points in the image of $f$ at which $\varphi$ is not differentiable.
\item \textnormal{\textbf{Leibniz rule.}}\label{La:LEIB} If $f$ and $g$ are nonnegative,
\begin{align*}
\vert \rmd(f\,g)\vert \geq f\,\vert\rmd g\vert + g\,\vert \rmd f\vert\quad\meas\textnormal{-a.e.}
\end{align*}
\end{enumerate}
\end{proposition}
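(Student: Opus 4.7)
All four assertions reduce to the defining pathwise bound \cref{Le:Pathwise} along generic test plans, combined with the one-dimensional calculus for monotone and BV functions from \cref{Sub:Monotone real}, upgraded via maximality of the weak subslope (\cref{Th:MaxWSS}). The unifying mantra is: produce a candidate function $G$, verify it is a weak subslope of the relevant $l$-causal function, then invoke \cref{Th:MaxWSS} to conclude $G \leq \vert\rmd(\cdot)\vert$ $\meas$-a.e. Along an $l$-causal continuous curve $\gamma$ drawn from a test plan $\bdpi$, the composition $F := f \circ \gamma$ is nondecreasing, hence of locally bounded variation; its $\Leb^1$-absolutely continuous derivative $F'$ satisfies $F' \geq \vert\rmd f\vert(\gamma_\cdot)\,\vert\dot\gamma_\cdot\vert$ $\Leb^1$-a.e.\ by \cref{Le:Pathwise}.

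\textbf{The two locality statements.} I would tackle (ii) first. Given Borel $N \subseteq \R$ with $\Leb^1[N] = 0$, a classical Banach--Zaretski type fact for monotone functions (cf.~\cref{Sub:Monotone real}) yields $F' = 0$ $\Leb^1$-a.e.\ on $F^{-1}(N)$ for $\bdpi$-a.e.\ $\gamma$. Combined with the pathwise lower bound, $\vert\rmd f\vert(\gamma_t)\,\vert\dot\gamma_t\vert = 0$ at $\Leb^1$-a.e.\ $t$ with $f(\gamma_t) \in N$. Therefore $\tilde G := \vert\rmd f\vert\,\mathbf{1}_{\mms \setminus f^{-1}(N)}$ satisfies the weak subslope inequality of \cref{Def:WSS} with the same right-hand side as $\vert\rmd f\vert$, and maximality delivers (ii). For (i), set $A := \{f = g\}$. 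Along $\bdpi$-a.e.\ $\gamma$ the set $T := \{t : \gamma_t \in A\}$ satisfies $f \circ \gamma = g\circ\gamma$ on $T$; by Lebesgue density, the $\Leb^1$-a.e.\ derivatives of these two monotone functions coincide on $T$. Hence $(f\circ\gamma)' \geq \vert\rmd g\vert(\gamma_\cdot)\,\vert\dot\gamma_\cdot\vert$ on $T$, while the pathwise bound for $f$ controls $(f\circ\gamma)'$ on $T^c$. Consequently $G_f := \vert\rmd f\vert\,\mathbf{1}_{A^c} + \vert\rmd g\vert\,\mathbf{1}_A$ is a weak subslope of $f$; maximality gives $\vert\rmd g\vert \leq \vert\rmd f\vert$ $\meas$-a.e.\ on $A$, and swapping $f$ and $g$ finishes (i).

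\textbf{Chain and Leibniz rules.} The lower bound $\vert\rmd(\varphi\circ f)\vert \geq \varphi'(f)\,\vert\rmd f\vert$ in (iii) is proven by showing $\varphi'(f)\,\vert\rmd f\vert$ is a weak subslope of $\varphi \circ f$: along $\bdpi$-a.e.\ $\gamma$, the Lebesgue--Stieltjes substitution for the absolutely continuous $\varphi$ composed with the nondecreasing $F = f\circ\gamma$ gives
\begin{align*}
(\varphi\circ F)(t) - (\varphi\circ F)(s) \geq \int_s^t \varphi'(F(r))\,F'(r)\d r \geq \int_s^t \varphi'(f(\gamma_r))\,\vert\rmd f\vert(\gamma_r)\,\vert\dot\gamma_r\vert\d r,
\end{align*}
with (ii) ensuring the choice of $\Leb^1$-version of $\varphi'$ does not affect the product $\varphi'(f)\,\vert\rmd f\vert$ $\meas$-a.e. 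For the reverse inequality, approximate by the strictly increasing Lipschitz $\varphi_\epsilon := \varphi + \epsilon\,\mathrm{id}$: $\varphi_\epsilon^{-1}$ is $(1/\epsilon)$-Lipschitz nondecreasing on the image, so applying the lower bound to $\varphi_\epsilon^{-1} \circ (\varphi_\epsilon \circ f) = f$ yields $\vert\rmd(\varphi_\epsilon\circ f)\vert \leq \varphi_\epsilon'(f)\,\vert\rmd f\vert$ $\meas$-a.e.; subadditivity from \cref{Th:MaxWSS} combined with $\varphi_\epsilon'(f) = \varphi'(f) + \epsilon$ then lets me pass to the limit $\epsilon \downarrow 0$. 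For (iv), the $l$-causal nonnegative $F = f\circ\gamma$ and $G = g\circ\gamma$ are nondecreasing along $\bdpi$-a.e.\ $\gamma$, and classical Stieltjes integration by parts (applied symmetrically, as in the spirit of \cref{Th:Partial integration f}) yields
\begin{align*}
(FG)(t) - (FG)(s) \geq \int_{(s,t]} F\d\Diff\RCR G + \int_{(s,t]} G\d\Diff\RCR F,
\end{align*}
the cross-jump contributions being nonnegative by nonnegativity of $F,G$. Passing to $\Leb^1$-absolutely continuous parts and invoking the pathwise bounds gives $(fg)(\gamma_t) - (fg)(\gamma_s) \geq \int_s^t [f\,\vert\rmd g\vert + g\,\vert\rmd f\vert](\gamma_r)\,\vert\dot\gamma_r\vert\d r$, so $f\,\vert\rmd g\vert + g\,\vert\rmd f\vert$ is a weak subslope of $fg$, and maximality concludes (iv).

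\textbf{Main obstacle.} The subtlest step will be the upper bound in the chain rule (iii): for general nondecreasing Lipschitz $\varphi$, $\varphi^{-1}$ does not exist on the image, and the strictly monotone approximation $\varphi_\epsilon$ is the only available device. The $\epsilon \downarrow 0$ limit depends crucially on (ii) to ensure that no spurious mass arises from the set $\{\varphi' = 0\}$ under composition with $f$, and on the subadditivity property of the maximal weak subslope stated after \cref{Th:MaxWSS}. A parallel technicality in (iv) is that the Stieltjes integration by parts formula must be applied with compatible choices of right- and left-continuous representatives to guarantee the sign of the jump cross-terms, but this is merely bookkeeping.
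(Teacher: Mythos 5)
First, note that the paper does not prove this proposition at all: it is quoted verbatim from Beran et al.\ (Props.~3.22--3.24 of the cited work), so there is no in-paper argument to compare yours against, and I can only assess your proposal on its own terms. Your treatments of (i), of the lower bound in (iii), and of (iv) are sound: in each case you exhibit a candidate $G$ that genuinely is a weak subslope of the relevant $l$-causal function (via \cref{Le:Pathwise}, the density-point argument, and one-dimensional BV calculus), and maximality from \cref{Th:MaxWSS} is then applied in the correct direction, namely $G\leq\vert\rmd(\cdot)\vert$ $\meas$-a.e. Two small points: the additivity property recorded after \cref{Th:MaxWSS} is \emph{super}-, not sub-additivity (which is in fact exactly what your $\epsilon\downarrow 0$ cancellation in (iii) needs, since $\vert\rmd(\varphi\circ f+\epsilon f)\vert\geq\vert\rmd(\varphi\circ f)\vert+\epsilon\vert\rmd f\vert$), and one should fix Borel representatives of $f$ and $g$ before forming $\gamma^{-1}(\{f=g\})$.

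The genuine gap is in (ii), and it propagates into the representative-independence of $\varphi'\circ f$ claimed in (iii). You correctly deduce from the Banach--Zaretski property of monotone functions that $\vert\rmd f\vert(\gamma_t)\,\vert\dot\gamma_t\vert=0$ at $\Leb^1$-a.e.\ $t$ with $\gamma_t\in f^{-1}(N)$, so that $\tilde G:=\vert\rmd f\vert\,\mathbf{1}_{\mms\setminus f^{-1}(N)}$ has the same test-plan integrals as $\vert\rmd f\vert$. But maximality cannot ``deliver (ii)'' from this: since $\tilde G\leq\vert\rmd f\vert$ pointwise, $\tilde G$ is \emph{trivially} a weak subslope, and maximality only returns the vacuous inequality $\tilde G\leq\vert\rmd f\vert$; it produces no upper bound for $\vert\rmd f\vert$ on $f^{-1}(N)$. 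You have transplanted the standard locality proof for the \emph{minimal} weak upper gradient of Ambrosio--Gigli--Savar\'e and Gigli, where the truncation argument works precisely because minimality points the useful way; for the \emph{maximal} weak subslope the extremality is reversed, and two weak subslopes with identical integrals along every test plan need not coincide $\meas$-a.e.\ unless one knows that test plans detect every set of positive $\meas$-measure with positive causal speed --- a nontrivial structural input (compare the existence results for test plans quoted after \cref{Def:Test plan}, which invoke curvature hypotheses not assumed here). To close (ii) you must either produce such detecting plans, or argue from the actual construction of $\vert\rmd f\vert$ in the cited source rather than from the bare maximality property; as written, the step is a non sequitur.
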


The functions considered in \ref{La:CHAIN} and \ref{La:LEIB} above are clearly $l$-causal.

\begin{remark}[Extension I, see also \cref{Re:ExtensionII}]\label{Re:Extension} The locality properties of \cref{Pr:Calc rulez} enable us to make sense of  the maximal weak subslope of $l$-causal functions $f_0$ which are only given on an $\meas$-measurable subset $W$ of $\mms$. Fix $o\in \mms$ and consider the signed Lorentz distance function $l_o$ from \cref{Sec:Signed}. By an evident variant of \cref{Cor:Steepness}, $l_o$ is $1$-steep on $\mms$. Consequently, $f_\varepsilon := f_0 + \varepsilon\, l_o$ is $\varepsilon$-steep on $W$ for every $\varepsilon > 0$. We  extend $f_\varepsilon$ to $\mms$ by Beran et al.'s McShane-type lemma \cite{beran-braun-calisti-gigli-mccann-ohanyan-rott-samann+-}*{Lem.~3.5}, e.g.~through the assignment
\begin{align*}
\tilde{f}_\varepsilon(y) := \sup_{x\in W} \big[f_\varepsilon(x) + \varepsilon\,l(x,y)\big].
\end{align*}
The monotone limit $\smash{\tilde{f}_0 := \lim_{\varepsilon\to 0+}\tilde{f}_\varepsilon}$ is the desired $l$-causal extension of $f_0$\footnote{The extension may attain the values $\pm \infty$. However, since it clearly coincides with $f$ on $W$, it is real-valued on the relevant set $W$.}. 

Finally, on $W$ we set $\smash{\vert \rmd f_0\vert := \vert \rmd \tilde{f}_0\vert}$. By \cref{Pr:Calc rulez}, the quantity $\vert\rmd f_0\vert$ is $\meas\mres W$-a.e.~independent of the chosen extension of $f_0$.\hfill{\footnotesize{$\blacksquare$}}
\end{remark}

Let $\PP$ and $\QQ$ be conjugate nonzero numbers less than $1$. Following Beran et al.~\cite{beran-braun-calisti-gigli-mccann-ohanyan-rott-samann+-}, the convex \emph{$\PP$-Cheeger energy}  on $l$-causal functions defined on an open subset $U$ of $\mms$ is
\begin{align}\label{Eq:q-Cheeger}
\scrE_\PP(f) := \frac{1}{\QQ}\int_U \vert \rmd f\vert^\PP\d\meas.
\end{align}

\subsection{Horizontal vs.~vertical differentiation}\label{Sub:Horiz} Now we introduce a central concept for our subsequent work, namely horizontal and vertical differentiation by Beran et al.~\cite{beran-braun-calisti-gigli-mccann-ohanyan-rott-samann+-} inspired by Gigli's predecessor  \cite{gigli2015} in positive signature. It relates two  approaches to differentiating an $l$-causal function $\u$ ``in the direction'' of a not necessarily $l$-causal function $f$. These two approaches are linked through an integrated inequality based on the reverse Young inequality in \cite{beran-braun-calisti-gigli-mccann-ohanyan-rott-samann+-}*{Prop.~4.1}. We do not state this result and adjacent terminology here; in \cref{Th:df vs f'}, we anyway improve it by localizing it to an a.e.~bound, which will constitute the basis for our  integration by parts approach to the d'Alembertian (\cref{Def:DAlem}).

Throughout the rest of this \cref{Ch:Dalem}, let $\u$ be an $l$-causal function on $\mms$. For simplicity, we assume $\vert \rmd\u\vert$ is positive and finite $\meas$-a.e. This will hold in much stronger form in all relevant cases in \cref{Ch:Repr form,Ch:Appli}, cf. \cref{Cor:Constant slope} and the chain rule from \cref{Pr:Calc rulez}.

\subsubsection{Finite perturbations}\label{Sub:PRTUB} Let $W$ be an $\meas$-measurable subset of $\mms$.

\begin{definition}[Finite perturbation \cite{beran-braun-calisti-gigli-mccann-ohanyan-rott-samann+-}*{Def.~4.3}]\label{Def:Perturbations} A function $f$ on $W$  will be called  a \emph{finite perturba\-tion} of $\u$, symbolically $\smash{f\in\FPert(\u,W)}$, if there is $\tau > 0$ such that $\u + \tau f$ is $l$-causal on $W$.

A finite perturbation $f$ is \emph{symmetric} if $\smash{f^\leftarrow}$ belongs to $\smash{\FPert(\u,W)}$ as well.
\end{definition}

In Beran et al.~\cite{beran-braun-calisti-gigli-mccann-ohanyan-rott-samann+-}, perturbations are in fact allowed to attain the values $\pm\infty$. We do not work with such degenerate functions (hence the addendum ``finite'').

As the property of $l$-causality is closed under addition, every $l$-causal function on $W$ is a (generally nonsymmetric) finite perturbation of $\u$ thereon.

The sets of functions in $\smash{\FPert(\u,W)}$ which are bounded, have relatively compact support in $W$, or obey both properties simultaneously  are denoted $\Pert_\bounded(\u,W)$, $\smash{\FPert_\comp(\u,W)}$, and $\Pert_\bc(\u,W)$, respectively. 

If $W$ is all of $\mms$, we shall abbreviate the sets introduced above by $\smash{\FPert(\u)}$, $\smash{\Pert_\bounded(\u)}$, $\smash{\FPert_\comp(\u)}$, and $\smash{\Pert_\bc(\u)}$, respectively.

\begin{remark}[Admissible range]\label{Re:Adm range!} Thanks to the $l$-causality of $\u$, if $\tau$ is as in \cref{Def:Perturbations}, then $\u+\tau' f$ is $l$-causal for \emph{every} $\tau' \in (0,\tau]$; in other words, the defining property of $f$ is equivalently certified by a  range of positive scalars around zero. 

In particular, the class $\smash{\FPert(\u,W)}$ from \cref{Def:Perturbations} is stable under addition and multiplication with nonnegative real numbers.\hfill{\footnotesize{$\blacksquare$}}
\end{remark}

\begin{remark}[Extension II, see also \cref{Re:Extension}]\label{Re:ExtensionII} By using the decomposition
\begin{align}\label{Eq:Decomp prt}
f = \tau^{-1}\,(\u + \tau f) - \tau^{-1}\,\u
\end{align}
for a suitable $\tau > 0$ and \cref{Re:Extension}, we can extend every finite perturbation $f$ of $\u$ only defined on an $\meas$-measurable subset to all of $\mms$.

In fact, if $f\in \Pert_\bc(\u,U)$ for some \emph{open} subset $U$ of $\mms$, it is not hard to infer that its natural extension by zero to all of $\mms$ is an element of $\Pert_\bc(\u,\mms)$. This is the interpretation of the global extension of functions in $\Pert_\bc(\u,U)$ we use throughout this paper.\hfill{\footnotesize{$\blacksquare$}}
\end{remark}

It thus suffices to make the subsequent basic considerations for $\smash{f\in\FPert(\u)}$.

\begin{remark}[Properties of finite perturbations]\label{Re:Prop fin p} By using \eqref{Eq:Decomp prt}, we extend the properties of $l$-causal functions from \cref{Re:Prosp} to $f$ as follows.
\begin{itemize}
\item The function $f$ has unique $l$-right- and $l$-left continuous representatives $\RCR f$ and $\LCR f$ it coincides with outside countably many achronal sets in $\mms$. In particular, the functions $f$, $\RCR f$, and $\LCR f$ are $\meas$-measurable and coincide $\meas$-a.e.~by our standing  assumption on $\meas$ from  \cref{Sub:MMS}.
\item For every continuous timelike curve $\gamma$, the function $f\circ\gamma$ has bounded variation according to  \cref{Sub:Loc bdd var!}. In particular, we have $\RCR(f\circ \gamma) = (\RCR f)\circ \gamma$ on $[0,1)$ [sic] and $\LCR(f\circ\gamma) = (\LCR f)\circ\gamma$ on $(0,1]$ [sic], respectively.\hfill{\footnotesize{$\blacksquare$}}
\end{itemize}
\end{remark}

The trick of extending results and quantities from $l$-causal  functions to general finite perturbations using \eqref{Eq:Decomp prt} will be recurrent throughout our work, e.g.~\cref{Sub:Analytic preps}.

Since we frequently have to relate different sets of finite  perturbations (pertaining to the chain rule, see especially \cref{Pr:Chain rule,Th:11,Cor:22}), we note the following result separately. 

\begin{lemma}[Relations of classes of finite perturbations \cite{beran-braun-calisti-gigli-mccann-ohanyan-rott-samann+-}*{Prop.~4.8}] \label{Pr:Relations} Let $\varphi$ form a function defined on the image of $\u$. Then the following hold.
\begin{enumerate}[label=\textnormal{\textcolor{black}{(}\roman*\textcolor{black}{)}}] 
\item\label{La:R1} \textnormal{\textbf{Composition I.}} If $\varphi$ is Lipschitz continuous, we have $\smash{\varphi\circ\u\in \FPert(\u)}$.
\item\label{La:R2} \textnormal{\textbf{Composition II.}} If $\varphi$ is strictly increasing with Lipschitz continuous inverse, then every element of $\smash{\FPert(\u)}$ belongs to $\smash{\FPert(\varphi\circ\u)}$. 
\item\label{La:R212} \textnormal{\textbf{Composition III.}} If $\varphi$ is nondecreasing and Lipschitz continuous, then every $\smash{f\in \FPert(\u)}$ satisfies $\smash{\varphi\circ f\in\FPert(\u)}$.
\item\label{La:R3} \textnormal{\textbf{Multiplication.}} If $\varphi$ is nonnegative, bounded, and Lipschitz continuous, for every $\smash{f\in\Pert_\bounded(\u)}$ we have $\smash{f\,\varphi\circ\u \in \Pert_\bounded(\u)}$.
\end{enumerate}
\end{lemma}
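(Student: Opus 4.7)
The plan for each of the four claims is the same: starting from the hypothesis that $f \in \FPert(\u)$, i.e.\ that $\u + \tau_0 f$ is $l$-causal for some $\tau_0 > 0$, I will display an explicit scale $\tau > 0$ and verify by direct inspection at a fixed causal pair $x \leq y$ that the proposed linear combination is nondecreasing along $\leq$. The key identities I exploit are that $l$-causality of $\u$ reads $\u(y) - \u(x) \geq 0$, that $l$-causality of $\u + \tau_0 f$ reads $f(y) - f(x) \geq -(1/\tau_0)(\u(y) - \u(x))$, and that a Lipschitz bound on $\varphi$ with constant $L$ gives $|\varphi(b) - \varphi(a)| \leq L\,|b-a|$.

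For part \ref{La:R1}, I take any $\tau \in (0,1/L]$ with $L$ the Lipschitz constant of $\varphi$: since $\u(y)\geq \u(x)$ and $\varphi(\u(y)) - \varphi(\u(x)) \geq -L(\u(y) - \u(x))$, the increment of $\u + \tau\,\varphi\circ\u$ is at least $(1-\tau L)(\u(y)-\u(x)) \geq 0$. For part \ref{La:R2} I use that, under the Lipschitz-inverse and strict-monotonicity hypothesis with $\varphi^{-1}$ having Lipschitz constant $L'$, one has the \emph{lower} bound $\varphi(\u(y)) - \varphi(\u(x)) \geq (1/L')(\u(y)-\u(x))$. Combining this with $f(y)-f(x) \geq -(1/\tau_0)(\u(y)-\u(x))$ yields that any $\tau \in (0,\tau_0/L']$ makes $\varphi\circ\u + \tau f$ $l$-causal, since the increment is bounded below by $(1/L' - \tau/\tau_0)(\u(y)-\u(x))$.

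For part \ref{La:R212} I dichotomize: if $f(y) \geq f(x)$, monotonicity of $\varphi$ gives $\varphi(f(y))-\varphi(f(x)) \geq 0$ automatically; if $f(y) < f(x)$, the $L$-Lipschitz bound combined with $f(y)-f(x) \geq -(1/\tau_0)(\u(y)-\u(x))$ produces $\varphi(f(y)) - \varphi(f(x)) \geq -(L/\tau_0)(\u(y)-\u(x))$. Either way, $\tau \in (0,\tau_0/L]$ ensures $\u + \tau\,\varphi\circ f$ is $l$-causal. For part \ref{La:R3}, boundedness of $f\,\varphi\circ\u$ is immediate from $f \in \Pert_\bounded(\u)$ and $\varphi$ being bounded, while for causality I write the increment of $\u + \tau f\,\varphi\circ\u$ via the product identity
\begin{align*}
f(y)\,\varphi(\u(y)) - f(x)\,\varphi(\u(x)) = f(y)\,\big[\varphi(\u(y)) - \varphi(\u(x))\big] + \varphi(\u(x))\,\big[f(y) - f(x)\big].
\end{align*}
The first bracket is bounded below by $-\|f\|_\infty L(\u(y)-\u(x))$ using $l$-causality of $\u$; the second is bounded below by $-(\|\varphi\|_\infty/\tau_0)(\u(y)-\u(x))$, crucially using $\varphi \geq 0$. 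Hence any $\tau \leq (\|f\|_\infty L + \|\varphi\|_\infty/\tau_0)^{-1}$ does the job.

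The only real subtlety is the sign handling in \ref{La:R3}: without $\varphi \geq 0$, the factor $\varphi(\u(x))$ multiplying $f(y)-f(x)$ could flip the direction of the inequality coming from $f \in \FPert(\u)$, so this hypothesis is essential. Likewise in \ref{La:R2} the Lipschitz-inverse hypothesis is what upgrades the Lipschitz bound on $\varphi$ from a one-sided absolute bound to a genuine lower bound, and it is precisely this bound that survives the test pair argument. Apart from these sign bookkeeping points, the proofs are routine estimates on causal pairs and no calculus of weak subslopes, test plans, or measure theory is needed.
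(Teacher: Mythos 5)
Your verification is correct: all four items follow from the single inequality $f(y)-f(x)\geq -\tau_0^{-1}\,(\u(y)-\u(x))$ encoded by $f\in\FPert(\u)$ together with the stated Lipschitz/monotonicity/sign hypotheses on $\varphi$, and your explicit choices of $\tau$ (namely $1/L$, $\tau_0/L'$, $\tau_0/L$, and $(\|f\|_\infty L+\|\varphi\|_\infty/\tau_0)^{-1}$) each make the relevant combination nondecreasing along $\leq$, which is exactly the definition. Note that the paper itself gives no proof of this lemma — it is quoted from Beran et al., Prop.~4.8 — so there is nothing to compare against, but your elementary pointwise argument on causal pairs is the natural one, and your remarks on where the sign hypotheses ($\varphi\geq 0$ in the multiplication rule, the Lipschitz inverse in Composition II) are genuinely used are accurate.
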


As an addendum to item \ref{La:R2} above,  recall that if the function $\varphi$ from \cref{Pr:Relations} is continuously differentiable on an open set containing the image of $\u$, Lipschitz continuity of its inverse is equivalent to the derivative of $\varphi$ being bounded away from zero.

\subsubsection{Vertical difference and differential quotients}\label{Sub:Vert diff} We now recall ``vertical'' notions of  ``differentiation'' of $\u$ in a fixed direction $\smash{f\in\FPert(\u)}$ computed by perturbing $\u$ by $\tau f$ for sufficiently small $\tau > 0$. Vertical \emph{right} differentiation has been set up  by Beran et al.~\cite{beran-braun-calisti-gigli-mccann-ohanyan-rott-samann+-}. In view of \cref{Re:Two sided,Re:Symmmmmmmm} and \cref{Sub:LorentzFinsler}, we recall the analogous concept of vertical \emph{left} differentiation as well. This treatment is inspired by Gigli \cite{gigli2015} in positive signature.

Let $\PP$ be a nonzero number less than $1$. By our assumption on $\vert\rmd \u\vert$ being positive and finite $\meas$-a.e.,  we $\meas$-a.e.~define the \emph{vertical right differential quotient}
\begin{align}\label{Eq:Vert diff}
\rmd^+f(\nabla\u)\,\vert \rmd\u\vert^{\PP-2} &:= \lim_{\tau\to 0+} \frac{\vert\rmd(\u+\tau f)\vert^\PP - \vert\rmd\u\vert^\PP}{\PP\tau}.
\end{align}
By concavity and using the convention that the $\tau$-dependent term is $-\infty$ if $\u+\tau f$ is not an $l$-causal function, the dependence of the term inside the limit on $\tau>0$ is nonincreasing $\meas$-a.e. \cite{beran-braun-calisti-gigli-mccann-ohanyan-rott-samann+-}*{§4.3}. Therefore, \eqref{Eq:Vert diff} is an $\meas$-essential supremum.

By concavity and nonnegative $1$-homogeneity,  if $f$ and $g$ are finite perturbations of $\u$ and $\lambda$ is positive (in particular, $f$ is a finite perturbation of $\lambda\u$ by \cref{Pr:Relations}), 
\begin{alignat*}{3}
\rmd^+(f + g)(\nabla\u)\,\vert\rmd\u\vert^{\PP-2} &\geq \rmd^+f(\nabla\u)\,\vert\rmd\u\vert^{\PP-2} + \rmd^+g(\nabla\u)\,\vert\rmd\u\vert^{\PP-2} &&&\quad &\meas\textnormal{-a.e.,}\\
\rmd^+(\lambda f)(\nabla\u)^{\PP-2}\,\vert\rmd\u\vert^{\PP-2} &= \lambda\,\rmd^+f(\nabla\u)\,\vert\rmd\u\vert^{\PP-2} &&&& \meas\textnormal{-a.e.},\\
\rmd^+f(\nabla(\lambda\u))\,\vert\rmd(\lambda\u)\vert^{\PP-2} &= \lambda^{\PP-1}\,\rmd^+f(\nabla\u)\,\vert\rmd\u\vert^{\PP-2} &&&&\meas\textnormal{-a.e.}
\end{alignat*}

\begin{proposition}[Calculus rules II \cite{beran-braun-calisti-gigli-mccann-ohanyan-rott-samann+-}*{Prop.~4.11}, see also \cref{Pr:Calc rulez}]\label{Pr:Calc rulez II} Let $f$ and $g$ be finite perturbations of both $\u$ and $\v$, where $\v$ obeys the same properties as $\u$. Then the following hold.
\begin{enumerate}[label=\textnormal{\textcolor{black}{(}\roman*\textcolor{black}{)}}]
\item\label{La:Null1} \textnormal{\textbf{Locality I.}} We have
\begin{align*}
\rmd^+ f(\nabla\u)\,\vert \rmd\u\vert^{\PP-2} = \rmd^+g(\nabla\v)\,\vert\rmd\v\vert^{\PP-2}\quad\meas\mres[\{f=g\}\cap \{\u=\v\}]\textnormal{-a.e.}
\end{align*}
\item\label{La:Null2} \textnormal{\textbf{Locality II.}} Let $N$ be a Borel subset of $\R$ with $\smash{\Leb^1[N]=0}$. Then
\begin{align*}
\rmd^+f(\nabla\u)\,\vert\rmd\u\vert^{\PP-2} = 0\quad\meas\mres f^{-1}(N)\textnormal{-a.e.}
\end{align*}
\item\label{La:Null25} \textnormal{\textbf{Chain rule I.}} If $\varphi$ is continuously differentiable and Lipschitz continuous,
\begin{align*}
\rmd^+(\varphi\circ\u)(\nabla\u)\,\vert\rmd\u\vert^{\PP-2} = \varphi'\circ\u\,\vert\rmd\u\vert^\PP\quad\meas\textnormal{-a.e.}
\end{align*}
\item\label{La:Null3} \textnormal{\textbf{Chain rule II.}} Assume  $\varphi$ constitutes a nondecreasing, continuously differentiable, and Lipschitz constinuous function. Then
\begin{align*}
\rmd^+(\varphi\circ f)(\nabla\u)^{\PP-2}\,\vert\rmd\u\vert^{\PP-2} = \varphi'\circ f\,\rmd^+f(\nabla\u)\,\vert\rmd\u\vert^{\PP-2}\quad  \meas\textnormal{-a.e.}
\end{align*}
In addition, if the derivative of $\varphi$ is bounded away from zero,
\begin{align*}
\rmd^+f(\nabla(\varphi\circ \u))\,\vert\rmd(\varphi\circ \u)\vert^{\PP-2} =(\varphi')^{\PP-1}\circ \u \,\rmd^+f(\nabla\u)\,\vert\rmd\u\vert^{\PP-2}\quad\meas\textnormal{-a.e.}
\end{align*}
\item\label{La:Null4} \textnormal{\textbf{Leibniz rule.}} Assume $f$ and $g$ are nonnegative. Then
\begin{align*}
\rmd^+(f\,g)(\nabla\u)\,\vert\rmd\u\vert^{\PP-2} \geq f\,\rmd^+g(\nabla\u)\,\vert\rmd\u\vert^{\PP-2} + g\,\rmd^+f(\nabla\u)\,\vert\rmd\u\vert^{\PP-2}\quad\meas\textnormal{-a.e.}
\end{align*}
In addition, if $f$ is uniformly bounded from below and  $\varphi$ is nonnegative, bounded, continuously differentiable, and Lipschitz continuous, 
\begin{align*}
\rmd^+(f\,\varphi\circ\u)(\nabla\u)\,\vert\rmd\u\vert^{\PP-2}  \geq f\,\varphi'\circ\u\,\vert\rmd\u\vert^\PP + \varphi\circ\u\,\rmd^+f(\nabla\u)\,\vert\rmd\u\vert^{\PP-2}\quad\meas\textnormal{-a.e.}
\end{align*}
\end{enumerate}
\end{proposition}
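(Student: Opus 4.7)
The overall strategy is to transfer each of the corresponding rules for the maximal weak subslope $\vert\rmd\cdot\vert$ (given in \cref{Pr:Calc rulez}) through the defining limit of the vertical right differential quotient \eqref{Eq:Vert diff}. Because that limit is a monotone essential supremum/infimum of the $\tau$-indexed difference quotients, we may compute the quotients pointwise a.e.\ using the already-established rules for $\vert\rmd\cdot\vert$ applied to the perturbed functions $\u+\tau f$, and then pass to the limit.

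For \ref{La:Null1}, note that on $\{f=g\}\cap\{\u=\v\}$ the perturbed functions satisfy $\u+\tau f = \v+\tau g$ pointwise, so two applications of Locality~I from \cref{Pr:Calc rulez} yield $\vert\rmd(\u+\tau f)\vert=\vert\rmd(\v+\tau g)\vert$ and $\vert\rmd\u\vert=\vert\rmd\v\vert$ $\meas$-a.e.\ on this set for every $\tau>0$. Taking the limit along a countable sequence $\tau_n\downarrow 0$ (using monotonicity) gives the claim. For \ref{La:Null2}, the issue is that $f$ is not $l$-causal in general. The strategy I would follow is to approximate by choosing a family $(\psi_\varepsilon)_{\varepsilon>0}$ of nondecreasing Lipschitz functions with $\psi_\varepsilon'=0$ on an open $\varepsilon$-neighborhood of $N$ and $\psi_\varepsilon\to\mathrm{id}$ uniformly. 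By \cref{Pr:Relations}\,\ref{La:R212} the composition $\psi_\varepsilon\circ f$ is still a finite perturbation of $\u$, and coincides with a constant on $f^{-1}(N)$; an application of \ref{La:Null1} then shows $\rmd^+(\psi_\varepsilon\circ f)(\nabla\u)\,\vert\rmd\u\vert^{\PP-2}=0$ there. Combined with item \ref{La:Null3} below (which can be established independently), this propagates to $f$ itself by letting $\varepsilon\to 0$.

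For \ref{La:Null25}, the key algebraic identity is $\u+\tau(\varphi\circ\u)=(\mathrm{id}+\tau\varphi)\circ\u$; since $\mathrm{id}+\tau\varphi$ is nondecreasing Lipschitz for $\tau$ small enough, the chain rule from \cref{Pr:Calc rulez}\,\ref{La:CHAIN} gives $\vert\rmd(\u+\tau\varphi\circ\u)\vert=(1+\tau\,\varphi'\circ\u)\,\vert\rmd\u\vert$ $\meas$-a.e., and a Taylor expansion in $\tau$ of the $\PP$-th power concludes. For \ref{La:Null3} the first identity, I would expand $\varphi\circ f$ to first order: for $\meas$-a.e.\ $x$, locally in $\tau$ we have $\varphi\circ f = \varphi(f(x))+\varphi'(f(x))\,(f-f(x))+o(|f-f(x)|)$, which when inserted into the perturbation $\u+\tau(\varphi\circ f)$ can be compared to the constant-shifted perturbation $\u+\tau\varphi'(f(x))\,f$ via Locality~\ref{La:Null1}. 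Rigorously this uses \ref{La:Null2} to absorb the remainder on small level sets where $\varphi'$ may degenerate. The second identity of \ref{La:Null3} is the subtlest step: writing $\varphi\circ\u+\tau f\approx \varphi\circ(\u+\tau(\varphi'\circ\u)^{-1}f)$ up to $o(\tau)$ (which needs $\varphi'$ bounded away from zero to invert), one applies \cref{Pr:Calc rulez}\,\ref{La:CHAIN} to obtain $\vert\rmd(\varphi\circ\u+\tau f)\vert\approx (\varphi'\circ\u)\,\vert\rmd(\u+\tau(\varphi'\circ\u)^{-1}f)\vert$, and the extra factor $(\varphi'\circ\u)^{\PP-1}$ appears after raising to the $\PP$-th power and dividing by $\PP\tau$.

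Finally, \ref{La:Null4} follows from the elementary pointwise bound $fg\geq f_0\,g+g_0\,f-f_0g_0$ valid on $\{f\geq f_0,\,g\geq g_0\}$, combined with superadditivity of $\rmd^+\cdot(\nabla\u)\,\vert\rmd\u\vert^{\PP-2}$ in the perturbation variable, \ref{La:Null1} applied to the constant $-f_0g_0$, and a localization that lets us choose $f_0\uparrow f(x)$ and $g_0\uparrow g(x)$ at $\meas$-a.e.\ $x$. The second part of \ref{La:Null4} follows by specializing the same recipe with $g=\varphi\circ\u$ and invoking \ref{La:Null25} to identify its vertical derivative. The main obstacle I anticipate is the chain rule \ref{La:Null3}, whose one-sided nature forces us to control the first-order remainder in $\tau$ uniformly in a way compatible with the essential-supremum structure of $\rmd^+$; this is where the duality reformulation of \cite{beran-braun-calisti-gigli-mccann-ohanyan-rott-samann+-}*{Prop.~4.1} becomes indispensable.
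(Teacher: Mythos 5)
This proposition is not proved in the paper at all: it is imported verbatim from Beran et al.\ \cite{beran-braun-calisti-gigli-mccann-ohanyan-rott-samann+-}*{Prop.~4.11}, so there is no in-paper argument to compare against. Judged on its own terms, your plan gets the easy items right --- \ref{La:Null1} via \cref{Pr:Calc rulez}(i) applied to $\u+\tau f=\v+\tau g$ along a countable sequence $\tau_n\downarrow 0$, and \ref{La:Null25} via the identity $\u+\tau(\varphi\circ\u)=(\mathrm{id}+\tau\varphi)\circ\u$ and \cref{Pr:Calc rulez}\ref{La:CHAIN} --- but the remaining items contain genuine gaps.

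For \ref{La:Null2} your argument is circular where it is not vacuous. On $f^{-1}(N)$ you arrange $\psi_\varepsilon'\circ f=0$, so the chain rule \ref{La:Null3} returns the tautology $0=0\cdot\rmd^+f(\nabla\u)\,\vert\rmd\u\vert^{\PP-2}$, from which nothing about $\rmd^+f(\nabla\u)\,\vert\rmd\u\vert^{\PP-2}$ follows; moreover \ref{La:Null3} is stated for $\Cont^1$ functions (your $\psi_\varepsilon$ is only Lipschitz), and its proof in this setting normally \emph{uses} \ref{La:Null2} to handle the null set where $\varphi'$ is ambiguous. What Locality I actually gives is $\rmd^+(\psi_\varepsilon\circ f)(\nabla\u)\,\vert\rmd\u\vert^{\PP-2}=0$ on each $f^{-1}(I_j)$, $I_j$ a component of the neighborhood of $N$; to pass from $\psi_\varepsilon\circ f$ to $f=\psi_\varepsilon\circ f+(\mathrm{id}-\psi_\varepsilon)\circ f$ you would need an \emph{upper} bound on the vertical derivative of the remainder, and concavity only supplies the lower (superadditive) one.

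The same one-sidedness undermines \ref{La:Null3} and \ref{La:Null4}. In positive signature one discards pointwise $o(\tau)$ remainders via the triangle inequality $\vert\rmd(h+r)\vert\leq\vert\rmd h\vert+\vert\rmd r\vert$; here $\vert\rmd\cdot\vert$ is only superadditive and is only defined for $l$-causal arguments, and Locality \ref{La:Null1} applies where two perturbations coincide \emph{exactly}, not approximately. Hence "compare to the constant-shifted perturbation via Locality I" and "$\varphi\circ\u+\tau f\approx\varphi\circ(\u+\tau(\varphi'\circ\u)^{-1}f)$ up to $o(\tau)$" are precisely the steps that require a new mechanism (e.g.\ a partition of the range into small intervals with two-sided sandwiching of the perturbation between $l$-causal majorants and minorants, or the test-plan duality of \cite{beran-braun-calisti-gigli-mccann-ohanyan-rott-samann+-}*{Prop.~4.1}); you flag this yourself but do not supply it. Likewise, the pointwise bound $fg\geq f_0g+g_0f-f_0g_0$ does not transfer to vertical derivatives, because $(f-f_0)(g-g_0)$ is not $l$-causal and $\rmd^+\cdot(\nabla\u)\,\vert\rmd\u\vert^{\PP-2}$ is monotone only under perturbation by $l$-causal functions. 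As written, \ref{La:Null2}, \ref{La:Null3}, and \ref{La:Null4} are not established.
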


We turn to vertical left differentiation. Here, we assume  $f$ forms a symmetric finite perturbation of $\u$. Then we $\meas$-a.e.~introduce the \emph{vertical left differential quotient}
\begin{align*}
\rmd^-f(\nabla\u)\,\vert \rmd\u\vert^{\PP-2} &:= \lim_{\sigma\to 0-} \frac{\vert\rmd(\u+\sigma f)\vert^\PP - \vert\rmd\u\vert^\PP}{\PP\sigma}.
\end{align*}
Again by concavity, the dependence of the term inside the limit  on $\sigma<0$ is nonincreasing $\meas$-a.e.  
In particular, the previous limit is an $\meas$-essential infimum.

\begin{remark}[Exponent independence]\label{Re:Indep!} If $\vert \rmd\u\vert = 1$ $\meas$-a.e., then basic features of convex functions show the quantities $\smash{\rmd^+f(\nabla\u)\,\vert\rmd\u\vert^{\PP-2}}$ and $\smash{\rmd^-f(\nabla\u)\,\vert\rmd\u\vert^{\PP-2}}$, whenever defined, are $\meas$-a.e. independent of $\PP$ as the latter ranges over $(-\infty,1)\setminus \{0\}$.\hfill{\footnotesize{$\blacksquare$}}
\end{remark}

In general, we have
\begin{align}\label{Eq:We have ineq}
\rmd^+f(\nabla\u)\,\vert\rmd\u\vert^{\PP-2} \leq \rmd^-f(\nabla\u)\,\vert\rmd\u\vert^{\PP-2}\quad\meas\textnormal{-a.e.}
\end{align}
If $\scrM$ is \emph{infinitesimally Minkowskian} in the sense of Beran et al.~\cite{beran-braun-calisti-gigli-mccann-ohanyan-rott-samann+-}*{Def.~1.4}, this is an equality for a special class of functions $f$ and $\u$ \cite{beran-braun-calisti-gigli-mccann-ohanyan-rott-samann+-}*{Thm.~4.19}. In \cref{Sub:LorentzFinsler}, we will describe a smoother yet ``nonquadratic'' setting in which equality holds, see the proof of \cref{Th:Main I Finsler,Th:Main II Finsler} therein. In general, equality above relates to strict concavity of the given ``Lorentz\-ian norm'' (or  the exclusion of multivalued gradients), see \cref{Re:Inf str conv}. 

The following related \emph{equality} should be compared to its analog by Gigli \cite{gigli2015}*{Prop.~3.15} in positive signature and will become relevant in \cref{Re:Two sided}.

\begin{lemma}[Right vs.~left differentiation]\label{Le:Diff quot} If $f$ is a symmetric finite perturbation of $\u$,
\begin{align*}
\rmd^+(f^\leftarrow)(\nabla\u)\,\vert\rmd\u\vert^{\PP-2} = -\rmd^-f(\nabla\u)\,\vert\rmd\u\vert^{\PP-2}\quad\meas\textnormal{-a.e.}
\end{align*}
\end{lemma}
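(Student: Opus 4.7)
The identity is essentially a change-of-variables computation, but to carry it out we first need to ensure both sides are well-defined. By the symmetry assumption, $f^\leftarrow=-f$ belongs to $\FPert(\u)$, so $\rmd^+(f^\leftarrow)(\nabla\u)\,\vert\rmd\u\vert^{\PP-2}$ is defined per the discussion preceding \eqref{Eq:Vert diff}. Likewise, the monotone dependence behind the definition of $\rmd^-f(\nabla\u)\,\vert\rmd\u\vert^{\PP-2}$ requires that $\u+\sigma f$ be $l$-causal $\meas$-a.e.~for all sufficiently small $\vert\sigma\vert$ with $\sigma<0$; this is precisely what symmetry of $f$ provides via \cref{Re:Adm range!} applied to $f^\leftarrow$.

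The plan is then to parametrize the left-sided limit by its positive counterpart. Starting from the definition and using $f^\leftarrow=-f$,
\begin{align*}
\rmd^+(f^\leftarrow)(\nabla\u)\,\vert\rmd\u\vert^{\PP-2} = \lim_{\tau\to 0+}\frac{\vert\rmd(\u-\tau f)\vert^\PP - \vert\rmd\u\vert^\PP}{\PP\tau}\quad\meas\textnormal{-a.e.}
\end{align*}
On the other hand, substituting $\sigma=-\tau$ in the definition of the vertical left differential quotient yields
\begin{align*}
-\rmd^-f(\nabla\u)\,\vert\rmd\u\vert^{\PP-2} = -\lim_{\tau\to 0+}\frac{\vert\rmd(\u-\tau f)\vert^\PP - \vert\rmd\u\vert^\PP}{-\PP\tau} = \lim_{\tau\to 0+}\frac{\vert\rmd(\u-\tau f)\vert^\PP - \vert\rmd\u\vert^\PP}{\PP\tau}\quad\meas\textnormal{-a.e.}
\end{align*}
Comparing the two expressions gives the claim.

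There is no real obstacle here beyond bookkeeping: the content of the lemma is the algebraic observation that the one-sided limit at $0$ from the left in the parameter $\sigma$ is, after the substitution $\sigma=-\tau$, the one-sided limit at $0$ from the right of a perturbation by $-f=f^\leftarrow$, with an extra sign absorbed by the factor $\sigma$ (resp.~$\tau$) in the denominator. The only nontrivial ingredient is that symmetry of $f$ makes both sides simultaneously meaningful; once this is in place, the identity follows without any appeal to the curvature hypothesis or to calculus rules like \cref{Pr:Calc rulez II}.
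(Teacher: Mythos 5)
Your proposal is correct and follows essentially the same route as the paper: both reduce the identity to the substitution $\sigma=-\tau$ in the defining limits, using $f^\leftarrow=-f$ and the fact (via \cref{Re:Adm range!} and symmetry) that $\u+\tau f$ is $l$-causal for all nonzero $\tau$ sufficiently close to zero, so both sides are simultaneously well-defined.
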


\begin{proof} This follows by tracing through the definitions. Indeed, since $\u + \tau f$ is $l$-causal for every nonzero $\tau\in \R$ [sic] sufficiently close to zero by \cref{Re:Adm range!},
\begin{align*}
\rmd^+(f^\leftarrow)(\nabla\u)\,\vert\rmd\u\vert^{\PP-2} &= \lim_{\tau\to 0+} \frac{\vert\rmd(\u - \tau f)\vert^\PP - \vert\rmd\u\vert^\PP}{\PP\tau}\\
&= -\lim_{\tau\to 0+} \frac{\vert\rmd(\u - \tau f)\vert^\PP - \vert\rmd\u\vert^\PP}{\PP(-\tau)}\\
&= -\rmd^-f(\nabla\u)\,\vert\rmd\u\vert^{\PP-2}\quad\meas\textnormal{-a.e.}\qedhere 
\end{align*}
\end{proof}

In particular, all properties of the vertical right differential quotient, notably \cref{Pr:Calc rulez II}, transfer to the vertical left differential quotient with suitable  modifications.

\subsection{Definition and basic properties}\label{Sub:Def basis}  We now recall the distributional notion of ($\PP$-)\-d'Alembertian of Beran et al.~\cite{beran-braun-calisti-gigli-mccann-ohanyan-rott-samann+-}, where $\PP$ is any given nonzero number less than one. It reflects the notion of ``generators'' of maximal monotone operators in Hilbert spaces in terms of their subdifferentials and their metric generalizations of Ambrosio--Gigli--Savaré \cite{ambrosio-gigli-savare2008} after which the metric measure Laplacian was set up in by the same authors \cite{ambrosio-gigli-savare2014-calculus}*{Def.~4.13}; compare with \cref{Re:Brezis}   below.

Let $\u$ be an $l$-causal function on an open subset $U$ of $\mms$ with $\vert \rmd \u\vert > 0$ $\meas\mres U$-a.e. A real-valued map $T$ on $\Pert_\bc(\u,U)$ is termed
\begin{itemize}
\item \emph{nonnegatively linear} if $T(\lambda\,f + g) = \lambda\,T(f) + T(g)$ for every $\smash{f,g\in \Pert_\bc(\u,U)}$ and every nonnegative real number $\lambda$,  
\item \emph{linear} if the previous statement holds for all real numbers $\lambda$.
\end{itemize}

\begin{definition}[Weak Radon functional] A map $T$ as above is called a a \emph{weak\footnote{In general, it is unclear how $\smash{\Pert_\bc(\u,U)}$ relates to $\smash{\Cont_\comp(U)}$, hence the addendum ``weak'' in the nomenclature; see \cref{Le:VsTop} for a relation between these sets under a topological local anti-Lipschitz condition, however.} Radon functional} on $U$ if it is nonnegatively linear and for every compact subset $W$ of $U$ there is a constant $C$ such that for every $f\in\Pert_\bc(\u,U)$ with support in $W$,
\begin{align*}
\vert T(f)\vert\leq C\,\Vert f\Vert_\infty.
\end{align*}
\end{definition}

A weak Radon functional $T$ on $U$ is \emph{nonnegative} if $T(f) \geq 0$ for every nonnegative $\smash{f\in\Pert_\bc(\u,U)}$. Given two such weak Radon functionals $S$ and $T$, we say $S\leq T$ in the sense of weak Radon functionals if the Radon functional $T-S$ is nonnegative.

\begin{definition}[D'Alembertian \cite{beran-braun-calisti-gigli-mccann-ohanyan-rott-samann+-}*{Def.~5.30}]\label{Def:DAlem} Given any nonzero $\PP$ less than $1$, we term $\u$ to lie in the domain of the \emph{$\PP$-d'Alembertian}, symbolically $\u\in\Dom(\BOX_\PP\mres U)$, if there exists a weak  Radon functional $T$  on $\smash{\Pert_\bc(\u,U)}$ such that for every $\smash{f\in\Pert_\bc(\u,U)}$, 
\begin{align*}
\int_\mms \rmd^+f(\nabla \u)\,\vert\rmd \u\vert^{\PP-2}\d\meas \leq -T(f).
\end{align*}
\end{definition}

The convex set of functionals $T$ as above is denoted $\BOX_\PP\u\mres U$. We set
\begin{align*}
\Dom(\BOX \mres U) := \bigcap_{\PP\in (-\infty,1)\setminus \{0\}} \Dom(\BOX_\PP\mres U)
\end{align*}
and call every of its elements to belong to the domain of the \emph{d'Alembertian}. A sufficient condition for the latter is  of course the nonemptiness of the set
\begin{align*}
\BOX\u\mres U := \bigcap_{\PP\in (-\infty,1)\setminus \{0\}} \BOX_\PP\u\mres U.
\end{align*}

\begin{remark}[One-sided vs.~two-sided bounds]\label{Re:Two sided} Assume in the context of \cref{Def:DAlem}, $f$ is a symmetric finite perturbation of $\u$ on $U$; recall this means $\smash{f^\leftarrow}$ belongs to $\smash{\Pert_\bc(\u,U)}$ too. Furthermore, assume $T$ is in fact linear\footnote{In our main framework, our functionals $T$ derived in \cref{Sub:Formula1,Sec:Formula2} will be indeed linear.}. Then applying the certifying  inequality from \cref{Def:DAlem} to $f$ and $\smash{f^\leftarrow}$, respectively, and using \cref{Le:Diff quot} yields
\begin{align*}
\int_\mms \rmd^+f(\nabla\u)\,\vert\rmd\u\vert^{\PP-2}\d\meas \leq -T(f) \leq \int_\mms\rmd^-f(\nabla\u)\,\vert\rmd\u\vert^{\PP-2}\d\meas.
\end{align*}
Thus, in ``good'' cases in which $\u$ has sufficiently many symmetric finite  perturbations, our one-sided \cref{Def:DAlem} is thus  able to recover an inequality tightly linked to a non\-smooth integration by parts formula; cf.~e.g.~\cref{Th:Main I Finsler,Th:Main II Finsler} and their proof in \cref{Sub:LorentzFinsler}. An analogous argument has been followed in \cite{beran-braun-calisti-gigli-mccann-ohanyan-rott-samann+-}*{Prop.~5.31} to enhance some of their results under the hypothesis of infinitesimal Minkowskianity.

On metric measure spaces as treated in Gigli \cite{gigli2015}, the analog of the above two-sided bound is  more natural since there is simply no question about symmetry. On the other hand, our above observation is related to its predecessor in this case by Gigli \cite{gigli2015}*{Prop.~4.13}. Therein, he   starts from a one-sided inequality (related to an abstract  Laplace comparison result) and derives a two-sided inequality by changing signs in the relevant test functions. His proof uses the Hahn--Banach theorem, although this abstract way to get a linear functional is redundant under infinitesimal Hilbertianity \cite{gigli2015}*{Rem.~4.14}.  

Cavalletti--Mondino's approach via localization provides such a linear map \emph{explicitly} \cite{cavalletti-mondino2020-new}*{Thms.~4.8, 4.14, Cor. 4.16}.\hfill{\footnotesize{$\blacksquare$}}
\end{remark}

\begin{remark}[About linearity I, see also \cref{Re:About II}]\label{Re:About I} For functions $f,g\in\Pert_\bc(\u,U)$, the nonnegative linearity of $T$ in \cref{Def:DAlem} entails the inequality therein for $f+g$ in place of $f$. By concavity, this bound is stronger than the one which results  from adding up the individual defining  estimates for $f$ and $g$, respectively.\hfill{\footnotesize{$\blacksquare$}}
\end{remark}

\begin{remark}[Connection to \cite{ambrosio-gigli-savare2008,ambrosio-gigli-savare2014-calculus}]\label{Re:Brezis} Recall the definition \eqref{Eq:q-Cheeger} of the $\PP$-Cheeger energy $\smash{\scrE_\PP}$ on $U$ and assume $\smash{\scrE_\PP(u) < \infty}$. For $\smash{f\in\Pert_\bc(\u,U)}$, let $\tau > 0$ small enough such that $\u + \tau f$ is $l$-causal. If $\smash{T\in \BOX_\PP\u \mres U}$, the negativity of $\PP-1$ and definition \eqref{Eq:Vert diff} yield
\begin{align*}
-(\PP-1)\,T(\tau f) &\leq \tau(\PP-1)\int_U \rmd^+f(\nabla\u)\,\vert\rmd\u\vert^{\PP-2}\d\meas \leq \scrE_\PP(\u + \tau f) - \scrE_\PP(\u).
\end{align*}
Therefore, in a generalized sense $-(\PP-1)\,T$ can be interpreted as an element of the sub\-differential of the convex $\PP$-Cheeger energy on $U$. This is in accordance with the Laplacian $\Delta$ on metric measure spaces: for suitable  Sobolev functions $f$, $-\Delta f$ is the unique element with minimal $L^2$-norm in the subdifferential of the Cheeger energy at $f$. This definition of Laplacian for general metric measure spaces was introduced by Ambrosio--Gigli--Savaré  \cite{ambrosio-gigli-savare2014-calculus}*{Def.~4.13}.\hfill{\footnotesize{$\blacksquare$}}
\end{remark}

\subsection{Calculus rules}\label{Sub:Calc rulez}  We list properties of our $\PP$-d'Alembertian, where $\PP$ is as in \cref{Def:DAlem}. They transfer to analogous statements and definitions for the d'Alembertian.

\begin{remark}[Properties of the d'Alembertian]\label{Re:Props alemb} In the framework of \cref{Def:DAlem}, the class $\Dom(\BOX_\PP\mres U)$ is in general not closed under addition, hence no real cone.

However, the $\PP$-d'Alembertian is positively $(p-1)$-homogeneous in the following way. Let $\lambda$ be a positive real number and let $\u\in\Dom(\BOX_\PP\mres U)$. Given  $T\in\BOX_\PP\u\mres U$, note that $\lambda^{p-1}\,T\in \Dom(\BOX_\PP(\lambda\,\u))$ by the positive $(p-1)$-homogeneity of the quantity $\smash{\rmd^+f(\nabla \u)\,\vert\rmd\u\vert^{\PP-2}}$ in the second functional variable. In turn, this yields $\lambda\,\u\in \Dom(\BOX_\PP\mres U)$. Flipping this argument on its head gives 
\begin{align*}
\lambda^{p-1}\,\BOX_\PP\u\mres U = \BOX_\PP(\lambda\,\u)\mres U.
\end{align*}
This property should be reminiscent of the general scaling property of the Laplacian on metric measure spaces, cf.~Ambrosio--Gigli--Savaré \cite{ambrosio-gigli-savare2014-calculus}*{Rem.~4.14} and Gigli \cite{gigli2015}*{p.~37}.

Similarly, the chain rule and locality imply the invariance of $\Dom(\BOX_\PP\mres U)$ under shifts by arbitrary constants.

The $\PP$-d'Alembertian also has the following ``global-to-local property''. If $V$ is an open set contained in $U$, then the restriction of every element of $\BOX_\PP\u\mres U$ to $\Pert_\bc(\u,V)$ belongs to $\BOX_\PP\u\mres V$, symbolically $\smash{\u\big\vert_V \in\Dom(\BOX_\PP\mres V)}$. The left-hand side of the defining inequality localizes to $V$ by locality properties of the integrand.\hfill{\footnotesize{$\blacksquare$}} 
\end{remark}

A further important property is the chain rule. The following  should be compared to  its counterpart for metric measure spaces by Gigli \cite{gigli2015}*{Prop.~4.11}.

\begin{proposition}[Chain rule]\label{Pr:Chain rule} Assume that $\vert\rmd\u\vert^\PP$ is locally $\smash{\meas\mres U}$-integrable. Let $\varphi$ be a strictly increasing twice continuously differentiable function on an open set containing the image $\u(U)$. Then for every $\smash{T\in\BOX_\PP\u\mres U}$, the functional $S$ on $\smash{\Pert_\bc(\varphi\circ\u,U)}$ given by
\begin{align*}
S(f) := T(f\,(\varphi')^{\PP-1}\circ \u) + (\PP-1)\int_\mms f\,\big[(\varphi')^{\PP-2}\,\varphi''\big]\circ \u\, \vert\rmd \u\vert^\PP\d\meas
\end{align*}
belongs to $\smash{\BOX_\PP(\varphi\circ\u)\mres U}$.
\end{proposition}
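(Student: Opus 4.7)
The strategy is to test the defining inequality for $T\in\BOX_\PP\u\mres U$ against the product $g\,\psi\circ\u$, where $\psi:=(\varphi')^{\PP-1}$, and then reshape the resulting integrand via the Leibniz and chain rules of \cref{Pr:Calc rulez II}. Note $\psi$ is positive and $C^1$ on the neighborhood of $\u(U)$ where $\varphi$ is defined, and $\psi' = (\PP-1)\,(\varphi')^{\PP-2}\,\varphi''$.

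Given $g\in\Pert_\bc(\varphi\circ\u,U)$, the first step is to identify both $g$ and $f:=g\,\psi\circ\u$ as elements of $\Pert_\bc(\u,U)$. Since $\varphi$ is $C^2$ and strictly increasing, on the compact range $\u(\supp g)$ both $\varphi$ and $\varphi^{-1}$ are Lipschitz with derivatives bounded away from zero, and $\psi$ is nonnegative, bounded, and Lipschitz; after modifying $\varphi$, $\varphi^{-1}$, and $\psi$ outside this compact range to arrange these properties globally (which is innocuous, as argued below), \cref{Pr:Relations}\ref{La:R2} applied to $\varphi$ and $\varphi^{-1}$ yields $g\in\Pert_\bc(\u,U)$, and \cref{Pr:Relations}\ref{La:R3} yields $f\in\Pert_\bc(\u,U)$.

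Next, the second part of the Leibniz rule \cref{Pr:Calc rulez II}\ref{La:Null4}, applied with $g\in\Pert_\bounded(\u)$ (hence uniformly bounded from below) and $\psi$ in the role of ``$\varphi$'' there, gives
\begin{align*}
\rmd^+f(\nabla\u)\,\vert\rmd\u\vert^{\PP-2} \ge g\,\psi'\circ\u\,\vert\rmd\u\vert^\PP + \psi\circ\u\,\rmd^+g(\nabla\u)\,\vert\rmd\u\vert^{\PP-2} \quad\meas\text{-a.e.}
\end{align*}
Integrating against $\meas$ and combining with the defining inequality $\int\rmd^+f(\nabla\u)\,\vert\rmd\u\vert^{\PP-2}\d\meas \le -T(f)$ yields
\begin{align*}
\int_\mms \psi\circ\u\,\rmd^+g(\nabla\u)\,\vert\rmd\u\vert^{\PP-2}\d\meas \le -T(g\,\psi\circ\u) - \int_\mms g\,\psi'\circ\u\,\vert\rmd\u\vert^\PP\d\meas.
\end{align*}
By the second part of the chain rule \cref{Pr:Calc rulez II}\ref{La:Null3}, the left-hand side equals $\int_\mms \rmd^+g(\nabla(\varphi\circ\u))\,\vert\rmd(\varphi\circ\u)\vert^{\PP-2}\d\meas$, and substituting $\psi' = (\PP-1)(\varphi')^{\PP-2}\varphi''$ into the right-hand side produces exactly $-S(g)$, which is the sought inequality. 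That $S$ is a weak Radon functional follows because nonnegative linearity (in fact full linearity) is inherited from $T$ and the linearity of the integral, while for $g$ supported in a fixed compact $W\subset U$ both $\Vert g\,\psi\circ\u\Vert_\infty \le C_W\Vert g\Vert_\infty$ and $\big\vert\!\int_\mms g\,(\varphi')^{\PP-2}\varphi''\circ\u\,\vert\rmd\u\vert^\PP\d\meas\big\vert \le C_W'\Vert g\Vert_\infty$, using local boundedness of $\u$ and the assumed local $\meas\mres U$-integrability of $\vert\rmd\u\vert^\PP$; combining with the continuity bound for $T$ on $W$-supported perturbations of $\u$ produces the analogous bound for $S$.

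The main obstacle is the technical bookkeeping required to ensure the global hypotheses of \cref{Pr:Relations,Pr:Calc rulez II} (nonnegativity, boundedness, Lipschitz continuity, and derivative bounded away from zero for $\varphi$, $\varphi^{-1}$, $\psi$) are met, as on the full image of $\u$ these may fail. The resolution is to modify each function outside $\u(\supp g)$ and invoke the locality statements \ref{La:Null1} and \ref{La:Null2} of \cref{Pr:Calc rulez II}, together with their counterparts for $\vert\rmd\cdot\vert$ in \cref{Pr:Calc rulez}, to guarantee that neither $T$, the vertical differential quotients, nor the densities $\vert\rmd\u\vert^\PP$ and $\vert\rmd(\varphi\circ\u)\vert^{\PP-2}$ are altered $\meas$-a.e.~on the set where $g$ is nonzero.
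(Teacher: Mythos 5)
Your proposal is correct and follows essentially the same route as the paper: localize to a precompact neighborhood of the support, use \cref{Pr:Relations} to place $g$ and $g\,(\varphi')^{\PP-1}\circ\u$ in $\Pert_\bc(\u,\cdot)$, then combine the second parts of the Leibniz and chain rules from \cref{Pr:Calc rulez II} with the defining inequality for $T$. The only difference is cosmetic — you read the paper's chain of inequalities from the other end — and your handling of the boundedness/Lipschitz hypotheses via truncation outside $\u(\supp g)$ plus locality is an equivalent substitute for the paper's restriction to the precompact set $W$.
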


\begin{proof} Let $\smash{f\in\Pert_\bc(\varphi\circ\u,U)}$ be fixed. By locality, without loss of generality we may and will replace $U$ by a precompact neighborhood $W$ of $\supp f$ whose closure is contained in $U$. Then $\smash{\u(W)}$ is precompact in $\R$ by \cref{Le:Bounded}; consequently, by assumption there is $\varepsilon > 0$ with $\varepsilon < \varphi' < 1/\varepsilon$ on $\cl\,\u(W)$. Moreover, $\varphi''$ is bounded  on the same set.

\cref{Pr:Relations} implies the following correspondences between different classes of finite perturbations we tacitly use through the rest of this proof. First, as $\smash{\varphi^{-1}}$ is strictly increasing and Lipschitz continuous on $\cl\,\u(W)$ we have $\smash{f\in \Pert_\bc(\u,W)}$. Second, since $\smash{(\varphi')^{\PP-1}}$ is Lipschitz continuous on $\cl\,\u(W)$ we obtain $\smash{(\varphi')^{\PP-1}\circ\u\in\FPert(\u,W)}$. Lastly, since $f$ is bounded and $\smash{(\varphi')^{\PP-1}}$ is nonnegative, bounded from above, and Lipschitz continuous on $\cl\,\u(W)$, we obtain $\smash{f\,(\varphi')^{\PP-1}\circ \u\in\Pert_\bc(\u,W)}$. The latter property implies, for some constant $C$ depending only on $W$,
\begin{align*}
\big\vert T(f\,(\varphi')^{\PP-1}\circ\u)\big\vert &\leq C\,\big\Vert f\,(\varphi')^{\PP-1}\circ\u\big\Vert_\infty \leq C\,\varepsilon^{\PP-1}\,\Vert f\Vert_\infty.
\end{align*}
Since $\smash{\vert\rmd\u\vert^\PP}$ is $\meas\mres W$-integrable, the functional $S$ is a weak Radon functional.

Repeatedly using the previous facts, the Leibniz rule as well as the chain rule, $\smash{\meas\mres W}$-integrability of $\smash{\vert\rmd\u\vert^\PP}$, and our hypothesis $\smash{T\in\BOX_\PP\u\mres U}$ we obtain
\begin{align*}
&\int_W \rmd^+f(\nabla(\varphi\circ\u))\,\vert\rmd(\varphi\circ \u)\vert^{\PP-2}\d\meas\\
&\qquad\qquad = \int_W (\varphi')^{\PP-1}\circ\u\d^+f(\nabla\u)\,\vert\rmd\u\vert^{\PP-2}\d\meas\\
&\qquad\qquad \leq \int_W \rmd^+(f\,(\varphi')^{\PP-1}\circ\u)(\nabla\u)\,\vert\rmd\u\vert^{\PP-2}\d\meas\\
&\qquad\qquad\qquad\qquad - (\PP-1)\int_W f\,\big[(\varphi')^{\PP-2}\,\varphi''\big]\circ\u\,\vert\rmd\u\vert^\PP\d\meas\\
&\qquad\qquad \leq -T(f\,(\varphi')^{\PP-1}\circ\u) - (\PP-1)\int_W f\,\big[(\varphi')^{\PP-2}\,\varphi''\big]\circ\u\,\vert\rmd\u\vert^\PP\d\meas.
\end{align*}
Since $W$ was arbitrary, this is the desired claim.
\end{proof} 

\begin{definition}[Harmonicity]\label{Def:Harmon} We call $\u$ \emph{$\PP$-harmonic} on $U$ if $\smash{\BOX_\PP\u\mres U}$ contains $0$.
\end{definition}

The following sufficient condition for $\PP$-harmonicity should also  be compared to Gigli \cite{gigli2015}*{Prop.~4.16} and Gigli--Mondino \cite{gigli-mondino2013}*{Thm.~4.2}. It follows by replacing $f$ by $\tau f$ for sufficiently small $\tau > 0$ and sending $\tau\to 0+$.

\begin{proposition}[Harmonicity and energy minimizers]\label{Pr:Harmonic} Assume that the function $\vert \rmd\u\vert^\PP$ is $\meas\mres U$-integrable. If every $f\in\Pert_\bc(\u,U)$ satisfies
\begin{align*}
\int_U \vert\rmd \u\vert^\PP\d\meas \leq \int_U \vert\rmd(\u+f)\vert^\PP\d\meas,
\end{align*}
then $\u$ is $\PP$-harmonic on $U$.
\end{proposition}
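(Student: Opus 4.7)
The plan is a first-variation argument exploiting the cone structure of $\Pert_\bc(\u,U)$ under positive scaling, mirroring Gigli's analog in positive signature. I fix $f\in\Pert_\bc(\u,U)$ and, appealing to \cref{Re:Adm range!}, select $\tau_0>0$ such that $\u+\tau f$ is $l$-causal on $U$ for every $\tau\in(0,\tau_0]$; in particular $\tau f\in\Pert_\bc(\u,U)$ for each such $\tau$. Applying the standing hypothesis with $\tau f$ in place of $f$ will then yield
\begin{align*}
\int_U\vert\rmd\u\vert^\PP\d\meas\leq\int_U\vert\rmd(\u+\tau f)\vert^\PP\d\meas,
\end{align*}
whence, upon subtracting the left-hand side and dividing by $\PP\tau$, the integrated difference quotient
\begin{align*}
\int_U\frac{\vert\rmd(\u+\tau f)\vert^\PP-\vert\rmd\u\vert^\PP}{\PP\tau}\d\meas
\end{align*}
receives a definite sign.

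The next step is the passage $\tau\to 0+$. The crucial analytic input is the remark following \eqref{Eq:Vert diff}: by concavity of $\tau\mapsto\vert\rmd(\u+\tau f)\vert$, the integrand above is $\meas$-a.e.~nonincreasing in $\tau>0$ and tends pointwise (from below) to $\rmd^+f(\nabla\u)\,\vert\rmd\u\vert^{\PP-2}$ as $\tau\to 0+$. Monotone convergence under the integral will then produce the bound
\begin{align*}
\int_U\rmd^+f(\nabla\u)\,\vert\rmd\u\vert^{\PP-2}\d\meas\leq 0.
\end{align*}

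Since $f\in\Pert_\bc(\u,U)$ was arbitrary and the zero functional on $\Pert_\bc(\u,U)$ is trivially nonnegatively linear with a vacuous $\Vert\cdot\Vert_\infty$-continuity bound, the preceding inequality certifies $0\in\BOX_\PP\u\mres U$ in the sense of \cref{Def:DAlem}, whence $\u$ is $\PP$-harmonic per \cref{Def:Harmon}. The main technical obstacle I foresee is justifying the interchange of limit and integration: the monotonicity of the integrand in $\tau$ provides a one-sided integrable bound at any fixed $\tau\in(0,\tau_0]$, relying on the standing $\meas\mres U$-integrability of $\vert\rmd\u\vert^\PP$ together with that of $\vert\rmd(\u+\tau_0 f)\vert^\PP$ on the compactly supported region where the integrand can be nonzero; however, since the pointwise limit may fail to be $\meas$-a.e.~bounded, monotone convergence rather than dominated convergence is the correct tool, and the concluding inequality should be understood as an assertion in $[-\infty,0]$.
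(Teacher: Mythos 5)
Your argument is exactly the elaboration the paper intends: its own justification is the single sentence preceding the proposition ("It follows by replacing $f$ by $\tau f$ for sufficiently small $\tau>0$ and sending $\tau\to 0+$"), and your use of the $\meas$-a.e.\ monotonicity of the difference quotient together with monotone convergence is the right way to fill that in. For $\PP<0$ the proof is complete as you describe it.

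There is, however, one concrete point you hide behind the phrase "receives a definite sign", namely the sign of $\PP\tau$. Applying the hypothesis to $\tau f$ gives $\int_U[\vert\rmd(\u+\tau f)\vert^\PP-\vert\rmd\u\vert^\PP]\d\meas\geq 0$, and dividing by $\PP\tau$ yields the bound $\leq 0$ on the integrated difference quotient only when $\PP<0$. For $0<\PP<1$ the same manipulation yields $\geq 0$, and since the quotient increases to $\rmd^+f(\nabla\u)\,\vert\rmd\u\vert^{\PP-2}$ as $\tau\to 0+$, the limit is then $\geq 0$ --- the opposite of the inequality $\int_U\rmd^+f(\nabla\u)\,\vert\rmd\u\vert^{\PP-2}\d\meas\leq -T(f)=0$ that harmonicity requires via \cref{Def:DAlem}. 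Equivalently, the displayed hypothesis coincides with minimality of the convex energy $\scrE_\PP=\QQ^{-1}\int_U\vert\rmd\cdot\vert^\PP\d\meas$ only when $\QQ>0$, i.e.\ $\PP<0$; for $0<\PP<1$ one has $\QQ<0$ and minimality corresponds to the reversed inequality. This sign issue arguably originates in the formulation of the statement rather than in your argument, but as written your conclusion does not follow for positive $\PP$: you should either restrict to $\PP<0$ or phrase the hypothesis in terms of $\scrE_\PP$. A secondary, more minor caveat: for $\PP<0$ the integrable minorant you invoke at $\tau=\tau_0$ controls the \emph{positive} part of the quotient (via $\vert\rmd\u\vert^\PP$), whereas its negative part is controlled by $\vert\rmd(\u+\tau_0 f)\vert^\PP$, whose integrability the hypothesis only bounds from below; if $\int_U g_{\tau_0}\d\meas=-\infty$ one should start the monotone family at some $\tau_1$ where the integral is finite, or argue separately in that degenerate case.
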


\begin{remark}[Super- and subharmonicity]\label{Re:Superharmonicity} \cref{Def:DAlem} also allows us to define the following natural concepts, see Björn--Björn \cite{bjorn-bjorn2011}, Gigli \cite{gigli2015}, and Gigli--Mondino \cite{gigli-mondino2013} for similar approaches in positive signature. We say $\u$ is \emph{$\PP$-subharmonic} or \emph{$\PP$-superharmonic} on $U$, respectively, if it is in $\smash{\Dom(\BOX_\PP\mres U)}$ and $\smash{\BOX_\PP\u\mres U}$ contains a nonnegative or nonpositive element, respectively.\hfill{\footnotesize{$\blacksquare$}}
\end{remark}

Lastly, the following formula reflects Gigli  \cite{gigli2015}*{Prop.~4.18} from positive signature.

\begin{proposition}[Change of the reference measure]\label{Pr:Change} Given $\smash{V\in\Pert_\bounded(\u,U)}$, we assume $\rmd V(\nabla\u)\,\vert\rmd\u\vert^{\PP-2}$ is locally $\meas\mres U$-integrable. Let $\smash{\BOX_p^V\u\mres U}$ designate the set defined after  \cref{Def:DAlem} with respect to the new metric measure spacetime $\scrM_V := (\mms,l,\meas_V)$, where $\smash{\meas_V := \rme^V\,\meas}$. Then for every $\smash{T\in \BOX_{\PP}\u\mres U}$, the functional $R$ on $\smash{\Pert_\bc(\u,U)}$ defined by
\begin{align*}
R(f) := T(\rme^Vf) - \int_M f\,\rmd V(\nabla\u)\,\vert\rmd\u\vert^{\PP-2}\d\meas_V
\end{align*}
belongs to $\smash{\BOX_p^V\u\mres U}$.
\end{proposition}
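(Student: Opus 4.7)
The plan is to verify directly that $R$ defines a weak Radon functional on $\smash{\Pert_\bc(\u,U)}$ and satisfies the certifying inequality of \cref{Def:DAlem} relative to the new spacetime $\scrM_V$, in close analogy with Gigli's argument \cite{gigli2015}*{Prop.~4.18} in positive signature.

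For well-definedness, given $f\in\smash{\Pert_\bc(\u,U)}$, the boundedness of $V$ forces $\rme^V\in[\rme^{-\Vert V\Vert_\infty},\rme^{\Vert V\Vert_\infty}]$, so $\rme^V f$ is bounded with relatively compact support. To see $\rme^V f$ remains a finite perturbation of $\u$, I would combine the defining inequalities for $V\in\smash{\Pert_\bounded(\u,U)}$ and $f\in\smash{\Pert_\bc(\u,U)}$ with the Lipschitz bound $\vert\rme^{V(y)}-\rme^{V(x)}\vert\leq\rme^{\Vert V\Vert_\infty}\vert V(y)-V(x)\vert$ to exhibit some $\rho>0$ for which $\u+\rho\rme^V f$ is $l$-causal. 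Nonnegative linearity of $R$ is inherited from $T$ and from linearity of the integral; finiteness of the integral appearing in $R(f)$ follows from the local $\meas\mres U$-integrability of $\rmd V(\nabla\u)\,\vert\rmd\u\vert^{\PP-2}$ and the compactness of $\supp f$; and the Radon-type bound on a compact $W\subset U$ follows from the corresponding bound for $T$ applied to $\rme^V f$ together with the $L^1(W,\meas)$-norm of $\rme^V\,\rmd V(\nabla\u)\,\vert\rmd\u\vert^{\PP-2}$.

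The analytical core is the certifying inequality $\int\rmd^+f(\nabla\u)\,\vert\rmd\u\vert^{\PP-2}\d\meas_V\leq-R(f)$. Since $\rme^V$ is bounded between positive constants, $\meas$ and $\meas_V$ define the same class of test plans, so the maximal weak subslope $\vert\rmd\u\vert$ and the quantity $\rmd^+f(\nabla\u)\,\vert\rmd\u\vert^{\PP-2}$ coincide in $\scrM$ and $\scrM_V$. After substituting $\rme^V\d\meas=\d\meas_V$ and applying the defining inequality of $T\in\BOX_\PP\u\mres U$ at the test function $\rme^V f\in\smash{\Pert_\bc(\u,U)}$, the target reduces --- upon rearrangement --- to a pointwise Leibniz-type estimate relating $\rmd^+(\rme^V f)(\nabla\u)\,\vert\rmd\u\vert^{\PP-2}$ to $\rme^V\,\rmd^+f(\nabla\u)\,\vert\rmd\u\vert^{\PP-2}$ and the corrective term $\rme^V f\,\rmd V(\nabla\u)\,\vert\rmd\u\vert^{\PP-2}$; this is the formal Leibniz identity underlying the Bakry--\'Emery structure $\BOX_\PP^V\u=\BOX_\PP\u+\rmd V(\nabla\u)\,\vert\rmd\u\vert^{\PP-2}$.

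The principal obstacle is establishing this Leibniz-type estimate for general $f$, since the nonnegative-product Leibniz rule of \cref{Pr:Calc rulez II} requires both factors to be nonnegative while $f$ has no sign restriction. I would handle this in two moves. The chain rule of \cref{Pr:Calc rulez II}, applied to $V$ composed with the exponential --- which is smooth and Lipschitz on the bounded image of $V$ --- supplies $\rmd^+\rme^V(\nabla\u)\,\vert\rmd\u\vert^{\PP-2}=\rme^V\,\rmd^+V(\nabla\u)\,\vert\rmd\u\vert^{\PP-2}$ $\meas$-a.e. To remove the sign restriction on $f$, I would choose a constant $C>\Vert f\Vert_\infty$ and a cut-off $\chi\in\smash{\Pert_\bc(\u,U)}$ equal to one on a neighborhood of $\supp f$, apply the nonnegative-product Leibniz rule to the product $\rme^V(f+C\chi)$ of two nonnegative factors, and then use the invariance of vertical differentials under additive constants in the direction together with the locality properties of \cref{Pr:Calc rulez II} to transfer the identity back to $\rme^V f$ on $\supp f$. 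The surplus $C$-contribution is absorbed via the defining estimate of $T$ at the admissible test function $\rme^V\chi\in\smash{\Pert_\bc(\u,U)}$ combined with nonnegative linearity of $T$, completing the proof.
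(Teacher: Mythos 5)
Your skeleton is the paper's own: show $\rme^Vf\in\Pert_\bc(\u,U)$, observe that $\meas$ and $\meas_V$ generate the same test plans and hence the same weak subslopes, use the chain rule to compute $\rmd^+(\rme^V)(\nabla\u)\,\vert\rmd\u\vert^{\PP-2}=\rme^V\,\rmd^+V(\nabla\u)\,\vert\rmd\u\vert^{\PP-2}$, combine this with the Leibniz rule for the product $\rme^V\cdot f$, integrate against $\meas$, and invoke the defining inequality of $T$ at the test function $\rme^Vf$. The paper does exactly this in two lines, applying \cref{Pr:Calc rulez II} to $\rme^Vf$ without further comment; your elaborations of the weak Radon bound and of the identification of the two Sobolev calculi are welcome and consistent with that.

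The genuine problem is your device for removing the sign restriction on $f$: it cannot work as described. Set $g:=f+C\chi\geq 0$. Every estimate at your disposal points the same way: superadditivity gives $\rmd^+(f+C\chi)(\nabla\u)\geq\rmd^+f(\nabla\u)+C\,\rmd^+\chi(\nabla\u)$, the Leibniz rule for the nonnegative product $\rme^Vg$ is a one-sided lower bound, and the defining inequality of $T$ reads $\int\rmd^+(\rme^Vg)(\nabla\u)\,\vert\rmd\u\vert^{\PP-2}\d\meas\leq-T(\rme^Vg)=-T(\rme^Vf)-C\,T(\rme^V\chi)$. Chaining these yields $A+C\,B\leq-T(\rme^Vf)-C\,T(\rme^V\chi)$, where $A$ is the quantity you must bound by $-R(f)$ and $B$ is its analogue for $\chi$. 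To cancel the surplus you would need $B\geq-T(\rme^V\chi)$, but the only available information is the opposite inequality $B\leq-T(\rme^V\chi)$; what you actually obtain is $A\leq-T(\rme^Vf)+C\,\bigl[-T(\rme^V\chi)-B\bigr]$ with a nonnegative error that does not disappear for any choice of $C$. One cannot subtract one-sided inequalities involving a superadditive functional --- this is precisely why the whole theory lives on the cone $\Pert_\bc(\u,U)$ rather than a vector space. The variant via locality on $\{\chi=1\}$ fails for the same reason: there $\rme^V(f+C\chi)=\rme^Vf+C\,\rme^V$, and splitting off $C\,\rme^V$ again only produces a superadditive inequality in the unusable direction. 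To close the argument one must prove the Leibniz-type inequality for $\rme^V\cdot f$ with \emph{signed} bounded $f$ directly (the second half of the Leibniz rule in \cref{Pr:Calc rulez II} already allows one factor to be merely bounded from below when the other is of the form $\varphi\circ\u$, and it is this kind of statement that needs to be extended to a general positive factor $\rme^V$), rather than by adding and subtracting a cutoff. Separately, be aware that your Bakry--\'Emery identity carries a plus sign on the drift term while the stated $R$ carries a minus; tracing the Leibniz rearrangement carefully, you should check which sign your argument actually produces.
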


\begin{proof} It is easy to see that $R$ is a weak Radon functional. Let us therefore turn to the defining inequality in \cref{Def:DAlem}. Given any $\smash{f\in\Pert_\bc(\u,U)}$, it is easily seen that $\smash{\rme^V f\in \Pert_\bc(\u,U)}$. Hence, by \cref{Pr:Calc rulez II},
\begin{align*}
\int_\mms \rmd^+f(\nabla\u)\,\vert\rmd\u\vert^{\PP-2} \d\meas_V &\leq \int_\mms \rmd(\rme^Vf)(\nabla\u)\,\vert\rmd\u\vert^{\PP-2}\d\meas - \int_\mms f\,\rmd V(\nabla\u)\,\vert\rmd\u\vert^{\PP-2}\d\meas_V\\
&\leq T(\rme^Vf) - \int_\mms f\,\rmd V(\nabla\u)\,\vert\rmd\u\vert^{\PP-2}\d\meas.\qedhere
\end{align*}
\end{proof}

\section{Curvature, dimension, and localization}\label{Sec:Localization}

\subsection{Optimal transport on metric measure spacetimes}\label{Sub:Lor opt tr}  For background on the results subsequently reviewed, we refer to Eckstein--Miller \cite{eckstein-miller2017}, McCann \cite{mccann2020}, Mondino--Suhr \cite{mondino-suhr2022}, Cavalletti--Mondino \cite{cavalletti-mondino2020}, Braun--McCann \cite{braun-mccann2023}, and the references therein. For a discussion about negative transport exponents, see Beran et al.~\cite{beran-braun-calisti-gigli-mccann-ohanyan-rott-samann+-}.

\subsubsection{Causal and chronological couplings} Recall $\Prob(\mms)$ designates the space of Borel probability measures on $\mms$ endowed with the narrow topology. Its subset consisting of all  compactly supported and $\meas$-absolutely continuous elements is denoted $\smash{\Prob_\comp^\ac(\mms,\meas)}$.

Let $\Pi(\mu,\nu)$ be the set of all couplings of $\mu,\nu\in\Prob(\mms)$. We call $\pi\in\Pi(\mu,\nu)$ 
\begin{itemize}
\item \emph{causal}, symbolically $\pi\in\Pi_\leq(\mu,\nu)$, if $\pi[J]=1$, and
\item \emph{chronological}, symbolically $\pi\in\Pi_\ll(\mu,\nu)$, if $\pi[I]=1$.
\end{itemize}
In particular, through the notion of causal couplings, one can define a natural  causality relation $\preceq$ on $\Prob(\mms)$, as initiated by Eckstein--Miller \cite{eckstein-miller2017} and further studied by Suhr \cite{suhr2018-theory} and Braun--McCann  \cite{braun-mccann2023}. A central structural result from  \cite{braun-mccann2023}*{Thm.~B.5} is that through $\preceq$, global hyperbolicity of $\mms$ according to  \cref{Def:MMS} ``lifts'' to its counterpart on $\Prob(\mms)$.

\subsubsection{Lorentz--Wasserstein distance}\label{Sub:LorWas}  The \emph{$\beta$-Lorentz--Wasserstein distance} of $\mu,\nu\in\Prob(\mms)$, where $\beta\in (-\infty,1)\setminus\{0\}$ is fixed, taking values in $[0,\infty)\cup\{-\infty\}$  is defined by
\begin{align*}
\ell_\beta(\mu,\nu) := \sup_{\pi\in \Pi(\mu,\nu)} \Vert l\Vert_{\Ell^\beta(\mms^2,\pi)},
\end{align*}
where we adopt the conventions 
\begin{align*}
(-\infty)^\beta := (-\infty)^{1/\beta} := -\infty.
\end{align*}
The function $\smash{\ell_\beta}$ does inherit the reverse triangle inequality from $l$, cf.~Eckstein--Miller  \cite{eckstein-miller2017}*{Thm.~13}.

We will call a coupling $\pi$ of $\mu$ and $\nu$ \emph{$\ell_\beta$-optimal} if it is causal and it attains the above supremum. If $\mu$ and $\nu$ have compact support, by standard calculus of variations tools they admit an $\smash{\ell_\beta}$-optimal coupling as soon as $\Pi_\leq(\mu,\nu)$ is nonempty, cf.~Villani  \cite{villani2009}*{Thm.~4.1}. 

Following Cavalletti--Mondino \cite{cavalletti-mondino2020}*{Def.~2.18},   a pair $\smash{(\mu,\nu)\in\Prob_\comp(\mms)^2}$ is \emph{timelike $\beta$-dualizable} if $\mu$ and $\nu$ admit a chronological $\smash{\ell_\beta}$-optimal coupling (which is then called \emph{timelike $\beta$-dualizing}). In general, this is stronger than saying $\smash{\ell_\beta(\mu,\nu)>0}$.

\begin{remark}[Uniqueness of optimal couplings]\label{Re:Uniq1} In the relevant framework for our paper (i.e.~timelike $\beta$-essentially nonbranching $\TMCP^e(k,N)$ spaces, cf.~the beginning of \cref{Ch:Repr form}), \emph{chronological} $\smash{\ell_\beta}$-optimal couplings will in fact be unique if they exist, provided one of their marginals is $\meas$-absolutely continuous (and this coupling is  induced by a map from that marginal). See the results by McCann \cite{mccann2020}*{Thm.~5.8}, Braun--Ohta \cite{braun-ohta2024}*{Thm.~4.17}, Cavalletti--Mondino \cite{cavalletti-mondino2020}*{Thm.~3.20}, Braun \cite{braun2023-renyi}*{Thm.~4.16}, and Braun--McCann  \cite{braun-mccann2023}*{Thm. 3.23} which increase in their generality.\hfill{\footnotesize{$\blacksquare$}}
\end{remark}

\subsubsection{Geodesics} We come to the measure analog of \cref{Def:GeosM}. It was introduced by McCann \cite{mccann2020}*{Def.~1.1}.

\begin{definition}[Geodesics on $\Prob(\mms)$]\label{Def:GeodPM} A curve $(\mu_t)_{t\in[0,1]}$ in $\Prob(\mms)$ is called an \emph{$\smash{\ell_\beta}$-geodesic} if it is narrowly continuous and every $s,t\in[0,1]$ with $s<t$  satisfy
\begin{align*}
0 < \ell_\beta(\mu_s,\mu_t) = (t-s)\,\ell_\beta(\mu_0,\mu_1) < \infty.
\end{align*}
\end{definition}

\begin{remark}[About narrow continuity] As a consequence of regularity of $\mms$, the additional requirement of narrow continuity in \cref{Def:GeodPM} is superfluous if the endpoints $\mu_0$ and $\mu_1$ of $(\mu_t)_{t\in[0,1]}$ are compactly supported and all $\smash{\ell_\beta}$-optimal couplings of $\mu_0$ and $\mu_1$ are chronological, cf.~the result of Braun--McCann \cite{braun-mccann2023}*{Cor.~2.65}. The last property holds if $\smash{\supp\mu_0\times\supp\mu_1}$ is a subset of $I$.\hfill{\footnotesize{$\blacksquare$}}
\end{remark}

For two $\mu_0,\mu_1\in\Prob(\mms)$, let $\smash{\OptTGeo_\beta(\mu_0,\mu_1)}$ denote the set of all Borel probability measures $\bdpi$ on $\Cont([0,1];\mms)$ such that
\begin{itemize}
\item $\bdpi$ is concentrated on the Borel set $\TGeo(\mms)$, and 
\item $\smash{(\eval_0,\eval_1)_\push\bdpi}$ constitutes an $\smash{\ell_\beta}$-optimal coupling of $\mu_0$ and $\mu_1$.
\end{itemize}
We call such a $\bdpi$ a \emph{timelike $\smash{\ell_\beta}$-optimal dynamical plan}.

A curve $(\mu_t)_{t\in[0,1]}$ in $\Prob(\mms)$ is termed a \emph{displacement $\smash{\ell_\beta}$-geodesic} if it is represented by an element $\bdpi\in\OptTGeo_\beta(\mu_0,\mu_1)$  \cite{braun-mccann2023}*{Def. 2.62}. Every displacement $\smash{\ell_\beta}$-geodesic is an $\smash{\ell_\beta}$-geodesic, cf.~Cavalletti--Mondino \cite{cavalletti-mondino2020}*{Rem.~2.32} and Braun--McCann \cite{braun-mccann2023}*{Rem.~2.63}.

\begin{remark}[Uniqueness of geodesics]\label{Re:Uniq2}  Analogously to \cref{Re:Uniq1}, in the setting from the beginning of \cref{Ch:Repr form} below, we have uniqueness of \emph{displacement $\ell_\beta$-geodesics} (a tightening of \cref{Def:GeodPM} set up by Braun--McCann \cite{braun-mccann2023}*{Def. 2.62})  between any two timelike $\beta$-dualizable elements of $\Prob(\mms)$ at least one of which is $\meas$-absolutely continuous. 

If in addition, every $\smash{\ell_\beta}$-optimal coupling of $\mu_0$ and $\mu_1$ is chronological, we  also have uniqueness of $\smash{\ell_\beta}$-geodesics (and these are thus displacement $\smash{\ell_\beta}$-geodesics). See the results of McCann  \cite{mccann2020}*{Cor.~5.9}, Braun--Ohta \cite{braun-ohta2024}*{Cor.~4.18}, Cavalletti--Mondino \cite{cavalletti-mondino2020}*{Thm.~3.21}, Braun \cite{braun2023-renyi}*{Thm.~4.17}, and  Braun--McCann \cite{braun-mccann2023}*{Thm.~3.24, Cor. 3.25} which increase in their generality.\hfill{\footnotesize{$\blacksquare$}}
\end{remark}

\subsubsection{Timelike nonbranching phenomena} Recall that a subset $G$ of $\TGeo(\mms)$ is \emph{timelike nonbranching}  if for every $\gamma,\gamma'\in G$ and every $s,t\in[0,1]$ with $s<t$, 
\begin{align*}
\gamma\big\vert_{[s,t]} = \gamma'\big\vert_{[s,t]}\quad\Longrightarrow\quad \gamma = \gamma'.
\end{align*}
This definition originates in Cavalletti--Mondino \cite{cavalletti-mondino2020}*{Def.~1.10}.

The following notion was inspired by Rajala--Sturm \cite{rajala-sturm2014} in positive signature and proposed in the Lorentzian context by Braun \cite{braun2023-renyi}*{Def.~2.21}.

\begin{definition}[Timelike essential nonbranching]\label{Def:TENB} We call  $\scrM$ \emph{time\-like $\beta$-essentially nonbranching} if every $\smash{\bdpi\in\OptTGeo_\beta(\Prob_\comp^\ac(\mms,\meas)^2)}$ is concentrated on a timelike nonbranching subset of $\TGeo(\mms)$.
\end{definition}

\subsubsection{\textnormal{(}Exponentiated\textnormal{)} Boltzmann entropy} The Boltzmann entropy $\Ent_\meas\colon \Prob(\mms) \to \R\cup\{-\infty,\infty\}$ with respect to $\meas$ is defined by
\begin{align*}
\Ent_\meas(\mu) := \begin{cases}\displaystyle \lim_{\varepsilon \to 0+}\int_{\{\rho>\varepsilon\}} \rho\log\rho\d\meas & \textnormal{if } \displaystyle\rho:=\frac{\rmd\mu}{\rmd\meas} \textnormal{ exists},\\
\infty & \textnormal{otherwise}.
\end{cases}
\end{align*}
As usual, $\Dom(\Ent_\meas)$ denotes the set of all $\mu\in\Prob(\mms)$ with real-valued entropy. 

Thanks to Jensen's inequality and the Radon property of $\meas$ assumed in \cref{Sub:MMS}, if $\mu$ has compact support, its entropy does not attain the value $-\infty$. 

Moreover, in terms of the forthcoming dimensional parameter $N \in [1,\infty)$, we define another entropy functional $\scrU_N \colon \Prob(\mms) \to \R \cup\{\infty\}$ by
\begin{align}\label{Eq:UN def}
\scrU_N(\mu) := \rme^{-\Ent_\meas(\mu)/N}.
\end{align}

\subsection{Timelike measure contraction property}\label{Sub:TMCP} In this section, we recall the definition of the timelike measure contraction property. It has been introduced in Cavalletti--Mondino \cite{cavalletti-mondino2020} (see also Braun \cite{braun2023-renyi}) following Ohta \cite{ohta2007-mcp} and Sturm \cite{sturm2006-ii} and extended to the  variable framework in Braun--McCann \cite{braun-mccann2023}.

Throughout this chapter, let $k$ be a lower semicontinuous function on $\mms$ and $N\in [1,\infty)$ be a dimensional parameter.

Recall the $l$-causal speed $\vert\dot\gamma\vert = l(\gamma_0,\gamma_1)$ of a given $\gamma\in\TGeo(\mms)$.   
Let us define the functions $\smash{k_\gamma^\pm}$ on $[0,\vert\dot\gamma\vert]$ by the relations
\begin{align}\label{Eq:kgammapm}
\begin{split}
k_\gamma^+(t\,\vert\dot\gamma\vert) &:= k(\gamma_t),\\
k_\gamma^-(t\,\vert\dot\gamma\vert) &:= k(\gamma_{1-t}).
\end{split}
\end{align}

Next, we define the $L^2$-cost of a timelike $\smash{\ell_\beta}$-optimal dynamical plan $\bdpi$ by
\begin{align*}
\cost_\bdpi := \big\Vert l\circ(\eval_0,\eval_1)\big\Vert_{L^2(\TGeo(\mms),\bdpi)}.
\end{align*}
Jensen's inequality ensures $\smash{\cost_\bdpi}$ is no smaller than $\smash{\ell_\beta((\eval_0)_\push\bdpi,(\eval_1)_\push\bdpi)}$.

Let $\bdpi$ be as above and assume its endpoint marginals are compactly supported. Define the ``superposition'' functions $\smash{k_\bdpi^\pm}$ on $[0,\cost_\bdpi]$ of the quantities in \eqref{Eq:kgammapm} by
\begin{align*}
k_\bdpi^\pm(t\,\cost_\bdpi)\,\cost_\bdpi^2 := \int k_\gamma^\pm(t\,\vert\dot\gamma\vert)\,\vert\dot\gamma\vert^2\d\bdpi(\gamma).
\end{align*}
Since $\bdpi$-a.e.~$\gamma$ does never leave the compact set $J(\supp(\eval_0)_\push\bdpi,\supp(\eval_1)_\push\bdpi))$ and since $k$ is bounded from below on this set, the previous definitions make sense. In turn, Fatou's lemma ensures the functions $\smash{k_\bdpi^\pm}$ are lower semicontinuous. Hence, the discussion from \cref{Sub:Var dist coeff} applies and yields induced distortion coefficients $\smash{\sigma_{k_\bdpi^\pm}^{(t)}}$.

Recall \eqref{Eq:UN def} for the definition of the exponentiated Boltzmann entropy $\scrU_N$.

\begin{definition}[Timelike measure contraction property]\label{Def:TMCP} We say $\scrM$ obeys the \emph{entropic timelike measure contraction property} $\smash{\TMCP^e(k,N)}$ if the following holds. For every $\smash{\mu_0 \in \Prob_\comp(\mms)\cap\Dom(\Ent_\meas)}$ and every $x_1\in\mms$  such that either
\begin{enumerate}[label=\textnormal{\alph*.}]
\item $x_1$ is in the $l$-chronological future of all points in $\supp\mu_0$, there exist
\begin{itemize}
\item an $\smash{\ell_{1/2}}$-geodesic $(\mu_t)_{t\in[0,1]}$ from $\mu_0$ to $\smash{\mu_1 := \delta_{x_1}}$ and
\item a plan $\smash{\bdpi\in\OptTGeo_{1/2}(\mu_0,\mu_1)}$, or
\end{itemize}
\item $x_1$ is in the $l$-chronological past of all points in $\supp\mu_0$, there exist
\begin{itemize}
\item an $\smash{\ell_{1/2}}$-geodesic $(\mu_t)_{t\in[1,0]}$ from $\mu_1$ to $\mu_0$ and
\item a plan $\smash{\bdpi\in\OptTGeo_{1/2}(\mu_1,\mu_0)}$,
\end{itemize}
\end{enumerate}
such that for every $t\in[0,1]$,
\begin{align*}
\scrU_N(\mu_t) \geq \sigma_{k_\bdpi^-/N}^{(1-t)}(\cost_\bdpi)\,\scrU_N(\mu_0).
\end{align*}
\end{definition}

As observed by Cavalletti--Mondino \cite{cavalletti-mondino2022-review}*{Rem.~2.4} (see also their survey article   \cite{cavalletti-mondino2020}*{Rem.~3.8}, Braun \cite{braun2023-renyi}*{Rem.~4.3}, and Braun--McCann \cite{braun-mccann2023}*{Rem. A.2}), the exponent $1/2$ in \cref{Def:TMCP} could equivalently replaced by any $\beta$ as in \cref{Sub:Fixed}.

We refer to Braun--McCann \cite{braun-mccann2023}*{§A.2} for further basic properties of this notion.

\subsection{Timelike curvature-dimension condition}\label{Sub:TCD} A strictly stronger property (cf.~Braun--McCann \cite{braun-mccann2023}*{Prop. A.3} and \cref{Ex:TCD} below) is the timelike curvature-dimension condition subsequently reviewed. Inspired by McCann \cite{mccann2020} and Mondino--Suhr \cite{mondino-suhr2022}, it has been introduced in Cavalletti--Mondino \cite{cavalletti-mondino2020} (see also Braun \cite{braun2023-renyi}) and extended to the variable framework in Braun--McCann \cite{braun-mccann2023} following Ketterer \cite{ketterer2017}. The proposals of \cite{cavalletti-mondino2020,braun2023-renyi}  for constant $k$ are partly known to be equivalent in our setting described in \cref{Ch:Repr form}, cf.~Braun \cite{braun2023-renyi}*{Thm.~3.35} and their full equivalence --- along with the independence of the subsequent condition on the given transport exponent $\beta$ --- is explored by Akdemir \cite{akdemir+}; see \cref{Re:Potential independence} below.

\begin{definition}[Timelike curvature-dimension condition] We say $\scrM$ obeys the \emph{entropic timelike curvature-dimension condition} $\smash{\TCD_\beta^e(k,N)}$ if the following is satisfied. For every time\-like $\beta$-dualizable pair $(\mu_0,\mu_1)$ of measures $\smash{\mu_0,\mu_1\in\Prob_\comp(\mms)\cap\Dom(\Ent_\meas)}$, there exist
\begin{itemize}
\item an $\smash{\ell_\beta}$-geodesic $(\mu_t)_{t\in[0,1]}$ from $\mu_0$ to $\mu_1$ and
\item a plan $\smash{\bdpi\in\OptTGeo_\beta(\mu_0,\mu_1)}$
\end{itemize}
such that for every $t\in[0,1]$,
\begin{align*}
\scrU_N(\mu_t) \geq \sigma_{k_\bdpi^-/N}^{(1-t)}(\cost_\bdpi)\,\scrU_N(\mu_0) + \sigma_{k_\bdpi^-/N}^{(t)}(\cost_\bdpi)\,\scrU_N(\mu_1).
\end{align*}
\end{definition}

We refer to Braun--McCann \cite{braun-mccann2023}*{§§3.3, 7.2} for basic properties of this notion.

\begin{example}[Smooth spacetimes]\label{Ex:TCD} It is well-known, cf.~e.g.~Kunzinger--Sämann \cite{kunzinger-samann2018}*{§5.1}, that any  smooth, globally hyperbolic spacetime $(\mms,\Rmet)$ canonically induces a metric measure spacetime $\scrM$ according to \cref{Def:MMS} with $\smash{\meas := \vol_\Rmet}$. Then $\scrM$ obeys the $\TCD_\beta^e(k,N)$ condition if and only if $\smash{\Ric_\Rmet \geq k}$ holds in all timelike directions and $N\geq \dim\mms$. For constant $k$, this was independently discovered in McCann \cite{mccann2020}*{Cors.~6.6, 7.5, Thm.~8.5} and Mondino--Suhr \cite{mondino-suhr2022}*{Thm.~4.3}. The necessity of the dimensional constraint is shown in Cavalletti--Mondino \cite{cavalletti-mondino2020}*{Cor.~A.2} and McCann \cite{mccann2023-null}*{Thm.~25}. With evident adaptations, the proofs carry over to  the variable framework, see Braun--McCann \cite{braun-mccann2023}*{Thm.~4.1}.

An analog of the previous results also holds in the weighted case, i.e.~when the measure $\smash{\vol_\Rmet}$ is replaced by $\smash{v\,\vol_\Rmet}$, where $v$ is positive and twice continuously differentiable. If $N>\dim\mms$ we also have to replace $\smash{\Ric_\Rmet}$ by the \emph{Bakry--Émery--Ricci tensor}
\begin{align*}
\Ric_\Rmet - (N-\dim\mms)\,\frac{\Hess_\Rmet v^{1/(N-\dim\mms)}}{v^{1/(N-\dim\mms)}}.
\end{align*}

For the timelike measure contraction property from \cref{Def:TMCP}, this correspondence is more restrictive. Indeed, $\scrM$ (with reference measure $\smash{\meas:=\vol_\Rmet}$) is a $\smash{\TMCP^e(k,\dim\mms)}$ space if and only if $\Ric_\Rmet\geq k$ in all timelike directions (see Cavalletti--Mondino \cite{cavalletti-mondino2020}*{Thm.~A.1} for a proof for constant $k$). On the other hand, $\TMCP^e(k,N)$ for general $N$ does not necessarily imply $\Ric_\Rmet\geq k$ in all timelike directions \cite{cavalletti-mondino2020}*{Rem.~A.3}.\hfill{\footnotesize{$\blacksquare$}}
\end{example}

\subsection{MCP and CD disintegrations}\label{Sub:Disintegr} A central ingredient for us will be the localization paradigm, basics of which we review now. It has been pioneered in the Lorentzian context by Cavalletti--Mondino \cite{cavalletti-mondino2020}  and subsequently extended by them \cite{cavalletti-mondino2024} and Braun--McCann \cite{braun-mccann2023}; we refer to these works for details. We mainly follow the notation and the nomenclature of \cite{braun-mccann2023}, but we directly intertwine the results requiring nonbranching and curvature hypotheses with those which do not to shorten the presentation.

\subsubsection{Framework}\label{Sub:FRAME} Our subsequent  discussion (especially the proof of \cref{Th:From Bochner to TCD} and, connected with this, \cref{Ex:timedistfunct}) requires a slight modification of the hypotheses from \cite{cavalletti-mondino2020,cavalletti-mondino2024,braun-mccann2023}. We shortly outline them here. 

Here and in the sequel, we fix a function $\u$, $1$-steep with respect to $l$, defined  on a Borel subset $E$ on $\mms$. In Braun--McCann \cite{braun-mccann2023}*{§6.1}, $E$ is assumed $l$-geodesically convex. We relax this assumption as follows. Using a suggestive terminology in view of \cref{Sub:Transport rels}, we say $E$ is \emph{$\smash{\smash{\sim}_{\u}}$-convex} if for every $x,y\in E$ such that $\u(y) - \u(x) = l(x,y) > 0$, no $\gamma\in\TGeo(\mms)$ connecting the two points $x$ and $y$ leaves $E$; compare with  \cref{Def:Convexity} below. As for $l$-geodesic convexity, this notion is the same relative to $\u$ and $l$  or $\smash{\u^\leftarrow}$ and $\smash{l^\leftarrow}$, respectively.  Moreover,  $E$ is $\smash{\sim_{\u}}$-convex if it is $l$-geodesically convex \cite{braun-mccann2023}*{Lem.~6.4}. The  discussion of \cite{braun-mccann2023}*{§6} goes through under this weaker hypothesis (see especially \cite{braun-mccann2023}*{Lem.~6.15} therein), which we impose from now on.

By our assumptions from \cref{Sub:MMS}, the conditions on  $\meas$ from \cite{braun-mccann2023}*{§6.1} are satisfied, as pointed out before the disintegration \cref{Th:Disintegration} below.

\begin{example}[Lorentz distance functions]\label{Ex:timedistfunct} Akin to \cref{Sec:Signed}, let $\Sigma$ be an achronal Borel subset of $\mms$ and $\smash{l_\Sigma}$ the associated signed Lorentz distance function. Then $\smash{E_\Sigma}$ and $\smash{l_\Sigma}$ obey the above properties in place of $E$ and $\u$, respectively, by \cref{Le:Geod conv E} and \cref{Cor:Steepness}.\hfill{\footnotesize{$\blacksquare$}}
\end{example}

In \cref{Ex:timedistfunct}, pathologies may still occur through  the lack of ``many distinct points in relation to each other'' in \cref{Sub:Transport rels}. In \cite{cavalletti-mondino2020}, Cavalletti--Mondino rule this out by assuming $\Sigma$ is FTC after \cref{Def:timelike complete} (recall this entails the existence of footpoints of every point in $\smash{I^+(\Sigma)}$) and  $\smash{\meas[\Sigma]=0}$ \cite{cavalletti-mondino2020}*{Lem.~4.4, Thm.~4.17}. The latter property holds by default in our setting by \cref{Sub:MMS}. Since in \cref{Ex:timedistfunct}, both $I^-(\Sigma)$ and $I^+(\Sigma)$ will require the existence of footpoints, for simplicity we assume $\Sigma$ to be TC.

\begin{definition}[Signed TC Lorentz distance function]\label{Def:footpts} We will call $l_\Sigma$ a \emph{signed TC Lorentz  distance function} if its defining achronal set $\Sigma$ is TC.
\end{definition}

\subsubsection{Transport relations}\label{Sub:Transport rels} There are two conceivable relations to study, namely 
\begin{itemize}
\item the one induced by $\u$ relative to $l$, or
\item the one induced by $\u^\leftarrow$ relative to $l^\leftarrow$. 
\end{itemize}
As detailed especially in \cref{Re:INV CAUS RE,Re:INV CAUS RE II}, both essentially lead to the same sets and quantities. The difference will occur once we decide in which causal orientation rays are parametrized (and thus where initial and final points lie);  this is outsourced to \cref{Sub:Ray map}.

Define two relations $\smash{\preceq_{\u}}$ and  $\smash{\succeq_{\u}}$  on $E$ by
\begin{align*}
x \preceq_{\u} y\quad &:\Longleftrightarrow\quad x=y\quad\textnormal{or}\quad \u(y) - \u(x) = l(x,y) > 0,\\
x \succeq_{\u} y \quad &:\Longleftrightarrow\quad y\preceq_{\u} x.
\end{align*}
These constitute partial orders on $E$ \cite{braun-mccann2023}*{Lem.~6.3}.

To ensure symmetry, we define an additional relation $\smash{\sim_\u}$ on $E$ by
\begin{align*}
x\sim_\u y \quad :\Longleftrightarrow\quad x\preceq_\u y\quad\textnormal{or}\quad x\succeq_\u y.
\end{align*}
The \emph{transport relation with bad points} $\smash{E_{\sim_\u}^{2,\End}}$ is defined as the set of all pairs $\smash{(x,y)\in E^2}$ with $\smash{x\sim_\u y}$. The \emph{transport set with bad points} $\smash{\Tr^\End}$ is defined as the set of all $x\in E$ for which there is a point $y\in E\setminus \{x\}$ with $\smash{x\sim_\u y}$ \cite{braun-mccann2023}*{Def.~6.6}. Then $\smash{E_{\sim_\u}^{2,\End}}$ is Borel, while $\smash{\Tr^{\End}}$ is Suslin. Note that if $l_\Sigma$ forms a signed TC Lorentz distance function  after \cref{Def:footpts}, then $\smash{\Tr^\End = E_\Sigma}$ by nontrivial chronology.

To turn $\smash{\sim_\u}$ into an equivalence relation, we have to exclude \emph{bad points}. By these we mean  elements of the union of the following sets \cite{braun-mccann2023}*{Defs.~6.7, 6.9}.
\begin{itemize}
\item The \emph{timelike branching set} $\scrB$ of all $x \in \Tr^\End$ for which there are points $y_1,y_2\in \Tr^\End$ with $\smash{y_1\not\sim_\u y_2}$ such that either $\smash{x\preceq_\u y_1}$ and $\smash{x\preceq_\u y_2}$ or $\smash{x \succeq_\u y_1}$ and $\smash{x\succeq_\u y_2}$.
\item The \emph{endpoint set}\footnote{Strictly speaking, the endpoint set does not have to be excluded for $\smash{\sim_\u}$ to be an equivalence relation. However, its exclusion ensures the absolute continuity of the conditional measures from the disintegration \cref{Th:Disintegration} stipulated in \cref{Def:MCP disintegration,Def:CD disintegration}.} $e$, which is the complement (relative to $\smash{\Tr^\End}$) of the set of all $\smash{x\in \Tr^\End}$ such that there exist $\smash{x\pm\in \Tr^\End\setminus \{x\}}$ with $\smash{x-\preceq_\u x \preceq_\u x+}$.
\end{itemize}
Both $\scrB$ and $e$ are Suslin. Then the \emph{transport set without bad points} is 
\begin{align*}
\Tr := \Tr^\End\setminus (\scrB\cup e);
\end{align*}
the \emph{transport relation without bad points} $\smash{E_{\sim_\u}^2}$ is simply the intersection of $\smash{E_{\sim_\u}^{2,\End}}$ with $\smash{\Tr^2}$ \cite{braun-mccann2023}*{Def.~6.13}. Then $\sim_\u$ constitutes an equivalence relation on $\Tr$, cf.~Cavalletti--Mondino \cite{cavalletti-mondino2020}*{Prop.~4.5} and Braun--McCann \cite{braun-mccann2023}*{Thm. 6.14}. The equivalence class of $\smash{x\in\Tr}$ with respect to $\smash{\sim_\u}$ is denoted by $\smash{\tilde{x}^\u}$. Few is lost from $\smash{\Tr^\End}$: on a timelike $\beta$-essentially nonbranching $\smash{\TMCP^e(k,N)}$ space $\scrM$, the set $\scrB\cup e$ of bad points is $\meas$-negligible as shown in Cavalletti--Mondino \cite{cavalletti-mondino2020}*{Cor.~4.15} and Braun--McCann \cite{braun-mccann2023}*{Cor.~6.29}. This is a consequence of the uniqueness results described in \cref{Re:Uniq1,Re:Uniq2};  the curvature hypothesis is sufficient, but not necessary.

\begin{remark}[Uniqueness of footpoints]\label{Re:Uniqfttpts} If $\smash{l_\Sigma}$ is a signed TC Lorentz distance function,  the footpoint of every $y\in \Tr$ is in fact unique. If $y\in \Sigma$, this is clear by achronality. Otherwise, again by achronality the existence of two distinct footpoints would contradict $y\notin \scrB$.\hfill{\footnotesize{$\blacksquare$}}
\end{remark}

Finally, recall a map $\Quot\colon \Tr\to\Tr$ is a \emph{quotient map} for the equivalence relation $\smash{\sim_\u}$ if its graph is contained in $\smash{E_{\sim_\u}^2}$ and if $x,y\in\Tr$ satisfy $\smash{x\sim_\u y}$, then $\Quot(x) = \Quot(y)$. We shall henceforth fix such a map $\Quot$ which is  additionally $\meas$-measurable, which is possible by Cavalletti--Mondino \cite{cavalletti-mondino2020}*{Prop.~4.9} and Braun--McCann \cite{braun-mccann2023}*{Prop.~6.18}. Existence of $\Quot$ can  be deduced from the axiom of choice, but it can also be constructed  rather explicitly, as pointed out by Cavalletti--Mondino \cite{cavalletti-mondino2020-new}*{p.~2111}.

Lastly, following \cite{braun-mccann2023}*{Def.~6.19} the set 
\begin{align*}
Q := \Quot(\Tr).
\end{align*}
is called \emph{quotient set}. Any set of the form 
\begin{align*}
\mms_\alpha := \Quot^{-1}(\alpha),
\end{align*}
where $\alpha\in Q$, will be termed a \emph{ray}. Note that $\smash{\mms_\alpha = \tilde{\alpha}^\u}$ for every such $\alpha$;  thus, $Q$ can be interpreted as an ``index set'' labeling the  rays.

\begin{remark}[Invariance under time-reversal I, see also \cref{Re:INV CAUS RE II}]\label{Re:INV CAUS RE} It is clear that $\smash{\sim_\u}$, the transport relations with and without bad points, the transport sets with and without bad points, the timelike branching set $\scrB$, the endpoint set $e$, and the induced rays are all independent of the causal orientation of $\u$. The quotient set is universally fixed.\hfill{\footnotesize{$\blacksquare$}}
\end{remark}

\subsubsection{Ray map and endpoints}\label{Sub:Ray map} Every equivalence classes with respect to $\smash{\sim_\u}$ is formed by precisely one unbroken timelike curve of positive (possibly infinite) $l$-length each of whose sub\-seg\-ments lie in $\TGeo(\mms)$ after a suitable reparametrization \cite{braun-mccann2023}*{Lem.~6.15}. In fact, every equivalence class can be identified order isometrically with an interval by  \cref{Pr:Ray}, cf.~Cavalletti--Mondino \cite{cavalletti-mondino2020}*{Lem.~4.6} and Braun--McCann \cite{braun-mccann2023}*{Cor.~6.16}. 

Let $\Dom(\sfg)$ be the set of all pairs $(t,x)\in\R\times\Tr$ such that there is $\smash{y\in \tilde{x}^\u}$ with $\smash{l_x(y)=t}$. This set is clearly independent of the causal orientation of $\u$.

\begin{proposition}[Ray map]\label{Pr:Ray} There exists a Borel map $\sfg \colon \Dom(\sfg)\to \Tr$ such that for every $x\in \Tr$, the map $\smash{\sfg_\cdot(x)}$ is a bijection between an open interval and $\smash{\tilde{x}^\u}$. 

In fact, it is even an order isometry, in the sense that for every $x\in \Tr$ and every $s,t \in\R$ with $s <t$ such that $\smash{\sfg_s(x)}$ and $\smash{\sfg_t(x)}$ are defined,
\begin{align*}
\u\circ \sfg_t(x) - \u \circ\sfg_s(x) = l(\sfg_s(x),\sfg_t(x)) = t-s.
\end{align*}
\end{proposition}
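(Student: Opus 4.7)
The plan is to define $\sfg_t(x)$ as the unique point $y \in \tilde{x}^\u$ satisfying $\u(y) - \u(x) = t$, and then verify the stated bijection, the order isometry, and Borel measurability.

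First I would invoke \cite{braun-mccann2023}*{Lem.~6.15}, which says every equivalence class $\tilde{x}^\u$ is the image of a single unbroken timelike curve along which $\u$ is strictly increasing and agrees with $l$ in the sense $\u(y)-\u(x) = l(x,y)$ for $\smash{x \preceq_\u y}$. Consequently the map $y \mapsto \u(y) - \u(x)$ is a strictly increasing continuous injection from $\tilde{x}^\u$ into $\R$; because the endpoint set $e$ has been removed from $\Tr$, its image is an \emph{open} interval around $0$. This is precisely the fiber of $\Dom(\sfg)$ over $x$, and it allows us to invert: for each $(t,x)\in \Dom(\sfg)$ there is a unique $y\in \tilde{x}^\u$ with $\u(y)-\u(x)=t$, and we set $\sfg_t(x):=y$.

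Once $\sfg$ is defined, the order isometry is essentially automatic. For $s<t$ with both $\sfg_s(x)$ and $\sfg_t(x)$ defined, the two points lie in the same equivalence class, hence are related by $\smash{\preceq_\u}$, so by definition of $\smash{\sim_\u}$ one has
\begin{align*}
\u(\sfg_t(x)) - \u(\sfg_s(x)) = l(\sfg_s(x),\sfg_t(x)) = (\u(x)+t) - (\u(x)+s) = t-s.
\end{align*}
Strict monotonicity of $\u$ along the curve also gives injectivity of $\smash{\sfg_\cdot(x)}$, while surjectivity onto $\tilde{x}^\u$ is built into the definition of $\Dom(\sfg)$.

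The main obstacle I expect is Borel measurability of $\sfg$. Here I would argue that the graph
\begin{align*}
G := \big\{(t,x,y)\in\R\times\Tr\times\Tr : (x,y) \in E^{2,\End}_{\sim_\u},\ \u(y)-\u(x) = t\big\}
\end{align*}
is Borel, since $\smash{E^{2,\End}_{\sim_\u}}$ is Borel (cf.~the discussion in \cref{Sub:Transport rels}), $\u$ is Borel by $1$-steepness and \cref{Le:Almost rightleft}, and equalities of Borel functions cut out Borel sets. By construction the projection of $G$ to $\R\times\Tr$ is $\Dom(\sfg)$ and the fibers over $\Dom(\sfg)$ are singletons; the Lusin--Souslin theorem then yields that $\Dom(\sfg)$ is Borel and $\sfg$ is a Borel section, as in Cavalletti--Mondino \cite{cavalletti-mondino2020}*{Lem.~4.6} and Braun--McCann \cite{braun-mccann2023}*{Cor.~6.16}. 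Alternatively, one could build $\sfg$ more concretely through the Borel quotient map $\Quot$ together with $l$; either route gives the required Borel regularity.
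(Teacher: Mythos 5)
Your construction is correct and is essentially the argument behind the result, which the paper itself does not prove but simply quotes from Cavalletti--Mondino \cite{cavalletti-mondino2020}*{Lem.~4.6} and Braun--McCann \cite{braun-mccann2023}*{Cor.~6.16}: invert the strictly increasing map $y\mapsto \u(y)-\u(x)$ on each equivalence class (an unbroken timelike curve by \cite{braun-mccann2023}*{Lem.~6.15}, with open image since endpoints are excluded), read off the order isometry directly from the definition of $\smash{\sim_\u}$, and obtain measurability by a Lusin--Souslin selection on the graph. The one point to watch is that $\Tr$ is a priori only Suslin rather than Borel, so the graph $G$ should be cut out inside $\R\times E\times E$ via the Borel relation $\smash{E^{2,\End}_{\sim_\u}}$ before restricting to $\Tr$ — exactly as handled in the cited references.
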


\begin{remark}[Interpretation]\label{Re:Interpr} Given $x\in\Tr$, the map $\smash{\sfg_\cdot(x)}$ should be interpreted as the \emph{negative} gradient flow of $\u$ through $x$. Indeed, if $\u$ is a Lorentz distance function on a Lorentz spacetime, radial $l$-geodesics follow the negative gradient flow of $\u$, as discussed e.g.~in the smooth  compatibility of localization by Cavalletti--Mondino \cite{cavalletti-mondino2020}*{Rem.~5.4}. It appears naturally in the Bochner identity for suitable distance    functions, cf.~\cref{Sub:Nonsmooth Bochner}. \hfill{\footnotesize{$\blacksquare$}}
\end{remark}

By a slight abuse of notation, $\smash{\sfg^{-1}}$ denotes the ``inverse'' of $\smash{\sfg}$ in the subsequent sense  \cite{braun-mccann2023}*{Cor.~6.20}. The real number $\smash{\sfg^{-1}(x)}$ is the signed Lorentz distance function of $x\in\Tr$ to the base point $\Quot(x)$ of the ray $x$ lies in; explicitly, $\smash{\sfg^{-1}(x) := l_{\Quot(x)}(x)}$ in the notation of \cref{Sec:Signed}.

Our choice of $\sfg$ also fixes the ``location'' of the endpoints, for which we introduce some notation. If either point exists for $\alpha\in Q$, we define
\begin{itemize}
\item the \emph{initial point} $a_\alpha$ of the ray $\mms_\alpha$, symbolically $a_\alpha\in a$, as the unique element of $\smash{\cl\,\mms_\alpha}$ in the chronological past  of all points in $\smash{\mms_\alpha}$ with respect to $l$, and
\item the \emph{final point} $b_\alpha$ of the ray $\mms_\alpha$, symbolically $b_\alpha\in b$, as the unique element of $\smash{\cl\,\mms_\alpha}$  in the chronological future  of all points in $\smash{\mms_\alpha}$ with respect to $l$.
\end{itemize}
The points $a_\alpha,b_\alpha\in e$ depend Suslin measurably on $\alpha\in Q$. Note $a_\alpha$ exists if and only if the domain of definition of $\sfg_\cdot(\alpha)$ bounded from below; analogously for $b_\alpha$. 

\begin{remark}[Endpoints and timelike cut loci]\label{Re:Loci} In the case of a signed TC Lorentz distance function according to  \cref{Def:footpts}, the set $a$ coincides with the past  timelike cut locus $\smash{\TCut^-(\Sigma)}$. The latter is the complement of all points in $y\in I^-(\Sigma)$ relative to $I^-(\Sigma)$ such that there exists $\gamma\in \TGeo(\mms)$ with $\gamma_t\in \Sigma$ and $\gamma_1 = y$ for some $t\in (0,1)$.

Analogously, $b$ equals the future timelike cut locus $\smash{\TCut^+(\Sigma)}$. \hfill{\footnotesize{$\blacksquare$}}
\end{remark}

\begin{remark}[Identification]\label{Re:Identification} Occasionally, we identify a ray $\mms_\alpha$ and certain quantities associated to it with their image under $\smash{\ray_\cdot(\alpha)}$, respectively, where $\alpha\in Q$, through $\smash{\sfg}$ or its inverse. We use a barred notation whenever an object lives on the real line. For instance, $\smash{\bar{\mms}_\alpha}$ denotes the domain of $\smash{\sfg_\cdot(\alpha)}$. Given a Borel measure $\sigma$ on $\mms_\alpha$, we set
\begin{align*}
\bar{\sigma}_\alpha := \sfg^{-1}_\push\sigma.
\end{align*}
This identification will mostly be used in proofs, not statements.

These identifications can also be turned on their heads. For $v,w\in \Tr$ such that $\smash{v\preceq_\u w}$, $[v,w]$ will denote the intersection of $J(v,w)$ with $\smash{\tilde{v}^\u}$; ``intervals'' of the form $[v,w)$, $(v,w]$, and $(v,w)$ are set up analogously. The Lebesgue measure on $\mms_\alpha$ (corresponding to the one-dimensional Hausdorff measure thereon by \cref{Pr:Ray}) is defined by
\begin{align*}
\Leb_\alpha^1 := \sfg_\cdot(\alpha)_\push\Leb^1.\tag*{{\footnotesize{$\blacksquare$}}}
\end{align*}
\end{remark}

\begin{definition}[Real ray representative] For $\alpha\in Q$, the \emph{real ray representative} on the interval $\smash{\bar{\mms}_\alpha}$ of an $\meas$-measurable function $f$ on $\Tr$ is 
\begin{align*}
\bar{f}_\alpha := f \circ \sfg_\cdot(\alpha).
\end{align*}
\end{definition}

In the framework of a signed TC Lorentz distance function $l_\Sigma$ from \cref{Def:footpts} (also recall \cref{Ex:timedistfunct}), whenever convenient one can reparametrize all real ray representatives in such a way that ``they pass through the zero level set $\Sigma$ of $l_\Sigma$  at time zero''; cf. \cref{Sub:Dalem Mean Curv} below, Cavalletti--Milman \cite{cavalletti-milman2021}*{Prop.~10.4}, and  Ketterer \cite{ketterer2023-rigidity}*{Rem.~3.6} for details.

\subsubsection{Disintegration theorem} Now we report the disintegration \cref{Th:Disintegration} obtained under non\-branching and curvature hypotheses by Cavalletti--Mondino  \cite{cavalletti-mondino2020,cavalletti-mondino2024} and Braun--McCann \cite{braun-mccann2023}. To avoid pathological statements, we assume the condition $\smash{\meas[\Tr] > 0}$.

For details about the disintegration theorem for probability measures, we refer to the monograph of Fremlin  \cite{fremlin2006}.

\begin{definition}[Strong disintegration]\label{Def:Strong disint} A Borel probability measure $\q\in \Prob(Q)$ is a \emph{strong disintegration} relative to $\u$ if it is mutually absolutely continuous with respect to $\smash{\Quot_\push[\meas\mres\Tr]}$ and there exists a map $\smash{\meas_\cdot \colon Q\to \Meas(\mms)}$ certifying the disintegration formula
\begin{align*}
\meas\mres \Tr^\End = \meas\mres \Tr = \int_Q\meas_\alpha\d\q(\alpha)
\end{align*}
as well as the following properties.
\begin{enumerate}[label=\textnormal{\alph*.}]
\item \textnormal{\textbf{Measurability.}} For every $\meas$-measurable subset $B$ of $E$, the evaluation $\meas_\alpha[B]$ depends $\q$-measurably on $\alpha\in Q$.
\item \textnormal{\textbf{Strong consistency.}} For $\q$-a.e.~$\alpha\in Q$, the measure $\meas_\alpha$ is concentrated on $\mms_\alpha$.
\item \textnormal{\textbf{Disintegration.}} For every $\meas$-measurable subset $B$ of $E$ and every $\q$-measurable subset $A$ of $Q$, we have
\begin{align*}
\meas[B\cap \Quot^{-1}(A)] = \int_A \meas_\alpha[B]\d\q(\alpha).
\end{align*}
\item \textnormal{\textbf{Local finiteness.}} For every compact subset $C$ of $E$,
\begin{align*}
\q\textnormal{-}\!\esssup_{\alpha\in Q} \meas_\alpha[C]< \infty.
\end{align*}
\end{enumerate}
\end{definition}

The above measures $\meas_\alpha$, where $\alpha\in Q$, are called \emph{conditional measures}.

The identity $\smash{\meas\mres\Tr^\End = \meas\mres \Tr}$ is   stated as a separate property. As already observed in \cref{Sub:Transport rels}, in the relevant setting of the disintegration \cref{Th:Disintegration} it will hold (and in this case, the preceding hypothesis $\meas[\Tr]>0$ is equivalent to $\smash{\meas[\Tr^\End]>0}$). 

From Fremlin \cite{fremlin2006}*{Prop.~452F}, it follows in particular that if $f$ is an $\smash{\R\cup\{\pm\infty\}}$-valued Borel function on $\Tr$ whose mean $\smash{\int_\Tr f\d\meas}$ is well-defined in $\R\cup\{\pm\infty\}$, we have
\begin{align*}
\int_{\Tr} f\d\meas = \int_Q\int_{\mms_\alpha} f\d\meas_\alpha\d\q(\alpha).
\end{align*}

As indicated above, suitable nonbranching and curvature hypotheses on $\scrM$ pass over to each conditional measure, as recalled now.

\begin{definition}[MCP disintegration]\label{Def:MCP disintegration} Let $\q$ form a strong disintegration. We call $\q$ an \emph{$\MCP(k,N)$ disintegration} relative to $\u$ if for $\q$-a.e.~$\alpha\in Q$, the conditional measure $\meas_\alpha$ is $\smash{\Leb_\alpha^1}$-absolutely continuous with continuous Radon--Nikodým density $h_\alpha$ whose real ray representative $\smash{\bar{h}_\alpha}$ is an $\smash{\MCP(\bar{k}_\alpha,N)}$ density on $\smash{\bar{\mms}_\alpha}$.
\end{definition}

\begin{definition}[CD disintegration]\label{Def:CD disintegration} Let $\q$ be a strong disintegration. We call $\q$ a \emph{$\CD(k,N)$ disintegration} relative to $\u$ if for $\q$-a.e.~$\alpha\in Q$, the conditional measure $\meas_\alpha$ is $\smash{\Leb_\alpha^1}$-absolutely continuous with continuous Radon--Nikodým density $h_\alpha$ whose real ray representative $\smash{\bar{h}_\alpha}$ is a $\smash{\CD(\bar{k}_\alpha,N)}$ density on $\smash{\bar{\mms}_\alpha}$.
\end{definition}

\begin{remark}[Comparison with \cite{cavalletti-milman2021}] Modulo minor modifications, \cref{Def:MCP disintegration,Def:CD disintegration} constitute Lorentzian analogs of the $\smash{\MCP^1(k,N)}$ and $\smash{\CD^1(k,N)}$ properties for metric measure spaces, introduced in Cavalletti--Milman \cite{cavalletti-milman2021}*{Def.~8.6} for constant $k$ (evidently  interpreted in the variable case). Their Lorentzian versions are studied by Akdemir \cite{akdemir+}.\hfill{\footnotesize{$\blacksquare$}}
\end{remark}

The following theorem is due to Cavalletti--Mondino  \cite{cavalletti-mondino2020}*{Thm.~4.17} and \cite{cavalletti-mondino2024}*{Thms.~3.2, 5.2} as well as Braun--McCann  \cite{braun-mccann2023}*{Thms. 6.37, A.5} in variying generality. Its conclusion will be central in our study. We remark that since its proof uses the disintegration theorem for probability measures, a normalization is required therein  to turn certain restrictions of $\meas$ into probability measures. This is where the properness hypothesis on $\mms$ from \cref{Sub:MMS} tacitly enters, cf.~Cavalletti--Mondino \cite{cavalletti-mondino2020-new}*{Lem.~3.3}.

\begin{theorem}[Disintegration]\label{Th:Disintegration} Assume $\scrM$ forms a timelike $\beta$-essentially nonbranching metric measure spacetime. If it satisfies $\smash{\TMCP^e(k,N)}$ or $\smash{\TCD_\beta^e(k,N)}$, respectively, then it admits an $\MCP(k,N)$ or $\CD(k,N)$ disintegration $\q$ relative to $\u$, respectively.

In either case, the disintegrations are $\q$-essentially unique, i.e.~if $\smash{\meas'_\cdot\colon Q\to\Meas(\mms)}$ is a map satisfying the properties stated in \cref{Def:Strong disint}, 
\begin{align*}
\meas_\cdot = \meas_\cdot'\quad\q\textnormal{-a.e.}
\end{align*}
\end{theorem}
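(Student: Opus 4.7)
The argument follows the localization paradigm initiated (in positive signature) by Payne--Weinberger, Gromov--Milman, Klartag, and Cavalletti--Mondino, and Lorentzified by Cavalletti--Mondino and Braun--McCann. I would organize it in three stages: a purely measure-theoretic disintegration, a ``ray-wise'' curvature transfer, and uniqueness.

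\emph{Stage 1: measure-theoretic disintegration.} Having fixed a Borel/$\meas$-measurable quotient map $\Quot \colon \Tr \to \Tr$ as in \cref{Sub:Transport rels} and recalled that $\scrB \cup e$ is $\meas$-negligible under the standing nonbranching plus TMCP hypothesis (so that $\meas \mres \Tr^{\End} = \meas \mres \Tr$), I would apply the disintegration theorem for Radon measures on Polish spaces (Fremlin~452) to the push-forward $\Quot_\push[\meas \mres \Tr]$, after the local normalization that upgrades the statement from probability measures to locally finite Radon measures (here $\sigma$-compactness of $\mms$ and the Radon property of $\meas$ from \cref{Sub:MMS} enter). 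This produces $\q := \Quot_\push[\meas \mres \Tr]$ (up to normalization) together with a Borel family $\{\meas_\alpha\}_{\alpha \in Q}$ satisfying measurability, strong consistency on the rays $\mms_\alpha = \Quot^{-1}(\alpha)$, the disintegration identity, and the local finiteness bound. Strong consistency uses that $\smash{\sim_\u}$ is a Borel equivalence relation on $\Tr$ with Borel quotient data via the ray map $\sfg$ of \cref{Pr:Ray}.

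\emph{Stage 2: curvature transfer to each ray.} This is the heart of the argument. Fix a Borel set $A \subset Q$ with $\q[A] > 0$ of ``good'' rays and a compact subset $K \subset \Quot^{-1}(A)$ such that the map $\sfg^{-1}$ of \cref{Sub:Ray map} is bi-Lipschitz in a controlled sense on $K$. For $\q$-a.e.\ $\alpha \in A$, I would pick two interior points on $\mms_\alpha$, say $\sfg_{s_0}(\alpha) \ll \sfg_{s_1}(\alpha)$, and consider the $\ell_{1/2}$-optimal (resp.~$\ell_\beta$-optimal) dynamical plan $\bdpi^A$ that transports the localized measure $\meas \mres K \cap \Quot^{-1}(\cdot\leq s_0)$ to the Dirac at $\sfg_{s_1}(\cdot)$ (resp.~to the localized measure on the level $s_1$), parametrized along each ray by $\sfg$. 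Timelike $\beta$-essential nonbranching (\cref{Def:TENB}) plus uniqueness of displacement $\ell_\beta$-geodesics as recalled in \cref{Re:Uniq2} imply each trajectory of $\bdpi^A$ stays inside the same equivalence class, so the global transport disintegrates into a $\q$-measurable family of one-dimensional transports along the rays. Pushing $\smash{\TMCP^e(k,N)}$ (resp.~$\smash{\TCD_\beta^e(k,N)}$) through this disintegration, the entropy inequality defining the curvature-dimension condition factorizes into ray-wise inequalities for the pushed density $\bar h_\alpha := (\sfg_\cdot(\alpha))^{-1}_\push \meas_\alpha / \Leb^1$; these are exactly the MCP or CD density inequalities of \cref{Def:MCP density}, \cref{Def:CD density} applied to the real potentials $\bar k_\alpha$ from \eqref{Eq:kgammapm}. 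Absolute continuity of $\meas_\alpha$ with respect to $\Leb^1_\alpha$ follows by exhausting with such localized transports (using that endpoints are excluded as bad points), and continuity of $h_\alpha$ follows from the standard regularity of MCP/CD densities recalled in \cref{Re:Properties MCP,Re:Prop CD}.

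\emph{Stage 3: uniqueness.} Given a second family $\meas'_\cdot$ fulfilling the axioms of \cref{Def:Strong disint}, both $\meas_\cdot$ and $\meas'_\cdot$ disintegrate the same measure $\meas \mres \Tr$ along the same quotient map $\Quot$ with the same quotient measure $\q$. The $\q$-essential uniqueness part of the abstract disintegration theorem (again Fremlin~452) then gives $\meas_\cdot = \meas'_\cdot$ $\q$-a.e.

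\emph{Main obstacle.} The delicate step is Stage~2: rigorously factoring a global $\ell_\beta$-optimal transport into one-dimensional transports that live on individual rays, in a way that the entropy/concavity inequalities behave well under the disintegration. The essential tools are (i) $\beta$-essential nonbranching, to force the dynamical plan to respect the ray partition, and (ii) the ray map $\sfg$ being an order isometry (\cref{Pr:Ray}), which translates displacements along $\mms_\alpha$ into translations on intervals of $\R$ weighted by $\bar k_\alpha$, so that the distortion coefficients $\sigma_{k_\bdpi^\pm/N}^{(t)}$ reduce to $\sigma_{\bar k_\alpha/(N-1)}^{(t)}$ evaluated on intervals. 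Once this reduction is clean, the density inequalities follow by the same Bishop--Gromov-type manipulation that gives MCP/CD on the line.
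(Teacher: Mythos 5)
The paper does not actually prove this theorem: the text immediately preceding the statement attributes it to Cavalletti--Mondino \cite{cavalletti-mondino2020}*{Thm.~4.17}, \cite{cavalletti-mondino2024}*{Thms.~3.2, 5.2} and Braun--McCann \cite{braun-mccann2023}*{Thms.~6.37, A.5}, and only remarks where properness of $\mms$ enters. So there is no in-paper argument to compare yours against line by line; the comparison has to be with the cited proofs. Measured against those, your three-stage architecture --- abstract disintegration of $\meas\mres\Tr$ along $\Quot$, ray-wise transfer of the curvature hypothesis via essential nonbranching and the order isometry $\sfg$ of \cref{Pr:Ray}, and $\q$-essential uniqueness from the abstract disintegration theorem --- is the correct skeleton, and Stages 1 and 3 are essentially complete modulo the local normalization you yourself flag.

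The genuine gap is in Stage 2, and it is not merely a matter of unfilled details. You propose to apply $\smash{\TMCP^e(k,N)}$ to a transport whose target is ``the Dirac at $\sfg_{s_1}(\cdot)$'', i.e.\ a measure spread over a level set of the ray parametrization. But \cref{Def:TMCP} only constrains transports whose second marginal is a \emph{single} Dirac mass $\delta_{x_1}$ with $x_1$ chronologically related to all of $\supp\mu_0$; a fiber-wise Dirac target is not such a configuration, so the entropy inequality you want to ``push through the disintegration'' is not available in the form you invoke it. The cited proofs instead proceed in two separated steps: first, $\smash{\Leb_\alpha^1}$-absolute continuity of the conditionals is established by testing the curvature condition against translates $(\sfg_t)_\push$ of localized measures, using timelike essential nonbranching together with the uniqueness results recalled in \cref{Re:Uniq1,Re:Uniq2} to identify the unique interpolant with the ray-wise translation; only then is the one-dimensional $\MCP(k,N)$ (resp.\ $\CD(k,N)$) density inequality extracted by a Lebesgue-differentiation argument that shrinks the bundle of rays to a single one. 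Your sketch conflates these steps and treats the factorization of the entropy inequality as automatic, whereas that factorization is precisely where the bulk of the work in \cite{cavalletti-mondino2020} and \cite{braun-mccann2023} lies. As a plan the intent is recoverable, but as written Stage 2 would not go through.
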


\begin{remark}[Smooth case]\label{Re:Smooothspa} If $\scrM$ comes from a globally hyperbolic Finsler space\-time (cf.~\cref{Sub:LorentzFinsler} below), by the rich existence of transport maps stated in \cref{Re:Uniq1,Re:Uniq2} there are already strong disintegrations with absolutely continuous conditional measures  without any a priori curvature hypothesis, cf.~Braun--McCann \cite{braun-mccann2023}*{Thm.~6.35}. However, a synthetic curvature bound is not automatically satisfied by the rays, cf.~McCann  \cite{mccann2023-null}*{Thm.~31}.\hfill{\footnotesize{$\blacksquare$}}
\end{remark}

\begin{remark}[Invariance under time-reversal II, see also \cref{Re:INV CAUS RE}]\label{Re:INV CAUS RE II} The quantities and statements from the disintegration \cref{Th:Disintegration} are independent of the causal orientation of $\u$. Although the real ray representatives from \cref{Def:MCP disintegration,Def:CD disintegration} are directed relative to $\smash{l}$,  \cref{Def:MCP density,Def:CD density} are invariant under time reversal. \hfill{\footnotesize{$\blacksquare$}}
\end{remark}

\section{Representation formulas for the d'Alembertian}\label{Ch:Repr form}

Throughout the rest of this paper, we assume the following framework.

The metric measure spacetime $\scrM$ according to \cref{Def:MMS} is  timelike $\beta$-essentially nonbranching after \cref{Def:TENB}, where $\beta\in (-\infty,1)\setminus\{0\}$. This hypothesis is abbreviated by calling $\scrM$  ``timelike essentially nonbranching''. At the level of nonbranching, the precise value of $\beta$ mostly does not matter with the partial  exceptions of \cref{Th:From TCD to Bochner,Th:From Bochner to TCD}; see, however, \cref{Re:Potential independence} below.

Assume $\scrM$ obeys $\smash{\TMCP^e(k,N)}$, where $k$ is a lower semicontinuous function on $\mms$ and $N\in (1,\infty)$. The exclusion of the value $1$ for $N$ simplifies the presentation, yet all results below hold accordingly for it by taking limits.

We fix a $1$-steep function $u$ with respect to $l$ defined on a $\smash{\sim_\u}$-convex Borel subset $E$ of $\mms$.  Consider an $\MCP(k,N)$ disintegration $\q$ relative to  $\smash{\u}$ according to the disintegration  \cref{Th:Disintegration}, coming with associated transport sets $\smash{\Tr^\End}$ and $\Tr$ with and without bad points, respectively. To avoid pathologies, we assume $\smash{\meas[\Tr^\End]>0}$, which yields $\smash{\meas[\Tr]>0}$.

\subsection{Analytic preparations}\label{Sub:Analytic preps}

\subsubsection{Differentiation along the reverse ray map}\label{Sub:Differentiation} Our subsequent treatise requires a   notion of differentiation of finite perturbations $f$ of $\u$ on $\Tr$  along $\ray$; recall \cref{Def:Perturbations}. In this part, we make this precise using the discussions from \cref{Sub:Fixed,Sub:Monotone real}. 

Recall our convention of measures being nonnegative by default.

Fix $\alpha\in Q$ and assume first that $f$ is $l$-causal.  By the $l$-orientation of $\ray$, the real ray representative $\smash{\bar{f}_\alpha}$ of $f$ is \emph{nondecreasing}. Hence, the distributional derivative $\smash{\Diff\RCR \bar{f}_\alpha}$ of $\smash{\RCR \bar{f}_\alpha}$ is a   Radon measure on $\smash{\bar{\mms}_\alpha}$. In turn, the assignment
\begin{align*}
\Diff_\alpha\RCR f  := \ray_\cdot(\alpha)_\push\Diff\RCR \bar{f}_\alpha
\end{align*}
is a  Radon measure on $\smash{\bar{\mms}_\alpha}$. The tag $\RCR f$ appearing here comes from the observation  the real ray representative of the $l$-left-continuous representative of $f$ (relative to a subset of $\mms$) coincides with the right-continuous representative of $\smash{\bar{f}_\alpha}$ (relative to a subset of $\R$); this association  shows up in the proof of \cref{Le:IBP II}.  

If $f$ is a general finite perturbation, by the decomposition $\smash{f = \tau^{-1}\,(\u + \tau f) - \tau^{-1}\,\u}$ for some $\tau > 0$, the real ray representative $\smash{\bar{f}_\alpha}$ is the difference of two monotone  functions, thus of locally bounded variation on $\smash{\bar{\mms}_\alpha}$. The $\tau$-independent assignment
\begin{align}\label{Eq:Borel measure}
\Diff_\alpha \RCR f &:= \tau^{-1}\,\Diff_\alpha\RCR (\u+\tau f) - \tau^{-1}\,\Diff_\alpha\RCR\u
\end{align}
is a generalized signed Radon measure on $\mms_\alpha$. 

\begin{lemma}[Distributional derivative]\label{Le:Disderd} The assignment
\begin{align*}
\Diff \RCR f := \int_Q \Diff_\alpha\RCR f\d\q(\alpha)
\end{align*}
constitutes a generalized signed Radon measure on $\mms$.
\end{lemma}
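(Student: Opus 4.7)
\textit{Strategy.} The plan is to first establish the claim in the easier case where $f$ is $l$-causal, where every $\Diff_\alpha\RCR f$ is genuinely a (nonnegative) Radon measure on $\mms_\alpha$, and then to pass to general finite perturbations by linearity via the decomposition $f = \tau^{-1}\,(\u + \tau f) - \tau^{-1}\,\u$ already used in \eqref{Eq:Borel measure}. In both cases, the candidate object will be interpreted as the functional on $\Cont_\comp(\mms)$ defined by
\begin{align*}
\phi \longmapsto \int_Q \int_{\mms_\alpha} \phi\d\Diff_\alpha\RCR f\d\q(\alpha),
\end{align*}
with the convention that $\Diff_\alpha\RCR f$ is extended by zero outside $\mms_\alpha$.

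\textit{The $l$-causal case.} Assume $f$ is $l$-causal, so that $\RCR\bar f_\alpha$ is nondecreasing and right-continuous on $\bar{\mms}_\alpha$ for $\q$-a.e.~$\alpha$. Fix $\phi\in\Cont_\comp(\mms)$. By \cref{Pr:Ray} the ray map $\sfg$ is Borel and its restriction to each ray is a continuous bijection, so $\phi\circ\sfg_\cdot(\alpha)$ is continuous with compact support in $\bar{\mms}_\alpha$ and the inner integral $\int\phi\circ\sfg_\cdot(\alpha)\d\Diff \RCR\bar f_\alpha$ is well-defined from \cref{Sub:MONOT}. Its $\q$-measurability in $\alpha$ is established by a monotone class argument: on half-open intervals $(\sfg_s(\alpha),\sfg_t(\alpha)]$ the evaluation of $\Diff_\alpha\RCR f$ reduces to $\RCR f(\sfg_t(\alpha)) - \RCR f(\sfg_s(\alpha))$, which is $\q$-measurable in $\alpha$ by Borel measurability of $\sfg$, $\meas$-measurability of $\RCR f$ (cf.~\cref{Re:Prosp}) and the disintegration from \cref{Def:Strong disint}. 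For local finiteness, fix a compact $C\subset\mms$ and a relatively compact open neighborhood $\tilde C$ of $C$; since $C\cap\mms_\alpha$ is enclosed by the intersection of $\tilde C$ with the ray, the total mass assigned by $\Diff\RCR\bar f_\alpha$ to $\sfg_\cdot(\alpha)^{-1}(C)$ is bounded by the oscillation of $\RCR f$ over $\tilde C$, which is finite uniformly in $\alpha$ thanks to \cref{Le:Bounded}. Combined with the local finiteness condition built into \cref{Def:Strong disint}, this yields $\vert T(\phi)\vert\leq C_{\tilde C}\,\Vert\phi\Vert_\infty$ for a constant $C_{\tilde C}$ depending only on the chosen neighborhood. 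Nonnegativity of the functional and the Riesz--Markov--Kakutani theorem (cf.~\cref{Sub:Fixed}) then  identify it with a Radon measure on $\mms$.

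\textit{General finite perturbations.} For an arbitrary $f\in\FPert(\u)$, choose $\tau>0$ such that $\u+\tau f$ is $l$-causal. The previous step applied to $\u+\tau f$ and to $\u$ individually produces two Radon measures on $\mms$; their rescaled difference coincides, by the pointwise identity \eqref{Eq:Borel measure} combined with $\q$-integration (in particular, Fubini applied to finitely many test functions), with the proposed $\Diff\RCR f$. This exhibits $\Diff\RCR f$ as the difference of two Radon measures on $\mms$, hence as a generalized signed Radon measure in the sense of \cref{Sub:Fixed}. Its independence of the auxiliary parameter $\tau$ follows, exactly as in the remark after \eqref{Eq:Borel measure}, from the uniqueness of the Lebesgue--Stieltjes measure associated to a right-continuous function of locally bounded variation, together with the linearity of the decomposition in $\tau$.

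\textit{Main obstacle.} The most delicate point is reconciling the measurability in the transverse $\alpha$-direction with the weak regularity of the ingredients: $\sfg$ is merely Borel, each $\RCR\bar f_\alpha$ is only defined $\q$-a.e.~up to $\meas$-null modifications, and the local total variation is \emph{a priori} just pointwise-controlled rather than globally. The local finiteness clause in \cref{Def:Strong disint} and the pointwise boundedness from \cref{Le:Bounded} are what allow one to promote the pointwise ray-by-ray bounds into a global Radon-functional estimate without requiring any additional regularity of $f$.
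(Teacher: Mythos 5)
Your proof is correct and follows essentially the same route as the paper's: reduce to the $l$-causal piece $\u+\tau f$ via \eqref{Eq:Borel measure}, obtain local finiteness from the uniform-in-$\alpha$ oscillation bound supplied by \cref{Le:Bounded}, and conclude that the difference of the two resulting Radon measures is a generalized signed Radon measure. The only cosmetic point is that your appeal to the local finiteness clause of \cref{Def:Strong disint} is not actually needed (that clause concerns the conditional measures $\meas_\alpha$, not $\Diff_\alpha\RCR f$); the oscillation bound together with $\q$ being a probability measure already suffices, which is exactly what the paper uses.
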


\begin{proof} The quantity $\smash{\Diff_\alpha\RCR f}$ depends $\q$-measurably on $\alpha\in Q$ as the property uniquely defining the terms on the right-hand side of \eqref{Eq:Borel measure} does. 

Finally, we claim that
\begin{align*}
\sigma := \int_Q \Diff_\alpha\RCR(\u+\tau f)\d\q(\alpha)
\end{align*}
defines a Radon measure on $\mms$; by \eqref{Eq:Borel measure} and the discussion after this proof, this easily implies $\Diff\RCR f$ is the difference of two Radon measures on $\mms$, interpreted in the sense of Radon functionals. By the disintegration \cref{Th:Disintegration}, $\sigma$ clearly forms a Borel measure on $\mms$ which vanishes outside of $\Tr$. To conclude, it suffices to show $\sigma$ is finite on compact sets, since every such measure is automatically Radon in our setting of \cref{Sub:MMS}, cf.~Folland  \cite{folland1999}*{Thm.~7.8}. Let $C$ be a fixed compact subset of $\mms$. By \cref{Le:Bounded},  $\u+\tau f$ is bounded on $C$, hence has finite oscillation thereon. On the other hand, by the definition of the Lebesgue--Stieltjes measure of $\smash{\bar{\u}_\alpha+\tau\bar{f}_\alpha}$, it is easily seen $\smash{\Diff_\alpha(\u + \tau f)[C]}$ is bounded from above by the oscillation of $\u+\tau f$ on $C$ for $\q$-a.e.~$\alpha\in Q$. Since this upper bound is independent of $\alpha\in Q$, the  finiteness of $\Diff \RCR(\u+\tau f)[C]$ follows.
\end{proof}

In fact, the above decomposition of $f$ contains more information we crucially exploit in \cref{Le:IBP II} et seq.  Indeed, $\smash{\bar{\u}_\alpha}$ is \emph{affine} with slope $1$ by \cref{Pr:Ray}, where $\alpha\in Q$.  Hence, the $\smash{\Leb_\alpha^1}$-singular part of the generalized signed Radon measure
\begin{align*}
\Diff_\alpha \RCR f  = \tau^{-1}\,\Diff_\alpha\RCR(\u + \tau f)- \tau^{-1}\,\Leb_\alpha^1
\end{align*}
is \emph{nonnegative} (irrespective of whether $f$ is $l$-causal  or not), i.e.~
\begin{align}\label{Eq:NONNEG!!!!!!!!!!!}
(\Diff_\alpha\RCR f)^\perp \geq 0.
\end{align}

Lastly, the subsequent assignment is well-defined for $\smash{\Leb_\alpha^1}$-a.e.~$x\in\mms_\alpha$:
\begin{align}\label{Eq:f' def}
f'(x) &:= \lim_{t\to 0} \frac{f\circ\sfg_t(x) - f(x)}{t}.
\end{align}
The $\meas$-measurable function $f'$ on $\Tr$ is called  \emph{derivative} of $f$ along  $\ray$. It is an $\smash{\Leb_\alpha^1}$-version of the density of the $\smash{\Leb_\alpha^1}$-absolutely continuous part of $\Diff_\alpha \RCR f$, i.e.
\begin{align}\label{Eq:Leb decomp dalpha}
\Diff_\alpha\RCR f = f'\,\Leb_\alpha^1 + (\Diff_\alpha\RCR f)^\perp
\end{align}
in the  sense made precise in \cref{Sub:Fixed}.

\begin{remark}[Compatibility]\label{Re:Compat} It is easy to check that \eqref{Eq:f' def} is compatible with the classical derivative $\smash{\bar{f}_\alpha'}$ of the real ray representative $\smash{\bar{f}_\alpha}$ for $\q$-a.e.~$\alpha\in Q$ after  \cref{Sub:Monotone real}, in that
\begin{align*}
f' \circ \sfg_\cdot(\alpha) = \bar{f}_\alpha'\quad \Leb^1\mres\bar{\mms}_\alpha\textnormal{-a.e.}\tag*{{\footnotesize{$\blacksquare$}}}
\end{align*} 
\end{remark}

\begin{corollary}[Integration by parts II, see also \cref{Th:Partial integration f}]\label{Le:IBP II} For $\q$-a.e.~$\alpha\in Q$, we have the following. If $v,w\in \cl\, \mms_\alpha$ are distinct points with $\smash{v\preceq_{\u} w}$, 
\begin{align*}
\int_{(v,w]} h_\alpha\,f'\d\Leb_\alpha^1 &\leq \int_{(v,w]} h_\alpha\d\Diff_\alpha\RCR f\\
&= -\int_{(v,w]} f\, h_\alpha'\d\Leb_\alpha^1 + \big[\!\RCR f\,h_\alpha\big]_v^w.
\end{align*}
\end{corollary}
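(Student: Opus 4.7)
I would decompose the claim into the inequality and the equality, proving each separately for $\q$-a.e.~$\alpha\in Q$ at which the density $h_\alpha$ from the disintegration \cref{Th:Disintegration} exists with all the properties listed in \cref{Def:MCP disintegration} and \cref{Re:Properties MCP}.

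\textbf{The inequality.} This is essentially bookkeeping with the Lebesgue decomposition. By \eqref{Eq:Leb decomp dalpha},
\begin{align*}
\Diff_\alpha\RCR f = f'\,\Leb_\alpha^1 + (\Diff_\alpha\RCR f)^\perp,
\end{align*}
and the singular part is nonnegative by \eqref{Eq:NONNEG!!!!!!!!!!!}. Since $h_\alpha$ is nonnegative (cf.~\cref{Re:Properties MCP}), integrating against it over $(v,w]$ yields
\begin{align*}
\int_{(v,w]} h_\alpha\d\Diff_\alpha\RCR f = \int_{(v,w]} h_\alpha\,f'\d\Leb_\alpha^1 + \int_{(v,w]} h_\alpha\d(\Diff_\alpha\RCR f)^\perp \geq \int_{(v,w]} h_\alpha\,f'\d\Leb_\alpha^1.
\end{align*}

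\textbf{The equality.} The plan is to push everything to the real line via $\sfg_\cdot(\alpha)^{-1}$, apply the real-variable integration by parts \cref{Th:Partial integration f}, and pull the identity back. Set $\bar{v}:=\sfg^{-1}(v)$ and $\bar{w}:=\sfg^{-1}(w)$; these are the endpoints of a compact subinterval of $\cl\,\bar{\mms}_\alpha$. Since $\Diff_\alpha\RCR f = \sfg_\cdot(\alpha)_\push\Diff\RCR\bar{f}_\alpha$ by the definition given just above \eqref{Eq:Borel measure}, the change-of-variables identity gives
\begin{align*}
\int_{(v,w]} h_\alpha\d\Diff_\alpha\RCR f = \int_{(\bar{v},\bar{w}]}\bar{h}_\alpha\d\Diff\RCR\bar{f}_\alpha.
\end{align*}
By \cref{Re:Properties MCP}, $\bar{h}_\alpha$ is locally Lipschitz on $\bar{\mms}_\alpha$ and extends continuously to $\cl\,\bar{\mms}_\alpha$; in particular it is absolutely continuous on $[\bar{v},\bar{w}]$. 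As already noted around \eqref{Eq:Borel measure}, $\bar{f}_\alpha$ has bounded variation on this interval. Hence \cref{Th:Partial integration f} applies and yields
\begin{align*}
\int_{(\bar{v},\bar{w}]}\bar{h}_\alpha\d\Diff\RCR\bar{f}_\alpha = -\int_{(\bar{v},\bar{w}]}\bar{f}_\alpha\,\bar{h}_\alpha'\d\Leb^1 + \big[\!\RCR\bar{f}_\alpha\,\bar{h}_\alpha\big]_{\bar{v}}^{\bar{w}}.
\end{align*}
Pushing back, \cref{Re:Compat} identifies $\bar{f}_\alpha'$ with $f'\circ\sfg_\cdot(\alpha)$ $\Leb^1\mres\bar{\mms}_\alpha$-a.e., so the first integral becomes $-\int_{(v,w]} f\,h_\alpha'\d\Leb_\alpha^1$; the boundary term is converted into $[\!\RCR f\,h_\alpha]_v^w$ via continuity of $h_\alpha$ at the endpoints together with the compatibility between $\RCR\bar{f}_\alpha$ and the real ray representative of the $l$-left-continuous representative of $f$ recorded just after \eqref{Eq:Borel measure}.

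\textbf{Main obstacle.} The delicate step is legitimizing the boundary evaluation when $v$ or $w$ coincides with an endpoint $a_\alpha$ or $b_\alpha$, i.e.~lies in $\cl\,\mms_\alpha\setminus\mms_\alpha$. One must verify that the continuous extension of $\bar{h}_\alpha$ provided by \cref{Re:Properties MCP} is indeed what appears in the Lebesgue--Stieltjes formula of \cref{Th:Partial integration f}; that $\bar{f}_\alpha$ (hence $\RCR\bar{f}_\alpha$) remains bounded there by \cref{Le:Bounded} applied to the decomposition $f=\tau^{-1}(\u+\tau f)-\tau^{-1}\u$; and that the right-continuous representative on $\R$ evaluated at $\bar{v},\bar{w}$ agrees with $\RCR f$ at $v,w$. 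All three points reduce to the structural facts already recorded in Remarks \ref{Re:Prosp}, \ref{Re:Prop fin p} and \ref{Re:Properties MCP} together with the affineness of $\bar{\u}_\alpha$ from \cref{Pr:Ray}, so no new ingredient is required.
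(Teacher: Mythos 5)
Your overall architecture matches the paper's: the inequality comes from the Lebesgue decomposition \eqref{Eq:Leb decomp dalpha} together with the nonnegativity \eqref{Eq:NONNEG!!!!!!!!!!!} of the singular part, and the equality comes from pushing everything to $\R$ via the ray map and invoking \cref{Th:Partial integration f}. However, there is one genuine gap, and it sits exactly where you wave it away. You write that $\bar{h}_\alpha$ is ``locally Lipschitz on $\bar{\mms}_\alpha$ and extends continuously to $\cl\,\bar{\mms}_\alpha$; \emph{in particular} it is absolutely continuous on $[\bar{v},\bar{w}]$.'' That implication is false: a function can be smooth on the open interval, extend continuously to the closed one, and still fail to be absolutely continuous (indeed fail to be of bounded variation) up to an endpoint — think of $x\mapsto x^2\sin(1/x^2)$ near $0$. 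Since \cref{Th:Partial integration f} requires $h$ to be absolutely continuous on the \emph{compact} interval, and since in the statement $v$ and $w$ are allowed to be the endpoints $a_\alpha,b_\alpha\in\cl\,\mms_\alpha\setminus\mms_\alpha$, you must rule out exactly this boundary degeneracy of the Lipschitz constant.

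The paper closes this gap with a quantitative input you do not use: after bounding $k$ from below by a constant $K$ on the compact diamond $J(w,v)$, the conditional density is an $\MCP(K,N)$ density on the interior, and the first-order a priori integral bound of \cref{Le:Bounds}(ii) gives
\begin{align*}
\int_{[\bar{v},\bar{w}]}\vert\bar{h}_\alpha'\vert\d\Leb^1 \leq \frac{1}{l(w,v)}\,C_{l(w,v)}^{K,N}\,\meas_\alpha[J(w,v)] < \infty,
\end{align*}
so $\bar{h}_\alpha'$ is integrable up to the boundary and $\bar{h}_\alpha$, being continuous on $[\bar{v},\bar{w}]$, is genuinely absolutely continuous there. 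This is where the curvature hypothesis (via the structure of MCP densities) actually enters the proof; it is not merely ``bookkeeping'' reducible to \cref{Re:Properties MCP}. Your remaining concerns in the ``main obstacle'' paragraph (boundedness of $\bar{f}_\alpha$ at the endpoints, identification of $\RCR\bar{f}_\alpha$ with the real ray representative of the left-continuous representative of $f$) are indeed handled by the structural facts you cite, so apart from the absolute continuity step your argument is sound.
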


\begin{proof} Recall \cref{Re:Identification} for the tacit identifications used throughout the proof. We first prove that the real ray representative  $\smash{\bar{h}_\alpha}$ of the conditional density $h_\alpha$ is locally absolutely continuous on $\smash{\cl\,\bar{\mms}_\alpha}$ for $\q$-a.e.~$\alpha\in Q$. In the interior, this follows from local Lipschitz continuity stated in \cref{Re:Properties MCP}. We are  left to study the boundary behavior. To simplify the presentation, we assume \emph{both} endpoints $a_\alpha$ and $b_\alpha$ exist and show the claim for $\smash{v := a_\alpha}$ and $\smash{w:= b_\alpha}$. We shall identify $\smash{\mms_\alpha\cup\{w\}}$ with the  interval $\smash{(\bar{v},\bar{w}]}$; the arguments in the remaining cases are analogous. Owing to the disintegration \cref{Th:Disintegration}, we may and will  assume $\alpha$ is such that $\meas_\alpha$ gives finite mass to all compact subsets of $\mms$. Lastly, by compactness of causal  diamonds there is a negative constant $K$ with $k \geq K$ on $\smash{J(w,v)}$.

By continuity of $\smash{\bar{h}_\alpha}$ on all of $\smash{[\bar{v},\bar{w}]}$, see \cref{Re:Properties MCP} again, it  suffices to prove $\smash{\bar{h}'}$ is $\smash{\Leb^1\mres [\bar{v},\bar{w}]}$-integrable. However, since $\smash{\bar{h}_\alpha}$ is an $\MCP(K,N)$ density in the interior, this is a direct consequence of \cref{Le:Bounds}: there exists a constant $\smash{C_{l(w,v)}^{K,N}}$ such that
\begin{align*}
\int_{[\bar{v},\bar{w}]} \vert\bar{h}_\alpha'\vert\d\Leb^1 \leq \frac{1}{l(w,v)}\,C_{l(w,v)}^{K,N}\,\meas_\alpha[J(w,v)]. 
\end{align*}

Now the statement of the corollary readily follows from  \eqref{Eq:NONNEG!!!!!!!!!!!}, \eqref{Eq:Leb decomp dalpha},  and \cref{Th:Partial integration f} (applied to the nondecreasing function $\smash{\bar{f}_\alpha}$), using  $\smash{\RCR \bar{f}_\alpha}$ coincides with the real ray representative of $\RCR f$ on $\smash{\bar{\mms}_\alpha}$ by the $l$-orientation of  $\sfg$.
\end{proof}

\subsubsection{Convex and compactly extendible sets} Lastly,  we define two properties of subsets of $\Tr$ which will be useful below. These are inspired by Cavalletti--Mondino  \cite{cavalletti-mondino2020-new}*{Def.~4.3}.

\begin{definition}[Convexity]\label{Def:Convexity} A Borel subset $A$ of $\smash{\Tr}$ is termed \emph{$\smash{\sim_{\u}}$-convex} if for every $x\in \Tr$, the image of $\smash{A\cap \tilde{x}^{\u}}$ under the map $\smash{\sfg^{-1}}$ is an interval.
\end{definition}

This definition is easily seen to be compatible with the notion of $\smash{\sim_\u}$-convexity of the ambient set $E$ we have already used in  \cref{Sub:FRAME}.

Next, given $\varepsilon > 0$ the \emph{$2\varepsilon$-enlargement} of a subset $A$ of $\Tr$ relative to $\q$ is the set of all points of the form $\sfg_t(x)$ with  $t\in[-2\varepsilon,2\varepsilon]$  (whenever defined) and $x\in A$. 

\begin{definition}[Compact extendibility]\label{Def:Admissible} Given $\varepsilon>0$, a Borel subset $A$ of $\smash{\Tr}$ is \emph{compactly $2\varepsilon$-extendible} if for every $x\in A$, the map $\sfg_\cdot(x)$ is defined on the entire interval $[-2\varepsilon,2\varepsilon]$ and the induced $2\varepsilon$-enlargement of $A$ is precompact.

It is \emph{compactly extendible} if it is compactly $2\varepsilon$-extendible for some $\varepsilon>0$.
\end{definition}

The rough purposes of the previous notion is the following. 

The ``$\varepsilon$ of room'' provided therein will entail better bounds for the conditional densities of rays restricted to a compactly $2\varepsilon$-extendible subset $A$ of $\Tr$. It will ensure  the arguments of the generalized cotangent functions in \cref{Re:Logarithmic derivative} are bounded away from zero. In turn, the logarithmic derivative of every conditional density is uniformly bounded on every ray segment contained in $A$; see \cref{Pr:Plan representing}.

\begin{remark}[Generation]\label{Re:Gener} Since $\q$-a.e.~ray is order isometric to an \emph{open} interval via the reverse ray map $\ray$,  the intersection of every chronological diamond with $\Tr$  is the countable union of $\smash{\sim_{\u}}$-convex, compactly extendible Borel sets (where the implicit parameter $\varepsilon$ will of course vary over these sets in general). In particular, subsets of $\Tr$ obeying   \cref{Def:Convexity,Def:Admissible} generate the relative Borel $\sigma$-algebra of $\Tr$.\hfill{\footnotesize{$\blacksquare$}}
\end{remark}

\subsection{Test plans from localization}\label{Sub:Repr of} This part contains some preparatory material especially for \cref{Cor:Constant slope} and \cref{Th:df vs f'}. In \cref{Pr:Plan representing} below, we construct test plans related to $\u$ by using localization. This result is a Lorentzian version of Cavalletti--Mondino  \cite{cavalletti-mondino2020-new}*{Prop.~4.4}. In the context of Gigli's splitting theorem for RCD spaces, $\u$ would naturally be chosen as the Busemann function induced by a line. Bounded compression of test plans induced by its negative gradient flow lines then follows from the much stronger property of measure preservation, cf.~Gigli \cite{gigli2013}*{Thm.~3.21}. We also refer to the discussion in Cavalletti--Mondino  \cite{cavalletti-mondino2020-new}*{§7}. In the Lorentzian case, a related construction comes from Braun \cite{braun2023-good}*{Thm.~1.2}.

Let $\varepsilon\in (0,1)$ be fixed. For a $\smash{\sim_{\u}}$-convex, compactly $2\varepsilon$-extendible  (thus precompact) Borel subset $A$ of $\smash{\Tr}$ with $\meas[A]>0$, we define the Borel map $\sfG\colon A \to \Cont([0,1];\mms)$ by ``stopping'' all rays starting at $x\in A$ after time $\varepsilon$, i.e.~$\sfG(x)_t := \sfg_{t\wedge \varepsilon}(x)$. We consider the Borel probability measure $\smash{\bdpi}$ on $\Cont([0,1];\mms)$ defined by
\begin{align}\label{Eq:bdpi A def}
\bdpi := \meas[A]^{-1}\,\sfG_\push[\meas\mres A].
\end{align}
For $t\in[0,\varepsilon]$,  the element
\begin{align}\label{Eq:The measure}
(\eval_t)_\push\bdpi^\leftarrow = \meas[A]^{-1}\,(\eval_t\circ\sfG)_\push[\meas\mres A] = \meas[A]^{-1}\,(\sfg_t)_\push[\meas\mres A]
\end{align}
of $\Prob(\mms)$ resembles the role of the push-forward of $\smash{\meas[A]^{-1}\,\meas\mres A}$ by the negative gradient flow of $\u$ at time $t$.

\begin{proposition}[Induced test plans]\label{Pr:Plan representing} The Borel probability measure $\bdpi$ is a test plan.
\end{proposition}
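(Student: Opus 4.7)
The plan is to verify the two defining features of a test plan from \cref{Def:Test plan}: concentration on $\LCC([0,1];\mms)$ and bounded compression. The first is immediate: for every $x\in A$, the map $t\mapsto \sfG(x)_t=\sfg_{t\wedge\varepsilon}(x)$ is the concatenation of a continuous $l$-causal segment (by the continuity of $l$-geodesics and the order isometry in \cref{Pr:Ray}) with a constant curve, hence continuous and $l$-causal, thus in $\LCC([0,1];\mms)$. So the entire work goes into proving $\sup_{t\in[0,1]}\COMP(\eval_t)_\push\bdpi<\infty$.

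First I reduce to the window $t\in[0,\varepsilon]$. By the definition of $\sfG$, the curve $\sfG(x)$ is constant on $[\varepsilon,1]$, so $(\eval_t)_\push\bdpi=(\sfg_\varepsilon)_\push[\meas\mres A]/\meas[A]$ for every such $t$. Therefore it suffices to bound $\COMP(\sfg_t)_\push[\meas\mres A]$ uniformly in $t\in[0,\varepsilon]$, since then the supremum over $t\in[0,1]$ is simply the maximum of the two bounds.

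Second, I would compute the Radon--Nikod\'ym density $\rho_t$ of $(\sfg_t)_\push[\meas\mres A]$ with respect to $\meas$ using the MCP disintegration $\q$. For a Borel set $B\subseteq\mms$,
\begin{align*}
(\sfg_t)_\push[\meas\mres A][B]
&=\int_Q\int_{A\cap\sfg_t^{-1}(B)\cap\mms_\alpha}h_\alpha\d\Leb^1_\alpha\d\q(\alpha)\\
&=\int_Q\int_{\sfg_t(A)\cap B\cap\mms_\alpha}h_\alpha(\sfg_{-t}(y))\d\Leb^1_\alpha(y)\d\q(\alpha),
\end{align*}
after the change of variable $y=\sfg_t(x)$ along each ray, which is legitimate by  \cref{Pr:Ray} and the invariance of $\Leb^1_\alpha$ under the shift. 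Comparing with the analogous disintegration of $\meas$ gives
\begin{align*}
\rho_t(y)=\mathbf{1}_{\sfg_t(A)}(y)\,\frac{h_\alpha(\sfg_{-t}(y))}{h_\alpha(y)}\quad\text{for $y\in\mms_\alpha$.}
\end{align*}

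The main obstacle, and the place where the hypotheses enter, is uniformly bounding this ratio. Here I would use the compact $2\varepsilon$-extendibility of $A$ (\cref{Def:Admissible}) together with the $\MCP(k,N)$ disintegration property (\cref{Def:MCP disintegration}) and the density comparison \cref{Le:logarithmic derivative}. By $2\varepsilon$-extendibility, for every $\alpha$ intersecting $A$ and every $x\in A\cap\mms_\alpha$ the interval $\sfg_{[-2\varepsilon,2\varepsilon]}(x)$ lies inside the fixed precompact $2\varepsilon$-enlargement $K_A$ of $A$; the lower semicontinuous function $k$ is therefore bounded below by some constant $K\in\mathbb{R}$ on $K_A$. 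Applying \cref{Le:logarithmic derivative} to the positive $\MCP(\bar{k}_\alpha,N)$ density $\bar{h}_\alpha$ on the interval corresponding to $\sfg_{(-2\varepsilon,2\varepsilon)}(x)$, and using the monotonicity of the distortion coefficients with respect to the lower bound $K$ recorded in \cref{Re:Props dist coeff}, I would derive
\begin{align*}
\frac{h_\alpha(\sfg_{-t}(y))}{h_\alpha(y)}\le C(\varepsilon,N,K)\quad\text{for all $t\in[0,\varepsilon]$ and all $y\in\sfg_t(A)\cap\mms_\alpha$.}
\end{align*}
The resulting estimate $\rho_t\le C(\varepsilon,N,K)$ $\meas$-a.e.\ is $t$-uniform, so dividing by $\meas[A]$ gives the required uniform bound on $\COMP(\eval_t)_\push\bdpi$ and completes the proof. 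The one piece of care in executing the comparison is checking that $2\varepsilon-t$ stays bounded away from $0$ and from the first root $\pi_{K/(N-1)}$ of the relevant generalized sine (which it does, uniformly in $t\in[0,\varepsilon]$, possibly after shrinking $\varepsilon$ at the outset); this is routine but is the step most likely to obscure the argument if not handled explicitly.
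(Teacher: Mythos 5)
Your proposal is correct and follows essentially the same route as the paper: disintegrate $(\sfg_t)_\push[\meas\mres A]$ along the rays, identify the push-forward density as the shift ratio $h_\alpha\circ\sfg_{-t}/h_\alpha$ on $\sfg_t(A)$, and bound it uniformly via \cref{Le:logarithmic derivative} using a constant lower bound $K$ on $k$ over the precompact $2\varepsilon$-enlargement. Your closing worry about the root $\pi_{K/(N-1)}$ is moot since $K$ may be taken negative (so $\pi_{K/(N-1)}=\infty$), and the observation that the denominator argument $2\varepsilon-t\geq\varepsilon$ stays away from zero is exactly how the paper's explicit constant (which also records the dependence on the diameter bound $L$ of the enlarged ray segments, implicit in your $C(\varepsilon,N,K)$) acquires its $\sinh(\sqrt{-K/(N-1)}\,\varepsilon)$ in the denominator.
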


\begin{proof} Evidently, $\bdpi$ is concentrated on continuous  $l$-causal curves.

Next, we establish  $\bdpi$ has bounded compression. By construction and  \eqref{Eq:The measure}, it suffices to show the existence of a constant $C$ with the property $(\sfg_t)_\push[\meas\mres A] \leq C\,\meas$ for every $t\in[0, \varepsilon]$. The disintegration \cref{Th:Disintegration} implies
\begin{align}\label{Eq:LEB1}
(\sfg_t)_\push[\meas\mres A] = \int_Q (\sfg_t)_\push[\meas_\alpha\mres A]\d\q(\alpha).
\end{align}
Combining the $\smash{\Leb_\alpha^1}$-absolute continuity of $\meas_\alpha$ with positive density $h_\alpha$,   \cref{Re:Properties MCP},  the transformation formula, and the translation invariance of $\smash{\Leb^1}$ we obtain
\begin{align}\label{Eq:LEB!}
(\sfg_t)_\push[\meas_\alpha\mres A] = \frac{h_\alpha\circ\sfg_{-t}}{h_\alpha}\,\meas_\alpha\mres \sfg_t(A)
\end{align}
for $\q$-a.e.~$\alpha\in\Quot(A)$. As the $2\varepsilon$-enlargement of $A$ relative to $\q$ is precompact, there is a negative real constant $K$ such that for $\q$-a.e.~$\alpha\in \Quot(A)$, the inequality $\smash{\bar{k}_\alpha\geq K}$ holds on its closure. Since  $A$ is $\smash{\sim_{\u}}$-convex, the image of $\smash{\mms_\alpha \cap \bigcup_{t\in[0,\varepsilon]}\ray_t(A)}$ under $\smash{\ray^{-1}}$ is an interval of finite diameter, say $[\bar{v}_\alpha,\bar{w}_\alpha]$. Again by precompactness, there exists a real constant $L$ such that $\smash{\vert\bar{w}_\alpha - \bar{v}_\alpha\vert \leq L}$ for $\q$-a.e. $\alpha\in \Quot(A)$. Since $t\in[0,\varepsilon]$ and since $A$ is compactly $2\varepsilon$-extendible, the real ray representative $\smash{\bar{h}_\alpha}$ of $h_\alpha$  is a $\CD(K,N)$ density on the interval  $\smash{(\bar{v}_\alpha - \varepsilon,\bar{w}_\alpha + \varepsilon)}$. We define
\begin{align*}
C:= \frac{\sinh(\sqrt{-K/(N-1)}\,(L+\varepsilon))^{N-1}}{\sinh(\sqrt{-K/(N-1)}\,(\varepsilon))^{N-1}}.
\end{align*}
Applying \cref{Le:logarithmic derivative} and \cref{Re:Const sink cosk}, every $\smash{x\in [\bar{v}_\alpha, \bar{w}_\alpha]}$  satisfies
\begin{align*}
\frac{\bar{h}_\alpha(x-t)}{\bar{h}_\alpha(x)} &\leq \frac{\sinh(\sqrt{-K/(N-1)}\,(\bar{w}_\alpha + \varepsilon - x+t)^{N-1}}{\sinh(\sqrt{-K/(N-1)}\,(\bar{w}_\alpha + \varepsilon - x)^{N-1}}\leq C.
\end{align*}
With \eqref{Eq:LEB1} and \eqref{Eq:LEB!}, this yields the claimed estimate
\begin{align*}
(\sfg_t)_\push[\meas\mres A] \leq C \int_Q \meas_\alpha\mres\sfg_t(A) \d\q(\alpha) \leq C\int_Q\meas_\alpha\d\q(\alpha)\leq  C\,\meas.\tag*{\qedhere}
\end{align*}
\end{proof}

\begin{remark}[Representation of past gradients] Using the reverse Young inequality, \cref{Cor:Constant slope} below, and the affinity of $\u$ along each ray with slope one, one can  verify that $\bdpi$ in fact represents the past $\PP$-gradient of $\u$ according to Beran et al.~\cite{beran-braun-calisti-gigli-mccann-ohanyan-rott-samann+-}*{Def.~4.2}. Here $\PP$ is any nonzero number less than one. We refer to Cavalletti--Mondino \cite{cavalletti-mondino2020-new}*{Prop.~4.4} for a similar argument in positive signature.\hfill{\footnotesize{$\blacksquare$}}
\end{remark}

Next, we use these test plans to show $\smash{\u}$ has constant slope $1$ on $\Tr$. If $\u$ is the unsigned Lorentz distance function from a point, this is standard in the smooth case, cf.~e.g.~Treude--Grant  \cite{treude2011}*{Prop.~3.2.33},  while the singular counterpart of this has been shown recently by Beran et al.~\cite{beran-braun-calisti-gigli-mccann-ohanyan-rott-samann+-}*{Cor.~5.23}.

\begin{corollary}[Constant slope]\label{Cor:Constant slope} We have
\begin{align*}
\vert\rmd \u\vert = 1\quad\meas\mres\Tr\textnormal{-a.e.}
\end{align*}
\end{corollary}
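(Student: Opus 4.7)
The lower bound $\vert\rmd\u\vert\geq 1$ $\meas$-a.e.\ on all of $\mms$ is already in hand: it follows from $\u$ being $1$-steep with respect to $l$ together with the final observation at the end of \cref{Sub:Iso pert} about the maximal weak subslope of steep functions. Hence the real content is the reverse inequality $\vert\rmd\u\vert\leq 1$ $\meas\mres\Tr$-a.e., and the natural instrument is the test plan $\bdpi$ supplied by \cref{Pr:Plan representing}.

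Fix a $\smash{\sim_{\u}}$-convex, compactly $2\varepsilon$-extendible Borel set $A\subset\Tr$ with $\meas[A]>0$, and let $\bdpi$ be the test plan in \eqref{Eq:bdpi A def}. Applying the pathwise version of the weak subslope (\cref{Le:Pathwise}) to $\u$ and to this $\bdpi$ with $s=0$, $t=\varepsilon$, I would read off, for $\bdpi$-a.e.\ $\gamma=\sfG(x)$,
\begin{align*}
\u(\sfg_\varepsilon(x)) - \u(x) \geq \int_0^\varepsilon \vert\rmd\u\vert(\sfg_r(x))\,\vert\dot\gamma_r\vert\d r.
\end{align*}
Two features now collapse this inequality: the order-isometry property from \cref{Pr:Ray} makes the left-hand side equal to $\varepsilon$, and the stopping  time in the definition of $\sfG$ together with the $l$-causal speed computation yields $\vert\dot\gamma_r\vert=1$ for $\Leb^1\mres[0,\varepsilon]$-a.e.\ $r$. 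Combining with the a.e.\ lower bound $\vert\rmd\u\vert\geq 1$, which passes to $\sfg_r(A)$ thanks to bounded compression of $\bdpi$, one gets equality throughout. Hence, for $\meas\mres A$-a.e.\ $x$ and $\Leb^1\mres[0,\varepsilon]$-a.e.\ $r$, the identity $\vert\rmd\u\vert(\sfg_r(x))=1$ holds.

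To transfer this from the ``ray-time'' $r$ to the point $x\in A$ itself, I would invoke the push-forward viewpoint: the measure $\int_0^\varepsilon (\sfg_r)_\push[\meas\mres A]\d r$ is absolutely continuous with respect to $\meas$ (again by bounded compression), is supported in the ray-tube $B_A:=\bigcup_{r\in[0,\varepsilon]}\sfg_r(A)$, and by Fubini the function $\vert\rmd\u\vert$ equals $1$ a.e.\ with respect to it. Thus $\vert\rmd\u\vert=1$ $\meas\mres B_A$-a.e. The last step is to upgrade ``$B_A$'' to ``all of $\Tr$''. By \cref{Re:Gener}, $\Tr$ is a countable union of $\smash{\sim_{\u}}$-convex, compactly extendible Borel sets; applying the argument to each of them, and using that for any point $y\in \Tr$ one can realize $y$ as $\sfg_r(x)$ for some $r\in(0,\varepsilon)$ with $x$ sitting in the past segment of such a set (e.g.\ replace $A$ by $\sfg_{-\varepsilon/2}(A)$, which inherits $\smash{\sim_{\u}}$-convexity and compact extendibility), the covering is complete up to $\meas$-null sets and the corollary follows.

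The only step that requires genuine care is this last covering: one must make sure the generating family, when propagated for small ray-times $r>0$, actually sweeps out $\Tr$ modulo $\meas$-null sets. This is routine but mildly delicate because the inequality is obtained only for \emph{almost every} $r\in(0,\varepsilon)$ and not for $r=0$ directly; shifting $A$ backward along $\ray$ is the cleanest way to circumvent the boundary value issue, and this is precisely where \cref{Def:Admissible} earns its keep.
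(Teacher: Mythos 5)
Your argument is in substance the paper's: same lower bound from $1$-steepness, same test plan $\bdpi$ from \eqref{Eq:bdpi A def}, and the same squeeze obtained by combining \cref{Le:Pathwise} with the order isometry of \cref{Pr:Ray} and the a.e.\ bound $\vert\rmd\u\vert\geq 1$ pushed onto $\bdpi$ via bounded compression. The only divergence is the final localization. The paper avoids your tube $B_A$ altogether: it integrates the pathwise equality against $\bdpi$ and lets the time window shrink onto the endpoint of the curves whose marginal is \emph{exactly} $\meas[A]^{-1}\,\meas\mres A$ (no density distortion there), which yields $\int_A\vert\rmd\u\vert\d\meas=\meas[A]$ and hence the claim on $A$ itself; arbitrariness of $A$ together with \cref{Re:Gener} then finishes, with no need to shift $A$ backward along $\sfg$.

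One step of yours is justified in the wrong direction and needs repair. From ``$\vert\rmd\u\vert=1$ a.e.\ with respect to $\nu:=\int_0^\varepsilon(\sfg_r)_\push[\meas\mres A]\d r$'' you conclude ``$\vert\rmd\u\vert=1$ $\meas\mres B_A$-a.e.'', citing $\nu\ll\meas$. That implication requires the \emph{reverse} absolute continuity $\meas\mres B_A\ll\nu$ (up to a null set); $\nu\ll\meas$ alone is useless here. The reverse relation is true, but it does not follow from bounded compression: one has to invoke the explicit density formula \eqref{Eq:LEB!}, i.e.\ $(\sfg_r)_\push[\meas_\alpha\mres A]=\tfrac{h_\alpha\circ\sfg_{-r}}{h_\alpha}\,\meas_\alpha\mres\sfg_r(A)$ together with the positivity of $h_\alpha$ on the interior of each ray (\cref{Re:Properties MCP}) and a one-dimensional Fubini computation showing the $\Leb^1_\alpha$-density of $\bar\nu_\alpha$ is positive on the tube except at its left endpoint. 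Once you either add this or switch to the paper's endpoint-concentration limit, the proof is complete; the backward shift $\sfg_{-\varepsilon/2}(A)$ you propose for the covering is fine (it is compactly $\varepsilon$-extendible and $\smash{\sim_\u}$-convex), just unnecessary under the paper's route.
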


\begin{proof} Since $\u$ is $1$-steep with respect to $l$, we directly obtain ``$\geq$''.

To show the converse inequality, let $A$ be a $\smash{\sim_{\u}}$-convex, compactly $2\varepsilon$-extendible subset of $\Tr$ for some $\varepsilon >0$. Consider the induced test plan $\bdpi$ from \eqref{Eq:bdpi A def}. Then for every rational $t\in (1-\varepsilon,1)$, by \cref{Le:Pathwise} and \cref{Pr:Ray} $\bdpi$-a.e.~$\gamma$ satisfies
\begin{align*}
t = l(\gamma_{1-t},\gamma_1)= \u(\gamma_1) - \u(\gamma_{1-t})  \geq \int_{1-t}^1 \vert \rmd\u\vert(\gamma_r) \d r \geq t.
\end{align*}
Hence, equality holds throughout. Integrating the resulting inequality and arguing as in the proof of \cite{beran-braun-calisti-gigli-mccann-ohanyan-rott-samann+-}*{Prop.~4.1} by using \cref{Pr:Plan representing} yields
\begin{align*}
1 = \lim_{t\to 0+} \frac{1}{t}\iint_{1-t}^1 \vert \rmd\u\vert(\gamma_r)\d r\d\bdpi(\gamma) = \int \vert \rmd\u\vert(\gamma_1)\d\bdpi(\gamma) = \meas[A]^{-1}\int_A \vert \rmd\u\vert\d\meas.
\end{align*}
The arbitrariness of $A$ terminates the proof.
\end{proof}

Since we only consider functions defined on $\scrT$, we consistently abbreviate
\begin{align}\label{Eq:dnabla}
\rmd^+f(\nabla\u) := \lim_{\PP\to 1-}\One_{\Tr}\,\rmd^+f(\nabla\u)\,\vert\rmd\u\vert^{\PP-2}.
\end{align}
In all relevant cases, the limit exists: indeed, the $\PP$-dependent quantity it is $\meas$-a.e.~constant by \cref{Cor:Constant slope} and \cref{Re:Indep!}.

\subsection{Localization of horizontal vs.~vertical differentiation}\label{Sub:Loc hor vert} Now we finally connect the horizontal and the vertical approach to differentiation from \cref{Sub:Horiz} by using  disintegration tools. This improves Beran et al.~\cite{beran-braun-calisti-gigli-mccann-ohanyan-rott-samann+-}*{Thm.~4.15} from an integrated to an a.e.~estimate. This is crucial to perform integration by parts later, cf.~e.g.~the proof of \cref{Th:Meas val Alem I}.

\begin{theorem}[Horizontal vs.~vertical differentiation]\label{Th:df vs f'} Let $\scrM$ be a time\-like essentially nonbranching $\smash{\TMCP^e(k,N)}$ metric measure spacetime. Let $\q$ be an $\MCP(k,N)$-disintegration  relative to $\smash{\u}$. Finally, let $f$ be a finite perturbation of $\u$ on $\smash{\Tr}$. Then
\begin{align*}
\rmd^+f(\nabla \u) \leq f'\quad\meas\mres\Tr\textnormal{-a.e.}
\end{align*}
\end{theorem}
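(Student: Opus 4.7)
The first step is to reduce the claim to the pointwise estimate
\begin{align*}
\vert\rmd(\u+\tau f)\vert \leq 1 + \tau f' \quad \meas\mres\Tr\textnormal{-a.e.}
\end{align*}
valid for every $\tau>0$ with $\u+\tau f$ being $l$-causal. Indeed, since $x \mapsto x^\PP/\PP$ is concave on $(0,\infty)$ for every nonzero $\PP<1$ and $\vert\rmd\u\vert=1$ $\meas\mres\Tr$-a.e.\ by \cref{Cor:Constant slope}, the tangent line inequality at $x=1$ yields
\begin{align*}
\frac{\vert\rmd(\u+\tau f)\vert^\PP - 1}{\PP\tau} \leq \frac{\vert\rmd(\u+\tau f)\vert - 1}{\tau} \quad\meas\mres\Tr\textnormal{-a.e.}
\end{align*}
Combining this with the pointwise slope bound and the definition \eqref{Eq:dnabla} of $\rmd^+f(\nabla \u)$ as the $\meas$-essential supremum (equivalently, monotone limit as $\tau\to 0+$), the claim of the theorem follows upon sending $\tau\to 0+$.

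To prove the slope bound, fix $\tau>0$ admissible, fix $\varepsilon\in(0,1)$, and let $A\subseteq \Tr$ be $\smash{\sim_\u}$-convex and compactly $2\varepsilon$-extendible with $\meas[A]>0$. Consider the test plan $\bdpi$ from \cref{Pr:Plan representing} given by \eqref{Eq:bdpi A def}. Applying the pathwise \cref{Le:Pathwise} to the $l$-causal function $\u+\tau f$, $\bdpi$-a.e.\ $\gamma$ satisfies, for every $0\leq s<t\leq \varepsilon$,
\begin{align*}
\int_s^t \vert\rmd(\u+\tau f)\vert(\sfg_r(\gamma_0))\d r \leq (t-s) + \tau\big[\bar{f}_{\Quot(\gamma_0)}(r_0+\cdot)\big]_s^t,
\end{align*}
where we have used $\u\circ \sfg_r = \u(\gamma_0) + r$ from \cref{Pr:Ray} and $\vert\dot\gamma_r\vert = 1$. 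Dividing by $t-s$ and letting $s,t\to r_0$ from both sides, at every joint Lebesgue point $r_0\in(0,\varepsilon)$ of the integrand and of $\smash{\bar{f}_{\Quot(\gamma_0)}}$ (a cocountable set by \cref{Re:Prop fin p} and Lebesgue differentiation, using the local integrability of $\bar f'_{\Quot(\gamma_0)}$ granted by \cref{Sub:Differentiation} and \cref{Re:Compat}), one obtains
\begin{align*}
\vert\rmd(\u+\tau f)\vert(\sfg_{r_0}(\gamma_0)) \leq 1 + \tau f'(\sfg_{r_0}(\gamma_0)).
\end{align*}

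It remains to transfer this ray-wise almost-everywhere bound to an $\meas$-a.e.\ bound. By the construction of $\bdpi$ from $\meas\mres A$ and by Fubini, the previous inequality holds for $(\meas\otimes \Leb^1)$-a.e.\ $(x,r_0)\in A\times (0,\varepsilon)$; equivalently, invoking the disintegration \cref{Th:Disintegration} and the identity \eqref{Eq:LEB!} appearing in the proof of \cref{Pr:Plan representing}, the image of $\meas\mres A\otimes \Leb^1\mres(0,\varepsilon)$ under $(x,r)\mapsto \sfg_r(x)$ is mutually absolutely continuous with $\meas$ on the open set $\bigcup_{r\in(0,\varepsilon)}\sfg_r(A)$. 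Since by \cref{Re:Gener} every point of $\Tr$ lies in such a flow-translate of a $\smash{\sim_\u}$-convex compactly $2\varepsilon$-extendible subset of $\Tr$ (as $\q$-a.e.\ ray is identified with an \emph{open} interval by $\sfg$), a countable exhaustion yields the desired bound on all of $\Tr$, up to an $\meas$-null set.

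The main technical subtlety will be this last transfer: extracting a genuine pointwise $\meas$-a.e.\ bound on $\Tr$ from the cocountable-in-$r_0$ bounds obtained along $\bdpi$-a.e.\ ray, for which the absolute continuity of the push-forward of $\meas\otimes\Leb^1$ and the good generation property \cref{Re:Gener} of $\smash{\sim_\u}$-convex compactly extendible sets are crucial.
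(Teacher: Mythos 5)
Your proof is correct and follows essentially the same route as the paper's: the same test plan from \cref{Pr:Plan representing}, the pathwise \cref{Le:Pathwise} applied to $\u+\tau f$, the facts $\vert\rmd\u\vert=1$ and $\u'=1$ along rays, and your tangent-line inequality $(x^\PP-1)/\PP\leq x-1$ is precisely the Young inequality $x\geq x^\PP/\PP+1/\QQ$ the paper uses — the only differences are cosmetic (you extract the ray-wise a.e.\ bound by two-sided Lebesgue differentiation and a Fubini transfer, whereas the paper stays at the level of Lebesgue--Stieltjes measures and compares absolutely continuous parts, using \eqref{Eq:NONNEG!!!!!!!!!!!}). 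One small slip: the set of joint Lebesgue/differentiability points along a ray is only $\Leb^1$-co-null, not cocountable as you assert via \cref{Re:Prop fin p}, but co-null is all your final Fubini argument needs.
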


\begin{proof} Let $\tau > 0$ be such that $\u+\tau f$ is $l$-causal. As in the proof of \cref{Cor:Constant slope}, let $A$ be a $\smash{\sim_\u}$-convex, compactly $2\varepsilon$-extendible subset of $\Tr$ for some $\varepsilon>0$. For $s,t\in[0,\varepsilon]$ with $s\leq t$, we consider the Suslin set  $A_{[s,t]}$ consisting of all points of the form $\ray_r(x)$, where $x\in A$ and $r\in[s,t]$. Furthermore, let $\PP,\QQ\in (-\infty,1)\setminus \{0\}$ be mutually conjugate with $\PP$ positive. Applying \cref{Le:Pathwise} to the test plan $\bdpi$ induced by $A$ from  \eqref{Eq:bdpi A def} (noting $\vert \dot\gamma\vert = 1$ $\Leb^1\mres[0,\varepsilon]$-a.e.~for $\bdpi$-a.e.~$\gamma$), employing the disintegration \cref{Th:Disintegration}  and the definition of the Lebesgue--Stieltjes measure, and using locality $\q$-a.e.~$\alpha\in \Quot(A)$ obeys
\begin{align*}
\Diff_\alpha\LCR(\u + \tau f)[A_{[s,t]}] &\geq \int_{A_{[s,t]}}\vert \rmd(\u + \tau f)\vert\d\Leb_\alpha^1\\
&\geq \frac{1}{\PP}\int_{A_{[s,t]}} \vert\rmd(\u+\tau f)\vert^\PP\d\Leb_\alpha^1 + \frac{1}{\QQ}\Leb_\alpha^1[A_{[s,t]}].
\end{align*}
As $\PP$ is positive, $\QQ$ is negative, and closed intervals generate the Borel $\sigma$-algebra of $[0,\varepsilon]$, this reduces to the following inequality between Radon measures:
\begin{align*}
\Diff_\alpha\LCR(\u+\tau f) \mres A_{[0,\varepsilon]} - \QQ^{-1}\,\Leb_\alpha^1 \mres A_{[0,\varepsilon]} \geq \PP^{-1}\,\vert\rmd (\u+\tau f)\vert^\PP \,\Leb_\alpha^1 \mres A_{[0,\varepsilon]}.
\end{align*}

Recalling \eqref{Eq:NONNEG!!!!!!!!!!!}, this improves to
\begin{align}\label{Prev1}
\u' + \tau f' - \QQ^{-1} \geq \PP^{-1}\,\vert\rmd(\u+\tau f)\vert^\PP\quad\Leb_\alpha^1\mres A_{[0,\varepsilon]}\textnormal{-a.e.}
\end{align}
On the other hand, \cref{Pr:Ray} and \cref{Cor:Constant slope}  imply
\begin{align}\label{Prev2}
\u' - \QQ^{-1} = 1 - \QQ^{-1} = \PP^{-1}\,\vert\rmd\u\vert^\PP\quad\Leb_\alpha^1\mres A_{[0,\varepsilon]}\textnormal{-a.e.}
\end{align}
Subtracting \eqref{Prev2} from \eqref{Prev1} and using positivity of $h_\alpha$ on $\mms_\alpha$ yields
\begin{align*}
\frac{\vert\rmd(\u + \tau f)\vert^\PP - \vert\rmd\u\vert^\PP}{\PP\tau} \leq f'\quad\meas_\alpha\mres A_{[0,\varepsilon]}\textnormal{-a.e.}
\end{align*}

By the disintegration \cref{Th:Disintegration} again, the previous inequality holds $\smash{\meas\mres A_{[0,\varepsilon]}}$-a.e. By the arbitrariness of $A$ (and hence $\varepsilon$, cf.~\cref{Re:Gener}), we verify the above inequality to hold $\meas\mres\Tr$-a.e. Letting $\tau \to 0+$ while using \eqref{Eq:Vert diff} and \eqref{Eq:dnabla}, the claim is shown.
\end{proof}

The following should be compared to \cref{Re:Two sided}.

\begin{remark}[Symmetry]\label{Re:Symmmmmmmm} Assume $f$ is a symmetric finite perturbation of $\u$. Then the previous \cref{Th:df vs f'} and \cref{Le:Diff quot} imply
\begin{align*}
\rmd^+f(\nabla\u) \leq f'\leq \rmd^-f(\nabla\u)\quad\meas\mres\Tr\textnormal{-a.e.}
\end{align*}
where the right-hand side is defined in the evident way following \eqref{Eq:dnabla}. 

With a similar argument using \eqref{Eq:NONNEG!!!!!!!!!!!}, it also follows  if $\scrM$ is $q$-infinitesimally strictly concave (cf.~\cref{Re:Inf str conv}) for some $q\in (-\infty,1)\setminus \{0\}$, then $\Diff_\alpha\RCR f$ has no singular part with respect to $\meas_\alpha$ for $\q$-a.e.~$\alpha\in Q$. \hfill{\footnotesize{$\blacksquare$}}
\end{remark}

\subsection{A formula for the d'Alembertian of a general steep function}\label{Sub:Formula1} In \cref{Th:Meas val Alem I} below, we provide a first formula for the d'Alembertian of the  globally fixed $1$-steep function $\u$. \cref{Re:Properties MCP} and \cref{Le:logarithmic derivative} collect basic properties of the conditional densities from  the fixed disintegration $\q$ we tacitly use below. In particular, by the arguments of \cref{Le:IBP II} and  \cref{Le:Radon funct} below, every integral over $\q$-a.e.~ray will be well-defined. 

For general $\u$, in analogy to Cavalletti--Mondino  \cite{cavalletti-mondino2020-new}*{Prop.~4.7, Thm.~4.8} in positive signature, \cref{Th:Meas val Alem I} does require an integrability assumption on the inverse length of its induced rays. Roughly speaking, sufficiently many of them should not be  too short. 

Recall the initial and final points $a_\alpha$ and $b_\alpha$ of the ray $\mms_\alpha$ from \cref{Sub:Ray map}, where $\alpha\in Q$.

\begin{lemma}[Induced weak Radon functional I, see also \cref{Le:Radon funct II}]\label{Le:Radon funct} Assume
\begin{align*}
\int_Q \frac{1}{l(a_\alpha,b_\alpha)}\d\q(\alpha) < \infty.
\end{align*}
Then the  map $T$ on $\Pert_\bc(\u,\Tr)$ given  by
\begin{align*}
T(f) := \int_Q\int_{\mms_\alpha} f\,(\log h_\alpha)'\d\meas_\alpha\d\q(\alpha) - \int_Q \big[\!\RCR f\,h_\alpha\big]_{a_\alpha}^{b_\alpha}\d\q(\alpha)
\end{align*}
defines a weak Radon functional.
\end{lemma}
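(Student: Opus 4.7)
I will check nonnegative linearity and the Radon-functional bound separately. Linearity of $T$ is straightforward: the inner integrand $(\log h_\alpha)'$ depends only on $\alpha$ through the disintegration, so $f \mapsto \int_{\mms_\alpha} f\,(\log h_\alpha)' \d\meas_\alpha$ is linear in $f$ for $\q$-a.e.~$\alpha$; and the boundary assignment $f \mapsto [\RCR f \, h_\alpha]_{a_\alpha}^{b_\alpha}$ is linear since the passage to $l$-right-continuous representatives commutes with real-linear combinations at cocountably many points along each ray (using \cref{Le:Almost rightleft} and \cref{Re:Prop fin p}). Well-definedness of the outer integrals reduces, via the decomposition $f = \tau^{-1}(\u + \tau f) - \tau^{-1} \u$ into $l$-causal functions, to the case of $l$-causal $f$; then the integrability of the inner expressions is a consequence of the MCP-density regularity recalled in \cref{Re:Properties MCP} and the a priori bounds of \cref{Le:Bounds}.

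For the Radon-functional estimate, fix a compact set $W \subset \Tr$ and $f \in \Pert_\bc(\u,\Tr)$ with $\supp f \subset W$. The plan is to bound the two contributions to $T(f)$ separately by constants depending only on $W$. For the interior term, Fubini and the disintegration formula give
\begin{align*}
\Big|\int_Q \int_{\mms_\alpha} f\,(\log h_\alpha)' \d\meas_\alpha \d\q(\alpha)\Big| \le \Vert f \Vert_\infty \int_Q \int_{\mms_\alpha \cap W} \vert h_\alpha' \vert \d\Leb_\alpha^1 \d\q(\alpha).
\end{align*}
To bound the right-hand side, I cover $W$ by finitely many $\sim_\u$-convex, compactly $2\varepsilon$-extendible Borel sets $A_1,\dots,A_n$, whose existence follows from \cref{Re:Gener} together with compactness of $W$. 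On each enlarged compact set $\tilde A_i$ there is a constant lower bound $K_i$ of $k$ by lower semicontinuity, so that the real ray representatives $\bar h_\alpha$ are  $\MCP(K_i,N)$ densities on an open interval strictly containing the subinterval enclosing $\mms_\alpha \cap A_i$; the length of that subinterval is bounded above by the diameter of $\tilde A_i$ (in the ray parameter) and bounded below by the $2\varepsilon$-margin. \cref{Le:Bounds}(ii) then furnishes a uniform-in-$\alpha$ estimate
\begin{align*}
\int_{\mms_\alpha \cap A_i} \vert h_\alpha' \vert \d\Leb^1_\alpha \le C_i\, \meas_\alpha[\tilde A_i],
\end{align*}
which upon integrating against $\q$ produces a bound by $C_i\,\meas[\tilde A_i]$, finite by the Radon property of $\meas$.

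The boundary term is where the integrability hypothesis is used. Since $\vert \RCR f \vert \le \Vert f \Vert_\infty$ globally, it suffices to bound $\int_Q (h_\alpha(a_\alpha) + h_\alpha(b_\alpha)) \d\q(\alpha)$. Controlling $h_\alpha$ at the endpoints is the purpose of \cref{Le:Bounds}(i): using the continuous extension of $h_\alpha$ to $\cl\,\mms_\alpha$ from \cref{Re:Properties MCP} and a uniform curvature lower bound on the part of rays meeting $W$ (obtained by the same finite-cover argument as above, after restricting to the rays whose intersection with $W$ is nonempty, which is the only case contributing to the estimate via $\RCR f$), one obtains
\begin{align*}
h_\alpha(a_\alpha) + h_\alpha(b_\alpha) \le \frac{C}{l(a_\alpha,b_\alpha)}\,\meas_\alpha\big[\mms_\alpha \cap \tilde W\big]
\end{align*}
for a suitable compact $\tilde W$. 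The assumed finiteness of $\int_Q l(a_\alpha,b_\alpha)^{-1} \d\q(\alpha)$, combined with the $\q$-essential boundedness of $\meas_\alpha[\tilde W]$ guaranteed by local finiteness in \cref{Def:Strong disint}, delivers the desired bound. The main technical obstacle is precisely the simultaneous handling of the curvature uniformity (which requires compact extendibility, forcing the finite cover) and the integrable singularity of $1/l(a_\alpha,b_\alpha)$ at short rays; it is exactly to absorb this singularity that the hypothesis on $l(a_\alpha,b_\alpha)^{-1}$ is imposed in the statement.
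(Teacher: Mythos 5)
Your overall architecture --- splitting $T(f)$ into the interior and boundary contributions and controlling each via \cref{Le:Bounds} --- matches the paper's, but there is a genuine gap in your treatment of the interior term, which you claim to bound \emph{without} the integrability hypothesis. First, the finite cover of $W$ by $\sim_\u$-convex, compactly $2\varepsilon_i$-extendible sets need not exist: \cref{Re:Gener} only provides a \emph{countable} union of such \emph{Borel} (not open) sets, so compactness of $W$ yields no finite subcover; more fundamentally, a point lying on a ray of length $<4\varepsilon$ cannot belong to any compactly $2\varepsilon$-extendible set, while a compact subset of $\Tr$ may contain points on arbitrarily short rays (there is no positive lower bound on $\alpha\mapsto l(a_\alpha,b_\alpha)$ over $\Quot(W)$ in general). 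Second, the singularity is real and not an artifact: the first order integral bound of \cref{Le:Bounds} gives
\begin{align*}
\Big\vert\int_{\mms_\alpha} f\,(\log h_\alpha)'\d\meas_\alpha\Big\vert \leq \frac{\Vert f\Vert_\infty}{l(v_\alpha,w_\alpha)}\,C^{K,N}_L\,\q\textnormal{-}\!\esssup_{\alpha\in Q}\meas_\alpha[W],
\end{align*}
with $v_\alpha,w_\alpha$ the extremal points of $\cl(W\cap\mms_\alpha)$, and the factor $1/l(v_\alpha,w_\alpha)$ cannot be removed: e.g.~for the $\MCP(0,N)$ density $h(x)=(\delta-x)^{N-1}$ on $(0,\delta)$ one has $\int\vert h'\vert = N\delta^{-1}\int h$, so the total variation of $h_\alpha$ on a short ray is of order $l(a_\alpha,b_\alpha)^{-1}$ times its mass. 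The hypothesis $\int_Q l(a_\alpha,b_\alpha)^{-1}\d\q<\infty$ is therefore needed to integrate \emph{both} terms over $Q$; your sentence ``the boundary term is where the integrability hypothesis is used'' is precisely where the argument breaks.

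A secondary issue: your displayed estimate $h_\alpha(a_\alpha)+h_\alpha(b_\alpha)\leq C\,l(a_\alpha,b_\alpha)^{-1}\,\meas_\alpha[\mms_\alpha\cap\tilde W]$ does not follow from the zeroth order bound of \cref{Le:Bounds} as stated: applied on the whole ray it yields $\meas_\alpha[\mms_\alpha]$ on the right (possibly infinite), while applied on $\mms_\alpha\cap\tilde W$ it yields the prefactor $1/\mathrm{length}(\mms_\alpha\cap\tilde W)$, which may be much larger than $1/l(a_\alpha,b_\alpha)$. The paper resolves this by first replacing $[\RCR f\,h_\alpha]_{a_\alpha}^{b_\alpha}$ with $[\RCR f\,h_\alpha]_{v_\alpha}^{w_\alpha}$ (legitimate since $\supp f$ meets $\mms_\alpha$ only inside $[v_\alpha,w_\alpha]$), restricting to the rays with $l(v_\alpha,w_\alpha)\geq \varepsilon\vee l(a_\alpha,b_\alpha)\wedge L$, and passing to the limit $\varepsilon\to 0+$ by monotone convergence only at the very end; a truncation of this kind is needed to make both halves of your estimate rigorous.
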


\begin{proof} Let $W$ be a precompact open set containing $\supp f$, where $f\in\Pert_\bc(\u,\Tr)$. We may and will assume that $W$ is $l$-geodesically convex up to replacing it with its chronological emerald. Let $K$ be a negative  constant with $k \geq K$ on $\cl(\mms_\alpha\cap W)$ for every $\alpha\in\Quot(W)$. Let $L$ be the maximum of $\smash{l_+}$ on the compact set $\smash{(\cl\,W)^2}$. Lastly, given $\alpha\in \Quot(W)$ let $\smash{v_\alpha}$ and $\smash{w_\alpha}$ be the minimal and maximal elements of the set $\smash{\cl(W\cap \mms_\alpha)}$ with respect to $\smash{\preceq_{\u}}$, respectively. Given $\varepsilon > 0$,  let $\Quot(W)_\varepsilon$ denote the set of all $\alpha\in \Quot(W)$ such that
\begin{align*}
l(v_\alpha,w_\alpha) \geq \varepsilon \vee l(a_\alpha,b_\alpha) \wedge L.
\end{align*}

We first bound the first integrand.  \cref{Le:Bounds} and our choices of $K$ and $L$  imply there exists a constant $\smash{C_{L}^{K,N}}$ such that for $\q$-a.e.~$\alpha\in\Quot(W)_\varepsilon$,
\begin{align*}
\int_{\mms_\alpha} \vert h_\alpha'\vert\d\Leb_\alpha^1\mres W \leq \frac{1}{l(v_\alpha, w_\alpha)}\,C_L^{K,N}\,\q\textnormal{-}\!\esssup_{\alpha\in Q}\meas_\alpha[W].
\end{align*}
The right-hand side is finite by the disintegration  \cref{Th:Disintegration}. Since $\supp f$ lies in $W$,
\begin{align*}
\Big\vert\!\int_{\mms_\alpha} f\,(\log h_\alpha)'\d\meas_\alpha\Big\vert &\leq \int_{\mms_\alpha} \vert f\vert\,\vert h_\alpha'\vert\d\Leb_\alpha^1\mres W\\
&\leq \frac{\Vert f\Vert_\infty}{l(v_\alpha, w_\alpha)}\,C_L^{K,N}\,\q\textnormal{-}\!\esssup_{\alpha\in Q}\meas_\alpha[W].
\end{align*}

We turn to the second integrand in the definition of $T(f)$. For simplicity, we assume $a_\alpha$ and $b_\alpha$ exist for the given $\alpha\in\Quot(W)_\varepsilon$, the other cases are dealt with analogously. Then by \cref{Le:IBP II} and since $f$ has compact support in the interval $(v_\alpha,w_\alpha)$,
\begin{align}\label{Eq:bavw}
\big[\!\RCR f\,h_\alpha\big]_{a_\alpha}^{b_\alpha} = \big[\!\RCR f\,h_\alpha\big]_{v_\alpha}^{w_\alpha}.
\end{align}
By \cref{Le:Bounds} again, we obtain
\begin{align*}
\sup_{x\in \mms_\alpha\cap W} h_\alpha(x) \leq \frac{1}{l(v_\alpha,w_\alpha)}\,\Big[\!\int_0^1 \sigma_{K/(N-1)}^{(r)}(L)^{N-1}\d r\Big]^{-1}\,\q\textnormal{-}\!\esssup_{\alpha\in Q} \meas_\alpha[W],
\end{align*}
which directly implies
\begin{align*}
\big\vert\big[f\,h_\alpha\big]_{v_\alpha}^{w_\alpha}\big\vert \leq \frac{2\Vert f\Vert_\infty}{l(v_\alpha, w_\alpha)}\,\Big[\!\int_0^1 \sigma_{K/(N-1)}^{(r)}(L)^{N-1}\d r\Big]^{-1}\,\q\textnormal{-}\!\esssup_{\alpha\in Q}\meas_\alpha[W].
\end{align*}

Combining these two observations leads to
\begin{align*}
&\int_{\Quot(W)_\varepsilon} \Big\vert\!\int_{\mms_\alpha} f\,(\log h_\alpha)'\d\meas_\alpha\Big\vert \d\q(\alpha) + \int_{\Quot(W)_\varepsilon} \big\vert\big[\!\RCR f\,h_\alpha\big]_{a_\alpha}^{b_\alpha}\big\vert\d\q(\alpha)
\\
&\qquad\qquad \leq \Big[C_L^{K,N} + 2\Big[\!\int_0^1 \sigma_{K/(N-1)}^{(r)}(L)^{N-1}\d r\Big]^{-1}\Big]\int_Q \frac{1}{l(a_\alpha,b_\alpha)}\d\q(\alpha)\,\Vert f\Vert_\infty.
\end{align*}
Letting $\varepsilon \to 0+$, the statement follows by  Levi's monotone convergence theorem (using $\q$-a.e.~ray has positive length) and our integrability assumption.
\end{proof}

\begin{remark}[About linearity II, see also \cref{Re:About I}]\label{Re:About II} The map  from \cref{Le:Radon funct} is clearly linear with respect to linear combinations with negative numbers (recall also the footnote in   \cref{Re:Two sided}), unlike the set of finite perturbations of $\u$ on $\Tr$.\hfill{\footnotesize{$\blacksquare$}}
\end{remark}

\begin{theorem}[Distributional d'Alembertian]\label{Th:Meas val Alem I} Let $\scrM$ form a timelike essentially nonbranching $\smash{\TMCP^e(k,N)}$ metric measure spacetime.  Furthermore, let $\q$ be an $\MCP(k,N)$ disintegration relative to $\smash{\u}$ and $E$ satisfying
\begin{align*}
\int_Q \frac{1}{l(a_\alpha,b_\alpha)}\d\q(\alpha) < \infty.
\end{align*}
Let $U$ be an open subset of $\mms$ with $\meas[U\setminus \Tr] = 0$. Then $\u\in\Dom(\BOX, U)$.

More precisely, the functional $T_U$ on $\Pert_\bc(\u,U)$  given by
\begin{align*}
T_U(f) := \int_Q\int_{\mms_\alpha} f\,(\log h_\alpha)'\,\d\meas_\alpha \d\q(\alpha)  - \int_Q \big[\!\RCR f\,h_\alpha\big]_{a_\alpha}^{b_\alpha}\d\q(\alpha)
\end{align*}
defines a weak Radon functional belonging to $\BOX \u\mres U$.
\end{theorem}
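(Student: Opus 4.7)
The plan is to establish the two claims of the theorem in sequence: first that $T_U$ is a well-defined weak Radon functional on $\Pert_\bc(\u, U)$, and then that it lies in $\BOX\u\mres U$. Nonnegative linearity of $T_U$ is immediate from linearity of integration. For the local seminorm bound, since $\meas[U\setminus \Tr] = 0$ and the disintegration $\q$ is carried by $\Tr$, only values of $f$ along rays meeting $U$ enter the formula. The proof of \cref{Le:Radon funct} then adapts without essential change: local lower boundedness of $k$ on the chronological emerald of a compact $W \subset U$ containing $\supp f$, together with the a priori estimates of \cref{Le:Bounds}, bounds $\int_{\mms_\alpha} f\,(\log h_\alpha)'\d\meas_\alpha$ and the boundary term by constants times $\Vert f\Vert_\infty / l(v_\alpha, w_\alpha)$; the integrability assumption $\int_Q 1/l(a_\alpha, b_\alpha)\d\q(\alpha) < \infty$ then closes the $\q$-integration.

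For the defining inequality of \cref{Def:DAlem}, fix $\PP \in (-\infty, 1)\setminus\{0\}$ and $f \in \Pert_\bc(\u, U)$. \cref{Cor:Constant slope} gives $\vert\rmd\u\vert = 1$ $\meas\mres\Tr$-a.e., so by \cref{Re:Indep!} and convention \eqref{Eq:dnabla} the key estimate of \cref{Th:df vs f'} reads
$$\rmd^+ f(\nabla\u)\,\vert\rmd\u\vert^{\PP-2} \leq f' \quad \meas\mres\Tr\textnormal{-a.e.}$$
Integrating against $\meas$, using $\meas[U\setminus\Tr] = 0$, and applying the disintegration \cref{Th:Disintegration} reduces the goal to the ray-wise bound
$$\int_{\mms_\alpha} f'\d\meas_\alpha \leq -\int_{\mms_\alpha} f\,(\log h_\alpha)'\d\meas_\alpha + \big[\!\RCR f\,h_\alpha\big]_{a_\alpha}^{b_\alpha}$$
for $\q$-a.e.~$\alpha \in Q$. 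Since $\meas_\alpha = h_\alpha \Leb_\alpha^1$ with $h_\alpha$ positive and locally absolutely continuous on $\mms_\alpha$ and $(\log h_\alpha)' = h_\alpha'/h_\alpha$, this is exactly \cref{Le:IBP II} applied with $v = a_\alpha$ and $w = b_\alpha$, subject to the convention $h_\alpha(a_\alpha) := 0$ or $h_\alpha(b_\alpha) := 0$ whenever the corresponding endpoint does not exist.

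The principal subtlety is handling rays whose initial or final point is missing, since \cref{Le:IBP II} is stated only for genuine points in $\cl\,\mms_\alpha$. When $a_\alpha$ or $b_\alpha$ is absent, one applies the integration by parts on an exhausting sequence of compact subintervals $[v_n, w_n]$ of $\mms_\alpha$. Compactness of $\supp f$ in $U$, together with the fact that such endpoints lie in $e \subset \Tr^\End\setminus\Tr$, ensures $\RCR f$ vanishes near the corresponding far end of the ray for $n$ large, so the boundary contributions $\RCR f(v_n) h_\alpha(v_n)$ and $\RCR f(w_n) h_\alpha(w_n)$ eventually disappear -- consistent with the convention $h_\alpha(a_\alpha) := 0$ or $h_\alpha(b_\alpha) := 0$; the interior integrals converge by dominated convergence via \cref{Le:Bounds}. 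Integrating the resulting ray-wise inequality against $\q$ and invoking Fubini (justified by the integrability bounds established in the first paragraph) then yields
$$\int_\mms \rmd^+ f(\nabla\u)\,\vert\rmd\u\vert^{\PP-2}\d\meas \leq -T_U(f).$$
Since this holds for every admissible $\PP$, we conclude $T_U \in \BOX\u\mres U$ and hence $\u \in \Dom(\BOX\mres U)$.
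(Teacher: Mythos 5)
Your proposal is correct and follows essentially the same route as the paper: the weak Radon functional property is obtained by the argument of \cref{Le:Radon funct}, and the defining inequality of \cref{Def:DAlem} is derived ray by ray by combining \cref{Th:df vs f'} with the one-dimensional integration by parts of \cref{Le:IBP II}, then integrating against $\q$; the exponent independence via \cref{Cor:Constant slope} is also how the paper concludes membership in $\BOX\u\mres U$ for all $\PP$ at once. Your extra care with rays missing an endpoint matches the paper's convention in \cref{Re:Unbounded I} (compact support of $f$ kills the corresponding boundary term), so there is no gap.
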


\begin{proof} \cref{Le:Radon funct} above shows $T_U$ defines a weak Radon functional.

Let $f\in\Pert_\bc(\u,U)$. For $\q$-a.e.~$\alpha\in Q$, combining \cref{Th:df vs f'} and \cref{Le:IBP II} while recalling the definition \eqref{Eq:Borel measure} of $\smash{\Diff_\alpha\RCR f}$,
\begin{align*}
\int_{\mms_\alpha} \rmd^+f(\nabla\u)\d\meas_\alpha &\leq \int_{\mms_\alpha} f'\,h_\alpha\d\Leb_\alpha^1\\
&\leq \int_{(a_\alpha,b_\alpha]} h_\alpha\d\Diff_\alpha\LCR f\\
&= -\int_{\mms_\alpha} f\,(\log h_\alpha)'\d\meas_\alpha + \big[\!\RCR f\,h_\alpha\big]_{a_\alpha}^{b_\alpha}.
\end{align*}
By \cref{Cor:Constant slope} and the comment after \eqref{Eq:dnabla}, this estimation is notably independent of the choice of the implicit exponent $\PP\in (-\infty,1)\setminus\{0\}$.  Integration with respect to $\q$ yields the two claims simultaneously.
\end{proof}

\begin{remark}[Unbounded rays I, see also \cref{Re:Unbounded II}]\label{Re:Unbounded I} Since $f$ has compact support, if for $\alpha\in Q$ the initial point $a_\alpha$ does not exist, the last integrand in the definition of $T_U$ is naturally interpreted as $\smash{[\RCR f\,h_\alpha]^{b_\alpha}}$; analogously as $\smash{[\RCR f\,h_\alpha]_{a_\alpha}}$ if the final point $b_\alpha$ does not exist. In particular, if both endpoints do not exist, this term simply vanishes.\hfill{\footnotesize{$\blacksquare$}}
\end{remark}

The functional from \cref{Th:Meas val Alem I} can be decomposed as
\begin{align*}
T_U =T_U^\ll + T_U^\perp,
\end{align*}
where the two maps $\smash{T_U^\ll}$ and $\smash{T_U^\perp}$ on $\Pert_\bc(\u,U)$ are defined by
\begin{align*}
T_U^\ll(f) &:=  \int_Q\int_{\mms_\alpha} f\,(\log h_\alpha)'\d\meas_\alpha\d\q(\alpha),\\
T_U^\perp(f) &:= -\int_Q \big[\!\RCR f\,h_\alpha\big]_{a_\alpha}^{b_\alpha}\d\q(\alpha).
\end{align*}
These are  weak Radon functionals by the proof of \cref{Le:Radon funct}. The functional $\smash{T_U^\perp}$ is called the  \emph{singular part} of $T_U$. Note $\smash{T_U^\perp}$ vanishes  identically  if $\q$-a.e.~ray is order isometric to $\R$ by \cref{Re:Unbounded I}. Analogously, we call $\smash{T_U^\ll}$ the \emph{absolutely continuous part} of $T_U$. Somewhat suggestively,  on $\mms_\alpha$ with $\alpha\in Q$ we also define
\begin{align}\label{Eq:Tll}
\Box\u := (\log h_\alpha)'.
\end{align}
\cref{Le:logarithmic derivative} and the equivalence of $\smash{\Leb_\alpha^1}$ and $\meas_\alpha$ on $\mms_\alpha$  (since $h_\alpha$ is positive thereon) for $\q$-a.e.~$\alpha\in Q$ then transfers to comparison bounds for the density $\smash{\Box\u}$ in \cref{Cor:Sharp comparison}. Since the curvature variable in \cref{Le:logarithmic derivative} depends on the reference interval, the corresponding estimates become more complicated. To serve readers only interested in the constant case, we henceforth intertwine our comparison results with their simpler vanishing $k$ (SEC) versions. For constant yet nonvanishing  $k$, recall \cref{Re:Logarithmic derivative}.

\begin{corollary}[Sharp d'Alembert comparison I, see also \cref{Cor:11,Cor:1515}]\label{Cor:Sharp comparison} In the framework of \cref{Th:Meas val Alem I}, the following hold for $\q$-a.e.~$\alpha\in Q$. 
\begin{enumerate}[label=\textnormal{(\roman*)}]
\item \textnormal{\textbf{Variable version.}} Let $v,w\in \cl\,\mms_\alpha$ be distinct points such that $\smash{v\preceq_{\u} w}$  \textnormal{[sic]}. Then $\smash{\meas_\alpha}$-a.e.~$x\in [v,w]$ satisfies
\begin{align*}
&- (N-1) \,\frac{\COS_{(k_\alpha)_{x,w}^-/(N-1)}\circ\,l(x,w)}{\SIN_{(k_\alpha)_{x,w}^-/(N-1)}\circ\,l(x,w)}\\
&\qquad\qquad  \leq \Box\u(x) \leq (N-1)\,\frac{\COS_{(k_\alpha)^+_{v,x}/(N-1)}\circ\, l(v,x)}{\SIN_{(k_\alpha)^+_{v,x}/(N-1)}\circ\, l(v,x)}.
\end{align*}
\item \textnormal{\textbf{SEC version.}} If $k$ vanishes identically,
\begin{align*}
-(N-1)\,\frac{1}{l(\cdot,b_\alpha)} \leq \Box\u \leq (N-1)\,\frac{1}{l(a_\alpha,\cdot)}\quad\meas_\alpha\textnormal{-a.e.}
\end{align*}
with appropriate interpretation if either endpoints do not exist.
\end{enumerate}
\end{corollary}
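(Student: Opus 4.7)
The whole statement is a direct transfer of Lemma \ref{Le:logarithmic derivative} from the real line to a $\q$-a.e.~ray via the order isometry of \cref{Pr:Ray}, combined with the fact that the $\meas$-absolutely continuous density $\Box\u$ from \eqref{Eq:Tll} agrees, under the ray identification, with the logarithmic derivative of the conditional density.

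First, I would fix $\alpha\in Q$ in the full $\q$-measure subset on which the conclusion of the disintegration \cref{Th:Disintegration} holds: the conditional measure $\meas_\alpha$ is $\smash{\Leb_\alpha^1}$-absolutely continuous with continuous density $h_\alpha$ positive on $\mms_\alpha$, and its real ray representative $\bar{h}_\alpha$ is an $\MCP(\bar{k}_\alpha,N)$ density on $\bar{\mms}_\alpha$. Given distinct $v,w\in\cl\,\mms_\alpha$ with $v\preceq_\u w$, their real representatives obey $\bar{v}<\bar{w}$, and the order isometry from \cref{Pr:Ray} yields $\bar{w}-\bar{v}=l(v,w)$; more generally for $x\in [v,w]$ one has $\bar{x}-\bar{v}=l(v,x)$ and $\bar{w}-\bar{x}=l(x,w)$. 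In particular, $\bar{h}_\alpha$ restricted to the bounded open interval $(\bar{v},\bar{w})$ is still an $\MCP(\bar{k}_\alpha,N)$ density, with the forward and backward potentials from \cref{Sub:MCP densities} satisfying
\begin{align*}
(\bar{k}_\alpha)^\pm_{\bar{v},\bar{w}}(t\,l(v,w))=(k_\alpha)^\pm_{v,w}(t\,l(v,w))
\end{align*}
by the very definitions and the identity $\bar{k}_\alpha=k\circ\sfg_\cdot(\alpha)$.

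Next, I would apply the second conclusion of \cref{Le:logarithmic derivative} on the interval $[\bar{v},\bar{w}]$. It delivers, at $\smash{\Leb^1\mres(\bar{v},\bar{w})}$-a.e.~point $\bar{x}$,
\begin{align*}
-(N-1)\,\frac{\COS_{(\bar{k}_\alpha)^-_{\bar{x},\bar{w}}/(N-1)}(\bar{w}-\bar{x})}{\SIN_{(\bar{k}_\alpha)^-_{\bar{x},\bar{w}}/(N-1)}(\bar{w}-\bar{x})} &\leq (\log \bar{h}_\alpha)'(\bar{x})\\
&\leq (N-1)\,\frac{\COS_{(\bar{k}_\alpha)^+_{\bar{v},\bar{x}}/(N-1)}(\bar{x}-\bar{v})}{\SIN_{(\bar{k}_\alpha)^+_{\bar{v},\bar{x}}/(N-1)}(\bar{x}-\bar{v})}.
\end{align*}
Pulling this bound forward by $\sfg_\cdot(\alpha)$ while using \cref{Re:Compat} and the isometry identities of the previous paragraph, one obtains the stated variable bound $\meas_\alpha$-a.e.~on $[v,w]$, since $\meas_\alpha\ll\Leb_\alpha^1$ with positive density. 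Integration over $\q$ using the disintegration \cref{Th:Disintegration} turns this into an $\meas$-a.e.~statement on $\Tr$.

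For the SEC version I would specialize to $k\equiv 0$, so that $\bar{k}_\alpha\equiv 0$ as well, choose $v:=a_\alpha$ and $w:=b_\alpha$ when these endpoints exist, and invoke the vanishing-$\kappa$ case of \cref{Re:Logarithmic derivative} to reduce the generalized $\SIN/\COS$-ratios to $1/(b-x)$ and $1/(x-a)$; via the ray isometry these become $1/l(x,b_\alpha)$ and $1/l(a_\alpha,x)$. If an endpoint fails to exist, the corresponding side of $\bar{\mms}_\alpha$ is unbounded and the relevant ratio is sent to zero by taking monotone limits along a sequence of bounded subintervals (using monotonicity of the MCP density bound in \cref{Le:logarithmic derivative}), which yields the  ``appropriate interpretation'' in the statement, namely that the corresponding one-sided bound disappears. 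There is no genuine obstacle here: all the work has already been done in \cref{Le:logarithmic derivative} and \cref{Pr:Ray}; the only point requiring care is ensuring the interval parameters match under the order isometry, which is handled by the identity $\bar{k}_\alpha=k\circ\sfg_\cdot(\alpha)$ and the ray-map pushforward in \cref{Re:Compat}.
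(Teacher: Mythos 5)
Your proposal is correct and is exactly the argument the paper intends: the corollary is obtained by applying \cref{Le:logarithmic derivative} (resp.\ its constant-curvature specialization in \cref{Re:Logarithmic derivative}) to the $\MCP(\bar k_\alpha,N)$ density $\bar h_\alpha$ on the interval corresponding to $[v,w]$ under the order isometry of \cref{Pr:Ray}, then using the identification $\Box\u=(\log h_\alpha)'$ from \eqref{Eq:Tll} together with \cref{Re:Compat} and the equivalence of $\Leb_\alpha^1$ and $\meas_\alpha$ (positivity of $h_\alpha$) to convert the $\Leb^1$-a.e.\ bound into an $\meas_\alpha$-a.e.\ one. The final integration over $\q$ is superfluous (the statement is already formulated ray by ray) but harmless, and your treatment of nonexistent endpoints by monotone limits matches \cref{Re:Unbounded II}.
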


This first statement notably includes the endpoints once they exist.

\begin{remark}[Unbounded rays II, see also \cref{Re:Unbounded I}]\label{Re:Unbounded II} If in \cref{Cor:Sharp comparison}, $k$ is uniformly bounded from below on $\mms_\alpha$ by a negative real constant $K$ and $\mms_\alpha$ has infinite $\smash{l}$-length, by \cref{Re:Logarithmic derivative} and taking limits the respective critical bounds extend as follows.
\begin{itemize}
\item If $a_\alpha$ does not exist, then
\begin{align*} 
\Box\u \leq \sqrt{-K(N-1)}\quad\meas_\alpha\textnormal{-a.e.}
\end{align*}
\item If $b_\alpha$ does not exist, then
\begin{align*}
 -\sqrt{-K(N-1)} \leq \Box\u\quad\meas_\alpha\textnormal{-a.e.}
\end{align*}
\end{itemize}

An analogous interpretation is adopted for vanishing $K$.

Lastly, if $K$ is positive, both endpoints always exist by the Bonnet--Myers theorem, cf. e.g. Cavalletti--Mondino \cite{cavalletti-mondino2020}*{Prop.~3.6, Rem.~3.9}.\hfill{\footnotesize{$\blacksquare$}}
\end{remark}

\subsection{A formula for the d'Alembertian of a Lorentz  distance function}\label{Sec:Formula2} Now we specialize to $\u$ being the signed Lorentz distance function $l_\Sigma$ from an achronal TC  Borel subset $\Sigma$ of $\mms$, cf.~\cref{Ex:timedistfunct} and \cref{Def:footpts}. Accordingly, $E$ is replaced by the set $E_\Sigma$ from \eqref{Eq:ESigmaUSigma} in the beginning of \cref{Ch:Repr form}. The simpler case of $\Sigma$ being a singleton is treated separately in \cref{Sub:Point}.

We only formulate the subsequent results in terms of the open subset $\smash{I^+(\Sigma)\cup I^-(\Sigma)}$ of $\smash{E_\Sigma}$. By \cref{Re:Props alemb}, analogous statements hold for every open subset thereof. It is easy to see that $\meas[(I^+(\Sigma)\cup I^-(\Sigma)) \setminus \Tr]=0$, as often required in the preceding discussion.

Unlike \cref{Th:Meas val Alem I,Cor:Sharp comparison}, no integrability assumption on the inverse length of the rays is needed in this case. The rough reason for this is that  replacing $l_\Sigma$ by a suitable power of it plays in favor of ``uniform comparison bounds'' akin to  \cref{Le:logarithmic derivative}, as inferred in \cref{Th:11}. Applying the chain rule from \cref{Pr:Chain rule} propagates this to the originally sought d'Alembertian of $l_\Sigma$, cf.~\cref{Cor:22}.

\subsubsection{General reference sets $\Sigma$}\label{Gen Sigm}

\begin{remark}[Sign of $\smash{l_\Sigma}$ at endpoints]\label{Re:Neg!} Since the ray map $\sfg$ is oriented with respect to $l$, for every $\alpha\in Q$ such that the initial point $a_\alpha$ exists we have $\smash{l_\Sigma(a_\alpha) \leq 0}$. 

Analogously, for every $\alpha\in Q$ we have $\smash{l_\Sigma(b_\alpha) \geq 0}$ if the final point $b_\alpha$ exists.\hfill{\footnotesize{$\blacksquare$}}
\end{remark}

Let $\PP$ and $\QQ$ be conjugate numbers less than $1$. Similarly to  \cref{Th:df vs f'}, it is convenient to suppose $\QQ>0$ (hence $\PP<0$). In the chosen range of $\QQ$, the function $\varphi_\QQ$ defined by
\begin{align}\label{Eq:varphip}
\varphi_\QQ(r) := \QQ^{-1}\sgn r\,\vert r\vert^\QQ
\end{align}
is strictly increasing on all of $\R$ and smooth outside zero. Moreover, away from zero it is Lipschitz continuous with Lipschitz continuous inverse. This allows for a ``global'' treatment of the $\QQ$-th power of $l_\Sigma$ simultaneously defined on the chronological future and past of $\Sigma$. Conceptually, this is a choice rather than a technical necessity for what follows; one could equivalently drop the above sign  restrictions on $\PP$ and $\QQ$ and treat the $\PP$-d'Alembertians appearing below separately on $\smash{I^+(\Sigma)}$ and $\smash{I^-(\Sigma)}$, cf.~\cref{Re:Neg!,Re:Ext pos q}. 

Let us consider the $l$-causal function $\smash{\u_\QQ}$ on $\smash{E_\Sigma}$ defined by
\begin{align*}
\u_\QQ := \varphi_\QQ\circ l_\Sigma.
\end{align*}

The following version of \cref{Pr:Relations} is proven in \cref{App:A}.

\begin{lemma}[Equality of classes of finite perturbations]\label{Le:Invar p}  The sets $\Pert_\bc(l_\Sigma,I^+(\Sigma)\cup I^-(\Sigma))$ and $\Pert_\bc(\u_\QQ,I^+(\Sigma)\cup I^-(\Sigma))$ coincide.
\end{lemma}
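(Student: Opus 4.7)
The proof should split into two inclusions. The underlying observation is that since $\supp f$ is a compact subset of the open set $U=I^+(\Sigma)\cup I^-(\Sigma)$, continuity of $l_\Sigma$ on $U$ furnishes constants $0<c<C$ with $|l_\Sigma|\in[c,C]$ on $\supp f$. Hence the singular behaviors of $\varphi_\QQ$ near $0$ and of $\varphi_\QQ^{-1}$ near $\infty$ are avoided on the relevant part of the domain, opening the door to a local bi-Lipschitz argument. Note that \cref{Pr:Relations} does not apply directly: $\varphi_\QQ$ is neither globally Lipschitz (its derivative blows up near $0$) nor has a globally Lipschitz inverse (the derivative of $\varphi_\QQ$ degenerates at $\infty$).

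For $\Pert_\bc(l_\Sigma,U)\subseteq\Pert_\bc(\u_\QQ,U)$, I would assume $l_\Sigma+\tau f$ is $l$-causal on $U$ and fix $x\leq y$ in $U$, setting $a:=l_\Sigma(x)$ and $b:=l_\Sigma(y)$; the target is $\varphi_\QQ(b)-\varphi_\QQ(a)\geq\sigma(f(x)-f(y))$ for a uniform $\sigma>0$. If $f(x)\leq f(y)$, this is immediate from monotonicity of $\varphi_\QQ$. Otherwise, setting $w:=\tau(f(x)-f(y))>0$, $l$-causality of $l_\Sigma+\tau f$ yields $b\geq a+w$, and monotonicity of $\varphi_\QQ$ reduces matters to lower-bounding $\varphi_\QQ(a+w)-\varphi_\QQ(a)$. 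Since at least one of $x,y$ belongs to $\supp f$ (otherwise both $f$-values vanish), one of $|a|,|b|$ lies in $[c,C]$; together with the a priori bound $0<w\leq 2\tau\Vert f\Vert_\infty$, this confines both $a$ and $a+w$ to a bounded subinterval of $\R$ maintaining a fixed distance from $0$ on at least one side. On such an interval the average slope of $\varphi_\QQ$ admits a uniform positive lower bound $\kappa=\kappa(c,C,\tau,\Vert f\Vert_\infty,\QQ)$, yielding $\sigma:=\kappa\tau$ (with a separate elementary check when $[a,a+w]$ crosses $0$: then $\varphi_\QQ(b)-\varphi_\QQ(a)\geq\QQ^{-1}c^\QQ$ via the anchoring endpoint).

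The reverse inclusion reduces, by a symmetric analysis, to establishing the ratio estimate
\[ \inf_{\substack{a<b\\ \max(|a|,|b|)\geq c}}\frac{b-a}{\varphi_\QQ(b)-\varphi_\QQ(a)}\;>\;0.\]
For same-signed $a,b$ the mean value theorem and monotonicity of $r\mapsto|r|^{\QQ-1}$ handle this via the anchored endpoint bounded away from $0$. The delicate case is $a<0<b$, where the identity $\varphi_\QQ(b)-\varphi_\QQ(a)=\QQ^{-1}(b^\QQ+|a|^\QQ)$ combined with $b-a=b+|a|$ lets one parametrize the ratio as $\QQ\max(b,|a|)^{1-\QQ}(1+u)/(1+u^\QQ)$ with $u=\min(b,|a|)/\max(b,|a|)\in[0,1]$; a one-variable calculus argument then confirms that $(1+u)/(1+u^\QQ)$ has a strictly positive infimum on $[0,1]$, attained at some interior point depending only on $\QQ$.

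The hardest part will be uniformity of $\sigma$. A priori, pairs $(a,b)$ with one endpoint near $0$ (as when $x$ or $y$ approaches $\Sigma$) or with $b$ very large (as when $y$ is far from $\supp f$) threaten to degenerate the relevant derivatives of $\varphi_\QQ$ or $\varphi_\QQ^{-1}$. These configurations are neutralized by the anchoring of at least one endpoint in $[-C,-c]\cup[c,C]$, combined with the observation that the worst-case perturbation constraint is realized at the smallest admissible $b-a$, which $1$-steepness and $\Vert f\Vert_\infty$ control uniformly.
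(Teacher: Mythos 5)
Your route is genuinely different from the paper's: the paper first replaces $\varphi_\QQ$ by a smooth modification $\varphi$ agreeing with $\varphi_\QQ$ outside $[-\varepsilon,\varepsilon]$ and having Lipschitz inverse, invokes \cref{Pr:Relations} to pass to $\varphi\circ l_\Sigma$, and then undoes the modification by a four-case analysis that inserts an intermediate point on a causal curve from $x$ to $y$; you instead run a direct two-point slope estimate. Your reverse inclusion is correct as stated: the ratio bound $\inf\,(b-a)/(\varphi_\QQ(b)-\varphi_\QQ(a))>0$ over pairs with $\max(\vert a\vert,\vert b\vert)\geq c$ does hold (for same-signed endpoints write $\min=u\cdot\max$ and use $1-u^\QQ\leq 1-u$; the mixed-sign case is as you describe), and the anchoring hypothesis is always available because at least one of $x,y$ must lie in $\supp f$ for the inequality to be nontrivial.

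The forward inclusion, however, has a genuine gap in one subcase. You reduce to lower-bounding $\varphi_\QQ(a+w)-\varphi_\QQ(a)$ and assert that the anchoring of one of $\vert a\vert,\vert b\vert$ in $[c,C]$, together with $0<w\leq 2\tau\Vert f\Vert_\infty$, ``confines both $a$ and $a+w$ to a bounded subinterval.'' This fails when the anchored point is $y$: take $y\in\supp f$ (so $\vert b\vert\in[c,C]$) and $x\leq y$ deep in the chronological past of $\Sigma$, so that $a=l_\Sigma(x)$ is arbitrarily negative. Then $[a,a+w]$ escapes to $-\infty$, where $\varphi_\QQ'(r)=\vert r\vert^{\QQ-1}\to 0$, and $\varphi_\QQ(a+w)-\varphi_\QQ(a)\approx \vert a\vert^{\QQ-1}\,w$ admits no lower bound of the form $\kappa w$. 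Note also that a uniform two-point slope bound cannot rescue this direction (with $\vert a\vert\in[c,C]$ and $b\to\infty$ the quotient $(\varphi_\QQ(b)-\varphi_\QQ(a))/(b-a)$ tends to $0$), so the smallness of $w$ must genuinely be used. The repair is to anchor the increment at whichever endpoint lies in $\supp f$: when $y\in\supp f$, use $\varphi_\QQ(a)\leq\varphi_\QQ(b-w)$ (from $b\geq a+w$) to get $\varphi_\QQ(b)-\varphi_\QQ(a)\geq\varphi_\QQ(b)-\varphi_\QQ(b-w)$, and the interval $[b-w,b]$ lies in $[-C-2\tau\Vert f\Vert_\infty,\,C]$, on which $\varphi_\QQ'\geq (C+2\tau\Vert f\Vert_\infty)^{\QQ-1}$; the blow-up of $\varphi_\QQ'$ at the origin only helps a lower bound, so your worry about the interval crossing $0$ is unnecessary. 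With this one-line correction your argument closes and is, if anything, more self-contained than the paper's.
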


\begin{lemma}[Induced weak Radon functional II, see also \cref{Le:Radon funct}]\label{Le:Radon funct II} The map $T^\QQ$ on $\smash{\Pert_\bc(\u_\QQ,\Tr)}$ defined by
\begin{align*}
T^\QQ(f) &:= \int_Q\int_{\mms_\alpha} f\sgn l_\Sigma\,\big[1+l_\Sigma\,(\log h_\alpha)' \big]\d\meas_\alpha\d\q(\alpha) \\
&\qquad\qquad - \int_Q \big[\!\RCR f\,\vert l_\Sigma\vert\,h_\alpha\big]_{a_\alpha}^{b_\alpha}\d\q(\alpha)
\end{align*}
defines a weak Radon functional.
\end{lemma}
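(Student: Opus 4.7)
The plan is to mirror the proof of \cref{Le:Radon funct}, this time exploiting the vanishing of $l_\Sigma$ on $\Sigma$ and the boundedness of $|l_\Sigma|$ on compacta to eliminate the need for the integrability assumption on $1/l(a_\alpha, b_\alpha)$. Fix $f \in \Pert_\bc(\u_\QQ, \Tr)$ with support in a compact set $W \subset I^+(\Sigma) \cup I^-(\Sigma)$, enlarged to its chronological emerald so as to be $l$-geodesically convex. Continuity of $l_+$, lower semicontinuity of $k$, and the boundedness of $l_\Sigma$ on compacta yield finite constants $K, L, L_0$ with $k \geq K$ on $W$, $l_+ \leq L$ on $W^2$, and $|l_\Sigma| \leq L_0$ on $W$.

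To control the first summand in $T^\QQ(f)$, I split
\begin{align*}
f \sgn l_\Sigma \,[1 + l_\Sigma (\log h_\alpha)']\, h_\alpha = f \sgn l_\Sigma \, h_\alpha + f \sgn l_\Sigma \cdot l_\Sigma \, h_\alpha'
\end{align*}
along each ray. The first piece integrates to at most $\|f\|_\infty \meas_\alpha(W)$, while the second is bounded in modulus by $L_0 \|f\|_\infty \int_{\mms_\alpha \cap W} |h_\alpha'|\, \d\Leb^1_\alpha$, which by \cref{Le:Bounds}(ii) applied to $\bar h_\alpha$ on a suitable enlargement of $\mms_\alpha \cap W$ is dominated by $C_1 \meas_\alpha(W')$ for a precompact $W' \supset W$ and a constant $C_1 = C_1(K, N, L)$. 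Both $\q$-integrate to finite quantities by the local finiteness clause of the disintegration \cref{Th:Disintegration}.

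For the boundary term, the crux is a uniform upper bound on $|l_\Sigma(b_\alpha)|$ on the portion of $Q$ where the contribution is nonzero. Since $|\RCR f| \leq \|f\|_\infty$ and $\RCR f$ vanishes outside $J^-(\supp f) \subset J^-(W)$, only those $\alpha$ with $b_\alpha \in J^-(W)$ contribute. For such $\alpha$, pick $w \in W$ with $b_\alpha \leq w$ and let $\alpha_\Sigma \in \Sigma \cap \mms_\alpha$ denote the unique zero-crossing; the reverse triangle inequality then yields
\begin{align*}
l_\Sigma(b_\alpha) = l(\alpha_\Sigma, b_\alpha) \leq l(\alpha_\Sigma, w) \leq l_\Sigma(w) \leq L_0.
\end{align*}
Consequently $\bar h_\alpha$ is an $\MCP(K, N)$ density on the segment $[\alpha_\Sigma, b_\alpha]$ of $l$-diameter at most $L_0$, and \cref{Le:Bounds}(i) supplies a constant $C_2 = C_2(K, N, L_0)$ with $|l_\Sigma(b_\alpha)| \, h_\alpha(b_\alpha) \leq C_2 \meas_\alpha([\alpha_\Sigma, b_\alpha])$. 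The analogous estimate at $a_\alpha$ is symmetric, and the $\q$-integral of the right-hand sides is bounded by the $\meas$-mass of a precompact enlargement of $W$.

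Nonnegative linearity of $T^\QQ$ follows from the linearity of $f \mapsto \RCR f$ on $\Pert_\bc(\u_\QQ, \Tr)$, which via the decomposition \eqref{Eq:Decomp prt} reduces to its established linearity on pairs of $l$-causal functions. The main obstacle, distinguishing this lemma from \cref{Le:Radon funct}, is securing the $\alpha$-uniform bound on $|l_\Sigma(b_\alpha)|$ in the boundary estimate without any a priori geometric hypothesis on the rays; the reverse-triangle argument above, which is the sole place exploiting that $l_\Sigma$ is realized as a supremum over $\Sigma$, provides precisely this bound, and its scale $L_0$ is inherited from the chosen compact $W$.
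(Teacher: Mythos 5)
Your treatment of the boundary term is correct and is essentially the paper's argument: the identity $l_\Sigma(b_\alpha)=l(\alpha_\Sigma,b_\alpha)$ (valid by the order isometry of \cref{Pr:Ray} together with $l_\Sigma(\alpha_\Sigma)=0$, cf.~\cref{Re:Neg!}) makes the prefactor $\vert l_\Sigma(b_\alpha)\vert$ cancel exactly against the factor $1/(b-a)$ in \cref{Le:Bounds}(i), and monotonicity of the distortion coefficients then gives a constant depending only on $K$, $N$, and an upper bound for the segment length. This is precisely the mechanism that removes the integrability hypothesis of \cref{Le:Radon funct}.

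The gap is in your estimate of the first summand. You bound $\vert l_\Sigma\vert\le L_0$ uniformly and then claim that \cref{Le:Bounds}(ii), applied on ``a suitable enlargement'' of $\mms_\alpha\cap W$, yields $\int_{\mms_\alpha\cap W}\vert h_\alpha'\vert\d\Leb_\alpha^1\le C_1\,\meas_\alpha(W')$ with $C_1=C_1(K,N,L)$. But the bound in \cref{Le:Bounds}(ii) carries the factor $(b-a)^{-1}\,C_{b-a}^{K,N}$, which blows up as the interval shrinks; an $\alpha$-uniform constant therefore requires a \emph{positive lower bound} on the lengths of the enlarged segments, uniform over the rays meeting $W$. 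This is exactly the difficulty the lemma is designed to circumvent, and you neither state nor justify such a bound, so the asserted dependence $C_1(K,N,L)$ is unsubstantiated as written. Two repairs are available. (a) Keep the pointwise bound $\vert l_\Sigma(x)\vert\le l(\alpha_\Sigma,w_\alpha)$ along each ray (rather than the crude $L_0$): taking the enlargement to be the full segment $[\alpha_\Sigma,w_\alpha]$ from the $\Sigma$-crossing to the top of $W\cap\mms_\alpha$, the factor $l(\alpha_\Sigma,w_\alpha)$ cancels the $1/(b-a)$ exactly, which is the paper's route. (b) Observe that every ray meeting $W\cap I^+(\Sigma)$ contains a segment from $\Sigma$ of length $l_\Sigma(w_\alpha)\ge\varepsilon_+:=\min_{W\cap I^+(\Sigma)}l_\Sigma>0$ (using continuity of $l_\Sigma$ on $I^+(\Sigma)$ under the FTC hypothesis and compactness of $W\cap I^+(\Sigma)$), and likewise on the past side; then $C_1$ is finite but depends additionally on $\varepsilon_\pm$, which is harmless since the weak Radon constant may depend on $W$. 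Either fix is short, and (a) is just the cancellation you already exploit for the boundary term, so the proof is salvageable; but as submitted the key uniform estimate is asserted rather than proved.

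Two minor points: the $\Sigma$-crossing lies in $\Sigma\cap\cl\,\mms_\alpha$ rather than $\Sigma\cap\mms_\alpha$ (it may coincide with the endpoint $a_\alpha$, which is excluded from $\Tr$); and the restriction to $b_\alpha\in J^-(W)$ is a valid but non-sharp localization — along each ray $\RCR f$ is the right-continuous representative of the compactly supported BV function $\bar f_\alpha$, so it already vanishes outside $\cl(W\cap\mms_\alpha)$, whence $\vert l_\Sigma(b_\alpha)\vert\le L_0$ directly whenever the contribution is nonzero.
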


\begin{proof} Without restriction, we assume $\supp f$ is contained in $\smash{I^+(\Sigma)}$. Let $W$ be as in the proof of \cref{Le:Radon funct}, but now we replace it with the closure of the causal emerald of the closure of $\smash{J^-(\cl\,W)}$ and $\Sigma$ as well as $\cl\,W$. Since $\Sigma$ is TC, this set is still compact. Moreover, now the point $v_\alpha$ corresponds to the unique intersection of $\cl\,\mms_\alpha$ with $\Sigma$, where $\alpha\in Q$. In particular, we have $\LCR f(v_\alpha) = 0$ and $l_\Sigma(v_\alpha) = 0$. 

As in the mentioned proof of  \cref{Le:Radon funct},
\begin{align*}
\big\vert\big[\!\LCR f\,h_\alpha\big]^{w_\alpha}\big\vert \leq \frac{\Vert f\Vert_\infty}{l(v_\alpha,w_\alpha)}\,\Big[\!\int_0^1 \sigma_{K/(N-1)}^{(r)}(L)^{N-1}\d r\Big]^{-1}\,\q\textnormal{-}\!\esssup_{\alpha\in Q}\meas_\alpha[W].
\end{align*}
On the other hand, by \cref{Re:Neg!} we obtain
\begin{align*}
l_\Sigma(w_\alpha) = l(v_\alpha, w_\alpha),
\end{align*}
and consequently
\begin{align*}
\big\vert\big[\!\LCR f\,\vert l_\Sigma\vert\,h_\alpha\big]^{w_\alpha}\big\vert \leq \Vert f\Vert_\infty \,\Big[\!\int_0^1 \sigma_{K/(N-1)}^{(r)}(L)^{N-1}\d r\Big]^{-1}\,\q\textnormal{-}\!\esssup_{\alpha\in Q}\meas_\alpha[W].
\end{align*}
The previous observations and using \eqref{Eq:bavw} lead to
\begin{align*}
\big\vert\big[\!\LCR f\,\vert l_\Sigma\vert\,h_\alpha\big]^{b_\alpha}_{a_\alpha}\big\vert \leq 2\Vert f\Vert_\infty \,\Big[\!\int_0^1 \sigma_{K/(N-1)}^{(r)}(L)^{N-1}\d r\Big]^{-1}\,\q\textnormal{-}\!\esssup_{\alpha\in Q}\meas_\alpha[W].
\end{align*}
Since $\q$ is a Borel probability measure, this establishes the desired upper bound for the last summand  in the definition of $\smash{T^\QQ}$.

In an analogous manner, for $\alpha\in Q$ we use the prefactor $l_\Sigma$ in front of the logarithmic derivative of $h_\alpha$ in the first summand to estimate
\begin{align*}
&\Big\vert\!\int_{\mms_\alpha} f\sgn l_\Sigma\,\big[1+ l_\Sigma\,(\log h_\alpha)'\big]\d\meas_\alpha\Big\vert  \leq \Vert f\Vert_\infty\,\big[1+ C_L^{K,N}\big]\,\q\textnormal{-}\!\esssup_{\alpha\in Q}\meas_\alpha[W].
\end{align*}
Again we use the normalization of $\q$ to terminate the proof.
\end{proof}

Next, in \cref{Le:nu measure} we construct a Radon measure which will ``dominate'' the weak Radon functional from \cref{Th:11} below. This allows us to turn the latter into a weak Radon functional with a sign, to which the Riesz--Markov--Kakutani representation theorem applies. Since our curvature bound is variable, the construction is rather technical; for a simplification to constant curvature bounds, the reader may consult \cref{Re:Simpli const}.

Fix a locally finite open cover $\scrV$ of $\smash{I^+(\Sigma)\cup I^-(\Sigma)}$ the closures of whose elements are all  compactly contained in $\smash{I^+(\Sigma)\cup I^-(\Sigma)}$. Such a cover exists by paracompactness of $\mms$. Without loss of generality, we assume $V$ does not intersect with both $\smash{I^+(\Sigma)}$ and $\smash{I^-(\Sigma)}$ for every $V\in\scrV$. Since $\Sigma$ is TC, given such a $V$ the set $\smash{I^-(V) \cap \Sigma}$ is precompact. Thus, $k$ is bounded from below by a negative constant $\smash{K_V}$ on the  emerald $\smash{J(\cl(I^-(V)\cap \Sigma), \cl\,V)}$ by global hyperbolicity. Using the disintegration formula we now define
\begin{align*}
\mu_V := \sqrt{-K_V(N-1)}\int_Q \vert l_\Sigma\vert\, \frac{\cosh(\sqrt{-K_V/(N-1)}\,\vert l_\Sigma\vert)}{\sinh(\sqrt{-K_V/(N-1)}\,\vert l_\Sigma\vert)} \, \meas_\alpha\mres V \d\q(\alpha).
\end{align*}

\begin{lemma}[Global measure]\label{Le:nu measure} The assignment
\begin{align*}
\mu := \sum_{V\in \scrV} \mu_V
\end{align*}
defines an $\meas$-absolutely continuous Radon measure on $\mms$.
\end{lemma}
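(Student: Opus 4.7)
My plan is to first establish that each summand $\mu_V$ is an $\meas$-absolutely continuous Radon measure dominated by a constant multiple of $\meas\mres V$, and then use the local finiteness of $\scrV$ to patch these local bounds into a global statement.

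For fixed $V \in \scrV$, I would apply the disintegration formula $\meas\mres\Tr = \int_Q \meas_\alpha\d\q(\alpha)$ from \cref{Th:Disintegration} to rewrite $\mu_V$ as $F_V\cdot\meas\mres V$, where
\begin{align*}
F_V := \sqrt{-K_V(N-1)}\,\vert l_\Sigma\vert\,\frac{\cosh(c_V\vert l_\Sigma\vert)}{\sinh(c_V\vert l_\Sigma\vert)}, \qquad c_V := \sqrt{-K_V/(N-1)},
\end{align*}
is the $\alpha$-independent integrand. The crucial observation is that $\cl\,V$ is compact and contained in $I^+(\Sigma)\cup I^-(\Sigma)$, so by the TC hypothesis on $\Sigma$ (which ensures continuity of $l_\Sigma$ on this open set, as recalled after \cref{Def:timelike complete}), $\vert l_\Sigma\vert$ attains both a strictly positive minimum and a finite maximum on $V$. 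Since $r \mapsto r\coth(c_V r)$ is continuous on $(0,\infty)$, $F_V$ is uniformly bounded on $V$ by some $M_V < \infty$, giving $\mu_V \leq M_V\,\meas\mres V$, which is $\meas$-absolutely continuous and finite on compact sets.

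Assembling the pieces, local finiteness of $\scrV$ guarantees that the pointwise sum $\mu = \sum_V \mu_V$ is a countable sum of Borel measures, hence itself a Borel measure. For any compact $C \subset \mms$, only finitely many $V_1,\dots,V_n \in \scrV$ meet $C$, so
\begin{align*}
\mu[C] \leq \sum_{i=1}^n M_{V_i}\,\meas[V_i \cap C] < \infty.
\end{align*}
Since $\mms$ is locally compact and $\sigma$-compact by \cref{Sub:MMS}, any Borel measure finite on compact sets is automatically Radon by \cite{folland1999}*{Thm.~7.8}. Absolute continuity with respect to $\meas$ is inherited from each $\mu_V$.

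The only delicate point I anticipate is verifying boundedness of $F_V$ on $V$: the factor $\coth(c_V\vert l_\Sigma\vert)$ would blow up near $\Sigma$, but the insistence that $\cl\,V \subset I^+(\Sigma)\cup I^-(\Sigma)$ keeps $V$ bounded away from $\Sigma$, resolving this. The remaining steps are standard measure-theoretic bookkeeping.
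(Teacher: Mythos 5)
Your proof is correct and follows essentially the same route as the paper: local finiteness of $\scrV$ reduces the claim to finiteness of each $\mu_V$, which you obtain by bounding the $\alpha$-independent integrand and invoking the disintegration formula (this also gives the $\meas$-absolute continuity), and the Radon property then follows from finiteness on compact sets via \cite{folland1999}*{Thm.~7.8}. One remark: the ``delicate point'' you flag is a red herring --- the function $r\mapsto r\coth(c_V r)$ extends continuously to $r=0$ with value $1/c_V$, so no positive lower bound on $\vert l_\Sigma\vert$ is needed (the paper invokes only the local boundedness of $r\coth r$ on $[0,\infty)$); the genuine issue is the linear growth of $r\coth(c_V r)$ as $r\to\infty$, which is handled by the upper bound on $\vert l_\Sigma\vert$ over $\cl\,V$ coming from \cref{Cor:Steepness} and \cref{Le:Bounded}.
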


\begin{proof} Clearly, $\mu$ is a Borel measure. Since $\scrV$ is locally finite, for every compact subset $C$ of $\mms$, only finitely many $V\in \scrV$ obey $\mu_V[C] > 0$. As in the proof of \cref{Le:Disderd}, it thus suffices to show $\mu_V$ is finite. This follows from local boundedness of $r\coth r$ in $r\in[0,\infty)$ and the boundedness of $\vert l_\Sigma\vert$ on $\cl\,V$ implied by \cref{Cor:Steepness} and \cref{Le:Bounded}. 

The $\meas$-absolute continuity of $\mu_V$ follows from the disintegration \cref{Th:Disintegration}.
\end{proof}

\begin{remark}[Simplification]\label{Re:Simpli const} If $k$  is bounded from below by a constant $K$ of arbitrary sign throughout $\smash{E_\Sigma}$, one can follow the lines of  Cavalletti--Mondino \cite{cavalletti-mondino2020-new}*{Lem.~4.13} and replace $\mu$ by the more explicit
\begin{align*}
\mu' &:= (N-1)\int_Q \vert l_\Sigma\vert \,\frac{\COS_{K/(N-1)}\circ \,l(\cdot,b_\alpha)}{\SIN_{K/(N-1)}\circ\,l(\cdot,b_\alpha)}\,\meas_\alpha\mres I^+(\Sigma)\d\q(\alpha)\\
&\qquad\qquad + (N-1)\int_Q \vert l_\Sigma\vert \,\frac{\COS_{K/(N-1)}\circ \,l(a_\alpha,\cdot)}{\SIN_{K/(N-1)}\circ\,l(a_\alpha,\cdot)}\,\meas_\alpha\mres I^-(\Sigma)\d\q(\alpha)
\end{align*}
in all subsequent considerations. The right-hand side has to be appropriately interpreted if either endpoint does not exist, cf.~\cref{Re:Unbounded II}.\hfill{\footnotesize{$\blacksquare$}}
\end{remark}

The following condition will be  useful to link suitable finite  perturbations of $\smash{l_\Sigma}$ to the topology of $\mms$ and thence to turn distributions into signed Radon measures. Its proof is deferred to \cref{App:A}. A concrete setting where the hypothesis holds is described in \cref{Sub:LorentzFinsler}.

Here and in the sequel, we consider the classes
\begin{align}\label{Eq:scrC}
\scrC^\pm_\Sigma := \{f \in \FPert_\comp(l_\Sigma,I^\pm(\Sigma)) : f\textnormal{ symmetric}\} \cap \Cont(I^\pm(\Sigma))
\end{align}
as well as their union
\begin{align*}
\scrC_\Sigma := \scrC^+_\Sigma\cup \scrC^-_\Sigma.
\end{align*}

\begin{lemma}[Finite perturbations vs.~topology]\label{Le:VsTop}  Assume $\smash{l_\Sigma}$ is topologically locally anti-Lipschitz on $\smash{I^\pm(\Sigma)}$ after \cref{Def:KS}. Then $\smash{\scrC_\Sigma^\pm}$ is uniformly dense in $\smash{\Cont_\comp(I^\pm(\Sigma))}$.
\end{lemma}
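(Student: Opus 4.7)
The plan is to verify that $\scrC_\Sigma^\pm$ is a point-separating subalgebra of $\Cont_\comp(I^\pm(\Sigma))$ which vanishes nowhere, invoke the Stone--Weierstrass theorem for locally compact Hausdorff spaces, and finally restore compact support of the approximants by multiplying with a cutoff drawn from $\scrC_\Sigma^\pm$ itself.

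First I would establish the algebraic stability of $\scrC_\Sigma^\pm$. The characterization of $f\in\scrC_\Sigma^\pm$ as a continuous compactly supported function for which there exists $\tau>0$ with $|\tau f(y)-\tau f(x)|\le l_\Sigma(y)-l_\Sigma(x)$ for all causally related $x\le y$ in $I^\pm(\Sigma)$ makes it closed under scalar multiplication (adjusting $\tau$), addition (by the triangle inequality applied to the two bounds), and pointwise products (via the Leibniz-type estimate $|fg(y)-fg(x)|\le\Vert f\Vert_\infty\,|g(y)-g(x)|+\Vert g\Vert_\infty\,|f(y)-f(x)|$ and boundedness of both factors). It is also a lattice via $\min(f,g)=\tfrac{1}{2}(f+g-|f-g|)$ and the reverse triangle inequality for absolute values. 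Next I would construct sufficiently many elements from the topological local anti-Lipschitz hypothesis. Given $x_0\in I^\pm(\Sigma)$, fix an open neighborhood $V\ni x_0$ and a compatible metric $\met_V$ with $\met_V(x,y)\le l_\Sigma(y)-l_\Sigma(x)$ for $x\le y$ in $V$. Choose $\epsilon>0$ so that the closed $\met_V$-ball $\bar B:=\{z\in V:\met_V(x_0,z)\le 2\epsilon\}$ has compact closure contained in $V$, and define
\begin{align*}
f_{x_0,\epsilon}:=\max\{\epsilon-\met_V(x_0,\cdot),0\}
\end{align*}
on $V$, extended by zero outside. For $x\le y$ both in $V$, the $1$-Lipschitz property of $\met_V(x_0,\cdot)$ combined with anti-Lipschitzness yields the bound at once. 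For $x\in\supp f_{x_0,\epsilon}$ and $y\notin V$, I would pick a continuous causal curve from $x$ to $y$ (afforded by the regularity of $\scrM$), let $z$ be its first crossing of the boundary of $\bar B$, and exploit $x\le z\le y$ together with $\met_V(x_0,z)=2\epsilon$ to obtain
\begin{align*}
l_\Sigma(y)-l_\Sigma(x)\ge l_\Sigma(z)-l_\Sigma(x)\ge\met_V(x,z)\ge\epsilon\ge f_{x_0,\epsilon}(x);
\end{align*}
the symmetric case $y\in\supp f_{x_0,\epsilon}$, $x\notin V$ is analogous, while the case of both points outside $V$ is trivial.

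Having these local bumps available, $\scrC_\Sigma^\pm$ separates points of $I^\pm(\Sigma)$ and does not vanish simultaneously at any point, so the locally compact Stone--Weierstrass theorem yields uniform density of $\scrC_\Sigma^\pm$ in $\Cont_0(I^\pm(\Sigma))$. To promote this to density in $\Cont_\comp$, given $\phi\in\Cont_\comp(I^\pm(\Sigma))$ with support $K$, I would cover $K$ by finitely many bumps whose sum $g\in\scrC_\Sigma^\pm$ satisfies $g\ge c>0$ on $K$, and compose with a nondecreasing Lipschitz function $\psi\colon\R\to[0,1]$ with $\psi(0)=0$ and $\psi\equiv 1$ on $[c,\infty)$. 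The chain rule of \cref{Pr:Relations} together with the observation that composition with a Lipschitz function preserves the symmetric defining inequality give $\chi:=\psi\circ g\in\scrC_\Sigma^\pm$ with $\chi\equiv 1$ on $K$ and compact support inside $\supp g$. Multiplying any uniform $\Cont_0$-approximant $(f_n)$ of $\phi$ from $\scrC_\Sigma^\pm$ by $\chi$ then produces $\chi f_n\to\chi\phi=\phi$ uniformly with uniformly compact supports, staying inside $\scrC_\Sigma^\pm$ by the algebra property. The principal technical obstacle is precisely the boundary-crossing case in the construction of the local bump: establishing the symmetric causal inequality for pairs $x\le y$ with $x$ in $\supp f_{x_0,\epsilon}$ and $y$ outside $V$. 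Its resolution requires the compact containment of $\bar B$ in $V$ (to guarantee a minimal metric gap from the bump's support to $V^c$) together with a continuous causal connection from $x$ to $y$ afforded by the regularity of $\scrM$; everything else reduces to standard Stone--Weierstrass machinery and routine applications of the calculus rules recalled in \cref{Pr:Relations}.
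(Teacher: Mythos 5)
Your proposal is correct and follows essentially the same route as the paper's proof: verify that $\scrC_\Sigma^\pm$ is an algebra, build point-separating, nowhere-vanishing elements from the local anti-Lipschitz metrics via truncated distance functions extended by zero, and conclude with the locally compact Stone--Weierstrass theorem. The only remark is that your final cutoff step is redundant, since every element of $\scrC_\Sigma^\pm$ already has compact support by definition, so uniform density in $\Cont_0(I^\pm(\Sigma))$ immediately yields the claimed density in $\Cont_\comp(I^\pm(\Sigma))$.
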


\begin{theorem}[Measure-valued d'Alembertian I, see also \cref{Cor:22}]\label{Th:11} Let $\scrM$ be  a timelike essentially nonbranching $\smash{\TMCP^e(k,N)}$ metric measure spacetime. Moreover, let $\q$ be an $\MCP(k,N)$ disintegration relative to $\smash{l_\Sigma}$ and $E_\Sigma$ and let the exponent $\PP$ be negative. Then $\smash{\u_\QQ \in \Dom(\BOX_\PP,I^+(\Sigma)\cup I^-(\Sigma))}$.

More precisely, the map $\smash{T^\QQ}$ on $\smash{\Pert_\bc(\u_\QQ,I^+(\Sigma)\cup I^-(\Sigma))}$ defined through
\begin{align*}
T^\QQ(f) &:= \int_Q\int_{\mms_\alpha} f\sgn l_\Sigma\,\big[1+l_\Sigma\,(\log h_\alpha)' \big]\d\meas_\alpha\d\q(\alpha) \\
&\qquad\qquad - \int_Q \big[\!\RCR f\,\vert l_\Sigma\vert\,h_\alpha\big]_{a_\alpha}^{b_\alpha}\d\q(\alpha)
\end{align*}
constitutes an element of $\smash{\BOX_\PP \u_\QQ \mres [I^+(\Sigma)\cup I^-(\Sigma)]}$.

Lastly, assume $\smash{l_\Sigma}$ is topologically locally anti-Lipschitz  on $\smash{I^+(\Sigma)\cup I^-(\Sigma)}$.  Then the following statements hold.
\begin{enumerate}[label=\textnormal{(\roman*)}]
\item \textnormal{\textbf{Enhanced distribution.}} The map $\smash{T^\QQ}$  extends continuously to $\smash{\Cont_\comp(I^+(\Sigma)\cup I^-(\Sigma))}$ and this extension is the difference
\begin{align*}
T^\QQ=\sgn l_\Sigma\,\meas +\sgn l_\Sigma\,  \mu  -\, ^+\nu +\, ^-\nu,
\end{align*}
i.e.~a generalized signed Radon measure on $\smash{I^+(\Sigma)\cup I^-(\Sigma)}$; here $\mu$ comes from \cref{Le:nu measure} and $\smash{^\pm\nu}$ are Radon measures on $\smash{I^\pm(\Sigma)}$.
\item \textnormal{\textbf{Two-sided integration by parts.}} Every $\smash{f\in\scrC_\Sigma}$ satisfies
\begin{align*}
\int_\mms \rmd^+f(\nabla\u_\QQ)\,\vert\rmd\u_\QQ\vert^{\PP-2}\d\meas \leq -T^\QQ(f)\leq \int_\mms\rmd^-f(\nabla\u_\QQ)\,\vert\rmd\u_\QQ\vert^{\PP-2}\d\meas.
\end{align*}
\end{enumerate}
\end{theorem}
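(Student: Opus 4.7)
I would proceed in three stages corresponding to the three parts of the statement.

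\emph{Stage 1: the defining inequality $T^\QQ\in\BOX_\PP\u_\QQ\mres[I^+(\Sigma)\cup I^-(\Sigma)]$.} My strategy is to transfer the problem from $\u_\QQ$ to $l_\Sigma$ and then integrate by parts along rays. Fix $f\in\Pert_\bc(\u_\QQ,I^+(\Sigma)\cup I^-(\Sigma))$. On the compact support of $f$, the function $\vert l_\Sigma\vert$ is bounded away from $0$ and infinity (cf.~\cref{Le:Bounded}), so $\varphi_\QQ$ is smooth there with derivative bounded both above and away from $0$. The second chain rule in \cref{Pr:Calc rulez II} then applies locally; the conjugacy relation $(\QQ-1)(\PP-1)=1$ yields $(\varphi_\QQ')^{\PP-1}\circ l_\Sigma=\vert l_\Sigma\vert$, and combining this with $\vert\rmd l_\Sigma\vert=1$ from \cref{Cor:Constant slope} and the horizontal-vs-vertical bound of \cref{Th:df vs f'} gives
\begin{align*}
\rmd^+f(\nabla\u_\QQ)\,\vert\rmd\u_\QQ\vert^{\PP-2} \;=\; \vert l_\Sigma\vert\,\rmd^+f(\nabla l_\Sigma)\,\vert\rmd l_\Sigma\vert^{\PP-2} \;\leq\; \vert l_\Sigma\vert\,f'
\end{align*}
$\meas$-a.e.\ on $\Tr$. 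Next, I would set $g:=f\,\vert l_\Sigma\vert$, which by \cref{Le:Invar p} together with the multiplication rule from \cref{Pr:Relations} (applied separately on each chronological component, where $\vert l_\Sigma\vert$ is locally Lipschitz and bounded away from $0$) is a compactly supported finite perturbation of $l_\Sigma$. Its ray derivative decomposes as $g'=f'\,\vert l_\Sigma\vert+f\,\sgn l_\Sigma$. Substituting $f'\,\vert l_\Sigma\vert=g'-f\,\sgn l_\Sigma$, applying the ray-wise integration by parts \cref{Le:IBP II} to $g$, and using $\vert l_\Sigma\vert\,(\log h_\alpha)'=\sgn l_\Sigma\,l_\Sigma\,(\log h_\alpha)'$ reassembles the right-hand side into precisely $-T^\QQ(f)$ after integration against $\q$. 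Together with the weak Radon functional property from \cref{Le:Radon funct II}, this yields the first claim.

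\emph{Stage 2: upgrade to a generalized signed Radon measure.} Under topological local anti-Lipschitzness, I would examine the three summands of $T^\QQ$ separately. The piece $\int f\,\sgn l_\Sigma\,\rmd\meas$ is represented by the signed Radon measure $\sgn l_\Sigma\,\meas$. For the piece $\int f\,\sgn l_\Sigma\,l_\Sigma\,(\log h_\alpha)'\,\rmd\meas$, \cref{Le:logarithmic derivative} bounds $\vert l_\Sigma\,(\log h_\alpha)'\vert$ pointwise on each ray by the $\cosh/\sinh$-type density of $\mu$ from \cref{Le:nu measure} (using the local lower bounds $K_V$ on the cover $\scrV$ and monotonicity of the distortion coefficients), making this an $\meas$-absolutely continuous component with total variation dominated by $\mu$, identified with $\sgn l_\Sigma\,\mu$. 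The endpoint terms at $a_\alpha,b_\alpha$ produce positive Radon measures ${}^\pm\nu$ concentrated on $\TCut^\pm(\Sigma)$, with Radon-ness following from the density bounds used already in the proof of \cref{Le:Radon funct II}. Assembling the three pieces yields a bound $\vert T^\QQ(f)\vert\leq(\meas+\mu+{}^+\nu+{}^-\nu)(\supp f)\,\Vert f\Vert_\infty$ for $f\in\scrC_\Sigma$. By \cref{Le:VsTop}, $\scrC_\Sigma^\pm$ is uniformly dense in $\Cont_\comp(I^\pm(\Sigma))$, so this estimate extends $T^\QQ$ uniquely and continuously to $\Cont_\comp(I^+(\Sigma)\cup I^-(\Sigma))$ and realizes it as the stated difference of Radon measures.

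\emph{Stage 3: the two-sided integration by parts.} Given a symmetric $f\in\scrC_\Sigma$, both $f$ and $f^\leftarrow=-f$ are admissible test functions. I would apply the Stage 1 inequality to $f^\leftarrow$, use the genuine linearity of $T^\QQ$ in $f$ noted in \cref{Re:About II} (so $T^\QQ(f^\leftarrow)=-T^\QQ(f)$), and invoke the identity $\rmd^+f^\leftarrow(\nabla\u_\QQ)\,\vert\rmd\u_\QQ\vert^{\PP-2}=-\rmd^-f(\nabla\u_\QQ)\,\vert\rmd\u_\QQ\vert^{\PP-2}$ $\meas$-a.e.\ from \cref{Le:Diff quot}. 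This gives $-T^\QQ(f)\leq\int\rmd^-f(\nabla\u_\QQ)\,\vert\rmd\u_\QQ\vert^{\PP-2}\,\rmd\meas$, which combined with Stage 1 applied to $f$ closes the sandwich.

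\emph{Main obstacle.} The subtlest point is the compatibility required in Stage 1: both the vertical chain rule for $\u_\QQ=\varphi_\QQ\circ l_\Sigma$ and the admissibility of $g=f\,\vert l_\Sigma\vert$ as a finite perturbation of $l_\Sigma$ must be argued \emph{locally}, since $\varphi_\QQ$ fails to be globally Lipschitz with derivative bounded away from $0$ near the origin and $\vert l_\Sigma\vert$ is not $l$-causal on $I^-(\Sigma)$. Compact support of $f$ inside $I^+(\Sigma)\cup I^-(\Sigma)$, the locality clauses in \cref{Pr:Calc rulez II}, and a component-wise treatment rescue both issues, but carefully bookkeeping the sign $\sgn l_\Sigma$ through the integration by parts while correctly recovering the precise form of $T^\QQ$ is the technical heart of the argument.
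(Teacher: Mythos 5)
Your Stages 1 and 3 are sound and essentially reproduce the paper's argument: the paper also reduces via the chain rule and \cref{Cor:Constant slope} to $\int f'\,\vert l_\Sigma\vert\,h_\alpha\d\Leb_\alpha^1$ and then performs a weighted integration by parts with weight $\vert l_\Sigma\vert\,h_\alpha$ against $\Diff_\alpha\RCR f$ (an ``evident variant'' of \cref{Le:IBP II}); your repackaging, which absorbs $\vert l_\Sigma\vert$ into the test function $g=f\,\vert l_\Sigma\vert$ and applies \cref{Le:IBP II} verbatim, is an equivalent computation and is legitimate via \cref{Pr:Relations}. Stage 3 is exactly the symmetrization of \cref{Re:Two sided}.

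Stage 2, however, contains a genuine error. The pointwise bound $\vert l_\Sigma\,(\log h_\alpha)'\vert\leq\rho$ you invoke is false: \cref{Le:logarithmic derivative} is asymmetric, and on $I^+(\Sigma)$ only its \emph{upper} half involves the distance $l_\Sigma(x)$ to the footpoint (yielding $l_\Sigma\,(\log h_\alpha)'\leq(N-1)\,l_\Sigma\coth\textnormal{-type}(l_\Sigma)\leq\rho$), whereas the \emph{lower} half involves the distance $l(x,b_\alpha)$ to the future cut point and degenerates like $-(N-1)\coth(l(x,b_\alpha))\to-\infty$ as $x\to b_\alpha$ while $l_\Sigma$ stays bounded away from zero there (think of $h_\alpha\sim\sin^{N-1}$ of the distance to the cut point). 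So the absolutely continuous part of $T^\QQ$ is \emph{not} dominated in total variation by $\mu$, cannot be ``identified with $\sgn l_\Sigma\,\mu$'', and the measures $\smash{{}^\pm\nu}$ of the statement are \emph{not} concentrated on the cut loci: they must also absorb the (locally integrable but pointwise unbounded) negative excess of $l_\Sigma\,(\log h_\alpha)'$ below $\rho$. The correct route is one-sided: using only $\sgn l_\Sigma\cdot l_\Sigma\,(\log h_\alpha)'\leq\rho$ on each chronological component together with the sign of the endpoint term, one shows that $S:=\meas\mres I^+(\Sigma)+\mu\mres I^+(\Sigma)-T^\QQ$ (and its mirror $T^\QQ+\meas\mres I^-(\Sigma)+\mu\mres I^-(\Sigma)$) is a \emph{nonnegative} weak Radon functional; only then do \cref{Le:VsTop} and the Riesz--Markov--Kakutani theorem produce $\smash{{}^\pm\nu}$ as the representing Radon measures of these deficit functionals, which yields the stated decomposition.
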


\begin{proof} By \cref{Le:Radon funct II}, we already know $\smash{T^\QQ}$ constitutes a weak Radon functional.

Next, we establish $\smash{T^\QQ\in \BOX_\PP\u_\QQ\mres [I^+(\Sigma)\cup I^-(\Sigma)]}$. Thanks to \cref{Le:Invar p}, any given $\smash{f\in\Pert_\bc(\u_\QQ,I^+(\Sigma)\cup I^-(\Sigma))}$ belongs to $\smash{\Pert_\bc(l_\Sigma,I^+(\Sigma)\cup I^-(\Sigma))}$ and vice versa. Then for $\q$-a.e.~$\alpha\in Q$, the chain rule and the definition \eqref{Eq:dnabla}  yield
\begin{align*}
\rmd^+f(\nabla\u_\QQ)\,\vert\rmd \u_\QQ\vert^{\PP-2} &= \vert l_\Sigma\vert^{(\QQ-1)(\PP-1)}\,\rmd^+ f(\nabla l_\Sigma)\\ 
&= \vert l_\Sigma\vert \,\rmd^+f(\nabla l_\Sigma)\quad\meas_\alpha \mres [I^+(\Sigma)\cup I^-(\Sigma)]\textnormal{-a.e.}
\end{align*}
Here we have tacitly used that $\vert l_\Sigma\vert$ is bounded away from zero on the support of $f$ plus locality. In turn, \cref{Th:df vs f'} and the disintegration \cref{Th:Disintegration} imply
\begin{align*}
\int_{\mms_\alpha} \rmd^+f(\nabla\u_\QQ)\,\vert \rmd \u_\QQ\vert^{\PP-2}\d\meas_\alpha  \leq \int_{\mms_\alpha}f'\,\vert l_\Sigma\vert \d\meas_\alpha.
\end{align*}
Since $\ray$ directs the rays induced by $\q$ relative to $l$ and since $\smash{l_\Sigma}$ is affine along them, we get $\smash{\vert l_\Sigma\vert' = \sgn l_\Sigma}$ $\smash{\Leb_\alpha^1 \mres \supp f}$-a.e. An evident variant of \cref{Le:IBP II} entails
\begin{align*}
\int_{\mms_\alpha} f'\,\vert l_\Sigma\vert\d\meas_\alpha &\leq \int_{\mms_\alpha} \vert l_\Sigma\vert\,h_\alpha\d\Diff_\alpha\RCR f\\
&= -\int_{\mms_\alpha} f\,\big[\vert l_\Sigma\vert\,h_\alpha\big]'\d\Leb_\alpha^1 + \big[\!\RCR f\,\vert l_\Sigma\vert\,h_\alpha\big]_{a_\alpha}^{b_\alpha}\\
&= -\int_{\mms_\alpha} f\sgn l_\Sigma\d\meas_\alpha - \int_{\mms_\alpha} f\,h_\alpha'\,\vert l_\Sigma\vert\d\Leb_\alpha^1 + \big[\!\RCR f\,\vert l_\Sigma\vert\,h_\alpha\big]_{a_\alpha}^{b_\alpha}\\
&= -\int_{\mms_\alpha} f\sgn l_\Sigma\,\big[1 + l_\Sigma\,(\log h_\alpha)'\big]\d\meas_\alpha + \big[\!\RCR f\,\vert l_\Sigma\vert\,h_\alpha\big]_{a_\alpha}^{b_\alpha}.
\end{align*}
Integrating these estimates with respect to $\q$ gives the claim.

Finally, we verify $\smash{T^\QQ}$ is a generalized signed Radon measure on $\smash{I^+(\Sigma)\cup I^-(\Sigma)}$ assuming $\smash{l_\Sigma}$ obeys the described topological local anti-Lipschitz property. Since $\smash{I^+(\Sigma)}$ and $\smash{I^-(\Sigma)}$ are disjoint, it suffices to find representations of $\smash{T^\QQ}$ on each of these separately and then paste together the signed Radon measures.  We first argue for $\smash{I^+(\Sigma)}$. Let $\rho$ designate  the $\meas$-density of the measure $\mu$ from \cref{Le:nu measure}.   Let $\smash{x\in \mms_\alpha \cap I^+(\Sigma)}$, where $\alpha\in Q$, and let $V\in\scrV$ with $x\in V$. Then the \emph{lower} bound on the logarithmic derivative in  \cref{Le:logarithmic derivative} (choosing the footpoint of $x$ in $\Sigma$ as ``final point'') and \cref{Re:Props dist coeff} imply at  $x$ that
\begin{align}\label{Eq:BLBL}
l_\Sigma\,(\log h_\alpha)' \leq \sqrt{-K_V(N-1)}\,\vert l_\Sigma\vert\,\frac{\cosh(\sqrt{-K_V/(N-1)}\,\vert l_\Sigma\vert)}{\sinh(\sqrt{-K_V/(N-1)}\,\vert l_\Sigma\vert)} \leq \rho.
\end{align}
Thus, for every nonnegative $f\in\Pert_\bc(l_\Sigma,I^+(\Sigma))$,
\begin{align*}
T^\QQ(f) &=\int_Q\int_{\mms_\alpha} f\,\big[1+l_\Sigma\,(\log h_\alpha)'\big]\d\meas_\alpha\d\q(\alpha) - \int_Q \big[\!\RCR f\,\vert l_\Sigma\vert\, h_\alpha\big]^{b_\alpha}\d\q(\alpha)\\
&\leq \int_Q\int_{\mms_\alpha} f\,\big[1+l_\Sigma\,(\log h_\alpha)'\big]\d\meas_\alpha\d\q(\alpha)\\
&\leq \int_\mms f\d\meas \mres I^+(\Sigma) + \int_\mms f\d\mu\mres I^+(\Sigma).
\end{align*}
by the disintegration \cref{Th:Disintegration}. Consequently, the functional 
\begin{align*}
S :=  \meas\mres I^+(\Sigma) + \mu \mres I^+(\Sigma)-T^q
\end{align*}
is weak Radon and nonnegative. By the weak Radon property and  \cref{Le:VsTop}, $S$ extends continuously to a nonnegative (and bounded) functional on $\smash{\Cont_\comp(I^+(\Sigma))}$. By the Riesz--Markov--Kakutani representation theorem, $S$ is thus represented by a unique Radon measure $\smash{^+\nu}$ on $\smash{I^+(\Sigma)}$. In turn, for every appropriate $\smash{f\in \scrC_\Sigma^+}$ we have
\begin{align*}
S(f) = \int_{I^+(\Sigma)} f\d{^+\nu}.
\end{align*}

To cover the portion $\smash{I^-(\Sigma)}$, minor adaptations to the previous arguments are needed to remove the singular part due to the different sign of the signed Lorentz distance function. Again let $\smash{x\in\mms_\alpha\cap I^-(\Sigma)}$, where $\alpha\in Q$, and let $V\in \scrV$ with $x\in V$. The \emph{lower} bound on the logarithmic derivative in \cref{Le:logarithmic derivative} (choosing the footpoint of $x$ in $\Sigma$ as ``final point'') and \cref{Re:Props dist coeff} yield the following estimates at $x$ (noting $-l_\Sigma\geq 0$ there):
\begin{align*}
-l_\Sigma\,(\log h_\alpha)' \geq -\sqrt{-K_V(N-1)}\,\vert l_\Sigma\vert\,\frac{\cosh(\sqrt{-K_V/(N-1)}\,\vert l_\Sigma\vert)}{\sinh(\sqrt{-K_V/(N-1)}\,\vert l_\Sigma\vert)} \geq -\rho.
\end{align*}
As above, every nonnegative $\smash{f\in\scrC_\Sigma^-}$ thus satisfies
\begin{align*}
T^\QQ(f) \geq -\int_\mms f\d\meas\mres I^-(\Sigma)  - \int_\mms f\d\mu\mres I^-(\Sigma).
\end{align*}
The same argument as above yields the claimed Radon measure $\smash{^-\nu}$ on $I^-(\Sigma)$.

The last two-sided bounds follow from \cref{Re:Two sided}, recalling that $\smash{\scrC_\Sigma}$ entirely consists of symmetric finite  perturbations of $\smash{l_\Sigma}$.
\end{proof}

Analogously to \eqref{Eq:Tll}, on $\mms_\alpha$ with $\alpha\in Q$ we define
\begin{align*}
\Box_\PP\u_\QQ := \sgn l_\Sigma\,\big[1+l_\Sigma\,(\log h_\alpha)'\big].
\end{align*}

By sharpening the bound \eqref{Eq:BLBL} using \cref{Le:logarithmic derivative}, the following corollary is immediate from the previous proof; see also \cref{Re:Measvers} for a version including the singular parts. 

\begin{corollary}[Sharp d'Alembert comparison II, see also  \cref{Cor:Sharp comparison,Cor:1515}]\label{Cor:11} In the framework of \cref{Th:11}, the following hold for $\q$-a.e. $\alpha\in Q$. 
\begin{enumerate}[label=\textnormal{(\roman*)}]
\item \textnormal{\textbf{Variable version.}} Let $v,w\in \cl\,\mms_\alpha$ be distinct points such that $\smash{v\preceq_{l_\Sigma} w}$. Then $\smash{\meas_\alpha}$-a.e.~$x\in[v,w]$ satisfies
\begin{align*}
&\sgn l_\Sigma(x) - (N-1)\,\vert l_\Sigma\vert(x)\,\frac{\COS_{(k_\alpha)^-_{x,w}/(N-1)}\circ\, l(x,w)}{\SIN_{(k_\alpha)^-_{x,w}/(N-1)}\circ\, l(x,w)}\\
&\qquad\qquad \leq \Box_\PP\u_\QQ(x) \leq \sgn l_\Sigma(x) + (N-1)\,\vert l_\Sigma\vert(x)\,\frac{\COS_{(k_\alpha)^+_{v,x}/(N-1)}\circ\, l(v,x)}{\SIN_{(k_\alpha)^+_{v,x}/(N-1)}\circ\, l(v,x)}.
\end{align*}
\item \textnormal{\textbf{SEC version.}} If $k$ vanishes identically,
\begin{align*}
&\sgn l_\Sigma - (N-1)\,\vert l_\Sigma\vert\,\frac{1}{l(\cdot,b_\alpha)}\\
&\qquad\qquad \leq \Box_\PP\u_\QQ \leq \sgn l_\Sigma + (N-1)\,\vert l_\Sigma\vert\,\frac{1}{l(a_\alpha,\cdot)}\quad\meas_\alpha\textnormal{-a.e.}
\end{align*}
with appropriate interpretation if either endpoints do not exist.
\end{enumerate}
\end{corollary}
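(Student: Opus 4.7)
The plan is to derive the corollary as a direct pointwise consequence of the one-dimensional comparison lemma \cref{Le:logarithmic derivative}, applied ray-by-ray via the disintegration \cref{Th:Disintegration} and transported back to $\mms$ through the order-isometric ray map $\sfg$. The formula proved in \cref{Th:11}, together with the definition above the statement, gives the pointwise identity
\begin{align*}
\Box_\PP\u_\QQ = \sgn l_\Sigma + |l_\Sigma|\,(\log h_\alpha)'\quad\meas_\alpha\textnormal{-a.e.},
\end{align*}
valid on the whole ray $\mms_\alpha$ for $\q$-a.e.\ $\alpha\in Q$ (the sign prefactor converts $l_\Sigma$ into $|l_\Sigma|$ inside either causal sector). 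So estimating $\Box_\PP\u_\QQ$ reduces to estimating $(\log h_\alpha)'$ from above and below on the segment corresponding to $[v,w]$.

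Next, since $\q$ is an $\MCP(k,N)$ disintegration, for $\q$-a.e.\ $\alpha$ the real ray representative $\bar h_\alpha$ is an $\MCP(\bar k_\alpha,N)$ density and, by \cref{Re:Properties MCP}, is positive on the interior of $\bar\mms_\alpha$. I would then apply \cref{Le:logarithmic derivative} to $\bar h_\alpha$ on the compact sub-interval $\sfg^{-1}([v,w])$, obtaining that at $\Leb^1$-a.e.\ point of that interval,
\begin{align*}
-(N-1)\,\frac{\COS_{(\bar k_\alpha)_{\cdot,\bar w}^-/(N-1)}(\bar w-\cdot)}{\SIN_{(\bar k_\alpha)_{\cdot,\bar w}^-/(N-1)}(\bar w-\cdot)} \leq (\log\bar h_\alpha)' \leq (N-1)\,\frac{\COS_{(\bar k_\alpha)_{\bar v,\cdot}^+/(N-1)}(\cdot-\bar v)}{\SIN_{(\bar k_\alpha)_{\bar v,\cdot}^+/(N-1)}(\cdot-\bar v)},
\end{align*}
where $\bar v := \sfg^{-1}(v)$ and $\bar w := \sfg^{-1}(w)$. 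Pushing these bounds forward under $\sfg$ and using the order-isometry $\bar w-\bar x = l(x,w)$, $\bar x-\bar v = l(v,x)$ from \cref{Pr:Ray} (together with the compatibility \cref{Re:Compat} of $'$ with $\bar{\,\cdot\,}_\alpha'$) translates into the asserted ray-wise estimate on $(\log h_\alpha)'$ at $\meas_\alpha$-a.e.\ point of $[v,w]$. Multiplying by $|l_\Sigma|\geq 0$ and adding $\sgn l_\Sigma$ then yields the variable version.

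The SEC version is the specialization $\bar k_\alpha\equiv 0$, in which case \cref{Re:Logarithmic derivative} reduces the two-sided bound on $(\log\bar h_\alpha)'$ to $-(N-1)/(\bar w-\bar x)$ and $(N-1)/(\bar x-\bar v)$. Choosing $v=a_\alpha$ and $w=b_\alpha$ (when both endpoints exist) gives exactly the stated inequalities with $1/l(\cdot,b_\alpha)$ and $1/l(a_\alpha,\cdot)$; the interpretation referenced in \cref{Re:Unbounded II} handles the cases where one endpoint is missing by passing to the appropriate limit of the corresponding cotangent term. Since all the analytic content is already contained in \cref{Le:logarithmic derivative} and \cref{Pr:Ray}, no serious obstacle is expected; the only point requiring care is the bookkeeping between $(\bar k_\alpha)_{\bar v,\bar x}^+$ as a function on $[0,\bar x-\bar v]$ and its image $(k_\alpha)_{v,x}^+$ viewed along the segment of $\mms_\alpha$ — but this is precisely the definition of $\bar k_\alpha$ as the real ray representative, so the identification is automatic.
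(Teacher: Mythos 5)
Your proposal is correct and follows essentially the same route as the paper: the paper obtains the corollary "immediately from the proof of \cref{Th:11} by sharpening the bound \eqref{Eq:BLBL} via \cref{Le:logarithmic derivative}", which is exactly your combination of the identity $\Box_\PP\u_\QQ=\sgn l_\Sigma+\vert l_\Sigma\vert\,(\log h_\alpha)'$ with the two-sided density comparison transported along the ray map. The SEC specialization via \cref{Re:Logarithmic derivative} with $v=a_\alpha$, $w=b_\alpha$ also matches the paper's intent.
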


Again, this result does include the endpoints once they exist. If either of them does not exist yet $k$ is constant, these estimates can be extended to unbounded rays as described in \cref{Re:Unbounded II}. The same applies to \cref{Cor:1515}  below.

We are now in a position to improve \cref{Th:Meas val Alem I} in our current setting.

\begin{corollary}[Measure-valued d'Alembertian II, see also \cref{Th:11}]\label{Cor:22} Let $\scrM$ be a timelike essentially nonbranching $\smash{\TMCP^e(k,N)}$ metric measure spacetime. Let $\q$ be an $\MCP(k,N)$ disintegration relative to $l_\Sigma$ and $E_\Sigma$. Then $\smash{l_\Sigma\in \Dom(\BOX, I^+(\Sigma)\cup I^-(\Sigma))}$.

More precisely, the map $T$ on $\smash{\Pert_\bc(l_\Sigma,I^+(\Sigma)\cup I^-(\Sigma))}$ defined by
\begin{align*}
T(f) := \int_Q\int_{\mms_\alpha} f\,(\log h_\alpha)'\d\meas_\alpha\d\q(\alpha) - \int_Q \big[\!\RCR f\,h_\alpha\big]_{a_\alpha}^{b_\alpha}\d\q(\alpha)
\end{align*}
defines a weak Radon functional belonging to $\BOX l_\Sigma\mres [I^+(\Sigma)\cup I^-(\Sigma)]$.

Lastly, assume $\smash{l_\Sigma}$ is topologically locally anti-Lipschitz on $\smash{I^+(\Sigma)\cup I^-(\Sigma)}$. Then the following statements hold.
\begin{enumerate}[label=\textnormal{(\roman*)}]
\item \textnormal{\textbf{Enhanced distribution.}} The map  $T$ extends continuously to $\smash{\Cont_\comp(I^+(\Sigma)\cup I^-(\Sigma))}$ and this extension is the difference
\begin{align*}
T = l_\Sigma^{-1}\,\mu - \vert l_\Sigma\vert^{-1}\,(^+\nu -{} ^-\nu),
\end{align*}
i.e.~a generalized signed Radon measure on $\smash{I^+(\Sigma)\cup I^-(\Sigma)}$; here $\mu$ and $\smash{^\pm\nu}$ come from \cref{Le:nu measure} and \cref{Th:11}, respectively.
\item \textnormal{\textbf{Two-sided integration by parts.}} Every $\smash{f\in\scrC_\Sigma}$ satisfies
\begin{align*}
\int_\mms \rmd^+f(\nabla l_\Sigma)\d\meas \leq -T(f)\leq \int_\mms \rmd^-f(\nabla l_\Sigma)\d\meas.
\end{align*}
\end{enumerate}
\end{corollary}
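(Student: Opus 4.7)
The plan is to derive the corollary from \cref{Th:11} by applying the chain rule of \cref{Pr:Chain rule} with $\varphi := \varphi_\QQ^{-1}$, thereby inverting the substitution $\u_\QQ = \varphi_\QQ\circ l_\Sigma$ used there. Since $\varphi_\QQ^{-1}$ is smooth on each half-line $(0,\infty)$ and $(-\infty,0)$ but singular at the origin, I treat $I^+(\Sigma)$ and $I^-(\Sigma)$ separately and paste the two restrictions at the end; this is legitimate because the two open sets are disjoint and finite perturbations localize over any open cover.

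Fix a negative $\PP$ conjugate to $\QQ$ as in \cref{Th:11} and set $\psi := \varphi_\QQ^{-1}$. On each of $I^\pm(\Sigma)$, using the conjugacy relation $\PP\QQ = \PP+\QQ$, a direct calculation gives $(\psi')^{\PP-1}\circ\u_\QQ = \vert l_\Sigma\vert^{-1}$ and $[(\psi')^{\PP-2}\psi'']\circ\u_\QQ = \pm(1-\QQ)\,\vert l_\Sigma\vert^{-1-\QQ}$ (with the sign depending on the side), while $\vert\rmd\u_\QQ\vert^\PP = \vert l_\Sigma\vert^\QQ$ holds $\meas$-a.e. on $\Tr$ by \cref{Cor:Constant slope} and the chain rule for maximal weak subslopes from \cref{Pr:Calc rulez}. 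Substituting these identities into \cref{Pr:Chain rule} produces a functional $S\in\BOX_\PP l_\Sigma\mres[I^+(\Sigma)\cup I^-(\Sigma)]$; the relevant test function $f/\vert l_\Sigma\vert$ is a valid element of $\Pert_\bc(\u_\QQ,I^+(\Sigma)\cup I^-(\Sigma))$ by \cref{Le:Invar p} together with the fact that $1/\vert l_\Sigma\vert$ is bounded and Lipschitz continuous on $\supp f$. The $\sgn l_\Sigma\,[1+l_\Sigma(\log h_\alpha)']$ integrand of $T^\QQ(f/\vert l_\Sigma\vert)$ collapses to $l_\Sigma^{-1}+(\log h_\alpha)'$, while the boundary term reduces to $-\int_Q[\RCR f\,h_\alpha]_{a_\alpha}^{b_\alpha}\d\q(\alpha)$ since $\vert l_\Sigma\vert$ is continuous and strictly positive at $a_\alpha,b_\alpha$ by \cref{Re:Loci}. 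An algebraic check using $(\PP-1)(\QQ-1)=1$ shows the chain rule correction contributes exactly $-\int f\,l_\Sigma^{-1}\d\meas$ on each side, cancelling the spurious $l_\Sigma^{-1}$ term. Comparing yields $S=T$, so $T\in\BOX_\PP l_\Sigma\mres[I^+(\Sigma)\cup I^-(\Sigma)]$; by \cref{Cor:Constant slope}, \cref{Re:Indep!}, and the manifest $\PP$-independence of $T$, this membership upgrades to all $\PP\in(-\infty,1)\setminus\{0\}$, so $l_\Sigma\in\Dom(\BOX,I^+(\Sigma)\cup I^-(\Sigma))$.

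For the enhanced distributional statement, I transport the generalized signed Radon measure representation $T^\QQ = \sgn l_\Sigma\,\meas + \sgn l_\Sigma\,\mu - {}^+\nu + {}^-\nu$ from \cref{Th:11} through the same chain rule transformation: the $\sgn l_\Sigma\,\meas$ piece cancels with the chain rule correction as above, the $\sgn l_\Sigma\,\mu$ piece transforms to $l_\Sigma^{-1}\mu$ after dividing by $\vert l_\Sigma\vert$, and the boundary measures ${}^\pm\nu$ each pick up a factor $\vert l_\Sigma\vert^{-1}$ to yield $-\vert l_\Sigma\vert^{-1}({}^+\nu-{}^-\nu)$. The two-sided integration by parts follows from the symmetry-based argument outlined in \cref{Re:Two sided}: linearity of $T$ allows us to apply the one-sided inequality of \cref{Def:DAlem} to both $f$ and $f^\leftarrow$ for $f\in\scrC_\Sigma$, and combining these with \cref{Le:Diff quot} sandwiches $-T(f)$ between the two vertical differential quotients.

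The main technical obstacle I anticipate is the bookkeeping of signs across $I^+(\Sigma)$ and $I^-(\Sigma)$: the second derivative $\psi''$ is positive on $(0,\infty)$ and negative on $(-\infty,0)$, and the sign of $\sgn l_\Sigma$ in the first integrand of $T^\QQ$ flips accordingly, so one must verify that the $l_\Sigma^{-1}$-cancellation occurs simultaneously on both sides rather than doubling up or disappearing. A secondary point is ensuring the chain-rule-inherited continuous extension of $T$ to $\Cont_\comp(I^+(\Sigma)\cup I^-(\Sigma))$ actually coincides with the representation above as a generalized signed Radon measure; this amounts to checking that the densities $1/\vert l_\Sigma\vert$ used in the transformation remain locally bounded away from $\Sigma$, which is guaranteed by compactness of supports inside $I^+(\Sigma)\cup I^-(\Sigma)$.
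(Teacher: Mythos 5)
Your proposal is correct and follows essentially the same route as the paper's own proof: the paper likewise inverts the substitution via $\psi_\QQ=\varphi_\QQ^{-1}$, applies \cref{Pr:Chain rule} to the functional $T^\QQ$ from \cref{Th:11}, verifies the cancellation of the $l_\Sigma^{-1}$ term against the chain-rule correction using the conjugacy of $\PP$ and $\QQ$, and obtains the enhanced distribution and two-sided bounds from the identity $T(f)=T^\QQ(f\,\vert l_\Sigma\vert^{-1})-\int_\mms f\,\vert l_\Sigma\vert^{-1}\sgn l_\Sigma\d\meas$ together with \cref{Re:Two sided}. Your separate treatment of $I^+(\Sigma)$ and $I^-(\Sigma)$ is only a cosmetic variant of the paper's tacit localization to a neighborhood of $\supp f$ where $\vert l_\Sigma\vert$ is bounded away from zero.
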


\begin{proof} Throughout the subsequent proof, let us fix exponents $\PP$ and $\QQ$ as in \cref{Th:11}, although the final statement is of course independent of their choice (and notably holds for all exponents $\PP$ of arbitrary sign) by the definition  \eqref{Eq:dnabla}.

We intend to use the chain rule in  \cref{Pr:Chain rule}. To this aim,  consider the strictly increasing inverse $\smash{\psi_\QQ}$ of the function $\varphi_\QQ$ from \eqref{Eq:varphip} defined by 
\begin{align*}
\psi_\QQ(r) := \QQ^{1/\QQ}\sgn r\,\vert r\vert^{1/\QQ}
\end{align*}
In particular, $\psi_\QQ \circ u_\QQ = l_\Sigma$ on $E_\Sigma$. Moreover, $\psi_\QQ$ is smooth on $\R\setminus\{0\}$, where it obeys
\begin{align*}
\psi_\QQ'(r) &= \QQ^{-1/\PP}\,\vert r\vert^{-1/\PP},\\
\psi_\QQ''(r) &= -\QQ^{-1/\PP}\,\PP^{-1}\sgn r\,\vert r\vert^{-1/\PP-1}.
\end{align*}
By \cref{Pr:Chain rule} and \cref{Th:11}, we already get $\smash{l_\Sigma \in \Dom(\BOX_\PP\mres [I^+(\Sigma)\cup I^-(\Sigma)])}$.

To show the claimed formula, let $\smash{T^\QQ}$ be the weak Radon functional from \cref{Th:11}. By the chain rule and \cref{Cor:Constant slope}, we compute
\begin{align}\label{Eq:dupdupdupdududupdupdup}
\vert \rmd \u_\QQ\vert^\PP = \vert l_\Sigma\vert^{(\QQ-1)\PP}\,\vert\rmd l_\Sigma\vert^\PP = \vert l_\Sigma\vert^\QQ\quad\meas\mres [I^+(\Sigma)\cup I^-(\Sigma)]\textnormal{-a.e.}
\end{align}
In particular, $\smash{\vert \rmd \u_\QQ\vert^\PP}$ is locally $\smash{\meas\mres [I^+(\Sigma)\cup I^-(\Sigma)]}$-integrable. By \cref{Pr:Chain rule} and \cref{Le:Invar p}, the following functional is well-defined on $\smash{\Pert_\bc(l_\Sigma,I^+(\Sigma)\cup I^-(\Sigma))}$ and is an element of $\smash{\BOX_\PP \mres [I^+(\Sigma)\cup I^-(\Sigma)]}$:
\begin{align*}
S(f) &:= \int_Q\int_{\mms_\alpha} f\,(\psi_\QQ')^{\PP-1}\circ u_\QQ\sgn l_\Sigma\,\big[1 + l_\Sigma\,(\log h_\alpha)'\big]\d\meas_\alpha\d\q(\alpha)\\
&\qquad\qquad + \int_Q \big[\!\RCR f\,(\psi_\QQ')^{\PP-1}\circ u_\QQ\,\vert l_\Sigma\vert\,h_\alpha\big]_{a_\alpha}^{b_\alpha}\d\q(\alpha)\\
&\qquad\qquad + (\PP-1)\int_\mms f\,\big[(\psi_\QQ')^{\PP-2}\,\psi_\QQ''\big] \circ\u_\QQ \,\vert\rmd\u_\QQ\vert^\PP \d\meas.
\end{align*}
To see the well-definedness of all integrals, simply note that $\vert l_\Sigma\vert$ is bounded and bounded  away from zero on the support of $f$ by \cref{Le:Bounded} and  lower semicontinuity.

By the disintegration \cref{Th:Disintegration}, since
\begin{align*}
(\psi_\QQ')^{\PP-1}\circ \u_\QQ =  \vert l_\Sigma\vert^{-1} = \sgn l_\Sigma\,(l_\Sigma)^{-1}
\end{align*}
on the support of $f$,  the above expression simplifies to
\begin{align*}
S(f) &= \int_\mms f\,l_\Sigma^{-1}\d\meas  + \int_Q\int_{\mms_\alpha} f\,(\log h_\alpha)'\d\meas_\alpha\d\q(\alpha)  - \int_Q \big[\!\RCR f\,h_\alpha\big]_{a_\alpha}^{b_\alpha}\d\q(\alpha)\\
&\qquad\qquad + (\PP-1)\int_\mms f\,\big[(\psi_\QQ')^{\PP-2}\,\psi_\QQ''\big] \circ\u_\QQ \,\vert\rmd\u_\QQ\vert^\PP \d\meas \mres[I^+(\Sigma)\cup I^-(\Sigma)].
\end{align*}

Finally, we claim the first integral above cancels out with the last. On the support of $f$, 
\begin{align*}
(\psi_\QQ')^{\PP-2}\circ \u_\QQ &= \vert l_\Sigma\vert^{\QQ-2},\\
\psi_\QQ'' \circ\u_\QQ &= -\PP^{-1}\,\QQ\sgn l_\Sigma\,\vert l_\Sigma\vert^{1-2\QQ},
\end{align*}
which implies
\begin{align*}
(\PP-1)\,\big[(\psi_\QQ')^{\PP-2}\,\psi_\QQ''\big]\circ\u_\QQ &= -\sgn l_\Sigma\,\vert l_\Sigma\vert^{-1-\QQ} = -(l_\Sigma)^{-1}\,\vert l_\Sigma\vert^{-\QQ}
\end{align*}
on the same set. Combining this with \eqref{Eq:dupdupdupdududupdupdup}, we arrive at
\begin{align*}
S(f) &= \int_Q\int_{\mms_\alpha} f\,(\log h_\alpha)'
\d\meas_\alpha\d\q(\alpha) - \int_Q\big[\!\RCR f\,h_\alpha\big]_{a_\alpha}^{b_\alpha}\d\q(\alpha)
\end{align*}
and consequently $\smash{S = T}$, which terminates the proof of the second claim.

The last statement is straightforward by \cref{Th:11}. Indeed, via \cref{Pr:Relations} and locality we get $\smash{f\,\vert l_\Sigma\vert^{-1}\in \Pert_\bc(l_\Sigma,I^+(\Sigma)\cup I^-(\Sigma))}$. The trivial identity
\begin{align*}
T(f) = T^\QQ(f\,\vert l_\Sigma\vert^{-1}) - \int_\mms f\,\vert l_\Sigma\vert^{-1}\sgn l_\Sigma\d\meas
\end{align*}
implied by the disintegration \cref{Th:Disintegration} gives the claim.
\end{proof}

The following extends \cref{Cor:Sharp comparison} beyond any integrability assumption.

\begin{corollary}[Sharp d'Alembert comparison III, see also  \cref{Cor:Sharp comparison,Cor:11}]\label{Cor:1515} In the framework from the first part of \cref{Cor:22}, the following hold for $\q$-a.e.~$\alpha\in Q$. 
\begin{enumerate}[label=\textnormal{(\roman*)}]
\item \textnormal{\textbf{Variable version.}} Let $v,w\in \cl\,\mms_\alpha$ be distinct points such that $\smash{w\preceq_{l_\Sigma} v}$. Then $\smash{\meas_\alpha}$-a.e.~$x\in [v,w]$ satisfies
\begin{align*}
&- (N-1) \,\frac{\COS_{(k_\alpha)_{x,w}^-/(N-1)}\circ\,l(x,w)}{\SIN_{(k_\alpha)_{x,w}^-/(N-1)}\circ\,l(x,w)}\\
&\qquad\qquad \leq \Box l_\Sigma(x) \leq (N-1)\,\frac{\COS_{(k_\alpha)^+_{v,x}/(N-1)}\circ\, l(v,x)}{\SIN_{(k_\alpha)^-_{v,x}/(N-1)}\circ\, l(v,x)}.
\end{align*}
\item \textnormal{\textbf{SEC version.}} If $k$ vanishes identically,
\begin{align*}
- (N-1) \,\frac{1}{l(\cdot,b_\alpha)}  \leq \Box l_\Sigma \leq (N-1)\,\frac{1}{l(a_\alpha,\cdot)}\quad\meas_\alpha\textnormal{-a.e.}
\end{align*}
with appropriate interpretation if either endpoints do not exist.
\end{enumerate}
\end{corollary}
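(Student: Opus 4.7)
The plan is to read off the explicit density of the $\meas$-absolutely continuous part of $\BOX l_\Sigma$ from Corollary \ref{Cor:22}, and then apply the one-dimensional logarithmic derivative bounds of Lemma \ref{Le:logarithmic derivative} ray by ray. First, inspecting the functional $T$ in Corollary \ref{Cor:22} and disintegrating via $\q$, the $\meas$-absolutely continuous part localizes on $\q$-a.e.~ray $\mms_\alpha$ as the integrand $(\log h_\alpha)'$; this is precisely what is denoted by $\Box l_\Sigma$ in the $\meas_\alpha$-a.e.~sense (compare \eqref{Eq:Tll} in the general setting, and note that the chain rule argument in the proof of Corollary \ref{Cor:22} cancels the additional $\vert l_\Sigma\vert^{-1}$ term against the distributional derivative of $\vert l_\Sigma\vert$). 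By Remark \ref{Re:Properties MCP}, $h_\alpha$ is positive and locally Lipschitz on the interior of $\mms_\alpha$, so $(\log h_\alpha)'$ exists $\Leb_\alpha^1$-a.e.~and $\meas_\alpha$ is equivalent to $\Leb_\alpha^1$ on $\mms_\alpha$.

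Next, pass to the $\sfg$-parametrization of the ray. By Definition \ref{Def:MCP disintegration}, the real ray representative $\bar h_\alpha$ is an $\MCP(\bar k_\alpha, N)$ density on $\bar\mms_\alpha$. Writing $\bar v_\alpha := \sfg^{-1}(v)$, $\bar w_\alpha := \sfg^{-1}(w)$, and $\bar x := \sfg^{-1}(x)$, the order isometry property of Proposition \ref{Pr:Ray} yields $l(v,x) = \bar x - \bar v_\alpha$ and $l(x,w) = \bar w_\alpha - \bar x$, while by construction (compare \cref{Sub:MCP densities}) the one-sided curvature potentials $(k_\alpha)_{v,x}^+$ and $(k_\alpha)_{x,w}^-$ along the ray agree with the corresponding potentials $(\bar k_\alpha)_{\bar v_\alpha, \bar x}^+$ and $(\bar k_\alpha)_{\bar x, \bar w_\alpha}^-$ on the interval. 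Applying Lemma \ref{Le:logarithmic derivative} to $\bar h_\alpha$ on the subinterval $(\bar v_\alpha, \bar w_\alpha)$ and transferring the resulting two-sided estimate for $(\log \bar h_\alpha)'(\bar x)$ back to $\mms_\alpha$ via Remark \ref{Re:Compat} delivers the variable version (i).

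For the SEC version (ii), insert the identically vanishing potential into (i) and simplify using the explicit formulas from Remarks \ref{Re:Const sink cosk} and \ref{Re:Logarithmic derivative}: the generalized cosine becomes $1$ and the generalized sine becomes the identity, so the one-sided bounds reduce to $\pm(N-1)/l(v,x)$ and $\pm(N-1)/l(x,w)$. Taking the supremum over admissible $v$ and the infimum over admissible $w$ along $\mms_\alpha$ recovers the distances $l(a_\alpha,\cdot)$ and $l(\cdot,b_\alpha)$ to the endpoints; if either endpoint is missing, we truncate and pass to the limit, which sends the corresponding bound to zero (respectively $\pm\sqrt{-K(N-1)}$ when $k$ is bounded below by a negative constant $K$), in exact agreement with the convention of Remark \ref{Re:Unbounded II}.

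The substantive content has already been done in Corollary \ref{Cor:22} (which removed the inverse-length integrability assumption of Theorem \ref{Th:Meas val Alem I} via the $\QQ$-power chain rule trick), so the only genuine task remaining here is bookkeeping: matching the one-sided potentials $(k_\alpha)_{\cdot,\cdot}^\pm$ defined on the ray with the real-line potentials $(\bar k_\alpha)_{\cdot,\cdot}^\pm$ from \eqref{Eq:t vs 1-t}, and verifying compatibility at the endpoints where one or both of $a_\alpha$, $b_\alpha$ may not exist. The main (minor) subtlety is the interpretation of the bound at the boundary of $\mms_\alpha$, resolved by the conventions $h_\alpha(b_\alpha) := 0$ and the infinity convention for distortion coefficients from \cref{Def:Dist coeff}; since both boundary behaviors affect only an $\meas_\alpha$-negligible set, they do not obstruct the stated a.e.~estimate.
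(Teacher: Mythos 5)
Your proposal is correct and follows essentially the same route the paper intends: the corollary is read off from the representation formula of Corollary \ref{Cor:22} by identifying the $\meas$-absolutely continuous density of $\BOX l_\Sigma$ on $\q$-a.e.~ray with $(\log h_\alpha)'$ and then invoking the one-dimensional $\MCP$-density bounds of \cref{Le:logarithmic derivative} via the order isometry of \cref{Pr:Ray} and the compatibility of derivatives from \cref{Re:Compat}. Your handling of the endpoint conventions (and the SEC specialization via \cref{Re:Const sink cosk,Re:Logarithmic derivative}) likewise matches the paper's treatment in \cref{Re:Unbounded I,Re:Unbounded II}.
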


\begin{remark}[Inclusion of singular parts]\label{Re:Measvers} The upper bound from (say, the variable version of) \cref{Cor:1515} actually extends across the future timelike cut locus of $\Sigma$. Indeed, let $w\colon Q \to \Tr\setminus I^+(\Sigma)$ be a given $\q$-measurable map\footnote{One can e.g.~take $w_\alpha$ as the unique intersection of $\cl\,\mms_\alpha$ with $\Sigma$ for every $\alpha\in Q$.}. Then we have
\begin{align*}
T 
&\leq (N-1)\int_Q \frac{\COS_{(k_\alpha)^+_{v_\alpha,\cdot}/(N-1)}\circ\, l(v_\alpha,\cdot)}{\SIN_{(k_\alpha)^+_{v_\alpha,\cdot}/(N-1)}\circ\, l(v_\alpha,\cdot)}\,\meas_\alpha\d\q(\alpha),\\
T^\QQ 
&\leq \meas + (N-1)\int_Q l_\Sigma\, \frac{\COS_{(k_\alpha)^+_{v_\alpha,\cdot}/(N-1)}\circ\, l(v_\alpha,\cdot)}{\SIN_{(k_\alpha)^+_{v_\alpha,\cdot}/(N-1)}\circ\, l(v_\alpha,\cdot)}\,\meas_\alpha\d\q(\alpha)
\end{align*}
in the sense of weak Radon functionals. Here the maps $T$ and $\smash{T^\QQ}$ are from \cref{Cor:22} and \cref{Th:11}, respectively.

Under topological anti-Lipschitzness of $l_\Sigma$ on $\smash{I^+(\Sigma)}$, either bound holds when replacing finite perturbations by characteristic functions of any compact subset of $I^+(\Sigma)$.

Analogous statements hold for the lower bounds relative to $\smash{I^-(\Sigma)}$.\hfill{\footnotesize{$\blacksquare$}}
\end{remark}

\begin{remark}[Sharpness] The upper and the lower bound of \cref{Cor:1515} are sharp. This follows from Treude--Grant's mean curvature comparison theorem \cite{treude-grant2013}*{Thm.~4.5}, see also Treude \cite{treude2011}*{Thm.~3.3.5}. By the chain rule,  \cref{Cor:11} is thus sharp.\hfill{\footnotesize{$\blacksquare$}}
\end{remark}

\begin{remark}[Extension to positive $\PP$]\label{Re:Ext pos q} If we replace $E_\Sigma$ by $I^+(\Sigma)\cup \Sigma$ (and accordingly, the endpoints $b_\alpha$ from the induced disintegration all lie in $\Sigma$), the hypothesis on the sign of $\PP$ from \cref{Th:11} can actually be dropped by \cref{Cor:22} and the chain rule from \cref{Pr:Chain rule}. The representation formula for $\smash{T^\QQ}$  remains the same.\hfill{\footnotesize{$\blacksquare$}}
\end{remark}

\subsubsection{Reference sets $\Sigma$ with cardinality one}\label{Sub:Point} Now we simplify the statements from the previous discussion in the case where $\Sigma$ merely consists of one given point $o\in\mms$. Similarly to \cref{Re:Ext pos q}, we also replace $E_o$ by $\smash{I^+(o)\cup \{o\}}$. In turn, under this restriction it suffices to assume $\Sigma$ is FTC according to \cref{Def:timelike complete}.

Analogously to above, for any nonzero number $\QQ$ less than $1$  we define
\begin{align*}
\v_\QQ := \QQ^{-1}\,l_o^\QQ.
\end{align*}
The relevant counterpart of \eqref{Eq:scrC} is
\begin{align*}
\scrC^+_o := \{f\in \FPert_\comp(l_o, I^+(o)) : f\textnormal{ symmetric}\}\cap \Cont(I^+(o)).
\end{align*}

\begin{corollary}[Measure-valued d'Alembertian for a singleton I, see also \cref{Cor:44}]\label{Cor:33} Let $\scrM$ be a timelike $\TMCP^e(k,N)$ metric measure spacetime. Furthermore, let $\q$ be an $\MCP(k,N)$ disintegration relative to $l_o$ and $\smash{I^+(o)\cup \{o\}}$. Then for every nonzero number $\PP$ less than $1$, we have $\smash{\v_\QQ\in \Dom(\BOX_\PP,I^+(o))}$.

More precisely, the map $\smash{T^\QQ}$ on $\smash{\Pert_\bc(\v_\QQ, I^+(o))}$ defined by
\begin{align*}
T^\QQ(f) := \int_Q\int_{\mms_\alpha} f\,\big[1+l_o\,(\log h_\alpha)'\big]\d\meas_\alpha\d\q(\alpha) - \int_Q\big[\!\RCR f\,l_o\,h_\alpha\big]^{b_\alpha} \d\q(\alpha)
\end{align*}
is an element of $\smash{\BOX_\PP\v_\QQ\mres I^+(o)}$.

Lastly, assume $\smash{l_o}$ is topologically locally anti-Lipschitz on $I^+(o)$. Then the following statements hold.
\begin{enumerate}[label=\textnormal{(\roman*)}]
\item \textnormal{\textbf{Enhanced distribution.}} The map $T^\QQ$ extends continuously to $\smash{\Cont_\comp(I^+(o))}$ and this extension is the difference
\begin{align*}
T^\QQ = \meas + \mu - \nu,
\end{align*}
i.e.~a generalized signed Radon measure on $I^+(o)$; here $\mu$ comes from \cref{Le:nu measure} and $\nu$ is a Radon measure on $\smash{I^+(o)}$.
\item \textnormal{\textbf{Two-sided integration by parts.}} Every $\smash{f\in \scrC_o^+}$ satisfies
\begin{align*}
\int_\mms \rmd^+f(\nabla \v_\QQ)\d\meas \leq -T(f)\leq \int_\mms \rmd^-f(\nabla \v_\QQ)\d\meas.
\end{align*}
\end{enumerate}
\end{corollary}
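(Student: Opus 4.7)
The plan is to derive \cref{Cor:33} as a direct specialization of \cref{Th:11} to the singular achronal set $\Sigma := \{o\}$, working in the restricted ambient set $I^+(o)\cup\{o\}$ rather than the full $E_\Sigma$. By \cref{Re:Ext pos q}, this restriction is precisely what removes the sign hypothesis $\PP<0$ and allows the conclusion for every nonzero $\PP<1$. Since $o$ is trivially achronal and only the future side is relevant, the FTC condition (sufficient in this one-sided setting, cf.~the discussion in \cref{Ex:timedistfunct}) is automatic in our globally hyperbolic framework. The induced transport structure is particularly transparent: for $\q$-a.e.~$\alpha\in Q$ the initial point $a_\alpha$ coincides with $o$ itself, while $b_\alpha$ is the future timelike cut point of $o$ along the ray $\mms_\alpha$ (when it exists).

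On $I^+(o)$ one has $l_o>0$, so $\sgn l_o=1$ and $\vert l_o\vert=l_o$; this turns the general formula of \cref{Th:11} into the one stated here. The crucial simplification is that the boundary contribution at $a_\alpha=o$ drops out, because $l_o(o)=0$ forces the product $\RCR f\cdot l_o\cdot h_\alpha$ to vanish at $o$ regardless of the value of $\RCR f(o)$. Hence only the $b_\alpha$-term survives in the singular part. The equality of perturbation classes $\Pert_\bc(\v_\QQ,I^+(o))=\Pert_\bc(l_o,I^+(o))$ follows from the one-sided analogue of \cref{Le:Invar p}, so that $\v_\QQ$ and $l_o$ can be used interchangeably at the level of test functions. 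The weak Radon property of $T^\QQ$ then follows by the same bounds on $(\log h_\alpha)'$ and $h_\alpha$ used in \cref{Le:Radon funct II}, applied only on the future side.

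For the enhanced distribution, I would invoke \cref{Le:nu measure} applied only to a cover of $I^+(o)$, yielding an $\meas$-absolutely continuous Radon measure $\mu$ dominating $l_o\,(\log h_\alpha)'$ via the inequality in \eqref{Eq:BLBL}. Consequently $S := \meas\mres I^+(o)+\mu\mres I^+(o)-T^\QQ$ is a nonnegative weak Radon functional on $\Pert_\bc(\v_\QQ,I^+(o))$. By \cref{Le:VsTop}, under the topological local anti-Lipschitz hypothesis the class $\scrC_o^+$ is uniformly dense in $\Cont_\comp(I^+(o))$, so $S$ extends continuously to a nonnegative bounded functional on $\Cont_\comp(I^+(o))$. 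The Riesz--Markov--Kakutani theorem produces a Radon measure $\nu$ on $I^+(o)$ representing $S$, giving the decomposition $T^\QQ=\meas+\mu-\nu$. The two-sided integration by parts for $f\in\scrC_o^+$ is then a direct application of the symmetrization argument of \cref{Re:Two sided} combined with \cref{Le:Diff quot}, using that each such $f$ is symmetric so that both $f$ and $f^\leftarrow$ lie in $\Pert_\bc(\v_\QQ,I^+(o))$.

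The only subtle point, and the main (mild) obstacle, is the bookkeeping at the initial endpoint $o$: one has to verify that $h_\alpha\cdot l_o$ indeed extends continuously to zero at $o$ so that the boundary formula produced by the pathwise integration by parts of \cref{Le:IBP II} collapses cleanly to a single contribution at $b_\alpha$. This follows from the local Lipschitz bound on $h_\alpha$ on the interior of each ray (\cref{Re:Properties MCP}) together with $l_o(o)=0$, so no pathology can arise even if $h_\alpha(o)=\infty$, since the prefactor $l_o$ kills any finite value of $\RCR f(o)$.
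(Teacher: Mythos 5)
Your proposal is correct and matches the paper's intended derivation: Cor:33 is stated without proof precisely as the specialization of \cref{Th:11} (via \cref{Re:Ext pos q} and the restriction to $I^+(o)\cup\{o\}$) that you describe, with the $a_\alpha$-boundary term killed by $l_o(o)=0$ and the enhanced statements obtained exactly as in the proof of \cref{Th:11} restricted to the future side. The only cosmetic point is that the hedge about $h_\alpha(o)=\infty$ is unnecessary, since \cref{Re:Properties MCP} already gives a continuous (hence finite) extension of $h_\alpha$ to the closure of each ray.
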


\begin{corollary}[Measure-valued d'Alembertian for a singleton II, see also \cref{Cor:33}]\label{Cor:44} Let $\scrM$ be a timelike $\TMCP^e(k,N)$ metric measure spacetime. Let $\q$ form  an $\MCP(k,N)$ disintegration relative to $l_o$ and $\smash{I^+(o)\cup \{o\}}$. Then $\smash{l_o\in \Dom(\BOX,I^+(o))}$.

More precisely, the map $T$ defined on $\smash{\Pert_\bc(l_o,I^+(o))}$ defined by
\begin{align*}
T(f) := \int_Q\int_{\mms_\alpha} f\,(\log h_\alpha)'\d\meas_\alpha\d\q(\alpha) - \int_Q \big[\!\RCR f\,h_\alpha\big]^{b_\alpha}\d\q(\alpha)
\end{align*}
consitutes an element of $\BOX l_o\mres I^+(o)$.

Lastly, suppose $\smash{l_o}$ is topologically locally anti-Lipschitz on $I^+(o)$. Then the following statements hold.
\begin{enumerate}[label=\textnormal{(\roman*)}]
\item \textnormal{\textbf{Enhanced distribution.}} The map $T$ extends continuously to $\smash{\Cont_\comp(I^+(o))}$, and this extension is the difference
\begin{align*}
T^\QQ = l_o^{-1}\,(\mu - \nu),
\end{align*}
i.e.~a generalized signed Radon measure on $I^+(o)$; here $\mu$ comes from \cref{Le:nu measure} and $\nu$ is a Radon measure on $\smash{I^+(o)}$.
\item \textnormal{\textbf{Two-sided integration by parts.}} Every $\smash{f\in \scrC_o^+}$ satisfies
\begin{align*}
\int_\mms \rmd^+f(\nabla l_o)\,\vert\rmd l_o\vert^{\PP-2}\d\meas \leq -T^\QQ(f)\leq \int_\mms \rmd^-f(\nabla l_o)\,\vert\rmd l_o\vert^{\PP-2}\d\meas.
\end{align*}
\end{enumerate}
\end{corollary}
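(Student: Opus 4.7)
The plan is to derive \cref{Cor:44} from \cref{Cor:33} through the chain rule (\cref{Pr:Chain rule}), mirroring the passage from \cref{Th:11} to \cref{Cor:22}, but simplified by the fact that $l_o > 0$ on $I^+(o)$ so no sign issues arise. Fix any $\PP \in (-\infty,1)\setminus\{0\}$ and its conjugate exponent $\QQ$; the final statement will be independent of this choice by \cref{Cor:Constant slope} and \cref{Re:Indep!}. Set $\psi_\QQ(r) := \QQ^{1/\QQ}\,r^{1/\QQ}$ for $r > 0$, so that $\psi_\QQ$ is smooth and strictly increasing with $\psi_\QQ \circ \v_\QQ = l_o$ on $I^+(o)$.

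First I would apply \cref{Pr:Chain rule} with $\varphi := \psi_\QQ$ to the weak Radon functional $T^\QQ$ from \cref{Cor:33}; by \cref{Cor:Constant slope} we compute $\vert\rmd\v_\QQ\vert^\PP = l_o^\QQ$ $\meas\mres I^+(o)$-a.e., which is locally integrable, so the hypothesis of \cref{Pr:Chain rule} is met. By the analog of \cref{Le:Invar p} (restricted to $I^+(o)$), the classes $\Pert_\bc(l_o,I^+(o))$ and $\Pert_\bc(\v_\QQ,I^+(o))$ agree. The resulting functional on $\Pert_\bc(l_o,I^+(o))$ evaluates to
\begin{align*}
S(f) &= \int_Q\!\int_{\mms_\alpha} f\,(\psi_\QQ')^{\PP-1}\!\circ \v_\QQ\,\big[1 + l_o\,(\log h_\alpha)'\big]\d\meas_\alpha\d\q(\alpha) \\
&\qquad - \int_Q \big[\!\RCR f\,(\psi_\QQ')^{\PP-1}\!\circ \v_\QQ\,l_o\,h_\alpha\big]^{b_\alpha}\d\q(\alpha) \\
&\qquad + (\PP-1)\int_\mms f\,\big[(\psi_\QQ')^{\PP-2}\,\psi_\QQ''\big]\!\circ\v_\QQ\,\vert\rmd\v_\QQ\vert^\PP\d\meas.
\end{align*}

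Next I would carry out the key cancellation: on $\supp f$, direct computation gives $(\psi_\QQ')^{\PP-1}\circ \v_\QQ = l_o^{-1}$ and $(\PP-1)\,[(\psi_\QQ')^{\PP-2}\,\psi_\QQ''] \circ \v_\QQ = -l_o^{-1-\QQ}$, so that the Lebesgue integral exactly cancels the $\meas$-contribution coming from the ``$1$'' in the bracket of the first integral (using $\vert\rmd\v_\QQ\vert^\PP = l_o^\QQ$ and the disintegration \cref{Th:Disintegration}). The endpoint terms collapse via $(\psi_\QQ')^{\PP-1}\circ \v_\QQ(b_\alpha)\,l_o(b_\alpha) = 1$. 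What remains is precisely the functional $T$ defined in the statement, yielding $T \in \BOX l_o \mres I^+(o)$.

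For the enhanced distributional statement under the topological local anti-Lipschitz condition, I would invoke the analog of \cref{Le:VsTop} restricted to $I^+(o)$ (which ensures $\scrC_o^+$ is uniformly dense in $\Cont_\comp(I^+(o))$), then use the identity $T(f) = T^\QQ(f\,l_o^{-1}) - \int_\mms f\,l_o^{-1}\d\meas$ for $f$ with compact support (valid by the disintegration \cref{Th:Disintegration} and the admissibility of $f\,l_o^{-1}$ ensured by \cref{Pr:Relations} and locality) to transfer the representation from \cref{Cor:33} to $T$; the $\meas$-contribution and $\mu$-contribution regroup into the single factor $l_o^{-1}\,\mu$ and the singular piece of \cref{Cor:33} becomes $-l_o^{-1}\,\nu$. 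The two-sided integration by parts estimate follows from \cref{Re:Two sided} since $\scrC_o^+$ consists of symmetric finite perturbations and $T$ is linear in the required sense (cf.~\cref{Re:About II}).

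The main obstacle I anticipate is the bookkeeping for the endpoint contributions in the chain rule application: one must carefully verify that the multiplication by $(\psi_\QQ')^{\PP-1}\circ\v_\QQ$ at $b_\alpha$ produces exactly the factor that converts $l_o\,h_\alpha$ into $h_\alpha$, which requires $\psi_\QQ$ (or rather $\varphi_\QQ$) to have the right behavior at the boundary of $\supp f$, where $l_o$ is bounded away from zero by \cref{Le:Bounded} and lower semicontinuity. Once this is in place, the rest is routine transcription of the arguments from \cref{Cor:22}.
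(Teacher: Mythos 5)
Your proposal is correct and follows essentially the same route as the paper: the paper obtains \cref{Cor:44} as the singleton specialization of \cref{Cor:22}, whose proof is exactly the chain-rule passage (via $\psi_\QQ$, the cancellation of the $\meas$-absolutely continuous correction term, and the identity $T(f) = T^\QQ(f\,l_o^{-1}) - \int_\mms f\,l_o^{-1}\d\meas$) that you reproduce in the setting of $I^+(o)$. The only cosmetic remark is that in the final decomposition the $\meas$-term of $T^\QQ$ from \cref{Cor:33} cancels exactly against the subtracted $\int_\mms f\,l_o^{-1}\d\meas$ rather than ``regrouping'' with $\mu$, leaving $T = l_o^{-1}(\mu-\nu)$ as claimed.
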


Note that the singular parts of the functionals in \cref{Cor:33,Cor:44}, which are ``concentrated'' on the future cut locus $\smash{\TCut^+(o)}$,  are \emph{nonpositive}.

To further simplify the statements about the accompanying comparison theorems, we directly assume $k$ vanishes identically. 

\begin{corollary}[Sharp d'Alembert comparison for a singleton I, see also \cref{Cor:M}]\label{Cor:N} In the framework from the first part of \cref{Cor:33}, we assume in addition $k$ vanishes identically. Then for $\q$-a.e.~$\alpha\in Q$,
\begin{align*}
&1 - (N-1)\,l_o\,\frac{1}{l(\cdot,b_\alpha)}  \leq \Box_\PP\v_\QQ \leq N\quad\meas_\alpha\textnormal{-a.e.}
\end{align*}
with appropriate interpretation if the final point does not exist.
\end{corollary}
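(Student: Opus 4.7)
My plan is to derive both bounds by combining the explicit representation formula for $\Box_\PP \v_\QQ$ along a ray (introduced right after \cref{Cor:11}) with the sharp pointwise bounds on $\Box l_o = (\log h_\alpha)'$ provided by \cref{Cor:1515} in its SEC version. First, since $\Sigma = \{o\}$ is trivially FTC and $l_o > 0$ throughout $I^+(o)$, the general formula
\begin{align*}
\Box_\PP \v_\QQ = \sgn l_o\,\big[1 + l_o\,(\log h_\alpha)'\big]
\end{align*}
reduces on $\mms_\alpha \cap I^+(o)$ to $\Box_\PP \v_\QQ = 1 + l_o\,(\log h_\alpha)'$. Moreover, the way $\sfg$ parametrizes each ray from its past endpoint (see \cref{Sub:Ray map} and \cref{Re:Uniqfttpts}) forces $a_\alpha = o$ for every $\alpha \in Q$ whose ray meets $I^+(o)$, so that $l(a_\alpha,\cdot) = l_o$ along $\mms_\alpha$.

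Next, I would invoke \cref{Cor:1515} in its SEC form (which applies since $k\equiv 0$) to $l_\Sigma = l_o$ on the portion $\mms_\alpha \cap I^+(o)$. This yields, for $\q$-a.e.~$\alpha \in Q$ and $\meas_\alpha$-a.e.~on $\mms_\alpha \cap I^+(o)$,
\begin{align*}
-(N-1)\,\frac{1}{l(\cdot,b_\alpha)} \leq (\log h_\alpha)' \leq (N-1)\,\frac{1}{l_o}.
\end{align*}
Multiplying through by $l_o > 0$ and adding $1$ delivers exactly the claimed two-sided bound
\begin{align*}
1 - (N-1)\,l_o\,\frac{1}{l(\cdot,b_\alpha)} \leq \Box_\PP \v_\QQ \leq N.
\end{align*}
The case in which the final point $b_\alpha$ does not exist is handled by \cref{Re:Unbounded II}: with the lower Ricci bound $K=0$, the lower estimate on $(\log h_\alpha)'$ degenerates to $(\log h_\alpha)' \geq 0$, which gives $\Box_\PP \v_\QQ \geq 1$, matching the formal interpretation $l(\cdot,b_\alpha) = \infty$ in the stated inequality.

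I do not anticipate any real obstacle; the whole argument is essentially a substitution once \cref{Cor:1515} and the representation formula following \cref{Cor:11} are in hand. The only conceptual points to verify are the identification $a_\alpha = o$ for rays meeting $I^+(o)$ in the singleton case, and the strict positivity of $l_o$ on $I^+(o)$ (required for the sign-preserving multiplication by $l_o$ in the inequality). Both are immediate from the standing assumptions of \cref{Sub:Point}.
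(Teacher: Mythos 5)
Your argument is correct and is essentially the paper's own route: \cref{Cor:N} is just the SEC version of \cref{Cor:11} specialized to the singleton, where $\sgn l_o=1$, $a_\alpha=o$ so $l(a_\alpha,\cdot)=l_o$, and the upper bound collapses to $1+(N-1)=N$; your detour through \cref{Cor:1515} (multiplying the bound on $(\log h_\alpha)'$ by $l_o$ and adding $1$) is the same underlying computation via \cref{Le:logarithmic derivative}. The handling of nonexistent $b_\alpha$ via \cref{Re:Unbounded II} with $K=0$ is also as intended.
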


\begin{corollary}[Sharp d'Alembert comparison for a singleton II, see also \cref{Cor:N}]\label{Cor:M} In the framework from the first part of \cref{Cor:33}, we assume in addition $k$ vanishes identically. Then for $\q$-a.e.~$\alpha\in Q$,
\begin{align*}
-(N-1)\,\frac{1}{l(\cdot,b_\alpha)} \leq \Box l_o \leq (N-1)\,\frac{1}{l_o}\quad\meas_\alpha\textnormal{-a.e.}
\end{align*}
with appropriate interpretation if the final point does not exist.
\end{corollary}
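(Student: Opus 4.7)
The plan is to derive Corollary~\ref{Cor:M} as a direct specialization of Corollary~\ref{Cor:1515} to the singleton case $\Sigma=\{o\}$, combined with the one-dimensional density bounds of Lemma~\ref{Le:logarithmic derivative} in the vanishing-potential form of Remark~\ref{Re:Logarithmic derivative}. Concretely, I would first invoke the representation formula for the absolutely continuous part established in Corollary~\ref{Cor:44}, namely
\begin{align*}
\Box l_o = (\log h_\alpha)'\quad\meas_\alpha\textnormal{-a.e.}
\end{align*}
for $\q$-a.e.~$\alpha\in Q$, which holds on each ray $\mms_\alpha$ in the setting of Corollary~\ref{Cor:33}.

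Next, I would exploit the structure of the singleton case: since $\Sigma=\{o\}$ and we are working on $I^+(o)\cup\{o\}$, every ray emanates from $o$. Through the order isometry $\sfg_\cdot(\alpha)$ from Proposition~\ref{Pr:Ray}, the interval $\bar{\mms}_\alpha$ is identified with $(0,L_\alpha)$ where $L_\alpha := l(o,b_\alpha)$ if $b_\alpha$ exists and $L_\alpha := \infty$ otherwise; moreover the parameter $t$ along the ray equals precisely $l_o$ at the corresponding point, by the definition of $l_o$ and Remark~\ref{Re:Loci}. Using the MCP-disintegration hypothesis with $k\equiv 0$, the density $\bar{h}_\alpha$ is an $\MCP(0,N)$ density on $(0,L_\alpha)$, so Remark~\ref{Re:Logarithmic derivative} applied with $a=0$ and $b=L_\alpha$ gives at $\Leb^1$-a.e.~$t\in(0,L_\alpha)$
\begin{align*}
-(N-1)\,\frac{1}{L_\alpha-t}\leq (\log \bar{h}_\alpha)'(t)\leq (N-1)\,\frac{1}{t}.
\end{align*}
Transporting these inequalities back through $\sfg_\cdot(\alpha)$, using $l_o\circ\sfg_t(\alpha)=t$ and $l(\sfg_t(\alpha),b_\alpha)=L_\alpha-t$, together with the compatibility of derivatives noted in Remark~\ref{Re:Compat}, yields exactly the claimed bounds on $\mms_\alpha$ for $\q$-a.e.~$\alpha\in Q$.

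The only remaining point is the ``appropriate interpretation'' when $b_\alpha$ fails to exist. In that case $L_\alpha=\infty$, and the right-continuous lower bound $-(N-1)/(L_\alpha-t)$ is understood as $0$ in the limit $L_\alpha\to\infty$, consistent with the convention already adopted in Remark~\ref{Re:Unbounded II} (noting that here $k$ is constant equal to $0$, so the extension is immediate). I do not anticipate a genuine obstacle: the argument is essentially bookkeeping, and the only delicate point is making sure that the parametrization by $\sfg$ indeed identifies $l_o$ with the affine coordinate starting at $0$, which is guaranteed by the fact that $o\in \cl\,\mms_\alpha$ is the initial point and by the order isometry property in Proposition~\ref{Pr:Ray}.
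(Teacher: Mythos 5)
Your proposal is correct and follows exactly the route the paper intends: the representation $\Box l_o=(\log h_\alpha)'$ from \cref{Cor:44} (i.e.~\eqref{Eq:Tll}), the $\MCP(0,N)$ density bounds of \cref{Le:logarithmic derivative} in the form of \cref{Re:Logarithmic derivative}, and the observation that in the singleton case the affine ray coordinate (after the harmless translation of the base point) coincides with $l_o$ so that $l(a_\alpha,\cdot)=l_o$, with the unbounded-ray case handled as in \cref{Re:Unbounded II}. No gaps.
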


\begin{remark}[Eschenburg's d'Alembert comparison theorem]\label{Re:Eschen} The upper estimate from  \cref{Cor:M} extends a causally reversed version of Eschenburg's d'Alembert comparison theorem established in his smooth proof of the Lorentzian splitting theorem \cite{eschenburg1988}*{§5}.\hfill{\footnotesize{$\blacksquare$}}
\end{remark}

\begin{remark}[On the lower bound in \cref{Cor:M}] 
Now let us replace $K$ by $N-1$ for simplicity, although the following discussion has an adaptation to general $K$. For $\varepsilon > 0$, let $D_\varepsilon$ denote the set of all points of the form $\sfg_t(a_\alpha)$, where $\alpha\in Q$ and $t\geq \varepsilon$ (whenever defined). In other words, $D_\varepsilon$ is the set of all points in $\smash{I^+(o)}$ with distance at least $\varepsilon$ to the cut point of  the  ray they lie on. Since $\cot$ is strictly decreasing, \cref{Cor:M} implies
\begin{align*}
\Box l_o \geq -(N-1)\cot \varepsilon\quad\meas\mres D_\varepsilon\textnormal{-a.e.}
\end{align*}
The lower bound notably depends on the dimensional parameter $N$ (and implicitly on $K$) as well as the distance parameter $\varepsilon$, but not on the given structure $\scrM$.\hfill{\footnotesize{$\blacksquare$}}
\end{remark}

\subsubsection{Unsigned Lorentz distance functions}\label{Sub:Unsign time} For this final digression, let us switch back to the setting of \cref{Gen Sigm} where we took into account the chronological future \emph{and} past of $\Sigma$. Let $\PP$ and $\QQ$ be mutually conjugate nonzero numbers less than $1$. We then set
\begin{align*}
\w_\QQ := \QQ^{-1}\,\vert l_\Sigma\vert^\QQ.
\end{align*}
In the consideration of the power $\u_\QQ$ of $l_\Sigma$ in \cref{Th:11} et seq., a sign appeared from the retention of $l$-causality on $I^-(\Sigma)$. In comparison, the unsigned functions $\smash{\w_\QQ}$ and $\vert l_\Sigma\vert$ are $l$-causal on $\smash{I^+(\Sigma)\cup\Sigma}$ and $\smash{l^\leftarrow}$-causal  on $\smash{I^-(\Sigma)\cup \Sigma}$. The goal of \cref{Th:UnsigndalemI} and \cref{Th:UnsigndalemII} is to make sense of the ($\PP$-)d'Alembertian of these two functions.

For a meaningful integration by parts formula, we  need to  disambiguate the potentially different Sobolev calculi produced by $\smash{\scrM}$ and $\smash{\scrM^{\leftarrow}}$ (recall \cref{Re:Behcaus}). To this aim, the condition from \cref{Def:Rev} is natural. It should be compared to reversibility of Finsler spacetimes, cf.~\cref{Sub:LorentzFinsler}. It is trivially satisfied on a genuine Lorentz spacetime.

In the sequel, to relax our notation we employ the following convention. Whenever a left arrow $\leftarrow$ appears in a quantity or formula, every adjacent object (e.g.~maximal weak subslopes) is automatically understood with respect to the structure  $\smash{\scrM^\leftarrow}$.

\begin{definition}[Reversibility]\label{Def:Rev} We call $\scrM$ \emph{reversible} if for every $l$-causal function $f$,
\begin{align*}
\vert\rmd f\vert = \vert\rmd f^\leftarrow\vert\quad\meas\textnormal{-a.e.}
\end{align*}
\end{definition}

\begin{remark}[Variant of \cref{Le:Diff quot}]\label{Re:Vat!} As for \cref{Le:Invar p}, the two disjoint unions
\begin{align*}
\Gamma &:= \Pert_\bc(l_\Sigma, I^+(\Sigma))\cup \Pert_\bc(l_\Sigma^\leftarrow,I^-(\Sigma))\\
\Gamma_\QQ &:= \Pert_\bc(\u_\QQ,I^+(\Sigma)) \cup \Pert_\bc(\u_\QQ^\leftarrow,I^-(\Sigma))
\end{align*}
coincide. Moreover, if $\scrM$ is reversible then for every $f\in \Gamma$,
\begin{alignat*}{3}
\rmd^+f(\nabla l_\Sigma^\leftarrow)\,\vert\rmd l_\Sigma^\leftarrow\vert^{p-2} &= \rmd^+(f^\leftarrow)(\nabla l_\Sigma)\,\vert\rmd l_\Sigma\vert^{\PP-2} &&\meas \mres I^-(\Sigma)\textnormal{-a.e.}\\
\rmd^+f(\nabla\u_\QQ^\leftarrow)\,\vert\rmd\u_\QQ^\leftarrow\vert^{\PP-2} &= \rmd^+(f^\leftarrow)(\nabla\u_\QQ)\,\vert\rmd \u_\QQ\vert^{\PP-2} &\quad &\meas\mres I^-(\Sigma)\textnormal{-a.e.}\tag*{{\footnotesize{$\blacksquare$}}}
\end{alignat*}
\end{remark}

Based on \cref{Re:Vat!}, \eqref{Eq:dnabla}, and locality, for $f\in \Gamma$ it makes sense to define
\begin{align*}
\rmd^+f(\nabla\vert l_\Sigma\vert) &:= \begin{cases} \rmd^+f(\nabla l_\Sigma) & \textnormal{if } f\in \Pert_\bc(l_\Sigma,I^+(\Sigma)),\\
\rmd^+f(\nabla l_\Sigma^\leftarrow) & \textnormal{if } f\in \Pert_\bc(l_\Sigma^\leftarrow,I^-(\Sigma)),
\end{cases}\\
\rmd^+f(\nabla \w_\QQ)\,\vert\rmd\w_\QQ\vert^{\PP-2} &:= \begin{cases} \rmd^+f(\nabla \u_\QQ)\,\vert\rmd \u_\QQ \vert^{\PP-2} & \textnormal{if } f\in \Pert_\bc(\u_\QQ,I^+(\Sigma)),\\
\rmd^+f(\nabla \u_\QQ^\leftarrow)\,\vert\rmd \u_\QQ^\leftarrow\vert^{\PP-2} & \textnormal{if } f\in \Pert_\bc(\u_\QQ^\leftarrow,I^-(\Sigma)).
\end{cases}
\end{align*}
We define $\smash{\rmd^-f(\nabla\vert l_\Sigma\vert)}$ and $\smash{\rmd^-f(\nabla \w_\QQ)\,\vert\rmd\w_\QQ\vert^{\PP-2}}$ analogously provided $\smash{f\in\Gamma\cap \scrC_\Sigma}$.

\begin{theorem}[Unsigned d'Alembertian I, see also \cref{Th:UnsigndalemII}]\label{Th:UnsigndalemI} Let $\scrM$ constitute a reversible timelike essentially nonbranching $\smash{\TMCP^e(k,N)}$ metric measure spacetime. Let $\q$ be an $\MCP(k,N)$ disintegration relative to $l_\Sigma$ and $E_\Sigma$ and let $\PP$ be a nonzero number less than $1$. Then the functional $\smash{R^\QQ}$ on $\smash{\Gamma_\QQ}$ defined by 
\begin{align*}
R^\QQ(f) &:= \int_Q\int_{\mms_\alpha} f\,\big[1+l_\Sigma\,(\log h_\alpha)'\big]\d\meas_\alpha\d\q(\alpha) - \int_Q\big[\!\RCR f\,l_\Sigma\,h_\alpha\big]^{b_\alpha}_{a_\alpha}\d\q(\alpha)
\end{align*}
belongs to $\smash{\BOX_\PP\w_\QQ\mres [I^+(\Sigma)\cup I^-(\Sigma)]}$ in sense that it forms a weak Radon functional on $\smash{\Gamma_\QQ}$ and every $\smash{f\in \Gamma_\QQ}$ satisfies the inequality
\begin{align*}
\int_\mms \rmd^+f(\nabla\w_\QQ)\,\vert\rmd\w_\QQ\vert^{\PP-2}\d\meas \leq -R^\QQ(f).
\end{align*}

Moreover, if $\smash{l_\Sigma}$ is topologically locally anti-Lipschitz on $\smash{I^+(\Sigma)\cup I^-(\Sigma)}$, the following statements hold.
\begin{enumerate}[label=\textnormal{(\roman*)}]
\item \textnormal{\textbf{Enhanced distribution.}} The map $R^\QQ$ extends continuously to $\smash{\Cont_\comp(I^+(\Sigma)\cup I^-(\Sigma))}$ and this extension is the difference
\begin{align*}
R^\QQ = \meas + \mu - \nu,
\end{align*} 
i.e.~a generalized signed Radon measure on $\smash{I^+(\Sigma)\cup I^-(\Sigma)}$; here $\mu$ comes from \cref{Le:nu measure} and $\nu$ is a Radon measure on $\smash{I^+(\Sigma)\cup I^-(\Sigma)}$.
\item \textnormal{\textbf{Two-sided integration by parts.}} Every $\smash{f\in\Gamma_\QQ\cap \scrC_\Sigma}$ satisfies
\begin{align*}
\int_\mms\rmd^+f(\nabla \w_\QQ)\,\vert\rmd\w_\QQ\vert^{\PP-2}\d\meas \leq -R^\QQ(f) \leq \int_\mms\rmd^-f(\nabla \w_\QQ)\,\vert\rmd\w_\QQ\vert^{\PP-2}\d\meas.
\end{align*}
\end{enumerate}
\end{theorem}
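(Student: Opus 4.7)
The plan is to decompose the problem along the two disjoint open sets $I^+(\Sigma)$ and $I^-(\Sigma)$ and to apply \cref{Th:11} (in the variant of \cref{Re:Ext pos q}) separately in each region. On $I^+(\Sigma)$, I would work in the original structure $\scrM$; on $I^-(\Sigma)$, I would pass to the time-reversed structure $\scrM^\leftarrow$. By the global-to-local property for $\Dom(\BOX_\PP)$ noted in \cref{Re:Props alemb}, together with the fact that every element of $\Gamma_\QQ$ has support entirely in one of the two pieces by its very definition in \cref{Re:Vat!}, the two contributions combine additively into a single weak Radon functional on $\Gamma_\QQ$.

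On $I^+(\Sigma)$, since $\w_\QQ = \u_\QQ$ and $\sgn l_\Sigma\equiv 1$, the classes $\Pert_\bc(\u_\QQ,I^+(\Sigma))$ and $\Pert_\bc(\w_\QQ,I^+(\Sigma))$ coincide and the formula defining $R^\QQ$ collapses to that of the functional $T^\QQ$ from \cref{Th:11} on this piece. On $I^-(\Sigma)$, I would transfer the task to $\scrM^\leftarrow$, which inherits timelike essential nonbranching and $\smash{\TMCP^e(k,N)}$ since both properties are invariant under time-reversal. The disintegration $\q$ localizes to an $\MCP(k,N)$ disintegration of $\scrM^\leftarrow$ relative to $l_\Sigma^\leftarrow = -l_\Sigma$, which is positive on $I^-(\Sigma)$. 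By \cref{Re:INV CAUS RE II} the conditional measures $\meas_\alpha$ and densities $h_\alpha$ are left untouched, while derivatives along the reversed ray map acquire a sign and the roles of initial and final endpoints swap. Applying \cref{Th:11} (extended via \cref{Re:Ext pos q}) in $\scrM^\leftarrow$ to $\u_\QQ^\leftarrow = \w_\QQ$ yields a functional $T^{\QQ,\leftarrow}$ on $\Pert_\bc(\u_\QQ^\leftarrow,I^-(\Sigma))$ whose formula, after unwinding the sign flips and re-indexing the endpoints in original-orientation language, coincides with $R^\QQ$ restricted to this piece. Reversibility of $\scrM$ combined with \cref{Re:Vat!} identifies the vertical differentials in the two structures $\meas$-a.e., so the defining inequality of the $\PP$-d'Alembertian transfers back from $\scrM^\leftarrow$ to $\scrM$ verbatim, completing the first assertion.

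For the enhanced distribution claim under topological local anti-Lipschitzness, the bound \eqref{Eq:BLBL} and its time-reversed analog on $I^-(\Sigma)$ show that $R^\QQ$ is dominated above by $\meas + \mu$ in the sense of weak Radon functionals on each of $I^+(\Sigma)$ and $I^-(\Sigma)$; \cref{Le:VsTop} then promotes $\meas + \mu - R^\QQ$ to a nonnegative continuous functional on $\Cont_\comp(I^+(\Sigma)\cup I^-(\Sigma))$, and the Riesz--Markov--Kakutani representation theorem yields the claimed decomposition with some Radon measure $\nu$. The two-sided integration by parts for $f\in\Gamma_\QQ\cap\scrC_\Sigma$ would follow from the symmetrization trick of \cref{Re:Two sided}: applying the already-established one-sided inequality to $f^\leftarrow$ (also in $\Gamma_\QQ$ by symmetry) and invoking \cref{Le:Diff quot} together with \cref{Re:Vat!} furnishes the companion upper bound on $-R^\QQ(f)$. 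The main technical obstacle throughout will be to keep track of how the signs of $l_\Sigma$, of $(\log h_\alpha)'$, of the $\sgn l_\Sigma$ prefactor appearing in the formula for $T^\QQ$, and of the boundary terms at $a_\alpha$ and $b_\alpha$ all transform consistently under time-reversal, so that the two half-pieces of $R^\QQ$ splice together seamlessly without any residual $\sgn l_\Sigma$ factor.
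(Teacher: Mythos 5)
Your proposal is correct and follows essentially the same route as the paper, whose own proof is a two-line sketch: apply \cref{Th:11} together with \cref{Re:Ext pos q} on $I^+(\Sigma)$, and handle $I^-(\Sigma)$ by time-reversal, with the details "left out". You have simply carried out that argument in full, and your explicit attention to the sign bookkeeping under time-reversal (via \cref{Re:INV CAUS RE II} and \cref{Re:Vat!}) supplies exactly the details the paper omits.
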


\begin{proof} When considered on $\smash{I^+(\Sigma)}$, the statement readily follows from \cref{Th:11} and \cref{Re:Ext pos q}. On $\smash{I^-(\Sigma)}$, the claim follows by considering the mentioned results by replacing the variable $f$ by $\smash{f^\leftarrow}$  (which makes the corresponding signed Lorentz distance function switch its sign compared to $l_\Sigma$). We leave out the details.
\end{proof}

\begin{corollary}[Sharp unsigned d'Alembert comparison I, see also \cref{Cor:UNS2}]\label{Cor:UNS1} In the framework from the first part of  \cref{Th:UnsigndalemI}, the following hold for $\q$-a.e. $\alpha\in Q$. 
\begin{enumerate}[label=\textnormal{(\roman*)}]
\item \textnormal{\textbf{Variable version.}} Let $v,w\in \cl\,\mms_\alpha$ be distinct points such that $\smash{v\preceq_{l_\Sigma} w}$. Then $\smash{\meas_\alpha}$-a.e.~$x\in[v,w]$ satisfies
\begin{align*}
&1 - (N-1)\, l_\Sigma(x)\,\frac{\COS_{(k_\alpha)^-_{x,w}/(N-1)}\circ\, l(x,w)}{\SIN_{(k_\alpha)^-_{x,w}/(N-1)}\circ\, l(x,w)}\\
&\qquad\qquad \leq \Box_\PP\w_\QQ(x) \leq 1 + (N-1)\,l_\Sigma(x)\,\frac{\COS_{(k_\alpha)^+_{v,x}/(N-1)}\circ\, l(v,x)}{\SIN_{(k_\alpha)^+_{v,x}/(N-1)}\circ\, l(v,x)}.
\end{align*}
\item \textnormal{\textbf{SEC version.}} If $k$ vanishes identically,
\begin{align*}
&1 - (N-1)\, l_\Sigma\,\frac{1}{l(\cdot,b_\alpha)}  \leq \Box_\PP\w_\QQ \leq 1 + (N-1)\,l_\Sigma\,\frac{1}{l(a_\alpha,\cdot)}\quad\meas_\alpha\textnormal{-a.e.}
\end{align*}
with appropriate interpretation if either endpoints do not exist.
\end{enumerate}
\end{corollary}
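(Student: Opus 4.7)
The plan is to read off $\Box_\PP\w_\QQ$ from the representation of $R^\QQ\in\BOX_\PP\w_\QQ\mres[I^+(\Sigma)\cup I^-(\Sigma)]$ established in \cref{Th:UnsigndalemI}, and then insert the sharp pointwise bounds for $(\log h_\alpha)'$ supplied by \cref{Le:logarithmic derivative}. By construction of $R^\QQ$, along $\q$-a.e.~ray $\mms_\alpha$ the $\meas_\alpha$-absolutely continuous part of $R^\QQ$ has density $1+l_\Sigma(\log h_\alpha)'$, so
\begin{align*}
\Box_\PP\w_\QQ = 1 + l_\Sigma\,(\log h_\alpha)' \quad \meas_\alpha\textnormal{-a.e.}
\end{align*}

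On $I^+(\Sigma)$, where $l_\Sigma\geq 0$ and $\w_\QQ$ coincides with $\u_\QQ$, the variable bound is obtained by multiplying the two-sided estimate from \cref{Le:logarithmic derivative}, applied to the positive $\MCP(\bar{k}_\alpha,N)$ real-ray density $\bar h_\alpha$ on the interval corresponding to $[v,w]$, by $l_\Sigma(x)\geq 0$ and adding $1$. The sign of $l_\Sigma$ preserves the direction of the two inequalities, and the resulting expressions match the statement verbatim. Alternatively, this bound on $I^+(\Sigma)$ can be read off directly from \cref{Cor:11} specialized to the half where $\sgn l_\Sigma=1$.

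On $I^-(\Sigma)$, where $l_\Sigma<0$, the same statement is recovered via time-reversal: by the reversibility hypothesis (\cref{Def:Rev}) together with the causal-orientation invariance of the disintegration (\cref{Re:INV CAUS RE II}), the function $-l_\Sigma$ plays the role of a $1$-steep $l^\leftarrow$-causal signed distance function of $\Sigma$ in $\scrM^\leftarrow$, and the same density formula $1+l_\Sigma(\log h_\alpha)'$ for $R^\QQ$ reappears in the reversed structure (this is the content of \cref{Re:Vat!}, applied to track the sign flip in the reparametrized logarithmic derivative of $h_\alpha$). Applying \cref{Le:logarithmic derivative} in the reversed parametrization, the endpoint roles of $v$ and $w$ interchange and the differential inequality for $(\log h_\alpha)'$ is read in the opposite sense; multiplying by the negative quantity $l_\Sigma(x)$ then flips each of these inequalities back, so that the symbolic form of the claimed variable bound is recovered.

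The SEC version follows by instantiating $k\equiv 0$ via \cref{Re:Logarithmic derivative}, which replaces the $\COS/\SIN$ quotients with $1/l(\cdot,b_\alpha)$ and $1/l(a_\alpha,\cdot)$; when either endpoint fails to exist, the appropriate interpretation is the one prescribed in \cref{Re:Unbounded II}. The main obstacle I anticipate is the bookkeeping on $I^-(\Sigma)$: one has to check, using \cref{Re:Vat!}, that the absolutely continuous density is genuinely invariant under reversal, and then verify that the simultaneous swap of endpoints and sign of $l_\Sigma$ does produce the bound as written rather than its mirror image.
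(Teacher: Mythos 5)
Your overall route is the paper's: read off $\Box_\PP\w_\QQ$ as the density $1+l_\Sigma\,(\log h_\alpha)'$ of the $\meas_\alpha$-absolutely continuous part of $R^\QQ$ from \cref{Th:UnsigndalemI} and insert the two-sided estimate of \cref{Le:logarithmic derivative}. On $I^+(\Sigma)$ your argument is complete and correct, and the reduction to \cref{Cor:11} there is legitimate.

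The gap is exactly the point you flag and then assert rather than verify, namely $I^-(\Sigma)$. Your heuristic is that two flips cancel: reading the differential inequality "in the opposite sense" in the reversed parametrization flips it once, and multiplying by the negative number $l_\Sigma(x)$ flips it back. But the first flip does not occur. The conclusion of \cref{Le:logarithmic derivative} is a two-sided bound on the intrinsic quantity $(\log h_\alpha)'(x)$: applying it to the reversed ray representative and translating back through $(\log\bar h_\alpha(-\cdot))'=-(\log\bar h_\alpha)'(-\cdot)$ together with the endpoint swap returns \emph{the same} inequality $-A\leq(\log h_\alpha)'(x)\leq B$, where $A$ and $B$ denote the two cotangent expressions attached to $w$ and $v$ respectively. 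Hence on $I^-(\Sigma)$ there is only the single flip coming from $l_\Sigma(x)<0$, and the computation yields
\begin{align*}
1+l_\Sigma(x)\,B\;\leq\;\Box_\PP\w_\QQ(x)\;\leq\;1-l_\Sigma(x)\,A,
\end{align*}
which is the displayed estimate with the two bounds (equivalently, the roles of $v,w$ and of $a_\alpha,b_\alpha$) interchanged — not "the bound as written". Read literally on $I^-(\Sigma)$, the SEC version would even have its lower bound strictly exceed its upper bound, so the statement can only be meant with the convention that on $I^-(\Sigma)$ everything is taken in the time-reversed orientation ($a_\alpha\leftrightarrow b_\alpha$, $v\leftrightarrow w$, $l_\Sigma\mapsto l_\Sigma^\leftarrow=-l_\Sigma$), exactly as in the proof of \cref{Th:UnsigndalemI}. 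Making that convention explicit and carrying out the sign and endpoint bookkeeping is the entire content of the corollary beyond \cref{Cor:11}; your proposal defers precisely this step, and the cancellation it appeals to is not available.
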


\begin{corollary}[Unsigned d'Alembertian II, see also \cref{Th:UnsigndalemI}]\label{Th:UnsigndalemII} Let $\scrM$ be a reversible timelike essentially nonbranching $\smash{\TMCP^e(k,N)}$ metric measure spacetime. Let $\q$ be  an $\MCP(k,N)$ disintegration induced by $l_\Sigma$ and $E_\Sigma$. Then the functional $R$ on $\Gamma$ defined by
\begin{align*}
R(f) &:= -\int_Q\int_{\mms_\alpha} f\,\sgn l_\Sigma\,(\log h_\alpha)'\d\meas_\alpha\d\q(\alpha) + \int_Q \big[\!\RCR f\,h_\alpha\big]^{b_\alpha}_{a_\alpha} \d\q(\alpha)
\end{align*}
lies in $\smash{\BOX_\PP\vert l_\Sigma\vert \mres [I^+(\Sigma)\cup I^-(\Sigma)]}$ in the sense that it is a weak Radon functional on $\Gamma$ and every $f\in \Gamma$ obeys the inequality
\begin{align*}
\int_\mms \rmd^+f(\nabla\vert l_\Sigma\vert)\d\meas \leq -R(f).
\end{align*}

In addition, if $l_\Sigma$ is topologically locally anti-Lipschitz on $\smash{I^+(\Sigma)\cup I^-(\Sigma)}$, the following statements hold.
\begin{enumerate}[label=\textnormal{(\roman*)}]
\item \textnormal{\textbf{Enhanced distribution.}} The map $R$ extends continuously to $\smash{\Cont_\comp(I^+(\Sigma)\cup I^-(\Sigma))}$ and this extension is the difference
\begin{align*}
R = \vert l_\Sigma\vert^{-1}\,(\mu - \nu),
\end{align*} 
i.e.~a generalized signed Radon measure on $\smash{I^+(\Sigma)\cup I^-(\Sigma)}$; here $\mu$ and $\nu$ are from \cref{Le:nu measure} and \cref{Th:UnsigndalemI}, respectively.
\item \textnormal{\textbf{Two-sided integration by parts.}} Every $\smash{f\in \Gamma\cap\scrC}$ satisfies
\begin{align*}
\int_\mms \rmd^+f(\nabla\vert l_\Sigma\vert)\d\meas \leq -R(f) \leq \int_\mms \rmd^-f(\nabla\vert l_\Sigma\vert)\d\meas.
\end{align*}
\end{enumerate}
\end{corollary}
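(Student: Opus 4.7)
The plan is to derive \cref{Th:UnsigndalemII} from \cref{Th:UnsigndalemI} by the chain rule, in direct parallel with the derivation of \cref{Cor:22} from \cref{Th:11}. Fix a nonzero $\QQ<1$ with conjugate $\PP$; by \cref{Re:Props alemb} the conclusion will be independent of this choice. Choose the strictly increasing smooth function $\varphi$ on the appropriate half-line (positive or negative according to the sign of $\QQ$) that inverts $r\mapsto \QQ^{-1}r^\QQ$ or $r\mapsto \QQ^{-1}(-r)^\QQ$, so that $\varphi\circ\w_\QQ=|l_\Sigma|$ on $I^+(\Sigma)\cup I^-(\Sigma)$.

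Since $|l_\Sigma|$ is bounded and bounded away from zero on the support of any $f\in\Gamma$ (by \cref{Le:Bounded} and lower semicontinuity of $l_\Sigma$), $\varphi$ satisfies the hypotheses of \cref{Pr:Chain rule} on a suitable open neighborhood of $\supp f$, and $|\rmd \w_\QQ|^\PP=|l_\Sigma|^\QQ$ is locally $\meas$-integrable by \cref{Cor:Constant slope} and \cref{Pr:Calc rulez}. Applying \cref{Pr:Chain rule} to the weak Radon functional $R^\QQ\in\BOX_\PP\w_\QQ\mres[I^+(\Sigma)\cup I^-(\Sigma)]$ produced by \cref{Th:UnsigndalemI} yields a weak Radon functional $S\in\BOX_\PP|l_\Sigma|\mres[I^+(\Sigma)\cup I^-(\Sigma)]$. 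The algebraic simplification of $S$ now tracks the cancellation carried out in the proof of \cref{Cor:22}: the factor $(\varphi')^{\PP-1}\circ\w_\QQ$ equals $|l_\Sigma|^{-1}$ up to a $\QQ$-dependent constant, so the endpoint-term prefactor $l_\Sigma$ of $R^\QQ$ reduces to $\sgn l_\Sigma$, which is absorbed into $[\RCR f\,h_\alpha]^{b_\alpha}_{a_\alpha}$ using that $f$ has compact support away from $\Sigma$; the unit coefficient of $R^\QQ$'s volume integral produces a term $\int f(l_\Sigma)^{-1}\d\meas$ that is cancelled by the $(\PP-1)\int f[(\varphi')^{\PP-2}\varphi'']\circ\w_\QQ\,|\rmd\w_\QQ|^\PP\d\meas$ correction of \cref{Pr:Chain rule}; and the $l_\Sigma(\log h_\alpha)'$-factor collapses to $\sgn l_\Sigma\,(\log h_\alpha)'$ after dividing by $l_\Sigma$. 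Comparing signs, one verifies $S=R$ with $R$ as stated.

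For the upgrade under topological local anti-Lipschitzness, decompose $R$ across the disjoint open sets $I^+(\Sigma)$ and $I^-(\Sigma)$. On each component, the lower logarithmic-derivative estimate of \cref{Le:logarithmic derivative} combined with \cref{Re:Props dist coeff} bounds the absolutely continuous part of $|l_\Sigma|\,R$ from above by $\mu$ (with $\mu$ from \cref{Le:nu measure}), exactly as in the proof of \cref{Th:11}; this renders $|l_\Sigma|^{-1}\mu-R$ a nonnegative weak Radon functional, which by \cref{Le:VsTop} extends continuously to $\Cont_\comp(I^+(\Sigma)\cup I^-(\Sigma))$ and, by the Riesz--Markov--Kakutani theorem, is represented by a Radon measure $\nu$. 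The two-sided integration by parts (ii) then follows from linearity of $R$ and \cref{Re:Two sided}, noting that $\scrC_\Sigma$ consists of symmetric finite perturbations.

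The main obstacle is the sign- and component-dependent choice of $\varphi$, and the reconciliation of the chain-rule identities across $I^+(\Sigma)$ and $I^-(\Sigma)$. The hypothesis of reversibility (\cref{Def:Rev}) enters precisely to make \cref{Re:Vat!} applicable, ensuring the two case-wise definitions of $\rmd^\pm f(\nabla|l_\Sigma|)$ employed by \cref{Pr:Chain rule} on the two components are consistent with the output of \cref{Th:UnsigndalemI}, so that the assembled functional $R$ genuinely tests against $|l_\Sigma|$ on the whole of $I^+(\Sigma)\cup I^-(\Sigma)$.
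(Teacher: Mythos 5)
Your strategy --- obtaining the corollary by running the chain rule \cref{Pr:Chain rule} on the functional $R^\QQ$ of \cref{Th:UnsigndalemI}, componentwise on $I^+(\Sigma)$ with respect to $\scrM$ and on $I^-(\Sigma)$ with respect to $\scrM^\leftarrow$ (reversibility and \cref{Re:Vat!} reconciling the two Sobolev calculi) --- is sound and is equivalent to the route the paper implicitly intends, namely applying \cref{Cor:22} on $I^+(\Sigma)$ as in \cref{Re:Ext pos q} and its time-reversal on $I^-(\Sigma)$, mirroring the proof of \cref{Th:UnsigndalemI}. The gap is your final step: ``comparing signs, one verifies $S=R$ with $R$ as stated'' is asserted, not performed, and it fails for the displayed $R$. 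On $I^+(\Sigma)$ your cancellation reproduces the functional $T$ of \cref{Cor:22} verbatim, i.e.\ $f\mapsto\int_Q\int_{\mms_\alpha}f\,(\log h_\alpha)'\d\meas_\alpha\d\q(\alpha)-\int_Q\big[\!\RCR f\,h_\alpha\big]^{b_\alpha}\d\q(\alpha)$ for $f$ supported in $I^+(\Sigma)$, whereas the displayed $R$ restricted to such $f$ equals $-T(f)$; with that sign the certifying inequality $\int_\mms\rmd^+f(\nabla\vert l_\Sigma\vert)\d\meas\leq-R(f)=T(f)$ would be the \emph{reverse} of what \cref{Cor:22} delivers. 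On $I^-(\Sigma)$ the time-reversal flips both the ray derivative and the roles of $a_\alpha$ and $b_\alpha$, and the correct output is $f\mapsto-\int_Q\int_{\mms_\alpha}f\,(\log h_\alpha)'\d\meas_\alpha\d\q(\alpha)-\int_Q\LCR f(a_\alpha)\,h_\alpha(a_\alpha)\d\q(\alpha)$.

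Assembling the two components, the functional your computation actually produces is
\begin{align*}
R(f)=\int_Q\int_{\mms_\alpha}f\,\sgn l_\Sigma\,(\log h_\alpha)'\d\meas_\alpha\d\q(\alpha)-\int_Q\big[\!\RCR f\,\sgn l_\Sigma\,h_\alpha\big]_{a_\alpha}^{b_\alpha}\d\q(\alpha),
\end{align*}
whose absolutely continuous part has density $\sgn l_\Sigma\,(\log h_\alpha)'$ and whose singular part is nonpositive on both cut loci. This is the only version compatible with \cref{Cor:UNS2} and with the representation $R=\vert l_\Sigma\vert^{-1}(\mu-\nu)$ claimed in part (i): the domination by $\vert l_\Sigma\vert^{-1}\mu$ needed to invoke Riesz--Markov--Kakutani holds for $+\sgn l_\Sigma\,(\log h_\alpha)'$ via \cref{Le:logarithmic derivative} and \cref{Re:Props dist coeff}, but not for its negative. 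In short, your method is correct, but the sign bookkeeping must actually be done; doing so shows the displayed formula for $R$ (sign of the volume term and of the $b_\alpha$-boundary term) cannot be the output of the chain rule, so a complete write-up has to derive the corrected formula above, or at least flag the discrepancy, rather than declare the identity verified.
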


\begin{corollary}[Sharp unsigned d'Alembert comparison II, see also \cref{Cor:UNS1}]\label{Cor:UNS2} In the framework from the first part of  \cref{Th:UnsigndalemII}, the following hold for $\q$-a.e. $\alpha\in Q$. 
\begin{enumerate}[label=\textnormal{(\roman*)}]
\item \textnormal{\textbf{Variable version.}} Let $v,w\in \cl\,\mms_\alpha$ be distinct points such that $\smash{v\preceq_{l_\Sigma} w}$. Then $\smash{\meas_\alpha}$-a.e.~$x\in[v,w]$ satisfies
\begin{align*}
&- (N-1)\, \sgn l_\Sigma(x)\,\frac{\COS_{(k_\alpha)^-_{x,w}/(N-1)}\circ\, l(x,w)}{\SIN_{(k_\alpha)^-_{x,w}/(N-1)}\circ\, l(x,w)}\\
&\qquad\qquad \leq \Box\vert l_\Sigma\vert(x) \leq (N-1)\,\sgn l_\Sigma(x)\,\frac{\COS_{(k_\alpha)^+_{v,x}/(N-1)}\circ\, l(v,x)}{\SIN_{(k_\alpha)^+_{v,x}/(N-1)}\circ\, l(v,x)}.
\end{align*}
\item \textnormal{\textbf{SEC version.}} If $k$ vanishes identically,
\begin{align*}
&- (N-1)\, \sgn l_\Sigma\,\frac{1}{l(\cdot,b_\alpha)}  \leq \Box\vert l_\Sigma\vert \leq (N-1)\,\sgn l_\Sigma\,\frac{1}{l(a_\alpha,\cdot)}\quad\meas_\alpha\textnormal{-a.e.}
\end{align*}
with appropriate interpretation if either endpoints do not exist.
\end{enumerate}
\end{corollary}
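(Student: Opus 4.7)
The strategy closely parallels that of \cref{Cor:1515} and \cref{Cor:UNS1}: the assertion is essentially an immediate consequence of the representation formula for $R$ in \cref{Th:UnsigndalemII} combined with the sharp logarithmic derivative bounds for $\MCP(k,N)$ densities from \cref{Le:logarithmic derivative}. First, I would fix a $\q$-a.e. ray $\mms_\alpha$ and read off the density of the $\meas_\alpha$-absolutely continuous part of $R$ directly from its defining formula, interpreting $\Box\vert l_\Sigma\vert$ on $\mms_\alpha$ as the signed multiple of $(\log h_\alpha)'$ appearing there. By the disintegration \cref{Th:Disintegration}, this is consistent with piecing the two regions $I^+(\Sigma)$ and $I^-(\Sigma)$ together through the prefactor $\sgn l_\Sigma$, which faithfully records the reversal of $l$-causal orientation of $\vert l_\Sigma\vert$ across $\Sigma$.

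Next, for given $v\preceq_{l_\Sigma} w$ in $\cl\,\mms_\alpha$, I would apply \cref{Le:logarithmic derivative} to the real ray representative $\bar h_\alpha$ on $\bar\mms_\alpha$, which by \cref{Def:MCP disintegration} is a genuine $\MCP(\bar k_\alpha, N)$ density. This yields, at $\Leb_\alpha^1$-a.e.~$x\in[v,w]$ and hence (since $h_\alpha>0$) at $\meas_\alpha$-a.e.~such $x$, the two-sided estimate
\begin{align*}
-(N-1)\,\frac{\COS_{(k_\alpha)^-_{x,w}/(N-1)}\circ l(x,w)}{\SIN_{(k_\alpha)^-_{x,w}/(N-1)}\circ l(x,w)} \leq (\log h_\alpha)'(x) \leq (N-1)\,\frac{\COS_{(k_\alpha)^+_{v,x}/(N-1)}\circ l(v,x)}{\SIN_{(k_\alpha)^+_{v,x}/(N-1)}\circ l(v,x)}.
\end{align*}
Multiplying through by $\sgn l_\Sigma(x)$, and flipping the direction of the inequalities where appropriate, produces the claimed variable-curvature bound on $\Box\vert l_\Sigma\vert(x)$. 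This mirrors verbatim the step ``sharpening'' \eqref{Eq:BLBL} that delivers \cref{Cor:11} from the proof of \cref{Th:11}.

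For the SEC version, I would then specialize to $k\equiv 0$ using the explicit expressions in \cref{Re:Logarithmic derivative}, noting that the generalized cosine/sine ratios reduce to $1/l(x,w)$ and $1/l(v,x)$, respectively, and pass to the extremal choices $v=a_\alpha$, $w=b_\alpha$ whenever these endpoints exist; the standing conventions of \cref{Re:Unbounded I,Re:Unbounded II} govern the rays where an endpoint is absent.

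The main obstacle, and in fact essentially the only nontrivial point, is the bookkeeping of signs and the swap of the roles of the ``forward'' and ``backward'' distortion potentials on $I^-(\Sigma)$, where the $l^\leftarrow$-causal character of $\vert l_\Sigma\vert$ inverts the natural orientation of the ray relative to the flow of its gradient. Once this is tracked carefully, using \cref{Re:Trafo refl} and the elementary symmetry properties of $\SIN_\kappa$ and $\COS_\kappa$ from \cref{Sub:Var dist coeff}, the bounds combine into the uniform expression stated in the corollary, and the remainder is routine.
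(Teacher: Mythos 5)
Your proposal is correct and follows exactly the route the paper (implicitly) takes: \cref{Cor:UNS2} is stated without proof as an immediate consequence of reading off the $\meas_\alpha$-absolutely continuous density of $R$ from \cref{Th:UnsigndalemII} and invoking the two-sided bounds for $\MCP$ densities from \cref{Le:logarithmic derivative}, precisely as \cref{Cor:11} and \cref{Cor:1515} are obtained from \cref{Th:11} and \cref{Cor:22}. The only step you should make fully explicit is the one your closing paragraph gestures at: on $I^-(\Sigma)$ the passage to the causally reversed structure does not merely multiply both bounds by $-1$ but simultaneously exchanges the roles of the reference points $v$ and $w$ (equivalently of $a_\alpha$ and $b_\alpha$) and of the forward and backward potentials in \cref{Le:logarithmic derivative}, via \cref{Re:Trafo refl} and \cref{Re:INV CAUS RE II}; a literal ``multiply by $\sgn l_\Sigma$ and reverse the inequalities,'' as phrased in the middle of your argument, would otherwise leave the two bounds in the wrong order on $I^-(\Sigma)$.
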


\section{Applications}\label{Ch:Appli}

We retain all hypotheses and the notation from the beginning of \cref{Ch:Repr form}. Moreover, unless explicitly stated otherwise $\Sigma$ is an achronal TC subset of $\mms$, in which case the $\MCP(k,N)$ disintegration $\q$ is understood relative to the set $E_\Sigma$ from \eqref{Eq:ESigmaUSigma}. We remark that whenever only the chronological future or past of $\Sigma$ is relevant (i.e.~$\q$ is understood with respect to $I^+(\Sigma)\cup\Sigma$ or $I^-(\Sigma)\cup\Sigma$), the TC hypothesis can be weakened to $\Sigma$ being FTC or PTC, respectively. Conversely, most results which take ``both sides'' of $\Sigma$ into account possess evident ``one-sided'' analogs.

By a slight abuse of notation, we denote the respective weak Radon functionals from \cref{Th:Meas val Alem I} and  \cref{Cor:22} by $\BOX \u$ and $\BOX l_\Sigma$. We write the restriction of the latter to $\smash{\Pert_\bc(l_\Sigma,I^\pm(\Sigma))}$ as  $\smash{\BOX l_\Sigma\mres I^\pm(\Sigma)}$. Moreover, we recall \eqref{Eq:Tll} for the definition
\begin{align*}
\Box\u := (\log h_\alpha)'
\end{align*}
on $\mms_\alpha$ with $\alpha\in Q$; analogously, $\Box l_\Sigma$ is defined.

\subsection{Bochner inequality}\label{Sub:Nonsmooth Bochner} In \cref{Th:From TCD to Bochner,Th:From Bochner to TCD}  we characterize the TCD condition for $\scrM$ in terms of a new  family of Bochner inequalities for Lorentz distance functions. The key technical ingredient is the characterization of  \cref{Le:Diffchar}. 

Inspired by Cavalletti--Mondino's synthetic approach in positive signature \cite{cavalletti-mondino2020}*{§6} and Galloway's use of Bochner's technique in his proof of the Lorentzian splitting theorem \cite{galloway1989-splitting}*{p.~383}, the idea is the   following. Let $\Sigma$ be a smooth, compact, achronal spacelike hyper\-surface in a given smooth, globally hyperbolic spacetime $\mms$ with nonnegative Ricci curvature in all timelike directions, cf.~\cref{Ex:TCD} above. As recapitulated from  Treude \cite{treude2011} in \cref{Th:Properties smooth dist fct} below, the signed Lorentz distance function $l_\Sigma$ is smooth on $E_\Sigma\setminus(\TCut^+(\Sigma)\cup\{\Sigma\}\cup\TCut^-(\Sigma))$. On its intersection $A_r$ with the level set $\{l_\Sigma = r\}$, where $r\in\R$ is such that $A_r$ is smooth, the usual Bochner identity reads
\begin{align*}
\Box\frac{\vert \rmd l_\Sigma\vert^2}{2} - \rmd \Box l_\Sigma(\nabla l_\Sigma)   = \Ric(\nabla l_\Sigma,\nabla l_\Sigma) + \big\vert\! \Hess l_\Sigma\big\vert_\HS^2.
\end{align*}
The first term vanishes since $\rmd l_o$ has unit magnitude. The third term is nonnegative by our curvature assumption and since $\nabla l_\Sigma$ is timelike. The second term is the derivative $(\Box l_\Sigma)'$ of $\Box l_\Sigma$ along the negative gradient flow of $l_\Sigma$, just as in \eqref{Eq:f' def}. Lastly, the Hessian of $l_\Sigma$ vanishes in the direction $\nabla l_\Sigma$ while it is equal to the submanifold Hessian $\smash{\Hess_{A_r} l_\Sigma}$ in all directions tangential to $\Sigma$, cf.~e.g.~Beem--Ehrlich--Easley \cite{beem-ehrlich-easley1996}*{Sublem.~14.33}. The usual Cauchy--Schwarz inequality estimates the Hilbert--Schmidt norm of $\smash{\Hess_{A_r} l_\Sigma}$ from below by $(\dim\mms-1)^{-1}(\Box l_\Sigma)^2$, recalling $\dim\mms-1=\dim A_r$ \cite{beem-ehrlich-easley1996}*{p.~584}. The  inequality 
\begin{align}\label{Eq:RICC in}
(\Box l_\Sigma)'\big\vert_{A_r} \geq \frac{1}{\dim\mms-1}\,(\Box l_\Sigma)^2\big\vert_{A_r}
\end{align}
is sometimes called ``Riccati inequality'' \cite{beem-ehrlich-easley1996}*{§B.3}.

As $(\Box l_\Sigma)'$ involves taking three derivatives of $l_\Sigma$, we integrate \eqref{Eq:RICC in} along the negative gradient flow trajectories of $l_\Sigma$. Thence we get a more robust candidate for our nonsmooth Bochner inequality which makes perfect sense in our synthetic setting.

\begin{theorem}[From TCD to Bochner inequality]\label{Th:From TCD to Bochner} Let $\scrM$ be a timelike essentially nonbranching $\smash{\TCD_\beta^e(k,N)}$ metric measure spacetime. Let $\q$ denote a $\CD(k,N)$ dis\-in\-te\-gration relative to $\smash{\u}$. Then the following hold.
\begin{enumerate}[label=\textnormal{\textcolor{black}{(}\roman*\textcolor{black}{)}}]
\item\label{La:UBoch} \textnormal{\textbf{1-steep functions.}} Assume the integrability condition
\begin{align}\label{Eq:INTEGR!cond}
\int_Q \frac{1}{l(a_\alpha,b_\alpha)}\d\q(\alpha) < \infty.
\end{align}
Then for $\q$-a.e.~$\alpha\in Q$, the quantities $\Box\u(x)$ and $\Box\u(y)$ are defined at co\-count\-ably many $\smash{(x,y)\in\mms_\alpha^2}$ and satisfy
\begin{align*}
\big[\Box\u\big]_x^y \geq \int_0^{l_x(y)} k\circ\sfg_r(x)\d r + \frac{1}{N-1}\int_0^{l_x(y)} (\Box \u)^2\circ\sfg_r(x)\d r.
\end{align*}
\item \textnormal{\textbf{Signed Lorentz distance functions.}} Let $l_\Sigma$ form a signed TC Lorentz distance function. For $\q$-a.e.~$\alpha\in Q$, the quantities $\Box l_\Sigma(x)$ and $\Box l_\Sigma(y)$ are well-defined at cocountably many $(x,y)\in \mms_\alpha^2$; if in addition $\smash{\sgn l_\Sigma(x) = \sgn l_\Sigma(y)}$,
\begin{align*}
\big[\Box l_\Sigma\big]_x^y \geq \int_0^{l_x(y)} k\circ\sfg_r(x)\d r + \frac{1}{N-1}\int_0^{l_x(y)} (\Box l_\Sigma)^2\circ\sfg_r(x)\d r.
\end{align*}
\end{enumerate}
\end{theorem}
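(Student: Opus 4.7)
My plan is to combine the $\CD$ disintegration provided by \cref{Th:Disintegration} with the one-dimensional Riccati-type characterization in \cref{Le:Diffchar}, and then integrate along each ray. Since $\scrM$ is timelike essentially nonbranching and satisfies $\TCD_\beta^e(k,N)$, \cref{Th:Disintegration} produces a $\CD(k,N)$ disintegration $\q$ relative to $l_\Sigma$ and $E_\Sigma$. For $\q$-a.e.\ $\alpha \in Q$, the real ray representative $\bar{h}_\alpha$ of the conditional density is thus a $\CD(\bar{k}_\alpha, N)$ density on $\bar{\mms}_\alpha$. By \cref{Re:Prop CD}, $\bar{h}_\alpha$ is positive and locally semiconcave, and $\bar{h}_\alpha^{1/(N-1)}$ is locally semiconcave as well; by Alexandrov's theorem $\bar{h}_\alpha$ is therefore twice differentiable $\Leb^1$-a.e., while $(\log\bar{h}_\alpha)'$ exists cocountably and is of locally bounded variation whose distributional derivative decomposes into its $\Leb^1$-absolutely continuous part (the pointwise $(\log\bar{h}_\alpha)''$) and a singular part which is \emph{nonpositive} by semiconcavity.

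At every twice differentiability point of $\bar{h}_\alpha$, \cref{Le:Diffchar} yields the pointwise Riccati inequality
\[
(\log\bar{h}_\alpha)''(r) + \frac{1}{N-1}\bigl((\log\bar{h}_\alpha)'(r)\bigr)^2 \leq -\bar{k}_\alpha(r),
\]
and through the compatibility from \cref{Re:Compat} and the definition \eqref{Eq:Tll}, one identifies $(\Box l_\Sigma)\circ\sfg_\cdot(\alpha) = (\log\bar{h}_\alpha)'$ as functions of the ray parameter.

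Fix now $x,y$ on a common ray $\mms_\alpha$ at which $\Box l_\Sigma$ is defined and $\sgn l_\Sigma(x) = \sgn l_\Sigma(y)$, and let $r_x, r_y$ be the corresponding ray parameters. The sign hypothesis confines the segment with endpoints $r_x, r_y$ to one of the two components of $\bar{\mms}_\alpha \cap \{l_\Sigma \neq 0\}$, so the Riccati bound applies throughout. Integrating $(\log\bar{h}_\alpha)'$ between $r_x$ and $r_y$ via its distributional derivative, discarding the singular contribution using its sign, and combining with the pointwise Riccati bound, yields the integrated Bochner-type inequality. The change of variables $s = r - r_x$, for which $\sfg_s(x) = \sfg_{r_x+s}(\alpha)$, rewrites the integration domain as $[0, l_x(y)]$ and produces the form stated. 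Part (i) follows from exactly the same argument; the integrability hypothesis \eqref{Eq:INTEGR!cond} serves only to guarantee that the pointwise values $\Box\u(x), \Box\u(y)$ and the associated integral quantities remain finite on $\q$-a.e.\ ray.

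The main obstacle is the integration step: one must verify that the singular part of the distributional derivative of the BV function $(\log\bar{h}_\alpha)'$ has the right sign to be legitimately discarded. This rests on the local semiconcavity of $\bar{h}_\alpha^{1/(N-1)}$, which forces the singular part of its distributional second derivative to be nonpositive, combined with the elementary algebraic identity in \cref{Le:Diffchar} relating $(h^{1/(N-1)})''/h^{1/(N-1)}$ to the logarithmic form, and division by the positive continuous function $\bar{h}_\alpha^{1/(N-1)}$, which preserves the sign of the relevant measure. The ``cocountably many'' qualification matches precisely the countable exceptional set where one-sided derivatives of $\bar{h}_\alpha$ disagree, augmented in part (ii) by the single crossing of $\Sigma$ on the ray.
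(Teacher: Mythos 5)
Your proposal is correct and follows essentially the same route as the paper's proof: extract a $\CD(k,N)$ disintegration from \cref{Th:Disintegration}, apply the pointwise Riccati-type characterization of \cref{Le:Diffchar} to each conditional density, identify $(\log h_\alpha)'$ with $\Box\u$ through \eqref{Eq:Tll} and \cref{Re:Compat}, and integrate along the ray between the parameters of $x$ and $y$. The one place you genuinely diverge is the low-regularity step: the paper first assumes $\smash{\bar h_\alpha}$ is twice continuously differentiable and then delegates the general case to the logarithmic convolution of Cavalletti--Milman, whereas you work directly with the BV structure of $(\log\bar h_\alpha)'$ and discard the singular part of its distributional derivative after checking its sign via the local semiconcavity of $\smash{\bar h_\alpha^{1/(N-1)}}$ and positivity of $\smash{\bar h_\alpha}$. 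That is a legitimate and somewhat more self-contained alternative, at the price of having to verify the sign of the singular measure, which you do correctly. Two minor remarks: the paper's subsequent remark explains that the hypothesis \eqref{Eq:INTEGR!cond} is present only so that $(\log h_\alpha)'$ is genuinely the absolutely continuous density of an element of the distributional d'Alembertian via \cref{Th:Meas val Alem I}, not to control finiteness along rays (the pointwise values are finite anyway since $h_\alpha$ is positive and locally Lipschitz); and, like the paper's own proof, your final integration step does not track the orientation of the segment from $x$ to $y$ against the sign conventions in $[\,\cdot\,]_x^y$ and $\smash{\int_0^{l_x(y)}}$, so you inherit rather than resolve that bookkeeping issue.
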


\begin{proof} We only prove the first claim, the second is shown along the same lines by using \cref{Cor:22} instead of \cref{Th:Meas val Alem I}. 

By locally monotone approximation of the real ray representative of $k$ from below on every fixed ray with continuous functions, without restriction we assume $k$ is continuous.  Given $\alpha\in Q$, by \cref{Re:Properties MCP} there exists a subset $D$ of $\mms_\alpha$  with countable complement such that $\log h_\alpha$ is differentiable on $D$ according to \cref{Sub:Differentiation}. In particular, thanks to  \eqref{Eq:Tll} the quantity $\Box\u$ is defined everywhere on the set $D$. Thanks to \eqref{Eq:Tll} again, our statement is easily seen to hold if we prove every $x,y\in \mms_\alpha\setminus D$ to satisfy
\begin{align*}
&\big[(\log h_\alpha)'\big]_x^y \geq \int_0^{l_x(y)} k\circ \sfg_r(x)\d r + \frac{1}{N-1}\int_0^{l_x(y)} \big\vert (\log h_\alpha)'\big\vert^2 \circ \sfg_r(x) \d r.
\end{align*}

To streamline the presentation, we assume $h_\alpha$ is twice continuously differentiable on $\mms_\alpha$; if this does not hold, the argument to follow only has to be modified by a  logarithmic convolution, as developed in Cavalletti-Milman  \cite{cavalletti-milman2021}*{Prop.~A.10} and described in the proof of Cavalletti--Mondino \cite{cavalletti-mondino2020-new}*{Thm.~6.1}. Employing the reinforced assumption on the disintegration $\q$ together with \cref{Le:Diffchar} entails 
\begin{align*}
-(\log\bar{h}_\alpha)'' \geq \bar{k}_\alpha + \frac{1}{N-1}\,\big\vert (\log\bar{h}_\alpha)'\big\vert^2
\end{align*}
everywhere on $\mms_\alpha$. Given $(s,t)\in \ray^{-1}(D)^2$ with $s< t$, this yields
\begin{align*}
\big[(\log \bar{h}_\alpha)'\big]_s^t \geq \int_{[s,t]} \bar{k}_\alpha\d \Leb^1  + \frac{1}{N-1}\int_{[s,t]} \big\vert (\log \bar{h}_\alpha)'\big\vert^2 \d \Leb^1.
\end{align*}
The desired estimate follows by choosing $s := \sfg^{-1}(x)$ and $t := \sfg^{-1}(y)$ while recalling the compatibility of differentiation on $\mms_\alpha$ and on $\smash{\bar{\mms}_\alpha}$ from  \cref{Re:Compat}.
\end{proof}

\begin{remark}[About the integrability condition] The above hypothesis \eqref{Eq:INTEGR!cond} only ensures the logarithmic derivative of $\q$-a.e.~conditional density is related to the d'Alembertian of the given function $\u$ set up in  \cref{Def:DAlem} by \cref{Th:Meas val Alem I}. It did not play a role in the previous proof. The statement of our Bochner inequality \ref{La:UBoch} carries over unchanged whenever the  ``d'Alembertian'' of $\u$ is defined by the formula \eqref{Eq:Tll}.\hfill{\footnotesize{$\blacksquare$}}
\end{remark}

\begin{theorem}[From Bochner inequality to TCD]\label{Th:From Bochner to TCD} Let $k'$ and $N'$ satisfy the same properties as $k$ and $N$ from the beginning of \cref{Ch:Repr form}, respectively. Let $\scrM$ form a timelike $\beta$-essential\-ly nonbranch\-ing $\smash{\TMCP^e(k',N')}$ metric measure spacetime. Assume that for every signed TC Lorentz distance function $l_\Sigma$, the induced $\MCP(k',N')$ disintegration $\q$ relative to $\smash{l_\Sigma}$ satisfies the following property. For $\q$-a.e.~$\alpha\in Q$, for every $x,y\in\mms_\alpha$ such that $\smash{\sgn l_\Sigma(x)} = \smash{\sgn l_\Sigma(y)}$ and the quantities $\Box l_\Sigma(x)$ and $\Box l_\Sigma(y)$ are defined,
\begin{align*}
\big[\Box l_\Sigma\big]_x^y \geq \int_0^{l_x(y)} k\circ\sfg_r(x)\d r + \frac{1}{N-1}\int_0^{l_x(y)} (\Box l_\Sigma)^2\circ \sfg_r(x)\d r.
\end{align*}
Then $\scrM$ certifies  the $\smash{\TCD_\beta^e(k,N)}$ condition.
\end{theorem}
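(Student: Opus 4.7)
The plan is to show that the hypothesized integrated Bochner inequality upgrades every $\MCP(k', N')$ disintegration supplied by \cref{Th:Disintegration} into a \emph{$\CD(k, N)$} disintegration relative to any signed TC Lorentz distance function $l_\Sigma$. Once this is accomplished for sufficiently many $l_\Sigma$'s, the characterization of Akdemir \cite{akdemir+} (referenced repeatedly in the paper, most explicitly just before the statement) identifies this one-dimensional Lagrangian property, under timelike $\beta$-essential nonbranching, with the global $\TCD_\beta^e(k, N)$ condition. The heart of the argument therefore consists in running the reasoning of \cref{Th:From TCD to Bochner} \emph{in reverse} on each ray.

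\textbf{From integrated Bochner to a CD density.} Fix a signed TC Lorentz distance function $l_\Sigma$ and an $\MCP(k', N')$ disintegration $\q$ relative to it. For $\q$-a.e.~$\alpha \in Q$, set $\phi_\alpha := \log \bar{h}_\alpha$ on $\bar{\mms}_\alpha$; by \eqref{Eq:Tll}, the real-line translation of the hypothesis reads
\begin{align*}
\phi_\alpha'(t) - \phi_\alpha'(s) \geq \int_s^t \bar{k}_\alpha(r)\d r + \frac{1}{N-1}\int_s^t \big\vert\phi_\alpha'(r)\big\vert^2\d r
\end{align*}
(up to the orientation convention of $\sfg$) for Lebesgue points $s < t$ lying in the same connected component of $\bar{\mms}_\alpha \setminus \sfg^{-1}(\Sigma)$, this being the transcription of the sign restriction $\sgn l_\Sigma(x) = \sgn l_\Sigma(y)$. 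Dividing by $t - s$ and sending $t \to s$ yields the pointwise differential inequality which, by \cref{Le:Diffchar}, identifies $\bar{h}_\alpha$ as a $\CD(\bar{k}_\alpha, N)$ density on each such component. Since a priori $\bar{h}_\alpha$ is only locally Lipschitz (\cref{Re:Properties MCP}), the two differentiations must be legitimized by a logarithmic convolution in the spirit of Cavalletti--Milman \cite{cavalletti-milman2021}*{Prop.~A.10} (invoked also in the proof of \cref{Th:From TCD to Bochner}), performed at the level of $\bar{h}_\alpha$ in such a way that both the variable potential $\bar{k}_\alpha$ and the quadratic correction $\vert\phi_\alpha'\vert^2$ pass to the limit.

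\textbf{Handling the zero set and conclusion.} The preceding step only gives the CD property away from $\sfg^{-1}(\Sigma)$; to upgrade across $\Sigma$ we exploit the freedom to vary the reference set. Every singleton $\Sigma = \{o\}$ is automatically TC, and the rays of the disintegration induced by $l_o$ on $I^+(o) \cup \{o\}$ (resp.\ $I^-(o) \cup \{o\}$) lie strictly in $I^+(o)$ (resp.\ $I^-(o)$), where the sign condition is vacuous; the previous step therefore yields the CD density property on the whole ray of the $l_o$-disintegration. Because every point of every ray of any $l_\Sigma$-disintegration sits in $I^+(o)$ or $I^-(o)$ for a suitable $o$, the local-to-global property of \cref{Re:Localtoglobal} combined with the $\q$-essential uniqueness part of \cref{Th:Disintegration} propagates the CD property to every $l_\Sigma$-disintegration. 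Akdemir's characterization \cite{akdemir+} then delivers $\TCD_\beta^e(k, N)$.

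\textbf{Main obstacle.} The technical core is the rigorous passage from the integrated Bochner inequality to the pointwise differential form required by \cref{Le:Diffchar}, in the presence of only locally Lipschitz regularity of $\bar{h}_\alpha$ and the nonlinear quadratic term $\vert\phi_\alpha'\vert^2$. Designing a smoothing procedure that simultaneously preserves the variable potential $\bar{k}_\alpha$ and the quadratic correction under the limit is delicate but should go through along the lines of \cite{cavalletti-milman2021}. A secondary, more bookkeeping obstacle is pinning down the precise statement of Akdemir's equivalence, which the paper treats as work in progress; in its absence, one could still conclude a weaker ``Lagrangian $\CD^1(k,N)$'' statement that may suffice for downstream applications.
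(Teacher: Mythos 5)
Your first step --- passing from the integrated Bochner inequality along a ray to the differential characterization of \cref{Le:Diffchar} via a logarithmic convolution, thereby upgrading the conditional densities to $\CD$ densities on each side of $\Sigma$ --- is exactly what the paper does. The gap lies in what you feed into Akdemir's characterization afterwards. The Lorentzian $\CD^1$-type property that \cite{akdemir+} identifies with $\smash{\TCD_\beta^e(k,N)}$ requires a $\CD(k,N)$ disintegration relative to \emph{every} $1$-steep function $\u$ on a $\smash{\sim_\u}$-convex set $E$ (the paper's proof ends with ``by the arbitrariness of $\u$ and $E$ we invoke the main result from Akdemir''), not merely relative to signed Lorentz distance functions. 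Since the hypothesis of the theorem only speaks about functions of the form $l_\Sigma$, the entire content of the paper's proof is the reduction from general $\u$ to this special class: for each continuity value $c$ of $\u$ one forms $\Sigma_c := E_c \cap \{\u = c\} \cap C$, verifies that $l_{\Sigma_c} = \u - c$ on $E_c$, that $\smash{\sim_\u}$ implies $\smash{\sim_{l_{\Sigma_c}}}$ and that the transport sets agree, so that the two disintegrations share the same quotient measure and --- by the $\q$-essential uniqueness in \cref{Th:Disintegration} --- the same conditional densities. Only then does the Bochner hypothesis for $l_{\Sigma_c}$ transfer to the $\u$-disintegration. None of this appears in your proposal, and without it you have established at best the analog of Cavalletti--Milman's $\smash{\CD^1_{\mathrm{Lip}}}$ rather than the input Akdemir's theorem needs.

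Your mechanism for gluing across the zero level set is also flawed. You propose applying the hypothesis to singletons $\Sigma = \{o\}$ and invoking essential uniqueness to propagate the $\CD$ property to the rays of an arbitrary $l_\Sigma$-disintegration. But the rays of the $l_o$-disintegration are the radial geodesics from $o$, which in general do not coincide (as sets, nor as parametrized curves with matching conditional densities) with the $\Sigma$-normal rays of the $l_\Sigma$-disintegration; essential uniqueness in \cref{Th:Disintegration} is uniqueness of the disintegration relative to a \emph{fixed} equivalence relation $\smash{\sim_\u}$ and says nothing about disintegrations induced by different functions. The correct gluing, both across $\Sigma$ and more generally along the rays of a fixed $\u$, is obtained by varying the level $c$: the level-set surfaces $\{\u = c\}$ all induce the \emph{same} rays on the relevant pieces, the hypothesis yields the $\CD$ density property on an open cover of each ray by subintervals, and \cref{Re:Localtoglobal} then globalizes.
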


\begin{proof} Let $\u$ and $E$ be as in the beginning of  the previous  \cref{Ch:Repr form}.  The strategy is to construct a disintegration of a signed Lorentz distance function $\smash{l_{\Sigma_c}}$, where $c$ is a constant, to the level set $\smash{\{\u=c\}}$ which is compatible with a given disintegration relative to $\smash{\u}$. By the essential uniqueness asserted in \cref{Th:Disintegration}, the hypothesized Bochner inequality will transfer from the constructed to the original disintegration.

A priori, we consider two disintegrations in this proof. Therefore, we exceptionally tag the associated quantities with the respective function. For instance, $\smash{\Tr_{\u}}$ and $\smash{\Tr_{l_{\Sigma_c}}}$ are the transport sets without bad points induced by $\smash{\u}$ and $\smash{l_{\Sigma_c}}$, respectively.

Let $c$ be a continuity point of $\u$ (recall \cref{Le:Almost rightleft}). We consider the Suslin set
\begin{align*}
E_c := \pr_2\big[E_{\sim_{\u}}^2 \cap (\{\u=c\} \times \mms)\big]
\end{align*}
of all points in $\smash{\Tr_{\u}}$ which are related to some point $x\in \Tr_{\u}$ with respect to $\smash{\sim_{\u}}$ with $\smash{\u(x) = c}$. Without restriction, we assume $\smash{\meas[E_c]>0}$.

Given a compact subset $C$ of $\mms$, the capped set $\smash{\Sigma_c := E_c\cap \{u = c\} \cap C}$ is achronal by $1$-steepness of $\u$ and TC. Again with some abuse of notation, we write $\smash{l_{\Sigma_c}}$ for the restriction of the signed Lorentz distance function from $\Sigma_c$ to $E_c$. By \cref{Cor:Steepness}, $\smash{l_{\Sigma_c}}$ is $1$-steep with respect to $l$. Next, define the relation  $\smash{\sim_{l_{\Sigma_c}}}$ as in \cref{Sub:Disintegr} relative to the set $E_c$. We claim 
\begin{itemize}
\item $\smash{l_{\Sigma_c}}$ coincides with $\smash{\u - c}$ on $E_c$,
\item $\smash{l_{\Sigma_c}}$ is a signed TC Lorentz distance function on $\smash{E_c}$, 
\item the relation $\smash{\sim_{\u}}$ implies $\smash{\sim_{l_{\Sigma_c}}}$, and
\item the induced transport set $\smash{\Tr_{l_{\Sigma_c}}}$ without bad points coincides with $E_c$.
\end{itemize}

We prove several points of this list together. Let $y \in E_c$. Owing to the definition of $E_c$, let $x\in \smash{\tilde{y}^{\u}}$ such that $\u(x) = c$. Without restriction, we assume $\smash{\u(y) \geq c}$, the other case is treated analogously. The assumption on $x$ and $y$ yields $\smash{x \leq y}$. For every $\smash{z\in \Sigma_c}$, 
\begin{align*}
l_{\Sigma_c}(y) \geq l(x,y) = \u(y) - \u(x) = \u(y) - \u(z) \geq l(z,y), 
\end{align*}
which establishes
\begin{align*}
l_{\Sigma_c}(y) = l(x,y) = \u(y) - c
\end{align*}
by the arbitrariness of $z$. The first three claims are thus jointly proven. Lastly, we show no point of $E_c$ is a bad point with respect to $\smash{\sim_{l_{\Sigma_c}}}$, which will entail the remaining statement. Indeed, if $y \in E_c$ then $y$ is no endpoint with respect to $\smash{\sim_{\u}}$ by construction of $E_c$, hence no endpoint with respect to $\smash{\sim_{l_{\Sigma_c}}}$ by the third claim. On the other hand, by using the first claim it is easy to verify that if $y\in E_c$ is a branching point with respect to $\smash{\sim_{l_{\Sigma_c}}}$ then it is a branching point with respect to $\smash{\sim_{\u}}$ --- this is incompatible with the definition of $E_c$. Since $\smash{\Tr_{l_{\Sigma_c}}}$ is a subset of $E_c$ by construction, the last claim follows.

Let $\q$ be an $\MCP(k',N')$ disintegration relative to $\smash{\u}$. By the above discussion, we can disintegrate $\smash{\meas\mres E_c}$ in two ways:
\begin{align*}
\meas\mres E_c = \int_{\Quot(E_c)} \meas_{\u,\alpha} \d\q(\alpha) = \int_{\Quot(E_c)} \meas_{l_{\Sigma_c},\alpha}\d\q(\alpha).
\end{align*}
Note that  $\q$ appears in both disintegrations; $\q$-essential uniqueness from the disintegration \cref{Th:Disintegration} implies $\q$-a.e.~$\alpha\in \Quot(E_c)$ satisfies
\begin{align*}
h_{\u,\alpha} &= h_{l_{\Sigma_c},\alpha}.
\end{align*}
The two given ray maps $\smash{\sfg_{\u,\cdot}}$ and $\smash{\sfg_{l_{\Sigma_c},\cdot}}$ do also coincide. In turn, by our hypothesis --- while taking into account the inherent sign constraint and by \cref{Cor:22} ---,  for every $s,t\in \sfg^{-1}(\mms_{\u,\alpha} \setminus \{\u = c\})$ with $\smash{\sgn (\u(x) - c) = \sgn (\u(y) - c)}$ and $s<t$,
\begin{align*}
\big[(\log \bar{h}_{\u,\alpha})\big]_s^t \geq \int_{[s,t]} \bar{k}_\alpha\d\Leb^1 + \frac{1}{N-1}\int_{[s,t]} \big\vert(\log \bar{h}_{\u,\alpha})'\big\vert^2\d\Leb^1.
\end{align*}
If $\smash{\bar{h}_{\u,\alpha}}$ is twice continuously differentiable, by differentiating the above inequality at $s$ and invoking \cref{Le:Diffchar} implies $\smash{\bar{h}_{\u,\alpha}}$ is a $\smash{\CD(\bar{k}_\alpha,N)}$ density on its support. If this regularity fails, we can restrict ourselves to intervals where $\smash{\bar{k}_\alpha}$ is ``almost constant'' (cf.~the proof of Braun--McCann \cite{braun-mccann2023}*{Thm.~6.37}), then invoke a logarithmic convolution as in the proof of Cavalletti--Mondino \cite{cavalletti-mondino2020-new}*{Thm.~6.2}, and globalize these computations by adding up the local estimates; we omit the details. 

Since both $C$ and $c$ were arbitrary, from the one-dimensional local-to-global property in \cref{Re:Localtoglobal} we deduce $\q$-a.e.~conditional measure is $\smash{\Leb^1}$-absolutely continuous with a $\CD(k_\cdot,N)$ density on its global  support. Finally, by the arbitrariness of $\u$ and $E$ we  invoke the main result from Akdemir \cite{akdemir+} and obtain $\scrM$ obeys the $\smash{\TCD_\beta^e(k,N)}$ condition.
\end{proof}

\begin{remark}[Independence of the transport exponent]\label{Re:Potential independence} If the space  $\scrM$ is  genuinely time\-like nonbranching in \cref{Th:From Bochner to TCD}, then the $\smash{\TCD_\beta^e(k,N)}$ condition would follow for every $\beta$ as in \cref{Sub:Fixed}. This reflects the independence of the timelike curvature-dimension condition on the transport exponent in such a situation, as shown in Akdemir  \cite{akdemir+}.\hfill{\footnotesize{$\blacksquare$}}
\end{remark}

\subsection{Estimating the timelike Minkowski content of cut loci}\label{Sub:Loci} In \cref{Th:Minkowski} we  relate the past timelike Minkowski content (introduced by Cavalletti--Mondino \cite{cavalletti-mondino2024}) of the future and past  timelike cut locus of $\Sigma$ to the singular part of the d'Alembertian of $\smash{l_\Sigma}$ on $\smash{I^+(\Sigma)}$ and $\smash{I^-(\Sigma)}$, respectively. For simplicity, we assume $l_\Sigma$ is topologically locally anti-Lipschitz on $I^+(\Sigma)\cup I^-(\Sigma)$. This connects our d'Alembertian to their recent discussion of Lorentzian isoperimetric inequalities. Our result should be compared to their relation \cite{cavalletti-mondino2020-new}*{Prop.~5.1} of the cut locus of a point to the singular part of the Laplacian of the induced squared distance function in positive signature.

We recall from \cref{Re:Loci} that $\smash{x\in \TCut^\pm(\Sigma)}$ if and only if $x = a_\alpha$ for some $\alpha\in Q$; analogously for $\smash{\TCut^-(\Sigma)}$.

In Cavalletti--Mondino \cite{cavalletti-mondino2024}*{Def.~4.1}, the \emph{future} and \emph{past timelike Minkowski content} of a Borel subset $A$ of $\smash{\TCut^\pm(\Sigma)}$, respectively, have been defined by
\begin{align*}
\meas^\pm[A] := \inf \limsup_{\varepsilon \to 0+} \frac{\meas[\{0< \pm l_A < \varepsilon\} \cap U]}{\varepsilon},
\end{align*}
where the infimum is taken over all open sets $U$ in $\mms$ which contain $A$.

In the sequel, by $\smash{\vert\BOX l_\Sigma^\perp\vert}$ we will mean the total variation of the $\meas$-singular part of the generalized signed Radon measure $\smash{\BOX l_\Sigma}$ after the notation set up in the beginning of the present \cref{Ch:Appli}. Recall by \cref{Cor:22}, $\smash{\BOX l_\Sigma^\perp}$ is nonpositive on $\smash{I^+(\Sigma)}$ and nonnegative on $\smash{I^-(\Sigma)}$, respectively.

\begin{theorem}[Upper bounds for timelike Minkowski contents]\label{Th:Minkowski} Let $\scrM$ form a timelike essentially nonbranching $\smash{\TMCP^e(k,N)}$ metric measure spacetime. Let $\q$ be an $\MCP(k,N)$ disintegration relative to $\smash{l_\Sigma}$ and $E_\Sigma$. Let $A$ be a precompact Borel subset of $\smash{\TCut^\pm(\Sigma)}$ such that $\vert l_\Sigma\vert$ is bounded away from zero on $A$. Then
\begin{align*}
\meas^\mp[A] \leq \big\vert\BOX l_\Sigma^\perp\big\vert [A].
\end{align*}
\end{theorem}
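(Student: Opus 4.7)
The plan is to compute $|\BOX l_\Sigma^\perp|[A]$ explicitly from the representation formula of \cref{Cor:22} and then to bound the past Minkowski content of $A$ from above by disintegrating $\meas$ along the rays meeting $A$.

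Since $\meas$ charges no achronal set (in particular $\meas[\TCut^\pm(\Sigma)] = 0$), the first double integral in
\begin{align*}
T(f) = \int_Q \int_{\mms_\alpha} f\,(\log h_\alpha)'\,\d\meas_\alpha\,\d\q(\alpha) - \int_Q \big[\!\RCR f\,h_\alpha\big]_{a_\alpha}^{b_\alpha}\,\d\q(\alpha)
\end{align*}
from \cref{Cor:22} is the $\meas$-absolutely continuous part of $\BOX l_\Sigma$, while the second term, split into its $b$- and $a$-contributions, constitutes the $\meas$-singular part:
\begin{align*}
\BOX l_\Sigma^\perp \mres I^+(\Sigma) = -\int_Q h_\alpha(b_\alpha)\,\delta_{b_\alpha}\,\d\q(\alpha),\qquad \BOX l_\Sigma^\perp \mres I^-(\Sigma) = \int_Q h_\alpha(a_\alpha)\,\delta_{a_\alpha}\,\d\q(\alpha).
\end{align*}
For $A \subset \TCut^+(\Sigma)$ this yields $|\BOX l_\Sigma^\perp|[A] = \int_{Q_A} h_\alpha(b_\alpha)\,\d\q(\alpha)$ with $Q_A := \{\alpha \in Q : b_\alpha \in A\}$; the case $A \subset \TCut^-(\Sigma)$ is symmetric and handled analogously after causal reversal.

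Concentrating on $A \subset \TCut^+(\Sigma)$, I fix an open $U \supset A$ whose closure is precompact in $I^+(\Sigma)$ and on which $|l_\Sigma|$ is bounded away from $0$, and write $\bar b_\alpha := \sfg^{-1}(b_\alpha)$ for $\alpha\in Q$ with $b_\alpha\in U$. For $x \in \mms_\alpha$ with $x \preceq_{l_\Sigma} b_\alpha$, the reverse triangle inequality together with $b_\alpha \in A$ gives $-l_A(x) \geq l(x,b_\alpha) = \bar b_\alpha - \sfg^{-1}(x)$, hence $\{0 < -l_A < \varepsilon\} \cap \mms_\alpha \subset \sfg((\bar b_\alpha - \varepsilon, \bar b_\alpha), \alpha)$ once $\varepsilon$ is small. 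Continuity of $\bar h_\alpha$ at $\bar b_\alpha$ (\cref{Re:Properties MCP}) then yields $\varepsilon^{-1}\meas_\alpha[\{0<-l_A<\varepsilon\}] \to h_\alpha(b_\alpha)$ for $\q$-a.e.~$\alpha\in Q_U$, while \cref{Le:Bounds}, applied on the uniformly long interior of each ray, supplies a uniform-in-$\alpha$ dominant for these ratios. Integrating with respect to $\q$ via the disintegration \cref{Th:Disintegration} and applying the reverse Fatou lemma,
\begin{align*}
\limsup_{\varepsilon \to 0+} \frac{\meas[\{0 < -l_A < \varepsilon\} \cap U]}{\varepsilon} \leq \int_{Q_U} h_\alpha(b_\alpha)\,\d\q(\alpha),
\end{align*}
with $Q_U := \{\alpha \in Q : b_\alpha \in U\}$. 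Taking a countable nested family $U_n \downarrow A$, monotone convergence reduces the right-hand side to $\int_{Q_A}h_\alpha(b_\alpha)\,\d\q(\alpha)$, and infimizing over such $U$ yields $\meas^-[A] \leq |\BOX l_\Sigma^\perp|[A]$.

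The main obstacle is that for $x \in \{0 < -l_A < \varepsilon\} \cap U$ the supremum in the definition of $-l_A(x)$ may a priori be attained by a point $y \in A$ \emph{not} lying on the ray $\mms_{\Quot(x)}$; this would spoil the crucial inclusion $\{0<-l_A<\varepsilon\}\cap\mms_\alpha\subset\sfg((\bar b_\alpha-\varepsilon,\bar b_\alpha),\alpha)$ for rays with $b_\alpha \notin U$. The hypothesis that $|l_\Sigma|$ is bounded away from $0$ on $A$ addresses this by keeping all relevant rays uniformly away from $\Sigma$: combined with timelike $\beta$-essential nonbranching and the identification $\TCut^+(\Sigma) = b$ from \cref{Re:Loci}, the maximizing $l$-geodesic from $x$ to $y$ must prolong the ray through $x$ up to its endpoint, forcing $y = b_{\Quot(x)}$ and hence $\Quot(x) \in Q_U$. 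The same uniform control underlies the dominated passage to the limit, and it is what ultimately allows us to reduce the integration from $Q_U$ to $Q_A$.
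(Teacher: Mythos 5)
Your overall architecture coincides with the paper's: both reduce the claim to the inequality
\begin{align*}
\meas^-[A] \leq \int_{\Quot(A)} h_\alpha(b_\alpha)\,\d\q(\alpha)
\end{align*}
via the disintegration, and then identify the right-hand side with $\vert\BOX l_\Sigma^\perp\vert[A]$ using the representation formula and the continuity of $h_\alpha$ up to $b_\alpha$. That part of your write-up (Lebesgue decomposition of $T$, the convergence $\varepsilon^{-1}\meas_\alpha[\{0<-l_A<\varepsilon\}]\to h_\alpha(b_\alpha)$ for rays ending in $A$, the uniform domination via \cref{Le:Bounds}, and the monotone passage $U_n\downarrow A$) is sound. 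The difference is that the paper outsources the first inequality to the proof of Cavalletti--Mondino's Prop.~4.6, whereas you attempt a direct proof --- and that is where there is a genuine gap.

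The problem is your resolution of what you correctly identify as the main obstacle. You claim that for $x\in\{0<-l_A<\varepsilon\}$, essential nonbranching together with $\TCut^+(\Sigma)=b$ forces the maximizing geodesic from $x$ to the maximizer $y\in A$ to prolong the ray $\mms_{\Quot(x)}$, so that $y=b_{\Quot(x)}$ and hence only rays ending in $U$ contribute. This is false. Equality $l_\Sigma(y)-l_\Sigma(x)=l(x,y)$ (which is what would place $y$ on $\cl\,\mms_{\Quot(x)}$) is not implied by $y$ maximizing $l(x,\cdot)$ over $A$; steepness only gives ``$\geq$''. Concretely, take the flat Lorentzian cylinder $\R_t\times S^1_x$, $\Sigma=\{o\}$, so that $\TCut^+(o)$ contains the antipodal half-line, and let $A$ be a compact segment of it (on which $l_o$ is bounded away from zero, so your hypothesis is in force). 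Points $x$ slightly off the antipodal line and hugging the past light cone of the top endpoint of $A$ lie in $\{0<-l_A<\varepsilon\}$, yet sit on rays whose endpoints $b_{\Quot(x)}$ lie strictly above $A$; the maximizer $y\in A$ for such $x$ is reached by a chord that does not prolong the ray. So the inclusion $\{0<-l_A<\varepsilon\}\cap U\subset\bigcup_{\alpha\in Q_U}\sfg((\bar b_\alpha-\varepsilon,\bar b_\alpha),\alpha)$ genuinely fails, and your argument provides no mechanism to discard these points. (In the example their total measure is of order $\varepsilon^2\log(1/\varepsilon)$, so the statement survives --- but showing that the ``bad'' set has measure $o(\varepsilon)$ is precisely the nontrivial content of the cited Cavalletti--Mondino proposition, and it requires an argument different from the one you give.) Neither timelike essential nonbranching nor the uniform positivity of $\vert l_\Sigma\vert$ on $A$ repairs the claim; the latter hypothesis plays a different role (keeping the relevant ray segments inside $I^+(\Sigma)$ and away from $\Sigma$ so that the density bounds apply).
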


\begin{proof} We only sketch the argument and we confine ourselves to  the claimed upper bound for $\smash{\meas^-[A]}$ provided $A$ lies in $\smash{\TCut^+(\Sigma)}$; the mirrored claim follows analogously. 

By the proof of Cavalletti--Mondino \cite{cavalletti-mondino2024}*{Prop.~4.6}, we know 
\begin{align*}
\meas^-[A] \leq \int_{\Quot(A)} \limsup_{\varepsilon\to 0+} \frac{1}{\varepsilon}\int_{(b_\alpha, b_\alpha - \varepsilon)} h_\alpha\d\Leb_\alpha^1\d\q(\alpha).
\end{align*}
Indeed, the argument therein was given for achronal Borel subsets of $\smash{I^+(\Sigma)}$ with empty past $\Sigma$-boundary  \cite{cavalletti-mondino2024}*{Def.~4.3}. Nevertheless, using \cref{Re:Loci} we straightforwardly extend it to our situation. Here, we implicitly use our assumption on the uniform positivity of $\smash{\vert l_\Sigma\vert}$ on $A$. Since $\smash{h_\alpha}$ extends continuously to $a_\alpha$ for $\q$-a.e.~$\alpha\in Q$ by \cref{Re:Properties MCP}, 
\begin{align*}
\meas^-[A] \leq \int_{\Quot(A)} \big[h_\alpha\big]^{b_\alpha}\d\q(\alpha).
\end{align*}
As the set $A$ is precompact, the right-hand side represents the total variation of the signed measure $\smash{\BOX l_\Sigma^\perp}$ evaluated at $A$ by \cref{Cor:44}. 
\end{proof}

\subsection{Mean curvature}\label{Sub:Dalem Mean Curv} We set up a fundamental quantity for the geometric analysis of surfaces in smooth Lorentzian geometry in the nonsmooth setting, namely mean curvature. Bounds on it appear prominently as initial conditions in the singularity theorems of general relativity (see Steinbauer \cite{steinbauer2023} for an overview), synthetic versions of which have been pioneered by Cavalletti--Mondino \cite{cavalletti-mondino2020} and recently extended by Braun--McCann \cite{braun-mccann2023}. (We also point out Burtscher--Ketterer--McCann--Woolgar's ``Riemannian singularity theorem'' \cite{burtscher-ketterer-mccann-woolgar2020}.) We adapt Ketterer's approach \cite{ketterer2020-heintze-karcher} to this from metric measure geometry.

In \cref{Sub:CMC}, the mean curvature of $\Sigma$ is naturally set up in terms of (quantities associated to) the distributional d'Alembertian of $l_\Sigma$; see e.g.~Treude--Grant \cite{treude-grant2013}. As a motivation, following Burtscher--Ketterer--McCann--Woolgar \cite{burtscher-ketterer-mccann-woolgar2020} and Ketterer \cite{ketterer2023-rigidity} we characterize Cavalletti--Mondino's approach \cite{cavalletti-mondino2020} to synthetic mean curvature \emph{bounds} in terms of our d'Alembertian of $l_\Sigma$ in \cref{Th:Mean curv bd equiv}; Mondino--Semola \cite{mondino-semola2023+}*{§A} have clarified the relation of Laplacian and mean curvature bounds on Riemannian manifolds. This naturally induces the Lorentz\-ian analog of ``mean curvature barriers'' after Antonelli--Pasqualetto--Pozzetta--Semola \cite{antonelli-pasqualetto-pozzetta-semola2022} and ``constant mean curvature sets'' after Ketterer \cite{ketterer2023-rigidity}. This is the nonsmooth analog of surfaces of constant mean curvature; see  \cref{Re:CMC}.

\subsubsection{Synthetic mean curvature bounds}\label{Sub:Synth mc} For $t\in \R$, let $\sfp_t\colon Q\to \cl\,\Tr$ be the map sending $\alpha\in Q$ to the unique point in the intersection $\mms_\alpha\cap \{l_\Sigma = t\}$ provided the latter is nonempty. In particular, $\sfp_0$ plays the role of a ``footpoint projection'' to $\Sigma$. The measurability  of these maps is discussed in detail in Cavalletti--Mondino \cite{cavalletti-mondino2020}*{§5.1}.

It will be convenient to parametrize the rays as follows. Given $\alpha\in Q$, on a suitable closed  interval $I_\alpha$, we define the functions $\smash{\hat{h}_\alpha}$ and $\smash{\hat{k}_\alpha}$ by
\begin{align}\label{Eq:tildeh}
\begin{split}
\hat{h}_\alpha(t) &:= h_\alpha\circ\sfg_t\circ\sfp_0(\alpha),\\
\hat{k}_\alpha(t) &:= k\circ\sfg_t\circ\sfp_0(\alpha).
\end{split}
\end{align}

\begin{remark}[Convention beyond endpoints]\label{Re:Convent endpts} To simplify our notation in the sequel, we extend $\smash{\hat{h}_\alpha}$ by zero and $\smash{\hat{k}_\alpha}$ constantly beyond $\smash{\cl\,\bar{\mms}_\alpha}$, where $\alpha\in Q$. Clearly, these  extensions do not alter the conclusion of \cref{Le:CompII}, which are used in \cref{Th:HeintzeKarcher,Th:AREA}.\hfill{\footnotesize{$\blacksquare$}}
\end{remark}

Given $t\geq 0$ we consider the Borel measure
\begin{align*}
\hh_t := (\sfp_t)_\push[\hat{h}_\cdot(t)\,\q],
\end{align*}
recalling $\smash{\hat{h}_\alpha}$ is continuous for $\q$-a.e.~$\alpha\in Q$ by \cref{Re:Properties MCP}. The Borel measure $\hh_t$ should be thought of as an ``area measure'' of $\{l_\Sigma=t\}$. Indeed, for every $\meas$-measurable subset $A$ of $\smash{\Tr^\End}$ with $\meas[A]<\infty$, Cavalletti--Mondino \cite{cavalletti-mondino2020}*{Prop.~5.1} grants the coarea-type formula
\begin{align}\label{Eq:COar}
\meas[A] = \int_0^\infty\hh_t[A]\d t = \int_0^\infty \hh_t[A \cap \{l_\Sigma=t\}]\d t.
\end{align} 
In general, $\hh_t$ is not a Radon measure, which partly  motivates the following definition.

\begin{definition}[Area-regularity] We call $\Sigma$ \emph{area-regular} if its area measure $\hh_0$ from above is a Radon measure and it satisfies $(\sfp_0)_\push\q\ll\hh_0$. 
\end{definition}

In the sequel, we always assume $\Sigma$ to be area-regular.

As explained in Cavalletti--Mondino \cite{cavalletti-mondino2020}*{§5.1}, this assumption will be natural from the interpretation of the mean curvature as the first variation of the area of $\Sigma$. Moreover, the condition $(\sfp_0)_\push\q \ll \hh_0$ is equivalent to asking $\smash{\hat{h}_\alpha(0) > 0}$ to hold  for $\q$-a.e.~$\alpha\in Q$, which can be interpreted as a ``codimension one'' hypothesis in the sense of Minkowski content \cite{cavalletti-mondino2020}*{Rem.~5.3}. By \cref{Re:Properties MCP}, it holds if $\q$-a.e.~ray (which does not contain endpoints by construction) intersects $\Sigma$, or equivalenty, can be extended to both ``sides'' past $\Sigma$.

The \emph{lift} of a function $f$ defined on $\Sigma$ to $Q$ is given by
\begin{align*}
f_0 := f\circ\sfp_0.
\end{align*}
Following Burtscher--Ketterer--McCann--Woolgar \cite{burtscher-ketterer-mccann-woolgar2020}*{§B}, we denote by  $\smash{\Ell_{\pm\loc}^1(\Sigma,\hh_0)}$ the class of all $\q$-measurable functions $H$ on $\Sigma$ such that their positive or negative parts $\smash{H_\pm}$ are locally $\hh_0$-integrable.

The following is an evident variable generalization of Cavalletti--Mondino \cite{cavalletti-mondino2020}*{Def.~5.2}, inspired by similar considerations by Ketterer \cite{ketterer2020-heintze-karcher,ketterer2023-rigidity} and Burtscher--Ketterer--McCann--Woolgar  \cite{burtscher-ketterer-mccann-woolgar2020} in positive signature.

\begin{definition}[Synthetic mean curvature bounds \cite{cavalletti-mondino2020}*{Def.~5.2}]\label{Def:Synth mean} Let $\smash{H\in \Ell_{+\loc}^1(\Sigma,\hh_0)}$. We say $\Sigma$ has \emph{forward mean curvature bounded from above} by $H$ if for every nonnegative, bounded, and compactly supported Borel function $\phi$ on $\Sigma$, its induced normal variations
\begin{align*}
\Sigma_{t,\phi} := \{l_\Sigma \leq t\,\phi_0\circ\sfQ\} \cap \Tr,
\end{align*}
where $t>0$, satisfy the inequality
\begin{align*}
\liminf_{t\to 0+} \frac{2}{t^2}\Big[\meas[\Sigma_{t,\phi}] - t\int_\Sigma \phi\d\hh_0\Big] \leq \int_\Sigma H\,\phi^2\d\hh_0.
\end{align*}

Accordingly, let $\smash{H\in \Ell_{-\loc}^1(\Sigma,\hh_0)}$. Then $\Sigma$  is said to possess \emph{forward mean curvature  bounded from below} by $H$ if for every $\phi$ as above,
\begin{align*}
\limsup_{t\to 0+}\frac{2}{t^2}\Big[\meas[\Sigma_{t,\phi}] - t\int_\Sigma \phi\d \hh_0\Big] \geq \int_\Sigma H \,\phi^2\d\hh_0.
\end{align*}
\end{definition}

The following should be compared with Ketterer  \cite{ketterer2023-rigidity}*{Prop.~3.6} in positive signature.

\begin{remark}[Scaling] For $\varepsilon > 0$ we consider the $\TMCP^e(\varepsilon^{-2}\,k,N)$ metric measure space\-time given by $\scrM_\varepsilon := (\mms,\varepsilon\, l,\meas)$. Moreover, given $\smash{H\in\Ell_{+\loc}^1(\Sigma,\hh_0)}$, $\Sigma$ has forward  mean curvature bounded from above by $H$ relative to $\scrM$ if and only if its forward mean curvature is  bounded from above by $\varepsilon^{-1}\,H$ with respect to $\scrM_\varepsilon$.

An analogous claim holds for synthetic lower mean curvature bounds.\hfill{\footnotesize{$\blacksquare$}}
\end{remark}

\subsubsection{D'Alembert mean curvature bounds} Recall the potential function from \cref{Sub:Ball Jac}.

\begin{theorem}[Synthetic vs.~d'Alembert mean curvature]\label{Th:Mean curv bd equiv} Let $\scrM$ be a timelike essentially nonbranching $\TMCP^e(k,N)$ metric measure spacetime. Moreover, let $\q$ be an $\MCP(k,N)$ disintegration relative to $l_\Sigma$ and $\smash{I^+(\Sigma)\cup \Sigma}$. Finally, let $\smash{H\in \Ell_{+\loc}^1(\Sigma,\hh_0)}$. 
\begin{enumerate}[label=\textnormal{(\roman*)}]
\item \textnormal{\textbf{Variable version.}} The forward mean curvature of $\Sigma$ is bounded from above by $H$ after \cref{Def:Synth mean} if and only if  $\q$-a.e.~$\alpha\in Q$ satisfies
\begin{align}\label{Eq:BD!!}
\Box l_\Sigma \leq -(N-1)\,\frac{\sfP_{\hat{k}_\alpha/(N-1), -H_0(\alpha)/(N-1)}' \circ l_\Sigma}{\sfP_{\hat{k}_\alpha/(N-1), -H_0(\alpha)/(N-1)} \circ l_\Sigma}\quad\meas_\alpha\textnormal{-a.e.};
\end{align}
in either case, the inequality
\begin{align}\label{Eq:BD!!!}
\BOX l_\Sigma\mres I^+(\Sigma) \leq -(N-1)\int_Q\frac{\sfP_{\hat{k}_\alpha/(N-1), -H_0(\alpha)/(N-1)}' \circ l_\Sigma}{\sfP_{\hat{k}_\alpha/(N-1), -H_0(\alpha)/(N-1)} \circ l_\Sigma}\,\meas_\alpha\d\q(\alpha)
\end{align}
holds in the sense of weak Radon functionals.
\item \textnormal{\textbf{SEC version.}} If $k$ vanishes identically, upper boundedness of the forward mean curvature of $\Sigma$ by $H$ is equivalent to $\q$-a.e.~$\alpha\in Q$ satisfying
\begin{align*}
\Box l_\Sigma \leq (N-1)\,\frac{H_0(\alpha)}{N-1- H_0(\alpha)\,l_\Sigma}\quad\meas_\alpha\textnormal{-a.e.};
\end{align*}
in either case, the inequality
\begin{align}\label{Eq:Ra5}
\BOX l_\Sigma\mres I^+(\Sigma) \leq (N-1)\int_Q \frac{H_0(\alpha)}{N-1- H_0(\alpha)\,l_\Sigma}\,\meas_\alpha\d\q(\alpha)
\end{align}
holds in the sense of weak Radon functionals.
\end{enumerate}

Assuming instead $\smash{H\in \Ell_{-\loc}^1(\Sigma,\hh_0)}$,  the forward mean curvature of $\Sigma$ is bounded from below by $H$ if and only if $\q$-a.e.~$\alpha\in Q$ satisfies
\begin{align*}
\Box l_\Sigma \geq -(N-1)\,\frac{\sfP_{\hat{k}_\alpha/(N-1), -H_0(\alpha)/(N-1)}' \circ l_\Sigma}{\sfP_{\hat{k}_\alpha/(N-1), -H_0(\alpha)/(N-1)} \circ l_\Sigma}\quad\meas_\alpha\textnormal{-a.e.}
\end{align*}
\end{theorem}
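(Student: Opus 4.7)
The strategy mirrors the upper mean curvature case, all inequalities reversed, and reduces the task to a one-dimensional comparison on $\q$-a.e.~ray. First I would invoke the coarea formula \eqref{Eq:COar} together with the disintegration \cref{Th:Disintegration} to write
\begin{align*}
\meas[\Sigma_{t,\phi}] - t\int_\Sigma \phi\d\hh_0 = \int_Q \int_0^{t\,\phi_0(\alpha)}\big[\hat h_\alpha(s) - \hat h_\alpha(0)\big]\d s\d\q(\alpha),
\end{align*}
using area-regularity of $\Sigma$ to identify $\int\phi\d\hh_0$ with the first-order term. To interchange $\limsup_{t\to 0+}(2/t^2)(\cdot)$ with the outer integration over $Q$, I would appeal to the a priori density estimates of \cref{Le:Bounds} (providing a uniform local majorant for $\hat h_\alpha'$) and the local Lipschitzness of $\hat h_\alpha$ guaranteed by \cref{Re:Properties MCP}. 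Varying $\phi$ through (approximations of) characteristic functions of Borel subsets of $\Sigma$, the hypothesized synthetic lower bound translates into the ray-wise condition that
\begin{align*}
\limsup_{t\to 0+} \frac{2}{t^2}\int_0^{t}\!\big[\hat h_\alpha(s)-\hat h_\alpha(0)\big]\d s \;\geq\; H_0(\alpha)\,\hat h_\alpha(0) \qquad\text{for }\q\text{-a.e. }\alpha\in Q.
\end{align*}

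Next, I would upgrade this initial bound to a pointwise lower bound on the log derivative along the entire ray. By \eqref{Eq:Tll}, the claimed inequality on $\Box l_\Sigma$ is equivalent, for $\q$-a.e.~$\alpha\in Q$, to
\begin{align*}
(\log \hat h_\alpha)'(t) \;\geq\; -(N-1)\,\frac{\sfP'_{\hat k_\alpha/(N-1),-H_0(\alpha)/(N-1)}(t)}{\sfP_{\hat k_\alpha/(N-1),-H_0(\alpha)/(N-1)}(t)} \qquad\Leb^1\mres\bar\mms_\alpha\textnormal{-a.e.}
\end{align*}
Since $\bar h_\alpha$ is an $\MCP(\bar k_\alpha,N)$ density on $\bar\mms_\alpha$, I would apply the lower bound of \cref{Le:logarithmic derivative} with the "base point" placed at $t=0$ rather than at the conjugate cut point, using the initial condition from the previous paragraph to calibrate the model. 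Concretely, the MCP concavity of $\hat h_\alpha^{1/(N-1)}$ (in its distortion-coefficient form) together with $\hat h_\alpha'^+(0)/\hat h_\alpha(0)\geq H_0(\alpha)$ gives $\hat h_\alpha(t)\geq \hat h_\alpha(0)\,\sfP_{\hat k_\alpha/(N-1),-H_0(\alpha)/(N-1)}(-t)^{N-1}$, which via \cref{Re:Trafo refl} is exactly the reflected model appearing in the theorem; differentiating logarithmically yields the desired inequality.

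For the converse direction, assume the d'Alembertian inequality and integrate $(\log \hat h_\alpha)'$ along the ray to obtain the pointwise lower bound
\begin{align*}
\hat h_\alpha(t)\;\geq\; \hat h_\alpha(0)\,\frac{\sfP_{\hat k_\alpha/(N-1),-H_0(\alpha)/(N-1)}(t)^{N-1}}{\sfP_{\hat k_\alpha/(N-1),-H_0(\alpha)/(N-1)}(0)^{N-1}}
\end{align*}
for $\q$-a.e. $\alpha \in Q$ and $t$ in a small right-neighborhood of $0$. Plugging back into the coarea expression from Step 1, a direct Taylor expansion of $\sfP^{N-1}_{\hat k_\alpha/(N-1),-H_0(\alpha)/(N-1)}$ at $t=0$ (its first derivative is $-H_0(\alpha)$, see the normalization after \cref{Def:Jacobian}) gives
\begin{align*}
\meas[\Sigma_{t,\phi}] - t\!\int_\Sigma\!\phi\d\hh_0 \;\geq\; \frac{t^2}{2}\int_Q H_0(\alpha)\,\phi_0(\alpha)^2\,\hat h_\alpha(0)\d\q(\alpha) + o(t^2) = \frac{t^2}{2}\!\int_\Sigma\! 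H\,\phi^2\d\hh_0 + o(t^2),
\end{align*}
which is precisely \cref{Def:Synth mean}.

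\textbf{Main obstacle.} The delicate point is the reverse Riccati step: \cref{Le:Riccati} naturally yields \emph{upper} bounds on the log derivative of a concave-like function from an upper bound on its initial slope, whereas here I need the \emph{lower} bound. Under only the MCP hypothesis, $\hat h_\alpha^{1/(N-1)}$ is not genuinely concave but only satisfies the one-sided interpolation of \cref{Def:MCP density}; consequently the upgrade from the "initial, liminf/limsup-type" estimate of Step 1 to an everywhere-defined pointwise bound requires handling low-regularity endpoints of $\bar h_\alpha$ (cf.~the logarithmic-convolution trick used in \cref{Th:From TCD to Bochner}) and a careful reparametrization so that the variable potential $\bar k_\alpha$ in \cref{Le:logarithmic derivative} matches $\hat k_\alpha$ in the final formula. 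Otherwise the translation between synthetic mean curvature bounds and d'Alembertian bounds is entirely parallel to the upper estimate \eqref{Eq:BD!!}--\eqref{Eq:Ra5}.
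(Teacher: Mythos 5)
Your reduction of the synthetic mean curvature bound to the initial condition $\hat h_\alpha'^+(0)\le H_0(\alpha)\,\hat h_\alpha(0)$ (resp.\ $\ge$) via the coarea formula, Fatou's lemma, and the a priori MCP density estimates is exactly the paper's \cref{Le:Diff in}, so that part of your plan is sound. The genuine gap is the upgrade from this initial condition to the pointwise bound along the whole ray. The paper does this (for the upper bound, which is the main content of the statement) by applying the Riccati comparison \cref{Le:Riccati} to $v=\hat h_\alpha^{1/(N-1)}$: an \emph{upper} bound on $v'(0)$ propagates forward because $v$ is a supersolution of $v''+\kappa v\le 0$. Your substitute does not work. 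First, the MCP interpolation inequality of \cref{Def:MCP density} (equivalently \cref{Le:logarithmic derivative}) never sees the initial slope $H_0(\alpha)$ at all --- it only compares $\hat h_\alpha$ against the model $\SIN_\kappa^{N-1}$ anchored at the \emph{far endpoint} of the ray --- so it cannot produce the bound $\hat h_\alpha(t)\ge \hat h_\alpha(0)\,\sfP_{\hat k_\alpha/(N-1),-H_0(\alpha)/(N-1)}(-t)^{N-1}$ you assert; an estimate calibrated by the initial slope is a consequence of the concavity structure plus a Riccati/Sturm comparison (\cref{Le:CompII}, \cref{Le:Riccati}), not of MCP alone. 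Second, even granting such a pointwise inequality, ``differentiating logarithmically'' is a non sequitur: two functions that touch at $t=0$ and are ordered for $t>0$ have comparable derivatives only \emph{at} $t=0$, so you do not recover the claimed a.e.\ bound on $(\log\hat h_\alpha)'$ along the ray. You flag this as the ``main obstacle'' but leave it unresolved; note also that you have the asymmetry backwards --- it is the \emph{upper} bound (i)--(ii), not the lower one, for which the forward Riccati comparison applies directly, so declaring the upper case ``entirely parallel'' to your lower-bound sketch inverts the logic of the paper's argument.

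There is also a sign error in your converse step: integrating $(\log\hat h_\alpha)'\ge -(N-1)\,(\log\sfP_{\hat k_\alpha/(N-1),-H_0(\alpha)/(N-1)})'$ yields $\hat h_\alpha(t)\ge\hat h_\alpha(0)\,\sfP_{\hat k_\alpha/(N-1),-H_0(\alpha)/(N-1)}(t)^{-(N-1)}$, with a \emph{negative} power, whose expansion is $1+H_0(\alpha)\,t+o(t)$; the positive power you display has derivative $-H_0(\alpha)$ at $0$ and would produce the wrong sign when compared with \cref{Def:Synth mean}. The paper avoids this entirely in the converse by letting $t_n\to 0+$ in the a.e.\ inequality to recover the initial slope condition and then quoting the ``if'' direction of \cref{Le:Diff in}, which sidesteps the uniform-in-$\alpha$ Taylor remainder your route would additionally have to control.
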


\begin{remark}[Equivalence of comparison estimates] The converse implication from \eqref{Eq:BD!!!} to \eqref{Eq:BD!!} holds if $l_\Sigma$ is topologically anti-Lipschitz on $I^+(\Sigma)$. This follows directly from \cref{Cor:22} and  \cref{Le:VsTop}. The latter enables us to choose appropriate  test functions to propagate an integrated to an a.e.~inequality.\hfill{\footnotesize{$\blacksquare$}}
\end{remark}

\begin{remark}[Nonnegative curvature] Assume $k$ vanishes identically in the first part of \cref{Th:Mean curv bd equiv}. If also $H$ vanishes identically, recalling \cref{Re:Superharmonicity} the estimate \eqref{Eq:Ra5} says the signed Lorentz distance function $l_\Sigma$ is superharmonic on $\smash{I^+(\Sigma)}$. This should be compared to the fundamental one-to-one correspondence of minimality of smooth hypersurfaces in positive signature (in the sense of vanishing mean curvature) and subharmonicity of their induced distance function (e.g.~in the viscosity or barrier  sense, cf.~Wu \cite{wu1979} and Choe--Fraser \cite{choe-fraser2018}). Also, inspired by Gromov's proof of the Lévy--Gromov  isoperimetric inequality \cite{gromov2007}*{§C} the mentioned subharmonicity is a necessary condition for \emph{singular} hypersurfaces to be minimal. We refer to  Mondino--Semola \cite{mondino-semola2023+}*{§A} for a  literature overview.\hfill{\footnotesize{$\blacksquare$}}
\end{remark}

This characterization suggests the following analog of Ketterer  \cite{ketterer2023-rigidity}*{Def.~3.10}.

\begin{definition}[D'Alembert mean curvature bounds]\label{Def:DAL MEAN C} Let $\smash{H\in \Ell_{+\loc}^1(\Sigma,\hh_0)}$. We say $\Sigma$ has \emph{forward d'Alembert mean curvature bounded from above} by $H$ if $\q$-a.e.~$\alpha\in Q$ satisfies
\begin{align*}
\Box l_\Sigma \leq -(N-1)\,\frac{\sfP_{\hat{k}_\alpha/(N-1), -H_0(\alpha)/(N-1)}' \circ l_\Sigma}{\sfP_{\hat{k}_\alpha/(N-1), -H_0(\alpha)/(N-1)} \circ l_\Sigma}\quad\meas_\alpha\textnormal{-a.e.}
\end{align*}

Accordingly, let $\smash{H\in\Ell_{-\loc}^1(\Sigma,\hh_0)}$. Then we term $\Sigma$ to have \emph{forward d'Alembert mean curvature bounded from below} by $H$ if $\q$-a.e.~$\alpha\in Q$ satisfies
\begin{align*}
\Box l_\Sigma \geq -(N-1)\,\frac{\sfP_{\hat{k}_\alpha/(N-1), -H_0(\alpha)/(N-1)}' \circ l_\Sigma}{\sfP_{\hat{k}_\alpha/(N-1), -H_0(\alpha)/(N-1)} \circ l_\Sigma}\quad\meas_\alpha\textnormal{-a.e.}
\end{align*}
\end{definition}

\begin{remark}[Synthetic Hawking-type singularity theorems]\label{Re:Syntg} \cref{Th:Mean curv bd equiv} entails the synthetic Hawking-type singularity theorems of Cavalletti--Mondino \cite{cavalletti-mondino2020}*{Thm.~5.6} and Braun--McCann \cite{braun-mccann2023}*{Thm.~7.14} hold under the assumption of appropriate forward d'Alembert mean curvature upper bounds. In the smooth situation, this has already been observed by Treude--Grant \cite{treude-grant2013}*{Thm.~5.1}.\hfill{\footnotesize{$\blacksquare$}}
\end{remark}

Now we proceed to the proof of \cref{Th:Mean curv bd equiv}. The strategy follows Burtscher--Ketterer--McCann--Woolgar  \cite{burtscher-ketterer-mccann-woolgar2020}*{Lem.~B.3, Cor. B.4} and Ketterer \cite{ketterer2023-rigidity}*{Lem.~3.7, Thm.~3.8} in positive signature. An expert reader will notice the similarity of the following proofs to the ones for the synthetic Hawking-type singularity theorems of Cavalletti--Mondino \cite{cavalletti-mondino2020}*{Thm.~5.6} and Braun--McCann \cite{braun-mccann2023}*{Thm.~7.14} (a correspondence which, in light of \cref{Re:Syntg}, is of course no surprise).

\begin{lemma}[Differential inequalities]\label{Le:Diff in} Suppose that $\smash{H\in L_{+\loc}^1(\Sigma,\hh_0)}$. Then $\Sigma$ possesses forward mean curvature bounded from above by $H$ if and only if for $\q$-a.e.~$\alpha\in Q$, the right derivative $\smash{\hat{h}_\alpha'^+(0)}$ from the beginning of \cref{Ch:Prerequisites} exists, is a real number, and satisfies
\begin{align*}
\hat{h}_\alpha'^+(0) \leq H_0(\alpha)\,\hat{h}_\alpha(0).
\end{align*}

Analogously, assume $\smash{H\in \Ell_{-\loc}^1(\Sigma,\hh_0)}$. Then $\Sigma$ has forward mean curvature bounded from below by $H$ if and only if for $\q$-a.e.~$\alpha\in Q$, the right derivative $\smash{\hat{h}_\alpha'^+(0)}$ from the beginning of \cref{Ch:Prerequisites} exists, is a real number, and satisfies
\begin{align*}
\hat{h}_\alpha'^+(0) \geq H_0(\alpha)\,\hat{h}_\alpha(0).
\end{align*}
\end{lemma}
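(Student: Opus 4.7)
The plan is to rewrite both $\meas[\Sigma_{t,\phi}]$ and $\int_\Sigma \phi \,\mathrm{d}\hh_0$ via the disintegration of $\meas$ and the definition $\hh_0 = (\sfp_0)_\push[\hat{h}_\cdot(0)\,\q]$. Concretely, since $\meas_\alpha = h_\alpha\,\Leb_\alpha^1$ and $l_\Sigma$ is affine along each ray with slope $1$ with $l_\Sigma(\sfp_0(\alpha))=0$, the parametrization \eqref{Eq:tildeh} yields
\begin{align*}
\meas[\Sigma_{t,\phi}] = \int_Q \int_0^{t\,\phi_0(\alpha)} \hat{h}_\alpha(s)\,\mathrm{d}s\,\mathrm{d}\q(\alpha),\qquad \int_\Sigma \phi \,\mathrm{d}\hh_0 = \int_Q \phi_0(\alpha)\,\hat{h}_\alpha(0)\,\mathrm{d}\q(\alpha),
\end{align*}
so that the defining liminf of \cref{Def:Synth mean} becomes
\begin{align*}
\liminf_{t\to 0+}\int_Q \frac{2}{t^2}\Big[\!\int_0^{t\,\phi_0(\alpha)}\hat{h}_\alpha(s)\,\mathrm{d}s - t\,\phi_0(\alpha)\,\hat{h}_\alpha(0)\Big]\mathrm{d}\q(\alpha).
\end{align*}

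Next, I would control the integrand uniformly in $t$. Since $\Sigma$ is area-regular, $\hat{h}_\alpha(0)$ is positive for $\q$-a.e.~$\alpha\in Q$, and by \cref{Re:Properties MCP} the function $\hat{h}_\alpha$ is locally Lipschitz on a two-sided neighborhood of $0$; a locally uniform Lipschitz estimate follows from \cref{Le:Bounds} applied on a compact subinterval generated by the support of $\phi$, together with a local lower bound for $k$. This bounds the integrand $\q$-a.e.~by $L(\alpha)\,\phi_0(\alpha)^2$ with $L$ locally $\q$-integrable (thanks to $\phi$ being compactly supported and $H \in \Ell^1_{+\loc}(\Sigma,\hh_0)$-type controls via area-regularity), which allows invoking reverse Fatou in the $\limsup$ direction and Fatou in the $\liminf$ direction. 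Further, for $\q$-a.e.~$\alpha$ the right derivative $\hat{h}_\alpha'^+(0)$ is a real number by the same Lipschitz control, and the elementary identity
\begin{align*}
\limsup_{t\to 0+}\frac{2}{t^2}\Big[\!\int_0^{t\,\phi_0(\alpha)}\hat{h}_\alpha(s)\,\mathrm{d}s - t\,\phi_0(\alpha)\,\hat{h}_\alpha(0)\Big] = \phi_0(\alpha)^2\,\hat{h}_\alpha'^+(0)
\end{align*}
holds (and similarly for the $\liminf$, which coincides when the one-sided derivative exists as a genuine limit; when it does not, one works with $\limsup$ throughout as in the convention of \cref{Sub:Fixed}).

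Then the mean-curvature inequality of \cref{Def:Synth mean} translates, after identifying $\int_\Sigma H\,\phi^2\,\mathrm{d}\hh_0 = \int_Q H_0(\alpha)\,\phi_0(\alpha)^2\,\hat{h}_\alpha(0)\,\mathrm{d}\q(\alpha)$, into
\begin{align*}
\int_Q \phi_0(\alpha)^2\,\hat{h}_\alpha'^+(0)\,\mathrm{d}\q(\alpha) \leq \int_Q H_0(\alpha)\,\phi_0(\alpha)^2\,\hat{h}_\alpha(0)\,\mathrm{d}\q(\alpha).
\end{align*}
Since $\phi$ is an arbitrary nonnegative, bounded, compactly supported Borel function on $\Sigma$ and $(\sfp_0)_\push\q \ll \hh_0$, the lifts $\phi_0^2$ range over a $\q$-measure determining family, so the integrated inequality is equivalent to the pointwise bound $\hat{h}_\alpha'^+(0) \leq H_0(\alpha)\,\hat{h}_\alpha(0)$ for $\q$-a.e.~$\alpha\in Q$. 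This proves both implications in the forward-upper case. The forward-lower case is identical modulo swapping $\liminf/\limsup$ and reversing the inequality.

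The main obstacle will be justifying the interchange of limit and integral uniformly in $\phi$ and $t$; I expect this to be handled by the a priori MCP density estimates of \cref{Le:Bounds} (zeroth and first order bounds), which give the Lipschitz domination on any compact subsegment and legitimize both Fatou applications, while the existence and finiteness of $\hat{h}_\alpha'^+(0)$ for $\q$-a.e.~$\alpha$ follows from local Lipschitzness of the conditional densities at the interior point $\sfp_0(\alpha)$.
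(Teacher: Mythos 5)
Your overall architecture — disintegrate, reduce to a one‑dimensional statement along each ray, and conclude by arbitrariness of $\phi$ — is the paper's, but two steps are genuinely broken.

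First, you assert that existence and finiteness of $\smash{\hat{h}_\alpha'^+(0)}$ "follows from local Lipschitzness of the conditional densities at the interior point $\sfp_0(\alpha)$". In the setting where this lemma is used (\cref{Th:Mean curv bd equiv}), the disintegration is taken relative to $I^+(\Sigma)\cup\Sigma$, so $\sfp_0(\alpha)=a_\alpha$ is the \emph{initial endpoint} of the ray, not an interior point. By \cref{Re:Properties MCP} an $\MCP$ density is locally Lipschitz only on the interior and merely extends continuously to the boundary; its right derivative at the left endpoint can be $+\infty$ (e.g.\ $h(x)=\sqrt{x}$ is an $\MCP(0,2)$ density on $[0,1]$). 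The one-sided comparison of \cref{Le:logarithmic derivative} only bounds $\hat{h}_\alpha(s)-\hat{h}_\alpha(0)$ from \emph{below} by $-c\,s\,\hat{h}_\alpha(0)$ near $0$; there is no upper Lipschitz bound there. Finiteness of $\smash{\hat{h}_\alpha'^+(0)}$ is therefore part of what must be \emph{proved} in the "only if" direction, and the paper derives it from the assumed mean curvature bound itself: if $\smash{\hat{h}_\alpha'^+(0)=\infty}$ on a $\q$-nonnegligible set, testing with the (scaled) indicator of a precompact subset of its footpoint projection forces $\int H_+\,\phi^2\d\hh_0=\infty$, contradicting $H\in \Ell^1_{+\loc}(\Sigma,\hh_0)$. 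Your proof omits this entirely.

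Second, and relatedly, your justification of the limit–integral interchange via "reverse Fatou" rests on a two-sided domination $\vert G_t(\alpha)\vert\leq L(\alpha)\,\phi_0(\alpha)^2$ that is not available: \cref{Le:Bounds} gives only an \emph{integral} bound on $\vert h_\alpha'\vert$ (a BV-type estimate), not a pointwise Lipschitz bound up to the endpoint. The paper uses only the one-sided lower bound $\hat{h}_\alpha(r\phi_0(\alpha))\geq[1-c\,r\,\phi_0(\alpha)]\,\hat{h}_\alpha(0)$, which suffices for Fatou in the $\liminf$ direction — the only direction needed, since \cref{Def:Synth mean} is itself stated with a $\liminf$. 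Likewise, your "elementary identity" equating the $\limsup$ of the averaged difference quotient with $\phi_0(\alpha)^2\,\hat{h}_\alpha'^+(0)$ is not an identity when the one-sided derivative exists only as a $\limsup$ (averaging can strictly decrease a $\limsup$); the paper avoids this by first securing that $\smash{\hat{h}_\alpha'^+(0)}$ is a real number and then identifying $\liminf_{t\to 0+}G_t(\alpha)$.
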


\begin{proof} We only prove the first statement, the second is analogous.

To show the ``only if'' implication, we start with some preparations. As in the proof of \cref{Le:Radon funct}, let $\varepsilon >0$ and let $Q_\varepsilon$ denote the $\q$-measurable set of all $\alpha\in Q$ such that $l_\Sigma(a_\alpha) \geq 2\varepsilon$. Let $\Sigma_\varepsilon$ be a precompact subset of $\Quot^{-1}(Q_\varepsilon)\cap \{l_\Sigma = \varepsilon\}$. As $\Sigma$ is FTC, the set $J(\cl\,\sfp_0(\Sigma_\varepsilon), \cl\,\Sigma_\varepsilon)$ is compact. Let $K$ denote a negative lower bound on $k$ on the latter set. Let $\phi$ be a nonnegative Borel function on $\Sigma$ to be chosen later whose support is contained in $\sfp_0(\Sigma_\varepsilon)$. By scaling, it will  suffice to assume  $\Vert \phi\Vert_\infty \leq \varepsilon$. Given  $t\in (0,\varepsilon)$, \eqref{Eq:COar} implies
\begin{align*}
\meas[\Sigma_{t,\phi}] - t\int_\Sigma\phi\d\hh_0 &= \int_{Q_\varepsilon}\int_0^t \big[\hat{h}_\alpha(r\,\phi_0(\alpha)) - \hat{h}_\alpha(0)\big]\d r\,\phi_0(\alpha)\d\q(\alpha).
\end{align*}

By \cref{Re:Props dist coeff}, \cref{Le:logarithmic derivative},  \cref{Re:Logarithmic derivative}, and our hypothesis on $\phi$,  $\q$-a.e.~$\alpha\in \Quot(\Sigma_\varepsilon)$ satisfies the following inequalities for every $r\in (0,t)$:
\begin{align}\label{Eq:ESST}
\begin{split}
\hat{h}_\alpha(r\,\phi_0(\alpha)) &\geq \frac{\sinh(\sqrt{-K/(N-1)}\,(2\varepsilon - r\,\phi_0(\alpha))^{N-1}}{\sinh(\sqrt{-K/(N-1)}\,(2\varepsilon))^{N-1}}\,\hat{h}_\alpha(0)\\
&\geq \big[1 - c\,r\,\phi_0(\alpha)\big]\,\hat{h}_\alpha(0)
\end{split}
\end{align}
where
\begin{align*}
c:= 2\varepsilon\,\sqrt{-K(N-1)}\,\frac{\cosh(\sqrt{-K/(N-1)}\,(2\varepsilon))^{N-1}}{\sinh(\sqrt{-K/(N-1)}\,(2\varepsilon))^{N-1}},
\end{align*}
and consequently 
\begin{align*}
2\int_0^t \big[\hat{h}_\alpha(r\,\phi_0(\alpha)) - \hat{h}_\alpha(0)\big]\d r\,\phi_0(\alpha) \geq -c\,t^2\,\phi^2_0(\alpha)\,\hat{h}_\alpha(0).
\end{align*}
With the $\hh_0$-integrability of $\phi^2$, this  justifies the use of Fatou's lemma to entail
\begin{align}\label{Eq:Fatu}
&\liminf_{t\to 0+}\frac{2}{t^2}\Big[\meas[\Sigma_{t,\phi}] - t\int_\Sigma\phi\d\hh_0\Big]\geq \int_{Q_\varepsilon}\liminf_{t\to 0+}G_t(\alpha)\,\phi_0(\alpha)\d\q(\alpha),
\end{align}
where
\begin{align*}
G_t(\alpha) := \frac{2}{t^2}\int_0^t \big[\hat{h}_\alpha(r\,\phi_0(\alpha)) - \hat{h}_\alpha(0)\big]\d r
\end{align*}

Recall from \cref{Re:Properties MCP} the right derivative $\smash{\hat{h}_\alpha'^+(0)}$ exists in the extended real numbers for $\q$-a.e.~$\alpha\in Q_\varepsilon$. The bound  \eqref{Eq:ESST} above shows it is not $-\infty$. We now invoke the forward mean curvature bound on $\Sigma$ for the first time to show $\smash{\hat{h}_\alpha'^+(0)}$ is a real number. Assume to the contrary that there exists a subset $R_\varepsilon$ of $Q_\varepsilon$ such that $\q[R_\varepsilon] > 0$ with $\smash{\hat{h}_\alpha'^+(0)=\infty}$ for every $\alpha\in R_\varepsilon$. Given any positive $m$ there is $t\in (0,\varepsilon)$ such that for every $s\in (0,t)$,
\begin{align*}
\hat{h}_\alpha(s) - \hat{h}_\alpha(0) \geq m s.
\end{align*}
In the case $\phi_0(\alpha) > 0$, this implies
\begin{align*}
\liminf_{t\to 0+} G_t(\alpha) \geq m\,\phi_0(\alpha)
\end{align*}
and the arbitrariness of $m$ gives
\begin{align*}
\liminf_{t\to 0+} G_t(\alpha) = \infty.
\end{align*}
Choosing $\phi$ as the indicator function scaled by $\varepsilon$ of a precompact subset $C$ of $\sfp_0(R_\varepsilon)$ in \eqref{Eq:Fatu} and using the forward mean curvature bound on $\Sigma$ gives
\begin{align*}
\varepsilon^2\int_C H_+\d\hh_0 \geq \int_{Q_\varepsilon}\liminf_{t\to 0+} G_t(\alpha)\,\phi_0(\alpha)\d \q(\alpha) = \infty.
\end{align*}
This contradicts the assumed local $\hh_0$-integrability of $H_+$.

In turn, arguing as in the previous paragraph, for $\q$-a.e.~$\alpha\in Q_\varepsilon$ we get
\begin{align*}
\liminf_{t\to 0+} G_t(\alpha) = \hat{h}'^+_\alpha(0)\,\phi_0(\alpha).
\end{align*}
Employing \eqref{Eq:Fatu} and our forward mean curvature bound  again,
\begin{align*}
\int_{Q_\varepsilon} \big[H_0(\alpha)\,\hat{h}_\alpha(0)\big]\,\phi^2_0(\alpha)\d\q(\alpha) \geq \int_{Q_\varepsilon} \hat{h}'^+_\alpha(0)\,\phi^2_0(\alpha)\d\q(\alpha).
\end{align*}
By the arbitrariness of $\phi$ and $\varepsilon$, the claim is proven.

The ``if'' implication follows by tracing back the previous  argumentation.
\end{proof}

\begin{proof}[Proof of \cref{Th:Mean curv bd equiv}] First,  assume the forward mean curvature of $\Sigma$ is bounded from above by a function $H$ as given. For $\q$-a.e. $\alpha\in Q$, by the disintegration \cref{Th:Disintegration} the reparametrization $\smash{\hat{h}_\alpha}$ is a $\CD(\hat{k}_\alpha,N)$ density on the closed interval $I_\alpha$ containing zero. By \cref{Le:Diff in,Le:Riccati}, on the interior of this interval we have
\begin{align*}
(\log \hat{h}_\alpha)'^+ &\leq \big[(\log \sfP_{\hat{k}_\alpha/(N-1), H_0(\alpha)/(N-1)})^{N-1}\big]'.
\end{align*}
Reversing the parametrization after \eqref{Eq:tildeh} and recalling \cref{Re:Trafo refl,Re:Compat} gives
\begin{align}\label{Eq:Ineq log}
-(\log h_\alpha)' \leq -(N-1)\,\frac{\sfP'_{\hat{k}_\alpha/(N-1), -H_0(\alpha)/(N-1)}\circ l_\Sigma}{\sfP_{\hat{k}_\alpha/(N-1), -H_0(\alpha)/(N-1)}\circ l_\Sigma}\quad\meas_\alpha\textnormal{-a.e.}
\end{align}

In turn, in the sense of weak Radon functionals this easily implies
\begin{align*}
\BOX l_\Sigma \mres I^+(\Sigma) &= -\int_Q (\log h_\alpha)'\,\meas_\alpha\d\q(\alpha) + \int_Q \big[\delta_\cdot\,h_\alpha\big]_{a_\alpha}\d\q(\alpha)\\
&\leq -\int_Q (\log h_\alpha)'\,\meas_\alpha\d\q(\alpha)\\
&\leq -(N-1)\int_Q \frac{\sfP'_{\hat{k}_\alpha/(N-1), -H_0(\alpha)/(N-1)}\circ l_\Sigma}{\sfP_{\hat{k}_\alpha/(N-1), -H_0(\alpha)/(N-1)}\circ l_\Sigma}\,\meas_\alpha\d\q(\alpha).
\end{align*}

Conversely, by assumption the above inequality \eqref{Eq:Ineq log} holds. In particular, there is a sequence $(t_n)_{n\in\N}$ of positive real numbers converging to zero such that for every $n\in\N$,
\begin{align*}
(\log \hat{h}_\alpha)'(t_n) \leq (N-1)\,\frac{\sfP'_{\hat{k}_\alpha/(N-1), H_0(\alpha)/(N-1)}(t_n)}{\sfP_{\hat{k}_\alpha/(N-1), H_0(\alpha)/(N-1)}(t_n)}.
\end{align*}
As $n\to\infty$, the left-hand side converges to $\smash{(\log \hat{h}_\alpha)'(0)}$ and the right-hand side converges to $H_0(\alpha)$. Using the chain rule and the positivity of $\smash{\hat{h}_\alpha(0)}$ yields
\begin{align*}
\hat{h}_\alpha'^+(0) \leq H_0(\alpha)\,\hat{h}_\alpha(0),
\end{align*}
and the claim follows from \cref{Le:Diff in}.

The equivalence for lower boundedness is shown in an analogous way.
\end{proof}

\subsubsection{Exact mean curvature and its barriers}\label{Sub:CMC} To define the actual mean curvature of $\Sigma$, we start with some technical discussions, referring to the literature \cite{cavalletti-milman2021,ketterer2020-heintze-karcher,burtscher-ketterer-mccann-woolgar2020,ketterer2023-rigidity} in positive signature for details about  measurability. Since $\Tr$ does not contain endpoints, $\Sigma\cap \Tr$ corresponds to the set of all points in $\Sigma$ that lie on the \emph{interior} of a ray. Accordingly, the $\meas$-measurable set $S := \Quot^{-1}(\Quot(\Sigma\cap\Tr))$ is the set of all points in $\Tr$ which lie on such a ray passing through $\Sigma$. Since $\Sigma$ has footpoints, the $\meas$-measurable sets
\begin{align*}
D^\pm_\Sigma &:= (I^\pm(\Sigma) \cap \Tr)\setminus S
\end{align*}
collect all points in $\smash{I^\pm(\Sigma)}$ which lie on a ray which starts or ends in $\Sigma$. We also set
\begin{align*}
D_\Sigma := D^+_\Sigma\cup D^-_\Sigma.
\end{align*}

This motivates the following Lorentzian analog of Ketterer's notion of ``regularity'' of a set sufficient in order to define its mean curvature \cite{ketterer2020-heintze-karcher}*{Def.~5.7}. 

\begin{definition}[Finite curvature]\label{Def:Forward C} We say $\Sigma$ has
\begin{enumerate}[label=\textnormal{\alph*.}]
\item \emph{finite forward curvature} provided $\smash{\meas[D_\Sigma^+]=0}$,
\item \emph{finite backward curvature} provided $\smash{\meas[D_\Sigma^-]=0}$, and
\item \emph{finite curvature} if it has finite forward and backward curvature simultaneously, or equivalently $\smash{\meas[D_\Sigma]=0}$.
\end{enumerate}
\end{definition}

\begin{remark}[Interpretation in the smooth case] As already noted e.g.~in Ketterer \cite{ketterer2020-heintze-karcher}*{Rem.~5.9} and Cavalletti--Mondino \cite{cavalletti-mondino2020}*{Rem.~5.5}, in the smooth case $\Sigma$ having finite curvature corresponds to a local $\smash{\Ell^\infty}$-bound on its second fundamental form.\hfill{\footnotesize{$\blacksquare$}}
\end{remark}

For the rest of the current \cref{Sub:HKIn}, we assume $\Sigma$ has finite curvature. Again, if only the chronological future or past is relevant, this can be weakened to finite forward or backward curvature, respectively.

This entails enhanced properties on the conditional densities ``on $\Sigma$'' comparable to the regularity assumptions from the previous \cref{Sub:Dalem Mean Curv}. Since for $\q$-a.e.~$\alpha\in \Quot(S)$, $0$ is an \emph{interior} point of the open interval $\smash{\bar{\mms}_\alpha}$, the value $\smash{\hat{h}_\alpha(0)}$ is positive by \cref{Re:Properties MCP}. In particular, the property $(\sfp_0)_\push\q \ll \hh_0$ assumed by area-regularity of $\Sigma$ holds by default. In turn, \cref{Re:Properties MCP} again implies $\smash{\hat{h}_\alpha}$ (and therefore $\smash{\log \hat{h}_\alpha}$) is right and left differentiable at zero and both derivatives are real numbers.

This motivates the subsequent adaptation of Ketterer \cite{ketterer2017}*{Def.~5.7, Rem.~5.10}. Let us point out the analogy of the distributional mean curvature set up below to the d'Alembert representation formula from \cref{Cor:22}.

\begin{definition}[Mean curvature]\label{Def:MCurv} If $\Sigma$ has finite forward curvature, then its \emph{forward mean curvature} is the function $\smash{H^+_\Sigma\colon \Sigma\to \R\cup\{-\infty\}}$ $\hh_0$-a.e.~defined by
\begin{align*}
H_\Sigma^+(z) &:= \begin{cases} (\log\hat{h}_{\Quot(z)})'^+(0) & \textnormal{\textit{if} } z\in S,\\
-\infty & \textnormal{\textit{if} }  \smash{\cl\,\mms_{\Quot(z)} \cap I^+(\Sigma)} \textnormal{ \textit{is empty}},\\
0 & \textnormal{\textit{otherwise}}.
\end{cases}
\end{align*}

If $\Sigma$ has finite backward curvature, then its \emph{backward mean curvature} is the function $\smash{H_\Sigma^-\colon \Sigma\to \R\cup\{\infty\}}$ $\hh_0$-a.e.~defined by
\begin{align*}
H_\Sigma^-(z) := \begin{cases} -(\log\hat{h}_{\Quot(z)})'^-(0) & \textnormal{\textit{if} } z\in S,\\
\infty & \textnormal{\textit{if} } \smash{\cl\,\mms_{\Quot(z)} \cap I^-(\Sigma)} \textnormal{ \textit{is empty}},\\
0 & \textnormal{\textit{otherwise}}.
\end{cases}
\end{align*}

Finally, if $\Sigma$ has finite curvature,  the \emph{extended  mean curvature} of $\Sigma$ is the generalized  signed Radon measure $\bdH_\Sigma$ on $\Sigma$ defined by
\begin{align*}
\bdH_\Sigma = H_\Sigma\,\hh_0 - \int_{\Quot(D_\Sigma)} \big[(\delta_\cdot\mres \Sigma)\,h_\alpha\big]_{a_\alpha}^{b_\alpha} \d\q(\alpha)
\end{align*}
where the \emph{mean curvature} of $\Sigma$ is the function $\smash{H_\Sigma\colon\Sigma\to \R}$ $\hh_0$-a.e.~given by
\begin{align*}
H_\Sigma := H^+_\Sigma \vee (-H_\Sigma^-).
\end{align*}
\end{definition}

In other words, the $\hh_0$-singular part in the definition of $\bdH_\Sigma$ weighs all endpoints which lie on $\Sigma$ with the corresponding conditional density (nonpositive mass if the ray does not extend to $\smash{I^-(\Sigma)}$ and nonnegative mass if it does not extend to $\smash{I^+(\Sigma)}$).

\begin{remark}[Compatibility with the smooth setting]\label{Re:Compat smooth} Suppose $\Sigma$ is a smooth, compact, achronal spacelike hypersurface in a smooth, globally hyperbolic space\-time, cf.~\cref{Ex:TCD}. By Cavalletti--Mondino  \cite{cavalletti-mondino2020}*{Rem.~5.4}, $\hh_0$ is the canonical area measure $\smash{\scrH^{\dim\mms-1}_\Sigma}$ of $\Sigma$. Denoting by $\sfH_\Sigma$ the mean curvature of $\Sigma$,  every smooth and nonnegative function $\phi$ on $\Sigma$ satisfies
\begin{align*}
\lim_{t\to 0+} \frac{2}{t^2}\Big[\meas[\Sigma_{t,\phi}] - t\int_\Sigma \phi\d\hh_0\Big] = \int_\Sigma \sfH_\Sigma\,\phi^2\d\hh_0.
\end{align*}
On the other hand, following the lines of the proof of \cref{Le:Diff in}, 
\begin{align*}
\lim_{t\to 0+} \frac{2}{t^2}\Big[\meas[\Sigma_{t,\phi}] - t\int_\Sigma \phi\d\hh_0\Big] &= \int_Q \hat{h}_\alpha'^+(0)\,\phi_0^2(\alpha)\d\q(\alpha)\\
&= \int_\Sigma (\log \hat{h}_{\Quot(z)})'^+(0)\,\phi^2\d\hh_0(z).
\end{align*}
The arbitrariness of $\phi$ and a similar argument relative to $I^-(\Sigma)$ imply
\begin{align*}
H_\Sigma = \sfH_\Sigma\quad\hh_0\textnormal{-a.e.}
\end{align*}
Here we have implicitly used that every ray (corresponding to the negative gradient flow trajectory of $l_\Sigma$) can be extended both to $I^+(\Sigma)$ and $I^-(\Sigma)$, i.e.~$\Sigma$ has finite curvature. In particular, its extended mean curvature has no $\hh_0$-singular part, whence
\begin{align*}
\bdH_\Sigma = \sfH_\Sigma\,\hh_0.\tag*{{\footnotesize{$\blacksquare$}}}
\end{align*}
\end{remark}

\begin{remark}[Real-valued mean curvature from mean curvature bounds] If $\Sigma$ has forward mean curvature bounded from above by a function $\smash{H\in \Ell_{+\loc}^1(\Sigma,\hh_0)}$, then \cref{Le:Diff in} implies $\smash{H^+\leq H}$ $\hh_0 \mres S$-a.e.

An analogous claim holds for forward lower mean curvature bounds.\hfill{\footnotesize{$\blacksquare$}}
\end{remark}

We now introduce the terminology of barriers for the mean curvature of $\Sigma$. For isoperimetric sets in positive signature, this originates in the preprint of Antonelli--Pasqualetto--Pozzetta--Semola \cite{antonelli-pasqualetto-pozzetta-semola2022}*{Def.~3.6}. A similar notion for metric measure spaces satisfying the measure contraction property is found in Ketterer \cite{ketterer2023-rigidity}*{§A}. It is interpreted as the synthetic analog of \emph{constant mean curvature sets}, briefly CMC sets, although this analogy should be taken with care, cf.~\cref{Re:CMC}. 

\begin{definition}[Mean curvature barrier]\label{Def:Barriers} We say a function $\smash{H\in \Ell_\loc^1(\Sigma,\hh_0)}$ constitutes a \emph{mean curvature barrier} for $\Sigma$ if the following two inequalities hold in the sense of weak Radon functionals:
\begin{align*}
\BOX l_\Sigma \mres I^+(\Sigma) &\leq -(N-1)\int_Q \frac{\sfP_{\hat{k}_\alpha/(N-1),-H_0(\alpha)/(N-1)}'\circ l_\Sigma}{\sfP_{\hat{k}_\alpha/(N-1),-H_0(\alpha)/(N-1)}\circ l_\Sigma}\,\meas_\alpha\d\q(\alpha),\\
\BOX l_\Sigma \mres I^-(\Sigma) &\geq (N-1)\int_Q \frac{\sfP_{\hat{k}_\alpha/(N-1),H_0(\alpha)/(N-1)}'\circ l_\Sigma^\leftarrow}{\sfP_{\hat{k}_\alpha/(N-1),H_0(\alpha)/(N-1)}\circ l_\Sigma^\leftarrow}\,\meas_\alpha\d\q(\alpha).
\end{align*}
\end{definition}

The second inequality corresponds to a forward upper mean curvature bound of the signed Lorentz distance function $\smash{l_\Sigma^\leftarrow}$ relative to the causally reversed structure $\scrM^\leftarrow$ by $-H$. 

In the SEC case, the estimates from \cref{Def:Barriers} simplify to
\begin{align}\label{Eq:BAR}
\begin{split}
\BOX l_\Sigma \mres I^+(\Sigma) &\leq (N-1)\int_Q \frac{H_0(\alpha)}{N-1-H_0(\alpha)\,l_\Sigma}\,\meas_\alpha\d\q(\alpha),\\
\BOX l_\Sigma \mres I^-(\Sigma) &\geq (N-1)\int_Q \frac{H_0(\alpha)}{N-1+H_0(\alpha)\,l_\Sigma^\leftarrow}\,\meas_\alpha\d\q(\alpha).
\end{split}
\end{align}

\begin{remark}[Relation to CMC sets]\label{Re:CMC} In the smooth setting of \cref{Re:Compat smooth}, \cref{Th:Mean curv bd equiv} implies $\Sigma$ is a CMC surface (with constant mean curvature $H_0$) if and only if the number $H_0$ is a barrier for its mean curvature.

In the general nonsmooth framework, examples from metric measure geometry (e.g.~An\-tonelli--Pasqualetto--Pozzetta--Semola \cite{antonelli-pasqualetto-pozzetta-semola2022}*{Rem.~3.8}) suggest achronal area-regular TC sets may have an entire range of mean curvature barriers. The subtle issue lies in the failure of smoothness of the conditional densities, as best illustrated  when $\q$ is even a $\CD(0,N)$ disintegration. By  the one-sided continuity properties of $\smash{(\log h_\alpha)'^\pm}$ for $\q$-a.e.~$\alpha\in Q$ from \cref{Re:Prop CD},  \eqref{Eq:BAR} reduces to an upper bound on $\smash{H^+_\Sigma}$ and a lower bound on $\smash{-H^-_\Sigma}$ by $H_0$, respectively. On the other hand,  \cref{Re:Prop CD} again yields 
\begin{align*}
H_\Sigma = -H_\Sigma^-\quad \hh_0\mres S\textnormal{-a.e.} 
\end{align*}
Thus, the previous estimates give no information about the constancy of $H_\Sigma$ unless  the two one-sided  conditional logarithmic derivatives coincide.

Of course, one can define $\Sigma$ to be a CMC set if $\smash{H_\Sigma}$ is equal to a given constant $\hh_0$-a.e. However, unlike \cref{Def:Barriers} this notion is highly unstable.\hfill{\footnotesize{$\blacksquare$}}
\end{remark}

\subsection{Volume and area estimates by mean curvature}\label{Sub:HKIn} 

\subsubsection{Volume control of Heintze--Karcher-type} Let us now  recall \cref{Sub:FRAME} for the definition of  $l$-geodesic convexity of an arbitrary given subset of $\mms$. Under our standing hypotheses from \cref{Sub:MMS}, $\mms$ can be exhausted by (closed or open) causal emeralds, cf.~Braun--McCann \cite{braun-mccann2023}*{Rem.~2.30}, which will occasionally be used below without explicit notice. Also recall the correspondence between the $l$-diameter of $\mms$ and $k$ and $N$ from the sharp Bonnet--Myers theorems of Cavalletti--Mondino \cite{cavalletti-mondino2020}*{Prop.~5.10} and Braun--McCann \cite{braun-mccann2023}*{Cor.~7.7}.  Moreover, given $s,t\in \R$ with $s<t$ we define the sets
\begin{align*}
\Sigma_{[s,t]} := E_\Sigma \cap \{s\leq l_\Sigma \leq t\}.
\end{align*}
We abbreviate $\smash{\Sigma_{[t,t]}}$ by $\Sigma_t$. Lastly, given $z\in\Sigma$ we abbreviate
\begin{align*}
k_z := \hat{k}_{\Quot(z)}.
\end{align*}

We recall \cref{Re:Convent endpts} for the interpretation of all  terms below beyond endpoints.

\begin{theorem}[Heintze--Karcher-type inequality]\label{Th:HeintzeKarcher} Let $\scrM$ be a timelike essentially nonbranching $\smash{\TMCP^e(k,N)}$ metric measure spacetime. Assume $\Sigma$ is an achronal area-regular TC Borel subset of $\mms$. Let $\q$ constitute an $\MCP(k,N)$ disintegration $\q$ relative to $l_\Sigma$ and $E_\Sigma$. If $\Sigma$ has finite  curvature, then every $\smash{s,t\in(-\diam^l\mms,\diam^l\mms)}$  with $s<t$ such that $\smash{\vert t-s\vert \leq \diam^l\mms}$ and every pre\-compact, $l$-geodesically convex Borel subset $A$ of $\mms$ obey
\begin{align*}
\meas[\Sigma_{[s,t]} \cap A] \leq \int_{\sfp_0\circ\Quot(A)}\int_{[s,t]}\sfJ_{k_z,N,H_\Sigma(z)}\d \Leb^1\d\hh_0(z),
\end{align*}
and in particular
\begin{align*}
\meas[\Sigma_{[s,t]}] \leq \int_\Sigma\int_{[s,t]} \sfJ_{k_z,N,H_\Sigma(z)} \d\Leb^1\d\hh_0(z).
\end{align*}
\end{theorem}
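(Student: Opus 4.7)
The plan is to combine the disintegration formula of \cref{Th:Disintegration} with a pointwise upper bound on each conditional density in terms of the Jacobian function. Explicitly, I will reduce the volume of $\Sigma_{[s,t]} \cap A$ to a one-dimensional integral along $\q$-almost every ray, dominate the resulting density by $\hat{h}_\alpha(0)\,\sfJ_{k_z,N,H_\Sigma(z)}$ with $z=\sfp_0(\alpha)$, and repackage the $\q$-integration via the identity $\hh_0 = (\sfp_0)_\push[\hat{h}_\cdot(0)\,\q]$ secured by area-regularity.

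First, since $\Sigma$ has finite curvature, $\meas[D_\Sigma]=0$, so by the disintegration \cref{Th:Disintegration} only rays that cross $\Sigma$ contribute: identifying each ray with its parameter domain through $\sfp_r(\alpha):=\sfg_r\circ\sfp_0(\alpha)$ and changing variables as in \eqref{Eq:COar},
\begin{align*}
\meas[\Sigma_{[s,t]}\cap A] = \int_{\Quot(A\cap S)}\int_{[s,t]\cap I_\alpha}\One_A(\sfp_r(\alpha))\,\hat{h}_\alpha(r)\d r\d\q(\alpha).
\end{align*}
The $l$-geodesic convexity of $A$ ensures that $\{r\in I_\alpha: \sfp_r(\alpha)\in A\}$ is an interval whenever $\sfp_0(\alpha)\in A$, so $\alpha\in\Quot(A)$ forces $\sfp_0(\alpha)\in\sfp_0\circ\Quot(A)\subset\Sigma$, and we may simply estimate $\One_A\leq\One$ to reduce to an integral over $[s,t]$.

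Second, for $\q$-a.e.\ such $\alpha$, the reparametrized conditional density $\hat{h}_\alpha$ is an $\MCP(\hat{k}_\alpha,N)$ density on $I_\alpha\ni 0$, and the finite curvature assumption guarantees that $\hat{h}_\alpha(0)>0$ together with real one-sided derivatives of $\log\hat{h}_\alpha$ at $0$ coinciding with $H^+_\Sigma(z)$ and $-H^-_\Sigma(z)$ (cf.~\cref{Def:MCurv}). Applying \cref{Le:Riccati}, or equivalently the Jacobian comparison of \cref{Le:CompII} (which extends from the $\CD$ to the $\MCP$ setting because the conclusion depends only on the one-sided behaviour at $0$ via the Riccati ODI), yields
\begin{align*}
\hat{h}_\alpha(r)\leq\hat{h}_\alpha(0)\,\sfJ_{k_z,N,H^+_\Sigma(z)}(r)\quad\text{for }r>0,
\end{align*}
while the time-reversed parametrization together with \cref{Re:Trafo refl} gives the analogue $\hat{h}_\alpha(r)\leq\hat{h}_\alpha(0)\,\sfJ_{k_z,N,-H^-_\Sigma(z)}(r)$ for $r<0$. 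Monotonicity of $H\mapsto\sfP_{\kappa,H}$ in the appropriate direction on each half-line, combined with $H_\Sigma=H^+_\Sigma\vee(-H^-_\Sigma)$, collapses both estimates into the single bound $\hat{h}_\alpha(r)\leq\hat{h}_\alpha(0)\,\sfJ_{k_z,N,H_\Sigma(z)}(r)$ on $[s,t]$, using \cref{Re:Convent endpts} to extend $\hat{h}_\alpha$ by zero beyond endpoints and matching it with the positive-part truncation in the definition of $\sfJ$.

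Third, substitute into the disintegration and invoke $\hh_0=(\sfp_0)_\push[\hat{h}_\cdot(0)\,\q]$ combined with the essential injectivity of $\sfp_0$ on $\Quot(A\cap S)$ from \cref{Re:Uniqfttpts}:
\begin{align*}
\meas[\Sigma_{[s,t]}\cap A]&\leq\int_{\Quot(A)}\hat{h}_\alpha(0)\int_{[s,t]}\sfJ_{k_{\sfp_0(\alpha)},N,H_\Sigma(\sfp_0(\alpha))}\d\Leb^1\d\q(\alpha)\\
&=\int_{\sfp_0\circ\Quot(A)}\int_{[s,t]}\sfJ_{k_z,N,H_\Sigma(z)}\d\Leb^1\d\hh_0(z).
\end{align*}
The second, unrestricted claim is the special case $A=\mms$ (noting $\mms$ is trivially $l$-geodesically convex and $\sfp_0\circ\Quot(\mms)\subset\Sigma$). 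The main obstacle I anticipate is justifying the Jacobian density bound under only the $\MCP$ hypothesis; this requires carefully transferring \cref{Le:CompII} to the $\MCP$ setting through the Riccati inequality for $\hat{h}_\alpha^{1/(N-1)}$ and simultaneously handling both causal directions so that $H_\Sigma=H^+_\Sigma\vee(-H^-_\Sigma)$ dominates the correct one-sided logarithmic derivative on each side of $0$, where the sign-dependent monotonicity of $\sfP_{\kappa,\lambda}$ in $\lambda$ plays a delicate role.
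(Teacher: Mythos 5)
Your proposal matches the paper's proof essentially step for step: disintegrate $\meas[\Sigma_{[s,t]}\cap A]$ over the rays via \cref{Th:Disintegration}, apply the Jacobian comparison of \cref{Le:CompII} separately on the future part (with $H_\Sigma^+$) and on the past part (with $-H_\Sigma^-$ via the reflected parametrization and \cref{Re:Trafo refl}), and convert the $\q$-integral weighted by $\hat{h}_\alpha(0)$ into an $\hh_0$-integral using area-regularity — and the two delicate points you flag (invoking the $\CD$-density lemma \cref{Le:CompII} under the mere $\MCP$ hypothesis, and the sign-dependent monotonicity of $\lambda\mapsto\sfP_{\kappa,\lambda}$ needed to merge the two one-sided bounds into the single $H_\Sigma=H_\Sigma^+\vee(-H_\Sigma^-)$) are precisely the ones the paper's own proof passes over without comment. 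The only real deviation is the second claim: since $\mms$ is noncompact you cannot take $A=\mms$ directly, and the paper instead exhausts $\mms$ by precompact $l$-geodesically convex sets and concludes by Levi's monotone convergence theorem.
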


More strongly,  in \cref{Th:HeintzeKarcher} the quantity $\smash{H_\Sigma}$ can be replaced by $\smash{H_\Sigma^+}$ if $s$ and $t$ are nonnegative and by $-H^-$ if $s$ and $t$ are nonpositive, respectively.

\begin{proof}[Proof of \cref{Th:HeintzeKarcher}] We only treat the case $s$ is negative and $t$ is positive, the others are dealt with analogously. 

Since $\Sigma$ has finite curvature, by the disintegration \cref{Th:Disintegration} and \cref{Re:Properties MCP} we get that for $\q$-a.e.~$\alpha\in Q$ the right derivative $\smash{(\log \hat{h}_\alpha)'^+(0)}$ is a real number. To all such $\alpha$, \cref{Le:CompII} applies and gives the following estimate on $I_\alpha$:
\begin{align*}
\hat{h}_\alpha \leq \hat{h}_\alpha(0)\,\sfJ_{\hat{k}_\alpha,N,(H_\Sigma)_0(\alpha)}.
\end{align*}
The disintegration \cref{Th:Disintegration} and \cref{Re:Convent endpts} imply
\begin{align*}
\meas[\Sigma_{[0,t]}\cap A] &= \int_{\Quot(A)}\int_{I_\alpha \cap[0,t]} \hat{h}_\alpha \d \Leb^1\\
&\leq \int_{\Quot(A)}\int_{I_\alpha\cap[0,t]} \sfJ_{\hat{k}_\alpha,N, (H_\Sigma^+)_0(\alpha)}\d\Leb^1\,\hat{h}_\alpha(0)\d\q(\alpha)\\
&\leq \int_{\sfp_0\circ\Quot(A)}\int_{[0,t]} \sfJ_{k_z,N,H_\Sigma^+(z)}\d\Leb^1\d\hh_0(z).
\end{align*}
Replacing $[0,t]$ with $[s,0]$ above gives 
\begin{align*}
\meas[\Sigma_{[s,0]}\cap A] \leq \int_{\sfp_0 \circ\Quot(A)}\int_{[s,0]} \sfJ_{k_z,N,-H_\Sigma^-(z)}\d\Leb^1\d\hh_0(z),
\end{align*}
and recalling $\meas[\Sigma] = 0$ by \cref{Sub:MMS} readily yields the claim by adding these estimates up.

The second claim follows from Levi's monotone convergence theorem.
\end{proof}

The above statement simplifies if both curvature bounds are constant. For the second part, we recall the future timelike Minkowski content of $\Sigma$ defined in \cref{Sub:Loci}.

\begin{corollary}[Minkowski content vs.~surface measure]\label{Cor:SMeasMink} In the framework of the previous \cref{Th:HeintzeKarcher}, assume $k$ and $H$ are bounded from below and above by numbers $K$ and $H_0$, respectively. Then for every $s,t\in (-\diam^l\mms,\diam^l\mms)$ with $s<t$,
\begin{align*}
\meas[\Sigma_{[s,t]}] &\leq \hh_0[\Sigma]\int_{[s,t]}\sfJ_{K,N,H_0}\d\Leb^1,\\
\meas^\pm[\Sigma] &\leq \hh_0[\Sigma].
\end{align*}

If in addition, $K$ is nonnegative and $H_0$ is nonpositive,
\begin{align*}
\meas[\Sigma_{[s,t]}] \leq \diam^l\mms\,\hh_0[\Sigma].
\end{align*}
\end{corollary}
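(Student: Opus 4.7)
The strategy is to deduce all three bounds directly from the Heintze--Karcher-type inequality \cref{Th:HeintzeKarcher} by establishing a pointwise monotonicity property of the Jacobian function and then taking suitable limits.

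The key auxiliary step is the claim that whenever $\kappa \geq K$ and $H \leq H_0$, we have $\sfJ_{\kappa,N,H}(\theta) \leq \sfJ_{K,N,H_0}(\theta)$ for every $\theta \geq 0$. By definition, $\sfP_{\kappa/(N-1),H/(N-1)}(\theta) = \COS_{\kappa/(N-1)}(\theta) + (H/(N-1))\SIN_{\kappa/(N-1)}(\theta)$, and by Sturm--Picone comparison (as already invoked in \cref{Re:Props dist coeff}) both $\SIN_{\cdot/(N-1)}(\theta)$ and $\COS_{\cdot/(N-1)}(\theta)$ are nonincreasing in the curvature parameter on the positivity interval of $\SIN_{K/(N-1)}$. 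Splitting on the signs of $H$ and $H_0$ (the only nonimmediate case being $H \leq H_0 \leq 0$, where one uses $H_0\SIN_{K/(N-1)}(\theta) \geq H_0\SIN_{\kappa/(N-1)}(\theta)$ since $H_0 \leq 0$, then $H_0\SIN_{\kappa/(N-1)}(\theta) \geq H\SIN_{\kappa/(N-1)}(\theta)$) yields $\sfP_{\kappa/(N-1),H/(N-1)}(\theta) \leq \sfP_{K/(N-1),H_0/(N-1)}(\theta)$. Taking positive parts and $(N-1)$-th powers preserves the inequality. Substituting $\kappa = k_z \geq K$ and $H = H_\Sigma(z) \leq H_0$ in the conclusion of \cref{Th:HeintzeKarcher} (and exhausting $\mms$ by precompact $l$-geodesically convex sets for the first claim) establishes the first inequality.

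The Minkowski content estimate is then an immediate application. Since $\meas$ charges no achronal set, $\{0 < \pm l_\Sigma < \varepsilon\} \subseteq \Sigma_{[0,\varepsilon]}$ (respectively $\Sigma_{[-\varepsilon,0]}$) up to an $\meas$-null set. Moreover $\sfJ_{K,N,H_0}(0) = \sfP_{K/(N-1),H_0/(N-1)}(0)^{N-1} = 1$ and $\sfJ_{K,N,H_0}$ is continuous at $0$. Applying the first inequality (with any precompact open neighborhood $U$ of $\Sigma$ containing the relevant $\Sigma_{[\cdot,\cdot]}$, which exists by global hyperbolicity and the TC property) and taking $\varepsilon \to 0+$ gives
\begin{align*}
\meas^\pm[\Sigma] \leq \limsup_{\varepsilon \to 0+}\frac{\hh_0[\Sigma]}{\varepsilon}\int_{[0,\varepsilon]}\sfJ_{K,N,H_0}\d\Leb^1 = \hh_0[\Sigma]\,\sfJ_{K,N,H_0}(0) = \hh_0[\Sigma].
\end{align*}

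For the final claim, observe that when $K \geq 0$ and $H_0 \leq 0$, the potential function $\sfP := \sfP_{K/(N-1),H_0/(N-1)}$ is bounded above by $1$ on $[0,\theta_{K/(N-1),H_0/(N-1)}]$. Indeed, $\sfP(0) = 1$ and (using the cocountable differentiability of $\SIN_{K/(N-1)}$ recalled in \cref{Sub:Var dist coeff}) the right derivative at $0$ equals $H_0/(N-1) \leq 0$; moreover, $\sfP$ is a distributional solution to $\sfP'' = -(K/(N-1))\sfP$, so it is concave wherever nonnegative. A concave function starting at $1$ with nonpositive initial slope lies below $1$, and the same holds after taking positive parts. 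Hence $\sfJ_{K,N,H_0} \leq 1$ on its entire domain of positivity, and the first inequality yields $\meas[\Sigma_{[s,t]}] \leq \hh_0[\Sigma](t-s) \leq \hh_0[\Sigma]\,\diam^l\mms$. The main (and only nontrivial) obstacle throughout is tracking the monotonicity of $\sfP$ in both parameters across the various sign configurations of $H$ and $H_0$; all other steps are direct substitutions into \cref{Th:HeintzeKarcher}.
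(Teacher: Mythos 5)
Your overall route---deduce all three bounds from \cref{Th:HeintzeKarcher} via monotonicity of the Jacobian function in the pair $(\kappa,H)$---is the natural one, and the Minkowski content estimate (taking $s=0$, $t=\varepsilon\to 0+$, $\sfJ_{K,N,H_0}(0)=1$) is fine once that monotonicity is available. But your proof of the monotonicity collapses in precisely the case you flag as nonimmediate. For $H\le H_0\le 0$ you assert $H_0\SIN_{K/(N-1)}(\theta)\ge H_0\SIN_{\kappa/(N-1)}(\theta)$; since $\SIN_{K/(N-1)}\ge \SIN_{\kappa/(N-1)}\ge 0$ and $H_0\le 0$, multiplying by $H_0$ \emph{reverses} the inequality, so this is backwards. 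Worse, the componentwise inequality you are chaining towards, $H\,\SIN_{\kappa/(N-1)}(\theta)\le H_0\,\SIN_{K/(N-1)}(\theta)$, is genuinely false: with $N-1=1$, $H=H_0=-1$, $K=-1$, $\kappa=0$, $\theta=1$ the left side is $-1$ and the right side is $-\sinh 1\approx -1.175$ (the full comparison $\sfP_{0,-1}(1)=0\le \rme^{-1}=\sfP_{-1,-1}(1)$ survives only because the cosine terms compensate). So no sign bookkeeping rescues the term-by-term argument when $H<0$; you must compare the potential functions as solutions of the full initial value problem. The tool is already in the paper: on its first positivity interval, $v:=\sfP_{\kappa/(N-1),H/(N-1)}$ is a nonnegative distributional solution of $v''+\tfrac{K}{N-1}\,v\le 0$ with $v(0)=1$ and $v'(0)=H/(N-1)\le H_0/(N-1)$, so \cref{Le:Riccati} gives both that this interval is contained in that of $\sfP_{K/(N-1),H_0/(N-1)}$ and that $v\le \sfP_{K/(N-1),H_0/(N-1)}$ there. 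Beware also that this comparison holds only on the \emph{first} positivity interval of $v$ (for oscillatory $\sfP$ the positive part can exceed $\sfJ_{K,N,H_0}$ on later humps), which suffices because each conditional density is supported in the first positivity interval of its own Jacobian (\cref{Le:CompII}) and extended by zero beyond it (\cref{Re:Convent endpts}); your blanket claim ``for every $\theta\ge 0$'' is too strong.

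A second gap concerns the final claim. Your concavity argument ($\sfP(0)=1$, $\sfP'(0^+)\le 0$, $\sfP''\le 0$ where $\sfP\ge 0$) yields $\sfP_{K/(N-1),H_0/(N-1)}\le 1$ only for $\theta\ge 0$; for $\theta<0$ the same concavity forces $\sfP(\theta)\ge 1$, e.g.\ $\sfJ_{0,N,H_0}(\theta)=\big[1+\tfrac{H_0}{N-1}\,\theta\big]^{N-1}>1$ for $H_0<0$ and $\theta<0$. Hence the bound $\meas[\Sigma_{[s,t]}]\le \hh_0[\Sigma]\,(t-s)$ is justified by your argument only when $0\le s$; the past portion $\Sigma_{[s,0]}$ (where, via \cref{Re:Trafo refl}, the relevant initial slope flips sign) is not covered and needs a separate treatment or a restriction on the range of $s$.
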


\begin{remark}[Connection to \cite{cavalletti-mondino2024}] The latter estimate should be compared to the recent isometric-type inequality \cite{cavalletti-mondino2024}*{Thm.~4.11} proven by Cavalletti--Mondino. They obtain a \emph{lower} bound on the volume of the conical region between two appropriate achronal sets in terms of their mutual ``distance'' and the timelike Minkowski content of the set which lies in the chronological past of the other. As explained in \cite{cavalletti-mondino2024}*{§1}, unlike the Riemannian case the Lorentzian isoperimetric problem asks for the optimal area to enclose a given volume. Hence, our results from \cref{Th:HeintzeKarcher} et seq.~should not be interpreted as a Lorentzian isoperimetric-type inequality. Rather, they turn out to be very effective and general means to predict volume singularities, cf.~\cref{Sub:Vol sing} below.\hfill{\footnotesize{$\blacksquare$}}
\end{remark}

The same argument as for Ketterer \cite{ketterer2020-heintze-karcher}*{Cor.~1.4} yields the following. Given a number $K$, we recall the definition of $\pi_{K/(N-1)}$ as the first nonzero root of $\smash{\SIN_{K/(N-1)}}$ from \cref{Sub:Var dist coeff}.  It is the Lorentzian analog to a result from Heintze--Karcher's work \cite{heintze-karcher1978}*{Thm.~2.2}.

\begin{corollary}[Uniformly positive curvature]\label{Cor:Unifpos} In the framework of the previous \cref{Th:HeintzeKarcher}, assume $k$ is bounded from below by a positive number $K$. Then
\begin{align*}
\meas[E_\Sigma] &\leq \int_0^{\pi_{K/(N-1)}} \sin^{N-1}\Big[\sqrt{\frac{K}{N-1}}\Big]\d r\\
&\qquad\qquad\times\int_\Sigma\Big[\frac{K}{N-1} + \Big[\frac{H_\Sigma(z)}{N-1}\Big]^2\Big]^{(N-1)/2}\d\hh_0(z).
\end{align*}
\end{corollary}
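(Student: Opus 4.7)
The plan is to specialize \cref{Th:HeintzeKarcher} to the case $k\geq K>0$ and carry out the resulting one-dimensional Jacobian integral explicitly.

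Since $k\geq K>0$, the sharp Bonnet--Myers theorem of Cavalletti--Mondino \cite{cavalletti-mondino2020}*{Prop.~5.10} gives $D:=\diam^l\mms<\infty$, so that $E_\Sigma = \Sigma_{[-D,D]}$. Exhausting $\mms$ by precompact $l$-geodesically convex subsets and invoking the enhanced version of \cref{Th:HeintzeKarcher} noted after that statement (use $H_\Sigma^+$ on the chronological future and $-H_\Sigma^-$ on the chronological past), I would reduce the task to bounding, for each $z\in\Sigma$, the integrals $\int_0^D \sfJ_{k_z,N,H_\Sigma^\pm(z)}\d\Leb^1$. The monotonicity of $\sfP_{\kappa,\lambda}$ in $\kappa$ recorded in \cref{Re:Props dist coeff} then allows me to replace the variable curvature $k_z$ by the constant lower bound $K$ pointwise inside the Jacobian, reducing matters entirely to the constant-curvature Jacobian $\sfJ_{K,N,H}$ for $H\in\R$ arbitrary.

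The second step is the explicit evaluation of $\int_0^\infty\sfJ_{K,N,H}\d\Leb^1$. Setting $a:=\sqrt{K/(N-1)}$ and $b:=H/(N-1)$, the harmonic-addition identity
\begin{align*}
\sfP_{a^2,b}(\theta) = \cos(a\theta) + \tfrac{b}{a}\sin(a\theta) = \sqrt{1 + b^2/a^2}\,\sin(a\theta+\phi),
\end{align*}
with $\phi\in(0,\pi)$ determined by $\sin\phi = a/\sqrt{a^2+b^2}$, identifies the support of $[\sfP_{a^2,b}]_+$ on $[0,\infty)$ as the interval $[0,(\pi-\phi)/a]$. Raising to the $(N-1)$-st power, substituting $u = a\theta+\phi$, and enlarging the $u$-range from $[\phi,\pi]$ to $[0,\pi]$ gives the bound by $(1+b^2/a^2)^{(N-1)/2}$ times $(1/a)\int_0^\pi\sin^{N-1}(u)\d u$, which rescales via $u=ar$ to the claimed integral over $[0,\pi_{K/(N-1)}]$ with prefactor $[K/(N-1) + (H/(N-1))^2]^{(N-1)/2}$ (up to the normalization inherent to $\SIN_{K/(N-1)}$ versus $\sin$).

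Finally, I would substitute this estimate for both $H = H_\Sigma^+(z)$ on the future side and $H = -H_\Sigma^-(z)$ on the past side. Since the $H$-dependent prefactor involves only $H^2$, both contributions are controlled by the single quantity $[K/(N-1) + (H_\Sigma(z)/(N-1))^2]^{(N-1)/2}$ through $H_\Sigma = H_\Sigma^+\vee(-H_\Sigma^-)$. The main obstacle I foresee is the careful bookkeeping of the $\SIN_{K/(N-1)}$--versus--$\sin$ normalization together with the absorption of the symmetric future/past contributions into the single integral factor appearing in the statement; modulo this bookkeeping the argument is essentially a direct computation following Ketterer \cite{ketterer2020-heintze-karcher}*{Cor.~1.4}.
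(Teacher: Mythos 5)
Your overall strategy (specialize \cref{Th:HeintzeKarcher}, reduce to constant curvature by Sturm comparison, and evaluate the constant-curvature Jacobian integral via the harmonic-addition identity) is the intended one, and your one-sided computation of $\int_0^\infty[\sfP_{a^2,b}]_+^{N-1}\d\theta$ is correct: the support is $[0,(\pi-\phi)/a]$ and the integral equals $R^{N-1}a^{-1}\int_\phi^\pi\sin^{N-1}(u)\d u$ with $R^{N-1}=(a^2+b^2)^{(N-1)/2}a^{-(N-1)}$. Your closing remark about the $\SIN_{K/(N-1)}$-versus-$\sin$ normalization is also on point: the extra $a^{-(N-1)}$ shows the integrand in the statement must be read as $\SIN_{K/(N-1)}(r)^{N-1}$.

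The genuine gap is in how you assemble the two sides of $\Sigma$. You propose to treat $I^+(\Sigma)$ and $I^-(\Sigma)$ separately via the post-theorem remark (using $H_\Sigma^+$ and $-H_\Sigma^-$), enlarge each side's angular range from $[\phi,\pi]$ to $[0,\pi]$, and then argue that "since the prefactor involves only $H^2$" both contributions are controlled through $H_\Sigma=H_\Sigma^+\vee(-H_\Sigma^-)$. This fails twice. First, $H_\Sigma$ is a maximum, not a maximum of absolute values, so $H_\Sigma^2$ need not dominate $(H_\Sigma^\pm)^2$ and the prefactors do not compare as claimed. Second, and more decisively, each side individually does not get the full $\int_0^\pi$: the future ray contributes the range $[\phi,\pi]$ and the past ray (after the reflection $\sfP_{\kappa,\lambda}(-\theta)=\sfP_{\kappa,-\lambda}(\theta)$ of \cref{Re:Trafo refl}) contributes the complementary range $[0,\phi]$, and it is precisely this complementarity that produces a single $\int_0^\pi$. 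Enlarging both sides to $[0,\pi]$ and summing yields twice the claimed bound — e.g.\ for $H_\Sigma^+=H_\Sigma^-=0$ your plan gives $2a^{-1}\int_0^\pi\sin^{N-1}(u)\d u$ per ray against the asserted $a^{-1}\int_0^\pi\sin^{N-1}(u)\d u$. The fix is to not split at all: apply \cref{Th:HeintzeKarcher} with the single quantity $H_\Sigma(z)$ on the full interval $[s,t]\supset\{0\}$ (this is how the theorem packages the two-sided comparison), use Sturm/Riccati comparison (\cref{Le:Riccati}, not \cref{Re:Props dist coeff}, which concerns $\sigma_\kappa^{(t)}$ rather than $\sfP_{\kappa,\lambda}$) separately on $[0,\infty)$ and, after reflection, on $(-\infty,0]$ to replace $k_z$ by $K$, and then compute $\int_{-\infty}^{\infty}\sfJ_{K,N,H_\Sigma(z)}\d\Leb^1$ over the single positivity interval $(-\phi/a,(\pi-\phi)/a)$ of length exactly $\pi_{K/(N-1)}$; this produces the stated constant with no enlargement and no doubling.
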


\subsubsection{Area control} With the same techniques as above, we show the following.

\begin{theorem}[Area inequality]\label{Th:AREA} In the framework of \cref{Th:HeintzeKarcher},
\begin{align*}
\hh_t[A\cap \Sigma_t] \leq \int_{\sfp_0\circ\Quot(A)} \sfJ_{k_z,N,H_\Sigma(z)}\d\hh_0(z),
\end{align*}
and in particular
\begin{align*}
\hh_t[\Sigma_t] \leq \int_\Sigma\sfJ_{k_z,N,H_\Sigma(z)}\d\hh_0(z).
\end{align*}
\end{theorem}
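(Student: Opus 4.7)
The strategy closely parallels the proof of \cref{Th:HeintzeKarcher}: the only difference is that one evaluates the Jacobian comparison at the single time $t$ rather than integrating it over $[s,t]$. First, by the finite curvature hypothesis on $\Sigma$, the disintegration \cref{Th:Disintegration} together with \cref{Re:Properties MCP} ensures that for $\q$-a.e.~$\alpha\in Q$, the right derivative $(\log \hat{h}_\alpha)'^+(0)$ is a real number, so that \cref{Le:CompII} applies to the reparametrized density $\hat{h}_\alpha$ on the interval $I_\alpha$. This yields, for $\q$-a.e. such $\alpha$ and every $t$ in the admissible range,
\begin{align*}
\hat{h}_\alpha(t) \leq \hat{h}_\alpha(0)\,\sfJ_{\hat{k}_\alpha,N,(H_\Sigma)_0(\alpha)}(t).
\end{align*}

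Next, using the definition $\hh_t = (\sfp_t)_\push[\hat{h}_\cdot(t)\,\q]$ and noting that $\sfp_t(\alpha)\in\Sigma_t$ whenever $\sfp_t(\alpha)$ is defined,
\begin{align*}
\hh_t[A\cap \Sigma_t] = \int_{\{\alpha\in Q : \sfp_t(\alpha)\in A\}} \hat{h}_\alpha(t)\d\q(\alpha).
\end{align*}
For any $\alpha\in Q$ with $\sfp_t(\alpha)\in A$, one has $\mms_\alpha\cap A\neq \emptyset$, whence $\alpha\in \Quot(A)$ and therefore $\sfp_0(\alpha)\in \sfp_0\circ\Quot(A)$; in other words $\One_A\circ\sfp_t \leq \One_{\sfp_0\circ\Quot(A)}\circ \sfp_0$ $\q$-a.e. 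Combining this pointwise inequality with the Jacobian bound above,
\begin{align*}
\hh_t[A\cap \Sigma_t] \leq \int_Q \One_{\sfp_0\circ\Quot(A)}\circ\sfp_0(\alpha)\,\sfJ_{\hat{k}_\alpha,N,(H_\Sigma)_0(\alpha)}(t)\,\hat{h}_\alpha(0)\d\q(\alpha).
\end{align*}
The final step is to recognize the right-hand side as an integral against $\hh_0 = (\sfp_0)_\push[\hat{h}_\cdot(0)\,\q]$: since $\hat{k}_\alpha = k_{\sfp_0(\alpha)}$ and $(H_\Sigma)_0(\alpha) = H_\Sigma(\sfp_0(\alpha))$ by construction, pushing forward under $\sfp_0$ yields exactly $\int_{\sfp_0\circ\Quot(A)} \sfJ_{k_z,N,H_\Sigma(z)}\d\hh_0(z)$, and the global bound $\hh_t[\Sigma_t]\leq \int_\Sigma \sfJ_{k_z,N,H_\Sigma(z)}\d\hh_0(z)$ follows by taking $A$ to exhaust $\mms$ (or simply by dropping the indicator). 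No real obstacle is anticipated; the only point requiring care is that the $l$-geodesic convexity of $A$ --- essential in \cref{Th:HeintzeKarcher} to control a full ray segment --- is not needed here because only a single level slice is being estimated, so the implication $\sfp_t(\alpha)\in A \Rightarrow \sfp_0(\alpha)\in\sfp_0\circ\Quot(A)$ is automatic from the definitions.
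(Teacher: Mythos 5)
Your argument is correct and is essentially the paper's proof: the paper likewise combines the identity $\hh_t[A\cap\Sigma_t]=\int_{\Quot(A)}\hat{h}_\alpha(t)\,\mathrm{d}\q(\alpha)$ with the pointwise Jacobian comparison from \cref{Le:CompII} and then pushes forward under $\sfp_0$, obtaining the global bound by monotone convergence. Your version is if anything slightly more careful, since you pass explicitly from $\{\alpha:\sfp_t(\alpha)\in A\}$ to $\Quot(A)$ via an inequality rather than asserting the identity over $\Quot(A)$ directly.
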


\begin{proof} The first follows from the identity
\begin{align*}
\hh_t[A \cap \Sigma_t] = \int_{\Quot(A)} \hat{h}_\alpha(t)\d\q(\alpha),
\end{align*}
recalling our conventions from \cref{Re:Convent endpts}, and an argument analogous to the proof of \cref{Th:HeintzeKarcher} based on \cref{Le:CompII}.

The second estimate follows once again from Levi's monotone convergence theorem by the arbitrariness of $A$. This applies although $\hh_t$ may fail to be a Radon measure.
\end{proof}

\subsection{Volume singularity theorems}\label{Sub:Vol sing} Finally, we use our \cref{Th:HeintzeKarcher} to predict volume incompleteness of spacetimes. The terminology of volume incompleteness has been recently proposed by García-Heveling \cite{garcia-heveling2023-volume} (partly inspired by the volume comparison results of Treude--Grant \cite{treude-grant2013}  which were in turn motivated from Heintze--Karcher's work  \cite{heintze-karcher1978}). It complements the classical approach to ``singularities'' by geodesic incompleteness and covers more general circumstances even in the smooth case, cf.~\cref{Re:Finite vol,Re:Comments}. In general, the proposal from \Cref{Def:Vol sing} and the more traditional understanding of a ``singularity'' in terms of geodesic incompleteness are logically independent \cite{garcia-heveling2023-volume}*{Exs.~2.3, 2.4}. For more comments on their interrelation, see \cref{Re:Comments} below.

\begin{definition}[Future volume incompleteness \cite{garcia-heveling2023-volume}*{Def.~1.1}]\label{Def:Vol sing} We call $\scrM$ \emph{future volume incomplete} if for every $\varepsilon > 0$ there exists a point $x\in \mms$ such that
\begin{align*}
\meas[I^+(x)]\leq \varepsilon.
\end{align*}
\end{definition}

\begin{remark}[Finite volume characterization]\label{Re:Finite vol} Under our standing hypotheses from \cref{Sub:MMS}, $\scrM$ is future volume incomplete if and only if there exists a point $x\in\mms$ with $\meas[I^+(x)] < \infty$. The proof of \cite{garcia-heveling2023-volume}*{Thm.~2.1} (which combines the nonexistence of closed timelike curves with the full support of the canonical volume measure) carries over without any change.\hfill{\footnotesize{$\blacksquare$}}
\end{remark}

Now we prove several volume singularity theorems. 

First, we give a nonsmooth extension of the cosmological volume singularity theorem under uniform curvature bounds for certain Cauchy hypersurfaces in globally hyperbolic spacetimes by García-Heveling \cite{garcia-heveling2023-volume}*{Thm.~5.2}. It addresses the SEC version of \cite{garcia-heveling2023-volume}*{Conj. 4.3} affirmatively in the nonsmooth case.

\begin{theorem}[Constant volume singularity theorem]\label{Th:Const vol} Assume that $\scrM$ forms a timelike essentially nonbranching $\TMCP^e(K,N)$ metric measure spacetime for a constant $K$. Suppose $\mms$ contains an achronal area-regular FTC Borel subset $\Sigma$  with $\hh_0[\Sigma] < \infty$ whose mean curvature is bounded from above by a number $H_0$. Then 
\begin{align*}
\meas[I^+(\Sigma)] < \infty
\end{align*}
provided
\begin{enumerate}[label=\textnormal{\alph*\textcolor{black}{.}}]
\item $K>0$ and $H_0$ is arbitrary,
\item $K=0$ and $H_0 < 0$, or
\item\label{eq:Bkajsbdab} $K<0$ and $\smash{H_0 \leq -\sqrt{-K(N-1)}}$.
\end{enumerate}

In particular, $\scrM$ is future volume incomplete.
\end{theorem}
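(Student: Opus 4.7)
The plan is to control $\meas[I^+(\Sigma)]$ directly via the Heintze--Karcher-type bound of \cref{Th:HeintzeKarcher} (in its constant-curvature simplification, \cref{Cor:SMeasMink}), and then to verify that the hypotheses on $(K,H_0)$ ensure the associated Jacobian $\sfJ_{K,N,H_0}$ is integrable on the whole positive real axis. Since $\Sigma$ is area-regular and FTC and its forward mean curvature is bounded from above by $H_0$, \cref{Le:Diff in} guarantees that $\hat{h}_\alpha'^+(0)$ is a real number for $\q$-a.e.~$\alpha\in Q$, which together with the $\TMCP^e(K,N)$ and essential nonbranching assumptions suffices to invoke the one-sided Heintze--Karcher bound
\begin{align*}
\meas[\Sigma_{(0,t]}] \leq \hh_0[\Sigma]\int_{[0,t]}\sfJ_{K,N,H_0}\d\Leb^1\quad\text{for every } t > 0.
\end{align*}

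The heart of the argument is to verify $C_{K,N,H_0} := \int_0^\infty \sfJ_{K,N,H_0}\d\Leb^1 < \infty$ in each case, equivalently that the pair $(K/(N-1),H_0/(N-1))$ either meets the ball condition from \cref{Sub:Ball Jac} or produces a rapidly decaying potential. In case a, $\sfP_{K/(N-1),H_0/(N-1)}$ is sinusoidal and admits a smallest positive zero irrespective of $H_0$, so $\sfJ_{K,N,H_0}$ is compactly supported. In case b, $\sfP_{0,H_0/(N-1)}(\theta) = 1 + H_0(N-1)^{-1}\theta$ vanishes at $\theta = -(N-1)/H_0 > 0$, again producing compact support. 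For case c, setting $c := \sqrt{-K/(N-1)}$, a direct calculation via \cref{Re:Const sink cosk} yields
\begin{align*}
\sfP_{K/(N-1),H_0/(N-1)}(\theta) = \cosh(c\theta) + \frac{H_0}{(N-1)c}\sinh(c\theta);
\end{align*}
strict inequality $H_0 < -\sqrt{-K(N-1)}$ forces the coefficient $H_0/((N-1)c)$ to lie strictly below $-1$, producing a positive root of $\sfP$ and thereby compact support of $\sfJ_{K,N,H_0}$, while in the critical case $H_0 = -\sqrt{-K(N-1)}$ the coefficient equals exactly $-1$ and the potential collapses to $e^{-c\theta}$, so $\sfJ_{K,N,H_0}(\theta) = e^{-(N-1)c\theta}$ is integrable on $[0,\infty)$.

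With $C_{K,N,H_0} < \infty$ in hand, writing $I^+(\Sigma) = \bigcup_{n\in\N}\Sigma_{(0,n]}$ and invoking monotone convergence together with $\meas[\Sigma]=0$ from the standing hypotheses of \cref{Sub:MMS} yields $\meas[I^+(\Sigma)] \leq \hh_0[\Sigma]\,C_{K,N,H_0} < \infty$. Picking any $x\in\Sigma$ then gives $\meas[I^+(x)] \leq \meas[I^+(\Sigma)] < \infty$, and \cref{Re:Finite vol} delivers future volume incompleteness of $\scrM$.

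I expect the main obstacle to be the critical subcase $H_0 = -\sqrt{-K(N-1)}$ of hypothesis c, where $\sfJ_{K,N,H_0}$ fails to be compactly supported and one must rely on the exact exponential decay to secure integrability; this is precisely what makes the hypothesis $H_0 \leq -\sqrt{-K(N-1)}$ sharp and the three-case dichotomy non-artificial. The remaining subcases reduce to elementary continuity statements on compact intervals, while the reduction from the two-sided "finite curvature" assumption of \cref{Th:HeintzeKarcher} to the one-sided setting available here is routine given \cref{Le:Diff in}.
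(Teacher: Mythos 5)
Your proposal is correct and follows essentially the same route as the paper: apply the Heintze--Karcher bound, check that $\sfJ_{K,N,H_0}$ either has compact support (cases a, b, and the strict part of c) or decays like $\rme^{H_0\theta}$ in the borderline case $H_0=-\sqrt{-K(N-1)}$, and conclude by monotone convergence together with \cref{Re:Finite vol}. The only (cosmetic) difference is that for $K>0$ the paper quotes \cref{Cor:Unifpos} directly rather than arguing via the positive zero of the sinusoidal potential, and your explicit reduction of the two-sided finite-curvature hypothesis to the one-sided setting via \cref{Le:Diff in} is a point the paper leaves implicit.
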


\begin{proof} The second statement follows from the first and \cref{Re:Finite vol}, while the first claim is shown by distinguishing the three named cases. 

The case $K>0$ immediately follows from \cref{Cor:Unifpos}.

Now we assume $K=0$ and $H_0 < 0$. In this case, the Jacobian  function takes the form
\begin{align*}
\sfJ_{0,N,H_0}(\theta) = \Big[1 + \frac{H_0}{N-1}\,\theta\Big]_+^{N-1}
\end{align*}
This function is positive on $[0,\theta_0)$ and vanishes identically on $[\theta_0,\infty)$, where
\begin{align*}
\theta_0 := \frac{N-1}{-H_0}.
\end{align*}
In particular, it is uniformly bounded from above on all of $\R$, say by $C$. Given any $t > \theta_0$, \cref{Cor:Unifpos} thus leads to
\begin{align*}
\meas[\Sigma_{[0,t]}] \leq  \int_\Sigma\int_{[0,\theta_0]} \sfJ_{K,N,H_0}\d\Leb^1\d\hh_0 \leq C\,\theta_0\,\hh_0[\Sigma].
\end{align*}
Levi's monotone convergence theorem provides the claim.

The  situation $K<0$ and $\smash{H_0 < -\sqrt{-K(N-1)}}$ is argued in a similar manner. Here the Jacobian function is given by
\begin{align}\label{Eq:JAV!}
\sfJ_{K,N,H_0}(\theta) = \Big[\!\cosh\!\Big[\sqrt{\frac{-K}{N-1}}\theta\Big] + \frac{H_0}{N-1}\,\sqrt{\frac{N-1}{-K}}\sinh\!\Big[\sqrt{\frac{-K}{N-1}}\theta\Big]\Big]_+^{N-1}.
\end{align}
It is positive on $[0,\theta_0)$ and vanishes identically on $[\theta_0,\infty)$, where
\begin{align*}
\theta_0 := \sqrt{\frac{N-1}{-K}}\coth^{-1}\!\Big[\frac{-H_0}{\sqrt{-K(N-1)}}\Big].
\end{align*}
It remains to argue as in the previous step.

In the borderline case $\smash{H_0 = -\sqrt{-K(N-1)}}$ of hypothesis \ref{eq:Bkajsbdab}, \eqref{Eq:JAV!} simplifies to
\begin{align*}
\sfJ_{K,N,H_0}(\theta) = \rme^{H_0\theta}.
\end{align*}
Levi's monotone convergence theorem and \cref{Cor:Unifpos} thus imply
\begin{align*}
\meas[I^+(\Sigma)] = \lim_{t\to \infty} \meas[\Sigma_{[0,t]}]  \leq \hh_0[\Sigma]\int_0^\infty \rme^{H_0\theta}\d\theta.
\end{align*}
Since $H_0$ is negative, the right-hand side is finite. 
\end{proof}

\begin{remark}[Comparison to \cite{cavalletti-mondino2020,garcia-heveling2023-volume}]\label{Re:Comments} The values of $\theta_0$ in the two intermediate steps of the above proof are exactly the upper estimates on the $l$-diameter of $I^+(\Sigma)$ predicted by Cavalletti--Mondino's synthetic Hawking-type singularity theorem \cite{cavalletti-mondino2020}*{Thm.~5.6} (under the same assumptions on $K$ and $H_0$). In these two cases, the proof of the smooth analog of \cref{Th:Const vol} \cite{garcia-heveling2023-volume}*{Thm.~5.2} relies on these uniform bounds. 

On the other hand, our more direct proof does not use  \cite{cavalletti-mondino2020}*{Thm.~5.6} (although our methods  of course compare to those of \cite{treude-grant2013,cavalletti-mondino2020,garcia-heveling2023-volume}). This stresses  the logical independence of volume and geodesic incompleteness indicated above.

Let us also comment on the borderline case $\smash{H_0 = -\sqrt{-K(N-1)}}$ in assumption \ref{eq:Bkajsbdab} above. It is not covered by \cite{cavalletti-mondino2020}*{Thm.~5.6} since, in fact, geodesic incompleteness is false here in general (e.g.~on model spaces, cf.~Treude--Grant \cite{treude-grant2013}*{Def.~4.4}). On the other hand, \cref{Th:Const vol} and its smooth predecessor by García-Heveling \cite{garcia-heveling2023-volume}*{Thm.~5.2} still entail volume incompleteness. In fact, this mixture of volume incompleteness and geodesic completeness under the stated relation of Ricci and mean curvature is rigid: it \emph{only} happens on model spaces by Andersson--Galloway \cite{andersson-galloway2002}*{Prop.~3.4} (see also Galloway--Woolgar \cite{galloway-woolgar2014}*{Thm.~1.3} for the Bakry--Émery case).\hfill{\footnotesize{$\blacksquare$}}
\end{remark}

Now we turn to volume singularity theorems variable curvature bounds. To the best of our knowledge, the three main  \cref{Th:Vol2,Th:Vol1,Th:Vol3} are entirely new. They confirm the hope to predict volume singularities under such hypotheses expressed by García-Heveling \cite{garcia-heveling2023-volume}*{§5}.

The following two facts should be compared to the recent  synthetic singularity theorems predicting geodesic incompleteness by Braun--McCann \cite{braun-mccann2023}*{Cor.~7.8, Thm.~A.7}.

\begin{theorem}[Variable volume singularity theorem I, see also \cref{Th:Vol1,Th:Vol3}]\label{Th:Vol2} Assume $\scrM$ is a timelike essentially nonbranching $\TMCP^e(k,N)$ metric measure spacetime. Suppose it contains an achronal area-regular FTC Borel subset $\Sigma$ with $\hh_0[\Sigma] < \infty$ whose forward mean curvature of $\Sigma$ is bounded from above by a number $H_0$. Moreover, assume there exist $\varepsilon > 0$ and $c > 0$ with 
\begin{align*}
\frac{2c}{\varepsilon} + \frac{H_0}{N-1} < 0
\end{align*}
such that everywhere on $I^+(\Sigma)$,
\begin{align*}
k_- \leq c\,\big[\varepsilon^{-2} \wedge l_\Sigma^{-2}\big].
\end{align*}
Then
\begin{align*}
\meas[I^+(\Sigma)] < \infty.
\end{align*}

In particular, $\scrM$ is future volume incomplete.
\end{theorem}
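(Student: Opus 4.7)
The plan is to combine the ray-wise Jacobian bound from \cref{Le:CompII} with a Sturm--Picone comparison against a reference potential tailored to the pointwise bound on $k_-$, and then integrate over $\q$. Structurally this parallels the strategy of \cref{Th:Const vol}, but the curvature is now allowed to blow up like $t^{-2}$ near $\Sigma$, which shifts the real work onto the analysis of a mixed constant-coefficient/Euler-type ODE.

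First I would harvest the mean curvature bound. By \cref{Th:Mean curv bd equiv} together with \cref{Le:Diff in}, the hypothesized upper bound on $H_\Sigma^+$ forces $\hat{h}_\alpha'^+(0)$ to be a real number with $\hat{h}_\alpha'^+(0) \leq H_0\,\hat{h}_\alpha(0)$ for $\q$-a.e.\ $\alpha \in Q$, so \cref{Le:CompII} applies to every ray and gives $\hat{h}_\alpha(t) \leq \hat{h}_\alpha(0)\,\sfJ_{\hat{k}_\alpha,N,H_0}(t)$ on the domain of $\hat{h}_\alpha$. Setting $\tilde\kappa(t) := -c\,[\varepsilon^{-2}\wedge t^{-2}]$, the pointwise assumption reads $\hat{k}_\alpha \geq \tilde\kappa$, so the monotonicity of distortion coefficients (\cref{Re:Props dist coeff}) combined with a Riccati argument in the spirit of \cref{Le:Riccati} produces the $\alpha$-independent majorization $\sfJ_{\hat{k}_\alpha,N,H_0}(t) \leq \sfJ_{\tilde\kappa,N,H_0}(t)$ on the positivity interval of the right-hand side, with the first positive zero of the left-hand side dominated by that of the right-hand side. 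This reduces everything to analyzing a single explicit reference Jacobian.

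The main obstacle is to verify that the strict inequality $2c/\varepsilon + H_0/(N-1) < 0$ forces $\sfJ_{\tilde\kappa,N,H_0}$ to have a finite positive zero $T$. I would do this by piecewise ODE analysis. Writing $\gamma := c/(N-1)$ and $G := \sfP_{\tilde\kappa/(N-1),H_0/(N-1)}$, on $[0,\varepsilon]$ the function $G$ solves the constant-coefficient equation $G'' = \gamma\,\varepsilon^{-2}\,G$ with $G(0)=1$ and $G'(0) = H_0/(N-1)$, admitting the closed form
\begin{align*}
G(t) = \cosh(\sqrt{\gamma}\,t/\varepsilon) + \frac{H_0\,\varepsilon}{(N-1)\sqrt{\gamma}}\,\sinh(\sqrt{\gamma}\,t/\varepsilon).
\end{align*}
On $[\varepsilon,\infty)$ it solves the Euler equation $G'' = \gamma\,t^{-2}\,G$, whose general solution is $G(t) = A\,t^{r_+} + B\,t^{r_-}$ with $r_\pm = (1 \pm \sqrt{1+4\gamma})/2$. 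The matching at $t = \varepsilon$ makes $A$ an explicit linear combination of $G(\varepsilon)$ and $\varepsilon G'(\varepsilon)$; expanding the hyperbolic functions, the condition $A < 0$ is governed by an inequality on $H_0/(N-1)$ whose leading term as $\gamma \to 0$ is exactly $-2c/\varepsilon$, and a routine asymptotic check confirms the strict hypothesis suffices across all regimes of $\gamma$. Since $r_+ > 0 > r_-$, once $A < 0$ the $t^{r_+}$ term dominates and $G$ becomes negative at some finite $T$, after which $\sfJ_{\tilde\kappa,N,H_0}$ vanishes identically.

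With $T < \infty$ in hand, the disintegration \cref{Th:Disintegration} closes the argument:
\begin{align*}
\meas[I^+(\Sigma)] = \int_Q \int_0^\infty \hat{h}_\alpha(t)\d t\d\q(\alpha) \leq C\int_Q \hat{h}_\alpha(0)\d\q(\alpha) = C\,\hh_0[\Sigma] < \infty,
\end{align*}
where $C := \int_0^T \sfJ_{\tilde\kappa,N,H_0}(t)\d t$ is a finite $\alpha$-independent constant; the last equality uses area-regularity of $\Sigma$ together with the FTC property, ensuring that $\q$-a.e.\ ray in $I^+(\Sigma)$ is footed in $\Sigma$. The stated future volume incompleteness then follows from \cref{Re:Finite vol}, since for any $x \in I^+(\Sigma)$ the transitivity of chronology yields $I^+(x) \subset I^+(\Sigma)$ and hence $\meas[I^+(x)] \leq \meas[I^+(\Sigma)] < \infty$.
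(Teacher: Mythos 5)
Your architecture—ray-wise Jacobian bound, replacement of the ray potentials by the single reference potential $\tilde\kappa=-c\,[\varepsilon^{-2}\wedge t^{-2}]$ via Sturm/Riccati comparison, then disintegration—is reasonable, and the first and last steps are fine. The gap is the decisive middle claim, which you assert rather than prove. Carrying out your matching: with $\gamma=c/(N-1)$ and $h:=H_0\varepsilon/(N-1)$ one finds $A<0$ iff $\varepsilon G'(\varepsilon)<r_-G(\varepsilon)$, i.e.\ iff $h<\Phi(\gamma)$ for an explicit $\Phi$ with $\Phi(\gamma)=-2\gamma+O(\gamma^2)$ as $\gamma\to0$. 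Translating back, the leading-order threshold is $H_0<-2c/\varepsilon$ — an inequality on $H_0$, not on $H_0/(N-1)$. The stated hypothesis is $H_0/(N-1)<-2c/\varepsilon$, i.e.\ $H_0<-2c(N-1)/\varepsilon$, which implies the threshold only when $N\geq 2$. For $N\in(1,2)$ and $\gamma$ small (e.g.\ $N=3/2$, $\varepsilon=1$, $c$ tiny, $H_0=-3c/2$) the hypothesis holds but $A>0$; then $G(t)\sim A\,t^{r_+}$ grows, $\sfJ_{\tilde\kappa,N,H_0}$ never vanishes and is not even integrable, and your chain of estimates yields nothing. So the "routine asymptotic check across all regimes of $\gamma$" is precisely where the proof lives, and as stated it is false; at minimum you must track the $1/(N-1)$ normalization in $\sfJ_{\kappa,N,H}=[\sfP_{\kappa/(N-1),H/(N-1)}]_+^{N-1}$ through the matching and confront the regime $N\in(1,2)$ honestly. (Be warned that the paper's own displayed formula for $\sfJ_{-(k_-)_z,N,H_0}$ in this proof drops a $1/(N-1)$ on the curvature integral relative to \cref{Def:Jacobian} and \eqref{Eq:GenCOS}, so the stated constant should not be taken as a target your sharp analysis must reproduce.)

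The paper's route is different and much more elementary at this step: it never solves the comparison ODE. Using \eqref{Eq:GenCOS} it writes
\begin{align*}
\sfP_{\kappa,\lambda}(\theta)=1-\int_0^\theta \kappa(r)\,\SIN_\kappa(r)\d r+\lambda\,\SIN_\kappa(\theta),
\end{align*}
pulls $\SIN_\kappa(\theta)$ out of the curvature integral by monotonicity of $\SIN_\kappa$, bounds the total curvature integral along each ray by $\int_0^\varepsilon c\varepsilon^{-2}\d r+\int_\varepsilon^\infty c r^{-2}\d r=2c/\varepsilon$ (this is the only place the $\varepsilon^{-2}\wedge l_\Sigma^{-2}$ structure enters), and finally uses $\SIN_\kappa(\theta)\geq\theta$ to get an upper bound of the form $[1+\Lambda\,\theta]_+^{N-1}$ with $\Lambda<0$ given (in the paper's normalization) by $2c/\varepsilon+H_0/(N-1)$. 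This linear bound is uniform in $\alpha$, vanishes beyond $-1/\Lambda$, and is integrable, which is all that is needed. If you want a correct write-up, I recommend adopting this integral-representation argument rather than the explicit cosh/Euler mode-matching.
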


\begin{proof} Again we apply \cref{Th:HeintzeKarcher} and obtain for every $t >\varepsilon$ that
\begin{align}\label{Eq:Jac bounds}
\meas[\Sigma_{[0,t]}] \leq \int_\Sigma\int_{[0,t]}\sfJ_{-(k_-)_z,N,H_0}\d\Leb^1 \d\hh_0(z).
\end{align}
Given $z\in\Sigma$, we now take a closer look at the involved Jacobian function, viz.~
\begin{align*}
\sfJ_{-(k_-)_z,N,H_0}(\theta) &= \Big[1+\int_0^\theta (k_-)_z(r)\,v_z(r)\d r + \frac{H_0}{N-1}\,v_z(\theta)\Big]_+^{N-1}.
\end{align*}
The hypothesis on $k_-$ implies $\smash{k_- \leq c\varepsilon^{-2}}$ on the tube $\smash{\Sigma_{[0,\varepsilon]}}$. Sturm's comparison theorem as stated e.g.~in  \cite{ketterer2017}*{Thm.~3.1}) implies $r \leq v_z(r) \leq \sin_{-c\varepsilon^{-2}/(N-1)}$ on $[0,\varepsilon]$; moreover, the first inequality in fact holds on all of $[0,\infty)$.

In the case $\theta \leq \varepsilon$, using the  negativity of $H_0$ we thus get
\begin{align*}
\sfJ_{-(k_-)_z,N,H_0}(\theta) \leq \Big[1 + \frac{c}{\varepsilon}\SIN_{-c\varepsilon^{-2}/(N-1)}(\varepsilon)\Big]^{N-1}.
\end{align*}

On the other hand, if $\theta > \varepsilon$ we have
\begin{align*}
\sfJ_{-(k_-)_z,N,H_0}(\theta) &\leq \Big[1+\Big[\!\int_0^\varepsilon (k_-)(r)\d r + \int_\varepsilon^\theta (k_-)(r)\d r + \frac{H_0}{N-1}\Big]\, v_z(\theta)\Big]_+^{N-1}\\
&\leq \Big[1 + \Big[\frac{c}{\varepsilon} + c\int_\varepsilon^\theta r^{-2}\d r + \frac{H_0}{N-1}\Big]\,v_z(\theta)\Big]_+^{N-1}\\
&\leq \big[1 - \theta_0^{-1}\,\theta\big]_+^{N-1}.
\end{align*}
In the last step, we used the hypothesized negativity of
\begin{align*}
\theta_0 := \Big[\frac{2c}{\varepsilon} + \frac{H_0}{N-1}\Big]^{-1}.
\end{align*}

In summary, we have shown the Jacobian function is uniformly bounded and it vanishes identically on $[\theta_0,\infty)$. The conclusion follows from \eqref{Eq:Jac bounds} as for \cref{Th:Const vol}.
\end{proof}

\begin{theorem}[Variable volume singularity theorem II, see also \cref{Th:Vol2,Th:Vol3}]\label{Th:Vol1} Let $\scrM$ be a timelike essentially nonbranching $\TMCP^e(k,N)$ metric measure spacetime. Assume it contains an achronal area-regular FTC Borel subset $\Sigma$ with $\hh_0[\Sigma] < \infty$, $k$ is bounded from below by a negative constant $K$ on $I^+(\Sigma)$, $\q$-a.e.~ray has infinite $l$-length, and the forward mean curvature of $\Sigma$ is bounded from above by a number $H_0$ with
\begin{align*}
\limsup_{t\to \infty} c(t)^{-1}\,\hh_0[\Sigma]^{-1}\int_{\Sigma_{[0,t]}} k_-\d\meas < \frac{-H_0}{N-1},
\end{align*}
where for some $\delta >0$,
\begin{align*}
c(t) := \frac{\sinh(\sqrt{-K(N-1)}\,(t+\delta))^{N-1}}{\sinh(\sqrt{-K(N-1)}\,(\delta))^{N-1}}.
\end{align*}
Then there exists an $\hh_0$-measurable and $\hh_0$-nonnegligible subset $\Sigma'$ of $\Sigma$ such that
\begin{align*}
\meas[I^+(\Sigma')] < \infty.
\end{align*}

In particular, $\scrM$ is future volume incomplete.
\end{theorem}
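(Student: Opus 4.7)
The strategy is to combine the Heintze--Karcher inequality (\cref{Th:HeintzeKarcher}) with a ray-wise Riccati comparison and an averaging argument over $\Sigma$, as a variable-curvature refinement of the pointwise approach of \cref{Th:Vol2}. By \cref{Th:HeintzeKarcher}, for every $t > 0$,
\begin{align*}
\meas[\Sigma_{[0,t]}] \leq \int_\Sigma\int_0^t \sfJ_{k_z,N,H_0}(\theta)\,\d\theta\,\d\hh_0(z),
\end{align*}
where $\sfJ_{k_z,N,H_0} = [\sfP_z]_+^{N-1}$ and the potential $\sfP_z := \sfP_{k_z/(N-1),H_0/(N-1)}$ solves the linear ODE $\sfP_z'' + (k_z/(N-1))\sfP_z = 0$ with $\sfP_z(0)=1$ and $\sfP_z'(0)=H_0/(N-1)$; it vanishes identically beyond its first positive root $\theta_z^*$. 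The plan reduces to exhibiting an $\hh_0$-nonnegligible $\Sigma'\subset\Sigma$ on which $\theta_z^*<\infty$ with $\int_0^{\theta_z^*}\sfJ_{k_z,N,H_0}(\theta)\,\d\theta$ bounded; dominated convergence will then give $\meas[I^+(\Sigma')]<\infty$, and \cref{Re:Finite vol} the volume incompleteness.

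Along each ray, the logarithmic derivative $\psi_z := \sfP_z'/\sfP_z$ satisfies the Riccati equation $\psi_z'+\psi_z^2+k_z/(N-1)=0$ on $[0,\theta_z^*)$ with $\psi_z(0)=H_0/(N-1)$. Dropping the nonnegative $\psi_z^2$ contribution and integrating,
\begin{align*}
\psi_z(t)\leq \frac{1}{N-1}\Big[H_0 + \int_0^t \hat{k}_{\Quot(z),-}(s)\,\d s\Big]
\end{align*}
on $[0,\theta_z^*)$. On the other hand, applying \cref{Le:logarithmic derivative} ray by ray with the backward extension of length $\delta$ --- guaranteed by the infinite $l$-length of $\q$-a.e.\ ray and the uniform lower bound $k\geq K$ --- produces the uniform density comparison $\hat{h}_\alpha(s)/\hat{h}_\alpha(0) \leq c(s)$ (and its reverse analog via forward extension), identifying $c(\cdot)$ with the sharp MCP growth factor over an interval of backward length $\delta$ and, crucially, allowing one to convert the $\meas$-integrated hypothesis into a control of the $\hh_0$-averaged accumulated $k_-$ along rays.

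Combining these ingredients, the $\hh_0$-averaged Riccati bound together with the hypothesis yields $\hh_0[\Sigma]^{-1}\int_\Sigma \psi_z(t)\,\d\hh_0(z) < -\epsilon$ strictly for all $t$ large enough; propagating this into the integrated potential $\int_\Sigma \sfP_z(t)\,\d\hh_0(z)$ --- which becomes strictly negative at some large $t$ --- forces $\sfP_z(t)\leq 0$ on an $\hh_0$-nonnegligible $\Sigma'\subset\Sigma$, so that $\theta_z^*\leq t$ on $\Sigma'$; the uniform density bound then controls $\int_0^{\theta_z^*}\sfJ_{k_z,N,H_0}(\theta)\,\d\theta$ from above uniformly on $\Sigma'$. \textbf{The main obstacle} is the precise bookkeeping of the weight discrepancy between the hypothesis integral $\int_{\Sigma_{[0,t]}}k_-\,\d\meas = \int_Q\int_0^t\hat{k}_{\alpha,-}(s)\,\hat{h}_\alpha(s)\,\d s\,\d\q(\alpha)$ and the Riccati average $\int_\Sigma\int_0^t\hat{k}_{\Quot(z),-}(s)\,\d s\,\d\hh_0(z) = \int_Q\int_0^t\hat{k}_{\alpha,-}(s)\,\hat{h}_\alpha(0)\,\d s\,\d\q(\alpha)$. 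Reconciling these weights is exactly the role of the normalization $c(t)^{-1}$ in the hypothesis, and the delicate point is to carry out the corresponding Fubini-type exchange and the passage from $\psi_z$-averages to $\sfP_z$-averages while correctly treating rays with $\theta_z^*\leq t$ (on which $\psi_z$ is no longer defined but the desired conclusion already holds), via a dichotomy on $\Sigma$.
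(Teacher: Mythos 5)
Your overall architecture --- Heintze--Karcher, conversion of the integrated curvature hypothesis into ray-wise information via a density comparison (this is precisely the content of the synthetic Cheeger--Colding segment inequality of Braun--McCann that the paper invokes as a black box), then a comparison argument killing the Jacobian on a nonnegligible set of rays --- is the right one. But the middle of your argument has a genuine gap. Your Riccati step discards the $\psi_z^2$ term and yields a \emph{finite upper bound} on the logarithmic derivative $\psi_z=\sfP_z'/\sfP_z$; integrating and exponentiating such a bound gives $\sfP_z(t)\leq\exp\big(\tfrac{1}{N-1}\int_0^t[H_0+\int_0^s(k_-)_z\,\d r]\,\d s\big)$, which is \emph{always positive}. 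So this bound alone can never certify that $\sfP_z$ reaches zero, nor that its $\hh_0$-average becomes negative: on $\{z:\sfP_z(t)>0\}$ the contribution to $\int_\Sigma\sfP_z(t)\,\d\hh_0$ is positive, and a negative total average would require the set $\{\sfP_z(t)\leq 0\}$ to already be large --- which is what you are trying to prove. The passage from "averaged $\psi_z(t)<-\epsilon$" to "averaged $\sfP_z(t)<0$" also fails independently, since $\sfP_z(t)=\sfP_z(T)\exp(\int_T^t\psi_z)$ and Jensen's inequality runs the wrong way: a small-measure family of rays with large accumulated $(k_-)_z$ (permitted by the hypothesis, which only controls an average) can carry large positive $\psi_z$ and dominate the average of $\sfP_z$.

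The repair is to reverse the order of averaging and comparison, which is what the paper does. First apply the segment inequality of Braun--McCann (your density-comparison heuristic is the idea behind its proof, but note the inequality actually needed is the \emph{lower} bound $\hat h_\alpha(r)\geq c(t)^{-1}\,\hat h_\alpha(0)$ coming from the \emph{forward} infinite extension of the rays via \cref{Le:logarithmic derivative}, not the upper bound $\hat h_\alpha(s)/\hat h_\alpha(0)\leq c(s)$ from a backward $\delta$-extension) to produce an $\hh_0$-nonnegligible $\Sigma'$ on which the \emph{pointwise} estimate $\int_0^t(k_-)_z\,\d r\leq -H_0/(N-1)-\theta_0^{-1}$ holds for all large $t$. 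Only then run the ray-wise comparison on $\Sigma'$, and do so directly from the integral representation \eqref{Eq:GenCOS}: since $v_z=\SIN_{-(k_-)_z/(N-1)}$ is nondecreasing with $v_z(\theta)\geq\theta$, one gets $\sfP(\theta)\leq 1+\big[\int_0^t(k_-)_z\,\d r+\tfrac{H_0}{N-1}\big]v_z(\theta)\leq 1-\theta_0^{-1}\theta$, hence a \emph{uniform} vanishing threshold $\theta_0$ and $\int_0^\infty\sfJ\leq\theta_0$ on $\Sigma'$ (it is this comparison, not "the uniform density bound", that controls the Jacobian integral). The remainder of your plan --- Heintze--Karcher on $\Sigma'$, monotone convergence, \cref{Re:Finite vol} --- then goes through as in the paper.
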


The lower boundedness of $k$ on the chronological future of $\Sigma$ can be dropped at the cost of a more complicated expression for $c$, cf.~Braun--McCann \cite{braun-mccann2023}*{Prop.~7.13, Thm.~7.14}. 

\begin{proof}[Proof of \cref{Th:Vol1}] The theorem is a simple consequence of the synthetic Cheeger--Colding segment inequality established by Braun--McCann \cite{braun-mccann2023}*{Prop.~7.13}. Indeed, the hypothesis on the lengths of the transport rays makes it applicable to every $t>0$ and the given parameter $\delta$ gives
\begin{align}\label{Eq:Segm}
\hh_0\textnormal{-}\!\essinf_{z\in \Sigma} \int_0^t (k_-)_z(r)\d r \leq c(t)^{-1}\,\hh_0[\Sigma]^{-1}\int_{\Sigma_{[0,t]}}k_-\d\meas.
\end{align}

Let $\theta_0 > 0$ such that for all sufficiently large $t>0$,
\begin{align*}
c(t)^{-1}\,\hh_0[\Sigma]^{-1}\int_{\Sigma_{[0,t]}} k_-\d\meas \leq  \frac{-H_0}{N-1} - 2\theta_0^{-1}.
\end{align*}
By \eqref{Eq:Segm}, there exists an $\hh_0$-measurable and $\hh_0$-nonnegligible subset $\Sigma'$ of $\Sigma$ such that for all sufficiently large $t>0$, every $z\in \Sigma'$ satisfies
\begin{align*}
\int_0^t (k_-)_z(r)\d r \leq \frac{-H_0}{N-1} - \theta_0^{-1}.
\end{align*}

Applying \cref{Th:HeintzeKarcher} to $\Sigma'$ in place of $\Sigma$ entails
\begin{align*}
\meas[\Sigma_{[0,t]}'] \leq \int_{\Sigma'}\int_{[0,t]} \sfJ_{-(k_-)_z,N,H_0}\d\Leb^1\d\hh_0(z).
\end{align*}
As in the above proof, for every $z\in \Sigma'$ the inherent Jacobian function is estimated by
\begin{align*}
\sfJ_{-(k_-)_z,N,H_0}(\theta) &= \Big[1+\int_0^\theta(k_-)_z(r)\,v_z(r)\d r + \frac{H_0}{N-1}\,v_z(\theta)\Big]_+^{N-1}\\
&\leq \Big[1+\Big[\!\int_0^t (k_-)_z(r) + \frac{H_0}{N-1}\Big]\,v_z(\theta)\Big]_+^{N-1}\\
&\leq \big[1-\theta_0^{-1}\,\theta\big]_+^{N-1}.
\end{align*}
As in the proof of \cref{Th:Const vol}, this provides the claim.
\end{proof}

In light of \cref{Re:Comments} and the hypothesis of geodesic completeness in the previous \cref{Th:Vol1}, it is an interesting question how rigid the latter result is.

Lastly, we present the following simple result which assumes an integrability condition on the negative part of $k$ with respect to a Borel measure which is mutually equivalent to $\meas$ on the relevant chronological future.

\begin{theorem}[Variable volume singularity theorem III, see also \cref{Th:Vol1,Th:Vol2}]\label{Th:Vol3} Let $\scrM$ form a timelike essentially nonbranching $\TMCP^e(k,N)$ metric measure spacetime. Assume it contains an achronal area-regular FTC Borel subset $\Sigma$ with $\hh_0[\Sigma] < \infty$ whose forward mean curvature is bounded from above by a number $H_0$. Lastly, assume
\begin{align*}
\hh_0[\Sigma]^{-1}\int_{I^+(\Sigma)} k_-\d\mathfrak{n}  < \frac{-H_0}{N-1},
\end{align*}
where the Radon measure $\mathfrak{n}$ on $I^+(\Sigma)$ is defined by
\begin{align*}
\mathfrak{n} := \int_\Sigma\Leb_{\Quot(z)}^1\d\hh_0(z).
\end{align*}
Then there is an $\hh_0$-measurable and $\hh_0$-nonnegligible subset $\Sigma'$ of $\Sigma$ with
\begin{align*}
\meas[I^+(\Sigma')] <\infty.
\end{align*}

In particular, $\scrM$ is future volume incomplete.
\end{theorem}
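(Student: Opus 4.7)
The plan is to adapt the strategy of \cref{Th:Vol1}, with the welcome simplification that the assumption on $\mathfrak{n}$ already bundles the integrability of $k_-$ against a product measure; this renders the appeal to the synthetic Cheeger--Colding segment inequality (used in the proof of \cref{Th:Vol1}) unnecessary, and Fubini alone supplies the key measure-theoretic input at the ray level.

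First, I would expand the hypothesis via the definition of $\mathfrak{n}$. By Fubini (equivalently, by the very definition of $\mathfrak{n}$ combined with the disintegration of $\meas$),
\begin{equation*}
\int_{I^+(\Sigma)} k_-\d\mathfrak{n} = \int_\Sigma \int_0^{L_z}(k_-)_z(r)\d r\d\hh_0(z),
\end{equation*}
where I write $L_z := l(z,b_{\Quot(z)})$ (with $L_z := \infty$ if the final endpoint $b_{\Quot(z)}$ does not exist) for the length of the ray segment from $z$ into $I^+(\Sigma)$. Setting $C := \hh_0[\Sigma]^{-1}\int_{I^+(\Sigma)}k_-\d\mathfrak{n} < -H_0/(N-1)$, I would then perform a Markov-type selection: choose $\theta_0 > 0$ with $C + 2/\theta_0 < -H_0/(N-1)$ and define the Borel set
\begin{equation*}
\Sigma' := \Big\{z \in \Sigma : \int_0^{L_z}(k_-)_z(r)\d r \leq \frac{-H_0}{N-1} - \frac{1}{\theta_0}\Big\}.
\end{equation*}
Were $\hh_0[\Sigma'] = 0$, integrating the reverse strict inequality over $\Sigma$ would yield $C \geq -H_0/(N-1) - 1/\theta_0$, contradicting the choice of $\theta_0$; hence $\hh_0[\Sigma'] > 0$.

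Next, I would combine the disintegration formula for $\meas\mres I^+(\Sigma')$ with the comparison bound $\hat h_\alpha \leq \hat h_\alpha(0)\,\sfJ_{\hat k_\alpha, N, H_\Sigma^+(\sfp_0(\alpha))}$ from \cref{Le:CompII}, the assumed inequality $H_\Sigma^+ \leq H_0$ on $\Sigma$, and the monotonicity of the Jacobian function in both its curvature and its mean-curvature input, to obtain
\begin{equation*}
\meas[I^+(\Sigma')] \leq \int_{\Sigma'}\int_0^{\infty} \sfJ_{-(k_-)_z, N, H_0}(r)\d r\d\hh_0(z).
\end{equation*}
Mimicking the corresponding step in the proof of \cref{Th:Vol1}, I would then bound the potential $\sfP_{-(k_-)_z/(N-1),\,H_0/(N-1)}$ using monotonicity of the auxiliary sinh-type function $v_z := \SIN_{-(k_-)_z/(N-1)}$ together with the defining inequality of $\Sigma'$. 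This will force the potential to become nonpositive by a uniform time $\theta_0$, yielding a uniform-in-$z$ bound on the inner integral. Hence $\meas[I^+(\Sigma')] < \infty$, and future volume incompleteness follows from \cref{Re:Finite vol} by choosing any $x \in \Sigma'$ and noting $\meas[I^+(x)] \leq \meas[I^+(\Sigma')]$.

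I anticipate no substantive obstacle. The only ingredient not already in \cref{Th:Vol1} is the elementary Markov selection, and by explicitly encoding the integrability against $\mathfrak{n}$ in the hypothesis, the theorem directly furnishes the ray-level bound that \cref{Th:Vol1} had to extract from a segment inequality. The only point requiring a little care is the measurability setup (Borel nature of $\Sigma'$, legitimacy of applying \cref{Le:CompII} and the disintegration on the restricted set), but all of that runs parallel to the corresponding passages in the proofs of \cref{Th:Vol1} and \cref{Th:HeintzeKarcher}.
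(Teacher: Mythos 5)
Your proposal is correct and follows essentially the same route as the paper: a Markov-type selection of an $\hh_0$-nonnegligible subset $\Sigma'$ on which the ray-wise integral of $(k_-)_z$ is at most $-H_0/(N-1)-\theta_0^{-1}$ (the paper invokes Markov's inequality directly for the probability measure $\hh_0[\Sigma]^{-1}\,\hh_0$, where you argue by contradiction, which is the same estimate), followed by the Heintze--Karcher bound on $\Sigma'$ and the pointwise Jacobian estimate $\sfJ_{-(k_-)_z,N,H_0}(\theta)\leq[1-\theta_0^{-1}\theta]_+^{N-1}$ via $v_z(\theta)\geq\theta$. The paper's Theorem~\ref{Th:Vol3} proof likewise makes no use of the segment inequality, so your observation that it is unnecessary here matches the source exactly.
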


\begin{proof} We fix $\theta_0 > 0$ such that
\begin{align*}
\hh_0[\Sigma]^{-1}\int_{I^+(\Sigma)} k_-\d\mathfrak{n} < \frac{-H_0}{N-1} - \theta_0^{-1}.
\end{align*}
In particular, the right-hand side is positive, and we abbreviate
\begin{align*}
\delta := \frac{-H_0}{N-1}-\theta_0^{-1}.
\end{align*}

We apply Markov's inequality to the probability measure $\hat{\hh}_0 := \hh_0[\Sigma]^{-1}\,\hh_0$ and get
\begin{align*}
\hh_0\Big[\!\int_0^\infty (k_-)_\cdot(r)\d r\geq \delta\Big] \leq \frac{1}{\delta}\int_{I^+(\Sigma)} k_-\d\mathfrak{n} < \hh_0[\Sigma].
\end{align*}
In other words, the $\hh_0$-measurable set
\begin{align*}
\Sigma' := \Big\lbrace\!\int_0^\infty (k_-)_\cdot(r)\d r < \delta\Big\rbrace
\end{align*}
has positive $\hh_0$-measure, hence is nonempty. 

Given $t>0$, applying \cref{Th:HeintzeKarcher} to $\Sigma'$ in place of $\Sigma$ gives
\begin{align*}
\meas[\Sigma_{[0,t]}'] \leq \int_{\Sigma'}\int_{[0,t]}\sfJ_{-(k_-)_z,N,H_0}\d\Leb^1 \d\hh_0(z).
\end{align*}
By definition of $\Sigma'$ and arguing as in the previous proofs, every $z\in\Sigma'$ satisfies
\begin{align*}
\sfJ_{-(k_-)_z,N,H_0}(\theta) &= \Big[1+\int_0^\theta (k_-)_z \,v_z(r)\d r +  \frac{H_0}{N-1}\,v_z(\theta)\Big]_+^{N-1}\\
&\leq \Big[1 + \Big[\delta + \frac{H_0}{N-1}\Big]\,v_z(\theta) \Big]_+^{N-1}\\
&\leq \big[1-\theta_0^{-1}\,\theta\big]_+^{N-1}.
\end{align*}
Arguing as for \cref{Th:Const vol}, this gives the desired inequality.
\end{proof}

\appendix

\section{Complementary material}

\subsection{Topological local anti-Lipschitz condition for Finsler spacetimes}\label{Sub:LorentzFinsler} The goal of this part is to verify the topological anti-Lipschitz condition of   \cref{Def:KS} for appropriate signed Lorentz distance functions on  Finsler spacetimes. In particular, our central \cref{Th:11} and \cref{Cor:22} (and, in the reversible case, \cref{Th:UnsigndalemI} and \cref{Th:UnsigndalemII}) hold in their strongest form. Even more, in \cref{Sub:Conclusion} we show that the integration by parts inequalities stipulated therein after \cref{Re:Two sided} actually become \emph{equalities}. This naturally leads to the condition of \emph{infinitesimally  strict concavity}, inspired by Gigli's ``infinitesimally strict convexity'' from metric measure geometry \cite{gigli2015}.

\subsubsection{Finsler spacetimes} We first recall some basic notions of Lorentz--Finsler geometry, referring to Minguzzi \cite{minguzzi2015-sprays,minguzzi2015-light,minguzzi2015-raychaudhuri,minguzzi2019-applications,minguzzi2019-causality} and Lu--Minguzzi--Ohta \cite{lu-minguzzi-ohta2022-range} for details.

Throughout the sequel, $\mms$ forms a connected smooth topological manifold. We fix an auxiliary Riemannian metric $r$ generating its topology; as all statements involving it will be local, they are independent of the choice of $r$. In order for the Legendre transform from \cref{Sub:Legendre} to have suitable smoothness properties, we suppose $\dim \mms$ is at least $3$. Lastly, we fix a \emph{smooth} measure $\meas$ on $\mms$, i.e.~it is mutually absolutely continuous with respect to the Lebesgue measure with smooth density on every chart. As in \cref{Ex:TCD}, this induces a canonical metric measure spacetime structure $\scrM$.

We call $\mms$ a  \emph{Lorentz--Finsler manifold} if it is endowed with a function $L$  on the tangent bundle $T\mms$ such that
\begin{itemize}
\item $L$ is smooth on the slit tangent bundle $T\mms\setminus\{0\}$,
\item for every $v\in T\mms$ and every $c > 0$, we have $L(cv)=c^2L(v)$, and
\item for every $\smash{v\in T\mms\setminus\{0\}}$, the symmetric $\dim\mms\times\dim\mms$-matrix with entries
\begin{align*}
\Rmet_{\alpha\beta}(v) := \frac{\partial^2L}{\partial v^\alpha\partial v^\beta}(v)
\end{align*}
is nondegenerate with signature $+,-,\dots,-$.
\end{itemize}
The function $L$ is assumed to be understood in the following. We will call $\mms$  \emph{reversible} if $L(v) = L(-v)$ for every $v\in T\mms$.

Every $v\in T\mms \setminus \{0\}$  induces a Lorentzian metric $\Rmet_v$ by the assignment
\begin{align*}
\Rmet_v\Big[a^\alpha\frac{\partial}{\partial x^\alpha}, b^\beta\frac{\partial}{\partial x^\beta}\Big] :=  g_{\alpha\beta}(V)\,a^\alpha\,b^\beta
\end{align*}
in local coordinates  (using Einstein summation convention). Then Euler's homogeneous function theorem yields $\smash{g_v(v,v) = 2L(v)}$.

We call a tangent vector $v\in T\mms$
\begin{itemize}
\item \emph{timelike}, symbolically $v\in \Omega'$, if $L(v) > 0$,
\item \emph{lightlike} if $L(v) = 0$ and $v$ is nonzero,
\item \emph{causal} if it is timelike or lightlike, and
\item \emph{spacelike} if it is not causal, i.e.~$L(v) < 0$ or $v$ vanishes.
\end{itemize}
For a causal vector $v\in T\mms$, we abbreviate
\begin{align*}
F(v) :=  \sqrt{g_v(v,v)} = \sqrt{2L(v)}.
\end{align*}

\begin{definition}[Finsler spacetime]\label{Def:Finsler spt} We call $\mms$ a \emph{Finsler spacetime} \textnormal{(}or \emph{time-oriented}\textnormal{)} if it is a Lorentz--Finsler manifold which admits a smooth timelike vector field $X$.
\end{definition}

A Lorentz--Finsler manifold can always be time-oriented.

Let $X$ be as in \cref{Def:Finsler spt}. We call $v\in T_x\mms$ \emph{future-directed} if it lies in the same connected component of $(\cl\,\Omega_x')\setminus \{0\}$ as  $X(x)$,  where $x\in\mms$, and \emph{past-oriented} if $-v$ is future-directed. Let $\Omega$ be the subset of future-directed elements of $\Omega'$. A curve $\gamma$ in $\mms$ is called timelike if it is continuously differentiable and its tangent vectors all belong to $\Omega$; we define causal curves analogously. Thus, unless explicitly stated otherwise every   causal curve is assumed to be future-directed.

\subsubsection{Legendre transform}\label{Sub:Legendre} On a genuine Lorentz spacetime, the identification between tangent and cotangent spaces is straightforward by employing the Lorentzian metric. The generalization thereof to Lorentz--Finsler geometry goes through the Legendre transform introduced and studied by Minguzzi \cite{minguzzi2015-light}. We also  refer to Lu--Minguzzi--Ohta \cite{lu-minguzzi-ohta2022-range}*{§4.4}, whose presentation we follow (modulo switched signature), for a comprehensive overview.

The \emph{polar cone} at $x\in\mms$ is the set $\Omega_x^*$ of all $\zeta\in T^*\mms$ such that $\zeta(v) > 0$ for every $\smash{v\in (\cl\,\Omega_x)\setminus \{0\}}$. It is an open convex cone in $T^*\mms$, cf.~Minguzzi \cite{minguzzi2015-light}*{Cor.~2}. For $\smash{\zeta\in\Omega^*_x}$, we define the quantity
\begin{align*}
L^*(\zeta) := \inf \zeta(v)^2,
\end{align*}
where the infimum is taken over all $v\in\Omega_x$ with $F(v) = 1$. This definition easily implies the reverse Cauchy--Schwarz inequality $\smash{4\,L^*(\zeta)\,L(v) \leq \zeta(v)^2}$ for every $\zeta\in \Omega_x^*$ and every $v\in \Omega_x$, cf.~Minguzzi \cite{minguzzi2015-light}*{Thm.~3}. The map $L^*$ inherits the smoothness properties from $L$. 

At every given $\zeta\in T^*M\setminus \{0\}$, the smooth map $\smash{L^*}$ induces a cometric of Lorentzian signature by the  formula 
\begin{align}\label{Eq:Cometricg}
\Rmet_\zeta^*\big[v_\alpha\d x^\alpha, w_\beta\d x^\beta\big] :=  {\Rmet^*}^{\alpha\beta}(\zeta)\,v_\alpha\,w_\beta,
\end{align}
in local coordinates, where
\begin{align*}
{\Rmet^*}^{\alpha\beta}(\zeta) := \frac{\partial^2L^*}{\partial\zeta^\alpha\partial\zeta^\beta}(\zeta).
\end{align*}

The \emph{Legendre transform} $\smash{\mathcal{L}^*\colon\Omega_x^* \to \Omega_x}$ is then variationally defined as follows. Given $\smash{\zeta\in \Omega_x^*}$, $\smash{\mathcal{L}^*(\zeta)}$ is the unique vector $v\in \Omega_x$ with $L^*(\zeta) = L(v) = \zeta(v)/2$\footnote{The uniqueness of $v$ follows from strict convexity of the superlevel sets of $F$ in $\Omega_x$.}. Analogously, one can define a map $\smash{\mathcal{L}\colon \Omega_x \to \Omega_x^*}$ which is the inverse of $\smash{\mathcal{L}^*}$. As $\dim\mms$ is at least $3$ as assumed above, $\smash{\mathcal{L}^*}$ and $\smash{\mathcal{L}}$ are in fact diffeomorphisms outside zero, cf.~Minguzzi \cite{minguzzi2015-light}*{Thm.~6}. Moreover,  in this case  $\smash{\Rmet^*_{\mathcal{L}(v)}}$ is the inverse matrix of the Lorentzian metric $\Rmet_v$ canonically induced by $L$ analogously to \eqref{Eq:Cometricg}  for every $v\in \Omega_x$, as stated e.g.~by Lu--Minguzzi--Ohta \cite{lu-minguzzi-ohta2022-range}*{Lem.~4.11}.

\begin{definition}[Gradient]\label{Def:Gradient} The  \emph{gradient} of a smooth function $\u$ on $\mms$ is defined by
\begin{align*}
\nabla \u := \mathcal{L}^*(\rmd \u).
\end{align*}
\end{definition}

For every smooth functions $\u$ and $f$ on $\mms$, as in Lu--Minguzzi--Ohta \cite{lu-minguzzi-ohta2022-range}*{p.~21} we get
\begin{align}\label{Eq:Cometr}
\Rmet_{\rmd f}^*(\rmd f,\rmd \u) = \rmd f(\nabla \u).
\end{align}

\begin{remark}[Nonlinearity] The differential $\rmd$ acting on smooth functions forms a linear operator since it only depends on the differentiable structure of $\mms$. On the other hand, the gradient operator is not linear unless $\mms$ is genuinely Lorentzian.\hfill{\footnotesize{$\blacksquare$}}
\end{remark}

\subsubsection{Regularity of Lorentz distance functions from  hypersurfaces} Let $\Sigma$ be an achronal spacelike\footnote{To keep the presentation short, we do not recapitulate the meaning of a  spacelike submanifold in the genuine Finsler setting. Instead, we refer to Minguzzi  \cite{minguzzi2015-light} for details.} TC submanifold of $\mms$ or just a point. We now briefly review some folklore regularity properties of $l_\Sigma$ on $\smash{I^+(\Sigma)}$; analogous discussions apply relative to $\smash{I^-(\Sigma)}$. Thorough details for Lorentz spacetimes are given by Treude \cite{treude2011}. For basic implicitly used properties of the exponential map, we refer to Minguzzi \cite{minguzzi2015-sprays}.

Recall from \cref{Sec:Signed} that by a timelike geodesic $\gamma$  every $y\in I^+(\Sigma)$ can be connected to a point $x\in\Sigma$ satisfying $l_\Sigma(y) = l(x,y)$. The set of all such $y$ for which $\gamma$ cannot be extended beyond $y$ is  the \emph{extended future timelike cut locus} $\ETCut^+(\Sigma)$, cf.~McCann  \cite{mccann2020}*{Def.~2.1}. By basic ODE theory, $\smash{\ETCut^+(\Sigma)}$ is closed  \cite{mccann2020}*{Thm.~3.6}.

The following \cref{Th:Properties smooth dist fct} is well-known. We refer to Treude  \cite{treude2011}*{Prop.~3.2.33} and Treude--Grant \cite{treude-grant2013}*{Prop.~2.7} for the case of causally complete submanifolds in Lorentz spacetimes. For $\Sigma$ being a singleton, see also McCann \cite[Thm.~3.6]{mccann2020} and Braun--Ohta \cite{braun-ohta2024}*{Thm.~3.11} in Lorentz and Finsler spacetimes, respectively.

\begin{proposition}[Regularity of $\smash{l_\Sigma}$]\label{Th:Properties smooth dist fct} The following hold.
\begin{enumerate}[label=\textnormal{(\roman*)}]
\item The function $l_\Sigma$ is smooth on $\smash{I^+(\Sigma)\setminus \ETCut^+(\Sigma)}$.
\item Everywhere on $I^+(\Sigma)\setminus \ETCut^+(\Sigma)$, $\nabla l_\Sigma$ is future-directed, timelike, and has unit magnitude with respect to $L$.
\item Given $\smash{y\in I^+(\Sigma)\setminus \ETCut^+(\Sigma)}$, let $\sigma$ the unique proper-time parametrized timelike geodesic from $\Sigma$ to $y$. Then
\begin{align*}
\nabla l_\Sigma(y) = \dot{\sigma}_{l_\Sigma(y)}.
\end{align*}
\end{enumerate}
\end{proposition}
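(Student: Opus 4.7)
The plan is to proceed via the (Lorentz--Finslerian) normal exponential map from $\Sigma$ and the first variation formula for arclength, treating the hypersurface and point cases in parallel.

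Fix $y\in I^+(\Sigma)\setminus \ETCut^+(\Sigma)$. By definition of $\ETCut^+(\Sigma)$, there exists a proper-time parametrized timelike geodesic $\sigma\colon [0,L]\to\mms$ with $\sigma_0\in\Sigma$, $\sigma_L = y$, $l_\Sigma(y) = L = l(\sigma_0,y)$, and $\sigma$ extends past $y$. In the hypersurface case, standard first variation arguments (cf.\ Treude \cite{treude2011}*{\S 3.2} in the Lorentzian setting, which transpose verbatim by replacing the metric with $\Rmet_v$) imply $\dot\sigma_0$ is $\Rmet_{\dot\sigma_0}$-orthogonal to $T_{\sigma_0}\Sigma$, since otherwise a compactly supported variation of $\sigma$ fixing the endpoint $y$ and moving $\sigma_0$ along $\Sigma$ would produce a longer causal curve from $\Sigma$ to $y$, contradicting maximality. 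In the point case, this orthogonality is vacuous. The pair $(\sigma_0,\dot\sigma_0)$ is moreover unique: two distinct such pairs would force $y$ into $\TCut^+(\Sigma)\subset\ETCut^+(\Sigma)$ by the standard argument (the concatenation of one maximizer with a small geodesic near $y$ is broken yet maximizing).

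Consider the future normal exponential map $\exp^\Sigma\colon \calN^+\Sigma \to \mms$ defined on the future normal bundle of $\Sigma$ (the set of future-directed timelike vectors $v$ based on $\Sigma$ that are $\Rmet_v$-orthogonal to $\Sigma$; in the point case this is simply the future timelike cone at the point). Since $y\notin\ETCut^+(\Sigma)$, the pair $(\sigma_0,L\dot\sigma_0)$ is not a focal/conjugate point of $\exp^\Sigma$, whence by the inverse function theorem $\exp^\Sigma$ is a diffeomorphism from an open neighborhood $U$ of $(\sigma_0,L\dot\sigma_0)$ onto an open neighborhood $V$ of $y$, with $V\cap\ETCut^+(\Sigma)=\emptyset$ after possibly shrinking $V$. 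The composition $z\mapsto F(\pi_{T\mms}((\exp^\Sigma)^{-1}(z)))$ (norm of the normal vector) then coincides with $l_\Sigma$ on $V$ by uniqueness of the maximizer, which is the first statement (i). Smoothness of $\exp^\Sigma$ and of the Finsler norm $F$ on the timelike cone yields (i).

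For (ii) and (iii), apply the first variation formula at the fixed initial endpoint $\sigma_0\in\Sigma$: for any smooth variation $y_s$ of $y = y_0$ in $V$, letting $\sigma^s$ be the family of maximizing geodesics from $\Sigma$ to $y_s$ supplied by the inverse of $\exp^\Sigma$, we obtain
\begin{align*}
\rmd l_\Sigma(y)[w] = \frac{\rmd}{\rmd s}\bigg|_{s=0} L(\sigma^s) = \Rmet_{\dot\sigma_L}\big(\dot\sigma_L, w\big),
\end{align*}
where the boundary term at $\sigma_0$ vanishes by the orthogonality (or triviality, in the point case) established above and the fact that $\sigma_0^s\in\Sigma$, and where we used that $F(\dot\sigma_L)=1$ by proper-time parametrization. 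The right-hand side is precisely $\mathcal{L}(\dot\sigma_L)[w]$ by the definition of the Legendre transform, hence $\rmd l_\Sigma(y) = \mathcal{L}(\dot\sigma_L)$ and therefore $\nabla l_\Sigma(y) = \mathcal{L}^*(\rmd l_\Sigma(y)) = \dot\sigma_L = \dot\sigma_{l_\Sigma(y)}$, giving (iii). Since $\dot\sigma_L$ is future-directed timelike of $F$-magnitude $1$, (ii) follows.

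The one subtle step is the vanishing of the boundary term at $\sigma_0$ in the first variation: one must justify it in the Lorentz--Finsler setting where the fibrewise metric $\Rmet_v$ depends on the direction $v$. I expect this to be the main obstacle, though it is handled by noting that at the endpoint $s=0$ the relevant inner product is taken with respect to $\Rmet_{\dot\sigma_0}$ and one may argue exactly as in Treude \cite{treude2011} using the variational characterization $L(v) = \tfrac12 \Rmet_v(v,v)$ plus homogeneity; alternatively, one may test (ii)--(iii) first along radial variations $y_s = \exp^\Sigma(\sigma_0, (L+s)\dot\sigma_0)$ (which give $\rmd l_\Sigma[\dot\sigma_L] = 1$ directly) and then along variations with fixed parameter length, reducing to the Finslerian Gauss lemma of Minguzzi \cite{minguzzi2015-light}.
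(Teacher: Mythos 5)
The paper gives no proof of this proposition: it is stated as well-known and deferred to Treude \cite{treude2011}, Treude--Grant \cite{treude-grant2013}, McCann \cite{mccann2020}, and Braun--Ohta \cite{braun-ohta2024}. Your sketch (uniqueness of the maximizer off the extended cut locus, non-focality, local invertibility of the normal exponential map, and the first-variation/Gauss-lemma identification $\rmd l_\Sigma=\mathcal{L}(\dot\sigma_{l_\Sigma})$ so that $\nabla l_\Sigma=\mathcal{L}^*(\rmd l_\Sigma)=\dot\sigma_{l_\Sigma}$) is precisely the standard argument those references give, with the one genuinely Finslerian subtlety (direction-dependence of $\Rmet_v$ in the boundary term) correctly identified and correctly resolved.
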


\begin{remark}[Semiconcavity]\label{Re:Semiconcd} As in McCann \cite{mccann2020}*{Prop.~3.4, Thm.~3.5} and its Finslerian successor of Braun--Ohta \cite{braun-ohta2024}*{Prop.~3.9, Thm.~3.10}, $l_\Sigma$ can actually be shown to be locally semiconvex (hence locally Lipschitz) on all of $\smash{I^+(\Sigma)}$. In particular, by \cref{Th:Properties smooth dist fct} and Rademacher's theorem the extended future timelike cut locus is $\meas$-negligible; see also Treude \cite{treude2011}*{Prop.~3.2.32} and Treude--Grant \cite{treude-grant2013}*{Thm.~2.6}.\hfill{\footnotesize{$\blacksquare$}}
\end{remark}

\subsubsection{Conclusion}\label{Sub:Conclusion} We show $l_\Sigma$ is topologically locally anti-Lipschitz on $I^+(\Sigma) \cup I^-(\Sigma)$ according to \cref{Def:KS}. As the arguments are similar for both connected components, we concentrate on $I^+(\Sigma)$. Our arguments are strongly inspired by Sormani--Vega \cite{sormani-vega2016}. In fact, the property stated in \cref{Le:TF lv} below seems to be folklore (at least when $\Sigma$ is a Cauchy hypersurface, e.g.~Sormani--Vega \cite{sormani-vega2016} and Graf--Sormani \cite{graf-sormani2022}), yet we have decided to include a proof.

\begin{proposition}[Topological local anti-Lipschitzness]\label{Le:TF lv} Every point in $I^+(\Sigma)$ has an open neighborhood $U$  with the following property. There is a positive constant $c$ such that for every $x,y\in U$ satisfying $x\leq y$,
\begin{align*}
l_\Sigma(y) - l_\Sigma(x) \geq c\,r(x,y). 
\end{align*}

In particular, $l_\Sigma$ is topologically locally anti-Lipschitz on $\smash{I^+(\Sigma)}$.
\end{proposition}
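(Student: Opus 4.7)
The plan is to extract the anti-Lipschitz constant from the Clarke subdifferential of $l_\Sigma$ by combining its local semiconvexity with the Lorentzian structure inherited through the Legendre duality of \cref{Sub:Legendre}. Fix $z_0\in I^+(\Sigma)$ and pick a precompact, $l$-geodesically convex open neighborhood $U\ni z_0$ with $\cl U\subset I^+(\Sigma)$. By \cref{Re:Semiconcd}, $l_\Sigma$ is locally semiconvex and locally Lipschitz on $I^+(\Sigma)$, so the subdifferential $\partial^-l_\Sigma(y)$ is nonempty at every $y\in\cl U$ and equals the closed convex hull of all limits $\lim_n\rmd l_\Sigma(y_n)$ with $y_n\to y$ through the open dense smoothness set $\cl U\setminus\ETCut^+(\Sigma)$. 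By \cref{Th:Properties smooth dist fct} and the Legendre identity $\rmd l_\Sigma=\mathcal{L}(\nabla l_\Sigma)$, each such $\rmd l_\Sigma(y_n)$ is future-timelike with $F^*(\rmd l_\Sigma(y_n))=F(\nabla l_\Sigma(y_n))=1$. Continuity of $F^*$ on the closed future-causal cotangent cone (where it vanishes precisely on the lightlike boundary) then passes this unit $F^*$-norm to every limit covector, and the concavity of $F^*$ on the future-timelike cone---a consequence of the Lorentzian signature of the cometric in \eqref{Eq:Cometricg}---preserves $F^*(\zeta)\geq 1$ under convex combinations. Together with the uniform bound $\vert\zeta\vert_{r^*}\leq\Lip_{\cl U}(l_\Sigma)$, this shows the subdifferential bundle $K':=\bigsqcup_{y\in\cl U}\partial^- l_\Sigma(y)$ is a compact subset of the open future-timelike cotangent cone.

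Next I extract a uniform pairing bound. The set
\[
K'':=\{(y,\zeta,v):(y,\zeta)\in K',\ v\in T_y\mms\text{ future-causal with }\|v\|_r=1\}
\]
is compact, and the reverse Cauchy--Schwarz inequality in Lorentz--Finsler geometry forces the continuous pairing $\zeta(v)$ to be strictly positive on $K''$ (a future-timelike covector evaluates strictly positively on every nonzero future-causal vector). Compactness and continuity therefore yield a constant $c>0$ with $\zeta(v)\geq c$ on $K''$, which by $1$-homogeneity gives
\[
\zeta(v)\geq c\,\|v\|_r\quad\textnormal{for every }(y,\zeta)\in K'\textnormal{ and every future-causal }v\in T_y\mms.
\]

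Finally, the anti-Lipschitz bound follows by integrating along a causal curve. Given $x,y\in U$ with $x\leq y$, $l$-geodesic convexity of $U$ together with global hyperbolicity supplies a Lipschitz future-causal curve $\gamma\colon[0,1]\to\cl U$ from $x$ to $y$, and any such curve automatically satisfies $L_r(\gamma)\geq r(x,y)$. The composition $l_\Sigma\circ\gamma$ is Lipschitz, hence absolutely continuous, and the subdifferential chain rule for semiconvex functions composed with a Lipschitz curve yields, at almost every $t\in[0,1]$ where $\gamma$ is differentiable,
\[
(l_\Sigma\circ\gamma)'(t)\geq\zeta(\dot\gamma_t)\quad\textnormal{for every }\zeta\in\partial^- l_\Sigma(\gamma(t)).
\]
Combining with the previous paragraph gives $(l_\Sigma\circ\gamma)'(t)\geq c\,\|\dot\gamma_t\|_r$ at a.e.\ $t$, and integration delivers
\[
l_\Sigma(y)-l_\Sigma(x)=\int_0^1(l_\Sigma\circ\gamma)'(t)\d t\geq c\int_0^1\|\dot\gamma_t\|_r\d t=c\,L_r(\gamma)\geq c\,r(x,y).
\]
Since $r\vert_{U\times U}$ is a metric generating the topology of $U$, the topological local anti-Lipschitz property of \cref{Def:KS} follows at once.

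The hardest piece is the subdifferential analysis in the first paragraph, namely establishing that every $\zeta\in\partial^- l_\Sigma(y)$ is strictly future-timelike with a uniform positive lower bound on $F^*(\zeta)$ across $y\in\cl U$. The individual ingredients---smoothness of $l_\Sigma$ off $\ETCut^+(\Sigma)$, Legendre duality, continuity of $F^*$ on the closed future-causal cone, and concavity of $F^*$ on the future-timelike cone---are classical in Lorentz--Finsler geometry, but the concavity statement in particular requires a careful coordinate-based verification exploiting the $(+,-,\ldots,-)$ signature of the cometric (in analogy with Minkowski space). Once this is pinned down the rest of the argument proceeds cleanly through compactness and the chain rule for semiconvex functions.
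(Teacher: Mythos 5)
Your argument is correct in outline but follows a genuinely different route from the paper's. The paper first proves the quantitative nondegeneracy statement — that $\smash{\vert\nabla^r l_\Sigma\vert_r}$ is bounded on compact subsets of $I^+(\Sigma)$ — by a contradiction argument combining the limit curve theorem with nontotal imprisonment (a blow-up of the Riemannian norm of the unit-Lorentzian gradient would produce a future inextendible causal curve trapped in a compact emerald), and then feeds the resulting "locally bounded away from the light cones" property of the gradient field into Sormani--Vega's Theorem~4.18, which is invoked as a black box for the integration step. You instead work entirely on the cotangent side: you obtain the uniform $r^*$-bound on the differentials directly from the local Lipschitz continuity recorded in \cref{Re:Semiconcd}, control the full viscosity subdifferential through the concavity and boundary behavior of $F^*$, and carry out the integration along causal curves explicitly via the chain rule for semiconvex functions. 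What your approach buys is a self-contained final step and a cleaner replacement of the limit-curve argument; what it costs is a heavier reliance on the duality theory of $L^*$.

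Two points should be tightened. First, the containment of the connecting curve: $l$-geodesic convexity of $U$ only controls maximizing geodesics between \emph{chronologically} related points, whereas you must also handle $x\leq y$ with $l(x,y)=0$, and a general causal curve between them need not stay in $U$. The standard fix is to shrink $U$ so that the emerald $J(\cl\,U,\cl\,U)$ is a compact subset of $I^+(\Sigma)$ and to build your compact covector set $K'$ over that emerald; then $J(x,y)$ automatically contains every causal curve from $x$ to $y$. Second, the load-bearing Finslerian input is not the concavity of $F^*$ (which is immediate, since $F^*$ is a positive multiple of $\inf\{\zeta(v):F(v)=1\}$, an infimum of linear functionals — no coordinate computation is needed, and the same formula gives upper semicontinuity, which already yields $F^*(\zeta)\geq 1$ for limit covectors). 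Rather, it is the claim that $F^*$ vanishes \emph{exactly} on the lightlike boundary of the polar cone, i.e.\ that $F^*(\zeta)\geq 1$ forces $\zeta$ into the open cone $\Omega^*_x$; without this, a boundary covector annihilating a null direction would destroy the uniform pairing bound on $K''$. This is true and is contained in Minguzzi's analysis of the Legendre transform (the diffeomorphism of open cones extending to map boundary to boundary), but it is not stated in the paper and deserves a precise citation — it is the cotangent-side counterpart of the step the paper handles on the tangent side, where the vanishing of $L$ on the light cone is built into the definitions.
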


\begin{proof} We claim for every open subset $U$ compactly contained in $\smash{I^+(\Sigma)}$ there is a constant $c'$ such that $\smash{\vert\nabla^r l_\Sigma\vert_r \leq c'}$ on $\smash{U \setminus \TCut^+(\Sigma)}$. Indeed, otherwise for some such set $U$ there is a sequence $(y_n)_{n\in\N}$ in $\smash{U \setminus \TCut^+(\Sigma)}$ converging to some $y\in \cl\,U$ with $\smash{\vert \nabla^r l_\Sigma\vert_r(y_n)\to\infty}$ as $n\to\infty$. Since $\Sigma$ is FTC and by nontrivial chronology, without loss of generality we may assume $(y_n)_{n\in\N}$ lies in a compact set of the form $\smash{J(C,y^+)}$ for some compact subset $C$ of $\Sigma$ and some $\smash{y^+\in I^+(y)}$. For $n\in\N$, let $\gamma_n$ be a proper-time parametrized past-directed timelike geodesic from $y_n$ to one of its footpoints in $C$ and $\smash{\eta_n}$ its time-reversed (hence future-directed) $r$-arclength parametrization. By the limit curve theorem for inextendible curves, cf.~Minguzzi \cite{minguzzi2008-limit-curve}*{Thm.~3.1}, the sequence $\smash{(\eta_n)_{n\in\N}}$ converges locally uniformly to a future inextendible causal curve $\smash{\eta}$. The latter  lies in $\smash{J(C,y^+)}$; this, however, contradicts nontotal imprisonment, cf.~Minguzzi  \cite{minguzzi2019-applications}*{Prop.~2.10}.

By \cref{Th:Properties smooth dist fct} we know $l_\Sigma$ is smooth on $\smash{I^+(\Sigma)\setminus \TCut^+(\Sigma)}$ with $\smash{L(\nabla l_\Sigma) = 1}$ there. With the previous paragraph, this implies the collection $\Pi$ of all vectors $\smash{\nabla l_\Sigma(x)}$, where $\smash{x\in I^+(\Sigma)\setminus \TCut^+(\Sigma)}$, is locally bounded away from the light cones on $\smash{I^+(\Sigma)}$ in the sense of Sormani--Vega  \cite{sormani-vega2016}*{Def.~4.13}. That is, for every $\smash{x\in I^+(\Sigma)}$ there exists a neighborhood $U$ of it and a  constant $c''$ such that $L(\xi)\geq c''\,(1 \vee \vert \xi\vert_r^2)$ for every $\smash{\xi\in \Pi \cap T^*U}$. 

The claim follows now as in Sormani--Vega \cite{sormani-vega2016}*{Thm.~4.18}.
\end{proof}

\begin{remark}[Smooth finite perturbations]\label{Re:Smth!pert} Let $U$ be as in \cref{Le:TF lv}. Every continuously differentiable function with compact support in $U$ is $c\,\met_r$-Lipschitz continuous. Thus, it is a symmetric finite perturbation of $l_\Sigma$ on $I^+(\Sigma)$. 

In turn, by partition of unity and since the set of symmetric finite perturbations of $l_\Sigma$ on $I^+(\Sigma)$ constitutes  a vector space, this implies every continuously differentiable function with compact support in $I^+(\Sigma)$ belongs to the set $\smash{\scrC_\Sigma^+}$ from \eqref{Eq:scrC}.\hfill{\footnotesize{$\blacksquare$}}
\end{remark}

\begin{proof}[Proof of \cref{Th:Main I Finsler,Th:Main II Finsler}] Recall  $\scrM$ is timelike nonbranching thanks to  the Cauchy--Lipschitz theorem. 

By Braun--Ohta \cite{braun-ohta2024}*{Thm.~5.9, Rem.~5.10}, we know the hypothesized curvature  bound implies $\smash{\TCD_\beta^e(0,N)}$ for \emph{every} $\beta\in (0,1)$ --- in fact, every negative $\beta$ as well according to Beran et al.~\cite{beran-braun-calisti-gigli-mccann-ohanyan-rott-samann+-}*{Rem.~1.2} ---, thus $\smash{\TMCP^e(0,N)}$. In turn, \cref{Le:TF lv} thus makes \cref{Th:11,Cor:22} applicable and yields the distributional ($\PP$-)\-d'Alembertians of the functions $\smash{l_\Sigma^\QQ}$ and $\smash{l_\Sigma}$ are generalized signed Radon measures on $\smash{I^+(\Sigma)\cup I^-(\Sigma)}$, where $\PP,\QQ\in (-\infty,1)\setminus \{0\}$ are mutually conjugate.

We are left to show the integration by parts identity. Owing to  \cref{Re:Smth!pert} we assume $\smash{f\in \Cont_\comp^\infty(I^+(\Sigma))}$, the case $\smash{f\in \Cont_\comp^\infty(I^-(\Sigma))}$ is analogous. We claim
\begin{align}\label{Eq:SCONV}
\rmd^+f(\nabla l_\Sigma) = \rmd^-f(\nabla l_\Sigma) = \rmd f(\nabla l_\Sigma)\quad\meas\mres I^+(\Sigma)\textnormal{-a.e.}
\end{align}
However, since $\smash{l_\Sigma}$ and $f$ are smooth outside a closed set of $\meas$-measure zero by \cref{Re:Semiconcd}, it suffices to show the statement in local coordinates around each point where both  are smooth. As $L^*$ is smooth outside zero, it is irrelevant from which side we differentiate it; this implies the first identity. The second follows from  the property \eqref{Eq:Cometr} of $\smash{g^*_{\rmd l_\Sigma}}$.

The discussion from \cref{Re:Two sided} forces equality to hold throughout \cref{Def:DAlem} in both cases, which terminates the proof.
\end{proof}

\subsection{Infinitesimally strict concavity}\label{Sub:Inf s} In \cite{gigli2015}, Gigli already observed an analog of \eqref{Eq:SCONV} in positive signature to yield the integration by parts \emph{identity} for (and the consequential uniqueness of) his distributional Laplacian. This motivates the subsequent  analog of his notion \cite{gigli2015}*{Def.~3.3} (which does not require curvature bounds, but only the material from  \cref{Sub:Sobo,Sub:Horiz}), which makes sense for general metric measure spacetimes after \cref{Sub:MMS}.

\begin{definition}[Infinitesimally strict concavity]\label{Re:Inf str conv} For a nonzero exponent $\PP$ less than $1$, let $\QQ$ denote its conjugate exponent. The metric measure spacetime $\scrM$ will be called \emph{$\QQ$-infinitesimally strictly concave} if for every $l$-causal function $\u$ as in \cref{Sub:Horiz} and every symmetric finite perturbation $f$ of $\u$,
\begin{align}\label{Eq:Sub:asd}
\rmd^+f(\nabla\u)\,\vert \rmd\u\vert^{\PP-2} = \rmd^-f(\nabla\u)\,\vert\rmd\u\vert^{\PP-2}\quad\meas\textnormal{-a.e.}
\end{align}
\end{definition}

The terminology is evident from the observations around the uniqueness properties of the Legendre transform from \cref{Sub:Legendre}.

In the context of \cref{Re:Inf str conv}, we use the $\meas$-a.e.~well-defined abbreviation
\begin{align*}
\rmd f(\nabla\u)\,\vert\rmd\u\vert^{\PP-2} := \rmd^+f(\nabla\u)\,\vert\rmd\u\vert^{\PP-2} = \rmd^-f(\nabla\u)\,\vert\rmd\u\vert^{\PP-2}.
\end{align*}

Of course, for the last arguments of \cref{Sub:LorentzFinsler} it would suffice to require \eqref{Eq:Sub:asd} only for a specific choice of $\u$.

\begin{example}[Basic examples] In the Finsler spacetime setting of \cref{Sub:LorentzFinsler}, the induced metric measure spacetime is $\QQ$-infinitesimally strictly concave for every $\QQ\in (-\infty,1)\setminus \{0\}$. This follows as for \eqref{Eq:SCONV} by observing that real-valued $l$-causal functions and their symmetric finite perturbations are functions of locally bounded variation, cf.~Beran et al.~\cite{beran-braun-calisti-gigli-mccann-ohanyan-rott-samann+-}*{Thm.~A.2}, so that the differential objects appearing in \eqref{Eq:SCONV} make sense $\meas$-a.e.

Moreover, the proof of \cite{beran-braun-calisti-gigli-mccann-ohanyan-rott-samann+-}*{Thm.~4.16} yields $\scrM$ is $\QQ$-infinitesimally strictly concave for every $\QQ$ as above if it is infinitesimally Minkowskian in the sense of  \cite{beran-braun-calisti-gigli-mccann-ohanyan-rott-samann+-}*{Def.~1.4}.\hfill{\footnotesize{$\blacksquare$}}
\end{example}

The following should be compared to Gigli \cite{gigli2015}*{Cor.~3.4} in positive signature.

\begin{proposition}[Linearity]\label{Pr:El prop} Assume that $\scrM$ is $\QQ$-infinitesimally strictly concave, where $\QQ$ is the conjugate exponent to a nonzero number $\PP$ less than $1$. Then for every $l$-causal function $\u$ as in \eqref{Sub:Horiz},  the quantity $\rmd\cdot(\nabla\u)\,\vert\rmd \u\vert^{\PP-2}$ is $\meas$-a.e.~linear on the real vector space $\FPert(\u)$. That is, for every $f,g\in \FPert(\u)$ and every $\lambda \in\R$,
\begin{align*}
\rmd(\lambda f + g)(\nabla\u)\,\vert\rmd\u\vert^{\PP-2} = \lambda\,\rmd f(\nabla\u)\,\vert\rmd\u\vert^{\PP-2} + \rmd g(\nabla\u)\,\vert\rmd\u\vert^{\PP-2}\quad\meas\textnormal{-a.e.}
\end{align*}
\end{proposition}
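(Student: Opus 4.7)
The plan is to mimic the argument of Gigli~\cite{gigli2015}*{Cor.~3.4} from positive signature, using the concavity and positive $1$-homogeneity of $f \mapsto \rmd^+f(\nabla\u)\,\vert\rmd\u\vert^{\PP-2}$ (free of charge from the calculus rules) and upgrading them to full $\R$-linearity via the cancellation provided by $\QQ$-infinitesimal strict concavity. Throughout the argument I will freely identify $\rmd^+f(\nabla\u)\,\vert\rmd\u\vert^{\PP-2}$ with $\rmd^-f(\nabla\u)\,\vert\rmd\u\vert^{\PP-2}$ on any symmetric $f\in\FPert(\u)$, writing the common value as $\rmd f(\nabla\u)\,\vert\rmd\u\vert^{\PP-2}$.

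The first step is homogeneity. Positive $1$-homogeneity in the perturbation, recorded after Proposition~\ref{Pr:Calc rulez II}, already gives $\rmd(\lambda f)(\nabla\u)\,\vert\rmd\u\vert^{\PP-2} = \lambda\,\rmd f(\nabla\u)\,\vert\rmd\u\vert^{\PP-2}$ for $\lambda\geq 0$. To cover $\lambda<0$ it suffices to treat $\lambda=-1$. Lemma~\ref{Le:Diff quot} yields
\begin{equation*}
\rmd^+(f^{\leftarrow})(\nabla\u)\,\vert\rmd\u\vert^{\PP-2} \;=\; -\rmd^-f(\nabla\u)\,\vert\rmd\u\vert^{\PP-2} \quad \meas\textnormal{-a.e.},
\end{equation*}
and $\QQ$-infinitesimal strict concavity collapses the right-hand side to $-\rmd f(\nabla\u)\,\vert\rmd\u\vert^{\PP-2}$, producing the sought scaling relation for $\lambda=-1$. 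For additivity, the concavity property recalled after \cref{Th:MaxWSS} gives the superadditivity $\rmd(f+g)(\nabla\u)\,\vert\rmd\u\vert^{\PP-2} \geq \rmd f(\nabla\u)\,\vert\rmd\u\vert^{\PP-2} + \rmd g(\nabla\u)\,\vert\rmd\u\vert^{\PP-2}$ $\meas$-a.e. The opposite inequality is obtained by applying the same rule to the decomposition $f = (f+g)+(-g)$ and invoking the $\lambda=-1$ case of the first step to rewrite $\rmd(-g)(\nabla\u)\,\vert\rmd\u\vert^{\PP-2} = -\rmd g(\nabla\u)\,\vert\rmd\u\vert^{\PP-2}$; the resulting two inequalities combine into equality. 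Bootstrapping homogeneity and additivity then delivers the formula $\rmd(\lambda f + g)(\nabla\u)\,\vert\rmd\u\vert^{\PP-2} = \lambda\,\rmd f(\nabla\u)\,\vert\rmd\u\vert^{\PP-2} + \rmd g(\nabla\u)\,\vert\rmd\u\vert^{\PP-2}$ for all $\lambda\in\R$.

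The principal conceptual obstacle is that $\FPert(\u)$ is a priori only a convex cone, not a vector space, so the operations $-f$ and $\lambda f$ with $\lambda<0$ are unavailable unless $f$ is already symmetric (cf.~\cref{Def:Perturbations} and \cref{Re:Adm range!}); this is precisely why the definition of $\QQ$-infinitesimal strict concavity is phrased through symmetric perturbations and why Lemma~\ref{Le:Diff quot} requires symmetry as a standing hypothesis. The proof therefore implicitly restricts to the vector subspace of symmetric finite perturbations, on which every step above is meaningful, and reads the statement of the proposition as an $\meas$-a.e.\ linearity on this genuine $\R$-vector space. Everything else is a bookkeeping exercise with the locality and chain rule statements of  Proposition~\ref{Pr:Calc rulez II}, making sure all equalities above are interpreted $\meas$-a.e.\ and that no measure-zero set accumulates ambiguously under the various compositions.
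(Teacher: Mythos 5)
Your proposal is correct and follows essentially the same route as the paper, whose proof is a one-line appeal to exactly the two ingredients you use: the $\meas$-a.e.\ concavity (superadditivity plus positive $1$-homogeneity) of $\rmd^+\cdot(\nabla\u)\,\vert\rmd\u\vert^{\PP-2}$ and \cref{Le:Diff quot} combined with $\QQ$-infinitesimal strict concavity to get $\rmd(f^\leftarrow)=-\rmd f$. Your remark that the statement must be read on the vector space of symmetric finite perturbations (since $-f$ and the unsuperscripted $\rmd f$ are otherwise undefined) is a fair and correct reading of what the paper leaves implicit.
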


\begin{proof} This is a  consequence of  the $\meas$-a.e.~concave dependence of $\smash{\rmd^+\cdot (\nabla\u)\,\vert\rmd\u\vert^{\PP-2}}$ on $\FPert(\u)$ and \cref{Le:Diff quot}.
\end{proof}

\subsection{Proofs pertaining to finite perturbations}\label{App:A} 

For the first fact, we recall the function $\varphi_\QQ$ from \eqref{Eq:varphip} used to define $\u_\QQ := \varphi_\QQ\circ l_\Sigma$, where $\QQ$ is a positive exponent less than $1$.

\begin{lemma}[Repetition of \cref{Le:Invar p}] In the framework  of \cref{Gen Sigm}, the two classes $\Pert_\bc(l_\Sigma,I^+(\Sigma)\cup I^-(\Sigma))$ and $\Pert_\bc(\u_\QQ,I^+(\Sigma)\cup I^-(\Sigma))$ coincide.
\end{lemma}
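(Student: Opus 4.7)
The plan is to verify the equality by a direct case analysis that exploits the compact support condition to confine the comparison to a region where $l_\Sigma$ is bounded above and bounded away from zero. Since $\varphi_\QQ$ is strictly increasing on $\R$, the function $\u_\QQ=\varphi_\QQ\circ l_\Sigma$ is $l$-causal on $E_\Sigma$, and $x\leq y$ in $U:=I^+(\Sigma)\cup I^-(\Sigma)$ implies $l_\Sigma(y)\geq l_\Sigma(x)$ if and only if $\u_\QQ(y)\geq \u_\QQ(x)$. For any $f$ in either of the two candidate classes, the continuity of $l_\Sigma$ on $U$ together with $l_\Sigma\neq 0$ there yield constants $0<\varepsilon<M$ with $\varepsilon\leq |l_\Sigma|\leq M$ on the compact set $\supp f$.

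First I would prove the inclusion $\Pert_\bc(l_\Sigma,U)\subseteq\Pert_\bc(\u_\QQ,U)$. Fix $\tau>0$ with $l_\Sigma+\tau f$ $l$-causal on $U$; by \cref{Re:Adm range!} we may shrink $\tau$ so that $\tau\Vert f\Vert_\infty\leq \varepsilon/2$. For $x\leq y\in U$, the goal is
\[
\varphi_\QQ(l_\Sigma(y))-\varphi_\QQ(l_\Sigma(x))\geq \sigma\big(f(x)-f(y)\big).
\]
If $f(x)\leq f(y)$ the right-hand side is nonpositive and the inequality follows from $l$-causality of $\u_\QQ$. Otherwise at least one of $x,y$ lies in $\supp f$. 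When the interval $[l_\Sigma(x),l_\Sigma(y)]$ avoids zero, the mean value theorem applied to $\varphi_\QQ$ (whose derivative $\vert r\vert^{\QQ-1}$ is bounded below by $(M+\varepsilon/2)^{\QQ-1}$ on the relevant range determined by $\tau\Vert f\Vert_\infty\leq\varepsilon/2$) together with the hypothesis $l_\Sigma(y)-l_\Sigma(x)\geq\tau(f(x)-f(y))$ yields the desired estimate with $\sigma:=(M+\varepsilon/2)^{\QQ-1}\tau$. When $[l_\Sigma(x),l_\Sigma(y)]$ straddles zero, one endpoint must lie in $\supp f$ and thus satisfies $|l_\Sigma|\geq\varepsilon$, from which one extracts the uniform lower bound $\varphi_\QQ(l_\Sigma(y))-\varphi_\QQ(l_\Sigma(x))\geq \QQ^{-1}\varepsilon^\QQ$; since $|f(x)-f(y)|\leq 2\Vert f\Vert_\infty$, any $\sigma\leq \QQ^{-1}\varepsilon^\QQ(2\Vert f\Vert_\infty)^{-1}$ suffices. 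Taking the minimum of the admissible $\sigma$ across the finitely many cases delivers a single positive $\sigma$.

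The reverse inclusion $\Pert_\bc(\u_\QQ,U)\subseteq\Pert_\bc(l_\Sigma,U)$ proceeds by the mirror argument. Given $\sigma>0$ with $\u_\QQ+\sigma f$ $l$-causal, I would invert the previous estimates using that the derivative $\psi_\QQ'(r)=\QQ^{1/\QQ-1}\vert r\vert^{1/\QQ-1}$ of $\psi_\QQ=\varphi_\QQ^{-1}$ is uniformly bounded above on the compact set $\varphi_\QQ([-M,-\varepsilon]\cup [\varepsilon,M])$. Again a case analysis, parallel to the one above but with the roles of $l_\Sigma$ and $\u_\QQ$ exchanged, produces a suitable $\tau>0$; the straddling case is handled by the same uniform lower bound, now read from the $\u_\QQ$ side.

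\textbf{Main obstacle.} The function $\varphi_\QQ$ is not globally bi-Lipschitz on $\R$: its derivative blows up at $r=0$ and vanishes at infinity (as $\QQ<1$), so neither $\varphi_\QQ$ nor $\psi_\QQ$ has globally Lipschitz inverse. Consequently \cref{Pr:Relations}\ref{La:R2} cannot be applied directly on the full image of $l_\Sigma$ in $U$. The rescue is that the compact support condition $\supp f\Subset U$ combined with continuity of $l_\Sigma$ and the fact that $l_\Sigma$ does not vanish on $U$ confines all perturbation-relevant comparisons to a region where $l_\Sigma$ is pinched in a bounded annulus avoiding zero; the most delicate configuration is when $x\leq y$ straddle $\Sigma$, handled via the absolute lower bound $\QQ^{-1}\varepsilon^\QQ$ rather than an MVT estimate.
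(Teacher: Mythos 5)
Your strategy is viable and genuinely different from the paper's. The paper first splits any $f\in\Pert_\bc(l_\Sigma,I^+(\Sigma)\cup I^-(\Sigma))$ into its restrictions to $I^+(\Sigma)$ and $I^-(\Sigma)$, replaces $\varphi_\QQ$ by a smooth strictly increasing surrogate $\varphi$ that agrees with $\varphi_\QQ$ outside $[-\varepsilon,\varepsilon]$ (so that \cref{Pr:Relations} applies to $\v=\varphi\circ l_\Sigma$), and then trades $\varphi$ back for $\varphi_\QQ$ via a four-case analysis in which the passage across the level $\{l_\Sigma=2\varepsilon\}$ is handled by inserting an intermediate point $\gamma_t$ on a causal curve from $x$ to $y$. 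You work directly with $\varphi_\QQ$ on all of $I^+(\Sigma)\cup I^-(\Sigma)$ via the mean value theorem, plus an absolute lower bound $\QQ^{-1}\varepsilon^{\QQ}$ in the straddling case; this avoids both the surrogate function and the curve interpolation, at the cost of more delicate bookkeeping of where the MVT point lives. Both arguments rest on the same two facts: $\varepsilon\le|l_\Sigma|\le M$ on $\supp f$, and the defining inequality is vacuous unless one endpoint lies in $\supp f$.

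Two points in your execution need repair before the argument closes. First, the MVT must be applied to the interval of length at most $2\tau\Vert f\Vert_\infty$ \emph{anchored at the endpoint belonging to} $\supp f$ (after first using monotonicity of $\varphi_\QQ$ to discard the far endpoint). If you anchor at $l_\Sigma(x)$ when only $y\in\supp f$ and both points lie in $I^-(\Sigma)$, then $x$ may be arbitrarily deep in the past, the MVT point $\xi$ satisfies only $|\xi|\le|l_\Sigma(x)|$, and $\varphi_\QQ'(\xi)=|\xi|^{\QQ-1}$ has no positive lower bound since it decays at infinity for $\QQ<1$; so the claimed $\sigma=(M+\varepsilon/2)^{\QQ-1}\tau$ fails for that pair as written. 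Anchoring at $y$ confines $|\xi|$ to $[\,|l_\Sigma(y)|-2\tau\Vert f\Vert_\infty,\,|l_\Sigma(y)|+2\tau\Vert f\Vert_\infty\,]\subseteq[\varepsilon/2,M+\varepsilon]$ and restores the bound; the symmetric issue (the interval for $\psi_\QQ$ creeping toward $0$, where $\psi_\QQ'$ vanishes) arises in the reverse inclusion and is fixed the same way after shrinking $\sigma$ so that $2\sigma\Vert f\Vert_\infty\le\tfrac12\QQ^{-1}\varepsilon^{\QQ}$. Second, for the reverse inclusion you need $\psi_\QQ'$ bounded \emph{below away from zero} (equivalently $\varphi_\QQ'$ bounded above) on the relevant range, not bounded above as you state; the bound you invoke is the wrong one, although the correct one also holds on the compact set you name, since $\psi_\QQ'$ is continuous and positive there. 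With these two adjustments your proof is complete.
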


\begin{proof} This is a consequence of \cref{Pr:Relations}, though not entirely evident, thus we include some details for the  convenience of the reader. 

Thanks to the $l$-causal of $l_\Sigma$ and the disjointness of $I^+(\Sigma)$ and $I^-(\Sigma)$, every element of $\smash{\Pert_\bc(l_\Sigma,I^+(\Sigma)\cup I^-(\Sigma))}$ is the sum of a function in $\smash{\Pert_\bc(l_\Sigma,I^+(\Sigma))}$ and a function in $\smash{\Pert_\bc(l_\Sigma,I^-(\Sigma))}$. The same holds by replacing $l_\Sigma$ by $\smash{\u_\QQ}$. It thus suffices to prove
\begin{align*}
\Pert_\bc(l_\Sigma,I^+(\Sigma)) &= \Pert_\bc(\u_\QQ,I^+(\Sigma)),\\
\Pert_\bc(l_\Sigma,I^-(\Sigma)) &= \Pert_\bc(\u_\QQ,I^-(\Sigma)).
\end{align*}
We concentrate on the first equality, the second follows analogously.

Let $\smash{f\in \Pert_\bc(l_\Sigma,I^+(\Sigma))}$, which is extended trivially to $\mms$ according to \cref{Re:ExtensionII}. Let $U$ be an open set which is compactly contained in $\smash{I^+(\Sigma)}$ and contains the support of $f$. By continuity of $l_\Sigma$ on $\smash{I^+(\Sigma)}$, there is  $\varepsilon\in (0,1/2)$ such that $U$ is contained in $\smash{\{l_\Sigma \geq 2\varepsilon\}}$. Interpolating $\smash{\varphi_\QQ(-\varepsilon)}$ and $\smash{\varphi_\QQ(\varepsilon)}$ linearly and smoothing out the corners, we can construct a strictly increasing smooth function $\varphi$ on $\R$ with Lipschitz continuous inverse with  $\smash{\varphi = \varphi_\QQ}$ on $\R \setminus [-\varepsilon,\varepsilon]$. Then \cref{Pr:Relations} implies $\smash{f\in \Pert_\bc(\v, I^+(\Sigma))}$, where
\begin{align*}
\v := \varphi\circ l_\Sigma.
\end{align*}

We replace $\varphi$ by $\smash{\varphi_\QQ}$ as follows. Assume $\v + \tau f$ is $l$-causal on $\smash{I^+(\Sigma)}$ for some given $\tau >0$. Let $\smash{x,y\in I^+(\Sigma)}$ satisfy $x\leq y$. We distinguish four cases. 
\begin{itemize}
\item If $l_\Sigma(x) \geq 2\varepsilon$ and $l_\Sigma(y) \geq 2\varepsilon$, clearly
\begin{align*}
\u_\QQ(y) - \u_\QQ(x) + \tau\big[f(y) - f(x)\big]  = \v(y) - \v(x) + \tau\big[f(y) - f(x)\big],
\end{align*}
which is nonnegative. 
\item If $l_\Sigma(x) < 2\varepsilon$ and $l_\Sigma(y) < 2\varepsilon$, since the support of $f$ has empty intersection with the set $\{l_\Sigma < 2\varepsilon\}$ by construction, we obtain
\begin{align*}
\u_\QQ(y) - \u_\QQ(x) + \tau\big[f(y) - f(x)\big] = \u_\QQ(y) - \u_\QQ(x),
\end{align*}
which is nonnegative by the $l$-causal of $\smash{\u_\QQ}$.
\item If $l_\Sigma(x) < 2 \varepsilon$ yet $l_\Sigma(y) \geq 2\varepsilon$, let $\gamma$ be a continuous $l$-causal curve from $x$ to $y$. Then there exists $t\in (0,1)$ such that $l_\Sigma(\gamma_t)\in (l_\Sigma(x)\vee\varepsilon, 2\varepsilon)$. In conjunction with the $l$-causal of $\smash{\u_\QQ}$, this entails nonnegativity of the rightmost side of
\begin{align*}
&\u_\QQ(y) - \u_\QQ(x) + \tau\big[f(y) - f(x)\big]\\
&\qquad\qquad = \u_\QQ(y) - \u_\QQ(x) + \tau\,f(y)\\
&\qquad\qquad = \v(y) - \v(\gamma_t) + \tau\big[f(y) - f(\gamma_t)\big] + \u_\QQ(\gamma_t) - \u_\QQ(x).
\end{align*}
\item The situation $l_\Sigma(x) \geq 2\varepsilon$ yet $l_\Sigma(y) < 2\varepsilon$ cannot occur by \cref{Cor:Steepness}.
\end{itemize}

An analogous argument replacing $\smash{\varphi_\QQ}$ with its inverse gives the converse.
\end{proof}

\begin{lemma}[Repetition of \cref{Le:VsTop}] In the framework of \cref{Gen Sigm}, assume that $\smash{l_\Sigma}$ is topologically locally anti-Lipschitz on $\smash{I^\pm(\Sigma)}$ according to \cref{Def:KS}. Then  $\smash{\scrC_\Sigma^\pm}$ is uniformly dense in $\smash{\Cont_\comp(I^\pm(\Sigma))}$.
\end{lemma}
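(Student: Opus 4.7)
The plan is to reduce to showing locally that $\met_U$-Lipschitz continuous, compactly supported functions are symmetric finite perturbations of $l_\Sigma$, and then use a standard partition-of-unity plus McShane-type approximation to deduce density in $\Cont_\comp(I^\pm(\Sigma))$. I will argue the $+$ case; the $-$ case is the same modulo time-reversal.

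First, I would establish the key local claim: if $U \subset I^+(\Sigma)$ is an anti-Lipschitz open set with associated metric $\met_U$ and $f$ is $L$-Lipschitz with respect to $\met_U$ with compact support in $U$, then its extension by zero to $I^+(\Sigma)$ lies in $\scrC_\Sigma^+$. For $x \leq y$ both in $U$, the anti-Lipschitz and Lipschitz estimates yield
\begin{align*}
|f(y) - f(x)| \leq L\,\met_U(x,y) \leq L\,\bigl[l_\Sigma(y) - l_\Sigma(x)\bigr],
\end{align*}
so $l_\Sigma \pm \tau f$ is $l$-causal on $U$ for every $\tau \leq L^{-1}$. To promote this to $l$-causality on all of $I^+(\Sigma)$, I would use compactness of $\supp f$: choose a relatively compact open $V$ with $\supp f \subset V$ and $\cl V \subset U$, and note that any $l$-causal pair $(x,y)$ violating the desired monotonicity would need to have one endpoint in $\supp f$ and the other outside $U$. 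A continuous timelike curve from such an $x$ to such a $y$ must exit $V$ at some $z \in \partial V$; applying the anti-Lipschitz inequality on the sub-arc inside $U$ produces a positive, $f$-independent lower bound on $l_\Sigma(z) - l_\Sigma(x)$, which together with $l_\Sigma(y) \geq l_\Sigma(z)$ absorbs $\tau\|f\|_\infty$ once $\tau$ is small. Symmetry $\smash{f \in \scrC_\Sigma^+}$ follows because both signs are treated simultaneously. (This is the Lorentzian analog of the fact, recalled from Beran et al.\ in the introduction, that $\smash{\hat{\met}_{l_o}}$-Lipschitz functions are symmetric finite perturbations of $l_o$.)

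Second, I would carry out the approximation. Given $\phi \in \Cont_\comp(I^+(\Sigma))$ and $\varepsilon > 0$, cover the compact set $\supp \phi$ by finitely many anti-Lipschitz neighborhoods $U_1,\dots,U_n$ and pick a continuous partition of unity $\{\chi_i\}_{i=1}^n$ subordinate to this cover with $\sum_i \chi_i \equiv 1$ on $\supp \phi$. Each $\chi_i \phi$ is continuous with compact support in $U_i$, so the classical McShane sup-inf formula
\begin{align*}
\tilde{\phi}_i^\delta(x) := \inf_{z \in U_i}\bigl[\chi_i\phi(z) + \delta^{-1}\,\met_{U_i}(x,z)\bigr]
\end{align*}
produces a $\met_{U_i}$-Lipschitz function converging uniformly to $\chi_i \phi$ as $\delta \to 0$. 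Multiplying $\smash{\tilde{\phi}_i^\delta}$ by a $\met_{U_i}$-Lipschitz cutoff supported in a compact subset of $U_i$ containing $\supp \chi_i \phi$ in its interior yields a Lipschitz approximant $f_i^\delta$ with compact support in $U_i$, still within $\varepsilon/n$ of $\chi_i \phi$ for small $\delta$. By Step~1, $\smash{f_i^\delta \in \scrC_\Sigma^+}$; since symmetric finite perturbations form a real vector space (Remark~\ref{Re:Adm range!}), their sum $f := \sum_i f_i^\delta$ again lies in $\scrC_\Sigma^+$. The triangle inequality gives $\|f - \phi\|_\infty < \varepsilon$.

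The main obstacle is the first step, specifically the patching of the quantitative anti-Lipschitz bound --- which only holds inside the neighborhood $U$ --- to global $l$-causality of $l_\Sigma \pm \tau f$ after extension by zero. The geometric content is that the support of $f$ must sit at a positive $\met_U$-distance from $\partial U$, so any $l$-causal curve escaping from $\supp f$ into $I^+(\Sigma)\setminus U$ necessarily picks up enough $l_\Sigma$-height on its journey through $U$ to dominate the bounded oscillation of $\tau f$; making this quantitative is the delicate part and uses the compatibility between the metric topology and the causal topology packaged into \cref{Def:KS}.
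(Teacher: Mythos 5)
Your proof is correct, but it takes a genuinely different route from the paper's. The paper proves \cref{Le:VsTop} via the Stone--Weierstraß theorem: it checks that $\smash{\scrC_\Sigma^\pm}$ is an algebra (closure under products uses the symmetry and boundedness of its elements) and that it separates points and vanishes nowhere, the latter by exhibiting cutoffs of the metrics $\met_U$ from \cref{Def:KS} as explicit members via the extension-by-zero clause of \cref{Re:ExtensionII}. You instead run a direct approximation: partition of unity, McShane inf-convolution in each anti-Lipschitz chart, and the local observation that compactly supported $\met_U$-Lipschitz functions are symmetric finite perturbations of $l_\Sigma$. The two arguments share the same essential input --- that Lipschitz functions for the anti-Lipschitz metric, extended by zero, land in $\smash{\scrC_\Sigma^\pm}$ --- but you make explicit the quantitative patching step that the paper delegates to the ``not hard to infer'' clause of \cref{Re:ExtensionII}; your case analysis closes once one notes that a strictly causal connecting curve exists by the length property in \cref{Def:MMS} and that $\dist_{\met_U}(\supp f,\partial V)>0$ by compactness of $\supp f$. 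Your route also avoids having to verify the algebra property altogether, needing only the vector-space structure of symmetric perturbations from \cref{Re:Adm range!}; the small price is the (routine) check that a compactly supported continuous function is uniformly continuous for $\met_{U_i}$, so that the inf-convolutions converge uniformly. Both proofs are sound; yours is closer in spirit to the Lipschitz-density arguments standard in metric measure geometry.
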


\begin{proof} We only prove uniform density of $\smash{\scrC_\Sigma^+}$ in $\smash{\Cont_\comp(I^+(\Sigma))}$, the argument relative to the chronological past of $\Sigma$ is analogous.

Using the symmetry and boundedness of all elements of $\smash{\scrC_\Sigma^+}$, $\smash{\scrC_\Sigma^+}$ is easily verified to be an algebra. By using distance functions from appropriate points (which are continuous, hence the additional topological component in \cref{Def:KS}) from the definition of topological local anti-Lipschitzness, a cutoff-argument, and the second part of \cref{Re:ExtensionII}, the class $\smash{\scrC_\Sigma^+}$ separates points and vanishes nowhere on $I^+(\Sigma)$. The statement then follows from the Stone--Weierstraß theorem.
\end{proof}

\bibliographystyle{amsrefs}

\begin{bibdiv}
\begin{biblist}

\bib{agrachev2008}{incollection}{
      author={Agrachev, Andrei~A.},
       title={Geometry of optimal control problems and {H}amiltonian systems},
        date={2008},
   booktitle={Nonlinear and optimal control theory},
      series={Lecture Notes in Math.},
      volume={1932},
   publisher={Springer, Berlin},
       pages={1\ndash 59},
         url={https://doi.org/10.1007/978-3-540-77653-6_1},
      review={\MR{2410710}},
}

\bib{akdemir+}{misc}{
      author={Akdemir, Afiny},
        date={In preparation},
}

\bib{ambrosio-gigli-savare2008}{book}{
      author={Ambrosio, Luigi},
      author={Gigli, Nicola},
      author={Savar\'{e}, Giuseppe},
       title={Gradient flows in metric spaces and in the space of probability
  measures},
     edition={Second edition},
      series={Lectures in Mathematics ETH Z\"{u}rich},
   publisher={Birkh\"{a}user Verlag, Basel},
        date={2008},
        ISBN={978-3-7643-8721-1},
      review={\MR{2401600}},
}

\bib{ambrosio-gigli-savare2014-calculus}{article}{
      author={Ambrosio, Luigi},
      author={Gigli, Nicola},
      author={Savar\'{e}, Giuseppe},
       title={Calculus and heat flow in metric measure spaces and applications
  to spaces with {R}icci bounds from below},
        date={2014},
        ISSN={0020-9910,1432-1297},
     journal={Invent. Math.},
      volume={195},
      number={2},
       pages={289\ndash 391},
         url={https://doi.org/10.1007/s00222-013-0456-1},
      review={\MR{3152751}},
}

\bib{ambrosio-gigli-savare2014-riemannian}{article}{
      author={Ambrosio, Luigi},
      author={Gigli, Nicola},
      author={Savar\'{e}, Giuseppe},
       title={Metric measure spaces with {R}iemannian {R}icci curvature bounded
  from below},
        date={2014},
        ISSN={0012-7094,1547-7398},
     journal={Duke Math. J.},
      volume={163},
      number={7},
       pages={1405\ndash 1490},
         url={https://doi.org/10.1215/00127094-2681605},
      review={\MR{3205729}},
}

\bib{ambrosio-gigli-savare2015-bakry}{article}{
      author={Ambrosio, Luigi},
      author={Gigli, Nicola},
      author={Savar\'{e}, Giuseppe},
       title={Bakry-\'{E}mery curvature-dimension condition and {R}iemannian
  {R}icci curvature bounds},
        date={2015},
        ISSN={0091-1798,2168-894X},
     journal={Ann. Probab.},
      volume={43},
      number={1},
       pages={339\ndash 404},
         url={https://doi.org/10.1214/14-AOP907},
      review={\MR{3298475}},
}

\bib{ambrosio-mondino-savare2019}{article}{
      author={Ambrosio, Luigi},
      author={Mondino, Andrea},
      author={Savar\'{e}, Giuseppe},
       title={Nonlinear diffusion equations and curvature conditions in metric
  measure spaces},
        date={2019},
        ISSN={0065-9266,1947-6221},
     journal={Mem. Amer. Math. Soc.},
      volume={262},
      number={1270},
       pages={v+121},
         url={https://doi.org/10.1090/memo/1270},
      review={\MR{4044464}},
}

\bib{andersson-galloway2002}{article}{
      author={Andersson, Lars},
      author={Galloway, Gregory~J.},
       title={d{S}/{CFT} and spacetime topology},
        date={2002},
        ISSN={1095-0761,1095-0753},
     journal={Adv. Theor. Math. Phys.},
      volume={6},
      number={2},
       pages={307\ndash 327},
         url={https://doi.org/10.4310/ATMP.2002.v6.n2.a4},
      review={\MR{1937858}},
}

\bib{andersson-moncrief2004}{incollection}{
      author={Andersson, Lars},
      author={Moncrief, Vincent},
       title={Future complete vacuum spacetimes},
        date={2004},
   booktitle={The {E}instein equations and the large scale behavior of
  gravitational fields},
   publisher={Birkh\"{a}user, Basel},
       pages={299\ndash 330},
      review={\MR{2098919}},
}

\bib{antonelli-pasqualetto-pozzetta-semola2022}{misc}{
      author={Antonelli, Gioacchino},
      author={Pasqualetto, Enrico},
      author={Pozzetta, Marco},
      author={Semola, Daniele},
       title={Sharp isoperimetric comparison on non-collapsed spaces with lower
  {R}icci bounds},
        date={Preprint, arXiv:2201.04916},
}

\bib{bartnik1988}{article}{
      author={Bartnik, Robert},
       title={Remarks on cosmological spacetimes and constant mean curvature
  surfaces},
        date={1988},
        ISSN={0010-3616,1432-0916},
     journal={Comm. Math. Phys.},
      volume={117},
      number={4},
       pages={615\ndash 624},
         url={http://projecteuclid.org/euclid.cmp/1104161820},
      review={\MR{953823}},
}

\bib{bayle2004}{thesis}{
      author={Bayle, Vincent},
       title={Propri\'et\'es de concavit\'e du profil isop\'erim\'etrique et
  applications},
        type={Ph.D. Thesis},
        date={2004},
}

\bib{beem-ehrlich-easley1996}{book}{
      author={Beem, John~K.},
      author={Ehrlich, Paul~E.},
      author={Easley, Kevin~L.},
       title={Global {L}orentzian geometry},
     edition={Second},
      series={Monographs and Textbooks in Pure and Applied Mathematics},
   publisher={Marcel Dekker, Inc., New York},
        date={1996},
      volume={202},
        ISBN={0-8247-9324-2},
      review={\MR{1384756}},
}

\bib{beran-braun-calisti-gigli-mccann-ohanyan-rott-samann+-}{misc}{
      author={Beran, Tobias},
      author={Braun, Mathias},
      author={Calisti, Matteo},
      author={Gigli, Nicola},
      author={McCann, Robert~J.},
      author={Ohanyan, Argam},
      author={Rott, Felix},
      author={S\"amann, Clemens},
       title={{A} nonlinear d'{A}lembert comparison theorem and causal
  differential calculus on metric measure spacetimes},
        date={Preprint, arXiv:2408.15968},
}

\bib{bjorn-bjorn2011}{book}{
      author={Bj\"{o}rn, Anders},
      author={Bj\"{o}rn, Jana},
       title={Nonlinear potential theory on metric spaces},
      series={EMS Tracts in Mathematics},
   publisher={European Mathematical Society (EMS), Z\"{u}rich},
        date={2011},
      volume={17},
        ISBN={978-3-03719-099-9},
         url={https://doi.org/10.4171/099},
      review={\MR{2867756}},
}

\bib{braun2023-good}{article}{
      author={Braun, Mathias},
       title={Good geodesics satisfying the timelike curvature-dimension
  condition},
        date={2023},
        ISSN={0362-546X,1873-5215},
     journal={Nonlinear Anal.},
      volume={229},
       pages={Paper No. 113205, 30 pp.},
         url={https://doi.org/10.1016/j.na.2022.113205},
      review={\MR{4528587}},
}

\bib{braun2023-renyi}{article}{
      author={Braun, Mathias},
       title={R\'{e}nyi's entropy on {L}orentzian spaces. {T}imelike
  curvature-dimension conditions},
        date={2023},
        ISSN={0021-7824,1776-3371},
     journal={J. Math. Pures Appl. (9)},
      volume={177},
       pages={46\ndash 128},
         url={https://doi.org/10.1016/j.matpur.2023.06.009},
      review={\MR{4629751}},
}

\bib{braun-calisti2023}{article}{
      author={Braun, Mathias},
      author={Calisti, Matteo},
       title={Timelike {R}icci bounds for low regularity spacetimes by optimal
  transport},
        date={2024},
        ISSN={0219-1997,1793-6683},
     journal={Commun. Contemp. Math.},
      volume={26},
      number={9},
       pages={Paper No. 2350049, 30},
         url={https://doi.org/10.1142/S0219199723500499},
      review={\MR{4793142}},
}

\bib{braun-gigli-mccann-ohanyan-samann+}{misc}{
      author={Braun, Mathias},
      author={Gigli, Nicola},
      author={McCann, Robert~J.},
      author={Ohanyan, Argam},
      author={S\"amann, Clemens},
       title={{A}n elliptic proof of the splitting theorems from {L}orentzian
  geometry},
        date={Preprint, arXiv:2410.12632},
}

\bib{braun-habermann-sturm2021}{article}{
      author={Braun, Mathias},
      author={Habermann, Karen},
      author={Sturm, Karl-Theodor},
       title={Optimal transport, gradient estimates, and pathwise {B}rownian
  coupling on spaces with variable {R}icci bounds},
        date={2021},
        ISSN={0021-7824,1776-3371},
     journal={J. Math. Pures Appl. (9)},
      volume={147},
       pages={60\ndash 97},
         url={https://doi.org/10.1016/j.matpur.2021.01.002},
      review={\MR{4213679}},
}

\bib{braun-mccann2023}{misc}{
      author={Braun, Mathias},
      author={McCann, Robert~J.},
       title={{C}ausal convergence conditions through variable timelike {R}icci
  curvature bounds},
        date={Preprint, arXiv:2312.17158},
}

\bib{braun-ohta2024}{article}{
      author={Braun, Mathias},
      author={Ohta, Shinichi},
       title={Optimal transport and timelike lower {R}icci curvature bounds on
  {F}insler spacetimes},
        date={2024},
        ISSN={0002-9947,1088-6850},
     journal={Trans. Amer. Math. Soc.},
      volume={377},
      number={5},
       pages={3529\ndash 3576},
         url={https://doi.org/10.1090/tran/9126},
      review={\MR{4744787}},
}

\bib{brendle2013}{article}{
      author={Brendle, Simon},
       title={Constant mean curvature surfaces in warped product manifolds},
        date={2013},
        ISSN={0073-8301,1618-1913},
     journal={Publ. Math. Inst. Hautes \'{E}tudes Sci.},
      volume={117},
       pages={247\ndash 269},
         url={https://doi.org/10.1007/s10240-012-0047-5},
      review={\MR{3090261}},
}

\bib{burtscher-ketterer-mccann-woolgar2020}{article}{
      author={Burtscher, Annegret},
      author={Ketterer, Christian},
      author={McCann, Robert~J.},
      author={Woolgar, Eric},
       title={Inscribed radius bounds for lower {R}icci bounded metric measure
  spaces with mean convex boundary},
        date={2020},
        ISSN={1815-0659},
     journal={SIGMA Symmetry Integrability Geom. Methods Appl.},
      volume={16},
       pages={Paper No. 131, 29},
         url={https://doi.org/10.3842/SIGMA.2020.131},
      review={\MR{4185085}},
}

\bib{caffarelli-sire2020}{article}{
      author={Caffarelli, Luis~A.},
      author={Sire, Yannick},
       title={Minimal surfaces and free boundaries: recent developments},
        date={2020},
        ISSN={0273-0979,1088-9485},
     journal={Bull. Amer. Math. Soc. (N.S.)},
      volume={57},
      number={1},
       pages={91\ndash 106},
         url={https://doi.org/10.1090/bull/1673},
      review={\MR{4037409}},
}

\bib{calabi1958}{article}{
      author={Calabi, E.},
       title={An extension of {E}. {H}opf's maximum principle with an
  application to {R}iemannian geometry},
        date={1958},
        ISSN={0012-7094,1547-7398},
     journal={Duke Math. J.},
      volume={25},
       pages={45\ndash 56},
         url={http://projecteuclid.org/euclid.dmj/1077467776},
      review={\MR{92069}},
}

\bib{cavalletti2014-monge}{article}{
      author={Cavalletti, Fabio},
       title={Monge problem in metric measure spaces with {R}iemannian
  curvature-dimension condition},
        date={2014},
        ISSN={0362-546X,1873-5215},
     journal={Nonlinear Anal.},
      volume={99},
       pages={136\ndash 151},
         url={https://doi.org/10.1016/j.na.2013.12.008},
      review={\MR{3160530}},
}

\bib{cavalletti-manini-mondino2024+}{misc}{
      author={Cavalletti, Fabio},
      author={Manini, Davide},
      author={Mondino, Andrea},
       title={Optimal transport on null hypersurfaces and the null energy
  condition},
        date={Preprint, arXiv:2408.08986},
}

\bib{cavalletti-milman2021}{article}{
      author={Cavalletti, Fabio},
      author={Milman, Emanuel},
       title={The globalization theorem for the curvature-dimension condition},
        date={2021},
        ISSN={0020-9910,1432-1297},
     journal={Invent. Math.},
      volume={226},
      number={1},
       pages={1\ndash 137},
         url={https://doi.org/10.1007/s00222-021-01040-6},
      review={\MR{4309491}},
}

\bib{cavalletti-mondino2017-isoperimetric}{article}{
      author={Cavalletti, Fabio},
      author={Mondino, Andrea},
       title={Sharp and rigid isoperimetric inequalities in metric-measure
  spaces with lower {R}icci curvature bounds},
        date={2017},
        ISSN={0020-9910,1432-1297},
     journal={Invent. Math.},
      volume={208},
      number={3},
       pages={803\ndash 849},
         url={https://doi.org/10.1007/s00222-016-0700-6},
      review={\MR{3648975}},
}

\bib{cavalletti-mondino2020-new}{article}{
      author={Cavalletti, Fabio},
      author={Mondino, Andrea},
       title={New formulas for the {L}aplacian of distance functions and
  applications},
        date={2020},
        ISSN={2157-5045,1948-206X},
     journal={Anal. PDE},
      volume={13},
      number={7},
       pages={2091\ndash 2147},
         url={https://doi.org/10.2140/apde.2020.13.2091},
      review={\MR{4175820}},
}

\bib{cavalletti-mondino2022-review}{article}{
      author={Cavalletti, Fabio},
      author={Mondino, Andrea},
       title={A review of {L}orentzian synthetic theory of timelike {R}icci
  curvature bounds},
        date={2022},
        ISSN={0001-7701,1572-9532},
     journal={Gen. Relativity Gravitation},
      volume={54},
      number={11},
       pages={Paper No. 137, 39 pp.},
         url={https://doi.org/10.1007/s10714-022-03004-4},
      review={\MR{4504922}},
}

\bib{cavalletti-mondino2020}{article}{
      author={Cavalletti, Fabio},
      author={Mondino, Andrea},
       title={Optimal transport in {L}orentzian synthetic spaces, synthetic
  timelike {R}icci curvature lower bounds and applications},
        date={2024},
        ISSN={2168-0930,2168-0949},
     journal={Camb. J. Math.},
      volume={12},
      number={2},
       pages={417\ndash 534},
      review={\MR{4779676}},
}

\bib{cavalletti-mondino2024}{misc}{
      author={Cavalletti, Fabio},
      author={Mondino, Andrea},
       title={{A} sharp isoperimetric-type inequality for {L}orentzian spaces
  satisfying timelike {R}icci lower bounds},
        date={Preprint, arXiv:2401.03949},
}

\bib{cavalletti-sturm2012}{article}{
      author={Cavalletti, Fabio},
      author={Sturm, Karl-Theodor},
       title={Local curvature-dimension condition implies measure-contraction
  property},
        date={2012},
        ISSN={0022-1236,1096-0783},
     journal={J. Funct. Anal.},
      volume={262},
      number={12},
       pages={5110\ndash 5127},
         url={https://doi.org/10.1016/j.jfa.2012.02.015},
      review={\MR{2916062}},
}

\bib{cheeger-ebin1975}{book}{
      author={Cheeger, Jeff},
      author={Ebin, David~G.},
       title={Comparison theorems in {R}iemannian geometry},
      series={North-Holland Mathematical Library},
   publisher={North-Holland Publishing Co., Amsterdam-Oxford; American Elsevier
  Publishing Co., Inc., New York},
        date={1975},
      volume={Vol. 9},
      review={\MR{458335}},
}

\bib{cheeger-gromoll1972}{article}{
      author={Cheeger, Jeff},
      author={Gromoll, Detlef},
       title={The splitting theorem for manifolds of nonnegative {R}icci
  curvature},
        date={1971/72},
        ISSN={0022-040X,1945-743X},
     journal={J. Differential Geom.},
      volume={6},
       pages={119\ndash 128},
         url={http://projecteuclid.org/euclid.jdg/1214430220},
      review={\MR{303460}},
}

\bib{choe-fraser2018}{article}{
      author={Choe, Jaigyoung},
      author={Fraser, Ailana},
       title={Mean curvature in manifolds with {R}icci curvature bounded from
  below},
        date={2018},
        ISSN={0010-2571,1420-8946},
     journal={Comment. Math. Helv.},
      volume={93},
      number={1},
       pages={55\ndash 69},
         url={https://doi.org/10.4171/CMH/429},
      review={\MR{3777125}},
}

\bib{choquet-bruhat1952}{article}{
      author={Choquet-Bruhat, Y.},
       title={Th\'eor\`eme d'existence pour certains syst\`emes d'\'equations
  aux d\'eriv\'ees partielles non lin\'eaires},
        date={1952},
        ISSN={0001-5962,1871-2509},
     journal={Acta Math.},
      volume={88},
       pages={141\ndash 225},
         url={https://doi.org/10.1007/BF02392131},
      review={\MR{53338}},
}

\bib{chrusciel-grant2012}{article}{
      author={Chru\'{s}ciel, Piotr~T.},
      author={Grant, James D.~E.},
       title={On {L}orentzian causality with continuous metrics},
        date={2012},
        ISSN={0264-9381,1361-6382},
     journal={Classical Quantum Gravity},
      volume={29},
      number={14},
       pages={145001, 32},
         url={https://doi.org/10.1088/0264-9381/29/14/145001},
      review={\MR{2949547}},
}

\bib{crandall-lions1983}{article}{
      author={Crandall, Michael~G.},
      author={Lions, Pierre-Louis},
       title={Viscosity solutions of {H}amilton-{J}acobi equations},
        date={1983},
        ISSN={0002-9947,1088-6850},
     journal={Trans. Amer. Math. Soc.},
      volume={277},
      number={1},
       pages={1\ndash 42},
         url={https://doi.org/10.2307/1999343},
      review={\MR{690039}},
}

\bib{eckstein-miller2017}{article}{
      author={Eckstein, Micha\l},
      author={Miller, Tomasz},
       title={Causality for nonlocal phenomena},
        date={2017},
        ISSN={1424-0637,1424-0661},
     journal={Ann. Henri Poincar\'{e}},
      volume={18},
      number={9},
       pages={3049\ndash 3096},
         url={https://doi.org/10.1007/s00023-017-0566-1},
      review={\MR{3685983}},
}

\bib{erbar-kuwada-sturm2015}{article}{
      author={Erbar, Matthias},
      author={Kuwada, Kazumasa},
      author={Sturm, Karl-Theodor},
       title={On the equivalence of the entropic curvature-dimension condition
  and {B}ochner's inequality on metric measure spaces},
        date={2015},
        ISSN={0020-9910,1432-1297},
     journal={Invent. Math.},
      volume={201},
      number={3},
       pages={993\ndash 1071},
         url={https://doi.org/10.1007/s00222-014-0563-7},
      review={\MR{3385639}},
}

\bib{eschenburg1988}{article}{
      author={Eschenburg, J.-H.},
       title={The splitting theorem for space-times with strong energy
  condition},
        date={1988},
        ISSN={0022-040X,1945-743X},
     journal={J. Differential Geom.},
      volume={27},
      number={3},
       pages={477\ndash 491},
         url={http://projecteuclid.org/euclid.jdg/1214442005},
      review={\MR{940115}},
}

\bib{folland1999}{book}{
      author={Folland, Gerald~B.},
       title={Real analysis},
     edition={Second edition},
      series={Pure and Applied Mathematics (New York)},
   publisher={John Wiley \& Sons, Inc., New York},
        date={1999},
        ISBN={0-471-31716-0},
        note={Modern techniques and their applications, A Wiley-Interscience
  Publication},
      review={\MR{1681462}},
}

\bib{fremlin2006}{book}{
      author={Fremlin, D.~H.},
       title={Measure theory. {V}ol. 4},
   publisher={Torres Fremlin, Colchester},
        date={2006},
        ISBN={0-9538129-4-4},
        note={Topological measure spaces. Part I, II, Corrected second printing
  of the 2003 original},
      review={\MR{2462372}},
}

\bib{galloway1986}{article}{
      author={Galloway, Gregory~J.},
       title={Curvature, causality and completeness in space-times with
  causally complete spacelike slices},
        date={1986},
        ISSN={0305-0041,1469-8064},
     journal={Math. Proc. Cambridge Philos. Soc.},
      volume={99},
      number={2},
       pages={367\ndash 375},
         url={https://doi.org/10.1017/S0305004100064288},
      review={\MR{817678}},
}

\bib{galloway1989-splitting}{article}{
      author={Galloway, Gregory~J.},
       title={The {L}orentzian splitting theorem without the completeness
  assumption},
        date={1989},
        ISSN={0022-040X,1945-743X},
     journal={J. Differential Geom.},
      volume={29},
      number={2},
       pages={373\ndash 387},
         url={http://projecteuclid.org/euclid.jdg/1214442881},
      review={\MR{982181}},
}

\bib{galloway2019}{article}{
      author={Galloway, Gregory~J.},
       title={Existence of {CMC} {C}auchy surfaces and spacetime splitting},
        date={2019},
        ISSN={1558-8599,1558-8602},
     journal={Pure Appl. Math. Q.},
      volume={15},
      number={2},
       pages={667\ndash 682},
         url={https://doi.org/10.4310/PAMQ.2019.v15.n2.a2},
      review={\MR{4047388}},
}

\bib{galloway-woolgar2014}{article}{
      author={Galloway, Gregory~J.},
      author={Woolgar, Eric},
       title={Cosmological singularities in {B}akry-\'{E}mery spacetimes},
        date={2014},
        ISSN={0393-0440,1879-1662},
     journal={J. Geom. Phys.},
      volume={86},
       pages={359\ndash 369},
         url={https://doi.org/10.1016/j.geomphys.2014.08.016},
      review={\MR{3282334}},
}

\bib{garcia-heveling2023-volume}{article}{
      author={Garc\'{\i}a-Heveling, Leonardo},
       title={Volume singularities in general relativity},
        date={2024},
        ISSN={0377-9017,1573-0530},
     journal={Lett. Math. Phys.},
      volume={114},
      number={3},
       pages={Paper No. 71},
         url={https://doi.org/10.1007/s11005-024-01814-y},
      review={\MR{4751750}},
}

\bib{gigli2015}{article}{
      author={Gigli, Nicola},
       title={On the differential structure of metric measure spaces and
  applications},
        date={2015},
        ISSN={0065-9266,1947-6221},
     journal={Mem. Amer. Math. Soc.},
      volume={236},
      number={1113},
       pages={vi+91 pp.},
         url={https://doi.org/10.1090/memo/1113},
      review={\MR{3381131}},
}

\bib{gigli2018}{article}{
      author={Gigli, Nicola},
       title={Nonsmooth differential geometry --- an approach tailored for
  spaces with {R}icci curvature bounded from below},
        date={2018},
        ISSN={0065-9266,1947-6221},
     journal={Mem. Amer. Math. Soc.},
      volume={251},
      number={1196},
       pages={v+161 pp.},
         url={https://doi.org/10.1090/memo/1196},
      review={\MR{3756920}},
}

\bib{gigli2013}{misc}{
      author={Gigli, Nicola},
       title={{T}he splitting theorem in non-smooth context},
        date={Mem. Amer. Math. Soc., to appear},
}

\bib{gigli-mondino2013}{article}{
      author={Gigli, Nicola},
      author={Mondino, Andrea},
       title={A {PDE} approach to nonlinear potential theory in metric measure
  spaces},
        date={2013},
        ISSN={0021-7824,1776-3371},
     journal={J. Math. Pures Appl. (9)},
      volume={100},
      number={4},
       pages={505\ndash 534},
         url={https://doi.org/10.1016/j.matpur.2013.01.011},
      review={\MR{3102164}},
}

\bib{giusti1984}{book}{
      author={Giusti, Enrico},
       title={Minimal surfaces and functions of bounded variation},
      series={Monographs in Mathematics},
   publisher={Birkh\"{a}user Verlag, Basel},
        date={1984},
      volume={80},
        ISBN={0-8176-3153-4},
         url={https://doi.org/10.1007/978-1-4684-9486-0},
      review={\MR{775682}},
}

\bib{graf2016}{article}{
      author={Graf, Melanie},
       title={Volume comparison for {$\mathcal{C}^{1,1}$}-metrics},
        date={2016},
        ISSN={0232-704X,1572-9060},
     journal={Ann. Global Anal. Geom.},
      volume={50},
      number={3},
       pages={209\ndash 235},
         url={https://doi.org/10.1007/s10455-016-9508-2},
      review={\MR{3554372}},
}

\bib{graf-sormani2022}{incollection}{
      author={Graf, Melanie},
      author={Sormani, Christina},
       title={Lorentzian area and volume estimates for integral mean curvature
  bounds},
        date={[2022] \copyright 2022},
   booktitle={Developments in {L}orentzian geometry},
      series={Springer Proc. Math. Stat.},
      volume={389},
   publisher={Springer, Cham},
       pages={105\ndash 128},
         url={https://doi.org/10.1007/978-3-031-05379-5_7},
      review={\MR{4539754}},
}

\bib{gromov2007}{book}{
      author={Gromov, Misha},
       title={Metric structures for {R}iemannian and non-{R}iemannian spaces},
     edition={English},
      series={Modern Birkh\"{a}user Classics},
   publisher={Birkh\"{a}user Boston, Inc., Boston, MA},
        date={2007},
        ISBN={978-0-8176-4582-3; 0-8176-4582-9},
        note={Based on the 1981 French original, With appendices by M. Katz, P.
  Pansu and S. Semmes, Translated from the French by Sean Michael Bates},
      review={\MR{2307192}},
}

\bib{hawking-penrose1970}{article}{
      author={Hawking, S.~W.},
      author={Penrose, R.},
       title={The singularities of gravitational collapse and cosmology},
        date={1970},
        ISSN={0962-8444,2053-9169},
     journal={Proc. Roy. Soc. London Ser. A},
      volume={314},
       pages={529\ndash 548},
         url={https://doi.org/10.1098/rspa.1970.0021},
      review={\MR{264959}},
}

\bib{hawking1967}{article}{
      author={Hawking, Stephen~W.},
       title={{T}he occurrence of singularities in cosmology. {III}.
  {C}ausality and singularities},
        date={1967},
        ISSN={2053-9169},
     journal={Proc. Roy. Soc. London Ser. A. Math. Phys. Sci.},
      volume={300},
      number={1461},
       pages={187\ndash 201},
         url={http://dx.doi.org/10.1098/rspa.1967.0164},
}

\bib{heintze-karcher1978}{article}{
      author={Heintze, Ernst},
      author={Karcher, Hermann},
       title={A general comparison theorem with applications to volume
  estimates for submanifolds},
        date={1978},
        ISSN={0012-9593},
     journal={Ann. Sci. \'{E}cole Norm. Sup. (4)},
      volume={11},
      number={4},
       pages={451\ndash 470},
         url={http://www.numdam.org/item?id=ASENS_1978_4_11_4_451_0},
      review={\MR{533065}},
}

\bib{huisken-yau1996}{article}{
      author={Huisken, Gerhard},
      author={Yau, Shing-Tung},
       title={Definition of center of mass for isolated physical systems and
  unique foliations by stable spheres with constant mean curvature},
        date={1996},
     journal={Invent. Math.},
      volume={124},
      number={1-3},
       pages={281\ndash 311},
         url={https://doi.org/10.1007/s002220050054},
      review={\MR{1369419}},
}

\bib{ketterer2017}{article}{
      author={Ketterer, Christian},
       title={On the geometry of metric measure spaces with variable curvature
  bounds},
        date={2017},
        ISSN={1050-6926,1559-002X},
     journal={J. Geom. Anal.},
      volume={27},
      number={3},
       pages={1951\ndash 1994},
         url={https://doi.org/10.1007/s12220-016-9747-2},
      review={\MR{3667417}},
}

\bib{ketterer2020-heintze-karcher}{article}{
      author={Ketterer, Christian},
       title={The {H}eintze-{K}archer inequality for metric measure spaces},
        date={2020},
        ISSN={0002-9939,1088-6826},
     journal={Proc. Amer. Math. Soc.},
      volume={148},
      number={9},
       pages={4041\ndash 4056},
         url={https://doi.org/10.1090/proc/15041},
      review={\MR{4127847}},
}

\bib{ketterer2024}{article}{
      author={Ketterer, Christian},
       title={Characterization of the null energy condition via displacement
  convexity of entropy},
        date={2024},
        ISSN={0024-6107,1469-7750},
     journal={J. Lond. Math. Soc. (2)},
      volume={109},
      number={1},
       pages={Paper No. e12846, 24},
         url={https://doi.org/10.1112/jlms.12846},
      review={\MR{4680207}},
}

\bib{ketterer2023-rigidity}{misc}{
      author={Ketterer, Christian},
       title={{R}igidity of mean convex subsets in non-negatively curved {RCD}
  spaces and stability of mean curvature bounds},
        date={J. Topol. Anal., to appear},
}

\bib{kunzinger-samann2018}{article}{
      author={Kunzinger, Michael},
      author={S\"{a}mann, Clemens},
       title={Lorentzian length spaces},
        date={2018},
        ISSN={0232-704X,1572-9060},
     journal={Ann. Global Anal. Geom.},
      volume={54},
      number={3},
       pages={399\ndash 447},
         url={https://doi.org/10.1007/s10455-018-9633-1},
      review={\MR{3867652}},
}

\bib{kunzinger-steinbauer2022}{article}{
      author={Kunzinger, Michael},
      author={Steinbauer, Roland},
       title={Null distance and convergence of {L}orentzian length spaces},
        date={2022},
        ISSN={1424-0637,1424-0661},
     journal={Ann. Henri Poincar\'{e}},
      volume={23},
      number={12},
       pages={4319\ndash 4342},
         url={https://doi.org/10.1007/s00023-022-01198-6},
      review={\MR{4512238}},
}

\bib{kuwae-shioya2007}{misc}{
      author={Kuwae, Kazuhiro},
      author={Shioya, Takashi},
       title={{L}aplacian comparison for {A}lexandrov spaces},
        date={Preprint, arXiv:0709.0788},
}

\bib{lichnerowicz1955-theorie-relativiste}{book}{
      author={Lichnerowicz, Andr\'{e}},
       title={Th\'{e}ories relativistes de la gravitation et de
  l'\'{e}lectromagn\'{e}tisme. {R}elativit\'{e} g\'{e}n\'{e}rale et
  th\'{e}ories unitaires},
   publisher={Masson et Cie, Paris},
        date={1955},
      review={\MR{71917}},
}

\bib{lott-villani2009}{article}{
      author={Lott, John},
      author={Villani, C\'{e}dric},
       title={Ricci curvature for metric-measure spaces via optimal transport},
        date={2009},
        ISSN={0003-486X,1939-8980},
     journal={Ann. of Math. (2)},
      volume={169},
      number={3},
       pages={903\ndash 991},
         url={https://doi.org/10.4007/annals.2009.169.903},
      review={\MR{2480619}},
}

\bib{lu-minguzzi-ohta2022-range}{article}{
      author={Lu, Yufeng},
      author={Minguzzi, Ettore},
      author={Ohta, Shinichi},
       title={Comparison theorems on weighted {F}insler manifolds and
  spacetimes with {$\epsilon$}-range},
        date={2022},
        ISSN={2299-3274},
     journal={Anal. Geom. Metr. Spaces},
      volume={10},
      number={1},
       pages={1\ndash 30},
         url={https://doi.org/10.1515/agms-2020-0131},
      review={\MR{4388774}},
}

\bib{mccann2020}{article}{
      author={McCann, Robert~J.},
       title={Displacement convexity of {B}oltzmann's entropy characterizes the
  strong energy condition from general relativity},
        date={2020},
        ISSN={2168-0930,2168-0949},
     journal={Camb. J. Math.},
      volume={8},
      number={3},
       pages={609\ndash 681},
         url={https://doi.org/10.4310/CJM.2020.v8.n3.a4},
      review={\MR{4192570}},
}

\bib{mccann2023-null}{article}{
      author={McCann, Robert~J.},
       title={A synthetic null energy condition},
        date={2024},
        ISSN={0010-3616,1432-0916},
     journal={Comm. Math. Phys.},
      volume={405},
      number={2},
       pages={Paper No. 38, 24 pp.},
         url={https://doi.org/10.1007/s00220-023-04908-1},
      review={\MR{4703452}},
}

\bib{milman2015}{article}{
      author={Milman, Emanuel},
       title={Sharp isoperimetric inequalities and model spaces for the
  curvature-dimension-diameter condition},
        date={2015},
        ISSN={1435-9855,1435-9863},
     journal={J. Eur. Math. Soc. (JEMS)},
      volume={17},
      number={5},
       pages={1041\ndash 1078},
         url={https://doi.org/10.4171/JEMS/526},
      review={\MR{3346688}},
}

\bib{minguzzi-suhr2022}{article}{
      author={Minguzzi, E.},
      author={Suhr, S.},
       title={Lorentzian metric spaces and their {G}romov--{H}ausdorff
  convergence},
        date={2024},
        ISSN={0377-9017,1573-0530},
     journal={Lett. Math. Phys.},
      volume={114},
      number={3},
       pages={Paper No. 73},
         url={https://doi.org/10.1007/s11005-024-01813-z},
      review={\MR{4752400}},
}

\bib{minguzzi2008-limit-curve}{article}{
      author={Minguzzi, Ettore},
       title={Limit curve theorems in {L}orentzian geometry},
        date={2008},
        ISSN={0022-2488,1089-7658},
     journal={J. Math. Phys.},
      volume={49},
      number={9},
       pages={092501, 18 pp.},
         url={https://doi.org/10.1063/1.2973048},
      review={\MR{2455836}},
}

\bib{minguzzi2015-sprays}{article}{
      author={Minguzzi, Ettore},
       title={Convex neighborhoods for {L}ipschitz connections and sprays},
        date={2015},
        ISSN={0026-9255,1436-5081},
     journal={Monatsh. Math.},
      volume={177},
      number={4},
       pages={569\ndash 625},
         url={https://doi.org/10.1007/s00605-014-0699-y},
      review={\MR{3371365}},
}

\bib{minguzzi2015-light}{article}{
      author={Minguzzi, Ettore},
       title={Light cones in {F}insler spacetime},
        date={2015},
        ISSN={0010-3616,1432-0916},
     journal={Comm. Math. Phys.},
      volume={334},
      number={3},
       pages={1529\ndash 1551},
         url={https://doi.org/10.1007/s00220-014-2215-6},
      review={\MR{3312442}},
}

\bib{minguzzi2015-raychaudhuri}{article}{
      author={Minguzzi, Ettore},
       title={Raychaudhuri equation and singularity theorems in {F}insler
  spacetimes},
        date={2015},
        ISSN={0264-9381,1361-6382},
     journal={Classical Quantum Gravity},
      volume={32},
      number={18},
       pages={185008, 26},
         url={https://doi.org/10.1088/0264-9381/32/18/185008},
      review={\MR{3411394}},
}

\bib{minguzzi2019-applications}{article}{
      author={Minguzzi, Ettore},
       title={Causality theory for closed cone structures with applications},
        date={2019},
        ISSN={0129-055X,1793-6659},
     journal={Rev. Math. Phys.},
      volume={31},
      number={5},
       pages={1930001, 139},
         url={https://doi.org/10.1142/S0129055X19300012},
      review={\MR{3955368}},
}

\bib{minguzzi2019-causality}{article}{
      author={Minguzzi, Ettore},
       title={Lorentzian causality theory},
        date={2019},
        ISSN={1433-8351},
     journal={Living Reviews in Relativity},
      volume={22},
      number={3},
       pages={202 pp.},
         url={http://dx.doi.org/10.1007/s41114-019-0019-x},
}

\bib{minguzzi2023}{article}{
      author={Minguzzi, Ettore},
       title={Further observations on the definition of global hyperbolicity
  under low regularity},
        date={2023},
        ISSN={0264-9381,1361-6382},
     journal={Classical Quantum Gravity},
      volume={40},
      number={18},
       pages={Paper No. 185001, 9 pp.},
      review={\MR{4641256}},
}

\bib{mondino-semola2023+}{misc}{
      author={Mondino, Andrea},
      author={Semola, Daniele},
       title={{W}eak {L}aplacian bounds and minimal boundaries in non-smooth
  spaces with {R}icci curvature lower bounds},
        date={Mem. Amer. Math. Soc., to appear},
}

\bib{mondino-suhr2022}{article}{
      author={Mondino, Andrea},
      author={Suhr, Stefan},
       title={An optimal transport formulation of the {E}instein equations of
  general relativity},
        date={2023},
        ISSN={1435-9855,1435-9863},
     journal={J. Eur. Math. Soc. (JEMS)},
      volume={25},
      number={3},
       pages={933\ndash 994},
         url={https://doi.org/10.4171/jems/1188},
      review={\MR{4577957}},
}

\bib{morgan2005}{article}{
      author={Morgan, Frank},
       title={Manifolds with density},
        date={2005},
        ISSN={0002-9920,1088-9477},
     journal={Notices Amer. Math. Soc.},
      volume={52},
      number={8},
       pages={853\ndash 858},
      review={\MR{2161354}},
}

\bib{newman1990}{article}{
      author={Newman, Richard P. A.~C.},
       title={A proof of the splitting conjecture of {S}.-{T}.\ {Y}au},
        date={1990},
        ISSN={0022-040X,1945-743X},
     journal={J. Differential Geom.},
      volume={31},
      number={1},
       pages={163\ndash 184},
         url={http://projecteuclid.org/euclid.jdg/1214444093},
      review={\MR{1030669}},
}

\bib{ohta2007-mcp}{article}{
      author={Ohta, Shinichi},
       title={On the measure contraction property of metric measure spaces},
        date={2007},
        ISSN={0010-2571,1420-8946},
     journal={Comment. Math. Helv.},
      volume={82},
      number={4},
       pages={805\ndash 828},
         url={https://doi.org/10.4171/CMH/110},
      review={\MR{2341840}},
}

\bib{ohta2014}{article}{
      author={Ohta, Shinichi},
       title={On the curvature and heat flow on {H}amiltonian systems},
        date={2014},
        ISSN={2299-3274},
     journal={Anal. Geom. Metr. Spaces},
      volume={2},
      number={1},
       pages={81\ndash 114},
         url={https://doi.org/10.2478/agms-2014-0003},
      review={\MR{3208069}},
}

\bib{ohta-sturm2014}{article}{
      author={Ohta, Shinichi},
      author={Sturm, Karl-Theodor},
       title={Bochner-{W}eitzenb\"{o}ck formula and {L}i-{Y}au estimates on
  {F}insler manifolds},
        date={2014},
        ISSN={0001-8708,1090-2082},
     journal={Adv. Math.},
      volume={252},
       pages={429\ndash 448},
         url={https://doi.org/10.1016/j.aim.2013.10.018},
      review={\MR{3144236}},
}

\bib{otto-villani2000}{article}{
      author={Otto, F.},
      author={Villani, C.},
       title={Generalization of an inequality by {T}alagrand and links with the
  logarithmic {S}obolev inequality},
        date={2000},
        ISSN={0022-1236,1096-0783},
     journal={J. Funct. Anal.},
      volume={173},
      number={2},
       pages={361\ndash 400},
         url={https://doi.org/10.1006/jfan.1999.3557},
      review={\MR{1760620}},
}

\bib{penrose1965}{article}{
      author={Penrose, Roger},
       title={Gravitational collapse and space-time singularities},
        date={1965},
        ISSN={0031-9007},
     journal={Phys. Rev. Lett.},
      volume={14},
       pages={57\ndash 59},
         url={https://doi.org/10.1103/PhysRevLett.14.57},
      review={\MR{172678}},
}

\bib{rajala-sturm2014}{article}{
      author={Rajala, Tapio},
      author={Sturm, Karl-Theodor},
       title={Non-branching geodesics and optimal maps in strong
  {$CD(K,\infty)$}-spaces},
        date={2014},
        ISSN={0944-2669,1432-0835},
     journal={Calc. Var. Partial Differential Equations},
      volume={50},
      number={3-4},
       pages={831\ndash 846},
         url={https://doi.org/10.1007/s00526-013-0657-x},
      review={\MR{3216835}},
}

\bib{rendall2002}{article}{
      author={Rendall, Alan~D.},
       title={Theorems on existence and global dynamics for the {E}instein
  equations},
        date={2002},
        ISSN={1433-8351},
     journal={Living Rev. Relativ.},
      volume={5},
       pages={2002\ndash 6, 62},
         url={https://doi.org/10.12942/lrr-2002-6},
      review={\MR{1932418}},
}

\bib{romero-sanchez1995}{article}{
      author={Romero, Alfonso},
      author={S\'{a}nchez, Miguel},
       title={Completeness of compact {L}orentz manifolds admitting a timelike
  conformal {K}illing vector field},
        date={1995},
        ISSN={0002-9939,1088-6826},
     journal={Proc. Amer. Math. Soc.},
      volume={123},
      number={9},
       pages={2831\ndash 2833},
         url={https://doi.org/10.2307/2160582},
      review={\MR{1257122}},
}

\bib{romero-sanchez1998}{article}{
      author={Romero, Alfonso},
      author={S\'{a}nchez, Miguel},
       title={Bochner's technique on {L}orentzian manifolds and infinitesimal
  conformal symmetries},
        date={1998},
        ISSN={0030-8730,1945-5844},
     journal={Pacific J. Math.},
      volume={186},
      number={1},
       pages={141\ndash 148},
         url={https://doi.org/10.2140/pjm.1998.186.141},
      review={\MR{1665060}},
}

\bib{savare2014}{article}{
      author={Savar\'{e}, Giuseppe},
       title={Self-improvement of the {B}akry-\'{E}mery condition and
  {W}asserstein contraction of the heat flow in {${\rm RCD}(K,\infty)$} metric
  measure spaces},
        date={2014},
        ISSN={1078-0947,1553-5231},
     journal={Discrete Contin. Dyn. Syst.},
      volume={34},
      number={4},
       pages={1641\ndash 1661},
         url={https://doi.org/10.3934/dcds.2014.34.1641},
      review={\MR{3121635}},
}

\bib{sormani2023}{incollection}{
      author={Sormani, Christina},
       title={Conjectures on convergence and scalar curvature},
        date={[2023] \copyright 2023},
   booktitle={Perspectives in scalar curvature. {V}ol. 2},
   publisher={World Sci. Publ., Hackensack, NJ},
       pages={645\ndash 722},
      review={\MR{4577927}},
}

\bib{sormani-vega2016}{article}{
      author={Sormani, Christina},
      author={Vega, Carlos},
       title={Null distance on a spacetime},
        date={2016},
        ISSN={0264-9381,1361-6382},
     journal={Classical Quantum Gravity},
      volume={33},
      number={8},
       pages={085001, 29},
         url={https://doi.org/10.1088/0264-9381/33/7/085001},
      review={\MR{3476515}},
}

\bib{sormani-wenger2011}{article}{
      author={Sormani, Christina},
      author={Wenger, Stefan},
       title={The intrinsic flat distance between {R}iemannian manifolds and
  other integral current spaces},
        date={2011},
        ISSN={0022-040X,1945-743X},
     journal={J. Differential Geom.},
      volume={87},
      number={1},
       pages={117\ndash 199},
         url={http://projecteuclid.org/euclid.jdg/1303219774},
      review={\MR{2786592}},
}

\bib{steinbauer2023}{article}{
      author={Steinbauer, Roland},
       title={The singularity theorems of general relativity and their low
  regularity extensions},
        date={2023},
        ISSN={0012-0456,1869-7135},
     journal={Jahresber. Dtsch. Math.-Ver.},
      volume={125},
      number={2},
       pages={73\ndash 119},
         url={https://doi.org/10.1365/s13291-022-00263-7},
      review={\MR{4594980}},
}

\bib{stepanov1993}{article}{
      author={Stepanov, S.~E.},
       title={Bochner's technique and cosmological models},
        date={1993},
        ISSN={0021-3411},
     journal={Izv. Vyssh. Uchebn. Zaved. Fiz.},
      volume={36},
      number={6},
       pages={82\ndash 86},
         url={https://doi.org/10.1007/BF00559457},
      review={\MR{1240251}},
}

\bib{sturm2006-i}{article}{
      author={Sturm, Karl-Theodor},
       title={On the geometry of metric measure spaces. {I}},
        date={2006},
        ISSN={0001-5962,1871-2509},
     journal={Acta Math.},
      volume={196},
      number={1},
       pages={65\ndash 131},
         url={https://doi.org/10.1007/s11511-006-0002-8},
      review={\MR{2237206}},
}

\bib{sturm2006-ii}{article}{
      author={Sturm, Karl-Theodor},
       title={On the geometry of metric measure spaces. {II}},
        date={2006},
        ISSN={0001-5962,1871-2509},
     journal={Acta Math.},
      volume={196},
      number={1},
       pages={133\ndash 177},
         url={https://doi.org/10.1007/s11511-006-0003-7},
      review={\MR{2237207}},
}

\bib{sturm2020}{article}{
      author={Sturm, Karl-Theodor},
       title={Distribution-valued {R}icci bounds for metric measure spaces,
  singular time changes, and gradient estimates for {N}eumann heat flows},
        date={2020},
        ISSN={1016-443X,1420-8970},
     journal={Geom. Funct. Anal.},
      volume={30},
      number={6},
       pages={1648\ndash 1711},
         url={https://doi.org/10.1007/s00039-020-00554-0},
      review={\MR{4182834}},
}

\bib{suhr2018-theory}{article}{
      author={Suhr, Stefan},
       title={Theory of optimal transport for {L}orentzian cost functions},
        date={2018},
        ISSN={1867-5778,1867-5786},
     journal={M\"{u}nster J. Math.},
      volume={11},
      number={1},
       pages={13\ndash 47},
         url={https://doi.org/10.17879/87109580432},
      review={\MR{3873093}},
}

\bib{treude2011}{thesis}{
      author={Treude, Jan-Hendrik},
       title={{R}icci curvature comparison in {R}iemannian and {L}orentzian
  geometry},
        type={Master's Thesis},
        date={2011},
}

\bib{treude-grant2013}{article}{
      author={Treude, Jan-Hendrik},
      author={Grant, James D.~E.},
       title={Volume comparison for hypersurfaces in {L}orentzian manifolds and
  singularity theorems},
        date={2013},
        ISSN={0232-704X,1572-9060},
     journal={Ann. Global Anal. Geom.},
      volume={43},
      number={3},
       pages={233\ndash 251},
         url={https://doi.org/10.1007/s10455-012-9343-z},
      review={\MR{3027611}},
}

\bib{villani2009}{book}{
      author={Villani, C\'{e}dric},
       title={Optimal transport},
      series={Grundlehren der mathematischen Wissenschaften [Fundamental
  Principles of Mathematical Sciences]},
   publisher={Springer-Verlag, Berlin},
        date={2009},
      volume={338},
        ISBN={978-3-540-71049-3},
         url={https://doi.org/10.1007/978-3-540-71050-9},
        note={Old and new},
      review={\MR{2459454}},
}

\bib{von-renesse-sturm2005}{article}{
      author={von Renesse, Max-K.},
      author={Sturm, Karl-Theodor},
       title={Transport inequalities, gradient estimates, entropy, and {R}icci
  curvature},
        date={2005},
        ISSN={0010-3640,1097-0312},
     journal={Comm. Pure Appl. Math.},
      volume={58},
      number={7},
       pages={923\ndash 940},
         url={https://doi.org/10.1002/cpa.20060},
      review={\MR{2142879}},
}

\bib{wenger2011}{article}{
      author={Wenger, Stefan},
       title={Compactness for manifolds and integral currents with bounded
  diameter and volume},
        date={2011},
        ISSN={0944-2669,1432-0835},
     journal={Calc. Var. Partial Differential Equations},
      volume={40},
      number={3-4},
       pages={423\ndash 448},
         url={https://doi.org/10.1007/s00526-010-0346-y},
      review={\MR{2764913}},
}

\bib{woolgar-wylie2018}{article}{
      author={Woolgar, Eric},
      author={Wylie, William},
       title={Curvature-dimension bounds for {L}orentzian splitting theorems},
        date={2018},
        ISSN={0393-0440,1879-1662},
     journal={J. Geom. Phys.},
      volume={132},
       pages={131\ndash 145},
         url={https://doi.org/10.1016/j.geomphys.2018.06.001},
      review={\MR{3836773}},
}

\bib{wu1979}{article}{
      author={Wu, H.},
       title={An elementary method in the study of nonnegative curvature},
        date={1979},
        ISSN={0001-5962,1871-2509},
     journal={Acta Math.},
      volume={142},
      number={1-2},
       pages={57\ndash 78},
         url={https://doi.org/10.1007/BF02395057},
      review={\MR{512212}},
}

\end{biblist}
\end{bibdiv}

\end{document}